\newcommand{\cp}{(G,H,\cL)}
\newcommand\reallywidehat[1]{%
\savestack{\tmpbox}{\stretchto{%
  \scaleto{%
    \scalerel*[\widthof{\ensuremath{#1}}]{\kern-.6pt\bigwedge\kern-.6pt}%
    {\rule[-\textheight/2]{1ex}{\textheight}}
  }{\textheight}%
}{0.5ex}}%
\stackon[1pt]{#1}{\tmpbox}%
}
\newcommand\reallywidecheck[1]{%
\savestack{\tmpbox}{\stretchto{%
  \scaleto{%
    \scalerel*[\widthof{\ensuremath{#1}}]{\kern-.6pt\bigwedge\kern-.6pt}%
    {\rule[-\textheight/2]{1ex}{\textheight}}
  }{\textheight}%
}{0.5ex}}%
\stackon[1pt]{#1}{\scalebox{-1}{\tmpbox}}%
}
\DeclareFontFamily{U}{mathx}{}
\DeclareFontShape{U}{mathx}{m}{n}{<-> mathx10}{}
\DeclareSymbolFont{mathx}{U}{mathx}{m}{n}
\DeclareMathAccent{\widehat}{0}{mathx}{"70}
\DeclareMathAccent{\widecheck}{0}{mathx}{"71}
\newcommand{\ver}{{\vert\kern-0.25ex\vert\kern-0.25ex\vert }}
\newcommand{\fS}{{\mathscr S}}
\newcommand{\ZZ}{{\mathbb Z}}
\newcommand{\QQ}{{\mathbb Q}}
\newcommand{\RR}{{\mathbb R}}
\newcommand{\NN}{{\mathbb N}}
\newcommand{\CC}{{\mathbb C}}
\newcommand{\BB}{{\mathbb B}}
\newcommand{\EE}{{\mathbb E}}
\newcommand{\XX}{{\mathbb X}}
\newcommand{\TT}{{\mathbb T}}
\newcommand{\SSS}{{\mathbb S}}
\newcommand{\AAA}{{\mathbb A}}
\newcommand{\cA}{{\mathcal A}}
\newcommand{\cB}{{\mathcal B}}
\newcommand{\cN}{{\mathcal N}}
\newcommand{\cH}{{\mathcal H}}
\newcommand{\im}{{\mathrm{i}}}
\newcommand{\AP}{{\mathsf{AP}}}
\newcommand{\vL}{\varLambda}
\newcommand{\vG}{\varGamma}
\newcommand{\supp}{{\mbox{supp}}}
\newcommand{\dens}{{\operatorname{dens}}}
\newcommand{\mc}{\mathcal}
\newcommand{\card}{\mbox{\rm card}}
\newcommand{\dd}{\mbox{\rm d}}
\newcommand{\mm}{\mathfrak{m}}
\newcommand{\re}{\operatorname{Re}}
\newcommand{\imm}{\operatorname{Im}}
\newcommand{\eps}{\varepsilon}
\newcommand{\cM}{{\mathcal M}}
\newcommand{\cL}{{\mathcal L}}
\newcommand{\cF}{{\mathcal F}}
\newcommand{\be}{\mathsf{B}}
\newcommand{\we}{\mathsf{W}}
\newcommand{\BL}{B\hspace*{-1pt}L}
\newcommand{\WL}{W\hspace*{-1pt}L}
\newcommand{\sap}{Ap\text{-}B\hspace*{-1.1pt}ohr}
\newcommand{\SAP}{\mathcal{AP}\text{-}{Strong} }
\newcommand{\weak}{Ap\text{-}W\hspace*{-1.1pt}eak}
\newcommand{\WAP}{\mathcal{AP}\text{-}W\hspace*{-1pt}{eak}}
\newcommand{\Wap}{\mathcal{AP}\text{-}W\hspace*{-1pt}{eyl}}
\newcommand{\wap}{Ap\text{-}W\hspace*{-1pt}eyl}
\newcommand{\Bap}{\mathcal{AP}\text{-}{Bes}}
\newcommand{\bap}{Ap\text{-}B\hspace*{-1pt}es_{\cA}}
\newcommand{\bappe}{Ap\text{-}B\hspace*{-1pt}es^p_{\cA}(G)/\equiv}
\newcommand{\bapte}{Ap\text{-}B\hspace*{-1pt}es^2_{\cA}(G)/\equiv}
\newcommand{\Map}{\mathcal{AP}\text{-}M\mathsf{ean}}
\newcommand{\mean}{Ap\text{-}M\hspace*{-1pt}ean}
\newcommand{\Mean}{\mathsf{Ap\text{-}Mean}} 
\newcommand{\amen}{\mbox{Amen}(G)}
\newcommand{\oplam}{\mbox{\Large $\curlywedge$}}
\newcommand{\exend}{\hfill $\Diamond$}
\newcommand{\lm}{\ensuremath{\lambda\!\!\!\lambda}}
\newcommand{\smoplam}{\mbox{$\curlywedge$}}
\newcommand{\Cu}{C_{\mathsf{u}}}
\newcommand{\Cc}{C_{\mathsf{c}}}
\newcommand{\Cz}{C^{}_{0}}
\newcommand{\lb}{\text{\textlquill} }
\newcommand{\rb}{\text{\textrquill} }
\newtheorem{theorem}{Theorem}[chapter]
\newtheorem{lemma}[theorem]{Lemma}
\newtheorem{prop}[theorem]{Proposition}
\newtheorem{coro}[theorem]{Corollary}
\theoremstyle{definition}
\newtheorem{definition}[theorem]{Definition}
\newtheorem{example}[theorem]{Example}
\theoremstyle{remark}
\newtheorem{remark}[theorem]{Remark}
\numberwithin{section}{chapter}
\numberwithin{equation}{chapter}
\begin{document}

\frontmatter

\title{Pure Point Diffraction and Mean, Besicovitch and Weyl Almost Periodicity}

\author{Daniel Lenz}
\address{Mathematisches Institut, Friedrich Schiller Universit\"at Jena, 07743 Jena, Germany}
\email{daniel.lenz@uni-jena.de}
\urladdr{http://www.analysis-lenz.uni-jena.de}

\author{Timo Spindeler}
\address{Fakult\"at f\"ur Mathematik, Universit\"at Bielefeld, \newline
\hspace*{\parindent}Postfach 100131, 33501 Bielefeld, Germany}
\email{tspindel@math.uni-bielefeld.de}
\thanks{The second author was supported in part by DFG with grant \#415818660.}

\author{Nicolae Strungaru}
\address{Department of Mathematics and Statistics, MacEwan University \\
10700 -- 104 Avenue, Edmonton, AB, T5J 4S2, Canada\\
and \\
Institute of Mathematics ``Simon Stoilow''\\
Bucharest, Romania}
\email{strungarun@macewan.ca}
\urladdr{https://sites.google.com/macewan.ca/nicolae-strungaru/home}
\thanks{The last author was supported in part by NSERC with grants \#2020-00038 and \#2024-04853.}


\dedicatory{We dedicate this work to Robert V. Moody on the occasion of his $80^{\mbox{th}}$ birthday.}

\subjclass[2020]{37A30, 43A05, 43A07, 43A25, 43A60, 22D10}
\keywords{\texttt{Mean almost periodicity, Besicovitch almost periodicity,
Weyl almost periodicity, pure point spectrum}}

\dedicatory{We dedicate this work to Robert V. Moody on the occasion of his $80^{\mbox{th}}$ birthday.}

\begin{abstract}
We show that a translation bounded measure has pure point
diffraction if and only if it is mean almost periodic. We then go on
and show that  a translation bounded measure has pure point diffraction and
satisfies the so called consistent
phase property if and only if it is Besicovitch almost periodic.
Finally, we show that a translation bounded measure has pure point diffraction and
satisfies the consistent phase property independent of the underlying van Hove sequence if and only
if it is Weyl almost periodic. These results solve fundamental
issues in the theory of pure point diffraction and answer questions
of Lagarias.
\end{abstract}

\maketitle

\tableofcontents

%
%
%


\chapter*{Introduction}
This book deals with the harmonic analysis behind pure-point
diffraction. This topic has received  substantial attention  since
the discovery of quasicrystals some forty years ago. Indeed,
the pivotal article \cite{BT} by Bombieri and Taylor written immediately
after the discovery of quasicrystals has in its title the
question `Which distributions of matter diffract?'. Of course, in
order to answer this question one needs to be more specific: How is
the distribution of matter modeled? What is meant by `diffract'?  In
this section, we will discuss this and  present our results. For
further details and precise definitions of certain concepts appearing
within our discussion, we refer the reader to Chapter~\ref{sec-key-player}.

As has become the custom in the last two decades, distribution of
matter is modeled by a measure in Euclidean space. This measure
should satisfy a uniform boundedness condition, known as translation
boundedness. This setting covers both Delone sets and bounded
densities. In fact, as far as our investigation here is concerned,
there is no reason to restrict to the Euclidean space. Instead we will
from now on consider translation bounded measures on a locally
compact, $\sigma$-compact Abelian group $G$. The dual group of $G$,
i.e. the set of all continuous group homomorphisms from $G$ into the
circle, is denoted by $\widehat{G}$.

According to the  mathematical setup developed by Hof  \cite{Hof1,Hof2}
(dealing  with the Euclidean case) and extended by Schlottmann
\cite{Martin2} (considering the group case), diffraction comes
about after one fixes a van Hove sequence $\cA = (A_n)$ of subsets
of the group $G$. Such a sequence is characterized by having the
boundary of its members become arbitrarily small compared to the
volume for large $n$.  The \textit{Eberlein (average) convolution} of the measure $\mu$
with the complex conjugate of its reflection along the sequence
$\cA$  is then known as the \textit{autocorrelation} of $\mu$ and denoted by
$\gamma_{\cA}$ (provided  it exists). This autocorrelation measure is
positive definite and, hence, possesses a Fourier transform. This
Fourier transform is known as the \textit{diffraction measure} of $\mu$ along
$\cA$ and is denoted by $\reallywidehat{\gamma_{\cA}}$.  It is this
diffraction measure that models the outcome of diffraction
experiments. Models for quasicrystals are
distinguished by the (pure) point nature of $\reallywidehat{\gamma_{\cA}}$.
Hence, in our context the natural first question is the following:

\medskip

\textbf{Question 1 (Characterization of pure-point diffraction).}
Let a van Hove sequence $\cA$ be given, and let $\mu$ be a measure
with autocorrelation $\gamma_{\cA}$. Under which conditions on $\mu$
is the diffraction measure $\reallywidehat{\gamma_{\cA}}$ a pure-point
measure?

\medskip

This is a long standing problem and clearly a most relevant question
in our context. However, it only deals with a partial aspect of the
situation as the diffraction measure only gives information on the
diffraction amplitudes. It does not contain any information on the
phases.  The real issue of diffraction concerns the phases.
Accordingly, the topic of phases is a central  focus of Lagarias'
article \cite{LAG} on the problem of diffraction.

The paper \cite{LAG} has been fairly influential. In particular,
various works in recent years have been devoted to answer questions
and treat problems from this article. This includes the work of Lev
and Olevski \cite{LevOle} on Poisson summation type formulae and
generalizations of Cordoba's theorem, the work of Favorov \cite{Fav}
on the failure of certain such generalizations in dimension bigger
than one, and the work of Kellendonk and Sadun \cite{KS} and
Kellendonk and Lenz \cite{KL} on the existence of sets with pure-point
diffraction without finite local complexity or Meyer property.

Following Lagarias\footnote{We use notation slightly different from
the notation in \cite{LAG}. Note also that the setting of \cite{LAG}
is restricted  to Delone sets in Euclidean space, and \cite{LAG}
assumes that the autocorrelation exists for any van Hove sequence,
whereas here we just assume existence along one fixed  van Hove
sequence.},  we look at the following \textbf{problem}: \label{problem-lagarias}
Consider a measure $\mu$ with pure-point diffraction supported on
the set $E\subset \widehat{G}$. How can one associate complex numbers $a_\chi\in \CC$, $\chi \in E$,
such that the Fourier transform of $\mu$ formally equals $\sum_{\chi \in E} a_\chi
\delta_\chi$ and the consistent phase property
\begin{equation}
 \widehat{\gamma} = \sum_{\chi \in E} |a_\chi|^2 \,
\delta_\chi  \tag{CPP}
\end{equation}
holds? As already discussed in \cite{LAG}, when dealing with this question, one first has to deal with what is meant by the
Fourier transform of $\mu$ being formally equal to $\sum_{\chi \in
E} a_\chi \delta_\chi$. Here, we take the point of view that this together with (CPP)
means that, for each $\chi\in \widehat{G}$, the Fourier--Bohr
coefficient
\[
a_{\chi}^{\cA}(\mu)=\lim_{n\to \infty} \frac{1}{|A_n|} \int_{A_n} \overline{\chi (t)}\, \dd\mu(t)
\]
of $\mu$ exists along the sequence $\cA$  and the consistent phase property holds with
\[
E = \{ \chi : a_{\chi}^{\cA} (\mu) \neq 0\} \mbox{ and } a_\chi = a_{\chi}^{\cA}(\mu) \mbox{  for each }  \chi \in E \,.
\]
Given this,  the problem
can be stated in a precise mathematical form as our second question:

\medskip

\textbf{Question 2.} Let a van Hove sequence $\cA$
be given. When does a measure $\mu$ satisfy the following three
properties?

\smallskip

\begin{itemize}
\item[(P1)]  The autocorrelation $\gamma_{\cA}$ of $\mu$ exists along $\cA$ and $\reallywidehat{\gamma_{\cA}}$ is a pure-point measure.

\item[(P2)] For each $\chi \in \widehat{G}$ the Fourier--Bohr coefficient $a_{\chi}^\cA (\mu)$  of $\mu$  exists along $\cA$.

\item[(P3)]  The consistent phase property
\[
\reallywidehat{\gamma_{\cA}} = \sum_{\chi \in \widehat{G}} |a_\chi^{\cA} (\mu)|^2 \,
\delta_\chi
\]
holds.
\end{itemize}

\medskip

So far, everything is developed with respect to a fixed van Hove
sequence. However, it is natural to aim for independence of the  van
Hove sequence. This leads us to the third question:

\medskip

\textbf{Question 3.} When does a measure
$\mu$ solve Question 2 for every van Hove sequence $\cA$ with
Fourier--Bohr coefficients and diffraction independent of the actual
van Hove sequence?

\medskip

In our work we provide complete answers to all three questions in
terms of  almost periodicity properties of $\mu$. Our main results
can be stated as follows:

\medskip

\textbf{Result 1 }(Theorem \ref{single element}). Let $\mu$ be
translation bounded with autocorrelation $\gamma_{\cA}$ along $\cA$.
Then, $\reallywidehat{\gamma_{\cA}}$ is pure point if and only if $\mu$ is
mean almost periodic.

\medskip

\textbf{Result 2 } (Theorem \ref{Bap and BT}).
 Let a van Hove sequence $\cA$ be given. Then,
the translation bounded measure  $\mu$ provides a solution to  Question 2
(along  $\cA$) if and only if $\mu$ is Besicovitch almost periodic.

\medskip

\textbf{ Result 3 }(Theorem \ref{theorem-uniform-phase-problem}).
The translation bounded measure $\mu$ provides a solution to  Question 3 if and only if $\mu$ is Weyl
almost periodic.

\medskip

Result 1 solves a long standing open problem with some partial
results obtained earlier. For Delone sets in $\RR^d$, a
characterization of pure-point diffraction has been given by
Gou\'{e}r\'{e} in \cite{Gouere-1}. As we will discuss below, his
characterization is just an alternative description of mean almost
periodicity for  Delone sets. Thus, his result is a special case of
our result (see Theorem \ref{gou}). For measures  supported inside
Meyer sets  a  condition for pure-point diffraction is
given by Baake and Moody in \cite[Theorem~5]{bm}. Here, again, we can show that
their condition actually is a description of mean almost
periodicity in the context of measures with Meyer set support
(see Theorem~\ref{ba}). In fact, as our proof shows, their condition implies
mean almost periodicity for all translation bounded measures. The Meyer condition
is only necessary when establishing that mean almost periodicity (and hence pure point diffraction)
implies their condition. At the same time, our result can be seen to
imply a result of Solomyak \cite{SOL}. We also note that in
retrospect one may also see how  mean almost periodicity is
implicitly present in Solomyak's dealing with pure-point
(diffraction) spectrum for primitive substitutions in \cite{SOL1}.

Result 2  and Result 3 are new. They settle
fundamental issues as witnessed by the mentioned article  of
Lagarias \cite{LAG}. Indeed, it has already been discussed how that
article focuses on the questions solved by Result 2 and Result
3. Moreover, the discussion in that article suggests to tackle the
problem via suitable notions of almost periodicity.  To be more
specific, we need some more notation. A \textit{Patterson set} in the sense
of \cite{LAG} is a Delone set in Euclidean space such that its
autocorrelation exists for any van Hove sequence, is independent of
the van Hove sequence, and has a pure-point
measure as its Fourier transform. Let now  $\mathcal{B}$ be a suitable vector space  of
almost periodic functions in Euclidean space satisfying three
natural additional assumptions via a Parseval type condition, a
Riesz--Fischer property and translation invariance. Then, Lagarias
calls a Delone set $\vL$ in Euclidean space a
$\mathcal{B}$-quasicrystal or a $\mathcal{B}$-Besicovitch almost
periodic set if
\[
\sum_{x\in \vL} \varphi (\cdot - x) \in \mathcal{B}
\]
holds  for each  infinitely many times differentiable function  $\varphi$
with compact support. In the introduction to his article, Lagarias
writes (p.~64): `\textit{...it remains to determine a good class
$\mathcal{B}$ that gives a reasonable theory.}' and further on
`\textit{It is natural to hope that a suitable class of
$\mathcal{B}$-Besicovitch almost periodic sets will all be Patterson
sets and have the consistent phase property given in (3.9), but this
is an open problem.}' Now, our results can clearly be understood to
answer these issues. In fact, our result specifically can be seen as
answers to Problems 4.6, 4.7 and 4.8 mentioned in the problem
section of \cite{LAG}. This deserves some further discussion:
Problem 4.6 asks for a class $\mathcal{B}$ of almost periodic
functions on Euclidean space such that their $\mathcal{B}$-quasicrystals satisfy:

\begin{itemize}
\item[(a)] each $\mathcal{B}$-quasicrystal satisfies (CPP).
\item[(b)] any Patterson set coming from a cut-and-project scheme (CPS) is a
$\mathcal{B}$-quasicrystal.
\item[(c)] any self-replicating Delone set which is a Patterson set is a
$\mathcal{B}$-quasicrystal.
\end{itemize}

Our results show that the choice $\mathcal{B}$ as the $2$-Besicovitch almost
periodic functions provides a solution to Problem 4.6:
This choice of $\mathcal{B}$ entails that every
$\mathcal{B}$-quasicrystal is a Besicovitch almost periodic measure
and, by Result 2, each such measure satisfies (CPP). Moreover, by
Result 2 again, each Patterson set satisfying (CPP) belongs to
$\mathcal{B}$. Hence, (b) and (c) are satisfied\footnote{The article
\cite{LAG} does not completely specify what is meant by Patterson
set coming from a cut-and-project scheme. We understand this to mean
regular model sets.}.

Problem 4.7 asks whether every $\mathcal{B}$-quasicrystal is a
Patterson set. Now, this is not the case for the  choice of
$\mathcal{B}$ as $2$-Besicovitch almost
periodic functions. The
reason is that in Result 2 we do not obtain existence of the
autocorrelation along any van Hove sequence (but just along one fixed
van Hove sequence). So, our Result 2 solves only a weakened version
of Problem 4.7. On the other hand, our Result 3 implies existence of
the autocorrelation along any van Hove sequence. So, Problem 4.7 is
solved if one takes as $\mathcal{B}$ the Weyl almost periodic
functions.

Strictly speaking, however, the Weyl almost periodic functions do
not qualify as a $\mathcal{B}$-class in the sense of Lagarias as
they do not satisfy the Riesz--Fischer property. On the other hand,
we can show in Chapter~\ref{sec:unavoidable} that any
$\mathcal{B}$-class satisfying the Riesz--Fischer property and a Parseval
type condition
must actually agree with the $2$-Besicovitch almost periodic
functions under some  mild additional assumptions. Now, with the
choice of $\mathcal{B}$ as $2$-Besicovitch almost periodic functions
one always ends up with some quasicrystals for which the
autocorrelation does not exist for all van Hove sequences. Thus, it
seems that one cannot expect a full solution to Problem 4.7 when
insisting on the Riesz--Fischer property and Parseval equality.

Problem 4.8 deals with a translation bounded  measure $\mu$ of
$\mathcal{B}$ whose Fourier transform is formally given by $\sum a_\xi
\delta_\xi$. It asks whether $a_\xi$ must be the Fourier--Bohr
coefficient (if this Fourier--Bohr coefficient exists). Now, this is
(trivially) true in our context if one chooses for $\mathcal{B}$ the
space of $2$-Besicovitch almost
periodic functions, as we have just defined the formal Fourier
expansion via the Fourier--Bohr coefficients.

Result 2 and Result 3 are not only of conceptual interest but also
of direct consequence.  Result 2 sheds a new and different light on
weak model sets of maximal density. Such model sets have received
attention in recent years  in work of Keller and Richard \cite{KR} and
Baake, Huck and Strungaru \cite{BHS}.  They have the particular
feature that -- unlike most other basic models for Aperiodic Order --
here the actual choice of the van Hove sequence matters. As we show
below, they can rather directly be seen to be Besicovitch almost
periodic. Given this, Result 2 allows one to recover most of the
fundamental results obtained for such weak model sets in the mentioned works
(Corollary  \ref{coro:weak-model-set}). Result 3 gives a new
perspective on a class of almost periodic measures recently
introduced by Meyer \cite{Mey2}. Meyer showed that this class
contains all regular model sets (in Euclidean space)  but did not
discuss  diffraction for this class. Here we show that every measure
in the class of Meyer is Weyl almost periodic (Corollary~\ref{cor12345}).  Then, Result 3 provides a rather complete description
of diffraction for this class.

A few words on our methods are in order. As discussed above, diffraction
theory starts with a translation bounded measure. The reflected Eberlein
convolution (along a given van Hove sequence) is then used to form
its autocorrelation. The Fourier transform of the autocorrelation is
the diffraction measure. A key insight in the  present work is
that this theory  can naturally be placed within the context of
group representations. Specifically, the autocorrelation gives rise
to a (pre-)Hilbert space structure on a certain space of functions,
on which the group acts continuously by isometries. The diffraction
measure  then appears as a  kind of `universal' spectral measure of
this group representation. In this way, tools from representation
theory become available in the study of diffraction. A convenient
way of formalizing this part of our approach is given by the concept
 of $\mathcal{A}$-representation introduced below (for a van Hove sequence $\cA =
(A_n)$). Such a representation is a  linear $G$-invariant map $N :
\Cc (G)\longrightarrow L^1_{loc} (G)$ with the additional property
that the means
\[
\lim_{n\to\infty} \frac{1}{|A_n|}\int_{A_n} N(\varphi) (s)\,
\overline{N(\psi) (s)} \, \dd s=:\langle N(\varphi),N(\psi)\rangle
\]
exist for all $\varphi,\psi \in\Cc(G)$. Under a mild additional
assumption any such representation comes with a measure $\sigma$ on
$\widehat{G}$ such that $t\mapsto \langle N(\varphi), T_t
N(\varphi)\rangle $ is just the Fourier transform of the finite
measure $|\widehat{\varphi}|^2 \sigma$ for any $\varphi \in\Cc (G)$.
To apply  this general approach  to diffraction of measures  we
consider, for a measure $\mu$ on $G$,  the map $N=N_\mu$ defined on
$\Cc (G)$ by
\[
N_\mu (\varphi) := \mu *\varphi \,.
\]
In this situation, the measure $\sigma$ can then be seen to be just
the `usual' diffraction measure considered in the literature.

A  second key insight of the present work is that almost
periodicity properties of the functions in the range of such an
$\mathcal{A}$-representation $N$ store all pieces of information
relevant to us to deal with pure-point diffraction and its
strengthened variants. Our main results are then obtained by
combining the framework of  $\mathcal{A}$-representations with a
thorough study of suitable  sets of almost periodic functions
(together with the  translation action on them). To provide such a
study can be seen as a core directive of this book.

We single out three types of almost periodicity. These are mean
almost periodicity, Besicovitch almost periodicity and Weyl almost
periodicity. All these concepts are natural generalizations of Bohr
almost periodicity. They arise by replacing the supremum norm by a
seminorm arising from averaging along a van Hove sequence in
respective characterizations of Bohr almost periodic functions.
 While very natural, the  concept of mean almost periodicity seems
not to have been investigated before. On the other hand, Besicovitch
and Weyl almost periodic functions have been considered in the
literature, mostly in connection with differential equations, i.e.
in the one dimensional Euclidean situation. Still,  parts of the
theory have also been considered for more general groups.  Here, we
thoroughly develop the  theory in the context of general
$\sigma$-compact, locally compact Abelian groups pointing out
earlier results along the way.  Central to our treatment are the
group action on Besicovitch spaces by translation  and the Eberlein
convolution of Besicovitch almost periodic functions. This part of
the theory is completely new. As for Weyl almost periodicity, a key
element for us is to characterize this within the Besicovitch class
by uniformity with respect to the van Hove sequences. To our
knowledge, this is a new characterization.

Two additional advantages of  our concept of $\cA$-representation
may be worth mentioning. Firstly, it makes the underlying
mathematics very transparent. In particular, it is clear that the
domain of  $N$ is rather irrelevant.  The crucial ingredient is the
range of $N$ being contained in certain classes of almost periodic
functions. In fact, the domain  $\Cc (G)$ of $N$ could be replaced
by any other subalgebra of continuous functions which is closed
under convolution and whose image under Fourier transform is dense
in a suitable $L^2$-space.  In particular, if $G$ is the Euclidean
space, we can develop a completely analogous theory based on
(tempered) distributions by considering
$\mathcal{A}$-representations $N$ mapping smooth functions with
compact support (or elements of the Schwartz space) into the set of
functions on $G$. Then, any (tempered) distribution $\varrho$ gives
a map $N = N_\varrho$ defined by $N_\varrho (\varphi) := \varrho\ast
\varphi$.

Secondly, we feel that this concept seems to be appropriate in terms
of modeling. After all, there is no intrinsic reason to prefer
measures over distributions. The only thing relevant is that --
irrespective of  how the distribution of matter in question is
modeled -- one should be able to pair it with functions. This is
exactly what is achieved by our concept of
$\mathcal{A}$-representation. In the context of dynamical systems
related  ideas were developed by Lenz and Moody in \cite{LM}.

This book is organized as follows: In Chapter~
\ref{sec-key-player}, we present the setting and discuss the
necessary concepts and tools for our considerations.  In particular,
we define the autocorrelation and  the Fourier--Bohr coefficients of
a measure. The material of this chapter is used throughout the whole book.

We then  discuss the fundamental seminorms and the
associated Besicovitch and Weyl type spaces and introduce the
framework of $\mathcal{A}$-representations, see Chapter~\ref{ch:besweyl}.

Chapter~\ref{sec-mean} is devoted to mean almost periodicity. We
introduce and study this notion for functions and measures and then
turn to our first main result, Theorem~\ref{single element}.
Finally, we discuss applications and show how our result contains
the mentioned earlier  results of \cite{bm} and \cite{Gouere-1}.

Chapter~\ref{sec-Besicovitch} deals with Besicovitch almost
periodicity. We first present a thorough study of Besicovitch almost
periodic functions. In particular, we show that the $p$-Besicovitch
almost periodic functions form a Banach space for every $p\geq 1$.
For $p=2$ this space is  a Hilbert space with a natural orthonormal
basis given by the characters of the group. Expansion with respect
to this orthonormal basis gives a  Fourier type theory and is the
basis for our solution to Problem 2. This solution is presented in
Theorem \ref{Bap and BT}. As an application, we give in Corollary
\ref{coro:weak-model-set} and its proof  a new approach to results
of \cite{BHS,KR} concerning weak model sets.

Our study of Weyl almost periodicity is given in Chapter
\ref{sec:Weyl}. We note
 that Weyl almost periodicity can
be understood as simultaneous  Besicovitch almost periodicity for
all van Hove sequences. Given this, Result 3 is a rather direct
consequence of Result 2. Details are given in Theorem
\ref{theorem-uniform-phase-problem} and its proof.  As an
application, we discuss a (slight generalization of a) concept of
almost periodicity recently introduced by Meyer in \cite{Mey2}. As
shown by Meyer, this type of almost periodicity is present in
regular model sets in Euclidean space. Indeed, finding a concept of
almost periodicity present in such models was exactly the motivation
for Meyer. Here, we show  in Corollary \ref{cor12345}
that this form of almost periodicity entails Weyl almost
periodicity. Given Result 3, this complements the results of Meyer
by providing the missing diffraction theory for this form of almost
periodicity.

Given a certain (weak) continuity assumption, Besicovitch almost periodic functions and Weyl
almost periodic functions are unavoidable when one deals with pure point
diffraction. In this sense, there is a uniqueness to our
solution of Questions 2 and 3. This is discussed in Chapter
\ref{sec:unavoidable}.

In Chapter~\ref{sec:Dynamics}, we discuss the connections to dynamical systems. The use of dynamical systems has been  fundamental to various parts of the study of Aperiodic Order, see e.g. \cite{TAO,KLS,Rad} and references therein. A general setup for diffraction in terms of dynamical systems was developed in \cite{BL} using dynamical systems of translation bounded measures (TMDS). Here, we study the link  between almost periodicity and
pure point spectrum for such TMDS.
We show that the hierarchy of mean/Besicovitch/Weyl almost periodicity can be fully understood
at the level of dynamical systems. More precisely, we show that
\begin{itemize}
  \item{} A measure $\mu$ is mean almost periodic if and only if it is generic for a $G$-invariant measure $\mm$ with pure-point spectrum (see Theorem~\ref{thm:DS-char-map}).
  \item{} A measure $\mu$ is Besicovitch almost periodic if and only if it is generic for an ergodic $G$-invariant measure $\mm$ with pure-point spectrum, and the Wiener--Wintner theorem applies to $\mu$ (i.e. $\mu$ is a Wiener--Wintner point for $\mm$). This is Theorem~\ref{bap-sg}.
  \item{} A measure $\mu$ is Weyl almost periodic if and only if its dynamical system is uniquely ergodic and has pure-point spectrum and continuous eigenfunctions (Theorem~\ref{thm:wap}).
\end{itemize}

The book is concluded by appendices, dealing with cut-and-project
schemes, semi-measures, counterexamples and uniform convergence
respectively.


\mainmatter
%
%
%

\chapter[Preliminaries]{Preliminaries or  framework,  notation and concepts  used throughout the text}\label{sec-key-player}

In this chapter, we introduce the central concepts  for our
investigations. These are the  autocorrelation and its Fourier
transform, the diffraction measure, as well as the associated
Fourier--Bohr coefficients and certain seminorms arising from
averaging. All these quantities live on  a $\sigma$-compact, locally
compact Abelian group,  and we start this section with a discussion
of basic concepts related to such groups.

\section[General setting]{The general setting: locally compact Abelian groups, functions, measures}\label{subsection-basic-setting}
In this section, we present the general framework used in our investigation. The concepts and results  are all well-known. We point  to specific (recent) references in some places for the convenience of the reader.

For the whole book, $G$ denotes a locally compact (Hausdorff),
$\sigma$-compact Abelian group. The associated Haar measure is
denoted by $\theta_G$\index{Haar~measure}. For the Haar measure of a measurable set $A\subseteq G$,
we often write $|A|$ instead of $\theta_G (A)$. Integration of an
integrable function $f: G\longrightarrow \CC$, with respect to
$\theta_G$, is often written as $\int_G f(s)\, \dd s$.
For $1\leq p <\infty$, we denote by $\mathcal{L}^p (G)$\nomenclature{$\mathcal{L}^p(G)$}{space of all measurable and $p$-integrable functions} the space of all measurable $f : G\longrightarrow \CC$ that satisfy $\int |f(s)|^p\, \dd s <\infty$. For $p =\infty$ we denote by $\mathcal{L}^\infty (G)$ the space of measurable $ f: G\longrightarrow \CC$ that are essentially bounded (i.e. for which there exists a number $C>0$ with $ |f(s)|\leq C$ for almost every $s\in G$). Similarly, for $1\leq p <\infty$ we denote by  $\mathcal{L}^p_{loc}(G)$ \nomenclature{$\mathcal{L}^p_{loc}(G)$}{space of locally $p$-integrable functions from $G$ to $\CC$} the space of all measurable $f : G\longrightarrow \CC$ that satisfy $\int_K |f(t)|^p\, \dd t <\infty$ for all
compact $K\subseteq G$. We call these functions \textit{locally integrable}.

Next, let us briefly recall some standard definitions.

\begin{definition}[Relatively dense and uniformly discrete sets]
A subset $\vL$ of $G$ is \textit{relatively dense}\index{relatively~dense} if there
exists a compact set $K \subseteq G$ such that
\[
G = \bigcup_{t\in \vL} (t+ K) \,.
\]
A subset $\vL$ of $G$  is \textit{uniformly discrete}\index{uniformly~discrete} if there
exists an open neighborhood $U$ of the identity such that
\[
(s+ U) \cap
(t + U) = \varnothing
\]
holds for all $s,t\in \vL$ with $s\neq t$. Such a subset
$\vL$ will also be called \textit{$U$-uniformly discrete}\index{uniformly~discrete!$U$~uniformly~discrete}.
A subset $\vL$ of $G$ is a \textit{Delone set}\index{Delone~set} if it is
relatively dense and uniformly discrete.       \exend
\end{definition}

\subsection{Functions and measures}

We will use the familiar symbols $C(G)$\nomenclature{$C(G)$}{set of continuous functions from $G$ to $\CC$} for the space of continuous functions from $G$ to $\CC$.
We denote by $\Cc(G)$\nomenclature{$C_{\mathsf{c}}(G)$}{set of continuous functions from $G$ to $\CC$ with compact support}, $\Cu(G)$ \nomenclature{$C_{\mathsf{u}}(G)$}{set of uniformly and continuous functions from $G$ to $\CC$} and $\Cz(G)$\nomenclature{$C^{}_{0}(G)$}{set of continuous functions from $G$ to $\CC$ vanishing at infinity} the subspaces of $C(G)$ consisting of
compactly supported, bounded uniformly continuous, and
continuous functions vanishing at infinity.
For a  bounded  function $u$  on $G$, we define the supremum
norm of $u$ by
\[
\|u\|_\infty:=\sup_{t\in G} |u(t)| \,.
\]
\nomenclature{$\| \, \|_\infty$}{supremum norm}

For any function $g$ on $G$ and any element $t\in G$, the functions $\widetilde{g}$\nomenclature{$\tilde{f}$}{complex conjugated and involuted version of a function $f$}, $\tau_t g$\nomenclature{$\tau_t f$}{shifted version of a function $f$} and $g^{\dagger}$\nomenclature{$f^{\dagger}$}{involuted version of a function $f$} on $G$ are defined
by
\[
\widetilde{g}(s) := \overline{g(-s)}\,, \qquad
(\tau_t g)(s) :=g(s-t) \qquad
\text{ and }  \qquad
g^{\dagger}(s):=g(-s) \,.
\]
The dual group\index{dual~group} $\widehat{G}$ of $G$ is the set of all  continuous group homomorphisms from $G$ to the unit circle $\{z\in \CC : |z| =1\}$. It becomes a topological space in a natural way, see e.g. \cite{BF,rud}. \nomenclature{$\widehat{G}$}{dual group of the group $G$}

\smallskip

The \textit{Fourier transform}\index{Fourier~transform!Fourier~transform~of~function} of  $g\in \mathcal{L}^1(G)$ is the function \nomenclature{$\widehat{f}$}{Fourier transform of the function $f$}
\begin{align*}
\widehat{g} &: \widehat{G}\longrightarrow \CC \\
\widehat{g}(\chi)(\chi) \, &:= \,  \int_G \overline{\chi (t)}\, g(t)\, \dd  t \,.
\end{align*}
We will also encounter the inverse Fourier transform\index{Fourier~transform!inverse~Fourier~transform~of~function} of $g$ defined by
\[
\widecheck{g} (\chi) \, :=\,  \widehat{g} (\chi^{-1}) \, = \,  \int_G \chi (t)\, g(t)\, \dd  t \,.
\] For basic properties of the Fourier transform, we refer the reader to \cite{rud}. \nomenclature{$\check{f}$}{inverse Fourier transform of the function $f$}

\medskip

Next, let us briefly review the concept of measures.
For a compact set $K\subseteq G$,
we define $C_K(G)$\nomenclature{$C_K(G)$}{set of continuous functions from $G$ to $\CC$ that are supported inside $K$} to be the subspace of
functions in $\Cc(G)$ that vanish outside of $K$. The space $\Cc
(G)$ is naturally equipped with the inductive limit topology\index{inductive~topology}, which is the largest
topology that makes the inclusion
\[
C_K (G)\hookrightarrow
\Cc (G) \,;\, f\mapsto f \,,
\]
continuous for any compact $K\subseteq G$.
A \textit{Radon measure}\index{measure!Radon~measure} $\mu$ on $G$ is a linear functional
$\mu: \Cc(G) \to \CC$ which is continuous in the inductive topology on $\Cc(G)$.
Subsequently, we will simply call $\mu$ a \textit{measure}\index{measure}.

\begin{remark} \phantom{XX}
\begin{itemize}
   \item[(a)] A linear functional $L : \Cc(G) \to \CC$ is a measure if and only if, for each compact $K \subseteq G$, there exists a constant $C_K>0$ such that
       \begin{equation}\label{eq-rm}
       \left| L(\varphi) \right| \leq C_K \| \varphi \|_\infty
       \end{equation}
holds for all $\varphi \in \Cc(G)$ with $\supp(\varphi) \subseteq K$.
   \item[(b)] Any positive linear mapping $L: \Cc(G) \to \CC$ is a measure \cite{CRS}. Moreover, a linear mapping $L : \Cc(G) \to \CC$ is a measure if and only if it is a linear combination of positive linear mappings \cite{CRS}.
   \item[(c)] By the Riesz representation theorem \cite{rud2}, the class of positive Radon measures coincides with the class of positive, regular Borel measures. In particular,
   a Radon measure is simply a linear combination of positive, regular Borel measures.  \exend
 \end{itemize}
\end{remark}

For a measure $\mu$ on $G$ and $t\in G$, we define the measures $\widetilde{\mu}$\nomenclature{$\widetilde{\mu}$}{complex conjugated and involuted version of a measure $\mu$}, $\tau_t \mu$\nomenclature{$\tau_t\mu$}{shifted version of a measure $\mu$} and $\mu^{\dagger}$
\nomenclature{$\mu^{\dagger}$}{involuted version of a measure $\mu$}
on $G$ by
\[
\widetilde{\mu}(g) := \overline{\mu(\widetilde{g})} \,, \qquad
(\tau_t\mu)(g):= \mu(\tau_{-t}g) \qquad  \text{ and }  \qquad
\mu^{\dagger}(g):=\mu(g^{\dagger}) \,.
\]

Moreover, for a measure $\mu$ and a function $\varphi \in \Cc (G)$ we define the \textit{convolution} \index{convolution! between a measure and a function}  $\mu \ast \varphi : G\longrightarrow \CC$ by
\[
(\mu \ast \varphi) (t) = \int_G \varphi (t-s)\, \dd\mu (s) \qquad \forall t \in G \,.
\]

Any measure $\mu$ gives rise to a positive measure $|\mu|$ satisfying
\[
|\mu|(\varphi) = \sup \{ \left| \mu(\psi) \right| : \psi \in \Cc(G), |\psi| \leq \varphi \}
\]
for all non-negative $\varphi \in \Cc(G)$, see \cite[Thm.~6.5.6]{Ped}.
The measure $|\mu|$ is called the \textit{total variation of $\mu$}\index{total~variation~measure}, and it (is the smallest positive measure on $G$ which) satisfies
\[
\left|\mu (\varphi)\right| \leq  |\mu|(|\varphi|) \qquad \text{ for all } \varphi \in
\Cc(G) \,.
\]
A measure $\mu$ on $G$ is called \textit{finite}\index{measure!finite~measure} if $|\mu| (G)<\infty$
holds. In particular, this means that the constant $C_K$ in \eqref{eq-rm} can be chosen to be the same for all compact sets $K \subseteq G$.

\smallskip

\begin{definition}[Translation bounded measure]
A measure $\mu$ on $G$ is called \textit{translation bounded}\index{measure!translation~bounded~measure}
if \nomenclature{$\| \, \|_{V}$}{$V$-norm}
\[
\| \mu \|_{V} := \sup_{t\in G}|\mu|(t+V) < \infty
\]
holds for one (and thus each) open and relatively compact subset
$V\subseteq G$. The space of all
translation bounded measures on $G$ is denoted by
$\mc{M}^{\infty}(G)$\nomenclature{$\mathcal{M}^{\infty}(G)$}{space of translation bounded measures on $G$}.\exend
\end{definition}

A measure $\mu$ is translation bounded if and only if  $\mu* \varphi \in \Cu(G)$ for
all $\varphi \in \Cc(G)$ \cite{ARMA1,MoSt}.

\smallskip
For $\varphi \in \Cu (G)$ and $\psi\in \Cc(G)$ or $\varphi \in \Cc(G)$ and $\psi
\in \Cu (G)$, the \textit{convolution}\index{convolution!of functions} $\varphi \ast \psi$
is defined via
\[
(\varphi \ast \psi) (t) := \int_{G} \varphi(t-s)\, \psi(s)\, \dd s
\]
for $t\in G$. It is not difficult to see that $\varphi \ast \psi\in \Cu(G)$. If both $\varphi$ and $\psi$ belong to $\Cc (G)$, so
does $\varphi \ast \psi$.  The convolution $\mu \ast
\nu$\index{convolution!of measures} of a finite measure $\mu$
and a translation-bounded measure $\nu$ is defined by
\[
(\mu \ast \nu) (\varphi) := \int_G \int_G \varphi (s +t)\, \dd \mu (s)\, \dd\nu (t) \,.
\]

\smallskip

For $f\in\Cu (G)$ and $\varepsilon>0$, there exists a neighborhood
$U$ of $0$ in $G$ such that
\[
\|f  - f*\varphi\|_\infty\leq \varepsilon
\]
for all $\varphi  \in\Cc (G)$ with support contained in $U$ and $\int_G \varphi(t)\, \dd t = 1$. In other words, this allows one to find an
\textit{approximate unit}\index{approximate~unit} (or \textit{approximate identity})\index{approximate~identity|see{approximate~unit}}, i.e. a net $(\varphi_\alpha)$  in $\Cc
(G)$ with $\varphi_\alpha *f \to f$ with respect to
$\|\cdot\|_\infty$ for all $f\in\Cu (G)$ and such that the support of every
$\varphi_\alpha$ is contained in one fixed open and relatively compact
neighborhood of $0$ in $G$.

\smallskip
Finally, recall that a function $g : G\longrightarrow \CC$ is \textit{positive definite}\index{positive~definite!positive~definite~function} if, for any finite set $F\subseteq G$ and any function $c : F\longrightarrow \CC$, we have
\[
\sum_{t,s\in F} c_t \cdot \overline{c_s} \cdot g(t-s)\geq 0 \,.
\]
A measure $\mu$  on $G$ is called \textit{positive
definite}\index{positive~definite!positive~definite~measure} if
\[
\mu(\varphi*\widetilde{\varphi})\geq 0 \qquad \text{ for all } \varphi\in \Cc(G) \,.
\]
A continuous function $g : G\longrightarrow \CC$ is positive definite if and only if the measure $g\theta_G$ is positive definite \cite[Prop.~4.1]{BF}.
Any positive definite measure $\gamma$
admits a (unique) positive measure  $\reallywidehat{\gamma}$ on
$\widehat{G}$ which satisfies
\[
\int_{\widehat{G}} |\widecheck{\varphi}(\chi)|^2\, \dd\reallywidehat{\gamma}(\chi)  = \int_{G}
(\varphi \ast\widetilde{\varphi})(x)\,  \dd\gamma(x) \,,
\]
for all $\varphi\in
\Cc(G)$ \cite{BF,MoSt}. The measure $\reallywidehat{\gamma}$ is called the
\textit{Fourier transform}\index{Fourier~transform!Fourier~transform~of~measure} of $\gamma$.  \nomenclature{$\widehat{\gamma}$}{Fourier transform of the measure $\gamma$}

\subsection{Bohr almost periodic functions, strongly almost periodic measures and the Bohr compactification}
Almost periodicity is the core concept in this book. Its strongest form is given by Bohr almost periodic functions and the corresponding measures. This is discussed in this section and builds the foundation for the more elaborate concepts of almost periodicity to be introduced later.

Bohr almost periodic functions were introduced by Bohr in \cite{Bohr1,Bohr2,Bohr3}. In these papers, Bohr showed the equivalence of the function having a relatively dense set of almost periods (property (i) in Proposition~\ref{prop:SAPchar}) and being the uniform limit of a sequence of trigonometric polynomials (property (iii) in Proposition~\ref{prop:SAPchar} below). He also studied the Fourier--Bohr coefficients as well as their square summability and relationship to the mean of $|f|^2$. Bohr's work built on earlier results in this direction by Bohl \cite{Bohl1,Bohl2} and Esclangon \cite{Esc1,Esc2}. Bochner proved \cite{Boch} that this definition is also equivalent to the compactness of the translation hull of the function (property (ii) in Proposition~\ref{prop:SAPchar}). These notions were extended from $\RR^d$ to arbitrary LCAG by von Neumann \cite{vNeu} and Bochner and von Neumann \cite{BvN}.

\smallskip

Over the years, generalizations of those notions were introduced by Stepanov \cite{Step1,Step2}, Weyl \cite{Weyl}, Besicovitch \cite{Bes26} and Eberlein \cite{Eb}, to name just a few. Almost periodicity in the sense of Weyl and Besicovitch will play a central role in this book.

We start by briefly reviewing some of the properties of Bohr almost periodic functions. For a full review, we recommend \cite{ABG,Bes,Boh,Cor,Cor2}.

\medskip


Recall that a \textit{trigonometric polynomial}\index{trigonometric~polynomial} on $G$ is a function of the form
\[
P(t):=\sum_{j=1}^n c_j \chi_j(t)
\]
for some $n \in \NN, c_1, \ldots, c_n \in \CC$ and $\chi_1, \ldots, \chi_n \in \widehat{G}$.

Recall also that to  any locally compact Abelian group $G$ there exists a unique (up to group isomorphism)  compact group $G_{\mathsf{b}}$ together with a  group homomorphism
$$i_{\mathsf{b}} : G\longrightarrow  G_{\mathsf{b}}$$
 with dense range such that any group homomorphism  $i: G\longrightarrow \mathbb{K}$ with dense range and compact $\mathbb{K}$ factors through $i_{\mathsf{b}}$ (i.e. admits a (necessarily unique) group homomorphism $\kappa : G_{\mathsf{b}} \longrightarrow \mathbb{K}$ with $\kappa \circ i_{\mathsf{b}} = i$).
The group  $G_{\mathsf{b}}$ \nomenclature{$G_{\mathsf{b}}$}{Bohr compactification of $G$} is called the Bohr compactification of $G$\index{Bohr~compactification} (see \cite{MoSt} for further details and properties of Bohr compactification).

\smallskip

Let us now review the following characterization which we can use as the definition of a Bohr almost periodic function. For a proof, see for example \cite[Thm.~4.3.5]{MoSt}.

\begin{prop}\label{prop:SAPchar} Let $f \in \Cu(G)$. Then, the following are equivalent:
\begin{itemize}
  \item[(i)] For all $\varepsilon >0$, the set
\[
\AP_{\infty}(f, \eps):= \{ t \in G\, :\, \| f-\tau_t f \|_{\infty} < \eps \}
\]
of \textit{$\eps$-Bohr almost periods of $f$}\index{almost~periods!Bohr-almost~periods} is relatively dense. \nomenclature{$\AP_{\infty}(f, \eps)$}{Bohr-almost periods}
  \item[(ii)] The closure of $\{\tau_t f\ : \ t\in G\}$ is compact in $(\Cu(G), \| \cdot \|_\infty)$.
  \item[(iii)] For each $\varepsilon >0$, there exists a trigonometric polynomial $P$ such that $\| f-P \|_\infty < \varepsilon$.
  \item[(iv)] There exists a continuous function $F : G_{\mathsf{b}} \to \CC$ such that $f= F \circ i_{\mathsf{b}}$.\qed
\end{itemize}
\end{prop}

 This leads to the following classical definition.

\begin{definition}[Bohr almost periodic functions and measures]
A function $f \in \Cu(G)$ is called \textit{almost periodic in the sense of Bohr}, or simply \textit{Bohr almost periodic}\index{almost~periodic!Bohr~almost~periodic~function}, if
it satisfies the equivalent conditions from Proposition~\ref{prop:SAPchar}.
The set of Bohr almost periodic functions is denoted by $\sap(G)$. \nomenclature{$\sap(G)$}{space of Bohr almost periodic functions on $G$}
A measure $\mu \in \cM^\infty(G)$ is called \textit{strongly almost periodic}\index{almost~periodic!strongly~almost~periodic~measure} if $\mu*\varphi \in \sap(G)$ holds for all $\varphi \in \Cc(G)$. The space of strongly almost periodic measures is denoted by $\SAP(G)$. \nomenclature{$\SAP(G)$}{space of strongly almost periodic measure}  \exend
\end{definition}

The set $\sap(G)$ is a closed subalgebra of $(\Cu(G),\|\cdot\|_\infty)$ \cite[Prop.~4.3.4]{MoSt}.
Note that the set of trigonometric polynomials is a dense subalgebra of $\sap(G)$.

\medskip

Weak almost periodicity is another classical concept of almost periodicity.  As this will at least occasionally show up in the book by means of comparison, we finish this subsection with a short discussion on this class of functions.

A function  $f\in \Cu(G)$  is called \textit{weakly almost periodic}\index{almost~periodic!weakly~almost~periodic~function}
if the closure $\{\tau_t  f : t\in G\}$ is compact in the weak
topology of the Banach space $(\Cu(G),\|\cdot\|_\infty)$. The space of weakly almost periodic functions is denoted by $\weak(G)$. \nomenclature{$\weak(G)$}{set of Weakly almost periodic functions}

Any weakly almost periodic $f$ admits a unique decomposition $f = g + h$ with
$g$ being Bohr almost periodic and $h$ being amenable with $M(|h|)
=0$ (see below for the definition of mean), see e.g. \cite{Ebe3} or \cite[Prop.~4.5.9]{MoSt}. As any Bohr almost periodic function
is amenable, we infer in particular that any weakly almost periodic function is amenable.

A measure $\mu$ is called \textit{weakly
almost periodic}\index{almost~periodic!weakly~almost~periodic~measure} if $\mu\ast \varphi$ is a weakly almost periodic
function for all $\varphi \in\Cc (G)$. The space of weakly almost periodic measures is denoted by $\WAP(G)$. \nomenclature{$\WAP(G)$}{set of Weakly almost periodic measures on $G$}

\subsection{Averaging and means}
Averaging will play an important role in our considerations. Thus, we need to discuss the appropriate basic concepts, such as F\o lner and van Hove sequences. While the notion of a F\o lner
sequence is sufficient for dealing with means of functions, the van Hove property
is essential for dealing with means of measures. Roughly speaking convergence along  F\o lner sequences ensures that means of functions and their translates agree whereas van Hove sequences are needed to ensure that the means of measures and their translates agree, see for example \cite[Lem.~4.10.6, Lem.~4.10.7]{MoSt} for more details on this.

\begin{definition} [F\o lner and van Hove sequences] \phantom{XX}
\begin{itemize}
\item[(a)] A sequence $(F_n)$ of measurable, pre-compact sets of positive measure is called a
\textit{F\o lner sequence}\index{F\o lner~sequence} if
\[
\lim_{n\to\infty} \frac{|F_n \triangle (t+F_n)|}{|F_n|} =0
\]
holds for all $t \in G$.

\item[(b)] A sequence $(A_n)$ of relatively compact open subsets of $G$ is
called a \textit{van Hove}\index{van~Hove~sequence} sequence if, for each compact set $K
\subseteq G$, one has
\[
\lim_{n\to\infty} \frac{|\partial^{K} A_{n}|}{|A_{n}|}  =  0 \,,
 \]
where the \textit{$K$-boundary $\partial^{K} A$}\index{van~Hove~boundary} of a set $A$
is defined as
\[
\partial^{K} A := \bigl( (\overline{A+K}) \cap \overline{A^c} \bigr) \cup
\bigl((\overline{A^c  - K})\cap \overline{A}\, \bigr)
\,,
\]
where we write $A^c$ for the complement $G\setminus A$ of $A$. \exend
\end{itemize}
\end{definition}
For open sets $A$ some of the sets appearing in the definition of the $K$-boundary are automatically closed, and the $K$-boundary takes the  (somewhat simpler, albeit less symmetric) form
\[
\partial^{K} A = \bigl( (\overline{A+K}) \setminus A\bigr) \cup
\bigl((\left(G \backslash A\right) - K)\cap \overline{A}\, \bigr)
\,.
\]

It is not difficult to see that every van Hove sequence is a F\o lner~sequence.
However, not every locally compact Abelian group admits a van Hove or a F\o lner~sequence. A characterization is given in  \cite{Martin2,SS}.

\begin{lemma} \cite[Prop.~B.6]{SS}
   For an LCAG $G$, the following assertions are equivalent:
  \begin{itemize}
    \item[(i)] $G$ is $\sigma$-compact.
    \item[(ii)]  $G$ admits a van Hove sequence.
    \item[(iii)] $G$ admits a F\o lner sequence.    \qed
  \end{itemize}
\end{lemma}

\begin{remark}
If the group is not $\sigma$-compact, it does not admit van Hove or F\o lner sequences. However, it will still admit van Hove and F\o lner nets. This is sufficient to carry out most of the subsequent considerations with the notable exception of the proof of completeness of the spaces $BL^p_\cA (G)$ (see below). We refrain from providing further details.  \exend
\end{remark}

Now, we can talk about the mean of a function. Whenever $f$ belongs to $\mathcal{L}^1_{loc}(G)$  and $\cA$ is a F\o lner
sequence, we define the \textit{mean of $f$  along $\cA$}\index{mean!mean~of~function} by \nomenclature{$M(f)$}{mean of the function $f$}
\[
M_{\cA} (f):= \lim_{n\to\infty} \frac{1}{|A_n|} \int_{A_n} f(t)\, \dd t
\]
if the limit exists. In this case, we also say that \textit{the
mean $M_{\cA}(f)$ exists}. For bounded measurable functions, the existence of the mean along arbitrary van Hove sequences is characterized as follows.

\begin{prop}[Amenability of functions]\label{prop: amenable}
For any bounded and measurable function $f : G\longrightarrow \CC$ and any van Hove sequence $\cA$, the following statements are equivalent:
\begin{itemize}
  \item[(i)] The limit
  \[
  \lim_{n\to\infty} \frac{1}{|A_n|} \int_{s+A_n} f(t)\, \dd t
  \]
exists uniformly in $s\in G$.
  \item[(ii)] For each van Hove sequence $\cB$, the limit
  \[
  \lim_{n\to\infty} \frac{1}{|B_n|} \int_{s+B_n} f(t)\, \dd t
  \]
exists uniformly in $s\in G$ and is independent of the van Hove sequence.
  \item[(iii)]
The limit
\[
\lim_{n\to\infty} \frac{1}{|B_n|}
\int_{B_n} f(t)\, \dd t \]
exists for every van Hove sequence $\cB$.
\end{itemize}
\end{prop}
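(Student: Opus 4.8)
The plan is to establish the cycle (i) $\Rightarrow$ (ii) $\Rightarrow$ (iii) $\Rightarrow$ (i); the implications (ii) $\Rightarrow$ (iii) (set $s=0$) and (ii) $\Rightarrow$ (i) (take $\cB=\cA$) are immediate from the definitions, so the real work sits in (i) $\Rightarrow$ (ii) and (iii) $\Rightarrow$ (i). Throughout I write $M_n(s):=\frac{1}{|A_n|}\int_{s+A_n} f(t)\,\dd t$, and analogously for other sequences. Two standing observations will be used repeatedly. First, any translate $(s+A_n)_n$ of a van Hove sequence is again a van Hove sequence, since $\partial^K(s+A_n)=s+\partial^K A_n$ leaves the defining ratio unchanged. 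Second, for the fixed compact set $K=\overline{A_n}\cup(-\overline{A_n})$ one checks the inclusion $(v+B_m)\triangle B_m\subseteq \partial^K B_m$ for every $v\in\overline{A_n}$, whence $\sup_{v\in \overline{A_n}}\frac{1}{|B_m|}|(v+B_m)\triangle B_m|\leq \frac{|\partial^K B_m|}{|B_m|}\to 0$; this van Hove (Følner type) estimate controls all boundary errors below. As a preliminary remark, applying it to $\cA$ itself gives $|M_n(s+x)-M_n(s)|\le \|f\|_\infty\frac{|(x+A_n)\triangle A_n|}{|A_n|}\to 0$, so whenever $M_n$ converges uniformly its limit is a constant.

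For (i) $\Rightarrow$ (ii), let $\cB=(B_m)$ be an arbitrary van Hove sequence and let $M$ be the constant uniform limit of $M_n$ furnished by (i). The idea is to average one mean against the other. Set
\[
I_{n,m}(s):=\frac{1}{|A_n|\,|B_m|}\int_{A_n}\int_{B_m} f(s+u+v)\,\dd u\,\dd v,
\]
which by Fubini can be read in two ways. Reading the inner integral as $M_n(s+u)$ and averaging over $u\in B_m$ gives $|I_{n,m}(s)-M|\le \frac{1}{|B_m|}\int_{B_m}|M_n(s+u)-M|\,\dd u\le \|M_n-M\|_\infty$, which is $<\eps$ for all $m$ and all $s$ once $n$ is large, by (i). Reading it the other way as an $A_n$-average of the shifted $\cB$-means $s\mapsto \frac{1}{|B_m|}\int_{s+B_m}f$ and invoking the van Hove estimate above yields $\bigl|I_{n,m}(s)-\frac{1}{|B_m|}\int_{s+B_m}f\bigr|\le \|f\|_\infty\frac{|\partial^K B_m|}{|B_m|}$, with $K=\overline{A_n}\cup(-\overline{A_n})$. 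Now fix $n$ so large that the first bound holds, and then send $m\to\infty$ so that the second bound becomes $<\eps$; combining the two gives $\bigl|\frac{1}{|B_m|}\int_{s+B_m}f - M\bigr|<2\eps$ for all large $m$, uniformly in $s$. Hence the $\cB$-mean converges uniformly to the same constant $M$, which is (ii).

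For (iii) $\Rightarrow$ (i), I first argue that all van Hove averages share a common value. Given two van Hove sequences, their interleaving is again van Hove (each tail of the interleaved sequence consists of members of large index from one of the two), so by (iii) its average converges; since the two original sequences are subsequences of it, they must have the same limit. Writing $M$ for this common value and using that each translate $(s+A_n)_n$ is van Hove, I obtain $M_n(s)\to M$ pointwise in $s$. It remains to upgrade this to uniform convergence. Suppose it fails: then there are $\eps_0>0$, indices $n_1<n_2<\cdots$ and points $s_k\in G$ with $|M_{n_k}(s_k)-M|\ge \eps_0$. Put $B_k:=s_k+A_{n_k}$. Since $|\partial^K B_k|/|B_k|=|\partial^K A_{n_k}|/|A_{n_k}|\to 0$, the sequence $(B_k)$ is van Hove, so (iii) forces $\frac{1}{|B_k|}\int_{B_k}f=M_{n_k}(s_k)\to M$, contradicting the choice of $(s_k)$. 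Therefore $M_n\to M$ uniformly, which is (i).

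I expect the main obstacle to be the double-averaging step in (i) $\Rightarrow$ (ii): the two readings of $I_{n,m}$ effectively commute the $n$- and $m$-limits, and one must fix $n$ first (using the uniform convergence from (i)) before sending $m\to\infty$ (using the van Hove property of $\cB$), the crucial quantitative input being that $|(v+B_m)\triangle B_m|/|B_m|$ is small uniformly for $v$ ranging over the compact set $\overline{A_n}$. The construction in (iii) $\Rightarrow$ (i) is by comparison elementary, the only delicate point being the verification that the interleaved and the translated-then-extracted sequences genuinely satisfy the van Hove condition.
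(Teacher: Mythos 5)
Your proof is correct and follows essentially the same route as the paper: your double-averaging (Fubini) argument for (i)$\Rightarrow$(ii) is exactly the content of the paper's Proposition \ref{prop:mother-of-uniform-van-Hove-results} in Appendix D, which the paper cites at this step, and your (iii)$\Rightarrow$(i) is the same contradiction argument via the translated sequence $B_k=s_k+A_{n_k}$ combined with interleaving of van Hove sequences. The only difference is that you write out the averaging step inline rather than isolating it as a separate lemma.
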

\begin{proof}
(i)$\Longrightarrow$(ii): This follows from Proposition~\ref{prop:mother-of-uniform-van-Hove-results} (compare \cite[Thm.~3.1]{Eb} or \cite[Prop.~4.5.6]{MoSt} for $f \in \Cu(G)$).
\smallskip

\noindent (ii)$\Longrightarrow$(iii): This is obvious.

\smallskip

\noindent (iii)$\Longrightarrow$(i): Assume to the contrary that
\[
\lim_{n\to\infty} \frac{1}{|A_n|} \int_{s +A_n} f(t)\, \dd t
\]
does
not exist uniformly in $s\in G$. Note that by (iii) the limit
\[
M_{\cA}(f)=\lim_{n\to\infty} \frac{1}{|A_n|} \int_{A_n} f(t)\, \dd t
\]
exists. Then, there exists some $\eps >0$, an increasing sequence
$(n_k)$ and some $s_k \in G$ such that,
for all $k$, we have
\begin{equation}\label{eq ame}
\bigg| \frac{1}{|A_{n_k}|} \int_{ s_k +A_{n_k}} f(t)\, \dd t -
M_{\cA} (f)\bigg| >\eps \,.
\end{equation}
Next, define $B_k=s_k+A_{n_k}$. Then, $\cB=(B_k)$ is a van Hove
sequence, and hence, by (iii),  $M_{\cB}$
 exists and $|M_{\cB}(f) - M_{\cA}(f)|\geq
\varepsilon$ holds by \eqref{eq ame}. On the other hand, we can
consider the van Hove sequence $\mathcal{C} = (C_n)$ with
\[
C_n = \begin{cases}
A_{m} &\mbox{ if } n=2m \\
B_{m} &\mbox{ if } n=2m+1 \,.
\end{cases}
\]
By (iii)  the
limit of $f$ along this sequence exists giving the contradiction
$M_{\cA}(f) = M_{\cB}(f)$.
\end{proof}

A bounded and measurable function $f: G\longrightarrow\CC$ satisfying
one of the equivalent conditions of Proposition~\ref{prop: amenable} is
called \textit{amenable}\index{amenable~function}\footnote{Amenability is typically defined for $f \in \Cu(G)$ using F\o lner sequences. For such $f$, Proposition~\ref{prop: amenable} holds for F\o lner sequences. As each van Hove sequence is a F\o lner sequence, condition (ii) tells us that for uniformly continuous bounded functions, our seemingly weaker condition is actually equivalent to the standard one.}.

\section[Diffraction theory]{Diffraction theory and its key players:  Eberlein convolution,  autocorrelation and Fourier--Bohr coefficients} \label{sub:autocorrelation} In this section, we
introduce the autocorrelation and study some of its properties as
well as its Fourier transform. These are the main objects of
interest in this book. The autocorrelation arises from a measure by taking the  Eberlein convolution with its reflection. In this sense the reflected Eberlein convolution is at the heart of diffraction theory and we start with a discussion of it.

\subsection{Eberlein convolution}

Whenever $\cA$ is a F\o lner sequence on $G$, the \textit{Eberlein
convolution}\index{Eberlein~convolution!Eberlein~convolution~of~functions} $f\circledast_{\cA} g$ (along $\cA$)\nomenclature{$f\circledast_{\cA} g$}{Eberlein
convolution of functions}  of measurable
functions $f,g : G\longrightarrow \CC$ is defined as the function
\begin{align*}
f\circledast_{\cA} g &: G\longrightarrow \CC\ \\
f\circledast_{\cA} g(t) \, &:= \, M_{\cA}( f \tau_{t}g^{\dagger}) = \lim_{n\to\infty} \frac{1}{|A_n|} \int_{A_n} f(s)\, g(t - s)\, \dd s
\end{align*}
if the integral and the limit in question exist for all $t\in G$
\cite{Eb,MoSt}.

Similarly, when $\cA$ is a van Hove sequence, the \textit{Eberlein convolution} $\mu
\circledast_{\cA} \nu$ \index{Eberlein~convolution!Eberlein~convolution~of~measures} of measures $\mu$ and $\nu$\nomenclature{$\mu\circledast_{\cA} \nu$}{Eberlein
convolution of measures}  on $G$ is
defined as the vague limit
\[
\mu \circledast_{\cA} \nu = \lim_{n\to\infty} \frac{1}{|A_n|} (\mu|_{A_n}
*\nu|_{-A_n})
\]
if this limit exists. Here,  $\mu|_{A_n}$  and $\mu|_{-A_n}$ are the
finite measures arising as the restrictions of $\mu$ to the compact
sets  $A_n$ and $-A_n$ respectively.

In the spirit of \cite{LSS4}, we also introduce the following definition.

\begin{definition}[Reflected Eberlein convolution]
Let  $\cA$ be a van Hove sequence.

Let measurable
functions $f,g : G\longrightarrow \CC$ be given. If  $f \circledast_{\cA} \tilde{g}$ exists we
call it the
 \textit{reflected Eberlein
convolution}\index{Eberlein~convolution!reflected~Eberlein~convolution~of~functions} of $f$ and $g$. We then also say that the reflected Eberlein  convolution of $f$ and $g$ exists and use the notation
\[
\lb f,g \rb_{\cA}(t):= f \circledast_{\cA} \tilde{g}(t) = \lim_{n\to\infty} \frac{1}{|A_n|} \int_{A_n} f(s)\, \overline{ g(s - t)} \, \dd s \,.
\]\nomenclature{$\lb f,g \rb_{\cA}$}{reflected Eberlein
convolution of functions}
Similarly, we say that the \textit{reflected Eberlein
convolution}\index{Eberlein~convolution!reflected~Eberlein~convolution~of~measures} $\lb \mu,\nu \rb_{\cA}$ of the measures $\mu, \nu$ exists with respect to $\cA$, if $\mu \circledast_{\cA} \tilde{\nu}$ exists. In this case, we use the notation
\[
\lb \mu,\nu \rb_{\cA}:= \mu \circledast_{\cA} \tilde{\nu}  \,.  \tag*{$ \Diamond $}
\]\nomenclature{$\lb \mu,\nu \rb_{\cA}$}{reflected Eberlein
convolution of measures}
\end{definition}

The following result will be important later.

\begin{prop}[Characterization of the existence of the reflected Eberlein convolution] \label{prop-compute-autocorrelation}
Let $\mu$ and $\nu$ be translation-bounded measures on $G$. Let
$\cA$ be a van Hove sequence. Then, the following assertions are
equivalent:
\begin{itemize}
\item[(i)] The reflected Eberlein convolution $\lb \mu, \nu\rb_{\cA}$ exists.
\item[(ii)]  For all  $\varphi, \psi \in \Cc (G)$, the mean $M_{\cA}\big((\mu * \varphi) \cdot \overline{(\nu *\psi)}\big)$ exists.
\end{itemize}
If (i) and (ii) hold, then
\[
M_{\cA}\big((\mu * \varphi) \cdot \overline{(\nu *\psi)}\big)  = \big(\lb \mu, \nu \rb_{\cA}*\varphi*\widetilde{\psi}\big)(0)
\]
holds for all $\varphi, \psi \in \Cc(G)$.
\end{prop}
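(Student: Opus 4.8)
The plan is to reduce both equivalences together with the final formula to a single approximate identity that, for each fixed $n$, compares the truncated mean with a convolution of the finite measures $\mu|_{A_n}$ and $\widetilde{\nu}|_{-A_n}$. Writing $\gamma_n := \frac{1}{|A_n|}\bigl(\mu|_{A_n}*\widetilde{\nu}|_{-A_n}\bigr)$, a direct computation (unfolding the convolutions of measures against $\varphi*\widetilde{\psi}\in\Cc(G)$, using $(\rho*f)(0)=\rho(f^{\dagger})$, and noting that $\widetilde{\nu}$ is the push-forward of $\overline{\nu}$ under $s\mapsto -s$) gives
\begin{displaymath}
|A_n|\,(\gamma_n*\varphi*\widetilde{\psi})(0) = \int_{A_n}\!\int_{A_n} (\varphi*\widetilde{\psi})(z-x)\, \dd\mu(x)\, \dd\overline{\nu}(z).
\end{displaymath}
On the other hand, inserting the definitions of $\mu*\varphi$ and $\nu*\psi$ and interchanging the order of integration yields
\begin{displaymath}
\int_{A_n} (\mu*\varphi)(s)\,\overline{(\nu*\psi)(s)}\, \dd s = \int_G\!\int_G \Bigl(\int_{A_n}\varphi(s-x)\overline{\psi(s-y)}\,\dd s\Bigr)\, \dd\mu(x)\, \dd\overline{\nu}(y),
\end{displaymath}
where the inner $s$-integral equals $(\varphi*\widetilde{\psi})(y-x)$ as soon as $s$ is allowed to range over all of $G$.

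The first key step is to estimate the difference of these two expressions. Since $\varphi$ and $\psi$ have compact support, in the second display only $x$ and $y$ lying in a fixed compact neighbourhood of $A_n$ contribute, and for $x,y$ well inside $A_n$ the $s$-integral over $A_n$ already equals the full integral $(\varphi*\widetilde{\psi})(y-x)$. Consequently, passing from $\mu,\nu$ to their truncations $\mu|_{A_n},\nu|_{A_n}$ and from the $s$-integral over $A_n$ to the full convolution alters the value only through contributions localised in a compact neighbourhood $K$ (depending only on $\supp(\varphi)$ and $\supp(\psi)$) of the boundary. Bounding the total variation of $\mu$ and $\nu$ over such a boundary layer by $C\,|\partial^{K}A_n|$, with $C$ depending only on $\|\mu\|_V$, $\|\nu\|_V$, $\varphi$ and $\psi$, one arrives at
\begin{displaymath}
\Bigl| \int_{A_n}(\mu*\varphi)\,\overline{(\nu*\psi)}\, \dd s - |A_n|\,(\gamma_n*\varphi*\widetilde{\psi})(0)\Bigr| \leq C\, |\partial^{K} A_n|.
\end{displaymath}
Dividing by $|A_n|$ and invoking the van Hove property, the error vanishes as $n\to\infty$; thus $M_{\cA}(\mu*\varphi\cdot\overline{\nu*\psi})$ exists if and only if $(\gamma_n*\varphi*\widetilde{\psi})(0)$ converges, and the two limits coincide. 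This estimate is the \emph{technical heart} of the proof and is where I expect the main work to lie.

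Given this, the implication (i) $\Rightarrow$ (ii) and the formula are immediate: if $\gamma_n \to \gamma := \mu\circledast_{\cA}\widetilde{\nu}$ vaguely, then $(\gamma_n*\varphi*\widetilde{\psi})(0)=\gamma_n\bigl((\varphi*\widetilde{\psi})^{\dagger}\bigr) \to \gamma\bigl((\varphi*\widetilde{\psi})^{\dagger}\bigr)=(\gamma*\varphi*\widetilde{\psi})(0)$, because $(\varphi*\widetilde{\psi})^{\dagger}\in\Cc(G)$. Combined with the approximate identity this gives existence of the mean and the stated equality $M_{\cA}(\mu*\varphi\cdot\overline{\nu*\psi}) = \bigl((\mu\circledast_{\cA}\widetilde{\nu})*\varphi*\widetilde{\psi}\bigr)(0)$.

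For the converse (ii) $\Rightarrow$ (i) I would show that $\gamma_n$ converges vaguely, for which two ingredients are needed. First, the $\gamma_n$ are uniformly translation bounded: the standard bound on the total variation of $\mu|_{A_n}*\widetilde{\nu}|_{-A_n}$ over a fixed relatively compact set, divided by $|A_n|$, yields $\sup_n\|\gamma_n\|_V<\infty$ from translation boundedness of $\mu$ and $\nu$. Second, by the approximate identity together with (ii), the numbers $\gamma_n(h)$ converge for every $h$ in the linear span of $\{(\varphi*\widetilde{\psi})^{\dagger}:\varphi,\psi\in\Cc(G)\}$, which is exactly $\Cc(G)*\Cc(G)$; using an approximate unit this span is dense in $(\Cc(G),\|\cdot\|_\infty)$ with uniform control on supports. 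A routine equicontinuity argument then upgrades convergence on this dense subspace to convergence of $\gamma_n(g)$ for every $g\in\Cc(G)$: approximate $g$ uniformly by some $h\in\Cc(G)*\Cc(G)$ with support in a fixed slightly larger compact set, and control $|\gamma_n(g-h)|$ uniformly in $n$ via the uniform translation bound. Hence $\gamma_n$ has a vague limit, i.e. $\mu\circledast_{\cA}\widetilde{\nu}$ exists, which completes the proof.
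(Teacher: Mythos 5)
Your argument is correct and follows essentially the same route as the paper: you work with the same normalised truncated convolutions $\gamma_n=\frac{1}{|A_n|}\mu|_{A_n}*\widetilde{\nu}|_{-A_n}$ (the paper's $m_n$), the same van Hove boundary estimate identifying $\frac{1}{|A_n|}\int_{A_n}(\mu*\varphi)\overline{(\nu*\psi)}$ with $(\gamma_n*\varphi*\widetilde{\psi})(0)$ up to an error $O(|\partial^K A_n|/|A_n|)$, and the same approximate-identity plus uniform-translation-boundedness argument to pass from convergence on $\Cc(G)*\Cc(G)$ to vague convergence. The only difference is cosmetic: you sketch the boundary estimate directly, whereas the paper imports it from the proof of \cite[Lem.~7.1]{LS2}.
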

\begin{proof} This is essentially contained in (the proof of)
\cite[Lem.~7.1]{LS2}. Specifically, this lemma states that (i)
implies (ii) and that the last statement holds. Moreover, the proof
allows one to conclude  that the reverse implication holds as well:
For $n\in\NN$, we define the measure $m_n$ on $G$ by
\[
m_n:=\frac{1}{|A_n|} \mu|_{A_n} \ast \widetilde{\nu}|_{-A_n}
\]
and
the map $M_n : \Cc (G)\longrightarrow \CC$ by
\[
M_n ( \varphi) =
\frac{1}{|A_n}| \int_{A_n} \varphi(t)\, \dd t \,.
\]
By (i), the sequence $\big(m_n (\varphi)\big)$
converges for all $\varphi \in \Cc(G)$. Of course, this is
equivalent to the convergence of the sequence  $\big((m_n\ast \varphi)(0)\big)$ for all $\varphi \in\Cc (G)$. In fact, (i) is actually equivalent to the convergence of the sequence $\big((m_n*\varphi*\widetilde{\psi})(0)\big)$ for all $\varphi,\psi \in \Cc (G)$. To see this, consider an arbitrary $\varphi\in \Cc (G)$. Let $K\subseteq  G$ be compact with $\varphi$ vanishing outside of $K$. Choose an open, relatively compact neighborhood $U$ of $0$ in $G$. Then, for any $\varepsilon >0$, we can find a $\psi \in \Cc (G)$ supported in $U$ with $\|\varphi - \varphi \ast \widetilde{\psi}\|_\infty < \varepsilon$. This gives
\[
|m_n (\varphi) - m_n (\varphi\ast \widetilde{\psi})| \leq |m_n| (K
+ \overline{U})\, \|\varphi - \varphi \ast\widetilde{\psi}\|_\infty
\leq \varepsilon\, |m_n| (K + \overline{U}) \,.
\]
Due to the translation
boundedness of $\mu$ and $\nu$, the sequence $\left(|m_n|
(K + \overline{U})\right)$ can be
seen to be  bounded, compare
\cite[Lem.~1.1]{Martin2}, and the desired statement follows.

Now, the proof of \cite[Lem.~7.1]{LS2} contains the line
\[
\lim_{n\to\infty} \left|\big(m_n \ast \varphi \ast \widetilde{\psi}\big) (0)
- M_n \big((\mu \ast \varphi) \cdot \overline{(\nu \ast \psi)} \big)\right| = 0 \,.
\]
This shows that
(ii) is equivalent to the convergence of $\big((m_n \ast \varphi \ast
\widetilde{\psi})(0)\big)$ for all $\varphi,\psi \in\Cc (G)$, which is
equivalent to (i) by our considerations above. This finishes the
proof.
\end{proof}

As a consequence, we get the following results which we will often use.

\begin{coro}\label{coro:ref-ebe}
Let $\mu$ and $\nu$ be translation bounded measures on $G$, and let
$\cA$ be a van Hove sequence. Then, $\lb \mu, \nu\rb_{\cA}$ exists if and only if $\lb \mu*\varphi, \nu*\psi \rb_{\cA}$ exists for all $\varphi, \psi \in \Cc(G)$.

Moreover, in this case, we have
\begin{align*}
\big(\lb \mu, \nu\rb_{\cA}*\varphi*\widetilde{\psi}\big)(t)
\,&=\, \lb \mu*\varphi, \nu*\psi \rb_{\cA}(t) \\
\,&=\, \lim_{n\to\infty} \frac{1}{|A_n|} \int_{A_n} (\mu*\varphi)(s)\, \overline{(\nu*\psi)(s-t)}\, \dd s
\end{align*}
for all $\varphi, \psi \in \Cc(G)$ and all $t \in G$. \qed
\end{coro}

\bigskip

Proposition~\ref{prop-compute-autocorrelation} implies that (reflected) Eberlein convolution of
functions in $\Cu (G)$ agrees with the (reflected) Eberlein convolution of the
corresponding measures.

\begin{prop}\label{prop:Eberlein-functions}
Let $\cA$ be a van Hove sequence. For any $f,g\in \Cu (G)$, the following assertions are equivalent:
\begin{itemize}
\item[(i)] The Eberlein convolution $f\circledast_\cA g$ exists.
\item[(ii)] The Eberlein convolution $(f\theta_G)\circledast_\cA (g\theta_G)
$ exists.
\end{itemize}
If (i) and (ii) hold,  we have
\[
(f\theta_G)\circledast_\cA
(g\theta_G)=(f\circledast_\cA g)\,  \theta_G \ ,
\]
and $f\circledast_{\cA} g$ belongs to $\Cu (G)$.
\end{prop}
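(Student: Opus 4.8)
The plan is to deduce the statement from Proposition \ref{prop-compute-autocorrelation} by rewriting both Eberlein convolutions as means of products. First I would set $\mu := f\theta_G$ and $\nu := \widetilde{g}\,\theta_G$; these are translation bounded since $f,g$ are bounded, and because $\widetilde{\widetilde{g}}=g$ one has $\widetilde{\nu}=g\,\theta_G$, so that $\mu\circledast_{\cA}\widetilde{\nu}=(f\theta_G)\circledast_{\cA}(g\theta_G)$. Proposition \ref{prop-compute-autocorrelation} then identifies assertion (ii) with the existence of the means $M_{\cA}\big((f*\varphi)\,\overline{\widetilde{g}*\psi}\big)$ for all $\varphi,\psi\in\Cc(G)$, and records their value as $\big((\mu\circledast_{\cA}\widetilde{\nu})*\varphi*\widetilde{\psi}\big)(0)$. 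Throughout I would use the elementary identities $(f\theta_G)*\varphi=f*\varphi$ and $\overline{\widetilde{g}}=g^{\dagger}$ together with $g(t-\cdot)=\tau_t g^{\dagger}$, so that assertion (i) says precisely that $M_{\cA}(f\cdot\tau_t g^{\dagger})=(f\circledast_{\cA}g)(t)$ exists for every $t$. The whole proof thus reduces to linking these pointwise means with the product means above.

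For the implication (i)$\Rightarrow$(ii) I would expand the product pointwise as a double integral,
\[
(f*\varphi)(s)\,\overline{(\widetilde{g}*\psi)(s)}=\int_G\int_G \varphi(r)\,\overline{\psi(r')}\,(\tau_r f)(s)\,(\tau_{r'} g^{\dagger})(s)\,\dd r\,\dd r',
\]
interchange the mean with the compactly supported integral, and invoke translation invariance of the mean along a van Hove sequence to obtain $M_{\cA}\big((\tau_r f)(\tau_{r'} g^{\dagger})\big)=(f\circledast_{\cA}g)(r'-r)$, which exists by (i). This yields the product means, with explicit value $\int_G\int_G\varphi(r)\overline{\psi(r')}(f\circledast_{\cA}g)(r'-r)\,\dd r\,\dd r'$; comparing with the value recorded from Proposition \ref{prop-compute-autocorrelation} identifies the limit measure and gives $(f\theta_G)\circledast_{\cA}(g\theta_G)=(f\circledast_{\cA}g)\,\theta_G$.

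For (ii)$\Rightarrow$(i), fix $t$ and approximate uniformly: with approximate units $(\varphi_\alpha)$ and $(\psi_\beta)$ one has $f*\varphi_\alpha\to f$ and $\overline{\widetilde{g}*\tau_t\psi_\beta}=\tau_t\,\overline{\widetilde{g}*\psi_\beta}\to\tau_t g^{\dagger}$ uniformly. Each product $(f*\varphi_\alpha)\,\overline{\widetilde{g}*\tau_t\psi_\beta}$ has an existing mean by (ii) and Proposition \ref{prop-compute-autocorrelation}. Since the bounded functions with existing mean form a $\|\cdot\|_\infty$-closed subspace on which $M_{\cA}$ is a contraction, the uniform limit $f\cdot\tau_t g^{\dagger}$ also has an existing mean, that is, $(f\circledast_{\cA}g)(t)$ exists. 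Finally $f\circledast_{\cA}g\in\Cu(G)$ follows from the uniform bound $|(f\circledast_{\cA}g)(t)|\le\|f\|_\infty\|g\|_\infty$ and the equicontinuity estimate $|(f\circledast_{\cA}g)(t)-(f\circledast_{\cA}g)(t')|\le\|f\|_\infty\,\|\tau_{t-t'}g-g\|_\infty$, whose right-hand side is small by uniform continuity of $g$. I expect the main obstacle to be precisely this last implication: upgrading the measure-level statement, which only tests against $\Cc(G)$, to genuine pointwise values of the function convolution. The two technical points to handle with care are the interchange of $M_{\cA}$ with the integral in the first implication and with the uniform (approximate-unit) limit in the second; both rest on the van Hove property — translation invariance and sup-norm continuity of the mean — rather than on any finer structure.
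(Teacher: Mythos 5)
Your argument is correct and follows essentially the same route as the paper: both reduce the statement to Proposition \ref{prop-compute-autocorrelation} via $(f\theta_G)*\varphi = f*\varphi$ and then use uniform continuity of $f$ and $g$ to pass between the smeared means $M_{\cA}(f*\varphi\cdot \overline{\widetilde{g}*\psi})$ and the pointwise means $M_{\cA}(f\cdot g(t-\cdot))$. The paper leaves that last equivalence as ``easily seen''; your dominated-convergence argument for one direction and the approximate-unit argument (together with closedness of the amenable bounded functions under uniform limits) for the other are exactly the details it omits.
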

\begin{proof}
As $(f\theta_G)*\varphi = f*\varphi$ and $(g\theta_G)*\psi = g*\psi$ for all $\varphi,\psi \in \Cc (G)$, Proposition~\ref{prop-compute-autocorrelation} easily gives that (ii) is equivalent to the existence of the means $M_{\cA}
\big(f\ast \varphi \cdot g \ast \psi (- \cdot)\big)$ for all $\varphi,\psi\in\Cc (G)$. As $f,g$ are uniformly continuous, this can easily be seen to be equivalent to the existence of $f\circledast_{\cA} g$.
Moreover, the uniform continuity of $f,g$ gives that $t\mapsto
M_{\cA} \big(f \cdot g (t- \cdot)\big)$ is uniformly continuous (if it exists at all).
\end{proof}

Replacing $g$ by $\tilde{g}$, we immediately obtain the following consequence.

\begin{coro}
Let $\cA$ be a van Hove sequence, and let $f,g\in \Cu (G)$ be given.
Then, the following assertions are equivalent:
\begin{itemize}
\item[(i)] The reflected Eberlein convolution $\lb f,g \rb_{\cA}$ exists.
\item[(ii)] The reflected Eberlein convolution $\lb f\theta_G, g\theta_G \rb_{\cA}$ exists.
\end{itemize}
If (i) and (ii) hold,  we have
\[
\lb f\theta_G, g\theta_G \rb_{\cA}= \lb f,g \rb_{\cA} \theta_G \ ,
\]
and
$\lb f,g \rb_{\cA} \in \Cu (G)$. \qed
\end{coro}

\subsection{Autocorrelation and diffraction}

\begin{definition}[Autocorrelation]
Let $\mu$ be a measure on $G$, and let $\cA =(A_n)$ be a van Hove
sequence. If the reflected Eberlein convolution $\lb \mu ,
\mu \rb_\cA$ exists, it is called the \textit{autocorrelation of $\mu$ along
$\cA$}\index{autocorrelation!autocorrelation~measure} and denoted by $\gamma_\cA$ or just $\gamma$  \nomenclature{$\gamma$}{autocorrelation measure} if $\cA$ is clear from the context.       \exend
\end{definition}

\begin{remark}
For translation bounded measures in second-countable groups $G$, given any van Hove sequence $(A_n)$, the autocorrelation always exists along a subsequence of $(A_n)$ \cite[Theorem~4.15]{LSS3}.        \exend
\end{remark}

The following result follows immediately from Corollary~\ref{coro:ref-ebe} and the polarisation identity.

\begin{coro}[Existence of the autocorrelation]
Let $\mu$ be a translation bounded measures on $G$, and let $\cA$ be a van Hove sequence. Then, the autocorrelation $\gamma_{\mu}$ exists if and only if, for all $\varphi \in \Cc(G)$ and all $t \in G$, the following limit exist:
\[
 \lim_{n\to\infty} \frac{1}{|A_n|} \int_{A_n} (\mu*\varphi)(s) \overline{(\mu*\varphi)(s-t)}\, \dd s \,.
\]
Moreover, in this case,
\[
 \lim_{n\to\infty} \frac{1}{|A_n|} \int_{A_n} (\mu*\varphi)(s) \overline{(\mu*\varphi)(s-t)}\, \dd s = \big(\gamma_{\mu}*\varphi*\widetilde{\varphi}\big)(t) \,.       \tag*{$\qed$}
\]
\end{coro}

By the  standard argument of `mixing' van Hove sequences (compare the proof of Proposition \ref{prop: amenable}), we can see that the autocorrelation of a translation bounded measure must be independent of the van Hove sequence if it exists along each
van Hove sequence. We then say that the translation bounded measure has a  \textit{unique autocorrelation}.

It is easy to see that, for any finite measure $\nu$, the measure
$\nu*\widetilde{\nu}$ is positive definite, and that vague limits of
positive definite measures are positive definite \cite{MoSt}.
As the Fourier transform of any positive definite measure
exists (compare discussion in Section \ref{subsection-basic-setting}), this gives the following result.

\begin{coro}
Let $\mu$ be a measure, and let $\cA$ be a van Hove sequence along which the autocorrelation $\gamma$ of $\mu$ exists. Then, $\gamma$ is positive
definite. In particular, its Fourier transform  $\widehat{\gamma}$
exists and is a positive measure. \qed
\end{coro}

In the situation of the corollary, we refer to the positive measure $\widehat{\gamma}$ as
the \textit{diffraction}\index{diffraction!diffraction~measure} or \textit{diffraction measure} of $\mu$ (with respect to $\cA$).

\medskip
Let us now recall the concept of Fourier--Bohr coefficients.

\begin{definition}[Fourier--Bohr coefficient of function]
Let a function $f \in \mathcal{L}^1_{loc}(G)$, a F\o lner sequence $\cA$ and a character $\chi \in \widehat{G}$ be given. If the limit
\[
\lim_{n\to\infty} \frac{1}{|A_n|} \int_{A_n}
\overline{\chi(t)} f(t)\, \dd t \,
\]
exists we call it the
 \textit{Fourier--Bohr coefficient}\index{Fourier--Bohr~coefficient!Fourier--Bohr~coefficient~of~function}  of $f$ at $\chi$. We then also say that the Fourier--Bohr coefficient of $f$ at $\chi$ exists and denote the limit by
$a_{\chi}^\cA(f)$.  \nomenclature{$a_{\chi}^\cA(f)$}{Fourier--Bohr coefficient of $f$ at $\chi$}

If $(A_n)$ can be replaced by $A_n + s_n$ for any sequence $(s_n)$ in $G$, we say that the \textit{Fourier--Bohr coefficient of $f$ exists uniformly on $G$ (along $\cA$)}.       \exend
\end{definition}

\begin{definition}[Fourier--Bohr coefficient of measure]
Given a measure $\mu$, a van Hove sequence $\cA$ and a character $\chi \in \widehat{G}$, we say that the \textit{Fourier--Bohr
coefficient}\index{Fourier--Bohr~coefficient!Fourier--Bohr~coefficient~of~measure} $a_{\chi}^\cA(\mu)$ of the measure  $\mu$ on $G$  exists at $\chi \in \widehat{G}$
if the following limit exists: \nomenclature{$a_{\chi}^\cA(\mu)$}{Fourier--Bohr coefficient of $\mu$ at $\chi$}
\[
a_{\chi}^{\cA}(\mu)=\lim_{n\to\infty} \frac{1}{|A_n|} \int_{A_n}
\overline{\chi(t)}\, \dd \mu(t) \,.
\]
If $(A_n)$ can be replaced by $A_n + s_n$ for any  sequence $(s_n)$ in $G$, we say that the \textit{Fourier--Bohr coefficient of $\mu$ exists uniformly on $G$ (along $\cA$)}.      \exend
\end{definition}

We complete the section by discussing the connection between the
Fourier--Bohr coefficients of a measure $\mu \in \cM^\infty(G)$ and
the Fourier--Bohr coefficients of $\mu*\varphi$ for $\varphi \in
\Cc(G)$. We first need the following Lemma.

\begin{lemma}
Let $\cA$ be a van Hove sequence, $\mu \in \cM^\infty(G), \varphi \in \Cc(G)$ and $\chi \in \widehat{G}$. Set $K := \supp(\varphi)$. Then, for all $s \in G$, we have
\begin{displaymath}
\left| \left( \int_{s+A_n} (\varphi* \mu)(t)\, \overline{\chi(t)}\, \dd t
\right) - \big( \widehat{\varphi}(\{ \chi \}) \big)  \int_{s+A_n}
\overline{\chi(t)}\, \dd  \mu(t) \right| \leq \bigl\| |\varphi|* |\mu|
\bigr\|_\infty  |\partial^K(A_n)| \,.
\end{displaymath}
\end{lemma}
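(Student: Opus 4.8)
Denote by $D$ the left-hand side of the asserted inequality and write $A:=s+A_n$.

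The plan is to rewrite $D$ as a single Haar integral over $G$ whose integrand is supported on $\partial^{K}A$. Expanding the convolution as $(\varphi*\mu)(r)=\int_G \varphi(r-t)\,\dd\mu(t)$ turns the first term of $D$ into $\int_G \mathbbm{1}_{A}(r)\,\overline{\chi(r)}\int_G \varphi(r-t)\,\dd\mu(t)\,\dd r$. For the second term I would write $\widehat{\varphi}(\chi)=\int_G \varphi(u)\,\overline{\chi(u)}\,\dd u$, substitute $u=r-t$, and use multiplicativity of the character in the form $\overline{\chi(r-t)}\,\overline{\chi(t)}=\overline{\chi(r)}$ to obtain $\widehat{\varphi}(\chi)\int_{A}\overline{\chi(t)}\,\dd\mu(t)=\int_G \overline{\chi(r)}\int_G \mathbbm{1}_{A}(t)\,\varphi(r-t)\,\dd\mu(t)\,\dd r$. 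Both terms thus carry the common factor $\overline{\chi(r)}$ and differ only in whether the cut-off $A$ constrains the outer variable $r$ or the measure variable $t$, so subtraction yields the key identity
\[
D=\int_G \overline{\chi(r)}\Big(\int_G \big[\mathbbm{1}_{A}(r)-\mathbbm{1}_{A}(t)\big]\,\varphi(r-t)\,\dd\mu(t)\Big)\,\dd r .
\]
The interchanges of integration used here are legitimate because $\varphi$ has compact support and $\mu$ is translation bounded, so the relevant double integrals are absolutely convergent over a fixed compact set.

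From this identity the bound follows from two elementary observations together with a geometric one. Since $|\overline{\chi(r)}|=1$, the triangle inequality gives $|D|\le\int_G\int_G \big|\mathbbm{1}_{A}(r)-\mathbbm{1}_{A}(t)\big|\,|\varphi(r-t)|\,\dd|\mu|(t)\,\dd r$, and for each fixed $r$ the inner integral is at most $\int_G |\varphi(r-t)|\,\dd|\mu|(t)=(|\varphi|*|\mu|)(r)\le \bigl\| |\varphi|* |\mu| \bigr\|_\infty$. The decisive point is that this inner integral vanishes unless $r\in\partial^{K}A$: if $r\notin A$ the factor $\mathbbm{1}_{A}(r)-\mathbbm{1}_{A}(t)$ is nonzero only for $t\in A$, and $\varphi(r-t)\neq0$ forces $r-t\in K=\supp(\varphi)$, hence $r\in(A+K)\setminus A\subseteq\overline{A+K}\setminus A$, the first constituent of $\partial^{K}A$; while if $r\in A$ one needs $t\notin A$ with $r-t\in K$, which places $r$ in the $\overline{A}$-component of $\partial^{K}A$. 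Restricting the outer integral to $\partial^{K}A$ therefore gives $|D|\le \bigl\| |\varphi|* |\mu| \bigr\|_\infty\,|\partial^{K}A|$, and $|\partial^{K}(s+A_n)|=|\partial^{K}A_n|$ by translation invariance of $\theta_G$, which is exactly the claim.

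The step I expect to be the main obstacle is the geometric identification of the support of the integrand with $\partial^{K}A$: one must verify that the two ways in which the indicator difference can fail to vanish---mass of $\mu$ sitting just outside $A$ but reached from inside through $\varphi$, and mass just inside $A$ reached from outside---match precisely the two sets appearing in the definition of the $K$-boundary, with particular care about the direction of the shift by $K=\supp(\varphi)$ in the second case. The remaining ingredients (the two Fubini interchanges, the pointwise domination by $\bigl\| |\varphi|* |\mu| \bigr\|_\infty$, and the translation invariance of Haar measure) are routine.
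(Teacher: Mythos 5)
Your proof is correct and follows essentially the same route as the paper's: both rewrite the difference, via Fubini and the multiplicativity of $\chi$, as a double integral against $\bigl(1_{A}(r)-1_{A}(t)\bigr)\varphi(r-t)\overline{\chi(r)}$ with $A=s+A_n$, observe that the integrand is supported in $r$ on the $K$-boundary of $A$, and then bound by $\bigl\||\varphi|*|\mu|\bigr\|_\infty\,|\partial^K(A_n)|$ using translation invariance. The one delicate point you flag --- matching your second case to the $\bigl((G\setminus A)-K\bigr)\cap\overline{A}$ component --- actually lands in $(G\setminus A)+K$ rather than $(G\setminus A)-K$, a harmless orientation mismatch (repaired by replacing $K$ with $K\cup(-K)$) that is present in the paper's own argument as well.
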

\begin{proof}
By a standard application of Fubini's theorem, we have
\begin{align*}
D:=&\left| \left( \int_{s +A_n} (\varphi* \mu)(z)\, \overline{\chi(z)}\, \dd z \right) - \left( \widehat{\varphi}(\{ \chi \}) \right)  \int_{s+A_n} \overline{\chi(t)}\, \dd  \mu(t) \right|   \\
&=\left| \int_{G}\int_G \big( (1_{x+A_n}(r)-1_{x+A_n}(t))\,
\varphi(r-t) \big)\overline{\chi(r)}\,  \dd r \,\dd  \mu(t) \right| \,.
\end{align*}
A simple  computation shows that $(1_{s+A_n}(z)-1_{s+A_n}(t)) \,
\varphi(r-t)=0$ unless $r \in G$ satisfies $r\in
\partial^{K}(s+A_n)=s+\partial^{K}(A_n)$. Therefore, we obtain
\[
D \leq \int_{G}\int_{s+\partial^{K}(A_n)} | \varphi(r-t)
|\,  \dd r\, \dd |\mu | (t)\leq  \bigl\| |\varphi|*
|\mu| \bigr\|_\infty |\partial^{K}(A_n)| \,.
\]
This finishes the proof.
\end{proof}

We note an immediate consequence of the previous lemma.

\begin{coro}\label{FB measure relations}
Let $\cA$ be a van Hove sequence, $\mu \in \cM^\infty(G)$ and $\chi \in
\widehat{G}$ be given.
\begin{itemize}
  \item [(a)] If $\varphi \in \Cc(G)$ is a function such that $\widehat{\varphi}(\chi) \neq 0$,
and the Fourier--Bohr coefficient $a_{\chi}^{\cA}(\mu*\varphi)$
exists (uniformly on $G$), then the Fourier--Bohr coefficient
$a_{\chi}^{\cA}(\mu)$ exists (uniformly on $G$).
  \item [(b)]
If the Fourier--Bohr coefficient $a_{\chi}^{\cA}(\mu)$ exists
(uniformly on $G$), then for all $\varphi \in \Cc(G)$, the
Fourier--Bohr coefficient $ a_{\chi}^{\cA}(\mu*\varphi)$ exists
(uniformly on $G$) and satisfies the identity
\[
a_{\chi}^{\cA}(\mu*\varphi)=\widehat{\varphi}(\chi)
a_{\chi}^{\cA}(\mu) \,.       \tag*{$\qed$}
\]
\end{itemize}
\end{coro}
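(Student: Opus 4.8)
The plan is to take the inequality from the preceding lemma, divide through by $|A_n|$, and let $n\to\infty$, arguing that the right-hand side becomes negligible for two independent reasons. First I would record that, since $\mu$ is translation bounded and $\varphi\in\Cc(G)$, the function $|\varphi|*|\mu|$ lies in $\Cu(G)$ and is in particular bounded, so the constant $\bigl\||\varphi|*|\mu|\bigr\|_\infty$ is finite and, crucially, independent of the shift. Second, because $\cA$ is a van Hove sequence and $K=\supp(\varphi)$ is compact, $|\partial^{K}(A_n)|/|A_n|\to 0$. Dividing the lemma's estimate by $|A_n|$ then yields
\[
\sup_{s\in G}\left|\frac{1}{|A_n|}\int_{s+A_n}(\mu*\varphi)(t)\,\overline{\chi(t)}\,\dd t \;-\; \widehat{\varphi}(\chi)\,\frac{1}{|A_n|}\int_{s+A_n}\overline{\chi(t)}\,\dd\mu(t)\right|\longrightarrow 0,
\]
where I have used $\mu*\varphi=\varphi*\mu$ so that the function appearing in the lemma is exactly the one whose Fourier--Bohr coefficient is named in the corollary, and where the supremum over $s$ is legitimate precisely because the error bound does not depend on $s$.

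Given this uniform asymptotic identification of the two averages, both parts are pure bookkeeping. For (b), if $a_{\chi}^{\cA}(\mu)$ exists uniformly on $G$, then $\tfrac{1}{|A_n|}\int_{s+A_n}\overline{\chi(t)}\,\dd\mu(t)\to a_{\chi}^{\cA}(\mu)$ uniformly in $s$; combining this with the display shows that $\tfrac{1}{|A_n|}\int_{s+A_n}(\mu*\varphi)(t)\,\overline{\chi(t)}\,\dd t$ converges uniformly in $s$ to $\widehat{\varphi}(\chi)\,a_{\chi}^{\cA}(\mu)$, which is exactly the claim that $a_{\chi}^{\cA}(\mu*\varphi)$ exists uniformly and equals $\widehat{\varphi}(\chi)\,a_{\chi}^{\cA}(\mu)$. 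For (a), the hypothesis $\widehat{\varphi}(\chi)\neq 0$ lets me run the argument in reverse: existence of $a_{\chi}^{\cA}(\mu*\varphi)$ forces convergence of $\widehat{\varphi}(\chi)\,\tfrac{1}{|A_n|}\int_{s+A_n}\overline{\chi(t)}\,\dd\mu(t)$, and dividing by the nonzero scalar $\widehat{\varphi}(\chi)$ produces the limit defining $a_{\chi}^{\cA}(\mu)$. In each direction the non-uniform statement is recovered by specializing to the constant shift $s=0$.

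I expect no genuine obstacle here, since the entire analytic content sits in the lemma and the corollary is essentially its quantified reformulation. The only point deserving care is threading the uniform-in-$s$ clause through both implications, and this is automatic: the bound $\bigl\||\varphi|*|\mu|\bigr\|_\infty\,|\partial^{K}(A_n)|$ is independent of $s$, so the convergence it governs is uniform in the shift, and the uniform and non-uniform versions are therefore settled simultaneously. I would merely be careful to state $\mu*\varphi=\varphi*\mu$ at the outset and to note that $\widehat{\varphi}(\chi)$ is a fixed (possibly complex) scalar, so that multiplying and dividing by it is unproblematic once it is assumed nonzero in part (a).
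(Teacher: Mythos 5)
Your argument is exactly the intended one: the paper states the corollary as an immediate consequence of the preceding lemma, obtained precisely by dividing the lemma's estimate by $|A_n|$, invoking translation boundedness to bound $\bigl\||\varphi|*|\mu|\bigr\|_\infty$ and the van Hove property to kill $|\partial^{K}(A_n)|/|A_n|$, and then reading off both implications (with the uniform-in-$s$ clause carried along since the error bound is shift-independent). The proposal is correct and matches the paper's approach.
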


By combining Corollary~\ref{FB measure relations} with
Proposition~\ref{prop: amenable}, we also obtain the next result.

\begin{coro}\label{cor unif FB}
Let $\cA$ be a van Hove sequence, $\mu \in \cM^\infty(G)$ and $\chi \in
\widehat{G}$  be given. Then, the Fourier--Bohr coefficient $a_\chi^{\cA}(\mu)$ exists uniformly in $s \in G$ if and only if the Fourier--Bohr coefficient
$a_\chi^{\cB}(\mu)$ exists with respect to any van Hove sequence $\cB$.\qed
\end{coro}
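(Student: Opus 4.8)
The plan is to reduce the statement to Proposition~\ref{prop: amenable} applied to a suitable bounded continuous function, using Corollary~\ref{FB measure relations} to pass back and forth between $\mu$ and the convolutions $\mu * \varphi$. The key observation is that, for any $\varphi \in \Cc(G)$, the function $\mu * \varphi$ lies in $\Cu(G)$ by translation boundedness of $\mu$, so that $f := \overline{\chi} \cdot (\mu * \varphi)$ is bounded and measurable. By the definition of the Fourier--Bohr coefficient of a function, we have
\[
a_\chi^{\cB}(\mu * \varphi) = \lim_{n\to\infty} \frac{1}{|B_n|} \int_{B_n} f(t)\, \dd t
\]
for every van Hove sequence $\cB = (B_n)$, and the uniform version corresponds to replacing $B_n$ by $s + B_n$. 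Hence ``$a_\chi^{\cA}(\mu * \varphi)$ exists uniformly on $G$'' is exactly condition (i) of Proposition~\ref{prop: amenable} for $f$, while ``$a_\chi^{\cB}(\mu * \varphi)$ exists for every van Hove sequence $\cB$'' is exactly condition (iii). Thus the equivalence (i) $\Longleftrightarrow$ (iii) of that proposition transfers uniform existence along $\cA$ into existence along all van Hove sequences for these convolved quantities.

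First I would fix once and for all a function $\varphi \in \Cc(G)$ with $\widehat{\varphi}(\chi) \neq 0$; such a $\varphi$ exists because one may take $\varphi = \chi \cdot \psi$ for a nonnegative bump $\psi \in \Cc(G)$ with $\int_G \psi \, \dd t > 0$, which gives $\widehat{\varphi}(\chi) = \int_G |\chi(t)|^2 \psi(t)\, \dd t = \int_G \psi \, \dd t > 0$. For the forward direction, assume $a_\chi^{\cA}(\mu)$ exists uniformly in $s \in G$. By Corollary~\ref{FB measure relations}(b), $a_\chi^{\cA}(\mu * \varphi)$ then exists uniformly on $G$, i.e.\ condition (i) holds for $f$. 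By Proposition~\ref{prop: amenable}, (i) $\Rightarrow$ (iii), the limit $a_\chi^{\cB}(\mu * \varphi)$ exists for every van Hove sequence $\cB$. Since $\widehat{\varphi}(\chi) \neq 0$, Corollary~\ref{FB measure relations}(a) (applied with the generic van Hove sequence there taken to be $\cB$) yields that $a_\chi^{\cB}(\mu)$ exists for every $\cB$, as desired.

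The backward direction would run the same chain in reverse. Assuming $a_\chi^{\cB}(\mu)$ exists for every van Hove sequence $\cB$, Corollary~\ref{FB measure relations}(b) gives that $a_\chi^{\cB}(\mu * \varphi)$ exists for every $\cB$, which is condition (iii) for $f$. By Proposition~\ref{prop: amenable}, (iii) $\Rightarrow$ (i), the corresponding limit exists uniformly in $s$ along $\cA$, i.e.\ $a_\chi^{\cA}(\mu * \varphi)$ exists uniformly on $G$; a final application of Corollary~\ref{FB measure relations}(a), again using $\widehat{\varphi}(\chi) \neq 0$ and now in its uniform form, delivers uniform existence of $a_\chi^{\cA}(\mu)$. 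I do not expect a genuine obstacle here, since the argument is a bookkeeping combination of two established results; the one point demanding care is to confirm at the outset that $\mu * \varphi$ is a bounded measurable function, so that the amenability proposition applies, and to check that the two notions of uniformity line up precisely, namely that the ``uniform in $s$'' existence of the Fourier--Bohr coefficient of $\mu * \varphi$ is literally condition (i) of Proposition~\ref{prop: amenable}.
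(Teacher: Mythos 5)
Your proposal is correct and is essentially the paper's own argument: the paper gives no written proof but states that the corollary follows "by combining Corollary~\ref{FB measure relations} with Proposition~\ref{prop: amenable}", which is exactly the chain you carry out (pass from $\mu$ to $\mu*\varphi$ with $\widehat{\varphi}(\chi)\neq 0$, apply the amenability equivalence to the bounded continuous function $\overline{\chi}\cdot(\mu*\varphi)$, and pass back). Your explicit construction of $\varphi$ with $\widehat{\varphi}(\chi)\neq 0$ and your flagged check that "uniform existence on $G$" matches condition (i) of the amenability proposition are the right points of care, and both go through.
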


Because of this result, when the Fourier--Bohr coefficient $a_{\chi}^{\cA}(f)$ or $a_{\chi}^\cA(\mu)$ of a function or measure exists uniformly, we will simply denote it by $a_{\chi}(f)$ or $a_\chi(\mu)$.

%
%
%

\chapter{Besicovitch and Weyl seminorms and the associated spaces} \label{ch:besweyl}

We introduce the Besicovitch and Weyl seminorms $\|\cdot\|_{\be,p,\cA}$ and $\|\cdot\|_{\we,p,\cA}$
and the corresponding spaces $\BL_{\cA}^p (G)$ and $\WL_{\cA}^p (G)$ of locally integrable functions for which these seminorms are finite.
Additionally, we study the relationship and properties of these two seminorms. We show that the space $(BL^p_{\cA}(G), \| \cdot \|_{\be,p,\cA})$ is complete and that the translation action induces a unitary map on the completion $BC^p_{\cA}(G)$ of $\Cu(G)$ in $(BL^p_{\cA}(G), \| \cdot \|_{\be,p,\cA})$.
We complete the chapter by introducing the notions of $\cN$-representations and $\cA$-representations and their diffraction theory.

\section{Besicovitch and Weyl seminorms and the associated
spaces}\label{sub:semi-norms} One basic tool in our considerations
will be two seminorms and the corresponding spaces discussed in this
section.

\medskip

\begin{definition}[Besicovitch seminorm]
Let $1 \leq p < \infty$, and let $\cA$ be a F\o lner sequence. For $f
\in \mathcal{L}^p_{loc}(G)$, define the \textit{Besicovitch $p$-seminorm} $\| f \|_{\be,p,\cA}$ \index{seminorm!Besicovitch~seminorm~for~functions} via \nomenclature{$\Vert f \Vert_{\be,p,\cA}$}{Besicovitch seminorm for functions}
\[
\| f \|_{\be,p,\cA}:= \limsup_{n\to\infty} \left(\frac{1}{|A_n|} \int_{A_n} |f(t)|^p \dd t \right)^\frac{1}{p}
\]
and the \textit{Weyl $p$-seminorm} $\| f \|_{\we,p,\cA}$ \index{seminorm!Weyl~seminorm~for~functions} via \nomenclature{$\Vert f \Vert_{\we,p,\cA}$}{Weyl seminorm for functions}
\[
\| f \|_{\we,p,\cA}:=  \limsup_{n\to\infty} \left(\sup_{x \in G} \frac{1}{|A_n|} \int_{x+A_n} |f(t)|^p \dd t \right)^\frac{1}{p} \,.
\]
We set
\[
\BL_{\cA}^p (G) :=\{ f \in \mathcal{L}^p_{loc} (G) : \| f \|_{\be,p,\cA} < \infty\}
\]
\nomenclature{$BL_{\cA}^p (G)$}{space of locally $p$-integrable functions with finite Besicovitch seminorm} and \nomenclature{$WL_{\cA}^p (G)$}{space of locally $p$-integrable functions with finite Weyl seminorm}
\[
\WL_{\cA}^p (G):=\{ f \in \mathcal{L}^p_{loc} (G) : \|f \|_{\we,p,\cA} < \infty\} \,.                     \tag*{$\Diamond$}
\]
\end{definition}

We will often use the following obvious estimate.

\begin{lemma} For all $f \in L^\infty(G)$, all $1 \leq p < \infty$ and all F\o lner sequence $\cA$, we have
\[
\| f \|_{\be,p,\cA} \leq \|f\|_{\we,p,\cA}  \leq \| f\|_\infty \,.
\]
In particular,
\[
L^\infty(G) \subseteq \WL_{\cA}^p (G) \subseteq \\BL_{\cA}^p (G) \,.    \tag*{$\qed$}
\]
\end{lemma}

\smallskip

Next, we will show that $\| \cdot \|_{\be,p,\cA}$ and $\| \cdot \|_{\we,p,\cA}$ are, indeed, seminorms.

\begin{lemma}[Basic properties of $\|\cdot \|_{\be,p,\cA}$ and $\| \cdot \|_{\we,p,\cA}$]
\label{L1} Let $1 \leq p < \infty$, and let a F\o lner sequence $\cA$ on
$G$ be given.
\begin{itemize}
\item[(a)] The maps  $\| \cdot \|_{\be,p,\cA}$ and $\| \cdot \|_{\we,p,\cA}$
define seminorms on $\BL_{\cA}^p (G)$ and $\WL_{\cA}^p (G)$,
respectively.
\item[(b)] For all $f \in \BL_{\cA}^p (G)\cap L^\infty(G)$ and for all $t \in G$, we have
\[
\| f \|_{\be,p,\cA} =\| \tau_t  f \|_{\be,p,\cA} \,.
\]
\item[(c)]  For all $f \in \WL_{\cA}^p (G)$ and all $t \in G$, we have
\[
\| f\|_{\we,p,\cA} =\| \tau_t  f \|_{\we,p,\cA}\,.
\]
\end{itemize}
\end{lemma}
\begin{proof}
(a) The only thing which is not obvious is the triangle inequality.
It follows immediately from the triangle inequality for $\mathcal{L}^p(G)$ as
we have, for each $f,g \in \mathcal{L}^p_{loc}(G)$ and all $n\in\NN,$
\[
\left(  \int_{A_n} |f(t)+g(t) |^p\, \dd
t \right)^\frac{1}{p} \leq  \left(
\int_{A_n} |f(t) |^p\, \dd t \right)^\frac{1}{p}
+ \left(  \int_{A_n} |g(t) |^p\, \dd t
\right)^\frac{1}{p} \,.
\]
Dividing every term by $|A_n|^\frac{1}{p}$ and taking the limsup gives the desired inequality. The proof for the uniform limit is identical.

\medskip

\noindent (b) A short computation gives
\begin{align*}
{}
    &\left|  \frac{1}{|A_m|} \int_{A_m} | f(z) |^p\, \dd z
      - \frac{1}{|A_m|} \int_{A_m} | \tau_t f(s) |^p \,
      \dd s \right| \\
    &\phantom{======}= \left|  \frac{1}{|A_m|}  \int_{A_m} | f(s)
      |^p \,\dd t - \frac{1}{|A_m|}  \int_{A_m} |f(s -t)
       |^p\, \dd t \right| \\
    &\phantom{======}= \left|  \frac{1}{|A_m|}  \int_{A_m} | f(z)|^p\,
      \dd z - \frac{1}{|A_m|}  \int_{t+A_m} |f(z)|^p\, \dd z \right| \\
    &\phantom{======}= \left|  \frac{1}{|A_m|}  \int_{A_m\, \triangle\, (t+A_m)}
      | f(z) |^p \, \dd z \right| \leq \frac{|A_m\, \triangle\, (t+A_m)|}{|A_m|} \| f\|^p_\infty \,.
\end{align*}
Therefore, by the F\o lner condition, we get
\[
\lim_{n\to\infty} \left( \frac{1}{|A_n|} \int_{A_n} | f(z) |^p\, \dd
z - \frac{1}{|A_n|} \int_{A_n}  | \tau_t f(s) |^p\, \dd s\right) =0
\]
and, hence,
\[
\limsup_{n\to\infty}  \frac{1}{|A_n|} \int_{A_n} | f(z) |^p\, \dd z
=\limsup_{n\to\infty} \frac{1}{|A_n|} \int_{A_n}  | \tau_t f(s) |^p
\, \dd s \,.
\]



\medskip

\noindent (c) follows immediately from the definition. Indeed,
\begin{align*}
\| \tau_t f \|_{\we,p,\cA}^p
    &= \limsup_{m\to\infty} \sup_{x \in G} \frac{1}{|A_m|}\int_{x+A_m}
      |\tau_t f(s)|^p\, \dd s\\
    &= \limsup_{m\to\infty} \sup_{x \in G} \frac{1}{|A_m|}\int_{t+x+A_m}
      |f(s)|^p \, \dd s=\|f \|_{\we,p,\cA}^p \,.
\end{align*}
This finishes the proof.
\end{proof}

Let us now cover an example which shows that, in general, $BL^p_{\cA}(G)$ is not invariant under taking translates.

\begin{example}[$BL^1_{\cA}(\RR)$ is not translation invariant]\label{rem:not-translation-invariant}

Choose two strictly increasing sequences $(a_n)$ and $(b_n)$ which satisfy the following three properties:
\begin{itemize}
\item $b_{n+1} > b_n +n$ for all $n$,
\item $\lim_{N \to \infty} \frac{\sum_{k=1}^N k \cdot a_k }{b_{N+1}}=0$,
\item $\lim_{N \to \infty} \frac{a_{N}}{b_{N}} = \infty$.
\end{itemize}
Let $A_n = [-b_n,b_n]$, and let $f: \RR \to \RR$ be defined by
\[
f(x)=
\left\{
\begin{array}{cc}
  a_n & \mbox{ if }b_n\leq |x| \leq b_n + n , \\
  0& \mbox{ otherwise} \,.
\end{array}
\right.
\]
Then,
\begin{align*}
  \| f\|_{\be,1,\cA} &=\limsup_{n\to\infty} \frac{1}{2b_n} \sum_{k=1}^{n-1}k a_{k}=0 \,.
\end{align*}
Moreover, for all $t \geq  1$, we have
\begin{align*}
\| \tau_t f\|_{\be,1,\cA}
&=\limsup_{n\to\infty} \frac{1}{2b_n} \int_{-b_n}^{b_n} |f(x-t)| \dd x = \limsup_{n\to\infty} \frac{1}{2b_n} \int_{-b_n+t}^{b_n+t} |f(u)| \dd u \\
&\geq  \limsup_{n\to\infty} \frac{1}{2b_n} \int_{b_n}^{b_n+1} |f(u)| \dd u= \limsup_{n\to\infty} \frac{a_n}{2b_n}= \infty \,.
\end{align*}
Finally, for all $0 < t \leq 1$, we have
\begin{align*}
\| \tau_t f\|_{\be,1,\cA}
&= \limsup_{n\to\infty} \frac{1}{2b_n} \int_{-b_n+t}^{b_n+t} |f(u)| \dd u
\geq  \limsup_{n\to\infty} \frac{1}{2b_n} \int_{b_n}^{b_n+t} |f(u)| \dd u\\
  &= \limsup_{n\to\infty} \frac{t \cdot a_n}{2b_n}= \infty \,.
\end{align*}
Similarly, one can show that $\| \tau_t f \|_{\be,1,\cA}=\infty$ for all $t<0$.

Therefore, $f \in BL^1_{\cA}(\RR)$ and
\[
\tau_t f \notin BL^1_{\cA}(G) \qquad \text{ for all } t \neq 0 \,.    \tag*{$\Diamond$}
\]
\end{example}

\begin{remark}
For the usual van Hove sequence $A_n=[-n,n]^d$ in $G=\RR^d$ or $G=\ZZ^d$, the phenomenon from Example~\ref{rem:not-translation-invariant} does not occur.
Indeed, in this case, for each $t \in G$, there exists a sequence $\big(k_n(t)\big)$ such that
\begin{equation}
t+A_n \subseteq A_{n+k_n(t)}  \qquad \forall n \label{van-str1}
\end{equation}
and
\begin{equation}
\lim_{n\to\infty} \frac{|A_{n+k_n(t)}|}{|A_n|} = 1 \,. \label{van-str2}
\end{equation}
 Hence, for all $f \in \cL^p_{loc}(G)$, all $t \in G$ and all $n$, we have
\begin{align*}
 \frac{1}{|A_n|} \int_{A_n} | \tau_t f(z) |^p\, \dd z &= \frac{1}{|A_n|} \int_{A_n} |  f(z-t) |^p\, \dd z  = \frac{1}{|A_n|} \int_{t+A_n} |  f(z) |^p\, \dd z \\
    &\leq  \frac{1}{|A_n|} \int_{A_{n+k_n(t)}} |  f(z) |^p\, \dd z \\
    &=  \frac{|A_{n+k_n(t)}|}{|A_n|}  \cdot \frac{1}{|A_{n+k_n(t)}|} \int_{A_{n+k_n(t)}} | f(z)|^p\, \dd z  \,.
\end{align*}
This immediately implies
\[
\| \tau_t f \|_{\be,p,\cA} \, \leq \, \| f \|_{\be,p,\cA}  \,.
\]
Replacing $t$ by $-t$ and $f$ by $\tau_t f$, we also find the reverse inequality.
Therefore, if the van Hove sequence $(A_n)$ satisfies \eqref{van-str1} and \eqref{van-str2}, the seminorm $\| \cdot \|_{\be,p,\cA} $ is invariant under translation.

The van Hove sequence we used in Example~\ref{rem:not-translation-invariant} fails \eqref{van-str2}.
\exend
\end{remark}

\begin{remark}
For $G = \RR$, the spaces $BL^p_{\cA}(G)$ were studied by Marcinkiewicz \cite{Mar} under the name `Besicovitch space'. Later, they were called Marcinkiewicz spaces, see e.g. \cite{CohLos}.       \exend
\end{remark}

Next, we give some standard inequalities involving the mean and
square mean. Such estimates were used in \cite{Eb,ARMA} for weakly
almost periodic functions.

\begin{lemma}[Inequalities for Besicovitch seminorms] \label{lemma C-S} Let $1 < p <\infty$ be arbitrary, and let
 $q$ be the conjugate exponent of $p$ (i.e. $1/p + 1/q = 1$). Let $\cA$ be a F\o lner
 sequence on $G$.
\begin{itemize}
\item [(a)]  For each $f\in BL^p_{\cA}(G)$ and $g\in BL^q_{\cA}(G) (G)$,
the H\"older inequality \index{H\"older~inequality!for~Besicovitch~seminorm}
\[
\| f g \|_{\be,1,\cA}\leq \| f \|_{\be,p,\cA} \|
g\|_{\be,q,\cA}
\]
holds. In particular, for each $f,g \in BL^2_{\cA}(G)$, the Cauchy--Schwarz inequality \index{Cauchy--Schwarz~inequality!for~Besicovitch~seminorm}
\[
\| f g \|_{\be,1,\cA}\leq \| f \|_{\be,2,\cA} \|
g\|_{\be,2,\cA}
\]
holds.
\item[(b)] For all $1 \leq p <\infty$ we have:
\[
\| f \|_{\be,1,\cA}\leq \| f \|_{\be,p,\cA} \ .
\]
\item [(c)]  For each $f \in BL^1_{\cA}(G)$ and $g\in L^\infty (G)$,
we have
\[
\| f g \|_{\be,1,\cA}\leq \| f \|_{\be,1,\cA} \|g\|_\infty \,.
\]
\item [(d)]  For each $f \in \mathcal{L}^1_{loc}(G) \cap L^\infty(G)$, we have
\[
\| f \|_{\be,p,\cA}^p \leq \|f \|_\infty^{p-1}\,\| f \|_{\be,1,\cA} \,.
\]
\end{itemize}
\end{lemma}
\begin{proof}
 (a) By  H\"older's inequality, we have
\begin{displaymath}
\frac{1}{|A_n|} \int_{A_n} \left| f(x) g(x) \right|\ \dd x  \leq
\left(  \frac{1}{|A_n|} \int_{A_n} \left| f(x) \right|^p\ \dd x
\right)^\frac{1}{p} \left(  \frac{1}{|A_n|} \int_{A_n} \left| g(x) \right|^q\ \dd x
\right)^\frac{1}{q}  \,.
\end{displaymath}
Taking $\limsup_{n\to\infty}$ yields the first
statement.

The Cauchy--Schwarz inequality follows by setting $p=q=2$.

\noindent (b) follows from (a) by setting $g=1$.

\noindent (c) is obvious.

\noindent (d) The inequality
\begin{displaymath}
\frac{1}{|A_n|} \int_{A_n} \left| f(x) \right|^p\ \dd x  \leq
\|f \|_\infty^{p-1}  \left( \frac{1}{|A_n|} \int_{A_n} \left| f(x)
\right|\ \dd x \right)
\end{displaymath}
is obvious. Taking $\limsup_{n\to\infty}$ on both sides completes
the proof.
\end{proof}

Exactly the same proofs carry over when $\| \cdot \|_{\be,p,\cA}$ is
replaced by $\| \cdot \|_{\we,p,\cA}$, giving us the same statements for the Weyl norm.

\begin{lemma}[Inequalities for Weyl seminorms] \label{lemma C-S2} Let $1 < p <\infty$ be arbitrary, and let
 $q$ be the conjugate exponent of $p$ (i.e. $1/p + 1/q = 1$). Let $\cA$ be a F\o lner
 sequence on $G$.
\begin{itemize}
\item [(a)]  For each $f\in WL^p_{\cA}(G)$ and $g\in WL^q_{\cA}(G) (G)$,
the H\"older inequality \index{H\"older~inequality!for~Weyl~seminorm} holds:
\[
\| f g \|_{\we,1,\cA}\leq \| f \|_{\we,p,\cA} \|
g\|_{\we,q,\cA} \,.
\]
In particular, for each $f,g \in WL^2_{\cA}(G)$, the Cauchy--Schwarz inequality \index{Cauchy--Schwarz~inequality!for~Weyl~seminorm}
\[
\| f g \|_{\we,1,\cA}\leq \| f \|_{\we,2,\cA} \|
g\|_{\we,2,\cA} \,.
\]
holds.
\item[(b)] For all $1 \leq p <\infty$ we have:
\[
\| f \|_{\we,1,\cA}\leq \| f \|_{\we,p,\cA} \ .
\]
\item [(c)]  For each $f \in WL^1_{\cA}(G)$ and $g\in L^\infty (G)$,
we have
\[
\| f g \|_{\we,1,\cA}\leq \| w \|_{\be,1,\cA} \|g\|_\infty \,.
\]
\item [(d)]  For each $f \in \mathcal{L}^1_{loc}(G) \cap L^\infty(G)$, we have
\[
\| f \|_{\we,p,\cA}^p \leq \|f \|_\infty^{p-1}\,\| f \|_{\we,1,\cA} \,.    \tag*{$\qed$}
\]
\end{itemize}
\end{lemma}

We note the following consequence of Lemma~\ref{lemma C-S} and Lemma~\ref{lemma C-S2}.

\begin{lemma}[Inclusion of spaces]\label{lemma norm inequality}
Let $1 \leq p < q <\infty$ be arbitrary, and
 let $\cA$ be a F\o lner sequence on $G$. Then, for all $f \in \mathcal{L}^q_{loc} (G)$, we have
 \begin{align*}
 \| f \|_{\be,p,\cA} \, &\leq \,  \| f \|_{\be,q,\cA}   \,, \\
 \| f \|_{\we,p,\cA} \,&\leq\,  \| f \|_{\we,q, \cA}  \,.
 \end{align*}
 In particular,
 \begin{align*}
 \BL_{\cA}^q (G) \,& \subseteq \, \BL_{\cA}^p (G)  \,, \\
 \WL_{\cA}^q (G) \, &\subseteq \, \WL_{\cA}^p (G) \,.
 \end{align*}
\end{lemma}
\begin{proof}
From Lemma~\ref{lemma C-S} we obtain
\begin{displaymath}
 \| f \|_{\be,p,\cA}^p=  \| |f|^p  \|_{\be,1,\cA}  \leq  \| |f|^p  \|_{\be,\frac{q}{p}, \cA} = \|f\|_{\be,q,\cA}^p \,.
\end{displaymath}
The statement for $\|\cdot\|_{\we,p,\cA}$ follows analogously.
\end{proof}

We now come to the crucial completeness result. The result is
certainly known. As we could not find it in the form stated here, we
include a proof.

\begin{theorem}[Completeness of $BL^p_{\cA}(G)$]\label{thm:completeness}
For each $p\geq 1$ and every van Hove sequence $\cA$ on $G$, the
space  $(\BL_{\cA}^p (G), \| \cdot   \|_{\be,p,\cA})$  is complete.
\end{theorem}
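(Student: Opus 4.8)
The plan is to establish completeness through the usual route: take a Cauchy sequence, pass to a rapidly convergent subsequence, construct a candidate limit by hand, and promote convergence of the subsequence to convergence of the whole sequence. So let $(f_j)$ be Cauchy for $\|\cdot\|_{b,p,\cA}$. I would extract $(f_{j_k})$ with $\|f_{j_{k+1}}-f_{j_k}\|_{b,p,\cA}<2^{-k}$ and set $g_i:=f_{j_{i+1}}-f_{j_i}$. Throughout it is convenient to abbreviate $\|f\|_{L^p(\mu_m)}:=\bigl(\tfrac{1}{|A_m|}\int_{A_m}|f|^p\,\dd t\bigr)^{1/p}$, so that $\|f\|_{b,p,\cA}=\limsup_{m}\|f\|_{L^p(\mu_m)}$ and the triangle inequality from Lemma~\ref{L1} is just Minkowski's inequality on $(A_m,\tfrac1{|A_m|}\theta_G)$ followed by a $\limsup$. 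Once I produce $h\in BL^p_{\cA}(G)$ with $\|h-f_{j_k}\|_{b,p,\cA}\to 0$, the Cauchy property upgrades this to $\|h-f_j\|_{b,p,\cA}\to 0$, which is completeness.

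The essential difficulty—and the reason the statement is not immediate—is that $h$ \emph{cannot} be taken to be the pointwise series $f_{j_1}+\sum_i g_i$. The Besicovitch seminorm only controls the asymptotic van Hove averages, not the genuine $L^p$-norm on any fixed compact set; a summable series of seminorms need therefore converge neither pointwise almost everywhere nor in $L^p_{loc}(G)$ (a function can have vanishing seminorm while failing to be integrable near a fixed compactum). I would bypass this with a truncation attached to an exhaustion: fix open, relatively compact sets $U_1\subseteq\overline{U_1}\subseteq U_2\subseteq\cdots$ with $\bigcup_l U_l=G$, available since $G$ is locally compact and $\sigma$-compact.

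The key observation is that for each fixed $i$ the sequence $m\mapsto\|g_i\|_{L^p(\mu_m)}$ has finite terms (as $g_i\in L^p_{loc}(G)$ and $A_m$ is relatively compact) and finite $\limsup$ equal to $\|g_i\|_{b,p,\cA}<2^{-i}$, hence is bounded. Consequently the truncated quantity $\psi_i(l):=\sup_{m}\|g_i\,\mathbbm{1}_{G\setminus U_l}\|_{L^p(\mu_m)}$ is finite, is nonincreasing in $l$, and satisfies $\psi_i(l)\to\|g_i\|_{b,p,\cA}$ as $l\to\infty$: for any $\delta>0$ only finitely many indices $m$ have $\|g_i\|_{L^p(\mu_m)}\ge\|g_i\|_{b,p,\cA}+\delta$, and choosing $U_l$ large enough to contain the corresponding finitely many relatively compact sets $A_m$ kills exactly their contribution. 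I would then pick $\lambda(i)\uparrow\infty$ with $\psi_i(\lambda(i))\le 2^{-i}$ and define
\[
h:=f_{j_1}+\sum_{i=1}^{\infty} g_i\,\mathbbm{1}_{G\setminus U_{\lambda(i)}}\,.
\]
Since every point lies in some $U_l$ and $\lambda(i)\to\infty$, at each point only finitely many summands are nonzero, so $h$ is a well-defined measurable function; and on each $U_L$ only the finitely many terms with $\lambda(i)<L$ survive, whence $h\in L^p_{loc}(G)$.

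It remains to estimate the tail. From $f_{j_k}=f_{j_1}+\sum_{i<k}g_i$ one gets $h-f_{j_k}=\sum_{i\ge k} g_i\,\mathbbm{1}_{G\setminus U_{\lambda(i)}}-\sum_{i<k} g_i\,\mathbbm{1}_{U_{\lambda(i)}}$. The second sum is a fixed $L^p$-function supported in the relatively compact set $U_{\lambda(k-1)}$, hence has Besicovitch seminorm $0$ (using $|A_m|\to\infty$, which holds for van Hove sequences in the noncompact case; the compact case is immediate, as then $\|\cdot\|_{b,p,\cA}$ reduces to a normalized $L^p$-norm). For the first sum the whole point of the choice of $\lambda$ is the \emph{uniform} bound $\|g_i\,\mathbbm{1}_{G\setminus U_{\lambda(i)}}\|_{L^p(\mu_m)}\le\psi_i(\lambda(i))\le 2^{-i}$, valid for \emph{every} $m$; Minkowski then gives $\bigl\|\sum_{i\ge k} g_i\,\mathbbm{1}_{G\setminus U_{\lambda(i)}}\bigr\|_{L^p(\mu_m)}\le 2^{-k+1}$ for all $m$, so its seminorm is at most $2^{-k+1}$. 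Hence $\|h-f_{j_k}\|_{b,p,\cA}\le 2^{-k+1}\to0$, and $h\in BL^p_{\cA}(G)$ because $\|h\|_{b,p,\cA}\le\|f_{j_1}\|_{b,p,\cA}+1<\infty$. The main obstacle is precisely the absence of pointwise or local convergence of the defining series; it is resolved by the exhaustion–truncation, whose effectiveness rests on the finiteness of the "bad" scales $m$ for each $g_i$, which in turn yields the crucial $m$-uniform estimate.
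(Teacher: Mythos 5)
Your argument is correct, but it follows a genuinely different route from the paper's. The paper also splits off the compact case (where it shows $\|\cdot\|_{b,p,\cA}$ coincides with the normalized $L^p(G)$-norm, so completeness is classical), but in the non-compact case it does not run a Cauchy-sequence argument at all: it invokes a completeness criterion for Marcinkiewicz-type spaces from \cite{CohLos}, which reduces the theorem to producing, for each $f$, a modification $f^*$ with $\|f-f^*\|_{b,p,\cA}=0$ whose averaged $L^p$-norms over the $A_n$ are bounded \emph{uniformly in $n$} by $2\|f\|_{b,p,\cA}$; that $f^*$ is obtained by setting $f$ to zero on $A_1\cup\cdots\cup A_N$ for $N$ large, using $|A_n|\to\infty$. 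You instead give a self-contained Riesz--Fischer argument: a rapidly convergent subsequence, and a limit candidate built by truncating each increment $g_i$ outside a member $U_{\lambda(i)}$ of an exhaustion chosen so that the supremum over \emph{all} scales $m$ (not merely the $\limsup$) of the averaged norms is at most $2^{-i}$. The underlying mechanism is the same in both proofs --- killing a function on a fixed relatively compact set changes it only by something of seminorm zero, and this upgrades a $\limsup$ bound to a $\sup$ bound --- but you apply it increment by increment, which buys independence from the external reference at the cost of some extra bookkeeping. That bookkeeping checks out: the well-definedness of $h$ and its membership in $L^p_{loc}(G)$ are argued correctly, and the Minkowski step for the tail $\sum_{i\ge k}g_i 1_{G\setminus U_{\lambda(i)}}$ is legitimate because $\overline{A_m}$ lies in some $U_L$, so on each $A_m$ only finitely many summands survive (worth stating explicitly). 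Two cosmetic points: your argument establishes, and only needs, $\limsup_{l}\psi_i(l)\le\|g_i\|_{b,p,\cA}$ rather than actual convergence of $\psi_i(l)$ to that value (the latter fails when $G$ is compact); and the reliance on $|A_n|\to\infty$ correctly forces the separate treatment of the compact case, which you flag.
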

\begin{proof} We distinguish two cases.

\smallskip

\noindent Case 1: $G$ is compact. It suffices to show that
$BL^p_{\cA}(G)$ is the space $\mathcal{L}^p (G)$ of measurable $f :
G\longrightarrow \CC$ with $\int_G |f|^p\, \dd t <\infty$ and
$\|f\|_{\be,p,\cA} = \left(\int_G |f|^p\, \dd t\right)^{1/p}$.

Via a standard renormalisation, we can assume without loss of
generality that $|G|=1$. Setting $K=G$ in the definition of the van
Hove sequence, we see that $G \backslash A_n \subseteq
\partial^K(A_n)$, and hence, by the definition of the van Hove
sequence, we get
\begin{displaymath}
\limsup_{n\to\infty} |G \backslash A_n| \leq \limsup_{n\to\infty}
\frac{|G \backslash A_n|}{|A_n|}\leq \limsup_{n\to\infty}
\frac{\partial^K(A_n)}{|A_n|}  =0 \,.
\end{displaymath}
From here, it follows immediately that, for all $f \in \mathcal{L}^p(G)$, we
have
\begin{displaymath}
\| f \|_{\be,p,\cA}= \lim_{n\to\infty} \left( \frac{1}{|A_n|}
\int_{A_n} |f(t)|^p\, \dd t \right)^\frac{1}{p} = \| f \|_ p\,.
\end{displaymath}
We directly deduce that $(\BL_{\cA}^p(G), \| \cdot
\|_{\be,p,\cA}) =(\mathcal{L}^p(G), \| \cdot \|_p)$ and the claim
follows.

\medskip

\noindent  Case 2: $G$ is not compact. Hence, $|G| = \infty$ holds.
 By \cite[Prop.~2.2]{CohLos}, it suffices to find, for each $f\in
BL^p_{\cA} (G)$ with $\|f\|_{\be,p,\cA}>0$, an $f^* \in BL^p_{\cA}(G)$
satisfying
\begin{align*}
\|f - f^*\|_{\be,p,\cA} \,&=0 \, \qquad \mbox{ and  } \\
\sup_{n\in\NN}
\frac{1}{|A_n|} \left( \int_{A_n} |f^*(t)|^p\, \dd t\right)^{1/p} \,&\leq\, 2\,
\|f\|_{\be,p,\cA} \,.
\end{align*}
To do so, choose an $N\in\NN$ large enough
such that
\[
\left(\frac{1}{|A_n|}\int_{A_n} |f(t)|^p\, \dd t\right)^{1/p}
\leq 2 \|f\|_{\be,p,\cA} \qquad \text{ for all } n\geq N \,.
\]
Next, consider $f^*$ with $f^* = 0$ on the relatively compact set $A_1\cup
A_2 \cdots \cup  A_N$ and $f^* = f$ else. By construction, $f^*$ has
the second desired property. Thus, it remains to show $\|f -
f^*\|_{\be,p,\cA} = 0$. This in turn follows immediately once we show
$\lim_{n\to\infty} |A_n|= \infty$.  Let  $m>0$ be given. Since $G$ is not
compact, $|G|=\infty$. Therefore, we can find a compact set $K$ such that $0 \in
K$ and  $|K| >2 m$. Since $0 \in K$, we immediately obtain
$A_n \subseteq A_n+K \subseteq A_n \cup
\partial^{K}(A_n)$ for all $n\in\NN$.
The van Hove property then gives $\lim_{n\to\infty} \frac{|A_n+K| }{
|A_n|} =1$. Therefore, there exists some $M\in\NN$ such
that, for all $n
>M$, we have $\frac{|A_n+K| }{ |A_n|}<2$. Since $A_n$ is non-empty,
there exists some $t_n \in A_n$. Then, we have
\begin{displaymath}
|A_n| > \frac{1}{2}\,|A_n+K| > \frac{1}{2}\, |t_n+K| = \frac{|K|}{2}
> m
\end{displaymath}
for each $n\geq M$, which finishes the proof.
\end{proof}

\begin{remark} The completeness of the so-called generalized Marcinkiewicz spaces is usually easy to establish when dealing with averaging sequences, see \cite[Examples 2.3. (i)]{CohLos}. The situation is more complicated when one uses averaging nets. Indeed, \cite[Example 2.4]{CohLos} and \cite[Sect~5]{Davis} provide examples of van Hove nets on $\NN$ and $\RR$, respectively, for which the corresponding Marcinkiewicz spaces is not complete.
\end{remark}

\medskip
Next, we extend the Besicovitch and Weyl seminorms to measures.

\begin{definition}[Besicovitch and Weyl seminorm for measures]
Let $\cA$ be a van Hove sequence. For a measure $\mu$, we define the \textit{Besicovitch seminorm} $\|  \mu \|_{\be,\cA}$ \index{seminorm!Besicovitch~seminorm~for~measures} via
\[
\| \mu \|_{\be,\cA}:= \limsup_{n\to\infty} \frac{|\mu|(A_n)}{|A_n|}
\]\nomenclature{$\Vert \mu \Vert_{\be,\cA}$}{Besicovitch seminorm for measures}
and the \textit{Weyl seminorm} $\| \mu \|_{\we,p,\cA}$ \index{seminorm!Weyl~seminorm~for~measures} via\nomenclature{$\Vert \mu \Vert_{\we,\cA}$}{Weyl seminorm for measures}
\[
\| \mu \|_{\we,\cA}:= \limsup_{n\to\infty}\sup_{t \in G} \frac{|\mu|(t+A_n)}{|A_n|} \,.     \tag*{$\Diamond$}
\]
\end{definition}

It is easy to see that these are indeed seminorms on the subspaces of measures where they are finite. In particular, these are seminorms on $\cM^\infty(G)$ \cite{Martin2}.

The following result is immediate.

\begin{lemma}\label{lem-bes-ine-measures}
Let $\cA$ be a van Hove sequence.
\begin{itemize}
  \item[(a)] For all $f \in \mathcal{L}^1_{loc}(G)$, we have
  \begin{align*}
  \| f\|_{\be,1,\cA} \,&=\, \| f \theta_G \|_{\be,\cA} \,,\\
    \| f\|_{\we,1,\cA} \,&=\, \| f \theta_G \|_{\we,\cA} \,.
\end{align*}
  \item[(b)] For all $\mu \in \cM^\infty(G)$ and $\varphi \in \Cc(G)$, we have
  \begin{align*}
    \| \mu*\varphi \|_{\be,1\,\cA} \,&\leq\, \| \varphi\|_1 \|\mu\|_{\be,\cA} \,,\\
     \| \mu*\varphi \|_{\we,1\,\cA} \, &\leq\, \| \varphi\|_1 \|\mu\|_{\we,\cA} \,.
  \end{align*}
\end{itemize}
\end{lemma}
\begin{proof}
(a) is obvious.

(b) One has
\begin{align*}
  \frac{1}{|A_n|} \int_{x+A_n} \big|(\mu*\varphi)(t) \big|\, \dd t
  & =  \frac{1}{|A_n|} \int_{x+A_n} \left|\int_{G} \varphi(y-s)\, \dd \mu(s) \right| \dd t \\
  &\leq  \frac{1}{|A_n|} \int_{G} \int_{G} 1_{x+A_n}(t)  \big|\varphi(t-s)\big|\, \dd |\mu|(s)\, \dd t \\
   &=  \frac{1}{|A_n|} \int_{G} \int_{G} 1_{x+A_n}(t)  \big|\varphi(t-s)\big|\,  \dd t\, \dd |\mu|(s) \,.
\end{align*}
Now, a standard van Hove argument gives
\begin{align*}
\lim_{n\to\infty} \left(  \frac{1}{|A_n|} \int_{G} \int_{G} \right. &1_{x+A_n}(t)  |\varphi(t-s)|\,  \dd t\, \dd |\mu|(s)   \\
&\left.- \frac{1}{|A_n|} \int_{G} \int_{G} 1_{x+A_n}(s)  |\varphi(t-s)|\,  \dd t\, \dd |\mu|(s) \right) =0 \,.
\end{align*}
This implies
\begin{align*}
\| \mu*\varphi \|_{\be,1,\cA}
&= \limsup_{n\to\infty} \frac{1}{|A_n|} \int_{A_n} |(\mu*\varphi)(t)|\, \dd t \\
&\leq \limsup_{n\to\infty} \frac{1}{|A_n|} \int_{G} \int_{G} 1_{A_n}(t)  |\varphi(t-s)|\,  \dd t\, \dd |\mu|(s)\\
&=\limsup_{n\to\infty} \frac{1}{|A_n|} \int_{G} \int_{G} 1_{A_n}(s)  |\varphi(t-s)|\,  \dd t\, \dd |\mu|(s)\\
&=\|\varphi\|_1 \|\mu\|_{\be, \cA} \,
\end{align*}
and
\begin{align*}
\| \mu*\varphi \|_{\we,1,\cA}
&= \limsup_{n\to\infty} \sup_{x \in G} \frac{1}{|A_n|} \int_{x+A_n} |(\mu*\varphi)(t) |\, \dd t \\
&\leq \limsup_{n\to\infty} \sup_{x \in G}  \frac{1}{|A_n|} \int_{G} \int_{G} 1_{x+A_n}(t)  |\varphi(t-s)|\,  \dd t\, \dd |\mu|(s)\\
&=\limsup_{n\to\infty} \sup_{x \in G} \frac{1}{|A_n|} \int_{G} \int_{G} 1_{x+A_n}(s)  |\varphi(t-s)|\,  \dd t\, \dd |\mu|(s)\\
&=\|\varphi\|_1 \|\mu\|_{\we, \cA} \,.
\end{align*}
This finishes the proof.
\end{proof}

We complete this subsection discussing the relationship between Fourier--Bohr coefficients and the Besicovitch seminorm. The first result is immediate.

\begin{lemma}\label{lemma-FB-bound-bes} Let $\cA$ be a van Hove sequence.
\begin{itemize}
  \item[(a)] Let $p \geq 1$. If $f \in BL^p_{\cA}(G)$ is such that $a_{\chi}^\cA(f)$ exists, then
  \[
  \left| a_{\chi}^\cA(f)\right| \leq \|f \|_{\be,p, \cA} \, .
  \]
  \item[(b)] If $\mu \in \cM^\infty(G)$ is such that $a_{\chi}^\cA(\mu)$ exists, then
  \[
  \left| a_{\chi}^\cA(\mu)\right| \leq \| \mu \|_{\be, \cA} \, .
  \]
\end{itemize}
\end{lemma}
\begin{proof}
(a) follows from Lemma~\ref{lemma C-S} applied with $g= \chi$, and (b) is obvious.
\end{proof}

We can next prove the following result on continuity of the Fourier--Bohr coefficients, which we will use often in the book.

\begin{prop}[Continuity of Fourier--Bohr coefficients]\label{prop-aprox-FB} Let $\cA$ be a F\o lner sequence on $G$.
\begin{itemize}
  \item[(a)] Let $f_n, f \in BL^1_\cA(G)$ be such that
  \[
  \lim_{n\to\infty} \|f_n-f \|_{\be,1,\cA} =0 \,.
  \]
  If the Fourier--Bohr coefficients $a_\chi^\cA(f_n)$ exist, then $a_\chi^\cA(f)$ exists and
  \[
  a_\chi^\cA(f)= \lim_{n\to\infty} a_\chi^\cA(f_n) \,.
  \]
  \item[(b)]  Let $f_n, f \in WL^1_\cA(G)$ be such that
  \[
  \lim_{n\to\infty} \|f_n-f \|_{\we,1,\cA} =0 \,.
  \]
  If the Fourier--Bohr coefficients $a_\chi(f_n)$ exist uniformly, then $a_\chi(f)$ exist uniformly and
  \[
  a_\chi(f)= \lim_{n\to\infty} a_\chi(f_n) \,.
  \]
\end{itemize}
\end{prop}
\begin{proof}
(a) By Lemma~\ref{lemma-FB-bound-bes}, we have
\[
\left|a_\chi^\cA(f_n)-a_\chi^\cA(f_m) \right| \leq \| f_m-f_n \|_{\be,1,\cA}
\]
for all $m,n$. It follows that $a_\chi^\cA(f_n)$ is a Cauchy sequence, and hence convergent to some $a \in \CC$. We show that $a_{\chi}^\cA(f)$ exists and is equal to $a$.

Let $\eps >0$. Then, there exists some $N$ such that
\begin{align*}
 \left|a_\chi^\cA(f_n)-a \right| \, &< \, \frac{\eps}{3} \\
  \| f_n -f \|_{\be,1,\cA} \, &< \, \frac{\eps}{4}
\end{align*}
hold for all $n>N$. Fix one $n > N$.
Since
\[
\limsup_{m\to\infty} \frac{1}{|A_m|} \int_{A_m} \left| f_n(t) -f(t) \right| \dd t < \frac{\eps}{4} \,,
\]
there exists some $M_1$ such that, for all $m >M_1$, we have
\[
\frac{1}{|A_m|} \int_{A_m} \left| f_n(t) -f(t) \right| \dd t < \frac{\eps}{3} \,.
\]
Also, since $a_\chi^\cA(f_n)$ exists, there exists some $M_2$ such that
\[
  \left| \frac{1}{|A_m|} \int_{A_m} \overline{\chi(t)}f_n(t) \dd t -a_\chi^\cA(f_n) \right| < \frac{\eps}{3}
\]
holds for all $m >M_2$. Then, for all $m >M:= \max\{ M_1, M_2\}$, we have
\begin{align*}
\left| \frac{1}{|A_m|} \int_{A_m} \overline{\chi(t)}f(t) \dd t -a \right|
&\leq  \left| \frac{1}{|A_m|} \int_{A_m} \overline{\chi(t)}f(t) \dd t -\frac{1}{|A_m|} \int_{A_m} \overline{\chi(t)}f_n(t) \dd t \right|  \\
&\phantom{XX}+ \left| \frac{1}{|A_m|} \int_{A_m} \overline{\chi(t)}f_n(t) \dd t -a_\chi^\cA(f_n)\right| +\left| a_\chi^\cA(f_n)-a \right| \\
   &\leq  \frac{1}{|A_m|} \int_{A_m} \left| \overline{\chi(t)}(f(t)  - \overline{\chi(t)}f_n(t))  \right| \dd t + \frac{2 \eps}{3} \\
   &= \frac{1}{|A_m|} \int_{A_m} \left| f(t)  - f_n(t)  \right| \dd t + \frac{2 \eps}{3} =\eps \,.
\end{align*}
(b) is proven exactly as (a), with all limits being replaced by uniform limits.
\end{proof}

The following result is a consequence of Proposition~\ref{prop-aprox-FB} and Corollary~\ref{FB measure relations}.

\begin{coro}\label{coro-FB-bound-bes-measure} Let $\mu_n$, $n\in\NN$,  and $ \mu$ be translation bounded measures on $G$,  and let $\cA$ be a van Hove sequence.
 \begin{itemize}
  \item[(a)] If
  \[
  \lim_{n\to\infty} \|\mu_n-\mu \|_{\be, \cA} =0
  \]
  and the Fourier--Bohr coefficients $a_\chi^\cA(\mu_n)$ exist, then $a_\chi^\cA(\mu)$ exists and
  \[
  a_\chi^\cA(\mu)= \lim_{n\to\infty} a_\chi^\cA(\mu_n) \,.
  \]
  \item[(b)] If
  \[
  \lim_{n\to\infty} \|\mu_n-\mu \|_{\we,\cA} =0
  \]
 and the Fourier--Bohr coefficients $a_\chi(\mu_n)$ exist uniformly, then $a_\chi(\mu)$ exists uniformly and
  \[
  a_\chi(\mu)= \lim_{n\to\infty} a_\chi(\mu_n) \,.      \tag*{$\qed$}
  \]
\end{itemize}
\end{coro}

\section{Continuous translation action and invariant subspaces}\label{sub:trans}
\label{sub:trans-sub}
In this section, we establish a (continuous) translation action on certain spaces with the Besicovitch seminorm.
As shown in Example~\ref{rem:not-translation-invariant}, the
Besicovitch seminorm is far from being  invariant under translations
$\tau_t$, $t\in G$,  and it may even be that translates of a
function with finite Besicovitch seminorm do not belong to any
Besicovitch space. Thus, we cannot hope to find a translation
action on the whole space. To remedy this, we will restrict our
attention to a subspace. The various spaces of almost periodic functions appearing in later chapters naturally belong to this subspace.  This part of the theory seems to be new.

\medskip

For $1\leq p <\infty$, we define \nomenclature{$BC^p_{\cA} (G)$}{closure of $\Cu (G)$ in $BL^p_{\cA}(G)$ with respect to the Besicovitch seminorm}
\[
BC^p_{\cA} (G) \,:=\, \text{Closure of $\Cu (G)$ in $BL^p_\cA
(G)$ with respect to $\|\cdot\|_{\be,p,\cA}$} \,.
\]
Elements in
$BC^p_{\cA} (G)$ can naturally be approximated by their cut-off
functions. To make this precise, we define, for $L \in (0, \infty)$,
the cut-off $c_L$\index{cut-off~function} at $L$ by
\begin{equation}\label{eq: cut off}
c_L :\CC\longrightarrow \CC, \qquad c_L (z) =
\begin{cases}
z, & \text{ if } |z|\leq L \,, \\
L \frac{z}{|z|}, & \text{otherwise} \,.
\end{cases}
\end{equation}
Then, $|c_L (z) -
c_L (w)|\leq |z -w|$ for all $z,w\in\CC$, together with $c_L(\Cu(G)) \subseteq \Cu(G)$ imply
\[
\|c_L
(f) - c_L (g)\|_{\be,p,\cA} \leq \|f - g\|_{\be,p,\cA}
\]
for all $f,g\in BC^p_{\cA}(G)$, where $c_L(f)(t):= (c_L \circ f) (t) =c_L(f(t))$.

\smallskip

Let us start with the following simple result.

\begin{prop}\label{prop:approxiimation by bounded functions} For
$f\in BC^p_\cA (G)$, we have $c_n (f)\to f$ in $(BC^p_\cA (G),
\|\cdot\|_{\be,p,\cA})$ as $n\to \infty$.
\end{prop}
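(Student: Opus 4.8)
The plan is to prove this via a density argument that rests entirely on the contraction estimate $\|c_L(f)-c_L(g)\|_{b,p,\cA}\leq \|f-g\|_{b,p,\cA}$ recorded just before the statement, combined with the elementary observation that the cut-off $c_n$ fixes bounded functions once $n$ exceeds their supremum. Concretely, since each $g\in \Cu(G)$ is bounded, for every $n\geq \|g\|_\infty$ one has $c_n(g)=g$ pointwise and hence $\|c_n(g)-g\|_{b,p,\cA}=0$. Thus the asserted convergence is trivially true on the dense subspace $\Cu(G)$, and I would only need to transport it to the closure.

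First I would fix $f\in BC^p_\cA(G)$ and $\varepsilon>0$, and use the definition of $BC^p_\cA(G)$ as the $\|\cdot\|_{b,p,\cA}$-closure of $\Cu(G)$ to choose $g\in \Cu(G)$ with $\|f-g\|_{b,p,\cA}<\varepsilon$. Before estimating, I would check that $c_n(f)$ really belongs to $BC^p_\cA(G)$ for each $n$: as $c_n$ is $1$-Lipschitz and $g$ is uniformly continuous, the composition $c_n\circ g$ again lies in $\Cu(G)$, while the contraction estimate gives $\|c_n(f)-c_n(g)\|_{b,p,\cA}\leq \|f-g\|_{b,p,\cA}<\varepsilon$; hence $c_n(f)$ is approximable by elements of $\Cu(G)$ and lies in the closure.

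The core step is then the triangle inequality
\[
\|c_n(f)-f\|_{b,p,\cA}\leq \|c_n(f)-c_n(g)\|_{b,p,\cA}+\|c_n(g)-g\|_{b,p,\cA}+\|g-f\|_{b,p,\cA}.
\]
Here the first summand is bounded by $\|f-g\|_{b,p,\cA}<\varepsilon$ via the contraction estimate, the middle summand vanishes as soon as $n\geq \|g\|_\infty$, and the last summand is $<\varepsilon$ by the choice of $g$. Consequently $\limsup_{n\to\infty}\|c_n(f)-f\|_{b,p,\cA}\leq 2\varepsilon$, and since $\varepsilon>0$ was arbitrary the limit is $0$, as required.

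I do not expect a genuine obstacle here: the contraction property $\|c_L(f)-c_L(g)\|_{b,p,\cA}\leq\|f-g\|_{b,p,\cA}$ does all the heavy lifting, and the only point demanding a moment's care is the membership $c_n(f)\in BC^p_\cA(G)$, which is handled by applying that same contraction estimate to the approximant $c_n\circ g\in \Cu(G)$.
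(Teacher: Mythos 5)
Your proposal is correct and follows essentially the same route as the paper: approximate $f$ by $g\in \Cu(G)$, note $c_n(g)=g$ for $n\geq\|g\|_\infty$, and conclude via the contraction estimate $\|c_n(f)-c_n(g)\|_{b,p,\cA}\leq\|f-g\|_{b,p,\cA}$ and the triangle inequality. The only (harmless) addition is your explicit check that $c_n(f)\in BC^p_\cA(G)$, which the paper leaves implicit.
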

\begin{proof} Let $\varepsilon >0$ be arbitrary. We can choose
$g\in \Cu (G)$ with $\|f - g\|_{\be,p,\cA}<\varepsilon/2$. Then, for
all $n\in\NN$ with $n\geq \|g\|_\infty$, we have $c_n(g) =
g$. Hence,
\[
\|f - c_n (f)\|_{\be,p,\cA} \leq\|f - g\|_{\be,p,\cA} + \|c_n (g) - c_n
(f)\|_{\be,p,\cA}  < \varepsilon \,,
\]
where we used $ \|c_n (f) - c_n(g)\|_{\be,p,\cA} \leq  \|f-g\|_{\be,p,\cA}$.
\end{proof}

\begin{remark}[Normal contraction]
A map $c : \CC\longrightarrow \CC$ satisfying $c(0) = 0$ together with $|c(z)  -
c(w)|\leq |z - w|$ for all $z,w \in \CC$, is called a normal contraction. For such a $c$, we
clearly  have
\[
\|c(f) - c(g)\|_\infty \leq \|f - g\|_\infty
\]
for all
$f,g\in\Cu (G)$ as well as
\[
\| c(f) - c(g)\|_{\be,p,\cA} \leq \|f -
g\|_{\be,p,\cA}
\]
for all $f,g\in BL^p_{\cA} (G)$. This implies
that $\sap(G)$, as well as the sets
of almost periodic functions considered below, are closed under
taking normal contractions.     \exend
\end{remark}

On $BC^p_{\cA} (G)$, we introduce the equivalence relation $\equiv$\nomenclature{$\equiv$}{equivalence relation}
with $f\equiv g$  whenever $\|f - g\|_{\be,p,\cA} =0$. In this case,
$\|\cdot\|_{\be,p,\cA}$ becomes a norm on the quotient
 $BC^p_{\cA} (G)/\equiv $ turning it into a complete
space. It is not hard to establish the following crucial feature of
this space: For each $t\in G$, the map $\tau_t  : \Cu
(G)\longrightarrow \Cu (G)$ can be uniquely extended to a continuous
map $T_t$ on $BC^p_{\cA} (G)/\equiv$. The map $T_t$ is isometric for
each $t\in G$, and for each $[f]\in BC^p_{\cA}(G)/\equiv$\label{equiv-class}, the map
\begin{equation}\label{Tt}
G\longrightarrow BC^p_{\cA} (G)/\equiv\,,\qquad t\mapsto T_t [f] \,,
\end{equation}
is continuous.

For $\varphi \in\Cc (G)$, we define the operator $T(\varphi)$\label{T-conv} as the
convolution of $\varphi$ on $BC^p_{\cA} (G)/\equiv$ by setting
\[
T(\varphi)[f]:=\int_G \varphi (s)\, T_s [f]\,\dd s \,
\]
for $[f] \in BC^p_{\cA} (G)$. Here, the integral is defined via
Riemann sums (which is possible as   $G\longrightarrow BC^p_{\cA}
(G)$, $s\mapsto \varphi (s) T_s [f]$, is continuous with compact
support). As each $T_t$, $t\in G$, is an isometry, the inequality
$\|T(\varphi)\| \leq \|\varphi\|_1$ holds for all $\varphi \in\Cc
(G)$. It is not hard to see that $T(\varphi)$ agrees on $\Cu (G)$
with the convolution by $\varphi$, i.e.
$$T(\varphi) [f] = [f*\varphi]$$
holds for all $\varphi \in \Cc (G)$ and $f\in \Cu (G)$. Indeed, the
function $G\times G\longrightarrow \CC, (t,s)\mapsto \varphi (s)
f(t-s)$ is bounded and uniformly continuous, and this shows that the
approximation of  $\int_G \varphi (s)\, T_s [f]\, \dd s$  by
Riemann sums is close to $f\ast \varphi$ in supremum norm and,
hence, also in $BC^p_{\cA}(G)$.
In particular, if $(\varphi_\alpha)$ is an approximate identity, we
find that
\[
T(\varphi_\alpha) [f]\to [f] \,,
\]
first for all $f\in \Cu (G)$ and, then, by the uniform boundedness of
the $T(\varphi_\alpha)$, for all $f\in BC^p_{\cA} (G)$.
Moreover, we easily see by a direct computation that $(\tau_t f_n)$
converges to $\tau_t f $ for all $t\in G$, and $(\varphi \ast f_n)$
converges to $\varphi
*f$ with respect to $\|\cdot\|_{\be,p,\cA}$ whenever $f$ is  bounded
and measurable, $f_n$ belongs to $\Cu (G)$ and $(f_n)$ converges to $f$
with respect to $\|\cdot\|_{\be,p,\cA}$. So, we obtain
\begin{equation}\label{eq-tra}
T_t [f] = [\tau_t f]\qquad \mbox{ as well as }\qquad T(\varphi) [f] = [\varphi *f]
\end{equation}
for all
bounded $f\in BC^p_{\cA}(G)$.

\begin{prop}[Compatibility with translations]\label{prop:compatibility}
 Let $\cA$ be a van Hove sequence. Let $f\in BC^p_{\cA} (G)$ be
 given.  If $\tau_t f\in BC^p_{\cA} (G)$ for some $t\in G$, then
 \[
 T_t [f] = [\tau_t f] \,.
 \]
\end{prop}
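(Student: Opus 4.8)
The plan is to reduce to the case of bounded functions, for which the identity $T_t[f]=[\tau_t f]$ has already been established above, and then to pass to the limit through the cut-off approximation. A naive density argument fails here because translation is not continuous on all of $BL^p_{\cA}(G)$ (indeed, as Remark~\ref{rem:not-translation-invariant} shows, $\tau_t f$ may leave the space entirely), so I cannot simply extend the relation $T_t g=[\tau_t g]$ from $\Cu(G)$ by continuity. The cut-offs $c_n$ remedy this, because they approximate from within the space and, crucially, commute with translation.

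First I would record that for every $n$ the function $c_n(f)$ lies in $BC^p_{\cA}(G)$ and is bounded: if $g_k\in\Cu(G)$ approximate $f$ in $\|\cdot\|_{b,p,\cA}$, then $c_n(g_k)\in\Cu(G)$ (the cut-off of a bounded uniformly continuous function is again bounded and uniformly continuous, $c_n$ being $1$-Lipschitz), and $c_n(g_k)\to c_n(f)$ by the contraction property of $c_n$, so $c_n(f)$ lies in the closure $BC^p_{\cA}(G)$. Next I would observe the pointwise identity $\tau_t c_n(f)=c_n(\tau_t f)$, which holds simply because $c_n$ acts pointwise while translation only shifts the argument. Since $c_n(f)$ is bounded, the already-established identity for bounded elements gives
\[
T_t[c_n(f)] = [\tau_t c_n(f)] = [c_n(\tau_t f)].
\]
Finally I would let $n\to\infty$. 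On the left, $c_n(f)\to f$ in $BC^p_{\cA}(G)$ by Proposition~\ref{prop:approxiimation by bounded functions}, and $T_t$ is an isometry, hence continuous, so $T_t[c_n(f)]\to T_t[f]$. On the right, the hypothesis $\tau_t f\in BC^p_{\cA}(G)$ lets me apply Proposition~\ref{prop:approxiimation by bounded functions} to $\tau_t f$, yielding $[c_n(\tau_t f)]\to[\tau_t f]$. Equating the two limits gives $T_t[f]=[\tau_t f]$.

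The one place where genuine care is needed — and where the hypothesis is used — is the convergence on the right-hand side: without knowing $\tau_t f\in BC^p_{\cA}(G)$, the expression $[\tau_t f]$ would not even be defined, and the cut-offs $c_n(\tau_t f)$ would have no reason to converge to it. Everything else is routine: the commutation of cut-off with translation is immediate, and the left-hand convergence uses only the isometry property of $T_t$ together with the cut-off approximation proposition. I expect no essential obstacle beyond being explicit that $c_n(f)$ and $c_n(\tau_t f)$ genuinely lie in $BC^p_{\cA}(G)$, so that both the bounded-case identity and Proposition~\ref{prop:approxiimation by bounded functions} apply.
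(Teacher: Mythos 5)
Your argument is correct and is essentially the paper's own proof: both use the cut-offs $c_n$, the pointwise commutation $\tau_t c_n(f)=c_n(\tau_t f)$, the bounded-case identity $T_t[c_n(f)]=[\tau_t c_n(f)]$, and Proposition~\ref{prop:approxiimation by bounded functions} applied to both $f$ and $\tau_t f$ before passing to the limit. Your additional remarks (why $c_n(f)\in BC^p_{\cA}(G)$, why a naive density argument fails) merely make explicit what the paper leaves implicit.
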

\begin{proof}
By Proposition~\ref{prop:approxiimation by bounded
functions}, we have $c_n (f)\to f$ as well as $c_n (\tau_t f) \to
\tau_t f$.
Since $(c_n (f))$ is bounded by construction, we have
\[
T_t [c_n (f)] = [\tau_t c_n (f)] = [c_n (\tau_t f)]
\]
and, consequently,
\[
 T_t [f]= \lim_{n\to\infty} T_t [c_n (f)] = \lim_{n\to\infty} [c_n (\tau_t f)] =  [\tau_t f] \,.
\]
\end{proof}

In  subsequent parts of the book, we will consider the situation
that we  are given a  subspace $\fS'$ of $\Cu (G)$ which is
invariant under translation and closed in $\|\cdot\|_\infty$. Hence,
this subspace is  also invariant under taking convolutions with
elements from $\Cc (G)$. We will be interested in the closure
$\fS$ of this subspace in $BC^p_{\cA}(G)$ equipped with
$\|\cdot \|_{\be,p,\cA}$. Clearly, the translation action and the convolution
then descend from $BC^p_{\cA}(G)$ to $\fS$, and the above
considerations hold for $\fS$ as well. Specifically, we
find the following property.

\begin{prop}[Translation action and convolution]\label{prop:translation}
Given a F\o lner sequence $\cA$ and $p\geq 1$. Let
$\fS'$ be a subspace of $\Cu (G)$ which is invariant under
translation and closed in $\|\cdot\|_\infty$, and let $\fS$ be
its closure in $BL^p_{\cA} (G)$.
\begin{itemize}
\item[(a)] For each $t\in G$, there exists a (unique) continuous map
\[
T_t : \fS/\equiv \longrightarrow \fS/\equiv
\]
extending the translation $\tau_t$  on $\fS'$.  Each $T_t $
is an isometry.
\item[(b)] The map $G\longrightarrow \fS/\equiv$, $t\mapsto
T_t [f]$, is continuous for each $f\in \fS$.
\item[(c)]  $T_t \circ T_s = T_{t+ s}$ and $T_0 = \mbox{Id}$ hold for all
$t,s\in G$.
\item[(d)] For each $\varphi \in \Cc (G)$, there exists a unique continuous  map
\[
T(\varphi) : \fS/\equiv \longrightarrow \fS/
\equiv
\] with
\[
T(\varphi)[f] = [f * \varphi]
\] for all $f\in\fS'$.
\item[(e)] If $(\varphi_\alpha)$ is an approximate identity, then
$T(\varphi_\alpha) [f] \to [f]$ for all $f\in\fS$.
\item[(f)] For all $f\in \fS\cap L^\infty(G)$, we have $\tau_t  f\in \fS$ and $T_t [f] = [\tau_t  f]$ for all $t\in G$.
\item[(g)]  For all $f\in \fS\cap L^\infty(G)$, we have $ f*\varphi \in \fS$ and $T(\varphi) [f] = [f*\varphi]$ for all $\varphi \in \Cc (G)$.   \qed
\end{itemize}
\end{prop}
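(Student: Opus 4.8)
The plan is to observe that every assertion (a)--(g) was already proved verbatim for the special case $\mathcal{S}' = \Cu(G)$, $\mathcal{S} = BC^p_{\cA}(G)$ in the discussion preceding the proposition, and that each of those arguments transfers to a general $\mathcal{S}$ once two stability properties of $\mathcal{S}'$ are in place: invariance under translation (which is assumed) and invariance under convolution with elements of $\Cc(G)$. I would derive the latter first. For $f\in\mathcal{S}'$ and $\varphi\in\Cc(G)$, the convolution $f*\varphi = \int_G \varphi(s)\,\tau_s f\,\dd s$ is a uniform limit of its Riemann sums (using that $f\in\Cu(G)$), and each such sum is a finite linear combination of translates $\tau_s f$, which lie in $\mathcal{S}'$ by translation invariance; since $\mathcal{S}'$ is a subspace closed in $\|\cdot\|_\infty$, this forces $f*\varphi\in\mathcal{S}'$. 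I would also record at the outset that $\mathcal{S}/\equiv$ is complete: it is a closed subspace of $BC^p_{\cA}(G)/\equiv$, which is complete by Theorem~\ref{thm:completeness}.

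With these in hand, I would establish (a)--(e) exactly as before. For (a), Lemma~\ref{L1}(c) shows $\tau_t$ is an isometry for $\|\cdot\|_{b,p,\cA}$ on the bounded functions of $\mathcal{S}'$ (this also makes $\tau_t$ well defined on classes); by density of $\mathcal{S}'$ in $\mathcal{S}$ and completeness of $\mathcal{S}/\equiv$ it extends uniquely to an isometry $T_t$, and continuity of $T_t$ together with $\tau_t(\mathcal{S}')\subseteq\mathcal{S}'$ guarantees that $T_t$ maps $\mathcal{S}/\equiv$ into itself. Parts (b) and (c) follow by the standard density-plus-uniform-boundedness argument, using $\|g\|_{b,p,\cA}\leq\|g\|_\infty$ for $g\in\Cu(G)$ and the $\|\cdot\|_\infty$-continuity of $t\mapsto\tau_t f$ for $f\in\Cu(G)$. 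For (d) I would define $T(\varphi)[f]:=\int_G\varphi(s)\,T_s[f]\,\dd s$ via Riemann sums as before; the identity $T(\varphi)[f]=[f*\varphi]$ on $\mathcal{S}'$ is unchanged, and convolution invariance of $\mathcal{S}'$ ensures $T(\varphi)$ stays inside $\mathcal{S}/\equiv$. Part (e) is the same approximate-identity computation, combining $\|f*\varphi_\alpha - f\|_\infty\to 0$ for $f\in\Cu(G)$ with the uniform operator bound $\|T(\varphi_\alpha)\|\leq\|\varphi_\alpha\|_1$.

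The genuinely substantive points are (f) and (g), precisely because the Besicovitch seminorm is not translation invariant in general (Remark~\ref{rem:not-translation-invariant}) and $\mathcal{S}$ is only a closure in that seminorm; the hypothesis $f\in\mathcal{S}\cap L^\infty(G)$ is what rescues the argument. Given such a bounded $f$, choose $f_n\in\mathcal{S}'$ with $\|f_n - f\|_{b,p,\cA}\to 0$. For (f), since $f_n - f\in BL^p_{\cA}(G)\cap L^\infty(G)$, Lemma~\ref{L1}(c) gives $\|\tau_t f_n - \tau_t f\|_{b,p,\cA}=\|f_n-f\|_{b,p,\cA}\to 0$; as each $\tau_t f_n\in\mathcal{S}'$ and $\mathcal{S}$ is closed, $\tau_t f\in\mathcal{S}$, and continuity of $T_t$ yields $T_t[f]=\lim[\tau_t f_n]=[\tau_t f]$. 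For (g) I would argue identically, invoking the fact recorded just before Proposition~\ref{prop:compatibility} that $\varphi*f_n\to\varphi*f$ in $\|\cdot\|_{b,p,\cA}$ when $f$ is bounded and $f_n\in\Cu(G)$ converges to $f$; since $f_n*\varphi\in\mathcal{S}'$ and $\mathcal{S}$ is closed, this gives $f*\varphi\in\mathcal{S}$ and $T(\varphi)[f]=[f*\varphi]$. I expect the work here to be bookkeeping rather than conceptual: the only place where $\mathcal{S}'$ must be used beyond what already holds for $BC^p_{\cA}(G)$ is in showing that $\tau_t$ and $T(\varphi)$ send $\mathcal{S}$ back into $\mathcal{S}$, which is exactly where translation- and convolution-invariance of $\mathcal{S}'$ enter.
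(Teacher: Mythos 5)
Your proposal is correct and follows exactly the route the paper intends: the paper states this proposition with no separate proof, asserting that the considerations developed for $\mathcal{S}'=\Cu(G)$, $\mathcal{S}=BC^p_{\cA}(G)$ descend to any translation-invariant, $\|\cdot\|_\infty$-closed $\mathcal{S}'$, and your write-up simply makes that transfer explicit (including the Riemann-sum argument for convolution invariance of $\mathcal{S}'$, which the paper also records, and the correct use of Lemma~\ref{L1}(c) and the pre-Proposition~\ref{prop:compatibility} convergence facts for the bounded-function statements (f) and (g)).
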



\begin{coro}[Translation action and convolution]\label{prop:translation2}
Let a F\o lner sequence $\cA$ and $p\geq 1$ be given.

\begin{itemize}
\item[(a)] For each $t\in G$, there exists a (unique) continuous map
\[
T_t : BC^p_{\cA} (G)/\equiv \longrightarrow BC^p_{\cA} (G)/\equiv
\]
extending the translation $\tau_t$  on $\Cu(G)$.  Each $T_t $
is an isometry.
\item[(b)] The map $G\longrightarrow BC^p_{\cA} (G)/\equiv$, $t\mapsto
T_t [f]$, is continuous for each $f\in \Cu(G)$.
\item[(c)]  $T_t \circ T_s = T_{t+ s}$ and $T_0 = \mbox{Id}$ hold for all
$t,s\in G$.
\item[(d)] For each $\varphi \in \Cc (G)$, there exists a unique continuous  map
\[
T(\varphi) :  BC^p_{\cA} (G)/\equiv \longrightarrow  BC^p_{\cA} (G)/
\equiv
\] such that
\[
T(\varphi)[f] = [f * \varphi]
\] for all $f\in \Cu(G)$.
\item[(e)] If $(\varphi_\alpha)$ is  an approximate identity, then
$T(\varphi_\alpha) [f] \to [f]$ for all $f\in  BC^p_{\cA} (G)$.
\item[(f)] For all $f\in BC^p_{\cA} (G) \cap L^\infty(G)$, we have $\tau_t  f\in BC^p_{\cA} (G)$
and $T_t [f] = [\tau_t  f]$ for all $t\in G$ .
\item[(g)] For all $f\in BC^p_{\cA} (G)\cap L^\infty(G)$, we have $ f*\varphi \in BC^p_{\cA} (G)$
 and $T(\varphi) [f] = [f*\varphi]$ for all $\varphi \in \Cc (G)$. \qed
\end{itemize}
\end{coro}

One can even extend the validity of $T(\varphi) [f] = [f*\varphi]$ to
all $f\in BC^1_\cA (G)$ with $f\theta \in \cM^\infty(G)$. To show
this (and more), we need a preparatory lemma.

\begin{lemma}\label{lemm:conv ineq}
Let $\cA$ be a van Hove sequence and $p\geq 1$ be given. For any function $f \in
\BL_{\cA}^p(G)$ which satisfies $f \theta_G \in
\cM^\infty(G)$, one has
\[
\| f *\varphi \|_{\be,p,\cA} \leq 2\, \| f \|_{\be,p,\cA}\, \|\varphi \|_1
\]
for all $\varphi \in \Cc(G)$.
\end{lemma}
\begin{proof}
For each $n$, Young's convolution inequality \cite[Prop.~2.39]{Foll} implies
\begin{align*}
\int_{G} \left| \int_{G} 1_{A_n}(t-s)\, f(t-s)\, \varphi (s)\, \dd s
\right|^p \dd t \leq \| \varphi \|_1^p \int_{A_n} | f(t)|^p\, \dd t  \,.
\end{align*}
We also have
\[
\left| \int_{G} 1_{A_n}(t)\, f(t-s)\, \varphi(s)\, \dd s \right| \leq
\left| \int_{G} 1_{A_n}(t-s)\, f(t-s) \,\varphi (s)\, \dd s
\right|+1_{\partial^K(A_n)}(t)\,\|f*\varphi \|_\infty  \,.
\]
By the standard inequality $(a+b)^p \leq 2^p(a^p+b^p)$, we get
\begin{align*}
\left| \int_{G} 1_{A_n}(t)\, f(t-s)\, \varphi (s)\, \dd s \right|^p
    &\leq 2^p \left| \int_{G} 1_{A_n}(t-s)\, f(t-s)\, \varphi (s)\,
     \dd s \right|^p  \\
    &\phantom{==}+2^p \left(1_{\partial^K(A_n)}(t)\|f*\varphi
      \|_\infty\right)^p \,.
\end{align*}
Therefore, we obtain
\begin{align*}
\int_{G} 1_{A_n}(t) \left| \int_{G} f(t-s)\, \varphi (s)\, \dd s
\right|^p \dd t
    &\leq 2^p \int_{G}\left| \int_{G} 1_{A_n}(t-s)\, f(t-s)\,
       \varphi (s)\, \dd s \right|^p \dd t \\
    &\phantom{==}+\int_{G} 1_{\partial^K(A_n)}(t)\,\|f*\varphi \|^p_\infty\, \dd t
      \,.
\end{align*}
Invoking the consequence of Young's inequality given above and using
the van Hove property and the boundedness of $f*\varphi$ (which follows
from $f \theta_G \in \cM^\infty(G)$), the claim follows.
\end{proof}

\begin{prop}[$f$ versus $f\theta_G$]\label{prop:measure-vs-function}
Let $\fS'$ be a subspace of  $\Cu (G)$ which is invariant
under translation and closed with respect to $\|\cdot\|_\infty$. Let
$\fS$ be its closure in $BL^p_{\cA} (G)$ with respect to
$\|\cdot\|_{\be,p,\cA}$. Then, the following assertions hold:
\begin{itemize}
\item[(a)] One has $f*\varphi \in \fS'$ for all $f\in\fS'$ and
$\varphi \in \Cc (G)$.
\item[(b)] Let $f\in \mathcal{L}^p_{loc} (G)$ be such that  $f \theta_G$ is a
translation bounded measure, and assume that $(f_n)$ is a sequence in
$\fS'$ with $f_n \to f$ with respect to
$\|\cdot\|_{\be,p,\cA}$. Then, $f*\varphi$ belongs to $\fS$
and  $(f_n*\varphi)$ converges to $f*\varphi$ with respect to
$\|\cdot\|_{\be,p,\cA}$.
\item[(c)] For $f\in BL^p_{\cA} (G)$ with $f\theta_G \in \cM^\infty(G)$,
the following assertions are equivalent:
\begin{itemize}
\item[(i)] $f$ belongs to $BC^p_{\cA} (G)$ and $f* \varphi\in \fS$ for all $\varphi \in \Cc
(G)$.
\item[(ii)] $f$ belongs to $\fS$.
\end{itemize}
In particular, for $f\in\Cu (G)$, we have  $f\in \fS$ if and
only if $f* \varphi\in \fS$ for all $\varphi \in \Cc (G)$.
\end{itemize}
\end{prop}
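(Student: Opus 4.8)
The plan is to treat the three parts in order, with part~(c)(i)$\Rightarrow$(ii) being the substantial one; the other items reduce quickly to the two tools already at hand, namely the convolution estimate of Lemma~\ref{lemm:conv ineq} and the cut-off approximation of Proposition~\ref{prop:approxiimation by bounded functions}.

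For (a), I would write the convolution as the vector-valued integral $f*\varphi=\int_G \varphi(s)\,\tau_s f\,\dd s$ over the compact support of $\varphi$. Since $f\in\Cu(G)$, the map $s\mapsto \varphi(s)\,\tau_s f$ is continuous from $G$ into $(\Cu(G),\|\cdot\|_\infty)$ and compactly supported, so $f*\varphi$ is the $\|\cdot\|_\infty$-limit of its Riemann sums, each of which is a finite linear combination of translates of $f$ and hence lies in $\mathcal{S}'$. As $\mathcal{S}'$ is translation invariant and $\|\cdot\|_\infty$-closed, the limit $f*\varphi$ lies in $\mathcal{S}'$; this is exactly the reasoning already used above to identify $T(\varphi)$ with convolution on $\Cu(G)$.

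For (b), observe first that $f_n\in\mathcal{S}'\subseteq\Cu(G)$ is bounded, so $f_n\theta_G\in\cM^\infty(G)$, and together with the hypothesis $f\theta_G\in\cM^\infty(G)$ this gives $(f-f_n)\theta_G\in\cM^\infty(G)$. Thus Lemma~\ref{lemm:conv ineq} applies to $f-f_n$ and yields $\|f*\varphi-f_n*\varphi\|_{b,p,\cA}=\|(f-f_n)*\varphi\|_{b,p,\cA}\leq 2\,\|f-f_n\|_{b,p,\cA}\,\|\varphi\|_1\to 0$, where $f*\varphi=(f\theta_G)*\varphi\in\Cu(G)$ is well defined by translation boundedness of $f\theta_G$. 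By part~(a) each $f_n*\varphi$ lies in $\mathcal{S}'\subseteq\mathcal{S}$, so the $\|\cdot\|_{b,p,\cA}$-limit $f*\varphi$ lies in the closed set $\mathcal{S}$. For (c), the implication (ii)$\Rightarrow$(i) is then immediate: $\mathcal{S}\subseteq BC^p_\cA(G)$ because $\mathcal{S}'\subseteq\Cu(G)$, and if $f\in\mathcal{S}$ one picks $f_n\in\mathcal{S}'$ with $f_n\to f$ and applies part~(b) to conclude $f*\varphi\in\mathcal{S}$ for every $\varphi\in\Cc(G)$.

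The heart of the matter is (i)$\Rightarrow$(ii). The plan is first to extend the identity $T(\varphi)[f]=[f*\varphi]$, established above only for bounded $f$, to every $f\in BC^p_\cA(G)$ with $f\theta_G\in\cM^\infty(G)$, where $T(\varphi)$ is the convolution operator on $BC^p_\cA(G)/\equiv$ constructed above. Applying Proposition~\ref{prop:approxiimation by bounded functions} gives $c_n(f)\to f$ in $\|\cdot\|_{b,p,\cA}$; continuity of $T(\varphi)$ then yields $T(\varphi)[c_n(f)]\to T(\varphi)[f]$, while the bounded case gives $T(\varphi)[c_n(f)]=[c_n(f)*\varphi]$. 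Since each $c_n(f)$ is bounded we have $(f-c_n(f))\theta_G\in\cM^\infty(G)$, so Lemma~\ref{lemm:conv ineq} gives $c_n(f)*\varphi\to f*\varphi$ in $\|\cdot\|_{b,p,\cA}$, and comparing the two limits forces $T(\varphi)[f]=[f*\varphi]$. With this in hand, choose an approximate identity $(\varphi_\alpha)$; then $T(\varphi_\alpha)[f]\to[f]$ while $T(\varphi_\alpha)[f]=[f*\varphi_\alpha]$ with $f*\varphi_\alpha\in\mathcal{S}$ by hypothesis, so closedness of $\mathcal{S}$ forces $f\in\mathcal{S}$. The \emph{in particular} statement is the special case of~(c) for $f\in\Cu(G)$, for which $f\theta_G\in\cM^\infty(G)$ and $f\in BC^p_\cA(G)$ hold automatically. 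The main obstacle I anticipate is precisely this extension of $T(\varphi)[f]=[f*\varphi]$ to unbounded $f$: it requires combining the cut-off approximation with the convolution estimate while checking at each step that translation boundedness of $f\theta_G$ is preserved under differences and cut-offs, so that Lemma~\ref{lemm:conv ineq} remains applicable.
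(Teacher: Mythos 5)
Your proposal is correct, and parts (a), (b) and the implication (ii)$\Rightarrow$(i) of (c) coincide with the paper's argument: (a) is the Riemann-sum identification of $f*\varphi$ with a $\|\cdot\|_\infty$-limit of linear combinations of translates, and (b) is exactly the application of Lemma~\ref{lemm:conv ineq} to $f-f_n$ (whose associated measure is translation bounded, as you check). Where you diverge is in (i)$\Rightarrow$(ii) of (c). The paper argues directly: it reduces to showing $\|f-f*\varphi\|_{b,p,\cA}$ is small and runs a three-term triangle inequality through an intermediate $g\in\Cu(G)$ close to $f$, using $\|g-g*\varphi\|_\infty$ small and Lemma~\ref{lemm:conv ineq} for the term $\|g*\varphi-f*\varphi\|_{b,p,\cA}$. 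You instead first extend the identity $T(\varphi)[f]=[f*\varphi]$ from bounded elements to all $f\in BC^p_{\cA}(G)$ with $f\theta_G\in\cM^\infty(G)$ (via the cut-offs $c_n(f)$, continuity of $T(\varphi)$, and Lemma~\ref{lemm:conv ineq} applied to $f-c_n(f)$), and then invoke the already-established approximate-identity convergence $T(\varphi_\alpha)[h]\to[h]$ on $BC^p_{\cA}(G)$. Both routes rest on the same two ingredients (density of $\Cu(G)$ and the convolution estimate), so the difference is organizational rather than substantive; but your version has the side benefit of explicitly proving the identity $T(\varphi)[f]=[f*\varphi]$ for $f$ with translation bounded $f\theta_G$, which is precisely the extension the paper announces in the paragraph preceding the proposition without writing out. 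Your bookkeeping of when Lemma~\ref{lemm:conv ineq} applies (differences and cut-offs of functions with translation bounded associated measure remain translation bounded) is the right thing to worry about and is handled correctly.
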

\begin{proof} (a) This follows easily as $\fS'$ is closed
under translations and with respect to $\|\cdot\|_\infty$.

\medskip

\noindent (b) As $\fS$ is closed, it suffices to show that
$(f_n*\varphi)$ converges to $f*\varphi$ with respect to
$\|\cdot\|_{\be,p,\cA}$.  Now, the desired statement follows from Lemma~\ref{lemm:conv ineq} and
\[
\|f *\varphi - f_n*\varphi\|_{\be,p,\cA} \leq 2\, \|f - f_n\|_{\be,p,\cA}\,
\|\varphi\|_1 \,.
\]

\smallskip

\noindent (c) The implication (ii)$\Longrightarrow$ (i) follows from (b). It
remains to show  (i)$\Longrightarrow$(ii): By (i), the function
$f*\varphi$ belongs to $\fS$ for all $\varphi \in \Cc (G)$
and $\fS$ is closed. Hence, it suffices to show that $\|f -
f*\varphi\|_{\be,p,\cA}$ becomes arbitrarily small. Now, for $\varphi
\in \Cc (G)$ with $\|\varphi\|_1\leq 1$ and $g\in \Cu (G)$, we find
\begin{eqnarray*}
\| f - f*\varphi\|_{\be,p,\cA}  &\leq &\|f - g\|_{\be,p,\cA} + \|g -
g*\varphi\|_{\be,p,\cA} + \|g*\varphi - f*\varphi\|_{\be,p,\cA}\\
&\leq & \|f  - g\|_{\be,p,\cA} + \|g - g*\varphi\|_\infty + 2\, \|g -
f\|_{\be,p,\cA},
\end{eqnarray*}
where we used Lemma~\ref{lemm:conv
ineq} and $\|\cdot\|_{\be,p,\cA} \leq\|\cdot\|_\infty$. Now, the right
hand side can be made arbitrarily small, first by choosing $g$
sufficiently close to $f$ (which is possible due to the assumption
$f\in BC^p_{\cA}(G)$) and then choosing $\varphi \in \Cc (G)$ with
$\|g - g*\varphi\|_\infty$ sufficiently small (which is possible due
to $g\in \Cu (G)$). This proves (ii).
\end{proof}

All spaces of almost periodic functions considered in the remainder
of this book will be subspaces of $BC^p_{\cA} (G)$. Consequently, they
can and will be equipped with a continuous  translation.

\section{Spectral theory of $\mathcal{N}$-representations}
In this section, we develop an abstract version of diffraction
theory based on the concept of $\mathcal{N}$-representation.
In the next section we  will combine this with  our considerations on the spaces coming from the Besicovitch- and Weyl seminorms. This will naturally lead to the concept of $\mathcal{A}$-representation. These are at the core of our subsequent considerations (and cover the diffraction theory developed above for translation bounded measures).  A discussion of these two concepts in the context of model sets is given in Example \ref{ex-cp-a-rep-vs-n-rep}. As far as methods go the material of this section and the next section is a crucial new ingredient for our solution of the three problems discussed in the introduction.

\medskip

Let $\mathcal{H}$ be a vector space with semi-inner product $\langle
\cdot,\cdot\rangle$ and associated seminorm $\|\cdot\|$. A
continuous action $T$  of $G$ on $\mathcal{H}$ via isometries $T_t$,
$t\in G$, will also  be referred to as \textit{unitary
representation}\index{unitary~representation} of $G$ on $\mathcal{H}$. Given such a representation,
a direct computation shows that $g: G\longrightarrow \CC$, $t\mapsto
\langle f, T_t f\rangle$, satisfies
\[
\sum_{j,k=1}^n c_i\, \overline{c_j} g(t_i - t_j) =
\Big\|\sum_{j=1}^n c_j T_{t_j} f\Big\|^2 \geq 0
\]
for $n\in\NN$ and arbitrary
$c_1,\ldots, c_n\in \CC$ and $t_1,\ldots, t_n\in G$. Hence, this
function is positive definite for each $f\in\mathcal{H}$, and it is clearly continuous.
Thus, by Bochner's theorem, for each $f\in\mathcal{H}$, there exists a unique,
positive and finite measure $\sigma_f$ on $\widehat{G}$ with
\[
\langle f, T_t f\rangle = \int_{\widehat{G}}  \chi (t)\, \dd\sigma_f
(\chi)
\]
for all $t\in G$. This measure is called the
\textit{spectral measure}\index{spectral~measure} of $f$.

When we have a continuous action of $G$ by isometries $T_t$ on
$\mathcal{H}$, we can define the operator
\[
T(\varphi) : \mathcal{H}\longrightarrow \mathcal{H}, \qquad f\mapsto \int_G
\varphi (s) T_s f\, \dd s,
\]
for $\varphi \in \Cc (G)$. Then, $T(\varphi)$ will be a bounded
operator with
\[
\|T(\varphi)\|\leq  \| \varphi\|_1 \,.
\]
The spectral
measure is compatible with taking convolutions in the following
sense.

\begin{prop}\label{prop-spectralmeasure-convolution}
Let $\mathcal{H}$ be a vector space with semi-inner product $\langle
\cdot,\cdot\rangle$. Let $G$ act continuously  on $\mathcal{H}$ via
isometries $T_t$, $t\in G$. Then,
\[
\sigma_{T(\varphi)f} =
|\widehat{\varphi}|^2 \sigma_{f} \, .
\]
\end{prop}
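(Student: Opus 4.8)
The plan is to compute the positive-definite function $t\mapsto \langle T(\varphi)f, T_t T(\varphi)f\rangle$ explicitly and to recognize it as the Fourier transform of the measure $|\widehat{\varphi}|^2\sigma_f$; the claim then follows at once from the uniqueness part of Bochner's theorem already used to construct the spectral measure. As a preliminary observation, $|\widehat{\varphi}|^2\sigma_f$ is indeed a finite positive measure on $\widehat{G}$: the function $\widehat{\varphi}$ is bounded, since $\varphi\in\Cc(G)\subseteq L^1(G)$, and $\sigma_f$ is finite. Hence it is a legitimate candidate for a spectral measure, and it suffices to show it reproduces the correct positive-definite function.

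First I would expand the definition of $T(\varphi)$. Using that each $T_t$ is linear and continuous and that $T_tT_s = T_{t+s}$, one gets $T_tT(\varphi)f = \int_G \varphi(s)\,T_{t+s}f\,\dd s$. Since the semi-inner product is continuous (by Cauchy--Schwarz) and the integrals defining $T(\varphi)f$ are limits of Riemann sums of continuous, compactly supported integrands, I can pull both integrals outside the inner product, linearly in the first and conjugate-linearly in the second argument, to obtain
\[
\langle T(\varphi)f, T_tT(\varphi)f\rangle = \int_G\int_G \varphi(r)\,\overline{\varphi(s)}\,\langle T_rf, T_{t+s}f\rangle\,\dd r\,\dd s \,.
\]
The next step uses that each $T_r$ is an isometry and hence preserves the semi-inner product (by the polarization identity applied to the seminorm), so that $\langle T_rf, T_{t+s}f\rangle = \langle f, T_{t+s-r}f\rangle$.

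I would then insert the Bochner representation $\langle f, T_uf\rangle = \int_{\widehat{G}}\chi(u)\,\dd\sigma_f(\chi)$ with $u = t+s-r$, and exploit that each $\chi\in\widehat{G}$ is a character, so $\chi(t+s-r) = \chi(t)\,\chi(s)\,\overline{\chi(r)}$. An application of Fubini (justified by the compact support of $\varphi$, boundedness of the integrand, and finiteness of $\sigma_f$) separates the three variables and yields
\[
\langle T(\varphi)f, T_tT(\varphi)f\rangle = \int_{\widehat{G}} \chi(t) \left( \int_G \varphi(r)\,\overline{\chi(r)}\,\dd r \right)\left( \int_G \overline{\varphi(s)}\,\chi(s)\,\dd s \right)\dd\sigma_f(\chi) \,.
\]
By the definition of the Fourier transform the inner factors are exactly $\widehat{\varphi}(\chi)$ and $\overline{\widehat{\varphi}(\chi)}$, whose product is $|\widehat{\varphi}(\chi)|^2$. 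Hence $\langle T(\varphi)f, T_tT(\varphi)f\rangle = \int_{\widehat{G}}\chi(t)\,\dd\big(|\widehat{\varphi}|^2\sigma_f\big)(\chi)$ for every $t\in G$, and uniqueness in Bochner's theorem gives $\sigma_{T(\varphi)f} = |\widehat{\varphi}|^2\sigma_f$.

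The only genuine subtleties — and the steps I would treat most carefully — are the justifications for commuting the continuous sesquilinear form with the vector-valued integrals defining $T(\varphi)f$, the Fubini interchange across $G\times G\times\widehat{G}$, and the fact that an isometry of a seminorm arising from a semi-inner product automatically preserves that form via polarization. None of these is a real obstacle; once the interchanges are in place the argument is essentially a bookkeeping computation, and the placement of the conjugation (first slot unconjugated, second slot conjugated) is precisely what routes the two factors onto $\widehat{\varphi}$ and $\overline{\widehat{\varphi}}$.
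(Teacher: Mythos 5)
Your proof is correct and follows essentially the same route as the paper's: expand $\langle T(\varphi)f, T_tT(\varphi)f\rangle$ into a double integral, shift by the isometry property, insert the Bochner representation of $\langle f,T_uf\rangle$, and use the character identity plus Fubini to recognize $|\widehat{\varphi}|^2$, concluding by uniqueness of the Fourier transform. The extra care you take with the interchange of the sesquilinear form and the vector-valued integrals is a welcome elaboration of what the paper calls ``a direct computation,'' but it is not a different argument.
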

\begin{proof} A direct computation shows that both  $\sigma_{T(\varphi)f}$ and $|\widehat{\varphi}|^2
\sigma_{f}$ have the same (inverse) Fourier transform:
\begin{align*}
\langle T(\varphi)f, T_t T(\varphi) f\rangle
    &= \int_G \int_G \varphi(s)\, \overline{\varphi(r)}\, \langle f,
       T_{t-s+r} f\rangle\, \dd s\, \dd r \\
    &=\int_G \int_G \varphi(s)\, \overline{\varphi(r)} \int_{\widehat{G}}
         \chi(t-s+r)\, \dd\sigma_{f}(\chi)\,  \dd s\, \dd r \\
    &= \int_{\widehat{G}} \chi (t)\, |\widehat{\varphi} (\chi)|^2 \,
        \dd\sigma_{f} (\chi) \,.
\end{align*}
The desired claim follows by the uniqueness property of Fourier transforms.
\end{proof}

We will be particularly interested in the situation that all
spectral measures are pure point measures. To put this in context,
consider a  unitary representation $T$ of $G$ on a Hilbert space $\mathcal{H}$.
An $f\in\mathcal{H}$ with $f\neq 0$ is called an
\textit{eigenfunction}\index{eigenfunction} of $T$ to the \textit{eigenvalue}\index{eigenvalue} $\chi
\in\widehat{G}$ if $T_t f = \chi(t) f$ for all $t\in G$.

$T$ is said to have \textit{pure point spectrum}\index{spectrum!pure~point~spectrum~for~unitary~representations} if $\cH$ has an
orthonormal basis of eigenfunctions. As is well known (and not hard
to see), pure point spectrum of $T$ is equivalent to all measures
$\sigma_f$ being pure point measures. This suggests to look for
criteria ensuring that a spectral measure is a pure point measure.
The following characterization is well known for Hilbert spaces, see e.g. \cite{LS}.
It applies to pre-Hilbert spaces as well. We include a proof for completeness reasons.

\begin{prop}\label{prop:Hilbert} Let $\mathcal{H}$ be a vector space with
semi-inner product $\langle \cdot,\cdot\rangle$ and associated
seminorm $\|\cdot\|$. Let  $G$ act continuously on $\mathcal{H}$ via
isometries $T_t$, $t\in G$. For $f\in \mathcal{H}$, the following
assertions are equivalent:
\begin{itemize}
\item[(i)] The spectral measure $\sigma_f$ is pure point.
\item[(ii)] The function $G\longrightarrow \CC$, $t\mapsto \langle  f,
 T_t f\rangle$, is Bohr almost periodic.
\item[(iii)] The function $G\longrightarrow \mathcal{H}$, $t\mapsto T_t
f$, is almost periodic in the sense that, for each $\varepsilon >0$,
the set
\[
P_{\cH}(f;\eps):=\{t\in G: \|f - T_t f\|< \varepsilon\}
\]
is relatively dense.
\end{itemize}
\end{prop}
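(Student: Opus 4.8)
The plan is to make the continuous positive definite function $g\colon t\mapsto\langle f,T_tf\rangle$ the central object and to establish the two equivalences (ii)$\Leftrightarrow$(iii) and (i)$\Leftrightarrow$(ii) separately. The bridge to the spectral measure is that $g$ is exactly the (inverse) Fourier transform of the finite measure $\sigma_f$, so that $g\theta_G$ is a positive definite, translation bounded measure with $\widehat{g\theta_G}=\sigma_f$. This allows me to reduce the pure point statement in (i) to the characterization of strongly almost periodic positive definite measures recalled earlier in Section~\ref{sec-key-player}.

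For (ii)$\Leftrightarrow$(iii) I would exploit the isometry property of the $T_t$ via two elementary computations. First, since $\|T_tf\|=\|f\|$ one gets $\|f-T_tf\|^2=2\bigl(g(0)-\re g(t)\bigr)\le 2\,|g(0)-g(t)|$. Second, using $T_{t-s}\circ T_s=T_t$ and Cauchy--Schwarz, one has $g(t)-(\tau_s g)(t)=\langle f,T_{t-s}(T_sf-f)\rangle$, so that $\|g-\tau_s g\|_\infty\le\|f\|\,\|f-T_sf\|$. The first estimate shows that every $\varepsilon$-almost period $t$ of $g$ (at which, by evaluation at the point $t$, the quantity $|g(0)-g(t)|$ is small) is an almost period of $t\mapsto T_tf$, giving (ii)$\Rightarrow$(iii). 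The second estimate gives the converse, and combined with the bound $|g(t)|\le\|f\|^2$ and the uniform continuity of $g$ (which follows from that same estimate together with continuity of $s\mapsto T_sf$ at $0$, hence holds in all cases) shows that $g\in\Cu(G)$ has relatively dense $\varepsilon$-almost periods, i.e. is Bohr almost periodic.

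For (i)$\Leftrightarrow$(ii) I would pass to the measure $g\theta_G$. On the one hand, $g$ being Bohr almost periodic is equivalent to $g\theta_G$ being a strongly almost periodic measure: if $g\in SAP(G)$ then each $g*\varphi=(g\theta_G)*\varphi$ inherits the almost periods of $g$ through $\|g*\varphi-\tau_s(g*\varphi)\|_\infty\le\|g-\tau_sg\|_\infty\,\|\varphi\|_1$, while conversely convolving with an approximate identity and using $g\in\Cu(G)$ recovers $g$ as a uniform limit of Bohr almost periodic functions. On the other hand, $g\theta_G$ is positive definite with Fourier transform $\sigma_f$, so the cited equivalence ``a positive definite measure is strongly almost periodic if and only if its Fourier transform is pure point'' matches strong almost periodicity of $g\theta_G$ with pure pointedness of $\sigma_f$. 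Chaining these yields (i)$\Leftrightarrow$(ii).

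The direction (i)$\Rightarrow$(ii) is routine and could also be seen directly: a pure point $\sigma_f=\sum_j a_j\delta_{\chi_j}$ gives $g=\sum_j a_j\chi_j$, a uniformly convergent sum of characters and hence Bohr almost periodic. The genuine obstacle is the converse (ii)$\Rightarrow$(i), namely excluding a continuous component of $\sigma_f$, and this is precisely where the deep input enters; the cleanest route is the cited measure characterization. If one prefers to avoid that black box, one can instead note that $g_{\mathrm{pp}}$, the transform of the atomic part of $\sigma_f$, is Bohr almost periodic by (i)$\Rightarrow$(ii), so the transform $g-g_{\mathrm{pp}}$ of the continuous part is Bohr almost periodic with all Fourier--Bohr coefficients equal to zero, whence it vanishes identically by the uniqueness theorem for Bohr almost periodic functions; injectivity of the Fourier transform on finite measures then forces the continuous part of $\sigma_f$ to be zero.
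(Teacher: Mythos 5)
Your proof is correct. The equivalence (ii)$\Leftrightarrow$(iii) is handled exactly as in the paper: the identity $\|f-T_tf\|^2=2\bigl(\langle f,f\rangle-\re\langle f,T_tf\rangle\bigr)$ for one direction and the Cauchy--Schwarz estimate $|g(t)-g(t-s)|\le \|f\|\,\|f-T_sf\|$ for the other are precisely the two computations given there (you are in fact slightly more careful, since you also verify $g\in\Cu(G)$, which the paper's definition of Bohr almost periodicity formally requires). The only divergence is in (i)$\Leftrightarrow$(ii): the paper attributes this to ``a classical result of Wiener'' and gives no argument, whereas you supply two derivations. Your first route, through the positive definite measure $g\theta_G$ and the characterization ``a positive definite measure is strongly almost periodic iff its Fourier transform is pure point'' recalled in Section~\ref{sec-key-player}, is sound; the passage between Bohr almost periodicity of $g$ and strong almost periodicity of $g\theta_G$ via convolution bounds and approximate identities is exactly the mechanism of Proposition~\ref{prop:measure-vs-function}. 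Your second, self-contained route --- split $\sigma_f$ into atomic and continuous parts, observe that the transform of the atomic part is a uniformly convergent character sum, and kill the continuous part by exhibiting its transform as a Bohr almost periodic function with vanishing Fourier--Bohr coefficients --- is the standard proof of Wiener's theorem and is also correct; it buys independence from the measure-theoretic black box at the cost of invoking the uniqueness theorem for Bohr almost periodic functions and an interchange of mean and integral, which is legitimate by dominated convergence since $\sigma_f$ is finite.
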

\begin{proof} The equivalence between (i) and (ii) is a classical
result of Wiener \cite{Wie} (see \cite{Ebe2} for a proof in arbitrary LCAG's).

\medskip

\noindent (ii)$\Longrightarrow$(iii): Using that  $T_t$ is an isometry, we have
\[
\|f - T_t f\|^2 = 2 (\langle f, f\rangle - \mbox{Re} \langle  f, T_t f\rangle) \leq 2|
\langle f, f\rangle - \langle f, T_t f \rangle|
\]
for each $t\in G$. The claim follows immediately.

\medskip

\noindent (iii)$\Longrightarrow$(ii): Set $F(t) := \langle  f, T_t  f\rangle$. Then, for each $t\in G$, we have
\begin{align*}
|F (t + s) - F(s)| &= |\langle f, T_{t+s} f\rangle - \langle f, T_s f
\rangle| \leq \|T_{t+s} f - T_s f\|\, \|f\|\\
& = \|T_{t} f - f\|\, \|f\| \,,
\end{align*}
and the claim follows.
\end{proof}

We will now consider the  special representations to be defined next.

\begin{definition}[$\mathcal{N}$-representation]
 An \textit{$\mathcal{N}$-representation}\index{$\mathcal{N}$-representation} of $G$ is a quadruple
$(N,\mathcal{H},\langle\cdot,\cdot\rangle,T)$ consisting of a complex vector space $\mathcal{H}$ with semi-inner product $\langle \cdot,\cdot\rangle$ together with a continuous  action $T_t$, $t\in G$, of $G$ on $\mathcal{H}$ by isometries and a linear $G$-equivariant map
\[
N : \Cc (G)\longrightarrow \mathcal{H} \,.  \tag*{$\Diamond$}
\]
\end{definition}

Whenever $(N,\mathcal{H},\langle\cdot,\cdot\rangle,T)$  is an
$\mathcal{N}$-representation, we denote the spectral measure of
$N(\varphi)$ by $\sigma_{\varphi}$ (instead of $\sigma_{N(\varphi)}$). We will be interested in the situation that all $\sigma_{\varphi}$ come about by one measure on $\widehat{G}$ as discussed in the subsequent definition.

\begin{definition}[Diffraction measure of an $\mathcal{N}$-representation]
A measure $\sigma$ on $\widehat{G}$ is called the \textit{diffraction measure}\index{diffraction!$\mathcal{N}$-representation} of the $\mathcal{N}$-representation  if
\[
|\widehat{\varphi}|^2 \sigma = \sigma_{\varphi}
\]
holds for all $\varphi \in\Cc (G)$. \exend
\end{definition}

We will often refer to $N: \Cc
(G)\longrightarrow \mathcal{H}$ or even just $N$  as an
$\mathcal{N}$-representation.
For us, the situation where $\mathcal{H}$ is a Hilbert space and $N(\Cc (G))$ is dense in $\mathcal{H}$ will be particularly relevant. In this case, we speak about an $\mathcal{N}$-representation \textit{on a Hilbert space with dense range}. Note that this is not so much an assumption  but rather a matter of convenience. Indeed, whenever $N :\Cc (G)\longrightarrow \mathcal{H}$ is an
$\mathcal{N}$-representation, we can always  factor out elements of $\mathcal{H}$  with vanishing seminorm and take the completion of $N(\Cc (G))$.

We call the $\mathcal{N}$-representation $N$ \textit{intertwining}\index{intertwining!$\mathcal{N}$-representation}
if
\[
T(\varphi) N(\psi) = T(\psi) N(\varphi) \qquad \mbox{ for all } \varphi,\psi\in \Cc (G) \,.
\]

\begin{prop}[Intertwining follows from continuity]
\label{prop:continuity-implies-intertwining} If the
$\mathcal{N}$-representation $N$ is continuous, then for all $\varphi, \psi \in \Cc(G)$ we have
\[
T(\varphi) N(\psi) = N(\varphi \ast \psi) \,.
\]
In particular, $N$ is intertwining.
\end{prop}
\begin{proof} A short computation using the continuity of $N$ gives
\[
T(\varphi) N(\psi) = \int_G \varphi (s)\, T_s N(\psi)\, \dd s = \int_G N(\varphi(s) \tau_s \psi)\, \dd s = N(\varphi \ast \psi) \,.
\]
From this, we directly see that $N$ is intertwining as $\varphi \ast \psi = \psi \ast \varphi$.
\end{proof}

An  intertwining $\mathcal{N}$-representation may not admit an autocorrelation measure. However, it does admit a weak analogue of an autocorrelation measure  that is sufficient to allow for a diffraction measure (which is the main concern for us). To discuss this we need some preparation.

Let us recall the following notation of \cite{ARMA}  \nomenclature{$K_2(G)$}{span of the set of convolutions of two continuous and compactly supported functions on $G$}
\[
K_2(G) := \operatorname{span}\{\varphi*\psi : \varphi,\psi\in\Cc(G)
\} \,.
\]

\begin{definition}\label{semi-measure}
  A linear map  $\vartheta : K_2(G) \to \CC$ is called a
\textit{semi-measure}\index{semi-measure}. A semi-measure $\vartheta$ is
\textit{Fourier transformable}\index{Fourier~transformable!semi-measure} if there exists a measure
$\widehat{\vartheta}$ on $\widehat{G}$ such that, for all $\varphi
\in \Cc(G)$, we have
\[
\left| \widecheck{\varphi} \right|^2 \in
\mathcal{L}^1(|\widehat{\vartheta}|) \qquad \text{ and } \qquad \vartheta(\varphi*\widetilde{\varphi})=\widehat{\vartheta}\big( |
\widecheck{\varphi}|^2  \big) \,.
\]
In this case, we call the measure $\widehat{\vartheta}$ the
\textit{Fourier transform}\index{Fourier~transform!semi-measure} of $\vartheta$.     \exend
\end{definition}

Given a semi-measure $\vartheta$, for all $\psi \in K_2(G)$, we can
define the \textit{convolution}\index{convolution!function~and~semi-measure}
\[
(\vartheta*\psi)(t):= \vartheta( \tau_t\psi^{\dagger}) \,. \label{semi-conv}
\]

\smallskip

We say that $N$ possesses the  \textit{semi-autocorrelation}\index{semi-autocorrelation} $\eta$
if $\eta$ is a  semi-measure with
\begin{equation}\label{eq:autoc def}
\langle N(\varphi), N(\psi)\rangle = (\eta*\varphi
*\widetilde{\psi})(0)
\end{equation}
for all $\varphi,\psi\in\Cc (G)$. We say that $N$ possesses an
\textit{autocorrelation}\index{autocorrelation!$\mathcal{N}$-representation} if $\eta$ is a measure.
Note that, when a semi-measure satisfying
\eqref{eq:autoc def} exists, it is Fourier transformable by
Remark~\ref{rem:pd implies FT}.

\begin{lemma}[Intertwining and diffraction measure]\label{lem:char-intertwining}
Let $N:\Cc (G)\longrightarrow \mathcal{H}$ be an
$\mathcal{N}$-representation. Then, the following assertions are
equivalent:
\begin{itemize}
\item[(i)] $N$ is intertwining.
\item[(ii)]  $N$ possesses a semi-autocorrelation $\eta$.
\item[(iii)] $N$ possesses a diffraction measure $\sigma$.
\end{itemize}
If one of these equivalent conditions holds, then $\widehat{\eta} =
\sigma$.
\end{lemma}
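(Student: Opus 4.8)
The plan is to prove the cyclic chain (i)$\Longrightarrow$(iii)$\Longrightarrow$(ii)$\Longrightarrow$(i), which gives the equivalence, and to read off $\widehat{\eta}=\sigma$ along the way. Throughout I use the normalisations $\widehat{\varphi}(\chi)=\int_G\overline{\chi(t)}\,\varphi(t)\,\dd t$ and $\check{\varphi}(\chi)=\int_G\chi(t)\,\varphi(t)\,\dd t$, together with the elementary rules $\widehat{\varphi\ast\widetilde{\psi}}=\widehat{\varphi}\,\overline{\widehat{\psi}}$, $\check{\varphi\ast\psi}=\check{\varphi}\,\check{\psi}$, $\check{\varphi}=\widehat{\varphi^{\dagger}}$ and $\check{g^{\dagger}}=\widehat{g}$. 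The technical backbone is the identity, valid whenever $\eta$ is a Fourier transformable semi-measure satisfying \eqref{eq:autoc def}: for all $\varphi,\psi\in\Cc(G)$ and $t\in G$,
\[
\langle N(\varphi),T_t N(\psi)\rangle=(\eta\ast\varphi\ast\widetilde{\psi})(t)=\int_{\widehat{G}}\chi(t)\,\widehat{\varphi}(\chi)\,\overline{\widehat{\psi}(\chi)}\,\dd\widehat{\eta}(\chi)\,.
\]
The first equality follows by applying \eqref{eq:autoc def} to the pair $(\varphi,\tau_t\psi)$, using $G$-equivariance $T_t N(\psi)=N(\tau_t\psi)$, the relation $\widetilde{\tau_t\psi}=\tau_{-t}\widetilde{\psi}$, and the rule $(\eta\ast\tau_{-t}h)(0)=(\eta\ast h)(t)$. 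The second follows by first extending $\eta(g)=\widehat{\eta}(\check{g})$ from $g=\varphi\ast\widetilde{\varphi}$ (the definition of Fourier transformability) to all $g\in K_2(G)$ by polarisation, and then tracking the transform of $\tau_t(\varphi\ast\widetilde{\psi})^{\dagger}$.

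For (i)$\Longrightarrow$(iii) --- which I expect to be the main obstacle, since here one must manufacture a genuine measure out of the abstract intertwining hypothesis --- I argue as follows. By Proposition~\ref{prop-spectralmeasure-convolution} one has $\sigma_{T(\psi)N(\varphi)}=|\widehat{\psi}|^2\sigma_{\varphi}$ and $\sigma_{T(\varphi)N(\psi)}=|\widehat{\varphi}|^2\sigma_{\psi}$; intertwining forces $T(\psi)N(\varphi)=T(\varphi)N(\psi)$, whence the consistency relation $|\widehat{\psi}|^2\sigma_{\varphi}=|\widehat{\varphi}|^2\sigma_{\psi}$ for all $\varphi,\psi$. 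Since for every $\chi_0\in\widehat{G}$ there is a $\varphi$ with $\widehat{\varphi}(\chi_0)\neq 0$, the sets $U_{\varphi}=\{\widehat{\varphi}\neq 0\}$ cover $\widehat{G}$, and on each $U_{\varphi}$ one sets $\sigma:=|\widehat{\varphi}|^{-2}\sigma_{\varphi}$. The consistency relation shows these local definitions agree on overlaps, so they patch to a single positive measure $\sigma$; the same relation shows $\sigma_{\varphi}$ is concentrated on $U_{\varphi}$, whence $|\widehat{\varphi}|^2\sigma=\sigma_{\varphi}$ globally. Local finiteness of $\sigma$ follows from finiteness of each $\sigma_{\varphi}$ and a compactness argument, so $\sigma$ is the diffraction measure.

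For (iii)$\Longrightarrow$(ii) I define $\eta$ on $K_2(G)$ directly by $\eta(g):=\int_{\widehat{G}}\check{g}\,\dd\sigma$. This is manifestly linear and well defined, because if $g=\sum_i\varphi_i\ast\psi_i=0$ then $\check{g}=\sum_i\check{\varphi_i}\check{\psi_i}=0$; the integrability $\check{g}\in L^1(\sigma)$ comes from $|\check{\varphi}|^2=|\widehat{\varphi^{\dagger}}|^2\in L^1(\sigma)$ (a restatement of finiteness of $\sigma_{\varphi^{\dagger}}$) via Cauchy--Schwarz. Taking $g=\varphi\ast\widetilde{\varphi}$ gives Fourier transformability with $\widehat{\eta}=\sigma$, and \eqref{eq:autoc def} is verified by comparing $(\eta\ast\varphi\ast\widetilde{\psi})(0)=\int\widehat{\varphi}\,\overline{\widehat{\psi}}\,\dd\sigma$ with $\langle N(\varphi),N(\psi)\rangle=\int\widehat{\varphi}\,\overline{\widehat{\psi}}\,\dd\sigma$, the latter obtained by polarising $\|N(\varphi)\|^2=\sigma_{\varphi}(\widehat{G})=\int|\widehat{\varphi}|^2\,\dd\sigma$.

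Finally, for (ii)$\Longrightarrow$(i) I expand $\|T(\varphi)N(\psi)-T(\psi)N(\varphi)\|^2$ and evaluate the three resulting inner products using the key identity above together with the isometry relation $\langle T_s a,T_r b\rangle=\langle a,T_{r-s}b\rangle$; a short Fubini computation shows all three equal the real number $\int_{\widehat{G}}|\widehat{\varphi}|^2|\widehat{\psi}|^2\,\dd\widehat{\eta}$, so the difference has vanishing seminorm and $N$ is intertwining. The identity $\widehat{\eta}=\sigma$ then holds in general: the semi-autocorrelation is unique, since \eqref{eq:autoc def} determines $\eta$ on the spanning set $\{\varphi^{\dagger}\ast\overline{\psi}\}$ of $K_2(G)$, and the diffraction measure is unique, being $|\widehat{\varphi}|^{-2}\sigma_{\varphi}$ on each $U_{\varphi}$; as the $\eta$ constructed in (iii)$\Longrightarrow$(ii) satisfies $\widehat{\eta}=\sigma$, so does the unique semi-autocorrelation.
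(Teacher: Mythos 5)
Your proposal is correct and follows essentially the same route as the paper: the compatibility relation $|\widehat{\varphi}|^2\sigma_\psi=|\widehat{\psi}|^2\sigma_\varphi$ obtained from Proposition~\ref{prop-spectralmeasure-convolution} plus a gluing construction for the diffraction measure (this is exactly Lemma~\ref{spectral lemma}, which you re-prove inline), the definition $\eta(g)=\widehat{\eta}(\check{g})$ for recovering the semi-measure (Proposition~\ref{FT of semi-measures}), and the expansion of $\|T(\varphi)N(\psi)-T(\psi)N(\varphi)\|^2$ via the Fourier representation of the inner products. The only difference is organizational: you run a cycle (i)$\Rightarrow$(iii)$\Rightarrow$(ii)$\Rightarrow$(i) where the paper uses (iii) as a hub, but your direct (ii)$\Rightarrow$(i) computation is the paper's (iii)$\Rightarrow$(i) computation with $\sigma$ replaced by $\widehat{\eta}$.
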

\begin{proof}  (i)$\Longrightarrow$(iii): By Lemma~\ref{spectral lemma} in
 Appendix \ref{appendix:semi},
it suffices to show that
\[
|\widehat{\varphi}|^2\, \sigma_\psi = |\widehat{\psi}|^2 \,
\sigma_\varphi
\]
holds for all $\varphi,\psi \in \Cc (G)$. However, this is immediate from
(i) and Proposition~\ref{prop-spectralmeasure-convolution}.

\medskip

\noindent (iii)$\Longrightarrow$(i): From (iii) and polarisation, we find
\[
\langle
N(\varphi), T_t N(\psi)\rangle = \int_{\widehat{G}} \chi(t)\, \widehat{\varphi}
(\chi)\, \overline{\widehat{\psi} (\chi)}\, \dd\sigma(\chi)
\]
for all
$t\in G$ and $\varphi,\psi \in \Cc (G)$. Given this, a  direct
computation similar to the one in the proof of Proposition~\ref{prop-spectralmeasure-convolution}, shows
\[
\langle T(\varphi) N(\psi), T(\varrho) N(\xi)\rangle = \int_{\widehat{G}}
\widehat{\varphi}(\chi)\, \widehat{\psi}(\chi)\, \overline{
\widehat{\varrho}(\chi)\,\widehat{\xi}(\chi)}\, \dd\sigma(\chi)
\]
for all
$\varphi,\psi,\varrho,\xi\in\Cc (G)$. This easily gives
\[
\|T(\varphi) N(\psi) - T(\psi)N(\varphi)\|^2 = 0 \,.
\]

\smallskip

\noindent (ii)$\Longrightarrow$ (iii): Let $\sigma$ be the Fourier transform
of $\eta$. From the defining properties of $\sigma$, $\eta$ and the
spectral measure, we find
\[
\int_{\widehat{G}} \chi(t)\, |\widehat{\varphi}|^2\, \dd\sigma (\chi) = (\eta*\varphi \ast
\widetilde{\tau_t \varphi})(0) = \langle N(\varphi), T_t
N(\varphi)\rangle =\int_{\widehat{G}} \chi (t)\, \dd\sigma_{\varphi}(\chi)\,.
\]
As this
holds for all $t\in G$, we conclude (iii).

\medskip

\noindent (iii)$\Longrightarrow$(ii): By (iii), the measures
$|\widehat{\varphi}|^2 \sigma$ agree with $\sigma_\varphi$ and, for all $\varphi \in\Cc (G)$, they are finite. Hence, $\sigma$ is
weakly admissible in the sense of Appendix \ref{appendix:semi}.
Then, Proposition~\ref{FT of semi-measures} guarantees the existence of a
semi-measure $\eta$ whose Fourier transform is $\sigma$, and (ii)
follows as $\eta$ satisfies
\[
(\eta*\varphi \ast \widetilde{\psi})(0)
= \eta\big((\varphi \ast \widetilde{\psi})^\dagger\big)
=\int_{\widehat{G}} \widehat{\varphi}(\chi)\,\overline{\widehat{\psi}(\chi)}\, \dd\sigma (\chi)
= \langle N(\varphi),N(\psi)\rangle
\]
for all $\varphi, \psi \in\CC (G)$.
Here, the last equality follows from (iii) and polarisation.

\medskip

\noindent The last statement has been shown in the proofs of
(iii)$\Longrightarrow$(ii) and (ii)$\Longrightarrow$(iii).
\end{proof}

\medskip

In the situation of Lemma~\ref{lem:char-intertwining}, $L^2 (\widehat{G},\sigma)$
admits a natural continuous action of $G$ by isometries via
\[
(t\cdot f) (\chi) = \chi
(t) f(\chi) \,.
\]
We will always think about $L^2 (\widehat{G},\sigma)$
as equipped with this action.

\begin{theorem}[Spectral theorem for $\mathcal{N}$-representation]\label{thm:intertwining-U}
Let $N:\Cc (G)\longrightarrow \mathcal{H}$ be an intertwining
$\mathcal{N}$-representation on a Hilbert space with dense range.
Then, there exists a unique unitary map
\[
U  : L^2 (\widehat{G},\sigma) \longrightarrow \mathcal{H}
\]
with  $\widehat{\varphi}\mapsto N(\varphi)$ for all $\varphi \in\Cc
(G)$. This map is $G$-equivariant.
\end{theorem}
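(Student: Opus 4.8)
The plan is to realize $U$ as the continuous extension of the tautological assignment $\widehat{\varphi}\mapsto N(\varphi)$, for which the decisive isometry property is already available. Since $N$ is intertwining, Lemma~\ref{lem:char-intertwining} provides the diffraction measure $\sigma$ together with the identity
\[
\langle N(\varphi),N(\psi)\rangle=\int_{\widehat{G}}\widehat{\varphi}(\chi)\,\overline{\widehat{\psi}(\chi)}\,\dd\sigma(\chi)=\langle\widehat{\varphi},\widehat{\psi}\rangle_{L^2(\widehat{G},\sigma)}
\]
that is established in the course of its proof. As $\Cc(G)$ is a vector space and both $N$ and the Fourier transform are linear, the assignment $V:\widehat{\varphi}\mapsto N(\varphi)$ is a well-defined linear isometry from the subspace $\mathcal{D}:=\{\widehat{\varphi}:\varphi\in\Cc(G)\}$ of $L^2(\widehat{G},\sigma)$ onto $N(\Cc(G))\subseteq\mathcal{H}$, well-definedness being automatic from isometry. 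I would then extend $V$ by uniform continuity.

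For the extension to live on all of $L^2(\widehat{G},\sigma)$ and to be surjective, two density facts are required: density of $N(\Cc(G))$ in $\mathcal{H}$, which is part of the hypothesis, and density of $\mathcal{D}$ in $L^2(\widehat{G},\sigma)$, which is the real content. I plan to obtain the latter via Stone--Weierstrass. First note that $\mathcal{D}\subseteq C_0(\widehat{G})$ by Riemann--Lebesgue, that $\mathcal{D}$ is a subalgebra since $\widehat{\varphi}\,\widehat{\psi}=\widehat{\varphi*\psi}$, that it is self-adjoint since $\overline{\widehat{\varphi}}=\widehat{\widetilde{\varphi}}$, and that it separates the points of $\widehat{G}$ and vanishes nowhere; hence $\mathcal{D}$ is $\|\cdot\|_\infty$-dense in $C_0(\widehat{G})$. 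As $\sigma$ is a Radon measure, $\Cc(\widehat{G})$ is dense in $L^2(\widehat{G},\sigma)$, so it suffices to approximate an arbitrary $h\in\Cc(\widehat{G})$, say with support $C$, in $L^2(\sigma)$ by elements of $\mathcal{D}$. Using that each $|\widehat{\varphi}|^2=\widehat{\varphi*\widetilde{\varphi}}$ lies in $\mathcal{D}$, is nonnegative, and belongs to $L^1(\sigma)$ because $\sigma_\varphi=|\widehat{\varphi}|^2\sigma$ is finite, a finite sum of such functions produces $g\in\mathcal{D}$ with $g\geq0$, $g\in L^2(\sigma)$, and $g>0$ on $C$. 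Then $k:=h/g$, extended by $0$ off $C$, lies in $\Cc(\widehat{G})$, and choosing $\widehat{\psi_n}\in\mathcal{D}$ with $\|k-\widehat{\psi_n}\|_\infty\to0$ gives $g\,\widehat{\psi_n}\in\mathcal{D}$ with
\[
\|h-g\,\widehat{\psi_n}\|_{L^2(\sigma)}=\|g\,(k-\widehat{\psi_n})\|_{L^2(\sigma)}\leq\|k-\widehat{\psi_n}\|_\infty\,\|g\|_{L^2(\sigma)}\longrightarrow 0\,.
\]
This is the step I expect to be the main obstacle: the delicate point is that $\sigma$ may be infinite, so a sup-norm approximation must be converted into an $L^2$-approximation by exploiting the finiteness of $|\widehat{\varphi}|^2\sigma$.

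Granting both densities, $V$ extends uniquely to an isometry $U:L^2(\widehat{G},\sigma)\to\mathcal{H}$ whose range is the closure of $N(\Cc(G))$, hence all of $\mathcal{H}$; thus $U$ is unitary, and it is the unique continuous map sending $\widehat{\varphi}$ to $N(\varphi)$. Finally, to see that $U$ is a $G$-map it suffices to verify the intertwining relation on the dense subspace $\mathcal{D}$: multiplication by $\chi(t)$ corresponds, under the Fourier transform, to translation of $\varphi$, so the $G$-equivariance of $N$ together with $U\widehat{\varphi}=N(\varphi)$ identifies $U$ applied to the multiplied element with the appropriate translate of $N(\varphi)$; continuity of both actions then propagates the identity from $\mathcal{D}$ to all of $L^2(\widehat{G},\sigma)$.
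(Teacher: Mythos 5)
Your proposal is correct and follows essentially the same route as the paper: realize $U$ as the continuous extension of the isometry $\widehat{\varphi}\mapsto N(\varphi)$ from $\mathcal{D}=\{\widehat{\varphi}:\varphi\in\Cc(G)\}$, using denseness of $N(\Cc(G))$ for surjectivity and checking equivariance on the dense subspace. The one point where you go beyond the paper is that you actually prove the density of $\mathcal{D}$ in $L^2(\widehat{G},\sigma)$ (via Stone--Weierstrass in $C_0(\widehat{G})$ together with the finiteness of $|\widehat{\varphi}|^2\sigma=\sigma_\varphi$ to pass from sup-norm to $L^2(\sigma)$-approximation), whereas the paper simply asserts that the closure of $\mathcal{D}$ is all of $L^2(\widehat{G},\sigma)$; your argument for that step is sound.
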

\begin{proof} For all $\varphi \in \Cc(G)$ we have
\[
\|\widehat{\varphi}\|_{L^2(\widehat{G},\sigma)}^2 = \int_{\widehat{G}} |\widehat{\varphi}(\chi)|^2 \,
\dd\sigma(\chi) = \int_{\widehat{G}} \dd\sigma_{\varphi}(\chi) = \|N(\varphi)\|^2 \,.
\]
Thus, $U$ is well defined and isometric on the subspace
$L:=\{\widehat{\varphi} : \varphi \in\Cc (G)\}$ of $L^2
(\widehat{G},\sigma)$. Hence, it can be extended to an isometric map
on the closure of $L$. This closure is $L^2 (\widehat{G},\sigma)$.
The map has dense range as $N$ has dense range. As it is an
isometry, it must be unitary. Finally, note that the map is
$G$-equivariant on $L$ as $N$ is $G$-equivariant.
\end{proof}

As unitary maps completely preserve spectral features, we
immediately obtain the following result.

\begin{coro}\label{coro:pointspectrum}
Let $N:\Cc (G)\to \mathcal{H}$ be an intertwining
$\mathcal{N}$-representation on a Hilbert space with dense range.
 Then, $\{T_t :t \in G \}$ has pure point spectrum if and only if $\sigma$ is a
pure point measure. In this case, each eigenvalue has multiplicity
one and the functions $c_\chi := U(1_\chi)$, for $\chi\in\widehat{G}$
with $\sigma (\{\chi\}) \neq 0$, form an orthogonal system with
dense span in $\mathcal{H}$ satisfying $\langle
c_{\chi},c_{\chi}\rangle = \sigma(\{\chi\})$.\qed
\end{coro}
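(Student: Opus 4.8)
The plan is to transport everything through the unitary $G$-map $U$ furnished by Theorem~\ref{thm:intertwining-U} and then to read off the spectral data directly from the multiplication representation on $L^2(\widehat{G},\sigma)$. First I would note that $U$ intertwines the multiplication action $(t\cdot f)(\chi)=\chi(t)\,f(\chi)$ on $L^2(\widehat{G},\sigma)$ with the representation $T$ on $\mathcal{H}$ and preserves the inner product. Hence it carries eigenfunctions to eigenfunctions with the same eigenvalue and orthonormal families to orthonormal families. Since pure point spectrum means, by definition, the existence of an orthonormal basis of eigenfunctions (equivalently, all spectral measures $\sigma_f$ being pure point, as recalled before Proposition~\ref{prop:Hilbert}), it follows that $T$ on $\mathcal{H}$ has pure point spectrum if and only if the multiplication representation on $L^2(\widehat{G},\sigma)$ does. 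This reduces the whole corollary to a statement purely about $L^2(\widehat{G},\sigma)$.

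Next I would analyse the multiplication representation itself. For every atom $\chi$ of $\sigma$ the indicator $1_{\{\chi\}}$ is a nonzero eigenfunction, since $t\cdot 1_{\{\chi\}}=\chi(t)\,1_{\{\chi\}}$. Conversely, if $f$ is an eigenfunction to an eigenvalue $\chi$, then $\chi'(t)\,f(\chi')=\chi(t)\,f(\chi')$ holds for all $t\in G$ and $\sigma$-almost every $\chi'$; as the characters separate the points of $\widehat{G}$, a null-set argument (exhausting $G$ by a suitable countable family of test elements $t$ and using continuity of $\chi'\mapsto\chi'(t)$) forces $f$ to vanish $\sigma$-a.e. off $\{\chi\}$. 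Thus each eigenspace is at most one-dimensional and is nonzero precisely when $\sigma(\{\chi\})\neq 0$, which gives the multiplicity-one statement. Moreover the closed span of the $1_{\{\chi\}}$ over all atoms equals $L^2$ of the pure point part of $\sigma$, and this exhausts $L^2(\widehat{G},\sigma)$ if and only if the continuous part of $\sigma$ vanishes, i.e. if and only if $\sigma$ is pure point. Combined with the first paragraph, this yields the claimed equivalence.

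Finally, in the pure point case I would push the orthogonal system $\{1_{\{\chi\}}\}$ back to $\mathcal{H}$ via $c_\chi := U(1_{\{\chi\}})$. Since $U$ is unitary, $\langle c_\chi,c_{\chi'}\rangle=\langle 1_{\{\chi\}},1_{\{\chi'\}}\rangle_{L^2(\widehat{G},\sigma)}$, which vanishes for $\chi\neq\chi'$ and equals $\sigma(\{\chi\})$ for $\chi=\chi'$; this gives at once the orthogonality and the norm identity $\langle c_\chi,c_\chi\rangle=\sigma(\{\chi\})$. Density of the span of the $c_\chi$ in $\mathcal{H}$ then follows from density of the span of the $1_{\{\chi\}}$ in $L^2(\widehat{G},\sigma)$, established above, together with surjectivity of $U$.

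The conceptual heart lies entirely in the second paragraph: transporting the spectral type through a unitary $G$-map is formal, and the norm and orthogonality computations are immediate from unitarity. The one point demanding genuine care is the identification of the eigenfunctions of the multiplication representation—specifically the passage from the family of $\sigma$-null sets $\{\chi' : \chi'(t)\neq\chi(t)\}$, one for each $t$, to a single null set, which is where I would invoke separability of $G$ (or at least a countable set of test elements $t$ together with continuity of the characters) to conclude that $f$ is concentrated on the single point $\chi$.
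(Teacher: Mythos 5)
Your overall route is exactly the one the paper intends: the corollary is stated as an immediate consequence of Theorem~\ref{thm:intertwining-U}, and your proof is simply that argument written out — transport the problem through the unitary $G$-map $U$, analyse the multiplication representation on $L^2(\widehat{G},\sigma)$, and pull the indicators $1_{\{\chi\}}$ back to $\mathcal{H}$. The first and third paragraphs are correct and purely formal, as you say.

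The one step that does not go through as written is the identification of the eigenfunctions of the multiplication representation. You pass from the family of $\sigma$-null sets $\{\chi' : \chi'(t)\neq \chi(t)\}$, one for each $t\in G$, to a single null set by invoking separability of $G$ (or a countable separating set of test elements). But the paper only assumes $G$ to be locally compact, $\sigma$-compact and Abelian; such groups need not be second countable, and in general no countable subset of $G$ separates a given character from all others, so the union of uncountably many null sets cannot be controlled this way. The gap is easily closed without any countability hypothesis: if $f$ is an eigenfunction to the eigenvalue $\chi$, compare the spectral measure of $f$, namely $|f|^2\sigma$, with $\|f\|^2\delta_\chi$. Both are finite positive measures on $\widehat{G}$ whose inverse Fourier transforms are $t\mapsto \langle f, t\cdot f\rangle=\chi(t)\|f\|^2$, so by uniqueness of the Fourier transform of finite measures they coincide; hence $\int_{\widehat{G}\setminus\{\chi\}}|f|^2\,\dd\sigma=0$ and $f$ is a multiple of $1_{\{\chi\}}$ in $L^2(\widehat{G},\sigma)$. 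This is the same Bochner-type reasoning the paper already uses for spectral measures, and with it the rest of your argument (multiplicity one, the equivalence with $\sigma$ being pure point, and the orthogonality and norm identities for $c_\chi=U(1_{\{\chi\}})$) stands as written.
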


Now, we consider the situation of Corollary~\ref{coro:pointspectrum}. Furthermore, assume that we are
given additionally an orthonormal basis of eigenfunctions $e_{\chi}$
 to the corresponding eigenvalue $\chi \in\widehat{G}$ of $\mathcal{H}$. For $\chi \in \widehat{G}$, we can define the \textit{Fourier coefficient
$A_\chi $ of $N$ (with respect to $(e_\chi)$)}\index{Fourier~coefficient} as the unique factor
with $A_{\chi} e_{\chi} = c_{\chi}$. Then, the following will be
true
\begin{align*}
|A_{\chi}|^2 \,&=\, \|c_{\chi}\|^2 = \sigma(\{\chi\})
\qquad \mbox{ and } \\
\langle N(\varphi), e_\chi \rangle
       \,& =\,\Big\langle \widehat{\varphi},\frac{1_\chi}{A_{\chi}} \Big\rangle
        = \widehat{\varphi}(\chi)\, A_{\chi}
\,,
\end{align*}
where we used $\sigma(\{\chi\}) = A_\chi
\cdot\overline{A_{\chi}}$ in the last step. This gives
$$N(\varphi) = \sum_{\chi} \langle N(\varphi),e_\chi\rangle e_\chi = \sum_{\chi}  \widehat{\varphi}(\chi)\, c_\chi$$
for all $\varphi \in\Cc (G)$. Using the functionals
\[
(\chi, \cdot): \Cc (G)\longrightarrow \CC \qquad \mbox{ with } (\chi,\varphi) = \widehat{\varphi}(\chi)  \,,
\]
we can thus formally expand $N$ as
\[
N = \sum_\chi (\chi, \cdot) \,c_\chi \,.
\]

\section[Diffraction of $\mathcal{A}$-representations]{Where invariant subspaces and $\mathcal{N}$-representations meet: diffraction theory of $\mathcal{A}$-representations}
In this section we put forward  special $\mathcal{N}$-representations which are  induced by van Hove sequences and take  values in the space $BC^2_{\cA} (G)$. As far as methods go these representations are the  crucial ingredient in our approach in subsequent sections.

\medskip

 Let $\cA$ be a van Hove sequence. Recall from
Section \ref{sub:trans} that $BC^2_{\cA} (G)$ is the closure of
$\Cu (G)$ with respect to $\|\cdot\|_{\be,2,\cA}$ and that
$(BC^2_{\cA}(G)/\equiv)$ allows for a continuous  action of $G$ by
translations $T_t, t\in G$. A linear map $N: \Cc (G)\longrightarrow
\mathcal{L}^1_{loc}(G)$ is called an \textit{$\cA$-representation}\index{$\mathcal{A}$-representation} if it
satisfies the following properties:
\begin{itemize}
\item $ N(\Cc (G))\subset BC^2_{\cA} (G)$,
\item  $ M_{\cA}(f\overline{g})$ exists for all $f,g\in N(\Cc (G))$,
\item $N(\tau_t (\varphi)) = \tau_t N(\varphi)$ for all $t\in G$ and
$\varphi \in \Cc (G)$.
\end{itemize}
Any $\cA$-representation $N$ gives naturally rise to an
$\mathcal{N}$-representation on a Hilbert space with dense range.
Specifically, we define
\[
\mathcal{H}:=\mbox{Closure of $\{[N(\varphi)]$, $\varphi \in\Cc
(G)\}$, in $BC^2_{\cA}(G)/\equiv$ } \,.
\]
Note that
\[
\langle [f], [g]\rangle :=M_{\cA}(f\overline{g})
\]
is well defined and gives an inner product on $\mathcal{H}$ whose
associated norm $\|\cdot\|$ satisfies
\[
\|[f]\| = M_{\cA}(|f|^2)^{1/2}  =\|f\|_{\be,2,\cA} \,.
\]
By construction, $\mathcal{H}$ is a Hilbert space. The defining
properties of $N$ and Proposition~\ref{prop:compatibility} imply that this
Hilbert space is invariant under the translation action and the map
\[
\overline{N}: \Cc (G)\longrightarrow \mathcal{H}\,,\qquad \varphi \mapsto
[N(\varphi)] \,,
\]
is $G$-equivariant with dense range. Hence,
$(\overline{N},\mathcal{H},\langle \cdot,\cdot\rangle, T)$ is an
$\mathcal{N}$-representation on a Hilbert space with dense range. If
this $\mathcal{N}$-representation is intertwining, we say that
the $\mathcal{A}$-representation $N$ is \textit{intertwining}\index{intertwining!$\mathcal{A}$-representation}.

We will be mostly interested in $\mathcal{A}$-representations
induced by measures. More specifically, for a measure $\mu$ on $G$,
we consider the map
\begin{align*}
N_\mu : \Cc(G)&\longrightarrow \mathcal{L}^1_{loc} (G)\,,  \\
 N_\mu (\varphi)\, &:=\, \mu \ast \varphi \,.
\end{align*}
In order for this to give an $\mathcal{A}$-representation, $\mu \ast
\varphi$ must belong to $BC^2_\cA (G)$ for all $\varphi \in\Cc (G)$,
and $M_{\cA} (\mu \ast \varphi \cdot \overline{\mu\ast \psi})$ must
exist for all $\varphi,\psi \in\Cc (G)$. The property \[
\tau_t
N_\mu(\varphi) = N_\mu(\tau_t \varphi)  \qquad \forall \, t \in G\,, \, \varphi
\in\Cc (G)
\]
is automatically satisfied. Finally, in order to apply
the above theorems, we also need that $N_\mu$ is intertwining. Next, we
gather some classes of measures for which all these assumptions
are satisfied.

\begin{prop}[Translation bounded measures with autocorrelation]
\label{prop:translation-bounded-admissible} Let $\mu$ be a
translation bounded measure on $G$ whose autocorrelation $\gamma$
exists with respect to the van Hove sequence $\cA$. Then, $N_\mu$
is an intertwining $\mathcal{A}$-representation.
\end{prop}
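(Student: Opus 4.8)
The plan is to verify directly the three defining conditions of an $\mathcal{A}$-representation for $N_\mu$ and then to establish the intertwining property, drawing on the machinery of Section~\ref{sub:seminorms} together with Proposition~\ref{prop-compute-autocorrelation}. Throughout I would use that translation boundedness of $\mu$ gives $\mu * \varphi \in \Cu(G)$ for all $\varphi \in \Cc(G)$, which is the single structural fact feeding every step.

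First I would check that the range of $N_\mu$ lies in $BC^2_{\cA}(G)$. Since $\mu * \varphi \in \Cu(G)$ and $BC^2_{\cA}(G)$ is by definition the closure of $\Cu(G)$ in $\|\cdot\|_{b,2,\cA}$, this is immediate; boundedness of $\mu * \varphi$ moreover gives $\|\mu * \varphi\|_{b,2,\cA} \leq \|\mu * \varphi\|_\infty < \infty$. Next, the $G$-equivariance $N_\mu(\tau_t \varphi) = \tau_t N_\mu(\varphi)$ follows from the elementary identity $\mu * (\tau_t \varphi) = \tau_t(\mu * \varphi)$, i.e.\ from the commutation of convolution with translation (compare Proposition~\ref{prop:compatibility}).

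The decisive property is the existence of the means $M_{\cA}(\mu * \varphi \cdot \overline{\mu * \psi})$ for all $\varphi, \psi \in \Cc(G)$, and this is exactly where the hypothesis enters. Applying Proposition~\ref{prop-compute-autocorrelation} with $\nu = \mu$, the existence of the autocorrelation $\gamma = \mu \circledast_{\cA} \widetilde{\mu}$ is equivalent to the existence of precisely these means. Since $\gamma$ exists by assumption, all the required means exist. Although short, this step is the conceptual heart of the statement: it is the precise bridge between the classical autocorrelation hypothesis and the abstract $\mathcal{A}$-representation framework, and getting the bookkeeping of Proposition~\ref{prop-compute-autocorrelation} right (with $\nu = \mu$) is the main point to be careful about.

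It then remains to show that $N_\mu$ is intertwining, i.e.\ that the associated $\mathcal{N}$-representation $\overline{N_\mu}(\varphi) = [\mu * \varphi]$ satisfies $T(\varphi)\overline{N_\mu}(\psi) = T(\psi)\overline{N_\mu}(\varphi)$ for all $\varphi,\psi \in \Cc(G)$. Since each $\mu * \psi$ lies in $\Cu(G)$ and is in particular bounded, Proposition~\ref{prop:translation}(g) (applied with $\mathcal{S}' = \Cu(G)$, so that $\mathcal{S} = BC^2_{\cA}(G)$) gives $T(\varphi)[\mu * \psi] = [(\mu * \psi) * \varphi] = [\mu * (\psi * \varphi)]$, and symmetrically $T(\psi)[\mu * \varphi] = [\mu * (\varphi * \psi)]$; commutativity of convolution, $\psi * \varphi = \varphi * \psi$, then yields equality, and both sides lie in $\mathcal{H}$ because $\psi * \varphi \in \Cc(G)$. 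Alternatively, one could note that $N_\mu$ is continuous for the inductive limit topology --- indeed $\|\mu * (\varphi_n - \varphi)\|_{b,2,\cA} \leq \|\mu * (\varphi_n - \varphi)\|_\infty \to 0$ whenever $\varphi_n \to \varphi$ with supports in a fixed compact set, by translation boundedness --- and then invoke Proposition~\ref{prop:continuity-implies-intertwining}. Either way, the intertwining is essentially formal once the means in the previous paragraph are known to exist.
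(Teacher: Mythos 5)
Your proof is correct, and the core of it --- membership of $\mu*\varphi$ in $\Cu(G)\subset BC^2_{\cA}(G)$ by translation boundedness, and existence of the means $M_{\cA}(\mu*\varphi\cdot\overline{\mu*\psi})$ via Proposition~\ref{prop-compute-autocorrelation} with $\nu=\mu$ --- is exactly the paper's argument. The only divergence is in the last step: the paper observes that $N_\mu$ possesses the autocorrelation $\gamma$ and then invokes the equivalence of Lemma~\ref{lem:char-intertwining} (existence of a semi-autocorrelation implies intertwining), whereas you verify the intertwining identity $T(\varphi)[\mu*\psi]=T(\psi)[\mu*\varphi]$ directly from $T(\varphi)[f]=[f*\varphi]$ for bounded $f$ together with associativity and commutativity of convolution. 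Your direct route is slightly more self-contained and avoids the semi-measure machinery behind Lemma~\ref{lem:char-intertwining}; the paper's route is shorter at this point because that lemma is needed elsewhere anyway and already packages the implication. Your alternative via continuity of $\varphi\mapsto\mu*\varphi$ and Proposition~\ref{prop:continuity-implies-intertwining} is also valid and is in fact noted in the remark following the proposition in the paper.
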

\begin{proof} As $\mu$ is translation bounded, $\mu \ast \varphi $
belongs to $\Cu (G)\subseteq BC^2_{\cA} (G)$ for all $\varphi \in\Cc
(G)$. Moreover, by Proposition~\ref{prop-compute-autocorrelation},
we have
\[
M_{\cA}( \mu*\varphi\cdot \overline{\mu*\psi}) = (\gamma
\ast \varphi \ast \tilde{\psi})(0) \qquad \text{ for all } \varphi, \psi \in\Cc
(G)\,.
\]
So, $N_\mu$ is indeed an $\mathcal{A}$-representation.
Clearly, $N_\mu$ possesses the autocorrelation $\gamma$. Hence, it
is intertwining by Lemma~\ref{lem:char-intertwining}.
\end{proof}

\begin{remark}
From Proposition~\ref{prop-compute-autocorrelation}, we see that
there is a converse of sorts to this proposition: If $\mu$ is
translation bounded such that $N_\mu$ is an intertwining
$\mathcal{A}$-representation, then $\mu$ possess an
autocorrelation.
 Let us also note that, in this case, $N_\mu$ is continuous (as a short computation shows).    \exend
 \end{remark}

\begin{prop}[$\mathcal{A}$-representation derived from continuity]
\label{prop:a-cont}
Let $\cA$ be a van Hove sequence on $G$ and  $\mu$ a  measure on $G$
with $\mu \ast \varphi \in BC^2_{\cA} (G)$ for all $\varphi \in \Cc
(G)$ such that $M_\cA (\mu*\varphi \cdot \overline{\mu*\psi})$
exists for all $\varphi, \psi \in\Cc (G)$. If $\Cc
(G)\longrightarrow BC^2_{\cA}(G)$, $\varphi \mapsto \mu*\varphi$, is
continuous, then $N_\mu$ is an intertwining
$\mathcal{A}$-representation. In particular, $N_\mu$ possesses a
semi-autocorrelation.
\end{prop}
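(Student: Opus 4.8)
The plan is to verify the three defining properties of an $\mathcal{A}$-representation directly from the standing hypotheses, then to use the continuity assumption to invoke Proposition~\ref{prop:continuity-implies-intertwining} for the intertwining property, and finally to read off the semi-autocorrelation from Lemma~\ref{lem:char-intertwining}.

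First I would confirm that $N_\mu$ is an $\mathcal{A}$-representation. Two of the three required properties are precisely the hypotheses: $N_\mu(\Cc(G))\subseteq BC^2_\cA(G)$ holds by assumption, and the mean $M_\cA(N_\mu(\varphi)\,\overline{N_\mu(\psi)})=M_\cA(\mu*\varphi\cdot\overline{\mu*\psi})$ exists by assumption for all $\varphi,\psi\in\Cc(G)$. The remaining property, equivariance, reduces to the one-line identity $\mu*(\tau_t\varphi)=\tau_t(\mu*\varphi)$ of functions, valid for all $t\in G$ and $\varphi\in\Cc(G)$. Consequently $N_\mu$ is an $\mathcal{A}$-representation, and by the general construction recorded after the definition of $\mathcal{A}$-representation it induces an $\mathcal{N}$-representation $\overline{N}\colon\Cc(G)\to\mathcal{H}$, $\varphi\mapsto[\mu*\varphi]$, on a Hilbert space with dense range, the $G$-equivariance of $\overline{N}$ being furnished by Proposition~\ref{prop:compatibility} (observing that $\tau_t(\mu*\varphi)=\mu*(\tau_t\varphi)\in BC^2_\cA(G)$, so the abstract translation $T_t$ agrees with the naive translate on representatives).

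Next I would transfer the continuity hypothesis to $\overline{N}$ and conclude. Since the inner-product norm on $\mathcal{H}$ coincides with $\|\cdot\|_{b,2,\cA}$ on representatives, the canonical map $BC^2_\cA(G)\to BC^2_\cA(G)/\!\equiv$ is continuous; composing it with the map $\varphi\mapsto\mu*\varphi$ (continuous by hypothesis for the inductive limit topology on $\Cc(G)$) shows that $\overline{N}$ is continuous. Proposition~\ref{prop:continuity-implies-intertwining} then gives that $\overline{N}$ is intertwining, which is exactly the statement that the $\mathcal{A}$-representation $N_\mu$ is intertwining. The final claim is then immediate: by the equivalence of (i) and (ii) in Lemma~\ref{lem:char-intertwining}, an intertwining $\mathcal{N}$-representation possesses a semi-autocorrelation, so $N_\mu$ does. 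Every step is routine once the earlier framework is in place; the only point meriting care is the transfer of continuity from $(BC^2_\cA(G),\|\cdot\|_{b,2,\cA})$ to the Hilbert space $\mathcal{H}$, so that Proposition~\ref{prop:continuity-implies-intertwining} applies, and this is immediate from the seminorm-to-norm relation built into the construction of $\mathcal{H}$, so no genuine obstacle arises.
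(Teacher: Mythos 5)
Your proposal is correct and follows essentially the same route as the paper: the paper's proof simply cites Proposition~\ref{prop:continuity-implies-intertwining} for the intertwining property and Lemma~\ref{lem:char-intertwining} for the semi-autocorrelation, exactly as you do. Your additional verifications (the three defining properties of an $\mathcal{A}$-representation, and the transfer of continuity from $BC^2_{\cA}(G)$ to the quotient Hilbert space $\mathcal{H}$) are the details the paper leaves implicit, and you handle them correctly.
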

\begin{proof} By Proposition~\ref{prop:continuity-implies-intertwining}, $N_\mu$ is an
intertwining $\mathcal{A}$-representation. The last statement
follows from Lemma~\ref{lem:char-intertwining}.
\end{proof}

\begin{prop}\label{prop:general-admissible}
Let $\cA$ be a van Hove sequence on $G$ and   $\mu$  a measure on
$G$ such that $\mu \ast \varphi \in BC^2_{\cA} (G)$ for all $\varphi \in
\Cc (G)$ and $|\mu|*\psi \in BL^2_{\cA}(G)$ for all $\psi \in
\Cc(G)$ with $\psi \geq 0$. If $M_\cA (\mu*\varphi \cdot
\overline{\mu*\psi})$ exists for all $\varphi, \psi \in\Cc (G)$,
then $N_\mu$ is an intertwining $\mathcal{A}$-representation. In
particular, $N_\mu$ possesses a semi-autocorrelation.
\end{prop}
\begin{proof}
This follows from Proposition~\ref{prop:a-cont} as the map $\Cc
(G)\longrightarrow BC^2_{\cA}(G)$, $\varphi \mapsto \mu*\varphi$, is
continuous. To see this, consider a sequence $(\varphi_n)$  in $\Cc
(G)$ converging to $\varphi \in \Cc (G)$ in the inductive limit
topology on $\Cc (G)$. Then, there exists a compact set $K\subseteq G$
such that the support of all $\varphi_n$ (and then the support of
$\varphi$ as well) is contained in $K$. Let $\psi\in\Cc(G)$ be a
nonnegative function with $\psi =1$ on $K$. Then, one has
\[
|\varphi
-\varphi_n|\leq \|\varphi-\varphi_n\|_\infty \psi
\]
and
\[
|\mu \ast \varphi - \mu \ast \varphi_n| =|\mu \ast (\varphi -\varphi_n)|\leq \|\varphi -\varphi_n\|_\infty\, \big(|\mu| \ast \psi\big) \,.
\]
This easily gives the desired continuity.
\end{proof}

As consequences, we get the following results.

\begin{coro}[$\mathcal{A}$-representation derived from positive measures]
\label{prop:positive-admissible} Let $\cA$ be a van Hove sequence on $G$, and let $\mu$ be a positive measure on $G$  with  $\mu \ast \varphi \in BC^2_{\cA} (G)$ for all $\varphi \in \Cc (G)$ such that $M_\cA
(\mu*\varphi \cdot \overline{\mu*\psi})$ exists for all $\varphi,
\psi \in\Cc (G)$. Then, $N_\mu$ is an intertwining
$\mathcal{A}$-representation. In  particular, $N_\mu$ possesses a
semi-autocorrelation. \qed
\end{coro}

\begin{coro}[$\mathcal{A}$-representation derived from measures with uniformly discrete support]
\label{prop:unif disc-admissible}
 Let $\cA$ be a van Hove sequence on $G$ and  $\mu$  a measure on $G$ with  $\mu \ast \varphi \in BC^2_{\cA}
(G)$ for all $\varphi \in \Cc (G)$ such that  $M_\cA (\mu*\varphi
\cdot \overline{\mu*\psi})$ exists for all $\varphi, \psi \in\Cc
(G)$. If $\supp(\mu)$ is uniformly discrete, then $N_\mu$ is an intertwining
$\mathcal{A}$-representation. In  particular, $N_\mu$ possesses a
semi-autocorrelation.
\end{coro}
\begin{proof}
Let $U$ be any open set such that $0 \in U$ and $\supp(\mu)$ is
$U$-uniformly discrete. Choose $t \in G$ and  $\varphi \in \Cc(G)$
with $\varphi \geq 0$ and $\supp(\varphi) \subset t+U$. Since
$\supp(\mu)$ is $t+U$-uniformly discrete, we have
\cite[Lem.~5.8.3]{NS11}
\begin{displaymath}
\left| \mu *\varphi \right|=\left| \mu\right| *\left|\varphi \right|= \left| \mu \right| * \varphi \,.
\end{displaymath}
Therefore, since $\mu \ast \varphi \in BC^2_{\cA}(G)$, we get
$|\mu|\ast\varphi = \left| \mu \ast \varphi\right| \in
BC^2_{\cA}(G)\subseteq BL^2_{\cA}(G)$. Since every function $\psi
\in \Cc(G)$ can be written as a finite linear combination of
positive function $\varphi \in \Cc(G)$ supported inside translates
of $U$, the claim follows from
Proposition~\ref{prop:general-admissible}.
\end{proof}

\chapter[Mean almost periodicity]{Mean almost periodicity and characterization of pure-point
diffraction}\label{sec-mean} In this chapter, we introduce the
notion of  mean  almost periodicity for functions and  measures
based on the seminorms $\|\cdot\|_{\be,p,\cA}$ and $\|\cdot\|_{\we,p,\cA}$, which arise  from averaging.
We use it to characterize pure-point diffraction.

For bounded functions $ f: \NN \to \CC$, mean almost periodicity was defined in \cite[Def.~6.6]{Que}, and it was used to characterize pure-point diffraction \cite[Lem.~6.6]{Que}. The approach in \cite{Que} uses the fact that the dual group $\widehat{\ZZ}=\RR/\ZZ$ is compact, and it does not seem to be easy to extend mean almost periodicity beyond non-discrete groups.

Systematic study for mean almost periodicity in arbitrary $\sigma$-compact LCAG seems not to have been undertaken before.

\section{Mean almost periodic functions and measures}

\subsection{Mean almost periodic functions}

\begin{definition}[Mean almost periodic classes of functions]
Let $\cA =(A_n)$ be a F\o lner sequence on $G$.
A class $[f] \in
BC^p_{\cA}(G)/\equiv$ is called \textit{mean $p$-almost periodic}\index{almost~periodic!mean~$p$-almost~periodic~class} with respect to
$\cA$ if, for each $\eps >0$, the set
\[
\AP^{\equiv}_{\be,p,\cA}([f], \eps):= \{ t \in G\, :\, \big\| [f]-T_t [f] \big\|_{\be,p,\cA} < \eps \}
\]
of \textit{$\eps$-mean almost periods of $[f]$}\index{almost~periods!mean~$p$-almost~periods~for~classes} is relatively dense. \nomenclature{$\AP^{\equiv}_{\be,p,\cA}([f], \eps)$}{mean $p$-almost periods for classes}
We denote the set of mean $p$-almost periodic classes in
$BC^p_{\cA}(G)/\equiv$ by $\mean^p_{\cA,\equiv} (G)$. \nomenclature{$\mean^p_{\cA,\equiv} (G)$}{set of mean $p$-almost periodic classes}

A function $f \in BC^p_{\cA}(G)$ is called \textit{mean $p$-almost periodic}\index{almost~periodic!mean~$p$-almost~periodic~function} if
$[f]$ is a mean $p$-almost periodic class.
We denote by $\mean^p_{\cA}(G)$ the set of mean $p$-almost periodic functions on $G$.
We will also set \nomenclature{$\mean^p_{\cA}(G)$}{space of mean $p$-almost periodic functions}
\[
\Mean_{\cA}(G):= \left(\mean^1_{\cA} (G) \right) \cap \Cu(G) \,.           \tag*{$\Diamond$}
\]
\end{definition}

Below, we will show that in the definition of $\Mean_{\cA}(G)$, the choice $p=1$ can be replaced by any $1 \leq p < \infty$. First, for $f \in \mean^p_{\cA} (G)$, let us set
\[
\AP_{\be,p,\cA}(f,\eps):= \{t \in G : \| f- \tau_t f \|_{\be,p,\cA} < \varepsilon \}
\]
and refer to this as the set of \textit{$\eps$-mean almost periods of $f$}\index{almost~periods!mean~$p$-almost~periods~for~functions}. \nomenclature{$\AP_{\be,p,\cA}(f, \eps)$}{mean $p$-almost periods for functions}

The following result is an immediate consequence of Corollary~\ref{prop:translation2}. We will prove a more general version of this below in Proposition~\ref{prop:independence}.

\begin{lemma}[Equivalence of mean almost periodicity for classes of bounded functions]
For any F\o lner sequence $\cA$ and any $f \in L^\infty(G) \cap  BC^p_{\cA}(G)$, one has
\[
\AP_{\be,p,\cA}(f,\eps)= \AP^{\equiv}_{\be,p,\cA}([f], \eps)\,.
\]
Therefore, $f$ is mean almost periodic if and only if,
for each $\eps >0$, the set $\AP_{\be,p,\cA}(f,\eps)$ is relatively dense.

In particular, the equivalence holds for all $f \in \Cu(G)$. \qed
\end{lemma}

In the spirit of Proposition~\ref{prop:SAPchar}, one can characterize mean $p$-almost periodicity in the following way.

\begin{theorem}[Characterization of mean almost periodicity via the hull]\label{thm-hull-char-map} Let $\cA$ be a F\o lner sequence on $G$, and let $1 \leq p < \infty$. For $[f] \in BC^p_{\cA}(G)/\equiv$ the following are equivalent:
\begin{itemize}
  \item[(i)] $[f]$ is mean $p$-almost periodic.
  \item[(ii)] $\{ T_t [f] : t \in G \}$ has a compact closure in $( BC^p_{\cA}(G)/\equiv , \| \cdot \|_{\be, p, \cA})$.
\item[(iii)] The closure $\overline{\{ T_t [f] : t \in G \}}$ of $\{ T_t [f] : t \in G \}$ in $( BC^p_{\cA}(G)/\equiv , \| \cdot \|_{\be, p, \cA})$ becomes a compact Abelian group with the operation induced by
\[
    T_t[f] \oplus T_s [f]= T_{t+s} [f] \qquad \text{ for all } t,s \in G \,.
\]
\item[(iv)] There exists a continuous function
\[
\Psi : G_{\mathsf{b}} \to \overline{\{ T_t [f] : t \in G \}}
\]
satisfying
\[
\Psi(i_\mathsf{b}(t))=T_t[f] \qquad \text{ for all } t \in G \,.
\]
\end{itemize}
Moreover, in this case, the mapping $\Psi$ is an onto group homomorphism.
\end{theorem}
\begin{proof}
By Theorem~\ref{thm:completeness}, we know that $( BC^p_{\cA}(G)/\equiv , \| \cdot \|_{\be, p, \cA})$ is complete. Moreover, by \eqref{Tt}, the action $T_t$ is continuous at each $[f] \in BC^p_{\cA}(G)/\equiv$. Finally, for each $t$, the mapping $T_t$ is an isometry on $( BC^p_{\cA}(G)/\equiv , \| \cdot \|_{\be, p, \cA})$.
Therefore, the conditions of \cite[Thm.~4.12]{LSS4} hold. The claim follows from \cite[Thm.~4.12]{LSS4}.
\end{proof}


Next, let us show that $\mean^p_{\cA,\equiv} (G)$ is a closed subspace of $BC^p_{\cA}(G)/\equiv$. This will imply that $\mean^p_{\cA} (G)$ is a closed subspace of $BC^p_{\cA}(G)$.

\begin{prop}\label{MAPisclosed} The space $\mean^p_{\cA,\equiv} (G)$ is a closed $G$-invariant subspace of $( BC^p_{\cA}(G)/\equiv , \| \cdot \|_{\be, p, \cA})$.

In particular, $(\mean^p_{\cA,\equiv} (G), \| \cdot \|_{\be,p,\cA})$ is a Banach space.
\end{prop}
\begin{proof}
We start by showing that $\mean^p_{\cA,\equiv} (G)$ is a subspace.
The set is clearly closed under multiplication with scalars. We show that it is closed under addition.

Let $\varepsilon >0$ be given. Let us note first that if $t,s \in\AP^{\equiv}_{\be,p,\cA}([f], \eps)$, one has
\begin{align*}
\|[f] - T_{t-s} [f]\|_{\be,p,\cA}
	&\leq \|[f] - T_t  [f]\|_{\be,p,\cA} + \|T_t[f] - T_{t-s} [f]\|_{\be,p,\cA} \\
	&<\varepsilon + \|T_s [f] - [f]\|_{\be,p,\cA} < 2 \varepsilon \,.
\end{align*}
Hence, $t-s \in \AP^{\equiv}_{\be,p,\cA}([f], 2\eps)$. Here, we used the fact that $T_t$ is an isometry on $BC^p_{\cA}(G)/\equiv$.

Next, let $[f],[g] \in \mean^p_{\cA,\equiv} (G) $ and
$\varepsilon >0$ be given. By \eqref{Tt}, we can find an open neighborhood
$U$ of $0\in G$ such that, for all $t \in U$, we have
\begin{align*}
\|[f] - T_t [f]\|_{\be,p,\cA} \,&<\, \varepsilon \,,\\
\|[g] - T_t[g]\|_{\be,p,\cA} \,&<\, \varepsilon \,.
\end{align*}
We obtain the inclusions
\[
\AP^{\equiv}_{\be,p,\cA}([f],\eps)
- \AP^{\equiv}_{\be,p,\cA}([f],\eps) + U \subseteq \AP^{\equiv}_{\be,p,\cA}([f],3\varepsilon)
\]
and
\[
(\AP^{\equiv}_{\be,p,\cA}([g],\varepsilon) - \AP^{\equiv}_{\be,p,\cA}([g],\varepsilon)) + U \subseteq \AP^{\equiv}_{\be,p,\cA}([g],3\varepsilon)
\,.
\]
Moreover, as both $\AP^{\equiv}_{\be,p,\cA}([f],\varepsilon) $ and
$\AP^{\equiv}_{\be,p,\cA}([g],\varepsilon)$ are relatively dense by assumption, the set
\[
\Big(\big( \AP^{\equiv}_{\be,p,\cA}([f],\varepsilon) - \AP^{\equiv}_{\be,p,\cA}([f],\varepsilon)\big) + U\Big)\cap  \Big(\big(\AP^{\equiv}_{\be,p,\cA}([g],\varepsilon) - \AP^{\equiv}_{\be,p,\cA}([g],\varepsilon)\big) +
U\Big)
\]
is relatively dense by standard arguments, see e.g. \cite[App.]{LSS}. Hence, there is a relatively dense set of
$3\varepsilon$-almost periods of both $[f]$ and $[g]$, and this easily
gives  that $[f] + [g]=[f+g]$ has a relatively dense set of $6\varepsilon$-
periods. As $\varepsilon>0$ was arbitrary, we infer that $[f]+[g]$ is
mean almost periodic.

Next, since for each $t$, $T_t$ is an isometry on $( BC^p_{\cA}(G)/\equiv , \| \cdot \|_{\be, p, \cA})$, $\mean^p_{\cA,\equiv} (G)$ is clearly $G$-invariant.

Finally, it is immediate that $\mean^p_{\cA,\equiv} (G)$ is closed in $( BC^p_{\cA}(G)/\equiv , \| \cdot \|_{\be, p, \cA})$.
Indeed, let $[f_n] \in \mean^p_{\cA,\equiv} (G)$ be such that it converges to some $[f] \in ( BC^p_{\cA}(G)/\equiv , \| \cdot \|_{\be, p, \cA})$.
Let $\varepsilon >0$. There exists some $n$ such that $\| [f]-[f_n] \|_{\be, p, \cA} < \varepsilon$. An immediate computation shows that
\[
\AP^{\equiv}_{\be,p,\cA}\big([f_n],\varepsilon\big) \subseteq \AP^{\equiv}_{\be,p,\cA}\big([f],3\varepsilon\big) \,.
\]
Since $\AP^{\equiv}_{\be,p,\cA}([f_n],\varepsilon)$ is relatively dense, so is $\AP^{\equiv}_{\be,p,\cA}([f],3\varepsilon)$. This implies that $[f] \in  \mean^p_{\cA,\equiv} (G)$.
\end{proof}

\begin{remark}
The fact that $\mean^p_{\cA,\equiv} (G)$ is closed under addition also follows immediately from Theorem~\ref{thm-hull-char-map}.
Indeed, by Theorem~\ref{thm-hull-char-map}, for $[f],[g] \in \mean^p_{\cA,\equiv} (G)$, the sets $\overline{\{ T_t [f] : t \in G \}}$ and $\overline{\{ T_t [g] : t \in G \}}$ are compact in $( BC^p_{\cA}(G)/\equiv , \| \cdot \|_{\be, p, \cA})$. The continuity of the addition implies that the range $K$ of the mapping
\[
+: \overline{\{ T_t [f] : t \in G \}} \times \overline{\{ T_t [g] : t \in G \}} \to BC^p_{\cA}(G)/\equiv
\]
must be compact. Since this range contains $\{ T_t ([f]+[g]) : t \in G \}$, Theorem~\ref{thm-hull-char-map} implies that $[f]+[g] \in \Map^p_{\cA,\equiv} (G)$.      \exend
\end{remark}

There are two immediate consequences of Proposition~\ref{MAPisclosed}.

\begin{coro}
The space $\mean^p_{\cA} (G)$ is a closed $G$-invariant subspace of $( BC^p_{\cA}(G) , \| \cdot \|_{\be, p, \cA})$.

In particular, $(\mean^p_{\cA}(G), \| \cdot \|_{\be,p,\cA})$ is complete.
\qed
\end{coro}

\begin{coro}\label{prop:map-subspace}
The space $\Mean_{\cA} (G)$ is a closed $G$-invariant subspace of $( \Cu(G), \| \cdot \|_\infty)$.
\end{coro}
\begin{proof}
This follows from $\| \cdot \|_{\be,p,\cA} \leq \| \cdot \|_\infty$.
\end{proof}

The following is an immediate consequence of Lemma~\ref{lemma norm inequality} and Lemma~\ref{lemma C-S}.

\begin{prop}\label{prop-inclus-map}
Let $1 \leq p \leq q < \infty$.
\begin{itemize}
\item[(a)] The inclusions
\begin{align*}
\mean^q_{\cA,\equiv} (G) \,&\subseteq\, \mean^p_{\cA,\equiv} (G) \,,\\
\mean^q_{\cA} (G) \,&\subseteq\, \mean^p_{\cA} (G)
\end{align*}
hold with continuous embedding.
\item[(b)] We have
\[
\left( \mean^p_{\cA} (G) \right) \cap L^\infty(G) = \left( \mean^1_{\cA} (G) \right) \cap L^\infty(G) \,.
\]
In particular, one has
\begin{align*}
\left(   \mean^p_{\cA} (G) \right) \cap L^\infty(G) \,&=\, \left(  \mean^q_{\cA} (G)\right)  \cap L^\infty(G) \,,\\
  \Mean_{\cA}(G) \, &=\,  \left( \mean_{\cA}^p(G) \right) \cap \Cu(G) \,.      \tag*{$\qed$}
\end{align*}
\end{itemize}
\end{prop}

Next, let us discuss the space $\mean^p_{\cA} (G) \cap L^\infty(G)$ in more detail. We can now show that, for bounded functions, mean almost periodicity does not depend on $p \geq 1$.


\begin{prop}[Independence of $p\geq 1$ for bounded functions]\label{prop:independence}
For any $f \in BC^1_{\cA}(G) \cap L^\infty(G)$. The following statements are equivalent:
\begin{itemize}
  \item[(i)] The function  $f $ belongs to $\mean^1_{\cA} (G)$.
  \item[(ii)] There exists some $1 \leq p < \infty$ such that $f \in \mean_{\cA}^p(G)$.
  \item[(iii)] For all $1 \leq p < \infty$, we have $f \in \mean_{\cA}^p(G)$.
  \item[(iv)] For all $\varepsilon >0$, the set $\AP_{\be,1,\cA}(f,\eps)$  is relatively dense.
 \item[(v)] There exists some $1 \leq p < \infty$ such that, for all $\varepsilon >0$, the set $\AP_{\be,p,\cA}(f,\eps)$  is relatively dense.
  \item[(vi)] For all $1 \leq p < \infty$ and for all $\varepsilon >0$, the set $\AP_{\be,p,\cA}(f,\eps)$  is relatively dense.
\end{itemize}
\end{prop}
\begin{proof}
The equivalence of (i),(ii) and (iii) follows from Proposition~\ref{prop-inclus-map}.
The equivalences (i)$\Longleftrightarrow$ (iv), (ii)$\Longleftrightarrow$ (v), (iii)$\Longleftrightarrow$ (vi) follow from
\[
\| T_t[f] -[f] \|_{\be,p,\cA}= \| \tau_t f -f \|_{\be,p,\cA}  \,,
\]
which holds for all $f \in L^\infty(G)$ by \eqref{eq-tra}.
\end{proof}

\begin{remark} For $G=\RR$, a function satisfying Proposition~\ref{prop:independence}(iv) is called \textit{almost-periodic} in the sense of Doss in \cite[Def.~5.16]{ABG}.      \exend
\end{remark}

The next result is an immediate consequence.

\begin{coro} Let $f \in \Cu(G)$ and $\cA$ a F\o lner sequence. Then, the following statements are equivalent:
\begin{itemize}
  \item[(i)] The function $f$ belongs to $\mean_{\cA}(G)$.
  \item[(ii)] There exists some $1 \leq p < \infty$ such that $f \in \mean^p_{\cA}(G)$.
 \item[(iii)] For all $1 \leq p < \infty$, we have $f \in \mean^p_{\cA}(G)$.
   \item[(iv)] For all $\varepsilon >0$, the set $\AP_{\be,1,\cA}(f,\eps)$  is relatively dense.
 \item[(v)] There exists some $1 \leq p < \infty$ such that, for all $\varepsilon >0$, the set $\AP_{\be,p,\cA}(f,\eps)$  is relatively dense.
  \item[(vi)] For all $1 \leq p < \infty$ and for all $\varepsilon >0$, the set $\AP_{\be,p,\cA}(f,\eps)$  is relatively dense.\qed
\end{itemize}
\end{coro}

It is now possible state the following characterization for $\mean_{\cA,\equiv}^p(G)$, which can be used as an equivalent definition.

\begin{lemma}\label{lem-intrinsic-char-map}
Let $\cA$ be a van Hove sequence. The space $\mean_{\cA,\equiv}^p(G)$ is the closure of $\Mean_{\cA}(G)/\equiv$ in
$(BC_{\cA}^p(G)/\equiv, \| \cdot \|_{\be,p,\cA})$.
\end{lemma}
\begin{proof}
Let $[f]\in \mean_{\cA,\equiv}^p(G)$ be given. Without loss of generality, we can assume that $f$ is bounded as otherwise we could use
Proposition~\ref{prop:approxiimation by bounded functions} to replace it by a cut-off version of it.
Now,  we have  $T(\varphi )[f] = [f*\varphi]$ for any $\varphi \in\Cc (G)$ and $f*\varphi$ belongs to $\Cu (G)$. Hence,  a short computation gives
\[
\| \tau_t (f*\varphi) -  f*\varphi\|_{\be,p,\cA} = \|T(\varphi) ( T_t[f] - [f])\|_{\be,p,\cA} \leq \|\varphi\|_{1}\, \| T_t [f]  - [f]\|_{\be,p,\cA}
\]
by the results above.  From the assumption on $f$, we easily infer that  $f*\varphi$ is mean almost
periodic. The desired statement now follows by
choosing an approximate identity $(\varphi_\alpha)$ in $\Cc(G)$ and
noting that $T(\varphi_\alpha)[f] =[f*\varphi_\alpha] \to [f]$.
\end{proof}

\subsection{Mean almost periodic measures}

As usual, we extend the definition of mean almost periodicity from functions to measures via convolutions with compactly supported continuous functions.

\begin{definition}[Mean almost periodic measures]
Let $\cA =(A_n)$ be a van Hove sequence on $G$, and let $1 \leq p
<\infty$.
A measure $\mu$ on $G$ is called \textit{mean $p$-almost
periodic}\index{almost~periodic!mean~$p$-almost~periodic~measure} with respect to $\cA$ if, for all $\varphi \in \Cc(G)$, we
have $\mu*\varphi \in \mean^p_{\cA}(G)$.
The  space of mean $p$-almost
periodic measures is denoted by $\Map^p_{\cA}(G)$. \nomenclature{$\Map^p_{\cA} (G)$}{set of mean $p$-almost periodic measure}  \exend
\end{definition}

As noted in Proposition~\ref{prop-inclus-map}, $\mean_{\cA}(G)$
contains $\mean_\cA^p(G)$ for all $1 \leq p$. Hence, $\Map_{\cA}^1
(G)$ contains $\Map_{\cA}^p(G)$ for all $1\leq p$. For this reason,
we often drop the superscript $1$ when referring to $\Map_{\cA}^1
(G)$ and call its elements just \textit{mean almost periodic
measures}\index{almost~periodic!mean~almost~periodic~measure}.

The following is an immediate consequence of Proposition~\ref{prop-inclus-map}.

\begin{coro}
For a van Hove sequence $\cA$ and $\mu \in \cM^\infty(G)$, the following assertions are equivalent:
\begin{itemize}
  \item[(i)] The measure $\mu$ belongs to $\Map_{\cA}^1 (G)$.
  \item[(ii)] There exists some $p \geq 1$ such that $\mu \in \Map_{\cA}^p (G)$.
  \item[(iii)] For all $p \geq 1$ we have $\mu \in \Map_{\cA}^p (G)$.
  \item[(iv)] For each $\eps >0$ and each $\varphi \in \Cc(G)$, the set
  \[
\AP_{\be,1,\cA}(\mu*\varphi,\eps)= \{ t  \in G: \| (\tau_t \mu-\mu)*\varphi \|_{\be,1,\cA} < \eps \}
\]
is relatively dense. \qed
\end{itemize}
\end{coro}

Mean almost periodic functions and mean almost periodic, absolutely continuous
measures are related in the following way.

\begin{prop}\label{prop-map-measure-vs-function}
Let $\cA$ be a van Hove sequence on $G$. Assume that
$f\in BC^1_{\cA} (G)$ is such that $f\theta_G$ is translation bounded.
Then, $f \in \mean_{\cA}(G)$ if and only if $f \theta_G \in
\Map_{\cA}(G)$.
\end{prop}
\begin{proof} This follows from Proposition~\ref{prop:measure-vs-function} applied to $\fS' = \mean_{\cA}
(G)$. The assumptions of that proposition are satisfied by
Corollary~\ref{prop:map-subspace}.
\end{proof}

\begin{coro} Let $\cA$ be a van Hove sequence. A function
$f\in BC^1_{\cA} (G) \cap L^\infty(G)$ is an element of $\mean_{\cA}(G)$ if and only if $f \theta_G \in
\Map_{\cA}(G)$.

In particular, $f\in\Cu (G)$ is mean almost periodic
if and only if $f\theta_G$ is mean almost periodic. \qed
\end{coro}

\section{Characterization of pure-point diffraction}
In this subsection, we characterize pure-point
diffraction via mean almost periodicity. This provides  our solution
to Question 1.
We start by solving the more general question of characterizing pure-point spectra for $\cA$-representations.

\begin{theorem}[Characterization of $\cA$-representations with pure-point spectrum]
\label{thm:a-representation-mean} Let $N: \Cc (G)\longrightarrow
BC^2_{\cA}(G)$ be an intertwining  $\cA$-representation with  diffraction measure
$\sigma$. Then, the following assertions are equivalent:
\begin{itemize}
\item[(i)] The diffraction measure $\sigma$  is a pure-point measure.
\item[(ii)] One has $N(\Cc(G))\subseteq \mean^2_\cA (G)$.
\end{itemize}
\end{theorem}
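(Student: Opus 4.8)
The plan is to reduce the global statement about $N(\Cc(G))$ to a statement about each individual $N(\varphi)$, then to translate that into a spectral statement, and finally to a purely measure-theoretic fact about $\sigma$. First I would pass to the Hilbert space $\mathcal{H}$ built from $N$: recall that $\mathcal{H}$ is the closure of $\{[N(\varphi)]:\varphi\in\Cc(G)\}$ in $BC^2_\cA(G)/\equiv$, that its norm is exactly $\|\cdot\|_{b,2,\cA}$, and that the action $T_t$ on $\mathcal{H}$ is the restriction of the translation action on $BC^2_\cA(G)/\equiv$. Since $N$ has a semi-autocorrelation, the associated $\mathcal{N}$-representation $\overline{N}$ is intertwining and possesses the diffraction measure $\sigma$, so by the definition of the diffraction measure together with Proposition~\ref{prop-spectralmeasure-convolution} and Lemma~\ref{lem:char-intertwining}, the spectral measure of $[N(\varphi)]$ is $\sigma_\varphi=|\widehat{\varphi}|^2\sigma$.

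The per-$\varphi$ step then rests on identifying two descriptions of almost periodicity. On one hand, the intrinsic characterization of $2$-mean almost periodic functions says that $N(\varphi)\in Map^2_\cA (G)$ if and only if $\{t\in G:\|T_t[N(\varphi)]-[N(\varphi)]\|_{b,2,\cA}<\varepsilon\}$ is relatively dense for every $\varepsilon>0$. On the other hand, since the norm of $\mathcal{H}$ coincides with $\|\cdot\|_{b,2,\cA}$, Proposition~\ref{prop:Hilbert} tells us that this relative denseness condition holds precisely when the spectral measure $\sigma_\varphi$ is pure point. Combining these for each fixed $\varphi\in\Cc(G)$ gives
\[
N(\varphi)\in Map^2_\cA (G)\iff |\widehat{\varphi}|^2\sigma \text{ is a pure point measure.}
\]
Quantifying over all $\varphi$ rephrases (ii) as the assertion that $|\widehat{\varphi}|^2\sigma$ is pure point for every $\varphi\in\Cc(G)$.

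It remains to prove the measure-theoretic equivalence that $|\widehat{\varphi}|^2\sigma$ is pure point for all $\varphi\in\Cc(G)$ if and only if $\sigma$ itself is pure point; this is the only genuinely non-formal part and hence the step I expect to be the main obstacle. The direction (i)$\Rightarrow$(ii) is immediate, as multiplying a pure point measure by the nonnegative function $|\widehat{\varphi}|^2$ leaves it pure point. For the converse I would decompose $\sigma=\sigma_{pp}+\sigma_c$ into pure point and continuous parts and argue by contradiction: if $\sigma_c\neq 0$, choose $\chi_0\in\supp(\sigma_c)$ and a $\varphi\in\Cc(G)$ with $\widehat{\varphi}(\chi_0)\neq 0$ (possible since the Fourier transforms of $\Cc(G)$-functions do not all vanish at any prescribed point). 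As $\widehat{\varphi}\in C_0(\widehat{G})$ is continuous, it stays bounded below by some $c>0$ on an open neighborhood $V$ of $\chi_0$, and $\sigma_c(V)>0$ because $\chi_0$ lies in the support of $\sigma_c$. Then the continuous measure $|\widehat{\varphi}|^2\sigma_c$ is the continuous part of $|\widehat{\varphi}|^2\sigma$ and satisfies $(|\widehat{\varphi}|^2\sigma_c)(V)\geq c^2\sigma_c(V)>0$, contradicting that $|\widehat{\varphi}|^2\sigma$ is pure point. Hence $\sigma_c=0$ and $\sigma$ is pure point. Chaining the three equivalences yields (i)$\iff$(ii).
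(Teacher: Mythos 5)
Your proof is correct and follows essentially the same route as the paper: identify the spectral measure of $[N(\varphi)]$ with $|\widehat{\varphi}|^2\sigma$ via Lemma~\ref{lem:char-intertwining}, invoke Proposition~\ref{prop:Hilbert} together with the intrinsic characterization of $Map^2_{\cA}(G)$, and reduce to the fact that $\sigma$ is pure point iff $|\widehat{\varphi}|^2\sigma$ is pure point for all $\varphi\in\Cc(G)$. The only difference is that you spell out the proof of that last measure-theoretic equivalence (via the support of the continuous part), which the paper simply asserts.
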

\begin{proof} Clearly, the measure $\sigma$ is a pure-point measure if and only if
$|\widehat{\varphi}|^2 \sigma$ is a pure-point measure for all
$\varphi \in\Cc (G)$. By Lemma \ref{lem:char-intertwining}, we have
$|\widehat{\varphi}|^2\, \sigma = \sigma_\varphi$ for all $\varphi
\in\Cc (G)$. From Proposition~\ref{prop:Hilbert}, we see that
$\sigma$ is a pure-point measure if and only if, for all $\varepsilon
>0$ and $\varphi \in\Cc (G)$, the set
\[
\{t\in G : \|T_t [ N(\varphi)] - [N(\varphi)]\|_{\be,2, \cA}< \varepsilon\}
\]
is relatively dense. This is equivalent to $N(\varphi) \in
\mean^2_{\cA} (G)$ by Lemma~\ref{lem-intrinsic-char-map}.
\end{proof}

Our first main result (Result 1) is a rather direct consequence of
Theorem~\ref{thm:a-representation-mean}.

\begin{theorem}[Characterization of pure point diffraction] \label{single element}
Let $\mu \in \cM^\infty(G)$, and let $\gamma$ be its autocorrelation with respect to some van Hove sequence $\cA$. Then, $\widehat{\gamma}$ is a pure point
measure if and only if $\mu$ is mean almost periodic with respect to $\cA$. \index{almost~periodic!mean~almost~periodic~measure} \index{diffraction!pure~point}
\end{theorem}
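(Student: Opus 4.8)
The plan is to reduce the statement to the abstract characterization already established in Theorem~\ref{thm:a-representation-mean} by realizing $\mu$ through the $\mathcal{A}$-representation $N_\mu$, $\varphi \mapsto \mu * \varphi$. First I would invoke Proposition~\ref{prop:translation-bounded-admissible}: since $\mu$ is translation bounded and its autocorrelation $\gamma$ exists along $\cA$, the map $N_\mu : \Cc(G)\longrightarrow L^1_{loc}(G)$ is an intertwining $\mathcal{A}$-representation, and (as noted in the proof of that proposition) it possesses the autocorrelation $\gamma$. In particular $\gamma$ is a measure serving as the semi-autocorrelation $\eta$ of $N_\mu$, that is $\eta = \gamma$. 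By Lemma~\ref{lem:char-intertwining}, $N_\mu$ then possesses a diffraction measure $\sigma$ with $\sigma = \widehat{\eta} = \widehat{\gamma}$. Thus the abstract diffraction measure of the $\mathcal{A}$-representation $N_\mu$ is precisely the Fourier transform of the autocorrelation of $\mu$.

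Next I would apply Theorem~\ref{thm:a-representation-mean} to $N = N_\mu$, whose hypotheses are met by the first step. It yields that $\sigma = \widehat{\gamma}$ is a pure point measure if and only if $N_\mu(\Cc(G)) \subset Map^2_\cA(G)$, i.e. $\mu * \varphi \in Map^2_\cA(G)$ for every $\varphi \in \Cc(G)$. By the very definition of mean $p$-almost periodic measures, the latter says exactly that $\mu$ is mean $2$-almost periodic, i.e. $\mu \in \Map^2_\cA(G)$.

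Finally I would remove the dependence on the exponent $p = 2$. Since $\mu$ is translation bounded, the observation recorded after the definition of $\Map^p_\cA(G)$ (obtained from Lemma~\ref{lemma norm inequality} and Lemma~\ref{lemma C-S}, stating that $\Map^p_\cA(G)$ and $\Map^1_\cA(G)$ coincide on translation bounded measures) shows that mean $2$-almost periodicity of $\mu$ is equivalent to mean almost periodicity of $\mu$. Concatenating the three equivalences then gives that $\widehat{\gamma}$ is pure point if and only if $\mu$ is mean almost periodic with respect to $\cA$. The only point genuinely requiring care is the identification $\sigma = \widehat{\gamma}$ in the first step, where one must ensure that the abstract diffraction measure attached to $N_\mu$ agrees with the classically defined diffraction measure of $\mu$; but this is exactly what Lemma~\ref{lem:char-intertwining} together with Proposition~\ref{prop:translation-bounded-admissible} delivers, so no real obstacle remains and the argument is a clean assembly of the preceding machinery.
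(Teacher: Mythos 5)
Your proposal is correct and follows essentially the same route as the paper: the paper's own proof also realizes $\mu$ via the intertwining $\mathcal{A}$-representation $N_\mu$ from Proposition~\ref{prop:translation-bounded-admissible} and then cites Theorem~\ref{thm:a-representation-mean} together with the definition of mean almost periodicity for measures. You merely make explicit two points the paper leaves implicit — the identification $\sigma=\widehat{\gamma}$ via Lemma~\ref{lem:char-intertwining} and the passage from mean $2$-almost periodicity to mean almost periodicity for translation bounded measures — both of which are handled correctly.
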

\begin{proof} As discussed in Proposition~\ref{prop:translation-bounded-admissible},
a translation bounded measure $\mu$ with autocorrelation $\gamma$
gives rise to the intertwining  $\mathcal{A}$-representation $N_\mu$ (defined by
$N_\mu (\varphi) = \mu \ast \varphi$) with autocorrelation $\gamma$.
Now, the claim is a direct consequence of Theorem~\ref{thm:a-representation-mean} and
the definition of mean almost periodicity for measures.
\end{proof}

We complete the section by relating the mean almost periods of $\mu$ with the strong almost periods on the autocorrelation.
In particular, this will provide a second independent proof of Theorem~\ref{single element}.

\begin{theorem}\label{rel-AP}
Let $\mu \in \cM^\infty(G)$, and let $\gamma$ be its autocorrelation with respect to some van Hove sequence $\cA$. Then,
\begin{equation}\label{EQ33}
\|\mu*\varphi-\tau_{t}(\mu*\varphi)\|_{\be,2,\cA}^2  = [
(\gamma*\varphi*\widetilde{\varphi})(0)  - \re
(\gamma*\varphi*\widetilde{\varphi})(t)]
\end{equation}
holds for all $\varphi \in \Cc(G)$. In particular, we have
\begin{align*}
\AP_{\infty}(\gamma*\varphi*\widetilde{\varphi},\frac{\eps^2}{2}) \,& \subseteq \,\AP_{\be,2,\cA}(\mu*\varphi,\eps) \,,\\
\AP_{\be,2,\cA}(\mu*\varphi, \frac{\eps^2}{1+(\gamma*\varphi*\widetilde{\varphi}) (0) \| \mu*\varphi \|_\infty}) \,& \subseteq\,  \AP_{\infty}(\gamma*\varphi*\widetilde{\varphi},\eps) \,.
\end{align*}
\end{theorem}
\begin{proof}
Since $\gamma$ is the autocorrelation of $\mu$ with respect to $\cA$,
Proposition~\ref{prop-compute-autocorrelation} implies that
\begin{align*}
\lim_{n\to\infty} \frac{1}{|A_{n}|} \int_{A_{n}} (\mu*\varphi)(x)\,
          (\widetilde{\mu*\varphi})(-x)\ \dd x
       &= (\gamma*\varphi*\widetilde{\varphi})(0)\,,     \\
\lim_{n\to\infty} \frac{1}{|A_{n}|} \int_{A_{n}} \tau_t(\mu*\varphi)(x)\, (\widetilde{\mu*\varphi})(-x)\ \dd x
       &=  (\gamma*\varphi*\widetilde{\varphi})
          (-t)\,, \\
 \lim_{n\to\infty} \frac{1}{|A_{n}|} \int_{A_{n}}  (\mu*\varphi)(x)\, (\widetilde{\tau_t(\mu*\varphi)})(-x)\ \dd x  &=(\gamma*\varphi*\widetilde{\varphi})(t)\,, \\
 \lim_{n\to\infty} \frac{1}{|A_{n}|} \int_{A_{n}}  \tau_t(\mu*\varphi)(x)\, (\widetilde{T_t(\mu*\varphi)})(-x)\ \dd x  &= (\gamma*\varphi*
    \widetilde{\varphi})(0)
\end{align*}
holds for all $\varphi \in \Cc(G)$ and all $t \in G$. Hence, we obtain
\begin{align*}
{}
    &\|\mu*\varphi-\tau_{t}(\mu*\varphi)\|_{\be,2,\cA}^2 \nonumber\\
    &\phantom{====}= \limsup_{n\to\infty} \frac{1}{|A_n|} \int_{A_n}
      \left(\mu*\varphi-T_{t}(\mu*\varphi)\right)(x)\, \widetilde{(\mu*\varphi-
      T_{t}(\mu*\varphi))}(-x)\ \dd x    \nonumber\\
    &\phantom{====}= \lim_{n\to\infty} \frac{1}{|A_{n}|} \int_{A_{n}}
      (\mu*\varphi)(x)\,(\widetilde{\mu*\varphi})(-x)\ \dd x \nonumber \\
    &\phantom{=======}-
      \lim_{n\to\infty} \frac{1}{|A_{n}|} \int_{A_{n}} T_t(\mu*\varphi)(x)\,
       (\widetilde{\mu*\varphi})(-x)\ \dd x \nonumber \\
    &\phantom{=======}-  \lim_{n\to\infty} \frac{1}{|A_{n}|} \int_{A_{n}}
         (\mu*\varphi)(x)\, (\widetilde{T_t(\mu*\varphi)})(-x)\ \dd x\nonumber \\
    &\phantom{=======}+  \lim_{n\to\infty} \frac{1}{|A_{n}|} \int_{A_{n}}
         T_t(\mu*\varphi)(x)\, (\widetilde{T_t(\mu*\varphi)})(-x)\ \dd x  \nonumber \\
     &\phantom{====}=  (\gamma*\varphi*\widetilde{\varphi})(0) - (\gamma*\varphi*\widetilde{\varphi})
          (-t) - (\gamma*\varphi*\widetilde{\varphi})(t) + (\gamma*\varphi*
          \widetilde{\varphi})(0)   \\
     &\phantom{====}= 2[ (\gamma*\varphi*\widetilde{\varphi})(0)  - \re (\gamma*\varphi*\widetilde{\varphi})(t)]
\end{align*}
with the last equality following from the fact that $\gamma*\varphi*\widetilde{\varphi}$ is a positive definite function and satisfies
\[
(\gamma*\varphi*\widetilde{\varphi})(-t) =\overline{ (\gamma*\varphi*\widetilde{\varphi})(t)} \,.
\]
This proves \eqref{EQ33}. The inclusions of the sets of almost periods follows immediately from Krein's
inequality \cite[p.~12]{BF}.
\end{proof}

\begin{remark} As it is instructive, we include a second proof of Theorem~\ref{single element}:
By Theorem~\ref{rel-AP}, $\mu$ is mean almost periodic if and only if $\gamma*\varphi*\widetilde{\varphi}$ is Bohr almost periodic for all $\varphi \in \Cc(G)$.
By the definition of Fourier transformability, for all $\varphi \in \Cc(G)$, the measure $\widehat{\gamma} \left| \check{\varphi} \right|^2$ is finite  and
\[
(\gamma*\varphi*\widetilde{\varphi})(t)= \left( \widehat{\gamma} \left| \reallywidecheck{\varphi} \right|^2 \right)\!\reallywidecheck{\phantom{i}} (t) \,.
\]
By \cite[Thm.~4.8.11]{MoSt}, for each $\varphi \in \Cc(G)$, $\gamma*\varphi*\widetilde{\varphi}$ is Bohr almost periodic if and only if $\widehat{\gamma} \left| \reallywidecheck{\varphi} \right|^2 $ is a pure point measure. The claim follows immediately.   \exend
\end{remark}

%



\section{Delone and Meyer sets with pure point diffraction}
In the context of Aperiodic Order, particular cases of interest are
translation bounded measures arising from Delone sets and Meyer
sets, see \cite{Meyer,MOO,Moody2} for definitions and properties. Specifically, for a uniformly discrete set $\varLambda\subseteq G$,
we define its \textit{Dirac comb}\index{Dirac~comb} to be the measure \nomenclature{$\delta_{\varLambda}$}{Dirac comb of the set $\varLambda$}
\[
\delta_\varLambda:=\sum_{t\in \varLambda} \delta_t
\]
with $\delta_s$ being the unit point mass at $s\in G$. For such
measures, sufficient conditions for pure point diffraction have
been given earlier. Here, we use Theorem~\ref{single element} to
recover these conditions and even show that they are necessary by
characterizing mean almost periodicity of Dirac combs of Delone
sets and measures with Meyer set support.

\medskip

In the spirit of \cite{Gouere-2}, we define \nomenclature{$\vL\, \triangle_U\, \vG$}{$U$-symmetric difference of $\vL$ and $\vG$}
\[
\vL\, \triangle_U\, \vG := \left( \vL \backslash
(\vG+U) \right) \cup \left( \vG \backslash (\vL+U) \right)
\,,
\]
when $\vL$ and $\vG$ are Delone sets in $G$ and $U$
is an open neighborhood of $0\in G$.

With this notation, we have the following equivalent characterization of mean almost periodicity for Dirac combs.

\begin{theorem}[Characterizing mean almost periodicity for Delone sets]\label{gou}
 If $\vL$ is a Delone set, then $\delta_\vL$ is mean almost periodic if and only if, for each open neighbourhood $U \subseteq G$ of $0$ and each $\eps >0$, the set
\[
\Big\{ t\in G\, :\, \limsup_{n\to\infty} \frac{\card \left(
\vL\, \triangle_U\, (t+\vL) \right) \cap A_n }{|A_n|} < \eps
\Big\}
\]
  is relatively dense. \index{almost~periodic!mean~almost~periodic~measure}
\end{theorem}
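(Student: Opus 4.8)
The plan is to reformulate both conditions through the single family of functions $\delta_\Lambda*\varphi$ and to compare the Besicovitch seminorm of $(\delta_\Lambda-\delta_{t+\Lambda})*\varphi$ with the upper density
\[
d_U(t):=\limsup_{n\to\infty}\frac{\sharp\bigl(\Lambda\,\Delta_U\,(t+\Lambda)\bigr)\cap A_n}{|A_n|}.
\]
Since $\Lambda$ is uniformly discrete the measure $\delta_\Lambda$ is translation bounded and $\delta_\Lambda*\varphi\in\Cu(G)$, so by Proposition~\ref{prop:consistency} and the definition of mean almost periodic measures, $\delta_\Lambda$ is mean almost periodic if and only if for every $\varphi\in\Cc(G)$ and every $\eps>0$ the set $\{t\in G:\|(\delta_\Lambda-\delta_{t+\Lambda})*\varphi\|_{b,1,\cA}<\eps\}$ is relatively dense; here I use $\tau_t(\delta_\Lambda*\varphi)=\delta_{t+\Lambda}*\varphi$. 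The geometric device underlying both directions is a matching furnished by uniform discreteness: fix a symmetric open neighbourhood $V$ of $0$ with $(x+V)\cap(y+V)=\emptyset$ for distinct $x,y\in\Lambda$. If $U-U\subseteq V$, then each point of $\Lambda$ lies within $U$ of at most one point of $t+\Lambda$ and conversely, so outside $\Lambda\,\Delta_U\,(t+\Lambda)$ the two sets split into bijectively matched pairs $(x,y)$ with $y-x\in U$, while the unmatched points are exactly the two constituents of $\Lambda\,\Delta_U\,(t+\Lambda)$.

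For the implication \emph{symmetric difference condition $\Rightarrow$ mean almost periodic} I would fix $\varphi\in\Cc(G)$ with support $K$ and $\eps>0$, and use the $L^1$-continuity of translation to pick $U$ with $U-U\subseteq V$ so small that $\omega(U):=\sup_{s\in U}\|\varphi-\tau_s\varphi\|_1$ is tiny. Writing $(\delta_\Lambda-\delta_{t+\Lambda})*\varphi=\sum_{x\in\Lambda}\varphi(\cdot-x)-\sum_{y\in t+\Lambda}\varphi(\cdot-y)$ and grouping matched pairs, one matched pair contributes at most $\|\varphi-\tau_{y-x}\varphi\|_1\le\omega(U)$ to $\int_{A_n}|\cdot|$, and the number of relevant pairs is controlled by the finite uniform upper density $D$ of $\Lambda$; each unmatched point contributes at most $\|\varphi\|_1$. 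Taking the limit along $\cA$ yields $\|(\delta_\Lambda-\delta_{t+\Lambda})*\varphi\|_{b,1,\cA}\le\omega(U)\,D+\|\varphi\|_1\,d_U(t)$. Shrinking $U$ to make $\omega(U)D<\eps/2$ and then invoking the hypothesis that $\{t:d_U(t)<\delta\}$ is relatively dense with $\|\varphi\|_1\,\delta<\eps/2$ produces the required relatively dense set of $\eps$-almost periods.

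For the converse I would fix $U$ and $\eps$ and choose a single $\varphi_0\ge0$ supported in a neighbourhood $W$ with $W-W\subseteq U\cap V$ and $c_0:=\int_G\varphi_0\,\dd t>0$. For an unmatched point $p$ (of $\Lambda$ or of $t+\Lambda$) the set $p+W$ carries exactly one bump, namely $\pm\varphi_0(\cdot-p)$, because $W-W\subseteq V$ excludes other points of the same set while $W-W\subseteq U$ together with $p$ being unmatched excludes all points of the other set; moreover these neighbourhoods are pairwise disjoint. Hence $\int_{p+W}|(\delta_\Lambda-\delta_{t+\Lambda})*\varphi_0|=c_0$, and summing over unmatched points with $p+W\subseteq A_n$ gives, after passing to the limsup and discarding boundary terms, $\|(\delta_\Lambda-\delta_{t+\Lambda})*\varphi_0\|_{b,1,\cA}\ge c_0\,d_U(t)$. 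Applying mean almost periodicity to this fixed $\varphi_0$ at threshold $c_0\eps$ then shows $\{t:d_U(t)<\eps\}$ contains a relatively dense set.

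I expect the main obstacle to be the bookkeeping of boundary effects, so that the two $\limsup$'s line up cleanly and with constants uniform in $t$: bumps centred at points $p$ with $p+W$ meeting $\partial A_n$, and matched pairs with $x$ near the boundary, must be shown negligible. This is precisely where the van Hove property enters, bounding the number of such points by a multiple of $|\partial^{K}A_n|$, which is negligible compared with $|A_n|$; combined with Lemma~\ref{L1}(c) on the translation invariance of $\|\cdot\|_{b,1,\cA}$ for bounded functions, it ensures that both comparison inequalities survive in the limit.
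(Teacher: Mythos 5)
Your proposal is correct and follows essentially the same route as the paper's own proof: both directions rest on the bijective matching of $\Lambda$ with $t+\Lambda$ inside a small neighbourhood furnished by uniform discreteness, giving the upper bound $\|(\delta_\Lambda-\tau_t\delta_\Lambda)*\varphi\|_{b,1,\cA}\leq \omega(U)\,D+\|\varphi\|_1\,d_U(t)$ for one implication and using a nonnegative bump function that detects unmatched points for the other. The only cosmetic differences are that you measure the modulus of continuity of $\varphi$ in $L^1$ rather than in sup-norm times $|K|$, and that you obtain the lower bound by integrating over the pairwise disjoint translates $p+W$ of unmatched points rather than via the paper's pointwise bound $\geq 1$ on $\left(\Lambda\,\Delta_U\,(t+\Lambda)\right)+V$ (your version, which makes explicit why the second convolution vanishes near an unmatched point, is if anything the more carefully justified of the two).
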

\begin{proof}
 $\Longrightarrow$: Let $U$ be an open neighbourhood of $0$, and
let $\eps >0$. Choose some open set $V$ such that $0 \in V$ and
$\overline{V} \subset U$. Let $\varphi \in \Cc(G)$ be such that
$\varphi \geq 1_V$ and $\supp(\varphi) \subset U$. A simple
computation shows that
\begin{displaymath}
\left| (\delta_\vL*\varphi)(x)- \tau_t
(\delta_\vL*\varphi)(x) \right| \geq 1
\end{displaymath}
for all $t \in G$ and for all $x \in \left( \vL\, \triangle_U\,
(t+\vL) \right) +V$. Therefore, for all $t \in G$, we have
\begin{displaymath}
\theta_G(V) \limsup_{n\to\infty} \frac{\card \left( \vL\,
\triangle_U\, (t+\vL) \right) \cap A_n }{|A_n|} \leq \|
\delta_\vL*\varphi- \tau_t  (\delta_\vL*\varphi)\|_{\be,1,\cA} \,.
\end{displaymath}
The claim follows.

\medskip

\noindent $\Longleftarrow$: Fix some open neighbourhood $U=-U$ of $0$ such that $\vL$ is $U-U$-uniformly discrete. Let $\varphi \in \Cc(G)$, $\eps >0$, and let $K:=\supp(\varphi)$.
As $\varphi$ is uniformly continuous, there exists some open neighbourhood $V \subseteq U$ of $0$ such that, for all $x,y \in G$ with $x-y \in V$, we have
\[
|\varphi(x)-\varphi(y)| < \frac{\eps}{2\cdot |K|\cdot \| \delta^{}_{\vL} \|_{\be,\cA} \, +1} \,.
\]
Now, for each $x \in \vL \backslash \left( \vL\, \triangle_V\, (t+\vL) \right)$, there is a unique $y_x \in (t+\vL) \backslash \left( \vL\, \triangle_V\, (t+\vL) \right)$ such that $x-y_x \in V$.
We know that the set
\[
P:= \Big\{ t\in G\, :\, \limsup_{n\to\infty} \frac{\card \left( \vL\, \triangle_V\, (t+\vL) \right) \cap A_n }{|A_n|} < \frac{\eps}{2 \| \varphi \|_1  +1} \Big\}
\]
is relatively dense.
It follows from a standard Fubini and van Hove type argument that, for all $t \in P$, we have
\begin{align*}
{}
	&\| \delta_\vL*\varphi- \tau_t  \delta_\vL*\varphi \|_{\be,1,\cA}   \\
        &\phantom{XXXX}\leq \limsup_{n\to\infty} \frac{\card \left( \vL\, \triangle_V\,
      (t+\vL) \right) \cap A_n }{|A_n|}\,\int_G |\varphi(t)|\ \dd t \\
        &\phantom{XXXX}\phantom{===}+ \limsup_{n\to\infty} \frac{1}{|A_n|}\sum_{x \in \vL
      \backslash \left( \vL \triangle_V (t+\vL) \right)\cap A_n}
      \int_G \left| \varphi(x-t) -\varphi(y_x-t) \right|\ \dd t   \\
        &\phantom{XXXX}\leq \frac{\eps}{2} + \limsup_{n\to\infty} \frac{1}{|A_n|}
      \sum_{x \in \vL \backslash \left( \vL \triangle_V (t+\vL)
       \right)\cap A_n} \frac{\eps}{2\cdot |K|\cdot \| \delta^{}_{\vL} \|_{\be,\cA} \, +1}|K|  \\
        &\phantom{XXXX}\leq \frac{\eps}{2} +\| \delta^{}_{\vL} \|_{\be,\cA} \frac{\eps}{2\cdot |K|\cdot \| \delta^{}_{\vL} \|_{\be,\cA} \, +1}\, |K| \, < \, \eps \,.
\end{align*}
This finishes the proof.
\end{proof}

Point sets $\vL$, which are such that $\vL - \vL$ is uniformly discrete,
are particularly relevant, see Remark~\ref{rem-ba-gou} below. For
such sets (and even measures supported on such sets),  we can
characterize pure point diffraction as follows.


\begin{theorem}[Characterizing mean almost periodicity for measures supported inside Meyer sets]
\label{ba} Let $\mu$ be a translation bounded measure supported
inside the uniformly discrete set $\vL$.
 \begin{itemize}
   \item[(a)] If, for each $\eps >0$, the set
 \[
 \AP_{\be,\cA}(\mu,\eps):=
\Big\{ t\in G\, :\, \| \mu -\tau_t \mu \|_{\be,\cA} < \eps
\Big\}
 \]
is relatively dense, then $\mu \in \Map_{\cA}$. \index{almost~periodic!mean~almost~periodic~measure}
   \item[(b)] If $\mu \in \Map_{\cA}(G)$ and $\vL -\vL$ is uniformly discrete, then, for all $\eps >0$, the set $ \AP_{\be,\cA}(\mu,\eps)$ is relatively dense.
 \end{itemize}
\end{theorem}
\begin{proof}
(a) Let $\varphi \in \Cc(G)$. The claim follows from Lemma~\ref{lem-bes-ine-measures} because
\[
\| \mu*\varphi - \tau_t \mu*\varphi \|_{\be,1,\cA} \leq \|\varphi\|_1 \| \mu -\tau_t \mu \|_{\be,\cA} \,.
\]

\smallskip

\noindent (b) If $\| \mu \|_{\be,\cA}=0$, the claim is trivial. So, we can assume that $\| \mu \|_{\be,\cA} >0$. Select an open precompact set $U=-U \subset G$ such that $0\in U$ and $\vL -\vL$ is $U-U-U$-uniformly discrete. Choose a $\varphi \in \Cc(G)$ such that  $0 \leq \varphi \leq 1, \varphi(0)=1$ and $\supp(\varphi) \subset U$.
Let $0< \eps < \frac{24\| \varphi\|_1 \| \mu \|_{\be,\cA}}{|U|}$. By the uniform continuity of $\mu*\varphi$, there exists some open set $W \subseteq U$ such that $0\in W$ and
\[
\| \mu*\varphi -\tau_t \mu*\varphi \|_\infty < \frac{\eps}{ 6 \| \delta^{}_{\vL} \|_{\be,\cA}+1}
\]
for all $t \in W$. Since $\mu$ is translation bounded and mean almost periodic, there exists some compact set $K \subseteq G$ such that
\[
\Big\{ t \in G : \| \mu*\varphi - \tau_t \mu*\varphi \|_{\be,1,\cA} < \frac{\eps|W|}{12} \Big\} +K =G \,.
\]
We show that
\begin{equation}\label{eq rel d}
\AP_{\be,\cA}(\mu,\eps)+K+\overline{U}-\overline{U}= G \,.
\end{equation}
Select $g \in G$ arbitrary and write $g=t+k$ for $k \in K$ and
\[
t \in \Big\{s \in G : \| \mu*\varphi - \tau_s \mu*\varphi \|_{\be,1,\cA} < \frac{\eps|W|}{12} \Big\} \,.
\]
Since
\[
\| \mu*\varphi - \tau_t \mu*\varphi \|_{\be,1,\cA} < \frac{\eps|W|}{12}< 2 \| \varphi\|_1 \|\mu\|_{\be,\cA} \,,
\]
a simple argument shows that $\supp(\varphi*\mu)\cap \supp(\tau_t \varphi *\mu) \neq \varnothing$. Let $x$ be any point in this intersection. Then, there exists some $y_1,y_2 \in \vL$ and $u_1,u_2 \in U$ such that
\[
x=y_1+u_1=-t+y_2+u_2 \,.
\]
Define $s:=y_1-y_2 \in \vL -\vL$. Then, $t=s+(u_1-u_2)$. We show that $s \in \AP_{\be,\cA}(\mu,\eps)$, which proves \eqref{eq rel d} and completes the argument.

Note first that, by the choice of $\varphi$, we have
\[
| \mu*\varphi(x) -\tau_t \mu*\varphi(x) | = \Big| \mu(\{x\})- \sum_{z \in \vL} \varphi(x-t-z) \mu(\{z \}) \Big|
\]
for all $x \in \vL$. Moreover, at most one term in the second sum can be nontrivial. If there exists $z \in \vL$ such that $\varphi(x-t-z) \mu(\{z \}) \neq0$, we must have $x-z=s$. Then,
\[
| \mu*\varphi(x) -\tau_t \varphi(x) | = | \mu(\{x\})-  \varphi(s-t) \mu(\{x-s \}) |  \,,
\]
which trivially also holds when $\varphi(x-t-z) \mu(\{z \}) =0$ for all $z \in \vL$. Similarly,
\[
| \mu*\varphi(x-s+t) -\tau_t \varphi(x-s+t) | = | \mu(\{x\})\varphi(s-t)-  \mu(\{x-s \}) | \,.
\]
A simple computation, compare \cite[Lem.~8]{bm}, yields
\begin{equation}\label{eq1}
\big| \mu\big(\{x\}\big)-  \mu\big(\{x-s \}\big) \big| \leq \left| \mu*\varphi- \tau_t \mu*\varphi \right|(x)+2 \left| \mu*\varphi- \tau_t \mu*\varphi  \right|(x-s+t)
\end{equation}
for all $x \in \vL$. Likewise, for all $x \in s+ \vL$, we have
\begin{equation}\label{eq2}
\big| \mu\big(\{x\}\big)-  \mu\big(\{x-s \}\big) \big| \leq \left| \mu*\varphi- \tau_t \mu*\varphi\right|(x-s+t)+2 \left| \mu*\varphi- \tau_t \mu*\varphi \right|(x)  \,.
\end{equation}
Now, note that
\[
\left| \mu- \tau_s \mu \right|
= \sum_{x \in \vL \cup (s+\vL)} \big| \mu\big(\{x\}\big) - \mu\big(\{x-s\}\big) \big| \delta_x \,.
\]
Set
\[
A:= \Big\{ x \in \vL \cup (s+\vL) : \big| \mu\big(\{x\}\big) -
\mu\big(\{x-s\}\big) \big|< \frac{\eps}{ 6 \| \delta^{}_{\vL} \|_{\be,\cA}+1} \Big\}\]
and define
\[
\omega_1\,=\, \sum_{x \in A} \big| \mu\big(\{x\}\big) - \mu\big(\{x-s\}\big) \big|
\delta_s
\]
as well as
\[
\omega_2\,:=\, \left| \mu- \tau_s \mu \right| -\omega_1 \,.
\]
It is obvious that
\[
\|\omega_1\|_{\be,\cA}< \frac{\eps}{3} \,.
\]
Let $\vG := \supp(\omega_{2})$. Then,  by \eqref{eq1}, \eqref{eq2} and the choice of $W$, we get
\begin{align*}
\int_{x+W} \left| \mu*\varphi- \tau_t \mu*\varphi \right|(r)\, \dd r &+\int_{x+s-t+W} \left| \mu*\varphi- \tau_t \mu*\varphi \right|(r)\, \dd r \\
&\geq |W| \left( \frac{1}{2}\big| \mu\big(\{x\}\big)-  \mu\big(\{x-s \}\big) \big|  - \frac{2\eps}{ 6 \| \delta^{}_{\vL} \|_{\be,\cA}+1} \right)
\end{align*}
for all $x \in \vG$.
Since $\vL- \vL$ is $W$-uniformly discrete, the set $x+W$ with $x \in \vG \subset \vL$ are pairwise disjoint, and so are the sets $x+s-t+W$. Summing the above relation for $x \in \vG \cap A_n$, and using the van Hove property, we get $\| \omega_{2}\|_{\be,\cA} < \frac{2 \eps}{3}$. Therefore,
\[
\| \mu -\tau_s\mu\|_{\be,\cA} \leq \| \omega_{1}\|_{\be,\cA}+\| \omega_{2} \|_{\be,\cA} < \eps \,.
\]
This finishes the proof. \end{proof}

\begin{remark}\label{rem-ba-gou}
\phantom{XX}
\begin{itemize}
\item[(a)] Theorem~\ref{gou} shows that mean almost periodicity for Delone sets
is equivalent to the conditions in \cite[Thm.~3.3(5)]{Gouere-2}.
Hence, \cite[Thm.~3.3]{Gouere-2} is a special case of
Theorem~\ref{single element}.
\item[(b)]  If $G$ is metrisable, it is easy to show that the condition in
Theorem~\ref{gou} can be replaced by the following statement:  For
each $\eps >0$, the set
\begin{displaymath}
 \Big\{ t\in G\, :\, \limsup_{n\to\infty} \frac{\card \left( \vL\, \triangle_{B_\eps(0)}\, (t+\vL) \right) \cap A_n }{|A_n|} < \eps \Big\}
\end{displaymath}
is relatively dense, compare \cite{Gouere-2}. Here, $B_r(0)$ is
the ball around $0\in G$ with radius $r > 0$.
\item[(c)] Using the well-known equivalence between pure point diffraction
and dynamical spectrum \cite{LMS,Gouere-2,BL}, it is easy to see
that Theorem~\ref{gou} generalizes \cite[Thm.~2.2]{SOL}.
\item[(d)]  A Delone set $\vL\subseteq G$ is called a \textit{Meyer set} \index{Meyer~set} if $\vL
- \vL-\vL$ is uniformly discrete.

Any Meyer set satisfies $\vL-\vL$ is uniformly discrete.
Moreover, if $G$ is compactly generated, the Meyer condition is equivalent to
$\vL-\vL$ being uniformly discrete (and even weaker conditions \cite{BLM,LAG1,NS11}). Therefore,
Theorem~\ref{ba} applies to Meyer sets.
\item[(e)] For Dirac combs $\delta_\vL$ of point sets $\vL$ Theorem~\ref{ba} generalizes Theorem~5 of \cite{bm}.  \exend
\end{itemize}
\end{remark}

\chapter{Besicovitch almost periodicity and the consistent phase property}\label{sec-Besicovitch}
In this chapter, we first study Besicovitch almost periodic
functions. This is done in two steps. In the first step, we develop
a general theory. In the second step, we focus on
a certain Hilbert space of Besicovitch almost periodic functions.
This Hilbert space structure will allow us to set up a Fourier
expansion type theory. Having this theory at hand, we can then turn
to $\mathcal{A}$-representations with values in the  Besicovitch
almost periodic functions and characterize them by pure-point
diffraction together with existence of the Fourier coefficients.
From this, we obtain our solution to Question 2.

\section{Besicovitch almost periodic functions: general theory}

\begin{definition}[Besicovitch almost periodic functions]
Let a F\o lner sequence  $\cA =(A_n)$ on $G$ and  $1 \leq p <\infty$
be given.
A function $f \in \mathcal{L}^p_{loc}(G)$ is called
\textit{Besicovitch $p$-almost periodic}\index{almost~periodic!Besicovitch~$p$-almost~periodic~function} with respect to $\cA$ if, for each $\eps >0$, there exists a trigonometric polynomial $P$ with \nomenclature{$\bap^p(G)$}{set of Besicovitch $p$-almost periodic functions}
\[
\| f -P \|_{\be,p,\cA} < \eps \,.
\]
The space of Besicovitch $p$-almost
periodic functions is denoted by $\bap^p(G)$.
Moreover, we set
\[
\bap(G):= \bap^1(G)\,.
\]\exend
\end{definition}

\begin{remark}\label{rem:sap in bap}
\phantom{XX}
\begin{itemize}
  \item[(a)]
A function is Besicovitch $p$-almost periodic if and only if, for
each $\eps >0$, there exists a Bohr almost periodic function $g$
such that $ \| f -g \|_{\be,p,\cA} <\eps$. In particular, all
trigonometric polynomials and all Bohr almost periodic functions are
Besicovitch almost periodic (for any $p\geq 1$). In fact, it is not
hard to see that every weakly almost periodic function is
Besicovitch $p$-almost periodic (for any $p\geq 1$).
  \item[(b)] For  $G=\RR$ a review of Besicovitch almost periodic functions and
  their properties can be found in \cite[Sect.~5]{ABG}.    \exend
\end{itemize}
\end{remark}

\begin{prop}[Inclusions of spaces] \label{ap inclusions 1}
Let $1 \leq p \leq q < \infty$.
\begin{itemize}
  \item [(a)] We have
  \[
  \bap^p(G) \subseteq  \mean^p_{\cA}(G)
  \]
  with a continuous inclusion map.
  \item [(b)] We have
  \[
  \bap^q(G) \subseteq   \bap^p(G) \subseteq   \bap(G)
  \]
  with a continuous inclusion map.
  \item[(c)] $\bap^p(G)$ is the closure of $\sap(G)$ in $BL^p_{\cA}(G)$.
\end{itemize}
\end{prop}
\begin{proof}
(a)  Consider $f\in \bap^p (G)$. Let $\varepsilon >0$ be arbitrary.
Then, there exists a trigonometric
polynomial $P$ with $\|f - P\|_{\be,p,\cA} < \varepsilon$.  Clearly,
the trigonometric polynomial $P$  is Bohr almost periodic and,
hence, also mean almost periodic. Thus, we can approximate $f$
arbitrarily well by mean almost periodic functions in $\Cu (G)$, and
$f\in  \mean^p_{\cA}(G)$ follows.

\medskip

\noindent (b) follows from Lemma~\ref{lemma norm inequality}.

\medskip
\noindent(c) Since trigonometric polynomials belong to $\sap(G)$, any Besicovitch almost periodic function belongs to the closure of $\sap(G)$ in $BL^p_{\cA}(G)$.
Let now $g$ be any function in the closure of $\sap(G)$ in $BL^p_{\cA}(G)$, and let $\eps >0$. There exists some $f \in \sap(G)$ such that
\[
\|g-f\|_{\be,p,\cA} < \frac{\eps}{2}\,.
\]
Since $f \in \sap(G)$, there exists a trigonometric polynomial $P$ such that \cite{MoSt}
\[
\| f-P \|_\infty < \frac{\eps}{2} \,.
\]
Then,
\[
\|g-P\|_{\be,p,\cA} \leq \|g-f\|_{\be,p,\cA} +\|f-P\|_{\be,p,\cA} \leq  \|g-f\|_{\be,p,\cA} +\|f-P\|_{\infty} < \eps \,.
\]
\end{proof}

Let us now give an example of a mean almost periodic function which is not Besicovitch almost periodic.

\begin{example}
\label{rem:map vs bap}
In general, the space $\mean_{\cA}^p (G)$ is strictly bigger than $\bap^p (G)$.
To see this, we consider $G = \RR$ with the van Hove sequence $A_n =
[-n, n]$, and let $f:\RR\longrightarrow \RR$ be the function
\begin{displaymath}
f(x):= \left\{ \begin{array}{cc}
1\,, &\mbox{ if  } x>1 \,, \\
x\,, &\mbox{ if } 0 \leq x \leq 1 \,, \\
 0\,, & \mbox{ if } x < 0 \,.
\end{array}
\right.
\end{displaymath}
Clearly, $f$ is uniformly continuous and $\|f - \tau_t f\|_{\be,1,\cA}
=0$ for all $t\in \RR$. So, $f$ belongs to $ \mean_{\cA}^p (G)$.

However, $f$ does not belong to $\bap^p (G)$. Indeed, let us note that if $g$ is any Bohr almost periodic function, an application of the triangle inequality gives
\begin{align*}
\frac{1}{2n} \int_{-n}^n \left| f(t) -g(t) \right| \dd t
&= \frac{1}{2} \left( \frac{1}{n}\int_{-n}^0\left| f(t) -g(t) \right|\dd t + \frac{1}{n}\int_{0}^n\left| f(t) -g(t) \right|\dd t  \right)\\
  &=\frac{1}{2} \left( \frac{1}{n}\int_{-n}^0 \left|g(t) \right|\dd t + \frac{1}{n}\int_{0}^n \left| f(t)-g(t) \right|\dd t  \right) \\
  &\geq \frac{1}{2} \left( \frac{1}{n}\int_{-n}^0 \left|g(t) \right|\dd t + \frac{1}{n}\int_{0}^n \left| f(t)\right| -\left|g(t) \right|\dd t  \right) \\
  &\geq \frac{1}{2} \left( \frac{1}{n}\int_{-n}^0 \left|g(t) \right|\dd t-\frac{1}{n}\int_{0}^n \left|g(t) \right|\dd t \right) + \frac{1}{2n}\int_{0}^n \left| f(t)\right| \dd t \,.
\end{align*}
Now, since $g$ is Bohr almost periodic, the mean of $|g|$ does not depend on the averaging F\o lner sequence \cite[Cor.~4.3.6 and Prop.~4.5.9]{MoSt}. Therefore,
\[
\lim_{n\to\infty} \frac{1}{n}\int_{-n}^0 \left|g(t) \right|\dd t= \lim_{n\to\infty} \frac{1}{n}\int_{0}^n \left|g(t) \right|\dd t \,.
\]
It follows immediately that we have
\begin{align*}
  \| f-g\|_{\be,1,\cA} &\geq \limsup_{n\to\infty} \frac{1}{2n}\int_{0}^n \left| f(t)\right| \dd t  \\
  &= \limsup_{n\to\infty} \frac{1}{2}\left( \int_0^1 t \dd t +\int_1^n 1 \dd t \right) =\frac{1}{2}
\end{align*}
for all Bohr almost periodic functions $g$ and (in particular)
\[
\| f -P \|_{\be,1,\cA} \geq \frac{1}{2}
\]
for all trigonometric polynomials $P$. Consequently, $f$ cannot be Besicovitch almost periodic.  \exend
\end{example}

For bounded functions, Besicovitch $p$-almost
periodicity is independent of $p$, see also \cite[Thm.~2.1]{LinOlsen}. We start with the following
preliminary lemma.

\begin{lemma}\label{BAP bounded approx}
Let $f \in L^\infty(G) \cap \bap^p(G)$. Then, for each $\eps, \eps' >0$, there exists some trigonometric polynomial $P$ satisfying
\begin{align*}
\| f -P \|_{\be,p,\cA} \,&<\, \eps \qquad \mbox{ and } \\
\| P \|_\infty \, &< \, \|f \|_{\infty} +\eps' \,.
\end{align*}
\end{lemma}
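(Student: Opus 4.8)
The plan is to take any trigonometric polynomial approximant guaranteed by $f\in\bap^p(G)$ and then truncate its height using the cut-off $c_L$ from \eqref{eq: cut off}, at the cost of having to replace the truncated polynomial by a genuine trigonometric polynomial at the very end. First I would use the assumption $f\in\bap^p(G)$ to choose a trigonometric polynomial $Q$ with
\[
\|f-Q\|_{b,p,\cA}<\tfrac{\eps}{2}.
\]
This $Q$ carries no control on its supremum norm, so the idea is to apply the cut-off $c_L$ with the specific level $L:=\|f\|_\infty$.

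Set $h:=c_L(Q)$. Since $|f|\le\|f\|_\infty=L$ almost everywhere, we have $c_L(f)=f$; and the pointwise contraction $|c_L(z)-c_L(w)|\le|z-w|$ gives $|f-h|^p=|c_L(f)-c_L(Q)|^p\le|f-Q|^p$ pointwise, so that, applying $\uM_{\cA}$ and taking $p$-th roots,
\[
\|f-h\|_{b,p,\cA}\le\|f-Q\|_{b,p,\cA}<\tfrac{\eps}{2}.
\]
Moreover $\|h\|_\infty\le L=\|f\|_\infty$, since $c_L$ takes values in the closed disc of radius $L$.

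The key remaining point --- and the one I expect to be the only real obstacle --- is that $h=c_L(Q)$ is no longer a trigonometric polynomial, so I must re-expand it in trigonometric polynomials while keeping its height under control. This is possible because $h$ is still Bohr almost periodic: as $c_L$ is $1$-Lipschitz, one has $\tau_t h=c_L(\tau_t Q)$ and hence $\|h-\tau_t h\|_\infty\le\|Q-\tau_t Q\|_\infty$, so that every $\eps$-almost period of $Q$ is an $\eps$-almost period of $h$; since $Q\in SAP(G)$, so is $h$. Because the trigonometric polynomials are dense in $(SAP(G),\|\cdot\|_\infty)$, I can then pick a trigonometric polynomial $P$ with $\|h-P\|_\infty<\delta$, where $\delta:=\min\{\eps/2,1/2\}$.

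Finally I would combine the estimates, using $\|\cdot\|_{b,p,\cA}\le\|\cdot\|_\infty$ throughout:
\[
\|f-P\|_{b,p,\cA}\le\|f-h\|_{b,p,\cA}+\|h-P\|_{b,p,\cA}<\tfrac{\eps}{2}+\delta\le\eps,
\]
and
\[
\|P\|_\infty\le\|h\|_\infty+\|h-P\|_\infty<\|f\|_\infty+\delta<\|f\|_\infty+1,
\]
which are exactly the two asserted bounds. The whole difficulty is concentrated in the truncation step: cutting $Q$ down to height $\|f\|_\infty$ destroys the trigonometric polynomial structure, and the point is that it nonetheless preserves Bohr almost periodicity, which is what lets me recover a trigonometric polynomial of essentially the same (controlled) height.
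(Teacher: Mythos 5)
Your proof is correct and follows essentially the same route as the paper's: approximate by a trigonometric polynomial $Q$, truncate via the cut-off $c_L$ (the paper uses $L=\|f\|_\infty+\tfrac12$ rather than $L=\|f\|_\infty$, an immaterial difference), observe that the truncation is still Bohr almost periodic, and re-approximate in the sup norm by a trigonometric polynomial $P$ of controlled height. The only cosmetic variation is that you absorb the paper's preliminary reduction to $\eps<1$ into the choice $\delta=\min\{\eps/2,1/2\}$.
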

\begin{proof}
It suffices to prove the claim for $0 < \eps <1$.  Fix such an
$\eps$ and choose some trigonometric polynomial $Q$ such that $\|f -Q
\|_{\be,p,\cA} < \frac{\eps}{2}$. Set $L:= \| f \|_\infty+\frac{\eps'}{2}$,
and define $g:= c_L(Q)$, where $c_L$ is the normal contraction
from \eqref{eq: cut off}. Since $|f(x)|\leq L$ for all
$x\in G$, we have
\[
\| f -g \|_{\be,p,\cA} \,=\, \| c_L(f) -c_L(g)
\|_{\be,p,\cA} \leq  \| f -Q \|_{\be,p,\cA}
\]
and
\[
\| g \|_\infty \,\leq\, L \,.
\]
It is easy to see that
\[
\left| \tau_t  g(x)- g(x) \right| \leq
\left| \tau_t Q(x)-Q(x) \right| \qquad \text{ for all } t,x \in G \,.
\]
As a
trigonometric polynomial, $Q$ is Bohr almost periodic and, hence, so
is $g$. Therefore, there exists a trigonometric polynomial $P$ such
that $\| g -P \|_\infty < \min\{ \frac{\eps}{2},\frac{\eps'}{2} \}$.
We obtain
\[
  \| P \|_\infty  \leq \|P-g \|_\infty + \| g \|_\infty < L +\frac{\eps'}{2} <  \| f \|_{\infty} + \eps'
\]
and
\[
 \| f -P \|_{\be,p,\cA}  \leq \| f -g \|_{\be,p,\cA} +\| g -P \|_{\be,p,\cA} \leq \| f -Q \|_{\be,p,\cA} + \| g-P \|_\infty <\eps \,.
\]
This finishes the proof.
\end{proof}

\begin{prop}\label{BAP equality}
For each $1 \leq p < \infty$, we have
\[
\bap^p(G) \cap L^\infty(G)=
\bap(G) \cap L^\infty(G) \,.\]
\end{prop}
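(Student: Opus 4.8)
The plan is to prove the two inclusions separately, noting that one direction is already available. The inclusion $\bap^p(G)\cap L^\infty(G)\subseteq \bap(G)\cap L^\infty(G)$ is immediate from Proposition~\ref{ap inclusions 1}(b), which gives $\bap^p(G)\subseteq \bap(G)$ for every $p\geq 1$. Thus the entire content lies in the reverse inclusion: a \emph{bounded} Besicovitch $1$-almost periodic function is automatically Besicovitch $p$-almost periodic.

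To establish this, I would fix $f\in L^\infty(G)\cap \bap(G)$ and an $\eps>0$, and aim to produce a trigonometric polynomial $P$ with $\|f-P\|_{b,p,\cA}<\eps$. The governing idea is interpolation: Lemma~\ref{lemma C-S}(b) lets one control the Besicovitch $p$-seminorm of a bounded function by its sup norm and its Besicovitch $1$-seminorm, via $\|h\|_{b,p,\cA}^p\leq \|h\|_\infty^{p-1}\,\|h\|_{b,1,\cA}$. Applying this to $h=f-P$ reduces a $p$-norm approximation to a $1$-norm approximation, \emph{provided} the error $f-P$ stays uniformly bounded. This is exactly where one must be careful, and it is the only genuinely delicate point: an arbitrary trigonometric polynomial approximating $f$ well in $\|\cdot\|_{b,1,\cA}$ need not be bounded, so the factor $\|f-P\|_\infty^{p-1}$ could be uncontrolled.

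The remedy is Lemma~\ref{BAP bounded approx} (in the case $p=1$), which supplies, for any $\delta>0$, a trigonometric polynomial $P$ with both $\|f-P\|_{b,1,\cA}<\delta$ and $\|P\|_\infty<\|f\|_\infty+1$. With such a $P$ one has $\|f-P\|_\infty\leq 2\|f\|_\infty+1=:C$, a bound independent of $\delta$, and hence by Lemma~\ref{lemma C-S}(b)
\[
\|f-P\|_{b,p,\cA}^p\;\leq\; C^{\,p-1}\,\|f-P\|_{b,1,\cA}\;<\;C^{\,p-1}\,\delta\,.
\]
Choosing $\delta:=\eps^p/C^{\,p-1}$ then yields $\|f-P\|_{b,p,\cA}<\eps$, so that $f\in \bap^p(G)$. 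Since $\eps>0$ was arbitrary, this completes the reverse inclusion and hence the proof.

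In summary, the proof is short once the right tools are in place: the forward inclusion is the earlier monotonicity of the spaces in $p$, and the reverse inclusion combines the uniform-boundedness of the Bohr-polynomial approximant (Lemma~\ref{BAP bounded approx}) with the interpolation estimate (Lemma~\ref{lemma C-S}(b)). I expect no substantive obstacle beyond remembering to keep the approximating polynomial bounded so that the $\|\cdot\|_\infty^{p-1}$ factor does not degrade the estimate.
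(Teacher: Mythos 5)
Your proof is correct and follows essentially the same route as the paper: the forward inclusion via the monotonicity of the Besicovitch seminorms, and the reverse inclusion by combining Lemma~\ref{BAP bounded approx} (to keep the approximating polynomial uniformly bounded) with the interpolation estimate of Lemma~\ref{lemma C-S}(b), differing only in the cosmetic choice of constants.
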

\begin{proof} The inclusion
$\subseteq$ follows from Proposition~\ref{ap inclusions 1}.
To show
the inclusion $\supseteq$, let $f \in \bap(G) \cap L^\infty(G)$, and
let $\eps >0$. By Lemma~\ref{BAP bounded approx}, we can find
some trigonometric polynomial $P$ satisfying
\[
\| f -P \|_{\be,1,\cA} \, < \, \frac{\eps^p}{\big(2 \| f \|_\infty\big)^{p-1} +1 }
\]
and
\[
\| P \|_\infty \,<\, \| f \|_{\infty} +1 \,.
\]
Therefore, by Lemma~\ref{lemma C-S}, we have
\begin{displaymath}
 \| f-P \|_{\be,p,\cA}^p
 \leq \|f-P \|_\infty^{p-1}\,\| f \|_{\be,1,\cA}
 \leq \big(\| f \|_\infty + \| P \|_\infty \big)^{p-1} \frac{\eps^p}{\big(2 \| f \|_\infty\big)^{p-1} +1} < \eps^p \,.
\end{displaymath}
This finishes the proof.
\end{proof}



Next, we review basic properties of $\bap^p(G)$, see
\cite[Thm.~2.4]{LinOlsen} for (a) as well as
\cite[Thm.~2.7]{LinOlsen} for (g).

\begin{prop}\label{Bap props}
Let $\cA$ be a F\o lner sequence in $G$.
\begin{itemize}
\item [(a)] If $f \in \bap(G)$, the mean
\[
  M_{\cA}(f)= \lim_{n\to\infty} \frac{1}{|A_n|} \int_{A_n} f(t)\, \dd t
\]
of $f$ exists with respect to $(A_n)$ and satisfies
\[
\left|M_{\cA}(f)\right|\leq
\|f\|_{\be,1,\cA} \,.
\]
\item [(b)] For all $f \in \bap^p(G)$ the seminorm $\| f \|_{\be,p, \cA}$ is a limit
\[
\|f\|_{\be,p,\cA}= \lim_{n\to\infty} \sqrt[p]{\frac{1}{|A_n|} \int_{A_n} |f(t)|^p \dd t }\,.
\]
\item [(c)] If $f, g \in \bap^p(G)$ for some $1\leq p<\infty$, and if $\chi \in \widehat{G}, c \in \CC$, one has $f \pm g, cf, \overline{f}, \chi f, |f| \in \bap^p(G)$.
\item [(d)] If $f \in \bap (G) \cap L^\infty(G)$, we have $\tau_t f\in \bap (G)$ for all $t\in G$.
\item [(e)] If $f, g \in \bap^p(G)$ and $f \in L^\infty(G)$, then $fg \in \bap^p(G)$.
\item [(f)] If $1< p,q <\infty$ are conjugate, $f,P \in BL^p_{\cA}(G)$ and $g,Q \in BL^q_{\cA}(G)$, one has
\begin{equation}\label{eq:pq}
\| fg-PQ \|_{\be,1,\cA} \leq \|f \|_{\be,p,\cA} \| g-Q \|_{\be,q,\cA}+ \| f-P  \|_{\be,p,\cA} \| Q \|_{\be,q,\cA} \,.
\end{equation}
\item[(g)] If $1< p,q <\infty$ are conjugate, and $f \in \bap^p(G), g \in \bap^q(G)$, then $fg \in \bap(G)$.
\end{itemize}
\end{prop}
\begin{proof}
(a) Since the mean exists for all $f \in \sap(G)$ \cite{MoSt}, (a) follows from Proposition~\ref{prop-aprox-FB} and Lemma~\ref{lemma-FB-bound-bes}.

\smallskip
\noindent (b) Let $\eps >0$. Choose some $g \in SAP(G)$ such that
\[
\|f -g \|_{\be,p,\cA} < \frac{\eps}{4} \,.
\]
Then, there exists some $N$ such that
\[
\sqrt[p]{\frac{1}{|A_n|}\int_{A_n} |f(t)-g(t)|^p\, \dd t} < \frac{\eps}{2}
\]
holds for all $n>N$. Since $g \in SAP(G)$, the following limit
\[
\lim_{n\to\infty} \sqrt[p]{\frac{1}{|A_n|}\int_{A_n} |g(t)|^p\, \dd t}
\]
exists \cite{MoSt} and is obviously equal to $\| g \|_{\be,p,\cA}$. Therefore, there exists some $N_2$ such that, for all $n >N_2$, we have
\[
\left| \sqrt[p]{\frac{1}{|A_n|}\int_{A_n} |g(t)|^p\, \dd t}- \| g \|_{\be,p,\cA} \right| < \frac{\eps}{4} \,.
\]
Finally, the triangle inequality for $\mathcal{L}^p(A_n)$ implies that
\[
\left| \sqrt[p]{\frac{1}{|A_n|}\int_{A_n} |f(t)|^p\, \dd t} - \sqrt[p]{\frac{1}{|A_n|}\int_{A_n} |g(t)|^p\, \dd t}\right| < \sqrt[p]{\frac{1}{|A_n|}\int_{A_n} |f(t)-g(t)|^p\, \dd t} < \frac{\eps}{2}
\]
holds for all $n$. Therefore, for all $n > N:= \max\{N_1, N_2 \}$, we have
\begin{eqnarray*}
& & \left| \sqrt[p]{\frac{1}{|A_n|}\int_{A_n} |f(t)|^p\, \dd t} - \|f  \|_{\be,p,\cA} \, \dd t \right| \\ &\leq &
\left| \sqrt[p]{\frac{1}{|A_n|}\int_{A_n} |f(t)|^p\, \dd t}
-   \sqrt[p]{\frac{1}{|A_n|}\int_{A_n} |g(t)|^p\, \dd t}\right| \\
& + & \left| \sqrt[p]{\frac{1}{|A_n|}\int_{A_n} |g(t)|^p\, \dd t}- \| g \|_{\be,p,\cA} \right|
+ \|f -g \|_{\be,p,\cA}   \\
&\leq & \frac{\eps}{2}+\frac{\eps}{4}+\frac{\eps}{4} \\
&=&\eps \,.
\end{eqnarray*}

\medskip

\noindent (c) follows from the (in)equalities
\begin{align*}
\| f+g-(P+Q) \|_{\be,p,\cA} &\leq \| f-P \|_{\be,p,\cA} +\| g-Q \|_{\be,p,\cA} \,,\\
\| cf-cP \|_{\be,p,\cA}&=|c|\, \| f-P\|_{\be,p,\cA}\,,  \\
\| \chi f -\chi P\|_{\be,p,\cA} &=\| \overline{f}-\overline{P}
\|_{\be,p,\cA}=\| f - P\|_{\be,p,\cA} \,,
\end{align*}
with trigonometric polynomials $P,Q$.

For $|f|$, recall first that, for all $a,b \in \CC$, we have
\[
\big| |a|-|b| \big| \,\leq \, |a-b| \,.
\]
This implies that for all trigonometric polynomials $P$ we have
\[
\big| |f|-|P| \big|^p \,\leq \, |f-P|^p \,.
\]
It follows that
\[
\big\| |f|-|P| \big\|_{\be,p, \cA} \, \leq \, \| f-P \|_{\be,p, \cA}
\]
This proves (c).

\medskip
\noindent (d) Let $\eps >0$. Choose a trigonometric polynomial $P$ such that
\[
\| f -P \|_{\be,1,\cA} < \eps \,.
\]
Since $f-P \in L^\infty(G)$, we have
\[
\| \tau_t f - \tau_t P \|_{\be,1,\cA} < \eps
\]
by Lemma~\ref{L1}(b). This proves the claim.
\smallskip

\noindent (e) Let $\eps >0$. Choose some trigonometric polynomials $P$ and $Q$ satisfying
\begin{align*}
\| f -P \|_{\be,p,\cA} \,&<\, \frac{\eps}{2 \| g \|_\infty+1} \,, \\
\| g-Q\|_{\be,p, \cA} \, &<\,  \frac{\eps}{2 \| P \|_{\infty} +1} \,, \\
\| P\|_\infty \, &< \, \|f \|_\infty +1 \,.
\end{align*}
Then, we have
\begin{align*}
  \| fg -PQ \|_{\be,p,\cA} &\leq \| fg-Pg \|_{\be,p,\cA}+ \| Pg-PQ \|_{\be,p,\cA} \\
  & \leq \| f-P \|_{\be,p,\cA}\,\| g \|_\infty+ \| g-Q \|_{\be,p,\cA}\, \|P \|_\infty < \eps  \,.
\end{align*}

\smallskip

\noindent (f) follows immediately from the H\"older inequality, Theorem~\ref{lemma C-S}.

\smallskip

\noindent (g) Select sequences $(P_n)$ and $(Q_n)$ of trigonometric polynomials satisfying
\[
  \| f-P_n \|_{\be,p,\cA} \, < \, \frac{1}{n}
\]
and
\[
  \| g-Q_n \|_{\be,q,\cA}  \, <\,  \frac{1}{n} \,.
\]
By \eqref{eq:pq}, the claim follows from
\begin{align*}
\| fg-P_nQ_n \|_{\be,1,\cA} &\leq \|f \|_{\be,p,\cA} \| g-Q_n \|_{\be,q,\cA}+ \| f-P_n  \|_{\be,p,\cA} \| Q_n \|_{\be,q,\cA} \\
&\leq \frac{ \|f \|_{\be,p,\cA} }{n}+ \frac{1}{n} (\|g\|_{\be,q,\cA}+1) \to 0 \,.
\end{align*}
\end{proof}

\smallskip
We can talk about Fourier coefficients on $\bap (G)$, see
\cite[Thm.~2.5]{LinOlsen} for the existence of the corresponding limits.

\begin{coro}[Existence and continuity of the Fourier coefficients]
\label{FB for BAP fct} For  $\chi \in \widehat{G}$, the map
\[
a_{\chi}^{\cA} : \bap (G)\longrightarrow \CC\,, \qquad f\mapsto
\lim_{n\to\infty} \frac{1}{|A_n|} \int_{A_n} \overline{\chi(t)}\,
f(t)\, \dd t \,,
\]
is well defined, linear, continuous and satisfies
\[
\left|  a_\chi^\cA(f) \right| \leq \|f
\|_{\be,1,\cA} \,.
\]
In particular, for all $f,g \in \bap(G)$ satisfying $\|f -g\|_{\be,1,\cA} = 0$, we have
\[
a_{\chi}^\cA (f) = a_{\chi}^{\cA}(g) \,.
\]
Moreover,
\[
a_{\chi}^{\cA} (f*\varphi) = \widehat{\varphi} (\chi) \,
a_{\chi}^{\cA} (f)
\]
holds for all $f \in \bap$ with $f \theta_G \in \cM^\infty(G)$ and all $\varphi \in \Cc(G)$. \index{Fourier--Bohr~coefficient!Fourier--Bohr~coefficient~of~function}
\end{coro}
\begin{proof} By (c) of Proposition~\ref{Bap props}, $\overline{\chi} f$ belongs to
$\bap (G)$ for any $f\in\bap (G)$. The existence of the
limit follows then from Proposition~\ref{Bap props}(a). Moreover, the map $ a_\chi^\cA : \bap(G) \to \CC$ is linear. Finally, by Lemma~\ref{lemma-FB-bound-bes}, we have
\[
\left|  a_\chi^\cA(f) \right| \leq \|f
\|_{\be,1,\cA} \,.
\]
The last statement follows from Corollary~\ref{FB
measure relations} applied to $\mu=f \theta_G \in \cM^\infty(G)$.
\end{proof}

It is possible to turn $\bap^p(G)$ into a normed space and show that
it is a Banach space. To do this, we need to factor out all the
elements of norm $0$. As before, we use the equivalence relation
$\equiv$ on $\bap^p(G)$ defined via
\[
f \equiv g \iff \|f-g \|_{\be,p,\cA} =0\,.
\]
Note that if $h \in \mathcal{L}^p_{loc}(G)$ satisfies
$\|h\|_{\be,p,\cA}=0$, then $h \in \bap^p(G)$ and $h \equiv 0$. As
usual, we denote by $[f]_p$ the equivalence class of $f$. When there
is no possibility of confusion, we will use the shorter notation
$[f]$. In this case, $\| \cdot \|_{\be,p,\cA}$ becomes a norm on the space
\[
\bappe \, := \{ [f] : f \in \bap^p(G)  \}
\]
of equivalence classes, and $(\bappe , \| \cdot \|_{\be,p,\cA})$ is a
normed space. Corollary~\ref{FB for BAP fct} allows us to
define the Fourier--Bohr coefficient of $[f] \in \bap(G)/\equiv$ in
$\chi \in \widehat{G}$  via
\[
a_{\chi}^{\cA}\big([f]\big):=a_{\chi}^{\cA}(f)\,.
\]
The subsequent completeness result is certainly known, see
e.g. \cite[Rem.~2]{CohLos}. However, we could not find a discussion
with a proof in the literature.

\begin{theorem}[Completeness of Besicovitch spaces]\label{Bap is complete}
For each $1 \leq p < \infty$, the space $(\bappe, \| \cdot \|_{\be,p,\cA})$ is a Banach space.
\end{theorem}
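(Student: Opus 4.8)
The plan is to realize $\bappe$ as a closed subspace of the Banach space $BL^p_{\cA}(G)/\equiv$ and then invoke the elementary fact that a closed subspace of a complete normed space is itself complete. All the genuine analytic content has already been isolated in Theorem~\ref{thm:completeness}, so what remains is essentially bookkeeping.

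First I would check that $\bap^p(G)$ is a linear subspace of $BL^p_{\cA}(G)$. Every trigonometric polynomial $P$ is bounded, hence $\|P\|_{b,p,\cA}^p \leq \|P\|_\infty^p < \infty$, so $P \in BL^p_{\cA}(G)$. If $f \in \bap^p(G)$, choosing a trigonometric polynomial $P$ with $\|f-P\|_{b,p,\cA} < 1$ and using the triangle inequality from Lemma~\ref{L1}(a) gives $\|f\|_{b,p,\cA} \leq \|f-P\|_{b,p,\cA} + \|P\|_{b,p,\cA} < \infty$, so $f \in BL^p_{\cA}(G)$. Together with Proposition~\ref{Bap props}(b), which shows that $\bap^p(G)$ is closed under addition and scalar multiplication, this exhibits $\bap^p(G)$ as a linear subspace of $BL^p_{\cA}(G)$ carrying the same seminorm $\|\cdot\|_{b,p,\cA}$.

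Next I would verify that $\bap^p(G)$ is closed in $\bigl(BL^p_{\cA}(G), \|\cdot\|_{b,p,\cA}\bigr)$. This is immediate from the definition, since $\bap^p(G)$ is by construction the set of functions approximable by trigonometric polynomials. Concretely, if $f \in BL^p_{\cA}(G)$ lies in the $\|\cdot\|_{b,p,\cA}$-closure of $\bap^p(G)$, then for any $\eps>0$ I can pick $h \in \bap^p(G)$ with $\|f-h\|_{b,p,\cA} < \eps/2$ and then a trigonometric polynomial $P$ with $\|h-P\|_{b,p,\cA} < \eps/2$; the triangle inequality yields $\|f-P\|_{b,p,\cA} < \eps$, so $f \in \bap^p(G)$. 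Passing to the quotient, the seminorm descends to a genuine norm on $\bappe$ (this is where the remark preceding the theorem is used: any $h$ with $\|h\|_{b,p,\cA}=0$ lies in $\bap^p(G)$ and is identified with $0$, so the equivalence relation is consistent with the subspace), and $\bappe$ is precisely the image of the closed subspace $\bap^p(G)$ inside $BL^p_{\cA}(G)/\equiv$, hence closed there. Since $BL^p_{\cA}(G)/\equiv$ is complete by Theorem~\ref{thm:completeness} and a closed subspace of a Banach space is complete, it follows that $(\bappe, \|\cdot\|_{b,p,\cA})$ is a Banach space.

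I do not expect a serious obstacle here: the nontrivial completeness statement is that of the ambient Marcinkiewicz-type space, already proved in Theorem~\ref{thm:completeness}. The only point requiring a moment's care is the consistency of the equivalence relation with the subspace $\bap^p(G)$, namely that seminorm-zero functions are absorbed into $\bap^p(G)$, which is exactly what is recorded just before the theorem statement. Everything else is the standard passage from a closed subspace of a Banach space to a Banach space.
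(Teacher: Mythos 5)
Your proposal is correct and follows essentially the same route as the paper: both reduce to the completeness of $(BL^p_{\cA}(G),\|\cdot\|_{b,p,\cA})$ from Theorem~\ref{thm:completeness} and then show that $Bap^p_{\cA}(G)$ is closed there via the same $\eps/2$ triangle-inequality argument with trigonometric polynomials. Your additional checks (that $\bap^p(G)$ sits inside $BL^p_{\cA}(G)$ and that seminorm-zero functions are absorbed into $\bap^p(G)$) are points the paper leaves implicit, but they do not change the argument.
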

\begin{proof}
Recall that $\bap^p(G)$ is the closure of the subspace spanned by trigonometric polynomials in $(BL_{\cA}^p (G), \| \cdot \|_{\be,p,\cA})$.
By Theorem \ref{thm:completeness}, the former space is closed, and the claim follows.
\end{proof}

It is possible to extend the translation action from $\sap(G)$ as
well as the convolution on $\sap(G)$ to all of $\bap^p
(G)/\equiv$. Indeed, with $\fS' = \sap(G)$ and $\fS =
\bap^p (G)$, we are exactly in the situation discussed
 in Section \ref{sub:trans-sub}. In particular, by Proposition~\ref{prop:translation2}, for each $t\in G$, there is a
 unique isometric  map
 \[
 T_t : \bap^p (G) /\equiv\, \longrightarrow\,
 \bap^p (G)/\equiv \]
 which satisfies
 \[
 T_t [f] = [\tau_t f] \qquad \text{ for all } f\in \sap(G)\,.
\]
One can also argue that, by definition, we have $\bap^p(G) \subseteq BC^p_{\cA}(G)$ and, hence, $\bap^p(G)/\equiv \subseteq BC^p_{\cA}(G)/\equiv$.
The mapping $T_t$ is then just the restriction to $\bap^p(G)/\equiv$ of the mapping from \eqref{Tt}.

\smallskip
Proposition \ref{prop:translation2} gives the following result.

\begin{prop}[Translation]\label{prop translation clases} \phantom{X}
\begin{itemize}
\item[(a)] For each $s,t \in G$, we have
\[
T_t \circ T_s= T_{t+s} \qquad \text{ and }  \qquad
 T_{0} = \mbox{\rm Id} \,.
\]
\item[(b)] For each $[f] \in \bappe$, the function $G\longrightarrow
\bappe$, $t \to T_t [f]$, is continuous and satisfies
\[
\|T_t [f]\|_{\be,p,\cA}= \|[f]\|_{\be,p,\cA} \qquad \text{ for all } t\in G \,.
\]
\item[(c)] If $[f] \in (\bappe)$ and $f \in L^\infty(G)$, we have $T_t[f]=[\tau_tf]$.\qed
\end{itemize}
\end{prop}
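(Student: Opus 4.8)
The plan is to deduce all three statements from Proposition~\ref{prop:translation} by specializing it to the choices $\mathcal{S}' = SAP(G)$ and $\mathcal{S} = \bap^p(G)$, exactly as announced in the paragraph preceding the statement. First I would confirm that these choices meet the hypotheses of that proposition. As recorded in Section~\ref{sec-key-player}, $SAP(G)$ is a subspace of $\Cu(G)$ that is invariant under translation and closed with respect to $\|\cdot\|_\infty$, so it is an admissible $\mathcal{S}'$.

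The one point that genuinely requires an argument is the identification of $\bap^p(G)$ with the space $\mathcal{S}$ of Proposition~\ref{prop:translation}, i.e. with the closure of $SAP(G)$ in $BL^p_{\cA}(G)$ for $\|\cdot\|_{b,p,\cA}$. By definition, $\bap^p(G)$ is the $\|\cdot\|_{b,p,\cA}$-closure of the trigonometric polynomials. Since the trigonometric polynomials are $\|\cdot\|_\infty$-dense in $SAP(G)$ and $\|\cdot\|_{b,p,\cA}\leq\|\cdot\|_\infty$, they are also dense in $SAP(G)$ for $\|\cdot\|_{b,p,\cA}$; hence the closure of $SAP(G)$ and the closure of the trigonometric polynomials coincide, both equal to $\bap^p(G)$. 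This is precisely the reformulation already noted in Remark~\ref{rem:sap in bap}.

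With these identifications in hand, the three assertions are read off directly from Proposition~\ref{prop:translation}. Statement~(a) is its part~(c). In statement~(b), the norm identity $\|T_t[f]\|_{b,p,\cA}=\|[f]\|_{b,p,\cA}$ is the isometry assertion contained in part~(a), while the continuity of $t\mapsto T_t[f]$ is part~(b). Finally, statement~(c) is part~(f), which for $f\in\bap^p(G)\cap L^\infty(G)$ simultaneously guarantees that $\tau_t f\in\bap^p(G)$—so that the class $[\tau_t f]$ is meaningful—and that $T_t[f]=[\tau_t f]$. Since each item is a direct transcription of a part of Proposition~\ref{prop:translation}, I do not anticipate any substantive obstacle beyond the bookkeeping identification of $\mathcal{S}$ carried out in the previous paragraph.
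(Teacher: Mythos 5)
Your proposal is correct and follows essentially the same route as the paper: the authors likewise obtain this proposition by specializing Proposition~\ref{prop:translation} to $\mathcal{S}'=SAP(G)$, $\mathcal{S}=Bap^p_{\cA}(G)$, and give no separate proof. Your explicit check that the $\|\cdot\|_{b,p,\cA}$-closure of $SAP(G)$ coincides with that of the trigonometric polynomials (via $\|\cdot\|_{b,p,\cA}\leq\|\cdot\|_\infty$) is exactly the bookkeeping the paper relegates to Remark~\ref{rem:sap in bap}.
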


We finish this section by providing an alternative view on $\bappe$
via the Bohr compactification of $G$.
Recall first that we have a natural continuous, injective group
homomorphism $i_{\mathsf{b}} : G \to G_{\mathsf{b}}$ of $G$ into its
Bohr compactification $G_{\mathsf{b}}$. Under this embedding, a
function $f\in \Cu(G)$ is Bohr almost periodic if and only if
there exists $f_{\mathsf{b}} \in C(G_\mathsf{b})$ such that $f=
f_{\mathsf{b}}\circ i_{\mathsf{b}}$. In this case, the function
$f_{\mathsf{b}}$ is unique, and the mapping $f \to f_{\mathsf{b}}$\label{bohr-map}
is called the \textit{Bohr mapping}\index{Bohr~mapping}, see \cite{MoSt} for the
details. Moreover, we have
\[
M_\cA(f)= \int_{G_{\mathsf{b}}} f_{\mathsf{b}}(t)\, \dd
t \,,
\]
where we use the probability Haar measure on
$G_{\mathsf{b}}$  on the right hand side, see again \cite{MoSt}. These considerations
immediately yield the following.

\begin{lemma}\label{norm preserving} For each van Hove sequence $\cA$ on $G$, all $f\in \sap(G)$ and all $1 \leq p < \infty$, we have
\[
\| f\|_{\be,p,\cA}= \left(\int_{G_{\mathsf{b}}} |f_{\mathsf{b}}(t)|^p\,
\dd t \right)^\frac{1}{p}= \| f_{\mathsf{b}}\|_p \,.    \tag*{$\qed$}
\]
\end{lemma}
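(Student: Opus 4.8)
The plan is to reduce everything to the displayed mean formula $M_\cA(g)= \int_{G_{\mathsf{b}}} g_{\mathsf{b}}(t)\, \dd t$ by applying it not to $f$ itself but to $|f|^p$. By definition $\| f\|_{b,p,\cA}^p = \uM_{\cA}(|f|^p)$, so the identity to be proven is precisely $\uM_{\cA}(|f|^p) = \int_{G_{\mathsf{b}}} |f_{\mathsf{b}}(t)|^p\, \dd t$. The two ingredients I would need are that $|f|^p$ again belongs to $SAP(G)$ (so that its mean exists as a genuine limit and the formula is applicable) and that its Bohr transform is exactly $|f_{\mathsf{b}}|^p$.

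Both of these follow at once from passing to the Bohr compactification. Since $f\in SAP(G)$, there is a unique $f_{\mathsf{b}}\in C(G_{\mathsf{b}})$ with $f = f_{\mathsf{b}}\circ i_{\mathsf{b}}$. As $G_{\mathsf{b}}$ is compact and the map $z\mapsto |z|^p$ is continuous on $\CC$, the function $|f_{\mathsf{b}}|^p$ again lies in $C(G_{\mathsf{b}})$. Pulling back along $i_{\mathsf{b}}$ gives $|f|^p = |f_{\mathsf{b}}\circ i_{\mathsf{b}}|^p = |f_{\mathsf{b}}|^p \circ i_{\mathsf{b}}$, which by the very definition of the Bohr mapping shows that $|f|^p\in SAP(G)$ with $(|f|^p)_{\mathsf{b}} = |f_{\mathsf{b}}|^p$. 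This single observation supplies both ingredients simultaneously.

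With this in hand the conclusion is immediate. Because $|f|^p\in SAP(G)$, its mean exists as an honest limit (Proposition \ref{Bap props}(a)), so that $\uM_{\cA}(|f|^p) = M_\cA(|f|^p)$. Applying the displayed mean formula to $g = |f|^p$ then yields $M_\cA(|f|^p) = \int_{G_{\mathsf{b}}} (|f|^p)_{\mathsf{b}}(t)\, \dd t = \int_{G_{\mathsf{b}}} |f_{\mathsf{b}}(t)|^p\, \dd t$, and taking $p$-th roots gives the claim. There is no serious obstacle here; the only point that needs a moment's care is recording that $|f|^p$ stays in $SAP(G)$ with the correct Bohr transform, since that is exactly what licenses the use of the mean formula and, in particular, the replacement of the $\limsup$ defining $\uM_{\cA}$ by a genuine limit.
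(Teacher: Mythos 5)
Your proof is correct and matches the paper's intent exactly: the paper derives this lemma as an immediate consequence of the mean formula $M_\cA(g)=\int_{G_{\mathsf{b}}} g_{\mathsf{b}}\, \dd t$ for $g\in SAP(G)$, and you supply precisely the needed details, namely that $|f|^p=|f_{\mathsf{b}}|^p\circ i_{\mathsf{b}}$ is again in $SAP(G)$ with Bohr transform $|f_{\mathsf{b}}|^p$, so the $\limsup$ defining $\|\cdot\|_{b,p,\cA}$ is a genuine limit equal to $\int_{G_{\mathsf{b}}}|f_{\mathsf{b}}|^p\,\dd t$.
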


Therefore, we obtain the next result, compare \cite{Fol} for $G=\RR^d$.

\begin{theorem}\cite[p.12]{Pan}\label{Bap completion}
Fix a van Hove sequence $\cA$. Then, for each $1 \leq p < \infty$, the Bohr mapping $( \cdot )_{\mathsf{b}}
: \sap(G) \to C(G_{\mathsf{b}}) \subseteq \mathcal{L}^p(G_{\mathsf{b}})$
extends uniquely to an isometric isomorphism
\begin{displaymath}
  ( \cdot )_{\mathsf{b},p } : (\bappe, \| \cdot \|_{\be,p,\cA})  \to ( \mathcal{L}^p(G_{\mathsf{b}}), \| \cdot \|_p) \,.
\end{displaymath}
Moreover, for all $t \in G$ and $[f] \in \bappe$, we have
 \[
  \big( T_t[f] \big)_{\mathsf{b},p}= \tau^{}_{i_{\mathsf{b}}(t)}  \big( [f] \big)_{\mathsf{b},p } \,.
 \]
Furthermore, if $f \in \bap^p(G) \cap L^\infty(G)$, then there exists a representative $( f )_{\mathsf{b},p}$ such that
 \[
 \big\|( f )_{\mathsf{b},p} \big\|_{\infty} \leq \|f \|_\infty \,.
 \]
\end{theorem}
\begin{proof}
Let us think of $\sap(G)$ as a subspace of $\bappe$.
For each $1 \leq p < \infty$, by Lemma~\ref{norm preserving}, the
Bohr mapping is a norm preserving isometry from $(\sap(G), \| \cdot
\|_{\be,p,\cA})$ into the Banach space $\mathcal{L}^p(G_{\mathsf{b}})$.
Since $(\sap(G), \| \cdot \|_{\be,p,\cA})$ is dense in $(\bappe ,  \| \cdot
\|_{\be,p,\cA})$, the Bohr mapping has a unique extension to an
isometry $( \cdot )_{\mathsf{b},p} : (\bappe, \| \cdot \|_{\be,p,\cA}) \to
( \mathcal{L}^p(G_{\mathsf{b}}), \| \cdot \|_p)$. Since the range contains
$C(G_{\mathsf{b}})$, as the image of $\sap(G)$, the extension $( \cdot
)_{\mathsf{b},p}$ has dense range. Hence, as an isometry, it is
onto.

The `moreover' claim follows trivially from
\[
(\tau_t f)_{\mathsf{b}}= \tau^{}_{i_{\mathsf{b}}(t)}( f_{\mathsf{b}}) \qquad \text{ for all } t \in G, f \in SAP(G) \,.
\]

Finally, let $f \in \bap^p(G) \cap L^\infty(G)$, and let $\eps >0$.
By Lemma~\ref{BAP bounded approx}, we can find a sequence $(P_n)$ of trigonometric polynomials such that
\begin{align*}
\| f -P_n \|_{\be,p,\cA} \,&<\, \frac{1}{n} \\
\| P_n \|_\infty \,&<\, \|
f \|_{\infty} + \eps \,.
\end{align*}
It follows that
\begin{align*}
\Big\| \big([ f ]\big)_{\mathsf{b},p} -(P_n)_{\mathsf{b}} \Big\|_{p} \,&<\, \frac{1}{n} \\
\big\| (P_n)_{\mathsf{b}} \big\|_\infty \,=\, \|P_n \|_\infty \,&<\, \|
f \|_{\infty} + \eps \,.
\end{align*}
If we choose any representative $g$ in the class $ \big([ f ]\big)_{\mathsf{b},p} \in \mathcal{L}^p(G_\mathsf{b})$, the sequence $\big((P_n)_{\mathsf{b}}\big)$ converges in $\| \cdot \|_p$ to $g$.
Then, there exists a subsequence $\big((P_{n_k})_{\mathsf{b}}\big)$ which converges pointwise almost everywhere to $g$. This implies that
\[
| g(x)| \leq  \| f \|_{\infty} + \eps
\]
for almost all $x \in G_{\mathsf{b}}$. The claim follows.
\end{proof}

\begin{remark}
\phantom{XXX}
\begin{itemize}
\item[(a)] If  $P= \sum_{k=1}^n c_k\, \chi_k$, then
$P_{\mathsf{b}}= \sum_{k=1}^n c_k\, (\chi_k)_{\mathsf{b}}$, where
$\chi_{\mathsf{b}}$ denotes the character $\chi \in
\widehat{G}=\widehat{G_{\mathsf{b}}}$ viewed as a character on
$G_{\mathsf{b}}$.
\item[(b)]  Let $f \in \bap^p(G)$ and let $(P_n)$ be a sequence of trigonometric polynomials
such that $\lim_{n\to\infty} \| f -P_n\|_{\be,p,\cA}=0$. Then,
Theorem~\ref{Bap completion} implies
\[
[f]_{\mathsf{b},p}= \lim_{n\to\infty} (P_n)_{\mathsf{b}} \qquad \text{ in $(\mathcal{L}^p(G_\mathsf{b}), \|\cdot\|_p)$}  \,.
\]
In particular, for all $1 \leq p <q < \infty$ and all $f \in
\bap^q(G) \subseteq \bappe $, we have $[f]_{\mathsf{b},p} =
[f]_{\mathsf{b},q}$ in $\mathcal{L}^p(G_{\mathsf{b}})$. Indeed, if we choose trigonometric polynomials $P_n$ such that $\|
f -P_n \|_{\be,q,\cA} \to 0$, we obtain $\| f -P_n \|_{\be,p,\cA} \to 0$. This
gives
\[
\lim_{n\to\infty} \| [f]_{\mathsf{b},q} - (P_n)_{\mathsf{b}} \|_q = 0 \qquad \text{ and } \qquad
\lim_{n\to\infty} \| [f]_{\mathsf{b},p} - (P_n)_{\mathsf{b}} \|_p = 0 \,.
\]
Finally, the claim follows from $\mathcal{L}^q(G_\mathsf{b}) \subseteq \mathcal{L}^p(G_\mathsf{b})$ and $\| \cdot \|_q \leq \| \cdot \|_p$.
\item[(c)] Let $f \in \bap^p(G)$, and let $(P_n)$ be a sequence of trigonometric polynomials
such that
\[
\sum_{n=1}^{\infty} \| f -P_n\|_{\be,p,\cA}< \infty \,.
\]
Then, for any representative $g$ in the class $ \big([ f ]\big)_{\mathsf{b},p} \in \mathcal{L}^p(G_\mathsf{b})$, the sequence $\big((P_n)_{\mathsf{b}}\big)$ converges pointwise almost everywhere to $g$.      \exend
\end{itemize}
\end{remark}

Using Theorem~\ref{Bap completion}, we obtain the following
complementary view on averaging and taking Fourier--Bohr
coefficients on $\bap(G)$.

\begin{lemma}
Let $\cA$ be a F\o lner sequence on $G$. For all $f \in
\bap(G)$, we have
\begin{enumerate}
\item[(a)]
\[
\lim_{m\to\infty}\frac{1}{|A_m|}\int_{A_m} f(t)\, \dd t  = \int_{G_\mathsf{b}} ([f])_{\mathsf{b},1}(t)\, \dd t \,.
\]
\item[(b)]  $a_{\chi}^\cA(f) = \reallywidehat{ [f]_{\mathsf{b},1}} (\chi_\mathsf{b})$.
\end{enumerate}
\end{lemma}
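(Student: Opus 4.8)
The plan is to prove both identities by the same density-and-continuity principle: each side of each identity is a continuous functional on $(\bap(G),\|\cdot\|_{b,1,\cA})$, and the two sides agree on the dense subspace $SAP(G)$ of Bohr almost periodic functions (in fact the trigonometric polynomials already suffice). Since continuous maps that agree on a dense set agree everywhere, the identities then hold on all of $\bap(G)$. Throughout I use that everything descends to $\bap(G)/\equiv$ and that $SAP(G)$ is dense by Remark~\ref{rem:sap in bap}.

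For (a), I would first note that the left-hand side $f\mapsto M_{\cA}(f)$ is well defined on $\bap(G)$ and satisfies $|M_{\cA}(f)|\le\|f\|_{b,1,\cA}$ by Proposition~\ref{Bap props}(a); in particular it is $\|\cdot\|_{b,1,\cA}$-continuous. For the right-hand side, Theorem~\ref{Bap completion} gives that $[f]\mapsto [f]_{\mathsf{b},1}$ is an isometric isomorphism onto $L^1(G_{\mathsf{b}})$, and since the Haar measure on $G_{\mathsf{b}}$ is a probability measure, $g\mapsto\int_{G_{\mathsf{b}}} g\,\dd t$ is continuous on $L^1(G_{\mathsf{b}})$ with $\bigl|\int_{G_{\mathsf{b}}} g\,\dd t\bigr|\le\|g\|_1$; hence $f\mapsto\int_{G_{\mathsf{b}}} [f]_{\mathsf{b},1}\,\dd t$ is continuous as well. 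Finally, for $f\in SAP(G)$ the extension satisfies $[f]_{\mathsf{b},1}=f_{\mathsf{b}}$ by construction, so the formula $M_{\cA}(f)=\int_{G_{\mathsf{b}}} f_{\mathsf{b}}(t)\,\dd t$ recorded before Lemma~\ref{norm preserving} shows the two sides coincide on $SAP(G)$. Density then yields (a).

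For (b), the idea is to reduce to (a). By the definition of the Fourier--Bohr coefficient in Corollary~\ref{FB for BAP fct} we have $a_\chi^{\cA}(f)=M_{\cA}(\overline{\chi}f)$, and $\overline{\chi}f\in\bap(G)$ by Proposition~\ref{Bap props}(b). Applying (a) to $\overline{\chi}f$ gives $a_\chi^{\cA}(f)=\int_{G_{\mathsf{b}}}[\overline{\chi}f]_{\mathsf{b},1}\,\dd t$. The key step is the identity
\[
[\overline{\chi}f]_{\mathsf{b},1}=\overline{\chi_{\mathsf{b}}}\,[f]_{\mathsf{b},1}
\]
in $L^1(G_{\mathsf{b}})$. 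I would again prove this by density: both $f\mapsto[\overline{\chi}f]_{\mathsf{b},1}$ and $f\mapsto\overline{\chi_{\mathsf{b}}}\,[f]_{\mathsf{b},1}$ are continuous maps $\bap(G)\to L^1(G_{\mathsf{b}})$, because multiplication by a unimodular character is an isometry for $\|\cdot\|_{b,1,\cA}$ on $\bap(G)$ and for $\|\cdot\|_1$ on $L^1(G_{\mathsf{b}})$ (as $|\chi|\equiv 1$), and on $SAP(G)$ they agree since the Bohr mapping is multiplicative and sends $\overline{\chi}$ to $\overline{\chi_{\mathsf{b}}}$. Substituting this identity and recognising $\int_{G_{\mathsf{b}}}\overline{\chi_{\mathsf{b}}(t)}\,[f]_{\mathsf{b},1}(t)\,\dd t$ as the value $\widehat{[f]_{\mathsf{b},1}}(\chi_{\mathsf{b}})$ of the Fourier transform on $G_{\mathsf{b}}$ (using $\widehat{G}=\widehat{G_{\mathsf{b}}}$) completes (b).

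The only genuinely delicate point is the intertwining identity $[\overline{\chi}f]_{\mathsf{b},1}=\overline{\chi_{\mathsf{b}}}\,[f]_{\mathsf{b},1}$, i.e.\ that the extended Bohr mapping carries multiplication by $\overline{\chi}$ on $\bap(G)$ to multiplication by $\overline{\chi_{\mathsf{b}}}$ on $L^1(G_{\mathsf{b}})$. This is where one must combine the fact that multiplication by a unimodular character is a $\|\cdot\|_{b,1,\cA}$-isometry with the algebra-homomorphism property of the Bohr mapping on $SAP(G)$; once this is established, everything else is a routine invocation of Theorem~\ref{Bap completion} and the mean formula.
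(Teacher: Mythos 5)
Your proposal is correct and follows essentially the same route as the paper, whose entire proof is the one-line observation that both sides of each identity are continuous functionals on $\bap(G)$ agreeing on the dense subspace $SAP(G)$. Your reduction of (b) to (a) via the intertwining identity $[\overline{\chi}f]_{\mathsf{b},1}=\overline{\chi_{\mathsf{b}}}\,[f]_{\mathsf{b},1}$ is a correct (if slightly more roundabout) elaboration of the same density-and-continuity principle; one could equally apply it directly to the two sides of (b), since $g\mapsto\widehat{g}(\chi_{\mathsf{b}})$ is bounded on $L^1(G_{\mathsf{b}})$.
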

\begin{proof} In both (a) and (b),
both sides are continuous functionals on $\bap(G)$, which agree on
the dense subspace $\sap(G)$.
\end{proof}

If $1<p,q<\infty$ are conjugates, then $\mathcal{L}^p(G_{\mathsf{b}})$ and
$\mathcal{L}^q(G_{\mathsf{b}})$ are dual spaces, with the duality given by
$(f,g) := \int_{G_{\mathsf{b}}} f(t)\, g(t)\, \dd t$. This leads to the following observation (compare \cite[Theorem 4]{Fol} for $G=\RR^d$).

\begin{theorem}[Dual of $\bap^q(G)/\equiv$] \label{them:dual}
Let $1<p<\infty$, and let $q$ be the conjugate of $p$, together with $f \in \bap^p(G), g \in \bap^q(G)$. Then,
\begin{itemize}
  \item[(a)] $fg \in \bap(G)$.
  \item[(b)] If $[f]_p=[f']_p$ and $[g]_q=[g']_q$ then $[fg]_1=[f'g']_1$.
\end{itemize}
In particular,
\begin{equation}\label{c4-e1}
( [g]_q, [f]_p ) := M_{\cA}(gf)
\end{equation}
does not depend on the choice of the representative.

Moreover, via \eqref{c4-e1} we can identify $(\bap^q(G)/\equiv, \| \, \|_{\be,q,\cA})$ as the dual space of  $(\bappe,, \| \, \|_{\be,p,\cA})$.
\end{theorem}
\begin{proof}
(a) and (b) follow Proposition~\ref{Bap props} (f) and (g).
As stated above, the duality follows trivially from the duality between $\mathcal{L}^p(G_{\mathsf{b}})$ and
$\mathcal{L}^q(G_{\mathsf{b}})$.
\end{proof}

Finally, we note that one can also understand the translation action
via the Bohr map.
 Let $f \in \bap^p(G)$ and $t \in G$. Then, $T_t [f]$ is
the only class $[g] \in \bap^p(G)$ such that
\[
\big( [g] \big)_{\mathsf{b},p } =
\big([f] \big)_{\mathsf{b},p } \big( \cdot - i_{\mathsf{b}}(t)  \big)\,.
\]

\section{Besicovitch almost periodic functions: Fourier
expansion} In this section, we focus on $\bap^2 (G)$. This space
has a natural Hilbert space structure, and we use it to develop a
Fourier expansion theory.

\medskip

Proposition~\ref{Bap props}
implies that the mean
\[
M_{\cA}(f \overline{g})= \lim_{n\to\infty} \frac{1}{|A_n|}
\int_{A_n} f(t) \overline{g(t)}\, \dd t
\]
exists for all $f,g\in\bap^2 (G)$. Moreover, by Theorem~\ref{them:dual} we can define the map $\langle \cdot , \cdot \rangle : \bapte \times \bapte \to \CC$ via
\[
\langle [f] , [g] \rangle:= M_{\cA}(f \overline{g}) \,.
\]
Given this, one can exhibit the
desired Hilbert space structure on $\bapte$ as follows.

\begin{theorem}[$\bapte$ as Hilbert space]\label{thm hilbert}
Let $\cA$ be a van Hove sequence.
\begin{itemize}
\item[(a)]  The map $\langle\cdot,\cdot\rangle : \left(\bapte\right) \times \left(
\bapte\right) \longrightarrow \CC$ defined by
\begin{displaymath}
\langle [f], [g] \rangle_{\cA} = \lim_{n\to\infty} \frac{1}{|A_n|}
\int_{A_n} f(t) \,\overline{g(t)}\, \dd t
\end{displaymath}
is an inner product on $\bapte$. The norm defined by it is $\| \cdot  \|_{\be,2,\cA}$.
\item[(b)] The space $(\bapte, \langle \cdot , \cdot \rangle_{\cA})$ is a Hilbert space.
\item[(c)] The group $\widehat{G}$ is an orthogonal basis in $\bapte$.
\item[(d)] For all $f \in \bap^2(G)$ and $\chi \in \widehat{G}$, we have
$ a_{\chi}^{\cA}([f])= \langle [f] , [\chi] \rangle$.
\item[(e)] For all $f \in \bap^2(G)$, one has $a_{\chi}^\cA(f) \neq 0$ for at most a countable set  of characters, and we have the \textit{Parseval identity}\index{Parseval~identity!Parseval~identity~for~$\bap^{2}(G)$}
\begin{equation}
\|f \|_{\be,2,\cA} ^2  = \sum_{\chi\in \widehat{G}} \big|
a_{\chi}^{\cA}(f) \big|^2 \label{eqn-parseval}
\end{equation}
and
\[
f = \sum_{\chi
\in\widehat{G}} a_{\chi}^{\cA}(f) \chi \, \mbox{ in } (\bapte, \|
\cdot \|_{\be,2,\cA})\,.
\]
\end{itemize}
\end{theorem}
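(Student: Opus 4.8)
My plan is to treat the five claims in sequence, leaning on the function-theoretic results already established and, for the deeper orthogonality statement, on the isometric identification with $L^2(G_{\mathsf{b}})$ from Theorem~\ref{Bap completion}. For (a), I would first check that the defining limit exists. Since $\bap^2(G)$ is closed under conjugation by Proposition~\ref{Bap props}(b), we have $\overline{g}\in\bap^2(G)$, so Proposition~\ref{prop:317} gives $f\overline{g}\in\bap(G)$, and hence $M_{\cA}(f\overline{g})$ exists by Proposition~\ref{Bap props}(a); this is exactly the stated limit. For well-definedness on equivalence classes I would write $f\overline{g}-f'\overline{g'}=(f-f')\overline{g}+f'\overline{(g-g')}$ and bound each summand using $|M_{\cA}(\cdot)|\le\|\cdot\|_{b,1,\cA}$ together with the Cauchy--Schwarz inequality of Lemma~\ref{lemma C-S}, so that $\|f-f'\|_{b,2,\cA}=\|g-g'\|_{b,2,\cA}=0$ forces the two means to coincide. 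Sesquilinearity and conjugate symmetry are immediate, and positive-definiteness follows from $\langle[f],[f]\rangle_{\cA}=M_{\cA}(|f|^2)=\|f\|_{b,2,\cA}^2$, which at the same time identifies the induced norm.

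Part (b) is then a one-line consequence: $\bapte$ is complete by Theorem~\ref{Bap is complete} with $p=2$, and since its norm arises from the inner product of (a), it is a Hilbert space. Part (d) is equally direct, being essentially the definition of the Fourier--Bohr coefficient: $\langle[f],[\chi]\rangle_{\cA}=M_{\cA}(f\overline{\chi})=a_{\chi}^{\cA}(f)$, using that $\overline{\chi}=\chi^{-1}$ is a character so that the relevant mean exists by (a).

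The substance lies in (c). Completeness of the system $\widehat{G}$ is essentially definitional: finite linear combinations of characters are precisely the trigonometric polynomials, which are dense in $Bap^2_{\cA}(G)$ by the very definition of Besicovitch almost periodicity, so the span of $\{[\chi]:\chi\in\widehat{G}\}$ is dense. For orthonormality I would compute $\langle[\chi],[\chi']\rangle_{\cA}=M_{\cA}(\chi\overline{\chi'})$, where $\chi\overline{\chi'}$ is again a character, reducing everything to the claim that $M_{\cA}(\psi)=1$ for the trivial character and $M_{\cA}(\psi)=0$ for every nontrivial $\psi\in\widehat{G}$ (so in particular $\|[\chi]\|_{b,2,\cA}=1$). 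The cleanest route uses the identity $M_{\cA}(\psi)=\int_{G_{\mathsf{b}}}\psi_{\mathsf{b}}\,\dd t$ (the lemma computing the mean through the Bohr map) together with orthogonality of characters on the compact group $G_{\mathsf{b}}$; alternatively one exploits translation invariance of the mean on $\bap(G)$, choosing $s$ with $\psi(s)\neq1$ and using $\tau_s\psi=\overline{\psi(s)}\,\psi$ to force $M_{\cA}(\psi)=0$. An even more economical option is to invoke Theorem~\ref{Bap completion} at $p=2$ directly, transporting the standard orthonormal basis $\widehat{G_{\mathsf{b}}}$ of $L^2(G_{\mathsf{b}})$ back to $\bapte$ under the isometric isomorphism sending $[\chi]$ to $\chi_{\mathsf{b}}$. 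I regard this vanishing of means of nontrivial characters as the only step carrying genuine harmonic-analytic content; the rest of (c) is formal, and I would take care to justify it rather than treat it as obvious.

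Finally, (e) follows formally from (c) and (d). Once $\{[\chi]:\chi\in\widehat{G}\}$ is known to be an orthonormal basis of the Hilbert space $\bapte$ with $\langle[f],[\chi]\rangle_{\cA}=a_{\chi}^{\cA}(f)$, the abstract Hilbert space expansion theorem yields at most countably many nonzero coefficients, convergence of $\sum_{\chi}a_{\chi}^{\cA}(f)\,\chi$ to $f$ in $\|\cdot\|_{b,2,\cA}$, and the Parseval identity $\|f\|_{b,2,\cA}^2=\sum_{\chi\in\widehat{G}}|a_{\chi}^{\cA}(f)|^2$. Thus the whole theorem rests on the inner-product verification in (a) and, above all, on the orthogonality computation in (c), which I expect to be the main obstacle.
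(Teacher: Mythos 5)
Your proposal is correct and follows essentially the same route as the paper: existence of the mean of $f\overline{g}$ via Proposition~\ref{prop:317} and Proposition~\ref{Bap props}, completeness from Theorem~\ref{Bap is complete}, orthonormality of characters plus density of trigonometric polynomials for (c), and the abstract Hilbert-space expansion for (e). Your explicit justification of $M_{\cA}(\psi)=0$ for nontrivial characters (via the Bohr map or translation invariance) simply fills in what the paper dismisses as well known, and your citation of Proposition~\ref{prop:317} for the product $f\overline{g}$ is in fact the more precise reference.
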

\begin{proof}
(a) For  $f,g \in \bap^2(G)$, Proposition~\ref{Bap props} gives
 $f \overline{g} \in \bap(G)$, and $M_\cA (f\overline{g})$ exists. It
follows immediately from the Cauchy--Schwarz inequality that $\langle [f], [g]
\rangle_{\cA}$ does not depend on the choice of the representative; thus it is well defined. Clearly, the associated norm is
$\|\cdot\|_{\be,2,\cA}$.

\medskip

\noindent (b) follows from Theorem~\ref{Bap is complete} and the fact,
established in (a), that the norm induced from $\langle
\cdot,\cdot \rangle$ agrees with $\|\cdot\|_{\be,2,\cA}$.

\medskip

\noindent (c) It is well known that $M_\cA (\chi \overline{\xi}) =0$ when
$\chi, \xi \in \widehat{G}$ do not agree. This shows that the
characters form an orthonormal system in $\bapte$. Moreover, linear
combinations of characters are dense in
$(\bapte,\|\cdot\|_{\be,2,\cA})$  by the very definition of the
Besicovitch space. This gives (c) as the Hilbert space  norm on
$\bapte$ agrees with $\|\cdot\|_{\be,2,\cA}$ due to (a).

\medskip

\noindent (d) follows from the definition of the inner product
and the Fourier--Bohr coefficient. The Fourier--Bohr coefficient does not depend on the
representative we choose from the class $[f]$ by (a).

\medskip

\noindent (e) is immediate from (b) and (c).
\end{proof}

As an immediate consequence, we obtain the so called Riesz--Fischer property.

\begin{coro}[Riesz--Fischer Property]\index{Riesz--Fischer~Property!Riesz--Fischer~Property~for~$\Bap^{2}_{\cA}(G)$}
Let $a: \widehat{G} \to \CC$, and let $\cA$ be a van Hove sequence. Then, there is some $f \in \bap^2(G)$ such that $a_{\chi}^\cA(f)=a(\chi)$ if and only if
\[
\sum_{\chi \in \widehat{G}} |a(\chi)|^2 < \infty\,.
\]
Moreover, in this case, $[f]$ is unique.
\end{coro}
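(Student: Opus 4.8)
The plan is to recognize this statement as nothing more than the abstract Riesz--Fischer theorem for an orthonormal basis of a Hilbert space, transported to the concrete Hilbert space $(\bapte, \langle\cdot,\cdot\rangle_\cA)$. All the necessary structure has already been assembled in Theorem~\ref{thm hilbert}: that result tells us that $\bapte$ is complete, that the characters $\{[\chi] : \chi\in\widehat{G}\}$ form an orthonormal system whose span is dense (hence an orthonormal basis), and, by part (d), that the Fourier--Bohr coefficient is exactly the coefficient functional for this basis, i.e.\ $a_\chi^\cA([f]) = \langle [f],[\chi]\rangle_\cA$. Thus the corollary is precisely the assertion that a scalar family $(a(\chi))_{\chi\in\widehat{G}}$ is the coefficient family of some vector with respect to this basis if and only if it is square-summable, together with uniqueness of that vector.

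For the necessity of square-summability I would simply invoke the Parseval identity of Theorem~\ref{thm hilbert}(e): if $f\in\bap^2(G)$ satisfies $a_\chi^\cA(f)=a(\chi)$ for all $\chi$, then $\sum_{\chi\in\widehat{G}}|a(\chi)|^2 = \|f\|_{b,2,\cA}^2 < \infty$.

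For sufficiency, assume $\sum_\chi|a(\chi)|^2<\infty$. Then $a$ is supported on a countable set, which I enumerate as $\chi_1,\chi_2,\dots$, and I form the partial sums $S_N:=\sum_{n=1}^N a(\chi_n)\chi_n$, each of which is a trigonometric polynomial and so lies in $\bap^2(G)$. Orthonormality gives $\|S_N-S_M\|_{b,2,\cA}^2=\sum_{n=M+1}^N|a(\chi_n)|^2$ for $N>M$, so $(S_N)$ is Cauchy in $\bapte$; by completeness it converges to some $[f]$ with $f\in\bap^2(G)$. Continuity of the coefficient functional $\langle\cdot,[\chi]\rangle_\cA$ (equivalently, of $a_\chi^\cA$; see Corollary~\ref{FB for BAP fct}) then yields $a_\chi^\cA(f)=\lim_{N\to\infty}\langle S_N,[\chi]\rangle_\cA=a(\chi)$ for every $\chi\in\widehat{G}$.

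Finally, uniqueness of $[f]$ follows once more from Parseval: if two elements of $\bap^2(G)$ share all Fourier--Bohr coefficients, their difference $h$ satisfies $a_\chi^\cA(h)=0$ for all $\chi$, whence $\|h\|_{b,2,\cA}^2=\sum_\chi|a_\chi^\cA(h)|^2=0$, so the two elements coincide in $\bapte$. I expect no genuine obstacle in carrying this out: the substantive work lies entirely in the Hilbert space structure and the identification of $a_\chi^\cA$ with the coefficient functional supplied by Theorem~\ref{thm hilbert}, and the only care required is the routine check that the defining series converges in $\|\cdot\|_{b,2,\cA}$, which is immediate from orthonormality and completeness.
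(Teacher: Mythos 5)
Your argument is correct and is exactly the route the paper intends: the corollary is stated without proof as an immediate consequence of Theorem~\ref{thm hilbert}, and you have simply written out the standard Riesz--Fischer argument (Parseval for necessity and uniqueness, Cauchy partial sums plus completeness of $\bapte$ and continuity of $a_\chi^{\cA}$ for sufficiency). No gaps; the only point worth keeping explicit, which you do cover by asserting the identity for every $\chi\in\widehat{G}$, is that the coefficients also vanish at characters outside the countable support of $a$.
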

\begin{proof}
$\Longrightarrow$: This follows from \eqref{eqn-parseval}.

\medskip

\noindent $\Longleftarrow$: The series $\sum_{\chi}  a(\chi) \chi$ is convergent in $\bap^2(G)$ to some $f$, which satisfies the given relation by the orthonormality of $\widehat{G}$.
\end{proof}

Another consequence of Theorem~\ref{thm hilbert}(e) is that any Besicovitch 2-almost periodic function can be approximated by trigonometric polynomials which are truncations of its Fourier--Bohr series.

\begin{coro}
Let $f \in \bapte$. Then, there exists a sequence of distinct characters $(\chi_n)$ in $\widehat{G}$ and some $c_n \in \CC$ such that
\[
\lim_{N\to\infty} \Big\| f- \sum_{n=1}^N c_n \chi_n \Big\|_{\be,2,\cA} =0 \,.
\]
Moreover, for all $\chi \in \widehat{G}$, we have
\[
a^{\cA}_\chi(f)=
\begin{cases}
  c_n\,, & \mbox{ if there exists } n \mbox{ such that } \chi=\chi_n \,, \\
  0\,, & \mbox{ otherwise} \,.          \tag*{$\qed$}
\end{cases}
\]
\end{coro}

We can next give the following characterization of $\bap^2 (G)$, which is important for diffraction theory.

\begin{coro}[Intrinsic characterization $\bap^2 (G)$] \label{bap2 char}
Let $f \in \mathcal{L}^2_{loc}(G)$, and let $\cA$ be a van Hove sequence. Then, $f \in
\bap^2(G)$ if and only if the following three conditions hold: \index{almost~periodic!Besicovitch~$p$-almost~periodic~function}
\begin{itemize}
  \item[(a)]  For each $\chi \in \widehat{G}$, the Fourier--Bohr coefficient $a_{\chi}^\cA(f)$ exists.
  \item[(b)]  The mean $M_{\cA}(|f|^2)$ exists.
  \item[(c)]  The Parseval equality
  \[
M_{\cA}(|f|^2) = \sum_{\chi \in \widehat{G}} \big| a_{\chi}^\cA(f)
\big|^2
\]
  holds.
\end{itemize}
\end{coro}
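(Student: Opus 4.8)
The plan is to prove the two implications separately, the forward one being essentially a restatement of the Hilbert space theory already developed and the converse requiring a genuine argument.

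For the direction $f\in Bap^2_{\cA}(G)\Rightarrow$ (a)--(c) I would argue as follows. Existence of every Fourier--Bohr coefficient is immediate from Corollary~\ref{FB for BAP fct} together with the inclusion $Bap^2_{\cA}(G)\subseteq Bap(G)$, which gives (a). Applying Proposition~\ref{prop:317} with $g=f$ shows $|f|^2=f\overline{f}\in Bap(G)$, so $M_{\cA}(|f|^2)$ exists by Proposition~\ref{Bap props}(a), giving (b). Finally (c) is exactly the Parseval identity of Theorem~\ref{thm hilbert}(e), once one records that $M_{\cA}(|f|^2)=\langle[f],[f]\rangle=\|f\|_{b,2,\cA}^2$ via Theorem~\ref{thm hilbert}(a).

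For the converse, assume (a)--(c). First I would produce a candidate element of $Bap^2_{\cA}(G)$: hypotheses (b) and (c) give $\sum_{\chi}|a_\chi^{\cA}(f)|^2=M_{\cA}(|f|^2)<\infty$, so the Riesz--Fischer property (the corollary following Theorem~\ref{thm hilbert}) furnishes some $g\in Bap^2_{\cA}(G)$ with $a_\chi^{\cA}(g)=a_\chi^{\cA}(f)$ for every $\chi\in\widehat{G}$. It then suffices to prove $\|f-g\|_{b,2,\cA}=0$, since then $f$ is approximable in $\|\cdot\|_{b,2,\cA}$ by the very trigonometric polynomials approximating $g$, whence $f\in Bap^2_{\cA}(G)$. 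To this end I would expand $|f-g|^2=|f|^2-2\re(f\overline{g})+|g|^2$ and treat the mean of each term. The outer terms are under control: $M_{\cA}(|f|^2)$ exists by (b), while $M_{\cA}(|g|^2)=\|g\|_{b,2,\cA}^2=\sum_{\chi}|a_\chi^{\cA}(g)|^2$ by the Parseval identity.

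The main obstacle is the cross term $M_{\cA}(f\overline{g})$, whose very existence is not granted: $f$ is a priori only in $L^2_{loc}(G)$, so $f\overline{g}$ need not lie in $Bap(G)$. Here I would approximate $g$ by trigonometric polynomials $P_k$ in $\|\cdot\|_{b,2,\cA}$. For each $P_k=\sum_j c_j\chi_j$ the number $M_{\cA}(f\overline{P_k})=\sum_j\overline{c_j}\,a_{\chi_j}^{\cA}(f)$ exists by (a) and linearity. Using the Cauchy--Schwarz estimate of Lemma~\ref{lemma C-S}, namely $\uM_{\cA}(|f(\overline{g}-\overline{P_k})|)\le\|f\|_{b,2,\cA}\,\|g-P_k\|_{b,2,\cA}$, I would show that the scalars $M_{\cA}(f\overline{P_k})$ form a Cauchy sequence and that $\frac{1}{|A_n|}\int_{A_n}f\overline{g}\,\dd t$ converges to the same limit $L$, by splitting the difference into the three standard pieces and noting that the limsup of the approximation error is arbitrarily small. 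Since $a_\chi^{\cA}(g)=a_\chi^{\cA}(f)$, one has $M_{\cA}(f\overline{P_k})=M_{\cA}(g\overline{P_k})=\langle[g],[P_k]\rangle\to\langle[g],[g]\rangle=M_{\cA}(|g|^2)$, so $L=M_{\cA}(|g|^2)=\sum_{\chi}|a_\chi^{\cA}(f)|^2=M_{\cA}(|f|^2)$, the final equality being hypothesis (c). As $L$ is real, substituting gives that $\frac{1}{|A_n|}\int_{A_n}|f-g|^2\,\dd t$ converges, so $M_{\cA}(|f-g|^2)=M_{\cA}(|f|^2)-2L+M_{\cA}(|g|^2)=0$. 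Hence $\|f-g\|_{b,2,\cA}^2=\uM_{\cA}(|f-g|^2)=0$, completing the proof.
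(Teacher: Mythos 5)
Your proof is correct, and both directions are sound: the forward implication is, as you say, a direct readout of Theorem~\ref{thm hilbert}, and in the converse your Cauchy--Schwarz control of the cross term via Lemma~\ref{lemma C-S} together with the three-piece limsup argument does establish that $M_{\cA}(f\overline{g})$ exists and equals $M_{\cA}(|g|^2)$, from which $\|f-g\|_{b,2,\cA}=0$ and hence $f\in Bap^2_{\cA}(G)$ follow. The paper's converse is, however, noticeably more economical and takes a different route: it never introduces a Riesz--Fischer representative $g$. Instead it works with the finite partial sums $P_N=\sum_{k\le N}a_{\chi_k}^{\cA}(f)\,\chi_k$ of $f$'s own Fourier series and computes $M_{\cA}(|f-P_N|^2)$ directly. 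The point you flag as the main obstacle --- that $f\overline{g}$ need not have a mean --- simply does not arise there, because the cross term $M_{\cA}(f\overline{P_N})=\sum_{k\le N}|a_{\chi_k}^{\cA}(f)|^2$ exists by hypothesis (a) and linearity, and $M_{\cA}(|P_N|^2)$ exists by orthogonality of characters; one obtains $M_{\cA}(|f-P_N|^2)=M_{\cA}(|f|^2)-\sum_{k\le N}|a_{\chi_k}^{\cA}(f)|^2\le\eps$ for $N$ large by (c). Your route buys a clean identification of $[f]$ with the Riesz--Fischer element and shows how the pairing $P\mapsto M_{\cA}(f\overline{P})$ extends by continuity from trigonometric polynomials, but it costs an extra limiting argument and invokes completeness (through Riesz--Fischer) where the paper needs only the hypotheses and character orthogonality; the paper's argument also yields the slightly sharper conclusion that the partial sums of $f$'s own Fourier series converge to $f$ in $\|\cdot\|_{b,2,\cA}$.
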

\begin{proof} This follows from Theorem~\ref{thm hilbert}. Indeed,
the `only if' part  is immediate from Theorem~\ref{thm hilbert}. As
for the `if' statement, let  $\eps >0$ be given. We can find
characters $\chi_1,\ldots, \chi_N \in \widehat{G}$ such that
\[
\sum_{k=1}^N \left| a_{\chi_k}^\cA(f) \right|^2 \geq M_{\cA}(|f|^2)  -\eps \,.
\]
Let $P := \sum_{k=1}^N  a_{\chi_k}^\cA(f) \chi_k$. Using the fact that $M_\cA
\big(|f|^2\big)$ and $a_{\chi}^{\cA} = M_{\cA}(
f\overline{\chi})$ exist, for all $\chi\in \widehat{G}$, together with
$M_{\cA}(\chi) =1$ for $\chi = 1$ and $M_{\cA} (\chi) = 0$, we easily
compute
\begin{align*}
M_{\cA}( |f-P|^2)
    &=M(|f|^2)- \sum_{k=1}^N \left| a_{\chi_k}^\cA(f) \right|^2  -
      \sum_{k=1}^N \left| a_{\chi_k}^\cA(f) \right|^2 + \sum_{k=1}^N
      \big| a_{\chi_k}^\cA(f) \big|^2  \\
    &= M_{\cA}(|f|^2)- \sum_{k=1}^N \left| a_{\chi_k}^\cA(f) \right|^2 \,.
\end{align*}
Putting this together, we see that $\|f - P\|_{\be,2,\cA}^2\leq
\varepsilon$. As $\varepsilon>0$ was arbitrary, this finishes the
proof.
\end{proof}

As consequence of Proposition \ref{prop translation clases}, we
obtain that the translation action of $G$ on $\bapte$ is a unitary
representation.

\begin{prop}[Translation]\label{prop_unitary-represenatation}
Let $\cA$ be a van Hove sequence.
\begin{itemize}
\item[(a)]
 For each $t\in G$, the map $T_t : \bapte \longrightarrow \bapte$ is a unitary map.
\item[(b)]  For each $[g] \in \bapte$, the function $t \mapsto T_t[g]$ is continuous, and so is then
$t\mapsto \langle [g], T_t [g]\rangle$.
\item[(c)] For each $s,t \in G$, we have $
T_t \circ T_s= T_{t+s}$ and $T_{0}= \mbox{\rm Id}$.
\end{itemize}
In particular, $T_t$ defines a unitary representation of the group $G$ on the Hilbert space $\bapte$.
\qed
\end{prop}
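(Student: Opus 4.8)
The plan is to derive all three claims directly from Proposition \ref{prop translation clases} (specialized to $p=2$) together with the Hilbert space structure recorded in Theorem \ref{thm hilbert}. The point is that essentially all the work has already been done: Proposition \ref{prop translation clases} provides the cocycle identity, the isometry property, and strong continuity, and Theorem \ref{thm hilbert}(a) tells us that $\|\cdot\|_{b,2,\cA}$ is the norm of an honest inner product on $\bapte$. So the task reduces to combining these inputs.

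First I would dispose of (c), which is literally Proposition \ref{prop translation clases}(a) with $p=2$: the relations $T_t \circ T_s = T_{t+s}$ and $T_0 = \mathrm{Id}$ hold on $\bappe$ for every $p\geq 1$, hence in particular on $\bapte$. Next, for (a), I would argue that each $T_t$ is unitary as follows. By Proposition \ref{prop translation clases}(b) the map $T_t$ is a norm-preserving linear map for $\|\cdot\|_{b,2,\cA}$, which by Theorem \ref{thm hilbert}(a) is the Hilbert space norm. Since $T_t$ is linear and norm-preserving, the polarization identity shows that it also preserves the inner product $\langle\cdot,\cdot\rangle_{\cA}$. Finally, by (c) the map $T_{-t}$ is a two-sided inverse of $T_t$, so $T_t$ is a bijective, inner-product-preserving linear map, i.e.\ unitary.

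For (b), continuity of $t\mapsto T_t[g]$ is exactly the first assertion of Proposition \ref{prop translation clases}(b). Continuity of $t\mapsto \langle [g], T_t[g]\rangle_{\cA}$ then follows from continuity of the inner product via Cauchy--Schwarz:
\[
\bigl| \langle [g], T_t[g]\rangle_{\cA} - \langle [g], T_s[g]\rangle_{\cA} \bigr| \;\leq\; \|[g]\|_{b,2,\cA}\, \bigl\|T_t[g] - T_s[g]\bigr\|_{b,2,\cA},
\]
and the right-hand side tends to $0$ as $t\to s$ by the continuity just cited.

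I do not expect any genuine obstacle here, since every ingredient is supplied by the preceding proposition; the only point deserving a moment's care is the standard observation that a linear, invertible, norm-preserving self-map of a Hilbert space is automatically unitary (one uses polarization to upgrade norm preservation to inner-product preservation), and both the isometry and the invertibility have already been established.
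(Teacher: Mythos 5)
Your proposal is correct and follows exactly the route the paper intends: the paper states this proposition without proof as an immediate consequence of Proposition~\ref{prop translation clases}, and you supply precisely the standard details left implicit there (the group law from part (a), isometry and strong continuity from part (b), unitarity via invertibility plus polarization, and continuity of $t\mapsto\langle [g],T_t[g]\rangle_{\cA}$ via Cauchy--Schwarz). No gaps.
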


\begin{remark} Considering the Bohr compactification is instructive
in this situation as well. The map
 $( \cdot )_{\mathsf{b}} : \sap(G) \to C(G_{\mathsf{b}}) \subseteq
L^2(G_{\mathsf{b}})$ extends uniquely to a unitary map between $
(\bapte, \langle \cdot , \cdot \rangle_{\cA})$ and   $
(L^2(G_{\mathsf{b}}), \langle \cdot,\cdot \rangle)$, compare Theorem
\ref{Bap completion} as well.     \exend
\end{remark}

We complete the section by defining an involution as well as the
Eberlein convolution for $\bapte$ and discussing some of their
properties.

\begin{prop}[Involution]\label{prop tilde extension}
There exists a unique isometric involution $\widetilde{\cdot}: \bapte \to \bapte$\nomenclature{$\widetilde{[f]}$}{complex conjugate and involuted version of the class $[f]$} satisfying
\[
\widetilde{[f]} = [\widetilde{f}] \qquad \mbox{ for all } f \in \sap(G)
\]
and
\[
\big\langle [\widetilde{f}], [\widetilde{g}] \big\rangle_{\cA} = \overline{\big\langle [f], [g] \big\rangle_{\cA}}  \mbox{ for all } f,g \in \bapte  \,.
\]
\end{prop}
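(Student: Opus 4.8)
The plan is to define the involution first on the dense subspace arising from $SAP(G)$ and then extend it by continuity. Recall that the trigonometric polynomials, and hence $SAP(G)$, are dense in $\bapte$, so it suffices to control the map $f\mapsto\widetilde f$ on $SAP(G)$. For $f\in SAP(G)$ one again has $\widetilde f\in SAP(G)$, and from $\widetilde f(s)=\overline{f(-s)}$ the assignment $f\mapsto\widetilde f$ is conjugate-linear: $\widetilde{cf}=\overline c\,\widetilde f$ and $\widetilde{f+g}=\widetilde f+\widetilde g$, so in particular $\widetilde{f}-\widetilde{g}=\widetilde{f-g}$. Everything will follow once we know that this map preserves the seminorm $\|\cdot\|_{b,2,\cA}$ on $SAP(G)$.

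This isometry statement is the crux, and it is cleanest to verify on the Bohr compactification. By Theorem~\ref{Bap completion} the Bohr map $(\cdot)_{\mathsf b,2}$ is an isometric isomorphism from $\bapte$ onto $L^2(G_{\mathsf b})$, and for $f\in SAP(G)$ we have $f=f_{\mathsf b}\circ i_{\mathsf b}$. Since $i_{\mathsf b}$ is a group homomorphism, $\widetilde f(s)=\overline{f(-s)}=\overline{f_{\mathsf b}(-i_{\mathsf b}(s))}$, so that $(\widetilde f)_{\mathsf b}(x)=\overline{f_{\mathsf b}(-x)}$ for $x\in G_{\mathsf b}$. As the probability Haar measure on the compact group $G_{\mathsf b}$ is invariant under inversion, substituting $y=-x$ gives
\[
\|\widetilde f\|_{b,2,\cA}^2=\int_{G_{\mathsf b}}|f_{\mathsf b}(-x)|^2\,\dd x=\int_{G_{\mathsf b}}|f_{\mathsf b}(y)|^2\,\dd y=\|f\|_{b,2,\cA}^2 \,.
\]
Equivalently, one can argue directly on $G$: the sequence $-\cA=(-A_n)$ is again van Hove by inversion invariance of Haar measure, and the mean of the bounded Bohr almost periodic, hence amenable, function $|f|^2$ is independent of the van Hove sequence by Proposition~\ref{prop: amenable}.

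Combining the isometry with conjugate-linearity yields $\|\widetilde f-\widetilde g\|_{b,2,\cA}=\|f-g\|_{b,2,\cA}$ for $f,g\in SAP(G)$; hence $[f]\mapsto[\widetilde f]$ is well defined on classes and is an isometric, conjugate-linear map on the dense image of $SAP(G)$ in $\bapte$. It therefore extends uniquely to an isometric involution $\widetilde{\cdot}$ on all of $\bapte$, uniqueness being forced by density and continuity. That the extension squares to the identity is checked on $SAP(G)$, where $\widetilde{\widetilde f}(s)=\overline{\widetilde f(-s)}=f(s)$, and then passes to the whole space by continuity.

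Finally, the inner-product identity is again transparent on $L^2(G_{\mathsf b})$: using $(\widetilde f)_{\mathsf b}(x)=\overline{f_{\mathsf b}(-x)}$ and inversion invariance,
\[
\langle\widetilde{[f]},\widetilde{[g]}\rangle_{\cA}=\int_{G_{\mathsf b}}\overline{f_{\mathsf b}(-x)}\,g_{\mathsf b}(-x)\,\dd x=\overline{\int_{G_{\mathsf b}}f_{\mathsf b}(y)\,\overline{g_{\mathsf b}(y)}\,\dd y}=\overline{\langle[f],[g]\rangle_{\cA}} \,.
\]
This holds for $f,g\in SAP(G)$, and since both sides are continuous in $([f],[g])$ (the involution being an isometry and the inner product being continuous), the identity extends to all of $\bapte$. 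Alternatively, I would use the orthonormal basis of characters from Theorem~\ref{thm hilbert}: since $\widetilde\chi=\chi$ for every $\chi\in\widehat G$, the involution conjugates Fourier--Bohr coefficients, and both the isometry and the inner-product identity then drop out of Parseval. The one genuinely delicate point throughout is the reflection invariance of the mean, i.e. that replacing $\cA$ by $-\cA$ does not alter averages; this is precisely what the inversion invariance of Haar measure on $G_{\mathsf b}$ encodes, which is why routing the argument through the Bohr compactification makes it essentially computation-free.
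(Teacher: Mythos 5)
Your proof is correct and follows essentially the same route as the paper: define the involution on the dense subspace coming from $SAP(G)$, verify the inner-product identity there using that the mean of a Bohr almost periodic function is unchanged under reflection (the paper does this directly on $G$ by replacing $A_n$ with $-A_n$ and invoking independence of the mean from the F\o lner sequence, while you encode the same fact as inversion invariance of Haar measure on $G_{\mathsf b}$ via Theorem~\ref{Bap completion}), and then extend by density. The extra details you supply (conjugate-linearity, well-definedness on classes, the involution property) are all correct.
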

\begin{proof}
The mapping $f \mapsto \widetilde{f}$ is an involution on $\sap(G)$, and we have
\begin{align*}
\big\langle [\widetilde{f}], [\widetilde{g}] \big\rangle_{\cA}
    &= \lim_{n\to\infty} \frac{1}{|A_n|} \int_{A_n} \widetilde{f}(t)\,
       \overline{\widetilde{g}(t)}\, \dd t = \lim_{n\to\infty} \frac{1}{|A_n|}
       \int_{A_n}\overline{f(-t)}\, g(-t)\, \dd t \\
    &= \lim_{n\to\infty} \frac{1}{|A_n|} \int_{-A_n}\overline{f(t)} \, g(t)
       \, \dd t = \overline{\big\langle [f], [g] \big\rangle_{\cA}} \,,
\end{align*}
for all $f,g \in \sap(G)$, with the last equality following from the fact that, for $h \in \sap(G)$, the mean is independent of the F\o lner
sequence \cite{Eb,MoSt}. In particular, for all $f \in \sap(G)$, we
have $\big\| [\widetilde{f}] \big\|_{\be,2,{\cA}} = \big\| [f] \big\|_{\be,2,{\cA}}$. The
claim follows immediately from the denseness of $\sap(G)$ in
$\bapte$.
\end{proof}

\begin{remark}
\phantom{XX}
\begin{itemize}
\item[(a)]  It is easy to see that $(\widetilde{f})_{\mathsf{b}}= \widetilde{ f_{\mathsf{b}}}$ for all $f \in \sap(G)$. It follows immediately that, for all
$f \in \bapte$, we have
$\widetilde{[f]_{\mathsf{b},p}}=\big(\widetilde{[f]}\big)_{\mathsf{b},p}$.
\item[(b)] In the same way as in Proposition~\ref{prop tilde extension}, it can be shown that the involution $\widetilde{\cdot}$ on $\sap(G)$ can be uniquely extended to
an isometric involution $\widetilde{\cdot}$ on $(\bappe,  \| \cdot \|_{\be,p,{\cA}})$.     \exend
\end{itemize}
\end{remark}

\medskip

\begin{definition}[Reflected Eberlein convolution of classes]
We define the \textit{reflected Eberlein
convolution of classes}\index{Eberlein~convolution!reflected~Eberlein~convolution~on~$\bapte$} $[f],[g] \in \bapte$ via
\[
\lb [f], [g] \rb_{\cA} : G \to \CC\,, \qquad \lb [f], [g] \rb_{\cA}(t) := \big\langle [f], T_t [g] \big\rangle \,.     \tag*{$\Diamond$}
\]\nomenclature{$\lb [f], [g] \rb_{\cA}$}{reflected Eberlein
convolution of Besicovitch 2-almost periodic classes}
\end{definition}

By the properties of the inner product on $\bapte$, we have
\begin{equation}\label{eq ebe conv}
\lb [f], [g] \rb_{\cA} (t) = \lim_{n\to\infty}
\frac{1}{|A_n|} \int_{A_n} f(s)\, \widetilde{h}(s)\, \dd s
\end{equation}
for $[f], [g] \in \bapte$, $t\in G$ and $h \in T_t[g]$. In particular, whenever $f$ and $g$ are bounded functions with
$[f],[g]\in\bapte$, Proposition \ref{prop
translation clases} (c) implies
\[
\lb [f], [g] \rb_{\cA} (t) = \lim_{n\to\infty}
\frac{1}{|A_n|} \int_{A_n} f(s)\, \overline{g(s-t)}\, \dd s.
\]
In this case, we just recover the usual definition of the reflected
Eberlein convolution between $f$ and $g$, see Section
\ref{sec-key-player}. This is the reason for our notation. Also
note that
\begin{equation}\label{EQ: ebe ref}
\lb [f], [g] \rb_{\cA}(t) = \langle [f], T_t [g] \rangle = \langle T_{-t} [f], g \rangle= \widetilde{\lb [g], [f] \rb_{\cA}} (t)  \,,
\end{equation}
compare \cite[Lem.~3.6]{LSS3}.

\begin{prop}[Continuity of reflected Eberlein
convolution]\label{prop:continuity-eberlein} Let $\cA$ be a van Hove
sequence on $G$. Let $(f_n)$, $(g_n)$ be sequences in $\bap^2
(G)$ converging to $f$ and $g$ respectively.  Then,
$\lb [f_n], [g_n] \rb_{\cA} \to \lb [f], [g] \rb_{\cA}$ with respect to
 $\|\cdot\|_\infty$.
\end{prop}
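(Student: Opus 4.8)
The plan is to reduce the statement to the Cauchy--Schwarz inequality in the Hilbert space $(\bapte, \langle\cdot,\cdot\rangle_{\cA})$ together with the fact that each translation $T_t$ acts isometrically. By the definition of the abstract Eberlein convolution, for every $t\in G$ we have $([f_n]\circledast\widetilde{[g_n]})(t) - ([f]\circledast\widetilde{[g]})(t) = \langle [f_n], T_t[g_n]\rangle_{\cA} - \langle [f], T_t[g]\rangle_{\cA}$, so controlling the supremum norm of the difference amounts to estimating this quantity uniformly in $t$. First I would split the difference in the standard bilinear way, writing it as $\langle [f_n]-[f], T_t[g_n]\rangle_{\cA} + \langle [f], T_t([g_n]-[g])\rangle_{\cA}$, using linearity of $T_t$.

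Next I would apply Cauchy--Schwarz to each summand. For the first term this gives the bound $\|[f_n]-[f]\|_{b,2,\cA}\,\|T_t[g_n]\|_{b,2,\cA}$, and for the second term $\|[f]\|_{b,2,\cA}\,\|T_t([g_n]-[g])\|_{b,2,\cA}$. The crucial observation, supplied by Proposition~\ref{prop_unitary-represenatation}, is that each $T_t$ is an isometry, so that $\|T_t[g_n]\|_{b,2,\cA} = \|[g_n]\|_{b,2,\cA}$ and $\|T_t([g_n]-[g])\|_{b,2,\cA} = \|[g_n]-[g]\|_{b,2,\cA}$, both independent of $t$. Since $([g_n])$ converges in $\|\cdot\|_{b,2,\cA}$, its norms are bounded by some constant $C$, and I then obtain the uniform estimate $\sup_{t\in G}|([f_n]\circledast\widetilde{[g_n]})(t) - ([f]\circledast\widetilde{[g]})(t)| \leq C\,\|[f_n]-[f]\|_{b,2,\cA} + \|[f]\|_{b,2,\cA}\,\|[g_n]-[g]\|_{b,2,\cA}$. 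The right-hand side tends to $0$ by the assumed convergence of $(f_n)$ and $(g_n)$, which is exactly convergence of the Eberlein convolutions in $\|\cdot\|_\infty$.

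There is essentially no hard part here: the whole argument is the joint continuity of a bounded bilinear form, and the only ingredient requiring the structure developed earlier is that the translations $T_t$ are simultaneously isometric. It is precisely this isometry that makes every estimate independent of $t$, thereby upgrading pointwise control into control in $\|\cdot\|_\infty$. One minor point worth flagging is to note at the outset that each of $[f_n]\circledast\widetilde{[g_n]}$ and $[f]\circledast\widetilde{[g]}$ is a genuine (continuous, bounded) function on $G$, so that the supremum norm on the left is well defined; this follows from Proposition~\ref{prop_unitary-represenatation}(b) and the representative-independence of the inner product.
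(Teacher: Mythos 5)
Your proof is correct and follows essentially the same route as the paper: a bilinear splitting of $\langle [f_n],T_t[g_n]\rangle-\langle [f],T_t[g]\rangle$, Cauchy--Schwarz, and the fact that each $T_t$ is an isometry so the bounds are uniform in $t$. The only (immaterial) difference is which factor carries the index $n$ in the cross terms.
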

\begin{proof} This is a straightforward computation, compare \cite[Lem.~3.13]{LSS3}: For each
$t\in G$, we find
\begin{align*}
|\lb [f_n], [g_n] \rb_{\cA} (t) - \lb [f], [g] \rb_{\cA} (t)|
    &= |\langle [f_n], T_t [g_n] \rangle - \langle[f], T_t [g]\rangle| \\
    &\leq  \|f_n\|\, \|T_t ([g_n] - [g])\| + \|[f] - [f_n]\| \, \|T_t[g]\| \\
    & = \|f_n\|\, \|[g_n] - [g]\| + \|[f] - [f_n]\| \,\| [g]\| \to 0 \,.
\end{align*}
Here, we used that $T_t$ is an isometry. As  the convergence to zero
in the last line is clearly independent of $t\in G$, the claim follows.
\end{proof}

\begin{theorem}[Properties of reflected Eberlein convolution]\label{eber conv Bap2 functions}
Let $f,g \in \bap^2(G)$. Then, the following statements hold.
\begin{itemize}
\item[(a)] $\lb [f], [g] \rb_{\cA} \in \sap(G)$.
\item[(b)] For all $\chi \in \widehat{G}$, we have
$ a_{\chi} \big( \lb [f], [g] \rb_{\cA} \big) = a_{\chi}^\cA\big([f]\big)
\, \overline{  a_{\chi}^\cA\big([g]\big) }$.
\item[(c)] If $t \in G$ is such that $T_t [g] = [\tau_t  g]$, then
\[
 \lb [f], [g] \rb_{\cA}(t) = \lim_{n\to\infty}
\frac{1}{|A_n|} \int_{A_n} f(s)\, \overline{g(s-t)}\, \dd s \,.
\]
\item[(d)]  If $g \in \bap^2(G) \cap L^\infty(G)$, then the classical reflected Eberlein convolution $\lb f, g \rb_{\cA}$ exists
and $ \lb f, g \rb_{\cA}=\lb [f], [g] \rb_{\cA}$.
\end{itemize}
\end{theorem}

\begin{remark}
We note that the reflected Eberlein convolution of $[f]$ and $[g]$
on $\bap^2(G)/\equiv$ can also be understood as the usual
convolution of the functions $([f])_{\mathsf{b},2}$ and $\widetilde{([g])_{\mathsf{b},2}}$ on
the Bohr compactification $G_{\mathsf{b}}$. Indeed, this is clear if $f$ and $g$ are
trigonometric polynomials. It follows for general $f,g \in
\bap^2(G)$ by the continuity of the Eberlein convolution, see Proposition~\ref{prop:continuity-eberlein}.  \exend
\end{remark}

\begin{proof} (a) The claim trivially holds when $f,g \in \sap(G)$. In particular, it holds for trigonometric polynomials.
Now, choose trigonometric polynomials $P_n, Q_n$ such that $\| f-P_n\|_{\be,2,\cA} \to 0$ and $\| g-Q_n\|_{\be,2,\cA} \to 0$. By
Proposition \ref{prop:continuity-eberlein} we then have
\[
\| \lb [f], [g] \rb_{\cA} - \lb [P_n], [Q_n] \rb_{\cA} \rb \|_\infty \to 0 \,.
\]
As $\lb [P_n], [Q_n] \rb_{\cA} \in \sap(G)$ and $\sap(G)$ is a closed subspace of $\Cu(G)$, we get $\lb [f], [g] \rb_{\cA} \in \sap(G)$.

\medskip

\noindent (b) Again, the claim trivially holds when $f,g \in \sap(G)$. In particular, it holds for trigonometric polynomials. Choose
 trigonometric polynomials $P_n, Q_n$ such that $\| f-P_n\|_{\be,2,\cA} \to 0$ and $\| g-Q_n\|_{\be,2,\cA} \to 0$. By
Proposition \ref{prop:continuity-eberlein}, we have
\[
\big\| \lb [f], [g] \rb_{\cA} - \lb [P_n], [Q_n] \rb_{\cA} \rb \big\|_\infty \to 0
\]
and
\[
\big\| \lb [f], [g] \rb_{\cA} - \lb [P_n], [Q_n] \rb_{\cA} \rb \big\|_{\be,1,\cA} \to 0 \,.
\]
Corollary~\ref{FB for BAP fct} then implies
\[
a_{\chi}\big(\lb [f], [g] \rb_{\cA}\big)
  =\lim_{n\to\infty} a_{\chi}\big(\lb [P_n], [Q_n] \rb_{\cA}\big)
  =\lim_{n\to\infty}   a_{\chi}\big([P_n]\big) \overline{ a_{\chi}\big([Q_n]\big)}
\]
as well as
\[
a_{\chi}^\cA\big([f]\big) \,=\, \lim_{n\to\infty}   a_{\chi}\big([P_n]\big)
\]
and
\[
a_{\chi}\big([g]\big) \,=\, \lim_{n\to\infty} a_{\chi}\big([Q_n]\big) \,.
\]
The claim follows.

\medskip

\noindent (c) follows immediately from Eq.~\eqref{eq ebe conv} with
$h=\tau_t g \in T_t[g]$.

\medskip

\noindent (d) By Proposition~\ref{prop translation clases}, we have
$T_t[g]=[\tau_t  g]$, and the claim follows from (c).
\end{proof}

\section{Besicovitch almost periodic measures}
Having studied Besicovitch almost periodic functions in the last
section, we now turn to Besicovitch almost periodic measures.

\begin{definition}[Besicovitch almost periodic measures] \label{defi: bap} Let a van Hove sequence $\cA$  on $G$, and let $1 \leq p < \infty$ be given.
A measure $\mu$ on $G$  is called \textit{Besicovitch
$p$-almost periodic} (with respect to $\cA$)\index{almost~periodic!Besicovitch~$p$-almost~periodic~measure} if the function
$\varphi*\mu$ is Besicovitch $p$-almost periodic for all $\varphi \in\Cc(G)$. \nomenclature{$\Bap^p(G)$}{set of Besicovitch $p$-almost periodic measures}
The  space of Besicovitch $p$-almost periodic measures is
denoted by $\Bap^p_{\cA}(G)$. In the case $p=1$, we drop the
superscript $1$.      \exend
\end{definition}

In the next remarks, we will list a few basic properties of
Besicovitch almost periodic measures, which are simple analogues of the
corresponding features of mean almost periodic measures.

\begin{remark}[Independence of $p$ for translation bounded measures]
Proposition~\ref{BAP equality} implies that a measure in
$\mathcal{M}^{\infty}(G)$ is Besicovitch $p$-almost periodic if
and only if it is Besicovitch 1-almost periodic, i.e. for all $1 \leq p < \infty $ we have
\[
 \left( \Bap^p_{\cA}(G) \right) \cap \cM^\infty(G) = \left( \Bap_{\cA}(G) \right)\cap \cM^\infty(G) \qquad \,.  \tag*{$\Diamond$}
\]
\end{remark}

\begin{remark}[Inclusion of spaces] \label{rem:ap inclusions1}
From the definition and
Proposition~\ref{ap inclusions 1}, we immediately obtain the following properties.
\begin{itemize}
  \item [(a)] For each $1 \leq p <\infty$, we have
  \begin{displaymath}
 \Bap^p_{\cA}(G) \subseteq \Map^p_{\cA}(G) \,.
  \end{displaymath}
  Moreover, in general the inclusion is strict.
  \item [(b)] For each $1 \leq p \leq q < \infty$, we have
  \begin{align*}
    \Bap^q_{\cA}(G) &\subseteq   \Bap^p_{\cA}(G) \subseteq   \Bap_{\cA}(G) \,.        \tag*{$\Diamond$}
  \end{align*}
\end{itemize}
\end{remark}

As  in the case of mean almost periodic measures, we can use (c) of
Proposition \ref{prop:measure-vs-function}, with $\mathcal{S}' =
\sap(G)$ instead of $\mathcal{S}'= MAP_{\cA}(G)$, to show that, for
a function $f \in \Cu(G)$, almost periodicity as function and as a
measure coincide.

\begin{prop}\label{prop:bap for functions vs measures}
Let $\cA$ be a van Hove sequence on $G$, and let $1\leq  p < \infty$ be
given. Assume that $f\in \mathcal{L}^1_{loc} (G)$ is such that $f \theta_G$ is a
translation bounded measure and that there exists a sequence $(f_n)$ in
$\Cu (G)$ with $f_n \to f$ with respect to $\|\cdot\|_{\be,p,\cA}$.
Then, $f \theta_G$ belongs to $\Bap^p_{\cA}(G)$ if and only if $f$
belongs to $\bap^p (G)$. In particular, $f\in \Cu (G)$ belongs to $
\bap^p(G)$ if and only if $f \theta_G \in \Bap^p_{\cA}(G)$.\qed
\end{prop}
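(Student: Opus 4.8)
The plan is to apply Proposition~\ref{prop:measure-vs-function} with the choice $\mathcal{S}' = SAP(G)$, mirroring the argument used for mean almost periodic measures where the role of $\mathcal{S}'$ was played by $MAP_{\cA}(G)$. First I would verify that $SAP(G)$ meets the hypotheses of that proposition: it is a subspace of $\Cu(G)$, it is invariant under translation, and it is closed with respect to $\|\cdot\|_\infty$, all as recorded in Section~\ref{sec-key-player}. With this choice the associated space $\mathcal{S}$ is the closure of $SAP(G)$ in $BL^p_{\cA}(G)$ with respect to $\|\cdot\|_{b,p,\cA}$. The key identification is that this closure is exactly $\bap^p(G)$: since the trigonometric polynomials are $\|\cdot\|_\infty$-dense in $SAP(G)$ and $\|\cdot\|_{b,p,\cA}\leq\|\cdot\|_\infty$, the $\|\cdot\|_{b,p,\cA}$-closures of the trigonometric polynomials and of $SAP(G)$ coincide, and by Remark~\ref{rem:sap in bap} this common closure is precisely $\bap^p(G)$. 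Hence $\mathcal{S} = \bap^p(G)$.

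Next I would record what the two hypotheses on $f$ supply. The existence of a sequence $(f_n)$ in $\Cu(G)$ with $f_n\to f$ in $\|\cdot\|_{b,p,\cA}$ says exactly that $f\in BC^p_{\cA}(G)$, by the definition of that space. Together with the standing assumption $f\theta_G\in\cM^\infty(G)$, this places $f$ in the setting required for part~(c) of Proposition~\ref{prop:measure-vs-function}. I would then translate the measure-theoretic condition into a functional one: for $\varphi\in\Cc(G)$ we have $\varphi*(f\theta_G) = f*\varphi$ as functions, so by Definition~\ref{defi: map} the statement $f\theta_G\in\Bap^p_{\cA}(G)$ means exactly that $f*\varphi\in\bap^p(G) = \mathcal{S}$ for every $\varphi\in\Cc(G)$.

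With these translations in place the conclusion is immediate from Proposition~\ref{prop:measure-vs-function}(c). Indeed, since $f\in BC^p_{\cA}(G)$ is already known, condition (i) of that part reduces to ``$f*\varphi\in\mathcal{S}$ for all $\varphi\in\Cc(G)$'', which we have just identified with $f\theta_G\in\Bap^p_{\cA}(G)$, while condition (ii) is $f\in\mathcal{S} = \bap^p(G)$. The equivalence (i)$\Leftrightarrow$(ii) therefore yields the desired equivalence $f\theta_G\in\Bap^p_{\cA}(G)\iff f\in\bap^p(G)$. For the ``in particular'' clause, any $f\in\Cu(G)$ is bounded and hence $f\theta_G$ is translation bounded, while the constant sequence $f_n=f$ shows $f\in BC^p_{\cA}(G)$; the claim then follows from the special case just proved, or equivalently from the final sentence of Proposition~\ref{prop:measure-vs-function}(c). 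The one point requiring care is that the passage from ``$f*\varphi\in\mathcal{S}$ for all $\varphi$'' to ``$f\in\mathcal{S}$'' genuinely uses the membership $f\in BC^p_{\cA}(G)$ supplied by the approximating-sequence hypothesis; without it this implication can fail, which is precisely why that hypothesis is built into the statement.
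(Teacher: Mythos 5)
Your proposal is correct and follows exactly the route the paper intends: the paper derives this proposition by invoking Proposition~\ref{prop:measure-vs-function}(c) with $\mathcal{S}' = SAP(G)$, noting that the resulting closure $\mathcal{S}$ is $\bap^p(G)$ and that the approximating-sequence hypothesis is precisely membership in $BC^p_{\cA}(G)$. You have simply spelled out the verifications the paper leaves implicit, and all of them check out.
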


A characteristic feature of the $2$-Besicovitch almost
periodic functions is that they can be expanded with respect to the orthogonal basis $[\chi], \chi \in\widehat{G}$, compare Theorem~\ref{thm hilbert}.
Our next goal is to show that a similar expansion result holds for translation bounded, Besicovitch almost periodic measures.
We start with an intermediate result.

\begin{prop}[Expansion of $\mu\ast \varphi$]\label{prop-bapappr}
For a van Hove sequence $\cA$ and $\mu \in  \Bap^2_{\cA}(G)\cap \cM^\infty(G)$, the Fourier Bohr coefficients $a^\cA_\chi (\mu)$ exist for each $\chi \in\widehat{G}$. Moreover, in $(\bapte, \| \, \|_{\be,2,\cA})$, the identity
\[
[\mu \ast \varphi] = \sum_{\chi \in \widehat{G}} a^\cA_\chi(\mu)\widehat{\varphi}(\chi)  [\chi]
\]
holds for all $\varphi\in \Cc (G)$.
\end{prop}
\begin{proof} As $\mu \in  \Bap^2_{\cA}(G)\cap \cM^\infty(G)$, the function $\mu \ast \varphi$ is Besicovitch almost periodic and bounded for any $\varphi \in\Cc (G)$. In particular, the Fourier Bohr coefficients of $\mu \ast \varphi $  exist for any $\varphi \in \Cc (G)$. By Corollary~\ref{FB measure relations}, the Fourier Bohr coefficients $a^\cA_\chi (\mu)$ exist, and
\[
a^\cA_\chi (\mu\ast \varphi) = a^\cA_\chi (\mu) \widehat{\varphi}(\chi)
\]
holds for any $\chi\in\widehat{G}$.  Given this, this result follows from Theorem \ref{thm hilbert}.
\end{proof}

Before proceeding, let us briefly discuss how large the Fourier--Bohr spectrum of a Besicovitch almost periodic function or measure can be.

\begin{prop}\label{FB-spe}
\phantom{XX}
\begin{itemize}
  \item[(a)] For all $f \in \bapte$, the set
  \[
  \BB(f):= \{ \chi \in \widehat{G} : a_\chi^\cA(f) \neq 0 \}
  \]
  is countable.
  \item[(b)] For all $\mu \in  \Bap^2_{\cA}(G)\cap \cM^\infty(G)$, the set
  \[
  \BB(\mu):= \{ \chi \in \widehat{G} : a_\chi^\cA(\mu) \neq 0 \}
  \]
  is locally countable\footnote{Recall that a set is called locally countable if its intersection with each compact set is countable.}. In particular, if $G$ is metrisable, $\BB(\mu)$ is countable.
\end{itemize}
\end{prop}
\begin{proof}
(a) follows from the Parseval identity \eqref{eqn-parseval}
\[
\sum_{\chi\in \widehat{G}} \big|
a_{\chi}^{\cA}(f) \big|^2 = \| f \|_{\be,2,\cA}^2 < \infty \,.
\]
(b) For each compact set $K \subseteq \widehat{G}$, there exists some $\varphi \in \Cc(G)$ whose Fourier transform does not vanish on $K$ \cite[Cor.~4.9.12]{MoSt}. Next,
\[
\BB(\mu) \cap K \subseteq \BB(\mu*\varphi)       \,,
\]
follows from
\[
a_{\chi}^{\cA}(\mu*\varphi)=\widehat{\varphi}(\chi) a_{\chi}^{\cA}(\mu)   \,,
\]
which proves the first claim. Finally, if $G$ is metrisable, $\widehat{G}$ is $\sigma$-compact \cite[Th.~4.2.7]{ReiSte}, which implies the second claim.
\end{proof}

Let us also note that Proposition~\ref{FB-spe} (b) also immediately follows from Theorem~\ref{Bap and BT} below.

Proposition~\ref{prop-bapappr} can be extended to give the desired Fourier--expansion result. To formulate this, we use  convergence along the net of all finite subsets of $\widehat{G}$. Specifically, for a function $u $ from the finite subsets of $\widehat{G}$ to $\CC$  and $c\in \CC$, we write
\[
\lim_F u(F) = c
\]
if, for any $\varepsilon >0$, there exists a finite subset $F_\varepsilon$ of $\widehat{G}$ with $|u(F) - c|\leq\varepsilon$ for all finite subsets $F$ of $\widehat{G}$ with $F_\varepsilon \subseteq F$.

\begin{theorem}[Expansion of an Besicovitch almost periodic measure]\label{thm-expa}
Let $\cA$ be a van Hove sequence, and let $\mu$ be a translation bounded measure. Then, the following assertions are equivalent: \index{almost~periodic!Besicovitch~$p$-almost~periodic~measure}
\begin{itemize}
\item[(i)] The measure $\mu$ is Besicovitch almost periodic.
\item [(ii)]  There exists a map $a : \widehat{G}\longrightarrow \CC $ such that, for all $\varphi \in \Cc (G)$, we have
\[
\lim_F  \Big\|\mu\ast \varphi - \Big(\sum_{\chi \in F} a_\chi \chi\Big)\ast \varphi \Big\|_{\be,2,\cA} = 0 \,.
\]
\end{itemize}
If the equivalent conditions (i) and (ii) hold, one has
\[
a_\chi = a_\chi^\cA (\mu) \qquad \text{ for all } \chi \in\widehat{G} \,.
\]
\end{theorem}
\begin{proof}
(i)$\Longrightarrow$(ii): We first note that
\begin{equation} \label{eq:diamond}
\chi \ast\varphi =\widehat{\varphi} (\chi)  \chi
\end{equation}
holds for all $\varphi \in \Cc(G)$ and all $\chi \in \widehat{G}$. Next, by Eq.~\eqref{eqn-parseval}, we obtain the identity
\[
 \varphi*\mu=\sum_{\chi \in \widehat{G}} a_{\chi}^{\cA}(\varphi*\mu) \chi   \qquad \text{ in $(\bapte, \| \cdot \|_{\be,2,\cA})$} \,,
\]
for all $\varphi \in \Cc(G)$. Hence, for each $\eps>0$, there exists some finite set $F \subseteq \widehat{G}$ such that
\[
\| \varphi*\mu- \sum_{\chi \in \widehat{G}} a_{\chi}^{\cA}(\varphi*\mu) \chi  \|_{\be,2,\cA} < \eps \,.
\]
Now
\[
a_{\chi}^{\cA}(\varphi*\mu) \chi =a_{\chi}^{\cA}(\mu) \widehat{\varphi}(\chi) \chi=a_{\chi}(\mu) \chi \ast\varphi
\]
implies (ii).

\medskip

\noindent (ii)$\Longrightarrow$(i): (ii) together with Eq.~\eqref{eq:diamond} implies that $\mu \ast \varphi$ is Besicovitch almost periodic for all $\varphi \in \Cc (G)$. This shows that $\mu$ is Besicovitch almost periodic measure.
\end{proof}

\begin{remark}
Loosely speaking, one can summarize (ii) as saying that $\mu$ can be expanded as $ \sum_{\chi \in \widehat{G}} a_\chi \chi$. So, the Fourier transform of $\mu$ formally equals $\sum_{\chi \in \widehat{G}} a_\chi \delta_\chi$. Theorem~\ref{thm-expa} can be seen as part of the solution to  the problem of Lagarias discussed  on Page \pageref{problem-lagarias} of the introduction. The proper solution to the problem, i.e. the answer to Question 2,  will be given in the next section.     \exend
 \end{remark}

 Let us complete this section by observing that Theorem~\ref{thm-expa} and Proposition~\ref{FB-spe} (b) immediately give the following result.

\begin{coro}
Let $G$ be a metrisable LCAG, and let $\cA$ be a van Hove sequence in $G$. For any translation bounded measure $\mu$, the following assertions are equivalent:
\begin{itemize}
\item[(i)] The measure $\mu$ is Besicovitch almost periodic.
\item [(ii)]  There exists a sequence $ (a_n)$ in  $\CC$ and a sequence $(\chi_n)$ in $ \widehat{G}$ such that, for all $\varphi \in \Cc(G)$, we have
\[
\lim_{N\to\infty}  \Big\|\Big(\mu - \sum_{n=1}^N a_n \chi_n\Big)\ast \varphi \Big\|_{\be,2,\cA} = 0 \,.
\]
\end{itemize}
If the equivalent conditions (i) and (ii) hold, then  $a_{\chi_n}^\cA (\mu)=a_n$ holds for all $n$, and all the remaining Fourier--Bohr coefficients of $\mu$ are zero.  \qed
\end{coro}

\section[Complete point diffraction]{Complete point diffraction  with respect to a given  van Hove sequence}
In this section, we characterize $\mathcal{A}$-representations that have
 pure-point diffraction together with  Fourier--Bohr
coefficients satisfying a certain completeness relation. We will summarize these properties under the name of complete point diffraction (see below).  As a consequence, we obtain a solution to Question 2.
Throughout the section, we denote the characteristic
function of $\{\chi\}$, $\chi\in\widehat{G}$, by $1_\chi$.

\begin{theorem}[Characterization of $\cA$-representation with Fourier--Bohr
coefficients]\label{thm:a-representation-besicov}  \index{almost~periodic!Besicovitch~$p$-almost~periodic~function}
Let $N$ be an $\cA$-representation with dense range which possesses a semi-autocorrelation $\eta$, and let
$\mathcal{H}$ be the associated Hilbert space. Then, the following
assertions are equivalent:
\begin{itemize}
\item[(i)] The Fourier transform of $\eta$ is a
pure-point measure $\sigma$, and there exist (necessarily unique)
complex numbers $A_\chi$, for $\chi \in \widehat{G}$, satisfying
\[
M_{\cA}(N(\varphi) \overline{\chi}) =A_\chi
\widehat{\varphi}(\chi)
\]
for all $\varphi \in\Cc (G)$, as well as
\[
|A_\chi|^2 = \sigma(\{\chi\}) \,.
\]
\item[(ii)] $N(\Cc (G))\subseteq \bap^2(G)$.
\item[(iii)] The space $\mathcal{H}$ has a dense subspace consisting
of trigonometric polynomials.
\end{itemize}
If these equivalent conditions are satisfied, the equation
\[
[N(\varphi)] = \sum_{\chi\in\widehat{G}} A_\chi \widehat{\varphi}(\chi) [\chi]
\]
holds in $Bap^2_\cA (G)/\equiv$, for all $\varphi \in\Cc (G)$, and the
(unique) unitary map
\[
U  : L^2 (\widehat{G},\sigma) \longrightarrow \mathcal{H} \,,
\]
with  $\widehat{\varphi}\mapsto N(\varphi)$ for all $ \varphi \in\Cc
(G)$, satisfies $U(1_\chi) = A_\chi \chi$ for all $\chi \in
\widehat{G}$. Moreover, in this case, the trigonometric polynomials
in (iii) are exactly the linear span of the set $ \{\chi \in\widehat{G}$
\,:\, $\sigma (\{\chi\})>0 \}$.
\end{theorem}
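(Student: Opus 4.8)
The plan is to establish the equivalence by the cycle (i)$\Rightarrow$(ii)$\Rightarrow$(iii)$\Rightarrow$(i) and then to read off the two displayed formulas together with the description of the trigonometric polynomials. Since $N$ possesses a semi-autocorrelation $\eta$, Lemma~\ref{lem:char-intertwining} tells me that $N$ is intertwining and that its diffraction measure is $\sigma=\widehat{\eta}$, with $\sigma_\varphi=|\widehat{\varphi}|^2\sigma$ for every $\varphi\in\Cc(G)$. The two structural facts I will lean on are that $(Bap^2_\cA(G)/\equiv,\langle\cdot,\cdot\rangle)$ is a Hilbert space whose orthonormal basis is $\widehat{G}$ and on which $a_\chi^\cA([f])=\langle[f],[\chi]\rangle$ (Theorem~\ref{thm hilbert}), and the intrinsic characterisation of $Bap^2_\cA(G)$ in Corollary~\ref{bap2 char}. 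I also record that $T_t[\chi]=\overline{\chi(t)}[\chi]$, so that each $[\chi]$ is an eigenfunction of the unitary translation action, with distinct characters giving distinct eigenvalues.

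For (i)$\Rightarrow$(ii) I would check the three conditions of Corollary~\ref{bap2 char} for each $N(\varphi)$: existence of the Fourier--Bohr coefficients is the hypothesis $M_\cA(N(\varphi)\overline{\chi})=A_\chi\widehat{\varphi}(\chi)$; existence of $M_\cA(|N(\varphi)|^2)$ holds because $N$ is an $\cA$-representation, and this mean equals $\sigma_\varphi(\widehat{G})=\int|\widehat{\varphi}|^2\,\dd\sigma$; and the Parseval identity then follows from purity of $\sigma$ together with $|A_\chi|^2=\sigma(\{\chi\})$, since both sides equal $\sum_{\chi\in\widehat{G}}|A_\chi\widehat{\varphi}(\chi)|^2$. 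For (ii)$\Rightarrow$(iii), condition (ii) makes $\mathcal{H}$ a closed translation-invariant subspace of $Bap^2_\cA(G)/\equiv$. The orthogonal projection $P$ onto $\mathcal{H}$ commutes with every $T_t$, hence maps the eigenfunction $[\chi]$ into the one-dimensional eigenspace $\CC[\chi]$; being a projection, $P[\chi]\in\{0,[\chi]\}$. Thus $\mathcal{H}$ is the closed span of the characters it contains, which supplies a dense subspace of trigonometric polynomials.

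For (iii)$\Rightarrow$(i) I would first note that a dense set of trigonometric polynomials consists of eigenfunctions, so $T$ has pure point spectrum on $\mathcal{H}$ and the corollary to Theorem~\ref{thm:intertwining-U} forces $\sigma$ to be pure point; moreover (iii) gives $\mathcal{H}\subseteq Bap^2_\cA(G)/\equiv$, so every $[N(\varphi)]$ has a Parseval expansion. The crux is to produce the $A_\chi$. From $T(\varphi)[f]=\int_G\varphi(s)\,T_s[f]\,\dd s$ and $T_t[\chi]=\overline{\chi(t)}[\chi]$ a short computation gives $a_\chi^\cA(T(\varphi)[f])=\widehat{\varphi}(\chi)\,a_\chi^\cA([f])$ for all $[f]$; inserting the intertwining relation $T(\varphi)[N(\psi)]=T(\psi)[N(\varphi)]$ yields
\[
\widehat{\varphi}(\chi)\,a_\chi^\cA(N(\psi))=\widehat{\psi}(\chi)\,a_\chi^\cA(N(\varphi))
\]
for all $\varphi,\psi\in\Cc(G)$. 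Choosing some $\psi$ with $\widehat{\psi}(\chi)\neq0$ shows that $A_\chi:=a_\chi^\cA(N(\varphi))/\widehat{\varphi}(\chi)$ is independent of $\varphi$ and satisfies $M_\cA(N(\varphi)\overline{\chi})=A_\chi\widehat{\varphi}(\chi)$. Comparing the two descriptions of the spectral measure of $N(\varphi)$, namely $\sigma_\varphi=|\widehat{\varphi}|^2\sigma$ and, via the Parseval expansion, $\sigma_\varphi=\sum_\chi|A_\chi\widehat{\varphi}(\chi)|^2\delta_\chi$, and reading off the atom at $\chi$ gives $|A_\chi|^2=\sigma(\{\chi\})$; uniqueness of $A_\chi$ is immediate. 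I expect this decoupling of $a_\chi^\cA(N(\varphi))$ into the product $A_\chi\widehat{\varphi}(\chi)$ to be the main obstacle, since it is precisely where the intertwining hypothesis is indispensable.

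Finally, I would record the concluding formulas. The Parseval expansion reads $[N(\varphi)]=\sum_\chi a_\chi^\cA(N(\varphi))[\chi]=\sum_\chi A_\chi\widehat{\varphi}(\chi)[\chi]$, which is the first display. To identify $U$, define $V\colon L^2(\widehat{G},\sigma)\to Bap^2_\cA(G)/\equiv$ on the orthogonal basis by $V(1_\chi)=A_\chi[\chi]$; this is isometric because $\|A_\chi[\chi]\|^2=|A_\chi|^2=\sigma(\{\chi\})=\|1_\chi\|_{L^2(\sigma)}^2$ and distinct characters are orthogonal. Since $V(\widehat{\varphi})=\sum_\chi A_\chi\widehat{\varphi}(\chi)[\chi]=[N(\varphi)]=U(\widehat{\varphi})$ on the dense set $\{\widehat{\varphi}:\varphi\in\Cc(G)\}$, I conclude $V=U$ and hence $U(1_\chi)=A_\chi[\chi]$. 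In particular $\mathcal{H}=\operatorname{ran}U$ is the closed span of $\{[\chi]:\sigma(\{\chi\})>0\}$, so a trigonometric polynomial belongs to $\mathcal{H}$ exactly when each of its frequencies $\chi$ satisfies $\sigma(\{\chi\})>0$; this identifies the dense subspace in (iii) as the linear span of those characters.
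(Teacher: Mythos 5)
Your proposal is correct, but it runs the equivalence around the opposite cycle from the paper and uses different mechanisms at the two key junctures, so a comparison is worthwhile. The paper proves (iii)$\Rightarrow$(ii) trivially, then (ii)$\Rightarrow$(i) and (i)$\Rightarrow$(iii), and in both of the latter steps the engine is the unitary map $U:L^2(\widehat{G},\sigma)\to\mathcal{H}$ of Theorem~\ref{thm:intertwining-U}: the coefficients $A_\chi$ are \emph{defined} by $U(1_\chi)=A_\chi[\chi]$, which is justified by observing that eigenfunctions in $Bap^2_{\cA}(G)/\equiv$ must be multiples of characters. You instead prove (i)$\Rightarrow$(ii) by a direct verification of the three conditions of Corollary~\ref{bap2 char}; you prove (ii)$\Rightarrow$(iii) by noting that the orthogonal projection of $Bap^2_{\cA}(G)/\equiv$ onto the closed translation-invariant subspace $\mathcal{H}$ commutes with the unitary translations and hence sends each $[\chi]$ to $0$ or $[\chi]$, exhibiting $\mathcal{H}$ as the closed span of the characters it contains; and you extract $A_\chi$ in (iii)$\Rightarrow$(i) from the identity $a_\chi^{\cA}(T(\varphi)[f])=\widehat{\varphi}(\chi)\,a_\chi^{\cA}([f])$ combined with the intertwining relation, which yields $\widehat{\varphi}(\chi)\,a_\chi^{\cA}(N(\psi))=\widehat{\psi}(\chi)\,a_\chi^{\cA}(N(\varphi))$ and hence the decoupling $a_\chi^{\cA}(N(\varphi))=A_\chi\widehat{\varphi}(\chi)$. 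Your route is more elementary and pinpoints exactly where the intertwining hypothesis enters, and the projection argument gives a clean structural description of $\mathcal{H}$ without detouring through (i); the paper's route makes the identification $U(1_\chi)=A_\chi[\chi]$ and the multiplicity-one statement come for free from the unitary equivalence. Two small wording points, neither a gap: in (iii)$\Rightarrow$(i) a trigonometric polynomial is a finite linear combination of eigenfunctions rather than an eigenfunction, though density of such combinations still forces all spectral measures $\sigma_f$, $f\in\mathcal{H}$, to be pure point and hence $\sigma$ to be pure point; and the invariance $T_t[N(\varphi)]=[N(\tau_t\varphi)]\in\mathcal{H}$ used for the projection argument should be flagged as coming from the $G$-equivariance of $N$ together with Proposition~\ref{prop:compatibility}.
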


\begin{remark}[Fourier--Bohr coefficients]\label{rem-FBC}
\phantom{XX}
\begin{itemize}
\item[(a)] In the discussion following Corollary~\ref{coro:pointspectrum},
 we introduced the concept of Fourier--Bohr
coefficients of an $\mathcal{N}$-representation with respect to an
orthonormal basis. Theorem~\ref{thm:a-representation-besicov} states that the $A_\chi$,
$\chi \in\widehat{G}$, are exactly the Fourier--Bohr coefficients of
$N$ with respect to the orthonormal basis given by the characters.
For this reason, we think of the $N$ in Theorem~\ref{thm:a-representation-besicov} as
$\mathcal{N}$-representation with Fourier coefficients. In fact, the formula for $[N(\varphi)]$ given in the theorem can be understood to give an expansion of $N$ in the form
$$N = \sum_{\chi \in\widehat{G}} A_\chi (\chi,\cdot) \chi,$$
where $(\chi, \cdot) :\Cc(G)\longrightarrow \CC$ is given by $(\chi,\varphi):=\widehat{\varphi}(\chi)$ (compare also  the corresponding discussion for $\mathcal{N}$-representations following Corollary~\ref{coro:pointspectrum}).

\item[(b)] From Theorem~\ref{thm:a-representation-besicov} (i), we see that the family $A_\chi$,
$\chi\in\widehat{G}$, has the property that
\[
\sum_{\chi \in \widehat{G}} |\widehat{\varphi}(\chi)|^2\, |A_\chi|^2
= \sum_{\chi \in \widehat{G}} |\widehat{\varphi}(\chi)|^2\,
\sigma\big(\{\chi\}\big) = \|\widehat{\varphi}\|_{L^2 (\sigma)} <\infty  \,,
\]
for all $\varphi \in\Cc (G)$. Conversely, when $A_\chi\in\CC$,
$\chi\in\widehat{G}$, are given with the summability property
$\sum_{\chi \in \widehat{G}} |\widehat{\varphi} (\chi) |^2
|A_\chi|^2 <\infty$, for all $\varphi \in \Cc (G)$, we can define an
$\mathcal{N}$-representation
\[
N : \Cc (G)\longrightarrow \bap^2 (G)/\equiv \,,\qquad  N(\varphi):= \sum_{\chi\in\widehat{G}} \widehat{\varphi}(\chi)\, A_{\chi} [\chi] \,.
\]
This $\mathcal{N}$-representation has a semi-autocorrelation, whose
Fourier transform is the pure-point measure
\[
\sigma \, := \, \sum_{\chi \in \widehat{G}} |A_\chi|^2 \delta_\chi\,.
\]
In this sense, there is a one-to-one correspondence between $A_\chi$,
$\chi \in \widehat{G}$, satisfying this summability property and
$\mathcal{N}$-representations with Fourier coefficients.
\item[(c)] A measure $\mu$ satisfying
\[
\int_{\widehat{G}} \left| \widehat{\varphi} (t) \right| \dd \mu(t) < \infty \qquad \text{ for all } \varphi \in \Cc(G)
\]
is called \textit{weakly admissible}\index{weakly~admissible~measure}. It follows from (b) that a pure-point measure $\mu$ on
$\widehat{G}$ is the diffraction of an intertwining $\cN$-representation if and only if it is positive and weakly admissible.      \exend
\end{itemize}
\end{remark}

\begin{proof} (iii)$\Longrightarrow$(ii): Recall that the norm on $\mathcal{H}$
agrees with $\|\cdot\|_{\be,2,\cA}$. Hence, (iii) and the definition of Besicovitch almost periodicity clearly imply (ii).

\medskip

\noindent (ii)$\Longrightarrow$(i): As $\bap^2(G)\subseteq \mean^2_{\cA} (G)$
and $N$ possesses a semi-autocorrelation, we infer from (ii) and
Theorem~\ref{thm:a-representation-mean} that the Fourier transform
of the semi-autocorrelation is a pure-point measure $\sigma$. By
Theorem~\ref{thm:intertwining-U}, there exists a
(unique) unitary $G$-map
\[
U  : L^2 (\widehat{G},\sigma) \longrightarrow \mathcal{H}
\]
with $\widehat{\varphi}\mapsto N(\varphi)$ for all $\varphi \in\Cc
(G)$.  As $1_{\chi}\in L^2 (\widehat{G},\sigma)$ is an eigenfunction
for each  $\chi\in \widehat{G}$ with $\sigma\big(\{\chi\}\big)
>0$, we obtain, for each such $\chi$, an eigenfunction $ U(1_\chi)$ in $\mathcal{H}$.
As the Besicovitch space has an orthonormal basis consisting of
characters and these characters are eigenfunctions to different
eigenvalues, an eigenfunction in the Besicovitch space
must be a multiple of the character. Hence, each $U(1_\chi)$ must be
a (multiple of a) character, i.e. there exist $A_\chi$, $\chi
\in\widehat{G}$, with $U(1_\chi) = A_\chi [\chi]$. Moreover, we have
\[
[N(\varphi)] = U(\widehat{\varphi})\, U\Big(\sum_{\chi\in\widehat{G}} \widehat{\varphi}(\chi) 1_{\chi} \Big) = \sum_{\chi \in\widehat{G}}
\widehat{\varphi}(\chi)\, A_\chi\, [\chi]
\]
for all $\varphi \in\Cc (G)$. This gives
\[
\widehat{\varphi}(\chi)\, A_\chi = \langle [N(\varphi)],
[\chi]\rangle  = M_{\cA}(N(\varphi) \overline{\chi})
\]
for all $\chi\in\widehat{G}$ and $\varphi \in \Cc (G)$.

Finally, since $|\widehat{\varphi}|^2 \sigma= \sigma_{N(\varphi)}$, we obtain
\[
\left| \widehat{\varphi}(\chi) \right|^2 \sigma( \{ \chi \})= \sigma_{N(\varphi)}( \{ \chi \})= \left| \widehat{\varphi}(\chi)A_\chi \right|^2
\]
for all $\chi \in \widehat{G}$ and $\varphi \in \Cc (G)$.

\medskip

\noindent (i)$\Longrightarrow$(iii):  By (i),  $\cN$ possesses a
semi-autocorrelation whose Fourier transform $\sigma$ is a pure
point measure. Hence, we infer from Theorem~
\ref{thm:intertwining-U} that there exists a (unique) unitary
$G$-map
\[
U  : L^2 (\widehat{G},\sigma) \longrightarrow \mathcal{H}
\]
with  $U(\widehat{\varphi}) =  N(\varphi)$ for all $\varphi \in\Cc
(G)$. As $U$ is unitary, we infer from (i) that
\[
\|N(\varphi)\|^2 = \|\widehat{\varphi}\|^2 = \sum_{\chi\in
\widehat{G}} |\widehat{\varphi}(\chi)|^2\, \sigma(\{\chi\})
=\sum_{\chi\in \widehat{G}} |\widehat{\varphi}(\chi)|^2\, |A_\chi|^2 =
\sum_{\chi\in\widehat{G}} |M_{\cA}(N(\varphi) \overline{\chi})|^2 \,.
\]
Moreover, a short direct computation, invoking $M_{\cA} (\chi
\overline{\varrho}) =0$ for $\chi,\varrho \in \widehat{G}$ with
$\chi\neq \varrho$, gives
\[
M_{\cA}( | N(\varphi) - \sum_{\chi \in F}
M_{\cA}(N(\varphi)\chi)|^2) = \|N(\varphi)\|^2 - \sum_{\chi\in F}
|M_{\cA}(N(\varphi)\overline{\chi})|^2
\]
for all finite sets $F\subseteq \widehat{G}$. Putting everything together, we arrive at (iii).

\medskip

\noindent The last statements of the theorem were shown in the proof
of the equivalence.
\end{proof}

From Theorem~\ref{thm:a-representation-besicov}, we easily obtain the solution to Question 2.

\begin{theorem}\label{Bap and BT}
Let $\mu \in \cM^\infty(G)$. Then, $\mu \in
\Bap^2_\cA(G)$ if and only if the following three conditions hold:
\begin{itemize}
  \item[(a)] The autocorrelation $\gamma$ of $\mu$ exists with respect to $\cA$, and $\widehat{\gamma}$ is a pure-point measure.
  \item[(b)] The Fourier--Bohr coefficients $a_{\chi}^\cA(\mu)$ exist for all
$\chi \in \widehat{G}$.
  \item[(c)] For all $\chi \in \widehat{G}$, we have the consistent phase property
  \[
  \widehat{\gamma}(\{\chi\}) = \left| a_{\chi}^\cA(\mu) \right|^2 \,.
  \]
\end{itemize}
\end{theorem}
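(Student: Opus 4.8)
The plan is to apply Theorem~\ref{thm:a-representation-besicov} to the $\cA$-representation $N_\mu$ given by $N_\mu(\varphi) = \mu * \varphi$, and then translate the abstract statements about $N_\mu$ into concrete statements about $\mu$. The dictionary is as follows: the semi-autocorrelation of $N_\mu$ is the autocorrelation $\gamma$, its Fourier transform is the diffraction $\widehat{\gamma}$ (Lemma~\ref{lem:char-intertwining}), and the quantity $M_{\cA}(N_\mu(\varphi)\overline{\chi})$ appearing in condition (i) of Theorem~\ref{thm:a-representation-besicov} is exactly the Fourier--Bohr coefficient $a_\chi^\cA(\mu*\varphi)$. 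With this dictionary in place, both implications reduce to bookkeeping.

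For the direction where $\mu \in \Bap^2_\cA(G)$ implies (a), (b), (c), I would first note that $\mu \in \Bap^2_\cA(G)$ means $\mu * \varphi \in Bap^2_\cA(G)$ for all $\varphi \in \Cc(G)$. From Proposition~\ref{prop:317} together with existence of the mean for Besicovitch almost periodic functions (Proposition~\ref{Bap props}), the means $M_{\cA}(\mu*\varphi \cdot \overline{\mu*\psi})$ exist for all $\varphi,\psi$; by Proposition~\ref{prop-compute-autocorrelation} this already yields existence of the autocorrelation $\gamma$. Hence $N_\mu$ is an intertwining $\cA$-representation with semi-autocorrelation $\gamma$ (Proposition~\ref{prop:translation-bounded-admissible}), and condition (ii) of Theorem~\ref{thm:a-representation-besicov} holds. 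The theorem then gives that $\widehat{\gamma} = \sigma$ is pure point (which is (a)) and produces constants $A_\chi$ with $M_{\cA}(N_\mu(\varphi)\overline{\chi}) = A_\chi\,\widehat{\varphi}(\chi)$ and $|A_\chi|^2 = \widehat{\gamma}(\{\chi\})$. To extract (b) and (c) it remains to identify $A_\chi$ with $a_\chi^\cA(\mu)$: choosing $\varphi$ with $\widehat{\varphi}(\chi) \neq 0$, Corollary~\ref{FB measure relations}(a) gives existence of $a_\chi^\cA(\mu)$, and comparing $a_\chi^\cA(\mu*\varphi) = \widehat{\varphi}(\chi)\,a_\chi^\cA(\mu)$ with $A_\chi\,\widehat{\varphi}(\chi)$ forces $a_\chi^\cA(\mu) = A_\chi$. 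Conditions (b) and (c) then follow at once.

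For the converse, assuming (a), (b), (c): since (a) provides the autocorrelation $\gamma$, Proposition~\ref{prop:translation-bounded-admissible} again makes $N_\mu$ an intertwining $\cA$-representation with semi-autocorrelation $\gamma$ and pure point diffraction $\sigma = \widehat{\gamma}$. I would then verify condition (i) of Theorem~\ref{thm:a-representation-besicov} with the choice $A_\chi := a_\chi^\cA(\mu)$, which exists by (b): Corollary~\ref{FB measure relations}(b) gives $M_{\cA}(N_\mu(\varphi)\overline{\chi}) = a_\chi^\cA(\mu*\varphi) = \widehat{\varphi}(\chi)\,A_\chi$, while (c) gives $|A_\chi|^2 = \widehat{\gamma}(\{\chi\}) = \sigma(\{\chi\})$. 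Thus condition (ii) of the theorem holds, i.e. $\mu * \varphi \in Bap^2_\cA(G)$ for all $\varphi \in \Cc(G)$, which is precisely the statement $\mu \in \Bap^2_\cA(G)$.

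The genuine mathematical content is all carried by Theorem~\ref{thm:a-representation-besicov}, so the only real obstacle is the careful matching between the abstract Fourier coefficients $A_\chi$ of the representation and the Fourier--Bohr coefficients $a_\chi^\cA(\mu)$ of the measure. In the forward direction this is what necessitates the two-step argument (first establishing existence of $a_\chi^\cA(\mu)$ via Corollary~\ref{FB measure relations}(a), then pinning down its value through the convolution identity), since Theorem~\ref{thm:a-representation-besicov} supplies the $A_\chi$ directly but says nothing a priori about the Fourier--Bohr coefficients of $\mu$ itself.
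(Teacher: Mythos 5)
Your proposal is correct and follows essentially the same route as the paper: both proofs reduce the statement to Theorem~\ref{thm:a-representation-besicov} applied to $N_\mu(\varphi)=\mu\ast\varphi$, using Proposition~\ref{prop-compute-autocorrelation} and Proposition~\ref{prop:translation-bounded-admissible} to set up the intertwining $\cA$-representation and Corollary~\ref{FB measure relations} to identify the abstract coefficients $A_\chi$ with $a_\chi^{\cA}(\mu)$. Your write-up is in fact slightly more careful than the paper's on the matching of $A_\chi$ with $a_\chi^{\cA}(\mu)$ in the forward direction, which the paper leaves implicit.
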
 \index{Fourier--Bohr~coefficient!Fourier--Bohr~coefficient~of~function} \index{almost~periodic!Besicovitch~$p$-almost~periodic~measure}
\begin{proof}
First, we note that, for $\mu \in \Bap^2_{\cA} (G)$, the limit
$M_{\cA} ((\mu \ast \varphi) \cdot \overline{(\mu\ast \psi)})$ exists
for all $\varphi,\psi \in\Cc (G)$ due to the existence of means for
products of functions from $\bap^2(G)$. Hence, for such $\mu$,
the autocorrelation exists by
Proposition~\ref{prop-compute-autocorrelation}. Thus, we infer from
Proposition \ref{prop:translation-bounded-admissible} that  the map
\[
N_\mu : \Cc (G)\longrightarrow \mathcal{L}^1_{loc} (G) \,,\qquad
N_\mu(\varphi) = \mu \ast \varphi  \,,
\]
is an intertwining $\mathcal{A}$-representation.
We also note that, for $\mu \in \cM^\infty(G)$, the existence of the
Fourier--Bohr coefficients $a_{\chi}^{\cA}(\mu)$ is equivalent to the
existence of the Fourier--Bohr coefficients $a_{\chi}^{\cA}(\mu*\varphi) =
M_\cA ((\mu\ast\varphi) \cdot \overline{\chi})$ for all $\varphi
\in\Cc (G)$ due to Corollary \ref{FB measure relations}.
Now, we can easily infer the statement of the theorem by an
application of Theorem~\ref{thm:a-representation-besicov} to the
$\mathcal{A}$-representation  $N_\mu$ with the Fourier--Bohr
coefficients $A_\chi$ appearing in
Theorem~\ref{thm:a-representation-besicov} being the
Fourier--Bohr coefficients $a_{\chi}^\cA(\mu)$ of $\mu$.
\end{proof}

\begin{remark}
\phantom{XX}
\begin{itemize}
\item[(a)] We could replace the assumption that $\mu$ is translation bounded by
the assumption that  $\mu \ast \varphi$ belongs to $BC^2_{\cA} (G)$ for
all $\varphi \in\Cc (G)$ and that either $\mu$ is positive or
$\supp(\mu)$ is uniformly discrete. The proof  proceeds along the
very same lines with Proposition \ref{prop-compute-autocorrelation}
and Proposition \ref{prop:translation-bounded-admissible} replaced
by Corollary \ref{prop:positive-admissible} or
Corollary~\ref{prop:unif disc-admissible}.
\item[(b)] In the situation of Theorem~\ref{Bap and BT}, the Fourier--Bohr coefficients
$a_{\chi}^\cA(\mu)$ are exactly the abstract Fourier--Bohr coefficients
appearing in Theorem \ref{thm:a-representation-besicov}. Hence, they
satisfy the following square summability type condition: For all
$\varphi \in \Cc(G)$, we have
\begin{displaymath}
\sum_{\chi \in \widehat{G}}\left| \widehat{\varphi}(\chi)
a^\cA_\chi(\mu) \right|^2 =\| \mu*\varphi \|_{\be,2,\cA}^2 \,.
\end{displaymath}
In the particular case $\mu \in \SAP(G)$, this was proven in
\cite[Prop.~8.3]{ARMA}.
\item[(c)] For $\mu \in \Bap_{\cA}(G)$, we can define a discrete Fourier
transform, see \cite[Sect.~6.3]{NS18}, by
\[
\cF_{\mathsf{d}}(\mu):= \sum_{\chi \in \widehat{G}}
a_{\chi}^\cA(\mu) \delta_{\chi} \,.
\]
This coincides with the classical
Fourier transform in the case of Fourier transformable strongly almost periodic
measures \cite{NS18}. Theorem~\ref{Bap and BT} tells us that, for
 $\mu \in \Bap_{\cA}(G) \cap \cM^\infty(G)$, we have a
commutative Wiener diagram

\begin{center}
\begin{tikzpicture}[scale=.35]
  \matrix (m) [matrix of math nodes,row sep=3em,column sep=4em,minimum width=2em]
  {
    \mu & \gamma \\
     \cF_{\mathsf{d}}(\mu) & \widehat{\gamma} \\};
  \path[-stealth]
    (m-1-1) edge node [left] {$\cF_{\mathsf{d}}$} (m-2-1)
            edge node [above] {$\lb \, , \, \rb_{\cA}$} (m-1-2)
    (m-2-1.east|-m-2-2) edge node [above] {$| \cdot |^2$} (m-2-2)
    (m-1-2) edge node [right] {$\widehat{\cdot}$} (m-2-2);
\end{tikzpicture}
\end{center}
where we define the square $|\cdot|^2$ of the point measure $\nu = \sum c_\chi \delta_\chi$ by $|\nu|^2 = \sum |c_\chi|^2 \delta_\chi$.   \exend
\end{itemize}
\end{remark}

The previous result suggests to single out the diffraction features given by  (a), (b), (c) with  a specific name.

\begin{definition}[Complete point spectrum with respect to $\mathcal{A}$]
Let $\cA$ be a van Hove sequence. A  measure $\mu$ is said to have \emph{complete point spectrum with respect to $\cA$} if the following three conditions hold:
\begin{itemize}
  \item[(a)] The autocorrelation $\gamma$ of $\mu$ exists with respect to $\cA$, and $\widehat{\gamma}$ is a pure-point measure.
  \item[(b)] The Fourier--Bohr coefficients $a_{\chi}^\cA(\mu)$ exist for all
$\chi \in \widehat{G}$.
  \item[(c)] For all $\chi \in \widehat{G}$, we have the consistent phase property
  \[
  \widehat{\gamma}(\{\chi\}) = \left| a_{\chi}^\cA(\mu) \right|^2 \,.  \tag*{$\Diamond$}
  \]
\end{itemize}
\end{definition}

With this definition the main result of this section reads that a translation bounded measure has complete point spectrum with respect to $\cA$ if and only if it is Besicovitch almost periodic.

\medskip

The proof of the preceding theorem has the following consequence.

\begin{coro}[Expansion of a Besicovitch almost periodic measure]  Any  $\mu \in \cM^\infty(G) \cap
\Bap^2_\cA(G)$ has an expansion
$$\mu = \sum_{\chi \in\widehat{G}} a_{\chi}^\cA (\mu)\, \chi$$
in the sense that
$$\mu \ast \varphi \equiv  \sum_{\chi \in \widehat{G}} a_{\chi}^\cA (\mu) \, \chi \ast \varphi$$
holds for all $\varphi \in \Cc (G)$. Here, $\equiv$ refers to equality in the $\bap^2 (G)$.
\end{coro}
\begin{proof} A direct computation gives
$$\chi \ast \varphi = \widehat{\varphi}(\chi) \chi$$
 for all $\chi \in\widehat{G}$.  Also,  the proof of the preceding theorem gives that $N_\mu$ with $N_\mu (\varphi) = \mu\ast \varphi$ is an $\mathcal{A}$-representation with coefficients $A_\chi = a_{\chi}^\cA (\mu)$.
Therefore,  Theorem~\ref{thm:a-representation-besicov} applied to this $\mathcal{A}$-representation gives
$$\mu \ast \varphi \equiv  \sum_{\chi \in\widehat{G}} a_{\chi}^\cA (\mu)  \chi \ast \varphi$$
for all $\varphi \in\Cc (G)$.
\end{proof}

As it is instructive, we discuss here  an example to see the
difference between mean almost periodic measures and Besicovitch
almost periodic measures.

\begin{example}
Consider the function $f$ and the van
Hove sequence $\mathcal{A}$ from Example~\ref{rem:map vs bap}. Then, $f \in
C_u (\RR) \cap \mean(\RR)$. Let $\mu =f \lm$ (with the Lebesgue
measure $\lm$). Everything can be computed explicitly:
\[
\gamma = \frac{1}{2}\,\lm \qquad \mbox{ and } \qquad \widehat{\gamma} = \frac{1}{2} \delta_{\underline{1}} \,,
\]
where we write $\underline{1}$ for the character which maps
everything to $1$. This character could also be denoted as $0$ if
we identify $\widehat{\RR}$ with $\RR$.

Now, let us consider $\mathcal{H}_\mu$:  Clearly, the
$\mathcal{H}_\mu$ norm is just the Besicovitch $2$-norm, which is
always true in the mean almost periodic case. Moreover, we note
that
\[
\mu \ast \varphi = f \ast \varphi = \left(\int_{\RR} \varphi(x)\, \dd x\right) \cdot
f \in \mathcal{H}_\mu
\]
for all $\varphi \in C_c (\RR)$. Here the first equality holds in
the sense of honest functions, and the last equality holds in
$BL_{\cA}^1 (G)$, i.e. after factoring out functions which vanish in the
Besicovitch seminorm. Therefore, $\mathcal{H}_\mu$ is one dimensional with
\[
\mathcal{H}_\mu = \{c f : c\in \CC\} \,.
\]
In particular,  $f$ is an eigenfunction to the eigenvalue $1$. This
can also directly be seen as $\| \tau_t f - f\|_{\be,2,\cA}=0$. Now, the character $\underline{1} $  does
\textit{not} belong to $\mathcal{H}_\mu$. However, the mean
\[
M_{\mathcal{A}} (f \cdot \overline{\chi}) = \lim_{n\to\infty}
\frac{1}{|A_n|} \int_{A_n} f(s) \, \overline{\chi (s)}\, \dd s
\]
exists for all $\chi$ in the dual group of $\RR$, and it is zero for
$\chi \neq \underline{1}$. For $\chi = \underline{1}$, we find
$a_{\underline{1}} (f) = \frac{1}{2}$. So, in this example, all
Fourier coefficients exist but the characters are not elements of the Hilbert
space $\mathcal{H}_\mu$ and
\[
|a_{\underline{1}}|^2  =\frac{1}{4} \neq \frac{1}{2} =
\widehat{\gamma}(\{0\}) \,.       \tag*{$\Diamond$}
\]
\end{example}

%
%

\section{Weak model sets of maximal density}\label{weak ms}
In this section, we apply the preceding considerations to the study
of weak model sets of maximal density. This will allow us to recover
various recent results.  For more details on cut-and-project schemes (CPS)
as well as for the notation used below, we refer to Appendix~\ref{appendix:cp}.

\medskip

When a CPS $(G,H, \cL)$ and a compact set $W \subseteq H$ are
given, we say that $\oplam(W)$ is a \textit{weak model set of
maximal density} with respect to $\cA$, see Definition~\ref{wms def}, if
\[
\dens(\oplam(W)) = \dens(\cL)\, \theta_H(W) \,.
\]
We now show that each weak model set of maximal density leads to a Besicovitch almost periodic
Dirac comb. This explains the pure-point diffractivity of this class. In fact, below, we recover all known results for
this class of point sets.
We start with the following preliminary result.

\begin{lemma}\label{lem-appr}
Let $(G, H, \cL)$ be a CPS with compact window $W \subseteq H$, and let $\cA$ be a van Hove sequence in $G$. Assume that $h_n \in \Cc(H)$ satisfies $h_n \geq 1_W$ and
\[
\lim_{n\to\infty} \int_{H} h_n(t) \dd t = \theta_H(W)\,.
\]
Then, $\oplam(W)$ is a weak model set of maximal density with respect to $\cA$ if and only if
\[
\lim_{n\to\infty} \| \omega_{h_n}-\delta_{\oplam(W)} \|_{\be,\cA} =0 \,.
\]
\end{lemma}
\begin{proof}
Note first that, by Theorem~\ref{thm-dens}, we have
\begin{equation}\label{eq1z}
\lim_{m\to\infty} \frac{\omega_{h_n}(A_m)}{|A_m|} = \dens(\cL) \int_{H} h_n(t) \dd t
\end{equation}
for all $n$. Also, we have
\[
\omega_{h_n} \geq \delta_{\oplam(W)} \qquad \text{ for all } n \,.
\]
$\Longrightarrow$: Since $\oplam(W)$ is a model set of maximal density, we have
\[
\lim_{m\to\infty} \frac{\delta_{\oplam(W)}(A_m)}{|A_m|} = \dens(\cL)\theta_H(W) \,.
\]
Now, $\omega_{h_n} \geq \delta_{\oplam(W)}$ implies
\begin{align*}
  \lim_{m\to\infty}  \frac{1}{|A_m|} \left| \omega_{h_n} -\delta_{\oplam(W)}\right|(A_m)
  &=\lim_{m\to\infty}\left( \frac{\omega_{h_n}(A_m)}{|A_m|}  - \frac{\delta_{\oplam(W)}(A_m)}{|A_m|} \right)\\
  &= \dens(\cL) \left( \int_{H} h_n(t) \dd t  - \theta_H(W) \right) \,.
\end{align*}
Therefore, we obtain
\[
\| \omega_{h_n} - \delta_{\oplam(W)} \|_{\be,\cA} = \dens(\cL) \left( \int_{H} h_n(t) \dd t  - \theta_H(W) \right) \stackrel{\ n \to \infty\ }{\longrightarrow} 0 \,.
\]

\noindent $\Longleftarrow$: Again, using $\omega_{h_n} \geq \delta_{\oplam(W)}$ and the fact that the limit in \eqref{eq1z} exits, we get
\begin{align*}
  \| \omega_{h_n} - \delta_{\oplam(W)} \|_{\be,\cA}
  & =\limsup_{m\to\infty} \left(\frac{\omega_{h_n}(A_m)}{|A_m|}  - \frac{\delta_{\oplam(W)}(A_m)}{|A_m|}\right) \\
  &= \left(\lim_{m\to\infty} \frac{\omega_{h_n}(A_m)}{|A_m|}\right)  - \left(\liminf_{m\to\infty} \frac{\delta_{\oplam(W)}(A_m)}{|A_m|}\right) \\
  &= \dens(\cL) \int_{H} h_n(t) \dd t  - \liminf_{m\to\infty} \frac{\delta_{\oplam(W)}(A_m)}{|A_m|} \,.
\end{align*}
This gives
\[
\liminf_{m\to\infty} \frac{\delta_{\oplam(W)}(A_m)}{|A_m|} =\left( \dens(\cL) \int_{H} h_n(t) \dd t \right)- \| \omega_{h_n} - \delta_{\oplam(W)} \|_{\be,\cA} \,.
\]
Taking the limit as $n \to \infty$, we get
\[
\liminf_{m\to\infty} \frac{\delta_{\oplam(W)}(A_m)}{|A_m|} = \dens(\cL)\theta_H(W) \,.
\]
The compactness of $W$ and Proposition~\ref{prop-dens-bounds} imply
\[
\liminf_{m\to\infty} \frac{\delta_{\oplam(W)}(A_m)}{|A_m|} = \limsup_{m\to\infty} \frac{\delta_{\oplam(W)}(A_m)}{|A_m|} =\dens(\cL)\theta_H(W)\,.
\]
Therefore, $W$ is a weak model set of maximal density.
\end{proof}

\begin{prop}\label{wms}
Let $\vL$ be a weak model set of maximal density with respect to
$\cA$. Then, $\delta_{\vL} \in \Bap_{\cA}(G) \cap \cM^\infty(G)$.
\end{prop}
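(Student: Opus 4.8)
The plan is to prove the two assertions separately: translation boundedness is a direct consequence of compactness of the window, while for Besicovitch almost periodicity I would reduce to the known case of regular model sets by approximating $W$ from the outside by a regular window and controlling the difference through the maximal density hypothesis.

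\textbf{Translation boundedness.} Since $W$ is compact it is in particular relatively compact, and a standard fact about cut and project schemes (see Appendix~\ref{appendix:cp}) shows that $\oplam(W)$ is uniformly discrete: if $x\neq y$ lie in $\oplam(W)$, then $x-y\in\pi_G(\cL)$ and $(x-y)^{\star}\in W-W$, and the set of $\ell\in\cL$ with $\pi_G(\ell)$ in a fixed compact neighbourhood of $0$ and $\pi_H(\ell)\in W-W$ is the intersection of the discrete set $\cL$ with a relatively compact set, hence finite. Consequently $\Lambda=\oplam(W)$ is uniformly discrete, so $\delta_\Lambda\in\cM^\infty(G)$.

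\textbf{Outer regular approximation.} Fix $\eps>0$. Using outer regularity of the Haar measure $\theta_H$ together with a level-set construction (take a continuous $g\colon H\to[0,1]$ with $g\equiv 1$ on $W$ and relatively compact support, and consider $\{g\ge c\}$ for a suitable $c\in(0,1)$), I would produce a compact set $W'\supseteq W$ with $\theta_H(\partial W')=0$ and $\theta_H(W'\setminus W)<\eps$. Then $\Gamma:=\oplam(W')$ is a regular model set; by the standard theory of such sets (Appendix~\ref{appendix:cp}, and \cite{Martin2,bm}) its Dirac comb $\delta_\Gamma$ is strongly almost periodic, so $\delta_\Gamma*\varphi\in SAP(G)\subseteq\bap(G)$ for every $\varphi\in\Cc(G)$ by Remark~\ref{rem:sap in bap}, and the density of $\Gamma$ exists along $\cA$ with $\dens_{\cA}(\Gamma)=\dens(\cL)\,\theta_H(W')$. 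Because $W\subseteq W'$ we have $\Lambda\subseteq\Gamma$ and $\Gamma\setminus\Lambda=\oplam(W'\setminus W)$, which is again uniformly discrete.

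\textbf{Controlling the difference.} The maximal density hypothesis gives that $\dens_{\cA}(\Lambda)$ exists and equals $\dens(\cL)\,\theta_H(W)$; combining this with the previous step, the density of $S:=\Gamma\setminus\Lambda$ exists and
\[
\dens_{\cA}(S)=\dens(\cL)\bigl(\theta_H(W')-\theta_H(W)\bigr)=\dens(\cL)\,\theta_H(W'\setminus W)<\dens(\cL)\,\eps .
\]
For $\varphi\in\Cc(G)$, the pointwise bound $|\varphi*\delta_S|\le|\varphi|*\delta_S$ together with a van Hove estimate (the boundary contributions $\#(S\cap\partial^K A_n)/|A_n|$ vanish by uniform discreteness of $S$ and the van Hove property) yields
\[
\|\varphi*\delta_\Lambda-\varphi*\delta_\Gamma\|_{b,1,\cA}=\|\varphi*\delta_S\|_{b,1,\cA}\le\uM_{\cA}\bigl(|\varphi|*\delta_S\bigr)\le\|\varphi\|_1\,\dens_{\cA}(S)<\|\varphi\|_1\,\dens(\cL)\,\eps .
\]
Since $\varphi*\delta_\Gamma\in\bap(G)$ and $\eps>0$ is arbitrary, $\varphi*\delta_\Lambda$ lies in the $\|\cdot\|_{b,1,\cA}$-closure of $\bap(G)$, which equals $\bap(G)$ by the completeness result of Theorem~\ref{Bap is complete}. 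As $\varphi$ was arbitrary, $\delta_\Lambda\in\Bap_{\cA}(G)$.

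\textbf{Main obstacle.} The delicate point is the precise use of the maximal density hypothesis: what makes the argument work is that $\dens_{\cA}(\Lambda)$ exists as a genuine limit (not merely as an upper density), which is exactly what lets $\dens_{\cA}(\Gamma\setminus\Lambda)$ be read off as a difference and be small \emph{even when $W$ has empty interior} — the case where inner approximation by regular windows is hopeless and the crude bound $\overline{\dens}_{\cA}(\oplam(W'\setminus W))\le\dens(\cL)\theta_H(\overline{W'\setminus W})$ is useless because the closure can be large. The remaining ingredients (uniform discreteness of $\oplam(W)$, existence and value of the density of the regular model set $\Gamma$ along every van Hove sequence, and strong almost periodicity of $\delta_\Gamma$) are standard facts that I would cite from the cut and project appendix and the literature.
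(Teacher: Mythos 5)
Your overall strategy -- majorize $\delta_\Lambda$ from above, use the maximal density hypothesis to make the density of the excess small, and convert that into smallness in $\|\cdot\|_{b,1,\cA}$ via a Fubini/van Hove estimate -- is exactly the strategy of the paper's proof, and your translation-boundedness argument, your computation $\dens_{\cA}(S)=\dens_{\cA}(\Gamma)-\dens_{\cA}(\Lambda)$, and your final estimate $\|\varphi*\delta_S\|_{b,1,\cA}\le\|\varphi\|_1\,\dens_{\cA}(S)$ are all fine. The one genuine error is the assertion that the Dirac comb $\delta_\Gamma$ of the regular model set $\Gamma=\oplam(W')$ is \emph{strongly} almost periodic, so that $\delta_\Gamma*\varphi\in SAP(G)$. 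This is false for non-periodic regular model sets: since $1_{W'}$ is discontinuous and $L^\star$ is dense in $H$, translating by $t\in L$ with $t^\star\neq 0$ small makes a relatively dense set of points enter or leave the window, so $\|\delta_\Gamma*\varphi-\tau_t(\delta_\Gamma*\varphi)\|_\infty$ stays of order $1$ and the set of $\eps$-almost periods in the sup norm is not relatively dense. What is true, and what the paper uses, is that the \emph{weighted} comb $\omega_g$ for continuous compactly supported $g$ on $H$ is strongly almost periodic (\cite[Thm.~3.1]{LR}); the paper therefore skips your intermediate regular window entirely and majorizes $1_W$ directly by such a $g$ with $\int_H(g-1_W)$ small, which makes $\omega_g-\delta_\Lambda$ a positive measure of small density and finishes by the same van Hove estimate you use.

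Your argument is repairable in two ways: either replace the regular-window step by the continuous majorant $g\ge 1_W$ as above (which is the shorter route), or keep $\Gamma$ but replace ``strongly almost periodic'' by the correct statement that $\delta_\Gamma$ is Besicovitch (indeed Weyl) almost periodic -- which in this paper is obtained from the g-a-p property of regular model sets (Lemma~\ref{model is gwap} together with Proposition~\ref{g-a-p-is weyl}), itself proved by sandwiching $1_{W'}$ between continuous functions, a step that needs $W'^{\circ}\neq\emptyset$ and $|\partial W'|=0$ and is therefore available for your level-set window. Note, however, that the second repair still ends up invoking the continuous-function approximation, so the detour through $\Gamma$ buys nothing; it only inserts an extra layer where the incorrect claim crept in. Your closing remark about why maximal density is indispensable when $W^{\circ}=\emptyset$ is accurate and matches the role the hypothesis plays in the paper's proof.
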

\begin{proof}
Let $(G,H, \cL)$ be the
CPS, and let $W$ be the window which satisfies $\vL=\oplam(W)$ as a model set of maximal
density.
Select $h_n \in \Cc(H)$ such that $1_{W} \leq h_n $ and
\[
\int_H \big(h(t)-1_{W}(t)\big)\ \dd t < \frac{1}{n} \,.
\]
Such a sequence $(h_n)$ exists by the regularity of the Haar measure $\theta_H$ and the compactness of $W$.
Since $\oplam(W)$ is a weak model set of maximal density, we have
\[
\lim_{n\to\infty} \| \omega_{h_n}-\delta_{\oplam(W)} \|_{\be,\cA} =0 \,.
\]
Also, by Theorem~\ref{thm-dens}, we have $\omega_{h_n} \in \SAP(G)$.
Lemma~\ref{lem-bes-ine-measures} and Proposition~\ref{ap inclusions 1}(c) complete the proof.


\end{proof}

Now, we can give an alternative proof to the following result first
shown in \cite{BHS,KR}.

\begin{coro}\label{coro:weak-model-set}
Let $(G,H, \cL)$ be a CPS with compact window $W \subseteq H$ such that
$\vL=\oplam(W)$ is a weak model set of maximal density with
respect to $\cA$.
\begin{itemize}
\item [(a)] The autocorrelation $\gamma$ of $\vL$ exists with respect to $\cA$, and $\widehat{\gamma}$ is pure point.
\item [(b)] For each $\chi \in \widehat{G}$, the Fourier--Bohr coefficient exists with respect to $\cA$ and satisfies
\[
a_\chi^{\cA}(\delta_{\vL})=
\begin{cases}
\dens(\cL)\, \reallywidecheck{1_W}(\chi^\star), & \mbox { if } \chi \in \pi_{\widehat{G}}(\cL^0)\,, \\
  0, &\mbox{ otherwise }\,.
\end{cases}
\]
\item [(c)] For all $\chi \in \widehat{G}$, we have $\widehat{\gamma}(\{ \chi \})= \big|  a_\chi^{\cA}(\delta_{\vL}) \big|^2$.
\item [(d)] We have
\begin{align*}
\gamma \,&=\, \dens(\cL) \omega_{1_W*\widetilde{1_W}} \\
\widehat{\gamma} \,&=\, \big(\dens(\cL)\big)^2\, \omega^{}_{\big|\reallywidecheck{1_W}\big|^2} \,.
\end{align*}
\end{itemize}
\end{coro}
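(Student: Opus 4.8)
The plan is to read off the measure-theoretic statements (a), (b), (d) and the existence part of (c) directly from the solution of the phase problem, and then to make the formulas in (c) and (e) explicit by approximating $1_W$ from above by continuous functions and invoking the classical computations for weighted model combs.

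First I would apply Proposition~\ref{wms} to obtain $\delta_\Lambda \in \Bap_{\cA}(G) \cap \cM^\infty(G)$. Since $\delta_\Lambda$ is translation bounded, the remark on independence of $p$ following Definition~\ref{defi: map} upgrades this to $\delta_\Lambda \in \Bap^2_{\cA}(G)$. Applying Theorem~\ref{Bap and BT} to $\mu=\delta_\Lambda$ then yields at once assertion~(a), purity of $\widehat{\gamma}$ [part (b)], existence of every Fourier--Bohr coefficient [the existence half of (c)], and the consistent phase property [part (d)].

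Next I would compute the coefficients explicitly. Fixing $\chi \in \widehat{G}$ and choosing $\varphi \in \Cc(G)$ with $\widehat{\varphi}(\chi) \neq 0$, it suffices by Corollary~\ref{FB measure relations} to understand $a_{\chi}^{\cA}(\delta_\Lambda * \varphi)$. As in the proof of Proposition~\ref{wms}, for each $\eps>0$ I pick $g \in \Cc(H)$ with $1_W \leq g$ and $\int_H (g-1_W)\,\dd\theta_H$ small; then $\omega_g \in \SAP(G)$ and $\| (\omega_g-\delta_\Lambda)*\varphi \|_{b,1,\cA}$ is controlled by $\int_H (g-1_W)\,\dd\theta_H$. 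Since the Fourier--Bohr coefficient is $1$-Lipschitz for $\|\cdot\|_{b,1,\cA}$ (Corollary~\ref{FB for BAP fct}), this gives $|a_{\chi}^{\cA}(\delta_\Lambda * \varphi) - a_{\chi}^{\cA}(\omega_g * \varphi)| \to 0$ as $g \to 1_W$. For the weighted comb the coefficients are the classical ones, $a_{\chi}^{\cA}(\omega_g) = \dens(\cL)\,\reallywidecheck{g}(\chi^\star)$ for $\chi \in \pi_{\widehat{G}}(\cL^0)$ and $0$ otherwise (the standard model-set computation, see \cite{CRS,LR} and Appendix~\ref{appendix:cp}). As $g \to 1_W$ in $L^1(H)$ one has $\reallywidecheck{g}(\chi^\star) \to \reallywidecheck{1_W}(\chi^\star)$, which yields the formula in (c); in particular $a_{\chi}^{\cA}(\delta_\Lambda)=0$ off $\pi_{\widehat{G}}(\cL^0)$.

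Finally I would deduce (e). Purity of $\widehat{\gamma}$, the consistent phase property (d), and the formula from (c) give $\widehat{\gamma} = \sum_{\chi} |a_{\chi}^{\cA}(\delta_\Lambda)|^2\,\delta_\chi = (\dens(\cL))^2\,\omega^{}_{|\check{1_W}|^2}$, the weighted comb over the dual scheme $(\widehat{G},\widehat{H},\cL^0)$. For the autocorrelation, $1_W * \widetilde{1_W}$ is a compactly supported continuous positive definite function on $H$, so $\dens(\cL)\,\omega_{1_W * \widetilde{1_W}}$ is a translation bounded positive definite measure; its Fourier transform, computed by the weighted-comb transform formula together with the fact that the inverse transform of $1_W * \widetilde{1_W}$ equals $|\check{1_W}|^2$, equals precisely the $\widehat{\gamma}$ just found. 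Since a positive definite measure is determined by its Fourier transform, $\gamma = \dens(\cL)\,\omega_{1_W * \widetilde{1_W}}$, completing (e). I expect the main obstacle to be the explicit coefficient computation: one must pass to the limit $g \to 1_W$ simultaneously in the Besicovitch seminorm (to transfer from $\omega_g$ to $\delta_\Lambda$) and in $L^1(H)$ (to pass $\reallywidecheck{g} \to \reallywidecheck{1_W}$), which is exactly where the maximal-density hypothesis is used, as it forces the one-sided approximation from above to be asymptotically sharp.
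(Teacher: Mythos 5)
Your proposal is correct and follows essentially the same route as the paper: Proposition~\ref{wms} plus Theorem~\ref{Bap and BT} (equivalently Theorem~\ref{thm:a-representation-besicov}) give (a), (b), (d) and the existence half of (c); the explicit formula in (c) comes from approximating $1_W$ from above by $g\in\Cc(H)$, controlling $\uM_{\cA}((\omega_g-\delta_\Lambda)*\varphi)$ via maximal density, and passing to the limit in the classical coefficient formula for $\omega_g$; and (e) follows from (c), (d) and uniqueness of the Fourier transform of the positive definite measure $\dens(\cL)\,\omega_{1_W*\widetilde{1_W}}$. The only cosmetic difference is that the paper normalizes $\widehat{\varphi}(\chi)=1$ and works with a decreasing sequence $g_n$, but this changes nothing of substance.
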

\begin{proof}
(a) follows from $\delta_{\vL} \in \Bap_{\cA}(G)$.

\medskip

\noindent (b) The Fourier--Bohr coefficients exist by Besicovitch almost
periodicity. Fix some $\chi \in \widehat{G}$ and $\varphi \in \Cc(G)$ such
that $\widehat{\varphi}(\chi)=1$.
Assume that $h_n \in \Cc(H)$ satisfies $1_{W} \leq h_n$ and $\int_{H} h_n(t) \dd t \leq \theta_{H}(W) +\frac{1}{n}$ for each $n$. Then, Lemma~\ref{lem-appr} gives
\[
\lim_{n\to\infty} \|\omega_{h_n}-\delta_{\vL} \|_{\be,\cA} =0 \,,
\]
and Corollary~\ref{coro-FB-bound-bes-measure} implies
\[
a_\chi^\cA(\delta_{\vL})= \lim_{n\to\infty} a_\chi(\omega_{h_n}) \,.
\]
Finally, by \cite{NS11,TAO}, we have
\[
a_\chi(\omega_{h_n})=
\begin{cases}
\dens(\cL)\, \reallywidecheck{h_n}(\chi^\star), & \mbox { if } \chi \in \pi_{\widehat{G}}(\cL^0) \,, \\
  0, &\mbox{ otherwise }\,.
\end{cases}
\]
The claim follows.

\smallskip

\noindent (c) follows from Theorem~\ref{thm:a-representation-besicov}.

\medskip

\noindent (d) is now immediate. Indeed, by the above, we have
\[
\widehat{\gamma}=  \big(\dens(\cL)\big)^2 \sum_{ \chi \in
\pi_{\widehat{G}}(\cL^0)} \big| \reallywidecheck{1_W}(\chi^\star)
\big|^2\delta_{\chi}= \big(\dens(\cL)\big)^2\, \omega_{\big|\reallywidecheck{1_W}\big|^2}  \,.
\]
Moreover, $ \dens(\cL)\, \omega_{1_W*\widetilde{1_W}}$ is positive
definite, thus Fourier transformable \cite{CRS} with
\[
\reallywidehat{\dens(\cL)\, \omega_{1_W*\widetilde{1_W}}} = \widehat{\gamma} \,,
\]
which gives the claim.
\end{proof}

\begin{remark}
\phantom{XX}
\begin{itemize}
\item[(a)] The corollary contains the main results of \cite{BHS}. The only
result from \cite{BHS} which is missing above, namely that every weak model set of
maximal density is generic for an ergodic measure, follows from Theorem~\ref{bap gives ergodic}, which
we prove in Section \ref{sec:Dynamics}.
\item[(b)] Recall that given a CPS $(G,H, \cL)$, a compact set $W \subseteq
H$ and a tempered\footnote{Recall that $(A_n)$ is called tempered if there exists
a constant $C$ such that $|A_n-A_n| \leq C |A_n|$ for all $n$.} van Hove sequence
$\cA$, for almost all $(s,t)+\cL \in \TT=(G\times H)/\cL$, the set
$-s+\oplam(t+W)$ has maximal density with respect to $\cA$, see
\cite{Moody}. It is therefore Besicovitch almost periodic. This
explains the pure-point spectrum of the extended hull of weak model
sets \cite{KR}.
\item[(c)] For every weak model set of maximal density $\oplam(W)$, we have
\[
\cF_{\mathsf{d}}\big(\delta_{\oplam(W)}\big) = \dens(\cL)\, \omega_{\reallywidecheck{1_W}} \,,
\]
i.e. the discrete Fourier transform of the underlying Besicovitch
almost periodic measure corresponds to the inverse Fourier transform
of the window. Note that  this holds in particular  for regular
model sets.    \exend
\end{itemize}
\end{remark}

\begin{example}\label{ex-cp-a-rep-vs-n-rep}
Consider a cut-and-project scheme $(G,H,\cL)$ with second countable $G,H$. Let $W \subseteq H$ be a compact set, and let
\[
\gamma:= \sum_{(x,x^\star)\in \cL} 1_{W}*\widetilde{1_{W}}(x^\star)\, \delta_x
\]
be the weighted Dirac comb given by the covariogram $c(W)=1_{W}*\widetilde{1_{W}}$ of $W$. Then, $\gamma$ is Fourier transformable \cite{CRS} and
\[
\widehat{\gamma}=\sum_{(\chi, \chi^\star) \in \cL^0} \Big| \reallywidecheck{1_W}(\chi^\star) \Big|^2 \delta_{\chi} \,.
\]
It follows that $N: \Cc(G) \to {\mathcal{H}}:= L^2(\widehat{\gamma})$ defined by
\[
N(\varphi):= \reallywidecheck{\varphi}
\]
and $U_t f(\chi):=\chi(t) f(\chi)$ define an intertwining $\mathcal{N}$-representation with autocorrelation $\gamma$ and diffraction $\widehat{\gamma}$.

Next, if $\cA=(A_n)$ is a tempered van Hove sequence, the set
$\vL:=-s+\oplam(t+W)$ has maximal density with respect to $\cA$ for almost all $(s,t)+\cL \in \TT$. Choosing one such set, we can define an intertwining $\mathcal{\cA}$ representation $N': \Cc(G) \to \mathcal{H}' \subseteq \Bap^2_{\cA}(G)$ via
\[
N'(\varphi):= \varphi*\delta_{\vL}\, ,
\]
and $T'_t$ being the translation operator. Here, $\mathcal{H}'$ is the Hilbert subspace of $\Bap^2_{\cA}(G)$ such that $N'$ has dense range.
Corollary~\ref{coro:weak-model-set} implies that this representation has the autocorrelation $\gamma$ and the diffraction $\widehat{\gamma}$.
It is easy to see that the mapping
\[
L^2(\widehat{\gamma})\supseteq \{ \widehat{\varphi}: \varphi \in \Cc(G) \} \ni \widehat{\varphi} \to \varphi*\delta_{\vL}
\]
induces a Hilbert space isometric isomorphism
\[
S: L^2(\widehat{\gamma}) \to \mathcal{H}'
\]
such that $U \circ S =S \circ (T')$ and $S \circ N=N'$.    \exend
\end{example}

\chapter[Weyl almost periodicity]{Weyl almost periodicity and the uniform consistent phase property}\label{sec:Weyl}
In this chapter, we study Weyl almost periodicity. By its
definition, it is a very uniform form of almost periodicity, and we
will show that, under suitable boundedness assumptions, Weyl almost
periodicity is the same as Besicovitch almost periodicity with
respect to any van Hove sequence. This will allow us to solve Question 3.

\section{Weyl almost periodic functions and measures}
Let us discuss Weyl almost periodic functions and
measures.

\begin{definition}[Weyl almost periodic functions and measures]
Let $\cA =(A_n)$ be a F\o lner sequence on $G$, and let $1 \leq p<\infty$ be given.
A function $f \in \mathcal{L}^p_{loc}(G)$ is called
\textit{Weyl $p$-almost periodic}\index{almost~periodic!Weyl~$p$-almost~periodic~function} with respect to $\cA$ if, for each $\eps >0$, there exists a trigonometric polynomial $P= \sum_{k=1}^n
c_k \chi_k$ with $c_k \in \CC$ and $\chi_k \in \widehat{G}$ such that  \nomenclature{$\wap^p(G)$}{set of Weyl $p$-almost periodic functions}
\[
\| f -P \|_{\we,p,\cA} < \eps \,.
\]
We denote the space of Weyl $p$-almost periodic functions by
$\wap^p_{\cA}(G)$\label{wap}. When $p=1$, we will simply set
\[
\wap_\cA(G) \, := \, \wap_\cA^1(G) \,.    
\]
A measure $\mu$ on $G$ is called \textit{Weyl
$p$-almost periodic}\index{almost~periodic!Weyl~$p$-almost~periodic~measure} if the function $\varphi*\mu$ is Weyl
$p$-almost periodic for all $\varphi \in \Cc(G)$. The  space of
Weyl $p$-almost periodic measures is denoted by $\Wap_{\cA}^p(G)$. \nomenclature{$\Wap^p(G)$}{set of Weyl $p$-almost periodic measures}
When $p=1$, we will simply set
\[
\Wap_\cA(G) \, := \, \Wap_\cA^1(G) \,.       \tag*{$\Diamond$}
\]
\end{definition}

\begin{remark}\label{rem:Weyl-ap}\phantom{X}
\begin{itemize}
\item[(a)] A function $f$ is Weyl $p$-almost periodic if and only if, for each $\eps >0$, there exists a Bohr almost periodic function $g$ with $\| f -g \|_{\we,p,\cA} <\eps$.
\item[(b)] If $f \in \mathcal{L}^p_{loc}(G)$ is so that 
\[
\|f \|_{\we,p,\cA}=0 
\]
then $f \in \wap^p_{\cA}(G)$. 

In particular, When $h \in \wap^p_{\cA}(G)$ and $f :G\longrightarrow \CC$ is measurable with $\|f \|_{\we,p,\cA} = 0$,
 then $f + h \in \wap^p_{\cA}(G)$, and
 \[
 \| f+h \|_{\we,p,\cA}= \| h \|_{\we,p,\cA} \,.
 \]
\item[(c)] As is clear from (a), all Bohr almost periodic functions are Weyl
almost periodic.

In fact, it is not hard to see that every weakly
almost periodic function is Weyl almost periodic. Indeed, any such
$f$ can be decomposed as 
\[
f = g + h
\]
with $g$ Bohr almost periodic
and $\| h\|_{\we,p,\cA}=0$, see \cite[Prop.~4.5.9]{MoSt},  and the statement follows from (b).
\item[(d)] In general, the space $(\wap_{\cA}^p(G), \| \cdot \|_{\we,p,\cA})$ is not complete \cite{BoF}.
\item[(e)]  In general, one has $ \wap^p_{\cA}(G) \subsetneq \bap^p(G)$, see Example~\ref{remark Bap but not Wap}.
\item[(f)] If $G$ is compact, then for all van Hove sequences $
\cA$  we have 
\begin{align*}
\bap^p(G) \, &= \, \wap^p_{\cA}(G)=\mathcal{L}^p(G)\\
\| \cdot \|_{\be,p,\cA}\, &= \, \| \cdot \|_{\we,p,\cA}=\| \cdot \|_p \,.
\end{align*}
\item[(g)] We observe $\Wap^p_{\cA}(G) =  \Bap^p_{\cA}(G)=\cM(G)$ if $G$ is compact. 
\item[(h)]  The function $f$ from Example \ref{rem:map vs bap}  can easily be seen  to satisfy 
\[
\| f - f_t\|_{W,q,\cA} = 0 \qquad \text{ for all } t\in \RR\,.
\]
However, as it is not Besicovitch almost periodic it can not be Weyl almost periodic. This shows that we can not define Weyl almost periodicity based on denseness of $\varepsilon$-periods.   \exend
\end{itemize}
\end{remark}

\begin{prop}[Inclusions of spaces] \label{ap inclusions} \phantom{X}
\begin{itemize}
\item [(a)] For each $1 \leq p <\infty$, we have
\[
\wap_{\cA}^p(G) \subseteq \bap^p(G)
\]
with continuous inclusion map. Moreover, the inclusion can be strict.
\item [(b)] For each $1 \leq p \leq q < \infty$, we have
\[
\wap_{\cA}^q(G) \subseteq   \wap_{\cA}^p(G) \subseteq
\wap_{\cA}(G)
\]  with continuous inclusion map.
\item [(c)] For each $1 \leq p <\infty$, we have 
\[
\Wap^p_{\cA}(G) \subseteq \Bap^p_{\cA}(G) \,.
\]
Moreover, the inclusion can be strict.
\item [(d)] For each $1 \leq p \leq q < \infty$, we have
\[
\Wap^q_{\cA}(G) \subseteq   \Wap^p_{\cA}(G) \subseteq
\Wap_{\cA}(G)\,.
\]
\end{itemize}
\end{prop}
\begin{proof} (a) and (b) follow from Lemma~\ref{L1}, Lemma~\ref{lemma C-S} and Lemma~\ref{lemma norm inequality}. (c) and (d) follow from (a) and (b).
\end{proof}

Next, we will recollect a few results for Weyl almost periodic functions
that follow easily, when one replaces $\| \cdot \|_{\be,p,\cA}$ by $\|
\cdot \|_{\we,p,\cA}$ in the corresponding statements and proofs for
Besicovitch almost periodic functions. We begin with an analogue of
Proposition~\ref{prop:bap for functions vs measures}.

\begin{prop}
Let $\cA$ be a van Hove sequence on $G$, let $1\leq  p < \infty$, and let $f \in \Cu(G)$ be arbitrary. Then, $f \in
\wap_{\cA}^p(G)$ if and only if $f \theta_G \in \Wap_{\cA}^p(G)$. \qed
\end{prop}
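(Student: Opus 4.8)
The plan is to reduce the statement, exactly as in the proof of Proposition~\ref{prop:bap for functions vs measures}, to the assertion that for $f\in\Cu(G)$ one has $f\in\wap^p_{\cA}(G)$ if and only if $f*\varphi\in\wap^p_{\cA}(G)$ for every $\varphi\in\Cc(G)$. This is precisely the content of the proposition, since by definition $f\theta_G\in\Wap^p_{\cA}(G)$ means $\varphi*(f\theta_G)=\varphi*f\in\wap^p_{\cA}(G)$ for all $\varphi\in\Cc(G)$. Throughout I would use two elementary facts: $\wap^p_{\cA}(G)$ is closed under $\|\cdot\|_{w,p,\cA}$-limits (immediate from the triangle inequality for trigonometric approximants), and $\|g\|_{w,p,\cA}\le\|g\|_\infty$ for bounded $g$ (immediate from the definition of $\uuM_{\cA}$).

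For the ``if'' direction I would argue directly, exploiting $f\in\Cu(G)$. Given $\eps>0$, the approximate unit property recorded in Section~\ref{sec-key-player} furnishes $\varphi\in\Cc(G)$ with $\int_G\varphi\,\dd t=1$ and small support such that $\|f-f*\varphi\|_\infty<\eps/2$, whence $\|f-f*\varphi\|_{w,p,\cA}<\eps/2$. Since $f*\varphi\in\wap^p_{\cA}(G)$ by hypothesis, I can choose a trigonometric polynomial $P$ with $\|f*\varphi-P\|_{w,p,\cA}<\eps/2$, and the triangle inequality yields $\|f-P\|_{w,p,\cA}<\eps$. As $\eps>0$ was arbitrary, $f\in\wap^p_{\cA}(G)$.

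For the ``only if'' direction the essential ingredient is the Weyl analogue of the convolution estimate of Lemma~\ref{lemm:conv ineq}, namely $\|h*\varphi\|_{w,p,\cA}\le 2\,\|h\|_{w,p,\cA}\,\|\varphi\|_1$ whenever $h\in WL^p_{\cA}(G)$ with $h\theta_G\in\cM^\infty(G)$ and $\varphi\in\Cc(G)$. Its proof copies that of Lemma~\ref{lemm:conv ineq} with $A_n$ replaced by $x+A_n$ and a supremum over $x\in G$ inserted: Young's inequality bounds the ``interior'' part by $\|\varphi\|_1^p\int_{x+A_n}|h|^p\,\dd t$, while the error, concentrated on $\partial^K(x+A_n)$ with $K=\supp(\varphi)$, is controlled by $\|h*\varphi\|_\infty^p\,|\partial^K(x+A_n)|$. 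Granting this, I fix $\varphi\in\Cc(G)$ and $\eps>0$, pick a trigonometric polynomial $P$ with $\|f-P\|_{w,p,\cA}<\eps$, and observe that $P*\varphi=\sum_k c_k\,\widehat{\varphi}(\chi_k)\,\chi_k$ is again a trigonometric polynomial. Since $f-P\in\Cu(G)$ is bounded, $(f-P)\theta_G\in\cM^\infty(G)$, and the estimate gives $\|f*\varphi-P*\varphi\|_{w,p,\cA}\le 2\eps\|\varphi\|_1$, proving $f*\varphi\in\wap^p_{\cA}(G)$.

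The only genuinely technical point, and hence the main obstacle, is verifying this Weyl convolution estimate with the required uniformity in the translate $x$. This uniformity is exactly what makes the Weyl case run as smoothly as the Besicovitch one: because Haar measure is translation invariant one has $|\partial^K(x+A_n)|=|\partial^K(A_n)|$, so the boundary contribution $|\partial^K(x+A_n)|/|A_n|$ tends to $0$ uniformly in $x$ by the van Hove property, and passing to $\sup_x$ and then $\limsup_n$ produces the claimed bound with no loss.
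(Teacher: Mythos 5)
Your proof is correct and follows essentially the route the paper intends: the paper states this proposition without proof as the Weyl analogue of Proposition~\ref{prop:bap for functions vs measures}, whose argument (via Proposition~\ref{prop:measure-vs-function}(c) with $\mathcal{S}'=SAP(G)$) rests on exactly the two ingredients you use, namely the approximate-unit estimate $\|f-f*\varphi\|_\infty<\eps$ for $f\in\Cu(G)$ and the convolution bound of Lemma~\ref{lemm:conv ineq}. Your observation that translation invariance of Haar measure makes the boundary term in the Weyl version of that lemma uniform in the translate $x$ is precisely the point that lets the Besicovitch proof carry over verbatim.
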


The next result is the
analogue of Proposition~\ref{Bap props}. (a) and (e) follow trivially from Proposition~\ref{prop-aprox-FB}.
Again, since the proofs of (b), (c), (d) and (f) are almost identical to Proposition~\ref{Bap props}, we skip it.

\begin{prop}[Basic properties Weyl almost periodic functions]\label{spi lemma}
Let $\cA$ be a F\o lner sequence on $G$.
\begin{itemize}
\item [(a)] Any $f\in  \wap_{\cA}(G)$ is amenable, i.e.
\[
\lim_{n\to\infty} \frac{1}{|A_n|} \int_{s+A_n} f(t)\, \dd t
\]
exists uniformly in $s\in G$.
\item[(b)] For all $f \in \wap^p_{\cA}(G)$ the limit
\[
\|f \|_{\we,p,\cA}= \lim_{n\to\infty} \left(\frac{1}{|A_n|} \int_{t+A_n} |f(t)|^p \dd t \right)^{\frac{1}{p}}
\]
exists uniformly in $t$.
\item[(c)] When $f,g$ belong to $\wap_{\cA}^p(G)$ for some $p\geq 1$, so do $f\pm g$, $cf$, $\chi f$ and $|f|$
for all $c \in \CC$ and $\chi \in \widehat{G}$.
\item[(d)] If $f,g\in \wap^p_{\cA} (G)$ and $f$ is bounded, then $f g \in \wap^p_{\cA} (G)$.
\item[(e)] For all $ 1 \leq p <\infty$, all $f\in \wap^p_{\cA}  (G)\subseteq \wap^1_{\cA} (G)$ and all $\chi \in\widehat{G}$, the Fourier--Bohr coefficient
\[
a_\chi(f)=\lim_{n\to\infty} \frac{1}{|A_n|} \int_{s+A_n}
\overline{\chi(t)}\, f(t)\, \dd t
\]
exists uniformly in  $s\in G$.
\item[(f)] If $1< p,q <\infty$ are conjugate, and $f \in \wap^p(G), g \in \wap^q(G)$, then $fg \in \wap(G)$.\qed
\end{itemize}
\end{prop}

\smallskip


As an immediate consequence, we get the following result.

\begin{coro}\label{coro-wap-compl} Let $1 \leq p < \infty$, and let $\cA$ be a F\o lner sequence.
Then, for all $f \in \wap^{p}(G)$, we have $f \in \bap^p(G)$ and
\[
\| f \|_{\we,p,\cA}= \| f \|_{\be,p,\cA} \,.
\]
In particular, the mapping
\[
\wap_{\cA}^p(G) \ni f \to [f] \in (\bappe, \| \, \|_{\be,p,\cA})
\]
is a completion map. \qed
\end{coro}


\smallskip

Analogously to Proposition~\ref{BAP equality}, one obtains the following identity.

\begin{prop}\label{WAP equality}
For each $1 \leq p < \infty$, we have
\[
\wap^p_{\cA}(G) \cap L^\infty(G)= \wap_{\cA}(G) \cap L^\infty (G) \,.     \tag*{$\qed$}
\]
\end{prop}

\medskip

\begin{coro}
For each $1 \leq p < \infty$, we have
\[
\cM^\infty(G) \cap \Wap^p_{\cA}(G) = \cM^\infty(G) \cap \Wap_{\cA}(G) \,.     \tag*{$\qed$}
\]
\end{coro}

The next statement is immediate from the definition of the Weyl
seminorm. It does not have an analogue for Besicovitch almost
periodic functions.

\begin{prop}
Let $1 \leq p < \infty$, and let $ f \in \wap^p_{\cA} (G)$. Then, for all $t \in G$ we have
$\tau_t  f  \in \wap^p_{\cA} (G)$, and
\[
\|f\|_{\we,p,\cA} =
\|\tau_t f\|_{\we,p,\cA} \,.        \tag*{$\qed$}
\]
\end{prop}

For bounded functions, Weyl almost periodicity does not depend of
the choice of the van Hove sequence. For this reason, for the rest of the chapter, we will restrict to van Hove sequences $\cA$.

\begin{prop}\label{prop:wap indep}
Let $f : G\longrightarrow \CC$ be a bounded and measurable function.
Let  $\cA$ and $\cB$ be van Hove sequences. Then, $f$ belongs to
$\wap^p_{\cA} (G)$ if and only if it belongs to $\wap^p_{\cB} (G)$. If
$f$ belongs to $\wap^p_{\cA} (G)$ and $\wap^p_{\cB} (G)$, we have
\[
\| f
\|_{\we,p,\cA} = \| f \|_{\we,p,\cB} \,.
\]
\end{prop}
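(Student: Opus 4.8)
The plan is to reduce the entire statement to a single claim about the uniform upper mean: for every bounded measurable $g\colon G\to[0,\infty)$ and any two van Hove sequences $\cA$ and $\cB$ one has $\uuM_{\cA}(g)=\uuM_{\cB}(g)$. Granting this, the proposition follows at once. Applying it to $g=|f|^p$, which is bounded since $f$ is, gives $\|f\|_{w,p,\cA}=\|f\|_{w,p,\cB}$, the last assertion. For the equivalence, recall that $f\in\wap^p_{\cA}(G)$ means that for each $\eps>0$ there is a trigonometric polynomial $P$ with $\|f-P\|_{w,p,\cA}<\eps$; since $f$ and $P$ are both bounded, $g=|f-P|^p$ is bounded, so $\|f-P\|_{w,p,\cA}=\|f-P\|_{w,p,\cB}$ and the very same $P$ witnesses $f\in\wap^p_{\cB}(G)$. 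Interchanging the roles of $\cA$ and $\cB$ finishes the equivalence.

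To prove the claim it suffices, by symmetry, to show $\uuM_{\cA}(g)\le\uuM_{\cB}(g)=:W$. Fix $\delta>0$. By the definition of the limit superior there is a single index $m$ with
\[
\sup_{x\in G}\frac{1}{|B_m|}\int_{x+B_m}g(t)\,\dd t\le W+\delta .
\]
Put $B:=B_m$, a relatively compact open set with $|B|>0$. The key tool is a Fubini averaging identity: for every $n$ and every $x\in G$,
\[
\int_{x+A_n}g(t)\,\dd t=\frac{1}{|B|}\int_{G}\Big(\int_{(s+B)\cap(x+A_n)}g(t)\,\dd t\Big)\dd s ,
\]
which follows from Tonelli together with the inversion invariance of Haar measure, giving $\int_G 1_{s+B}(t)\,\dd s=|B|$ for every $t$.

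Now each inner integral is at most $\int_{s+B}g\le (W+\delta)\,|B|$ by the choice of $m$, and the integrand vanishes unless $(s+B)\cap(x+A_n)\ne\emptyset$, that is, unless $s\in x+A_n-B$. Hence, setting $K:=\overline{-B}$ (compact) and using the covering bound $A_n-B\subseteq A_n+K\subseteq A_n\cup\partial^{K}A_n$,
\[
\frac{1}{|A_n|}\int_{x+A_n}g(t)\,\dd t\le\frac{|A_n-B|}{|A_n|}\,(W+\delta)\le\Big(1+\frac{|\partial^{K}A_n|}{|A_n|}\Big)(W+\delta).
\]
This bound is uniform in $x$, so taking $\sup_{x\in G}$ and then $\limsup_{n\to\infty}$, and invoking the van Hove property $|\partial^{K}A_n|/|A_n|\to0$, yields $\uuM_{\cA}(g)\le W+\delta$; letting $\delta\to0$ gives $\uuM_{\cA}(g)\le\uuM_{\cB}(g)$, and symmetry gives equality.

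The main obstacle is precisely the uniformity in $x$: the estimate must hold with a single constant for all translates $x+A_n$ simultaneously, and this is what forces the $x$-free covering inclusion $A_n-B\subseteq A_n\cup\partial^{K}A_n$ and the van Hove control of the boundary term. The only other point requiring care is the reduction itself, which hinges on the trigonometric polynomials appearing in the definition of Weyl almost periodicity being bounded, so that the clean bounded-function claim applies verbatim to $|f-P|^p$.
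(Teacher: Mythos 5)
Your proof is correct and follows essentially the same route as the paper: the claim you isolate (that $\uuM_{\cA}(g)=\uuM_{\cB}(g)$ for bounded nonnegative $g$) is precisely the content of the paper's Proposition~\ref{prop:mother-of-uniform-van-Hove-results}, which the paper proves by the same Fubini averaging over translates of a fixed window combined with the van Hove control of $|\partial^{K}A_n|/|A_n|$, and then applies to $|f-P|^p$ and $|f|^p$ exactly as you do.
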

\begin{proof}
This follows from Proposition~\ref{prop:mother-of-uniform-van-Hove-results}:  Assume $f\in
\wap^p_{\cA} (G)$. Let $\varepsilon >0$ be given. Then, there exist
an $N\in\NN$ and a trigonometric polynomial $P$ with
\[
\frac{1}{|A_N|} \int_{A_N+ s} |f(t) - P(t)|^p\, \dd t <\varepsilon
\]
for all $s\in G$.  With $h = |f - P|$, $A= A_N$ and $r = \varepsilon$, Proposition~\ref{prop:mother-of-uniform-van-Hove-results} implies that
\[
\frac{1}{|B_n|} \int_{B_n +u} |f(t) - P(t)|^p\, \dd t <2 \varepsilon
\]
holds for all $u\in G$ and $n$ sufficiently large. As $\varepsilon>0$ was
arbitrary, we deduce that $f\in \wap^p_{\cB} (G)$.

Similarly, there exist an $N'\in\NN$ with
\[
\frac{1}{|A_{N'}|} \int_{A_{N'}+ s} |f(t)|^p\, \dd t < \| f \|_{\we,p,\cA}^p  +  \varepsilon
\]
for all $s\in G$. With $h = |f|^p$, $A = A_{N'}$ and $r = \| f
\|_{\we,p,\cA}^p  +  \varepsilon$ we infer, again from
Proposition \ref{prop:mother-of-uniform-van-Hove-results}, that
\[
\frac{1}{|B_n|} \int_{B_n+ s} |f(t)|\, \dd t < \| f \|_{\we,p, \cA}  + 2\,\varepsilon
\]
for all $s\in G$ and $n$ sufficiently large. As $\varepsilon>0$ was arbitrary this gives
\[
\|f\|_{\we,p,\cB} \leq \|f\|_{\we,p,\cA} \,.
\]
Reversing the roles of $\cA$ and $\cB$, we obtain the remaining statement.
\end{proof}

In fact, it is even  possible to think about bounded Weyl almost
periodic functions as functions which are Besicovitch almost
periodic for every van Hove sequence.

\begin{prop}[Characterization of bounded elements of $\wap_{\cA}(G)$]
\label{prop-char-wap} Let $f : G\longrightarrow \CC$ be a bounded
measurable function, and let $\cA$ a van Hove sequence on $G$. Then, the
following assertions are equivalent:
\begin{itemize}
\item[(i)] The function $f$ belongs to $\wap_{\cA}(G)$.
\item[(ii)] The Fourier--Bohr coefficient
\[
a^{\cA}_{\chi}(f) = \lim_{n\to\infty} \frac{1}{|A_n|} \int_{A_n + s} f(t)\, \chi (t)\, \dd t
\]
and the limit
\[
\lim_{n\to\infty} \frac{1}{|A_n|}\int_{A_n+s} |f(t)|^2\, \dd t=M\big(|f|^2\big)
\]
exist uniformly in $s\in G$. Moreover, they satisfy the Parseval identity
\[
\sum_{\chi \in\widehat{G}} |a^{\cA}_\chi|^2 = M\big(|f|^2\big) \,.
\]
\item[(iii)] The Fourier--Bohr coefficient
\[
a^{\cB}_{\chi}(f) = \lim_{n\to\infty} \frac{1}{|B_n|} \int_{B_n} f(t)\, \overline{ \chi(t)}\, \dd t
\]
and the limit
\[
\lim_{n\to\infty} \frac{1}{|B_n|}\int_{B_n}
|f(t)|^2\, \dd t=M_{\cB}\big(|f|^2\big)
\]
exist for each van Hove sequence $\cB$. Moreover, they satisfy the Parseval identity
\[
\sum_{\chi \in\widehat{G}} |a^{\cB}_\chi|^2 = M_{\cB}\big(|f|^2\big) \,.
\]
\item[(iv)]
The function $f$ belongs to $B\hspace*{-1pt}es^2_{\cB} (G)$ for every van Hove
sequence $\cB$.
\end{itemize}
In particular, any bounded function in $\wap_{\cA}(G)$ is amenable.
\end{prop}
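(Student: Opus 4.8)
The plan is to establish the four conditions equivalent through the cycle $(i)\Rightarrow(iv)\Leftrightarrow(iii)\Leftrightarrow(ii)\Rightarrow(i)$, using boundedness of $f$ throughout, and then to read off the amenability assertion as an immediate corollary of $(i)\Rightarrow(ii)$. The conceptual point is that the genuine ``uniformity'' content has already been packaged into earlier results --- Proposition~\ref{prop: amenable} (mixing of van Hove sequences) and the immediately preceding proposition on van~Hove independence of bounded Weyl almost periodic functions --- so within this proof two of the three implications are essentially citations, and the only real work lies in $(i)\Rightarrow(iv)$ and in the Bessel/Parseval computation for $(ii)\Rightarrow(i)$.

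First I would dispatch the equivalences among $(ii)$, $(iii)$, $(iv)$. For a fixed van Hove sequence $\cB$, Corollary~\ref{bap2 char} states that $f\in Bap^2_{\cB}(G)$ holds if and only if each $a_\chi^{\cB}(f)$ exists, $M_{\cB}(|f|^2)$ exists, and the Parseval identity $\sum_{\chi}|a_\chi^{\cB}(f)|^2=M_{\cB}(|f|^2)$ holds; applied to every $\cB$ this is exactly $(iii)\Leftrightarrow(iv)$. For $(ii)\Leftrightarrow(iii)$ I would apply Proposition~\ref{prop: amenable} to the bounded measurable functions $\overline{\chi}\,f$ (one for each $\chi\in\widehat{G}$) and to $|f|^2$: that proposition gives that uniform existence in $s$ along $\cA$ is equivalent to existence along every van Hove sequence $\cB$, and that the resulting limit is then independent of the sequence. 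Hence uniform existence of the $a_\chi^{\cA}(f)$ and of $M(|f|^2)$ is equivalent to their existence along every $\cB$, and since the values coincide across all sequences the two Parseval identities transfer to one another.

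For $(i)\Rightarrow(iv)$ I would use boundedness decisively. By Proposition~\ref{WAP equality}, membership in $Wap_{\cA}(G)$ upgrades to $Wap^2_{\cA}(G)$; the preceding van~Hove-independence proposition then yields $f\in Wap^2_{\cB}(G)$ for every van Hove sequence $\cB$. Because $\|\cdot\|_{b,2,\cB}\le\|\cdot\|_{w,2,\cB}$ by Lemma~\ref{L1}(b), approximation by trigonometric polynomials in the Weyl $2$-seminorm forces approximation in the Besicovitch $2$-seminorm, i.e. $Wap^2_{\cB}(G)\subset Bap^2_{\cB}(G)$ (Proposition~\ref{ap inclusions}(a) with $\cB$ in the role of $\cA$). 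Thus $f\in Bap^2_{\cB}(G)$ for every $\cB$, which is $(iv)$.

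The computational heart is $(ii)\Rightarrow(i)$. Given the uniform existence of the $a_\chi^{\cA}(f)$ and of $M(|f|^2)$ together with Parseval, I would fix $\eps>0$, choose a finite $F\subset\widehat{G}$ with $\sum_{\chi\in F}|a_\chi^{\cA}(f)|^2\ge M(|f|^2)-\eps^2$, and set $P:=\sum_{\chi\in F}a_\chi^{\cA}(f)\,\chi$. Expanding $\tfrac{1}{|A_n|}\int_{s+A_n}|f-P|^2\,\dd t$ into its three terms, the $|f|^2$-term tends to $M(|f|^2)$ uniformly in $s$ and the cross term $\tfrac{1}{|A_n|}\int_{s+A_n}f\,\overline{P}\,\dd t$ tends to $\sum_{\chi\in F}|a_\chi^{\cA}(f)|^2$ uniformly in $s$, both by hypothesis, while $\tfrac{1}{|A_n|}\int_{s+A_n}|P|^2\,\dd t\to\sum_{\chi\in F}|a_\chi^{\cA}(f)|^2$ uniformly because the characters are orthonormal in mean uniformly over translates. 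Taking $\limsup_n\sup_s$ then gives $\uuM_{\cA}(|f-P|^2)=M(|f|^2)-\sum_{\chi\in F}|a_\chi^{\cA}(f)|^2\le\eps^2$, i.e. $\|f-P\|_{w,2,\cA}\le\eps$, and since $f$ is bounded Proposition~\ref{WAP equality} yields $f\in Wap_{\cA}(G)$. The only delicate feature is that \emph{every} one of these limits must be controlled uniformly in the translation parameter $s$ --- this is precisely where the uniform hypotheses of $(ii)$ are indispensable --- but no individual estimate is hard, the computation being the uniform-in-$s$ analogue of the Bessel argument in Corollary~\ref{bap2 char}. Finally, the ``in particular'' clause falls out at once: applying $(i)\Rightarrow(ii)$ and taking $\chi$ to be the trivial character (mapping all of $G$ to $1$), the coefficient $a_\chi^{\cA}(f)=\lim_n\tfrac{1}{|A_n|}\int_{s+A_n}f(t)\,\dd t$ exists uniformly in $s$, which is exactly condition (i) of Proposition~\ref{prop: amenable}; hence every bounded $f\in Wap_{\cA}(G)$ is amenable.
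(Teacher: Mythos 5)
Your proof is correct and follows essentially the same route as the paper: the equivalences (ii)$\Leftrightarrow$(iii)$\Leftrightarrow$(iv) come from Proposition~\ref{prop: amenable} and Corollary~\ref{bap2 char} exactly as in the paper, and your uniform-in-$s$ Bessel computation for (ii)$\Rightarrow$(i) is precisely the ``uniform version of Corollary~\ref{bap2 char}'' that the paper invokes. The only (harmless) deviation is that you close the cycle via (i)$\Rightarrow$(iv), using Proposition~\ref{WAP equality}, the van Hove independence of bounded Weyl almost periodic functions, and the inclusion $Wap^2_{\cB}(G)\subset Bap^2_{\cB}(G)$, instead of proving (i)$\Rightarrow$(ii) directly; all three ingredients are established earlier and independently, so there is no circularity.
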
 \index{Fourier--Bohr~coefficient!Fourier--Bohr~coefficient~of~function} \index{almost~periodic!Weyl~$p$-almost~periodic~function}
\begin{proof}
By Proposition~\ref{BAP equality}, a bounded measurable function $f$ belongs
to $B\hspace*{-1pt}es_{\cB}(G)$ if and only if it belongs to $B\hspace*{-1pt}es_{\cB}^2(G)$.
This will be used throughout the proof.

\medskip

\noindent The equivalence between (iii) and (iv) follows from Corollary
\ref{bap2 char}.

\medskip

\noindent The equivalence between (iii) and (ii) follows easily from
Proposition \ref{prop: amenable}.

\medskip

\noindent The equivalence between (i) and (ii) is just a uniform (in $s\in G$)
version of the characterization of $\bap^2 (G)$ in Corollary~\ref{bap2 char}. It can be shown by mimicking the proof of that
corollary.

\medskip

\noindent The last claim follows as  the mean $M(f)=c_{1}(f)$ exists uniformly
in translates by (ii).
\end{proof}

Let us next discuss the Fourier--Bohr expansion of an $f \in \wap^2_{\cA}(G) \cap L^\infty(G)$. By the Proposition~\ref{prop:wap indep} and Corollary~\ref{coro-wap-compl} , we can embed $\wap_{\cA}^2(G) \cap L^\infty(G)$ into $B\hspace*{-1pt}es_{\cB}^2(G)$ for all van Hove sequences $\cB$, and the norms $\| \cdot \|_{\we,2,\cA}$ and $\| \cdot \|_{\be,2,\cB}$ agree on $\wap_{\cA}^2(G)$. Theorem~\ref{thm hilbert} then leads to the following result.

\begin{theorem}\label{thm-pars-wap}  Let $\cA$ be a van Hove sequence.
\begin{itemize}
  \item[(a)] For all $f, g \in \wap_{\cA}^2(G)$
  \[
  \langle f,g \rangle = \lim_{n\to\infty} \frac{1}{|A_n|} \int_{t+A_n} f(s) \overline{g(s)}\, \dd s
  \]
  exists uniformly in $t$ and defines a semi-inner product on  $\wap_{\cA}^2(G)$.
  \item[(b)] For all van Hove sequences $\cB$ and all $f,g \in \wap_{\cA}^2(G) \cap L^\infty(G)$, we have
  \[
    \langle f,g \rangle = \big\langle [f], [g] \big\rangle_{\cB} \,,
  \]
  where $\langle [f], [g] \rangle_{\cB}$ is the semi-inner product from Theorem~\ref{thm hilbert}.
  \item[(c)] For all van Hove sequences $\cB$, $(B\hspace*{-1pt}es_{\cB}^2(G)/\equiv, \| \cdot \|_{\be,2,\cB})$ is the Hilbert space completion of $(\wap_{\cA}^2(G) \cap L^\infty(G), \langle \cdot , \cdot \rangle)$.
  \item[(d)] For all $f \in \wap^2_{\cA}(G)$, one has $a_{\chi}(f) \neq 0$ for at most a countable set  of characters, and we have the \textit{Parseval identity}\index{Parseval~identity!Parseval~identity~for~$\Wap^{2}(G)$}
\[
\|f \|_{\we,2,\cA} ^2  = \sum_{\chi\in \widehat{G}} \big|
a_{\chi}(f) \big|^2
\]
and
\[
f = \sum_{\chi
\in\widehat{G}} a_{\chi}^{\cA}(f) \chi \qquad \mbox{ in } (\wap^2_{\cA}(G), \|
\cdot \|_{\we,2,\cA})\,.
\]
\end{itemize}
\end{theorem}
\begin{proof}
(a) follows from Proposition~\ref{spi lemma} (a) and (f). (b) and (c) follow from Proposition~\ref{prop:wap indep} and Corollary~\ref{coro-wap-compl}. (d) follows from Corollary~\ref{coro-wap-compl} and Theorem~\ref{thm hilbert}.
\end{proof}

\begin{remark}
Let $\cA$ be a van Hove sequence. If $f \in \wap^2_{\cA}(G) \cap L^\infty(G)$, then for all van Hove sequences $\cB$, we have
\[
f = \sum_{\chi
\in\widehat{G}} a_{\chi}^{\cA}(f) \chi \qquad \mbox{ in } (\wap^2_{\cB}(G), \|
\cdot \|_{\we,2,\cB}) \,.    \tag*{$\Diamond$}
\]
\end{remark}

Weyl almost periodic functions satisfy a stronger version
of Theorem~\ref{eber conv Bap2 functions}.

\begin{prop}\label{eber conv Wap2 functions}
Let $\cA$ be a van Hove
sequence. Let $f,g\in\wap^2_{\cA}(G)$.
\begin{itemize}
  \item[(a)] The reflected
Eberlein convolution $\lb f , g \rb_{\cA} $ exists and belongs to
$\sap(G)$.
  \item[(b)]If $g \in L^\infty(G)$ then Eberlein convolution $f \circledast_{\cA} g $ exists and belongs to
$\sap(G)$.
\end{itemize}
\begin{remark}
In (b) the condition $g \in L^\infty(G)$ can be replaced by $\cA$ being symmetric, meaning
\[
A_{n} =-A_n 
\]
for all $n$. \exend
\end{remark}
\end{prop}
\begin{proof} (a) The space $\wap^2_{\cA} (G)$ is contained in $B\hspace*{-1pt}es^2_{\cA} (G)$.
Moreover,  for any $f\in \wap_{\cA}^2 (G)$ and $t\in G$, its
translate $\tau_t  f$ clearly belongs to $B\hspace*{-1pt}es^2_\cA (G)$ as well.
Hence, it is a representative of $T_t [f]$ by Proposition~\ref{prop:compatibility}.

\medskip

\noindent (b) Since $g$ is bounded, we have $g \in \wap^2_{-\cA}(G)$. This immediately gives $\tilde{g} \in \wap^2_{\cA}(G)$ and $[\tilde{g}]=\widetilde{[g]}$.
Hence, $f\circledast_{\cA} g$ exists and
belongs to $\sap(G)$ by Theorem~\ref{eber conv Bap2 functions}.
\end{proof}

\smallskip
We complete the section by discussing the connection between Weyl almost periodicity and an uniform version of
mean almost periodicity. The following lemma is immediate.

\begin{lemma}\label{lem-wap-implies-unif-map} Let $f \in \wap^p(G)$. Then, for each $\eps>0$, the set
\[
AP_{\we,p,\cA}(f,\eps):= \{ t \in G : \|\tau_tf-f \|_{\we,p,A} < \eps\}
\]
is relatively dense.
\end{lemma}
\begin{proof} Let $\eps>0$. Then, there exists a trigonometric polynomial $P$ such that
\[
\|f-P\|_{\we,p,\cA}<\frac{\eps}{3} \,.
\]
It follows immediately that
\[
\{ t \in G : \|\tau_t P-P \|_{\infty} < \eps\}= AP_{\infty}(P,\frac{\eps}{3}) \subseteq AP_{\we,p,\cA}(f,\eps) \,.
\]
Since $P$ is Bohr almost periodic, the claim follows.
\end{proof}

\medskip
Let us note that the function $f$ from Remark~\ref{rem:map vs bap} has the property that, for all $\eps>0$, the set $AP_{\we,p,\cA}(f,\eps)$
is relatively dense, but it is not Weyl almost periodic (since it is not Besicovitch almost periodic). This shows that the converse of Lemma~\ref{lem-wap-implies-unif-map} does not hold.

We briefly discuss this below: The conclusion of Lemma~\ref{lem-wap-implies-unif-map} is that, for each $\eps>0$, there exists a relatively dense set $P_\eps$ such that, for all $t \in P_\eps$, we have
\[
\|\tau_tf-f \|_{\we,p,A} < \eps \,.
\]
This means that, for each $t \in P_\eps$, there exists some $N_{\eps,t}$ such that
\[
\frac{1}{|A_n|} \int_{s+A_n} \left| f(s-t) - f(s) \right|^p \dd s < (2 \eps)^p
\]
holds for all $n>N_{\eps,t}$ and all $s \in G$. As the next result shows, Weyl almost periodicity is equivalent to the fact that $N_{\eps,t}$ can be chosen independently of $t$. As we do not use this in the sequel, we skip the proof and refer the reader to \cite{Spi}.

\begin{prop}\cite[Prop.~3.13]{Spi}
Let $f \in \mathcal{L}^p_{loc}(G)$. Then, $f \in \wap^p(G)$ if and only if, for each $\eps >0$, there exists a relatively dense set $P_\eps$ and some $N_\eps \in \NN$ such that, for all $t \in P_\eps$ and all $n >N_\eps$, we have
\[
\frac{1}{|A_n|} \int_{s+A_n} \left| f(s-t) - f(s) \right|^p \dd s < \eps  \,.        \tag*{$\qed$}
\]
\end{prop}

\section{Complete point diffraction}
The standard examples of Aperiodic Order do not only exhibit pure-point
diffraction and the existence of amplitudes (Fourier--Bohr coefficients) but rather a uniform
version of the existence of amplitudes and the consistent phase property. We will characterize the validity of these properties by
Weyl almost periodicity and then refer to this as complete point diffraction.

\medskip

In this subsection, we deal with translation bounded Weyl almost
periodic measures.
Let us start with the following obvious consequence of Proposition~\ref{prop:wap indep}.

\begin{prop}
Let $\mu \in \cM^\infty(G)$, and let $\cA$ be van Hove sequence. Then, $\mu \in \Wap_{\cA}(G)$ if and only if
$\mu \in \Wap_{\cB}(G)$ for all van Hove sequences $\cB$.    \qed
\end{prop}

Let us look now at the Fourier--Bohr coefficients of a Weyl
almost periodic measure. The following is an immediate
consequence of Lemma~\ref{spi lemma} and Corollary~\ref{FB measure
relations}.

\begin{lemma}\label{lem:fb}
Let  $\mu \in \Wap(G) \cap \cM^\infty(G)$. Then, for each $\chi \in
\widehat{G}$, the Fourier--Bohr coefficient
\[
a_\chi(\mu)=\lim_{n\to\infty} \frac{1}{|A_n|} \int_{s+A_n}
\overline{\chi(t)}\, \dd \mu(t)
\]
exists uniformly in $s\in G$ and does not depend on the choice of
the van Hove sequence. Moreover, for all $\varphi \in \Cc(G)$, we
have
\[
a_\chi(\mu*\varphi)=a_\chi(\mu)\, \widehat{\varphi}(\chi) \,.       \tag*{$\qed$}
\]
\end{lemma}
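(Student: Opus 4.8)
The plan is to reduce the statement for the measure $\mu$ to the already-established statement for its smoothings $\mu*\varphi$, which are bounded Weyl almost periodic \emph{functions}, and then to transport existence, uniformity and van Hove independence back to $\mu$ via the passage results of Section~\ref{sec-key-player}.

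First I would record that, since $\mu\in\cM^\infty(G)$, for every $\varphi\in\Cc(G)$ the convolution $\varphi*\mu=\mu*\varphi$ lies in $\Cu(G)$, and by the hypothesis $\mu\in\Wap(G)$ it is Weyl almost periodic, hence an element of $\wap_\cA(G)\cap L^\infty(G)$. The remark following Lemma~\ref{spi lemma} — which is obtained by applying parts (b) and (a) of that lemma to the Weyl almost periodic function $\overline{\chi}\,(\mu*\varphi)$ — then yields that for every $\chi\in\widehat{G}$ the Fourier--Bohr coefficient $a_\chi^{\cA}(\mu*\varphi)$ exists uniformly in $s\in G$.

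Next I would transfer this to $\mu$ itself. Fixing $\chi\in\widehat{G}$, I would choose $\varphi\in\Cc(G)$ with $\widehat{\varphi}(\chi)\neq 0$ (possible, e.g.\ by taking a nonnegative approximate unit, for which $\widehat{\varphi}(\chi)$ is close to $1$). Since $a_\chi^{\cA}(\mu*\varphi)$ exists uniformly on $G$ and $\widehat{\varphi}(\chi)\neq 0$, Corollary~\ref{FB measure relations}(a) immediately gives that $a_\chi^{\cA}(\mu)$ exists uniformly on $G$. The asserted identity $a_\chi(\mu*\varphi)=a_\chi(\mu)\,\widehat{\varphi}(\chi)$ for \emph{all} $\varphi\in\Cc(G)$ is then precisely Corollary~\ref{FB measure relations}(b), now that uniform existence of $a_\chi^{\cA}(\mu)$ is known.

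Finally, for independence of the van Hove sequence I would invoke Corollary~\ref{cor unif FB}: uniform existence of $a_\chi^{\cA}(\mu)$ in $s$ is equivalent to existence of $a_\chi^{\cB}(\mu)$ for every van Hove sequence $\cB$. To see that the \emph{values} coincide, I would apply Proposition~\ref{prop: amenable} to the bounded measurable function $\overline{\chi}\,(\mu*\varphi)$: uniform existence along $\cA$ is condition (i), which forces condition (ii), namely that the limit exists and is independent of the van Hove sequence; hence $a_\chi^{\cB}(\mu*\varphi)$ is the same for all $\cB$, and dividing by $\widehat{\varphi}(\chi)\neq 0$ through Corollary~\ref{FB measure relations}(b) shows $a_\chi^{\cB}(\mu)$ is independent of $\cB$. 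The argument is essentially a chaining of the cited results, so I do not expect a genuine obstacle; the only point requiring a little care is that Corollary~\ref{cor unif FB} delivers \emph{existence} of the Fourier--Bohr coefficients for all van Hove sequences but not directly their equality, so the independence of the actual value must be extracted from the stronger implication (i)$\Rightarrow$(ii) of Proposition~\ref{prop: amenable} rather than from Corollary~\ref{cor unif FB} alone.
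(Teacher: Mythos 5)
Your proof is correct and follows the same route the paper intends: the paper presents the lemma as an immediate consequence of Lemma~\ref{spi lemma} (amenability and character-multiplication stability of Weyl almost periodic functions, applied to $\mu*\varphi$) together with Corollary~\ref{FB measure relations}, which is exactly the chain you spell out. Your extra care in extracting the van Hove independence of the \emph{value} from Proposition~\ref{prop: amenable} (i)$\Rightarrow$(ii), rather than from Corollary~\ref{cor unif FB} alone, is a correct and welcome clarification of a step the paper leaves implicit.
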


Now, we can characterize the space $\Wap(G) \cap \cM^\infty(G)$.

\begin{theorem}[Characterization of $\Wap_{\cA}(G)\cap \cM^\infty(G)$]
\label{thm: wap in bap} Let $\mu \in \cM^\infty(G)$, and let
$\cA$ be a van Hove sequence on $G$. Then, the following assertions are
equivalent: \index{almost~periodic!Weyl~$p$-almost~periodic~measure}
\item[(i)] The measure $\mu$ belongs to $\Wap_{\cA}  (G)$.
\item[(ii)] The measure $\mu$ belongs to $\Bap_{\cA} (G)$ and the following statements hold:
\begin{itemize}
  \item{}  For all $\varphi \in \Cc(G)$, the function $\left|\mu*\varphi\right|^2$ is amenable.
  \item{}  For each $\chi \in \widehat{G}$, the Fourier--Bohr coefficient
\begin{displaymath}
a_\chi(\mu)=\lim_{n\to\infty} \frac{1}{|A_n|} \int_{s+A_n}
\overline{\chi(t)}\, \dd \mu(t)
\end{displaymath}
exist uniformly in $s\in G$.
\end{itemize}
\item[(iii)] The measure $\mu$ belongs to $\Bap_{\cB} (G)$ for all van Hove sequences $\cB$.

Moreover, if one of the equivalent assertions holds, then any finite
product of functions from the set $\{ \mu*\varphi,
\overline{\mu*\varphi}: \varphi \in \Cc(G) \}$ is amenable.
\end{theorem}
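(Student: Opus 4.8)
The plan is to reduce every assertion to a statement about the bounded functions $\mu*\varphi$, $\varphi\in\Cc(G)$, and then feed these into the characterization of bounded Weyl almost periodic functions of Proposition~\ref{prop-char-wap}. Since $\mu\in\cM^\infty(G)$, each $\mu*\varphi$ lies in $\Cu(G)$ and is in particular bounded. Hence, by Proposition~\ref{WAP equality} and Proposition~\ref{BAP equality}, for these functions the exponent $p=2$ may be replaced by $p=1$; concretely, $\mu\in\Wap^2_\cA(G)$ is equivalent to $\mu*\varphi\in\wap_\cA(G)$ for all $\varphi$, and $\mu\in\Bap^2_\cA(G)$ to $\mu*\varphi\in\bap(G)$ for all $\varphi$. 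This reduction turns the three measure-level conditions into the three function-level clauses of Proposition~\ref{prop-char-wap} applied to each $\mu*\varphi$.

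For (i)$\Rightarrow$(ii) I would first invoke $\Wap^2_\cA(G)\subseteq\Bap^2_\cA(G)$ (Remark~\ref{rem:ap inclusions}(a)) to obtain $\mu\in\Bap^2_\cA(G)$; next, since $\mu\in\Wap_\cA(G)$ (Remark~\ref{rem:ap inclusions}(b)) is a translation bounded measure, Lemma~\ref{lem:fb} yields the uniform existence of the $a_\chi(\mu)$; finally, as $\mu*\varphi$ and $\overline{\mu*\varphi}$ are bounded and Weyl almost periodic, Proposition~\ref{spi lemma}(c) shows $|\mu*\varphi|^2$ is Weyl almost periodic and Proposition~\ref{spi lemma}(a) shows it is amenable. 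For the converse (ii)$\Rightarrow$(i) I would fix $\varphi$, set $f=\mu*\varphi$, and verify clause (ii) of Proposition~\ref{prop-char-wap}: the uniform existence of $a^\cA_\chi(f)=\widehat{\varphi}(\chi)\,a_\chi(\mu)$ comes from the uniform existence of $a_\chi(\mu)$ via Corollary~\ref{FB measure relations}(b); the uniform existence of $M(|f|^2)$ is precisely the assumed amenability of $|\mu*\varphi|^2$; and the Parseval identity follows from $f\in\bap^2(G)$ (as $\mu\in\Bap^2_\cA(G)$) through Corollary~\ref{bap2 char}. Concluding $f\in\wap_\cA(G)$, hence $f\in\wap^2_\cA(G)$ by boundedness, gives $\mu\in\Wap^2_\cA(G)$.

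The equivalence (i)$\Leftrightarrow$(iii) is then immediate from the equivalence (i)$\Leftrightarrow$(iv) of Proposition~\ref{prop-char-wap} applied to each bounded $f=\mu*\varphi$: such an $f$ lies in $\wap_\cA(G)$ exactly when $f\in Bap^2_\cB(G)$ for every van Hove sequence $\cB$, and a trivial reordering of the quantifiers over $\varphi$ and $\cB$ turns this into the equivalence of $\mu\in\Wap^2_\cA(G)$ with $\mu\in\Bap^2_\cB(G)$ for all $\cB$. The final ``Moreover'' assertion rests on the same circle of ideas: under (i) every $\mu*\varphi$ and every $\overline{\mu*\varphi}$ is a bounded Weyl almost periodic function, so an induction using Proposition~\ref{spi lemma}(c) shows any finite product of such functions is again bounded and Weyl almost periodic, whence amenable by Proposition~\ref{spi lemma}(a).

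I expect the one point requiring genuine care, rather than mere bookkeeping, to be the Parseval step in (ii)$\Rightarrow$(i). Corollary~\ref{bap2 char} only delivers the Parseval equality with the ordinary mean $M_\cA(|f|^2)$ along $\cA$, whereas Proposition~\ref{prop-char-wap}(ii) demands it with the uniform mean $M(|f|^2)$. The two are reconciled by observing that the amenability hypothesis forces the uniform mean of $|\mu*\varphi|^2$ to exist and, evaluated at $s=0$, to coincide with the ordinary $\cA$-mean (Proposition~\ref{prop: amenable}), after which the two Parseval statements agree. Everything else is a careful matching of the hypotheses with the corresponding clauses of Proposition~\ref{prop-char-wap} together with the $p$-independence results of Proposition~\ref{WAP equality} and Proposition~\ref{BAP equality}.
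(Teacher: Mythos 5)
Your proposal is correct and follows essentially the same route as the paper: reduce all three conditions to statements about the bounded functions $\mu*\varphi\in\Cu(G)$, translate the Fourier--Bohr hypotheses between $\mu$ and $\mu*\varphi$ via Corollary~\ref{FB measure relations}, and then invoke the characterization of bounded Weyl almost periodic functions in Proposition~\ref{prop-char-wap}, with the ``moreover'' clause handled exactly as in the paper via Proposition~\ref{spi lemma}(c). Your extra care about reconciling the ordinary mean in Corollary~\ref{bap2 char} with the uniform mean in Proposition~\ref{prop-char-wap}(ii) via the amenability hypothesis is a correct and worthwhile elaboration of a step the paper leaves implicit.
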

\begin{proof} By Corollary~\ref{cor unif FB}, the existence of the Fourier--Bohr
coefficients for a translation bounded measure $\mu$ is equivalent
to the existence of the Fourier--Bohr coefficients for $\mu\ast
\varphi$ for all $\varphi \in \Cc (G)$. Clearly, $\mu \ast \varphi$ is
bounded belongs to $\Cu (G)$ for every $\varphi \in \Cc
(G)$. Now, the characterization follows easily from Proposition
\ref{prop-char-wap}.

The last claim is immediate. Indeed, for each $\varphi \in \Cc(G)$,
the bounded functions $\mu*\varphi$ and $\overline{\mu*\varphi}$ belong
to $\wap_{\cA}^2(G) \subseteq \wap_{\cA}(G)$. Therefore, by
Proposition~\ref{spi lemma}(c), any product of such functions
belongs to $\wap_{\cA}(G)$, i.e. it is amenable.
\end{proof}

Recall from Proposition \ref{prop: amenable} that the
existence of the mean, for each van Hove sequence, actually implies the
independence of the mean of the van Hove sequence. For this reason,
we do not state the independence of the van Hove sequence in the
condition below.

\begin{theorem}\label{theorem-uniform-phase-problem} Let $\mu \in
\cM^\infty(G)$. Then, $\mu \in \Wap(G)$ if and only if the
following three conditions hold:
\begin{itemize}
\item[(a)] The autocorrelation $\gamma$ of $\mu$ exists for each van Hove sequence $\cB$, and $\widehat{\gamma}$ is a pure-point measure.
\item[(b)] The Fourier--Bohr coefficients $a^\cB_{\chi}(\mu)$
exist for all $\chi \in \widehat{G}$ and van Hove sequences $\cB$.
\item[(c)] For all $\chi \in \widehat{G}$, we have the consistent phase property
\[
\widehat{\gamma}(\{\chi\}) = \left| a_{\chi}(\mu) \right|^2 \,.
\]
\end{itemize}
\end{theorem}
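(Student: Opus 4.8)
The plan is to deduce this theorem directly from the solution of the (non-uniform) phase problem, Theorem~\ref{Bap and BT}, by feeding it into the characterization of Weyl almost periodicity as \emph{simultaneous} Besicovitch almost periodicity for all van Hove sequences, Theorem~\ref{thm: wap in bap}. The decisive reduction is the equivalence (i)$\Leftrightarrow$(iii) of Theorem~\ref{thm: wap in bap}: for a translation bounded measure $\mu$ one has $\mu \in \Wap^2(G)$ (equivalently $\mu \in \Wap^2_{\cA}(G)$ for one, hence any, van Hove sequence $\cA$) if and only if $\mu \in \Bap^2_{\cB}(G)$ for \emph{every} van Hove sequence $\cB$. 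This converts the single uniform problem into a whole family of non-uniform phase problems, one for each $\cB$, each of which is already solved by Theorem~\ref{Bap and BT}.

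First I would treat the forward direction. Assuming $\mu \in \Wap^2(G)$, Theorem~\ref{thm: wap in bap} yields $\mu \in \Bap^2_{\cB}(G)$ for every van Hove sequence $\cB$. Applying Theorem~\ref{Bap and BT} to each such $\cB$ then gives, for that $\cB$, that the autocorrelation $\gamma_{\cB}$ exists along $\cB$ with $\widehat{\gamma_{\cB}}$ pure point, that all Fourier--Bohr coefficients $a_\chi^{\cB}(\mu)$ exist, and that $\widehat{\gamma_{\cB}}(\{\chi\}) = |a_\chi^{\cB}(\mu)|^2$. The one bookkeeping point is to see that these objects do not depend on $\cB$, so that they may be written $\gamma$ and $a_\chi(\mu)$ as in (a)--(c): independence of the autocorrelation follows from Proposition~\ref{prop ac independent van H}, and independence of the Fourier--Bohr coefficients from Corollary~\ref{cor unif FB} (or directly from Lemma~\ref{lem:fb}). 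This establishes (a), (b) and (c).

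For the converse I would run exactly the same chain in reverse. Assume (a), (b) and (c), and fix an arbitrary van Hove sequence $\cB$. Since (a), (b), (c) are phrased uniformly over all van Hove sequences, they assert in particular that the autocorrelation exists along $\cB$ with pure point Fourier transform, that all Fourier--Bohr coefficients exist along $\cB$, and that the consistent phase property holds with respect to $\cB$. These are precisely the three hypotheses of Theorem~\ref{Bap and BT} for the sequence $\cB$, so that theorem gives $\mu \in \Bap^2_{\cB}(G)$. As $\cB$ was arbitrary, $\mu$ is Besicovitch $2$-almost periodic with respect to every van Hove sequence, whence Theorem~\ref{thm: wap in bap} gives $\mu \in \Wap^2(G)$.

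Thus the argument is purely a matter of assembling Theorem~\ref{Bap and BT} and Theorem~\ref{thm: wap in bap}; no new estimates are required. Accordingly the only genuine obstacle lies upstream, in those two results — in particular in the equivalence of Theorem~\ref{thm: wap in bap} identifying Weyl almost periodicity with uniformity-in-$\cB$ of Besicovitch almost periodicity, which is where the real analytic content (through the uniform van Hove behaviour of Proposition~\ref{prop:mother-of-uniform-van-Hove-results}) resides. At the present level the subtlety to watch is merely the passage from the $\cB$-dependent data $\gamma_{\cB}$, $a_\chi^{\cB}(\mu)$ to the $\cB$-independent $\gamma$, $a_\chi(\mu)$ appearing in the statement, handled by the independence results cited above.
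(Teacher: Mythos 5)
Your proposal is correct and follows exactly the route of the paper: the paper's proof is the one-line observation that the statement is a direct consequence of Theorem~\ref{thm: wap in bap} (Weyl $=$ Besicovitch for every van Hove sequence) combined with Theorem~\ref{Bap and BT} applied to each sequence. Your extra bookkeeping on the $\cB$-independence of $\gamma$ and $a_\chi(\mu)$ is precisely what the paper handles in the remark preceding the theorem via Proposition~\ref{prop: amenable}.
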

\begin{proof}  This is a direct consequence of the characterization of Weyl almost periodic measures in Theorem~\ref{thm: wap in bap} and Theorem~\ref{Bap and BT}.
\end{proof}

The theorem provides an answer to Question 3 from the introduction. It suggests to single out the measures satisfying the conditions (a), (b), (c).

\begin{definition}[Complete point spectrum] A measure $\mu$ is said to have \emph{complete point spectrum} if the 
following three conditions hold:
\begin{itemize}
\item[(a)] The autocorrelation $\gamma$ of $\mu$ exists for each van Hove sequence $\cB$, and $\widehat{\gamma}$ is a pure-point measure.
\item[(b)] The Fourier--Bohr coefficients $a^\cB_{\chi}(\mu)$
exist for all $\chi \in \widehat{G}$ and van Hove sequences $\cB$.
\item[(c)] For all $\chi \in \widehat{G}$, we have the consistent phase property
\[
\widehat{\gamma}(\{\chi\}) = \left| a_{\chi}(\mu) \right|^2 \,.    \tag*{$\Diamond$}
\]
\end{itemize}
\end{definition}

With this definition the main result above says that a translation bounded measure has complete point spectrum if and only if it belongs to $\Wap(G)$.

\medskip

We complete the section by briefly discussing the Fourier expansion of a Weyl almost periodic measure. The result is an immediate consequence of  Theorem~\ref{thm-expa} and Proposition~\ref{prop-char-wap}.

\begin{theorem}[Expansion of an Weyl almost periodic measure]
Let $\cA$ be a van Hove sequence.
For any translation bounded measure $\mu$, the following assertions are equivalent:
\begin{itemize}
\item[(i)] The measure $\mu$ is Weyl almost periodic.
\item [(ii)]  There exists a map $a : \widehat{G}\longrightarrow \CC $ such that, for all $\varphi \in \Cc (G)$, we have
\[
\lim_F  \Big\|\mu\ast \varphi - \Big(\sum_{\chi \in F} a_\chi \chi\Big)\ast \varphi \Big\|_{\we,2,\cA} = 0 \,.
\]
\end{itemize}
If the equivalent conditions (i) and (ii) hold, one has
\[
a_\chi = a_\chi (\mu) \qquad \text{ for all } \chi \in\widehat{G} \,.        \tag*{$\qed$}
\]
\end{theorem}

\section{Meyer almost periodic functions and measures}
Next, let us look at a generalization of almost periodicity which
was introduced by Yves Meyer \cite{Mey2}. We will show that elements of
this large class of measures are Weyl almost periodic.

\smallskip

Recall first that
all Bohr almost periodic functions are amenable.

\begin{definition}[Generalized almost periodicity] \cite[Def.~2.1]{Mey2}.
A function $f : G \to \RR$ is called \textit{generalized almost periodic (g-a-p)}\index{almost~periodic!generalized~almost~periodic~function} if it is measurable and, for each $\eps>0$, there exist Bohr almost periodic functions $g$ and $h$ satisfying 
\begin{align*}
g \leq f \,&\leq\, h \,, \\
M(h-g) \,&<\, \eps \,.
\end{align*}
A complex valued
function $f : G\longrightarrow \CC$ is called \textit{generalized
almost periodic} if both its real and its imaginary part are
generalized almost periodic.

A real valued Borel measure $\mu$ on
$G$ is called \textit{generalized almost periodic (g-a-p) measure}\index{almost~periodic!generalized~almost~periodic~measure}
if, for each $\eps >0$, there exist strongly almost periodic
measures $\nu$ and $\omega$ satisfying
\begin{align*} 
\nu \,\leq\, \mu \,& \leq\, \omega \,, \\
M(\omega-\nu) \,&<\, \eps \,.
\end{align*}
A measure $\mu$ on $G$ is called
\textit{weakly g-a-p measure}\index{almost~periodic!weakly~generalized~almost~periodic~measure} if, for all $\varphi \in \Cc(G)$, the
function $\mu*\varphi$ is g-a-p.    \exend
\end{definition}

\begin{remark}\phantom{X}
\begin{itemize}
\item[(a)]  Note that any g-a-p function must be bounded, since Bohr
almost periodic functions are bounded and the g-a-p function is
bounded by Bohr almost periodic functions from above and below.
\item[(b)] The article of Meyer deals with $G= \RR^n$ and, accordingly,
gives the definition for $\RR^n$.
\item[(c)] Since each $\varphi \in \Cc(G)$ is a linear combination of
non-negative functions in $\Cc(G)$ and  convolution with
non-negative functions preserves inequalities, every g-a-p measure
is weakly g-a-p. The converse is not true \cite[Lem.~2.45]{Mey2}.  
\item[(d)] g-a-p functions have also been studied by Hartman \cite{Har1,Har2} under the name "R-almost periodic functions". \exend
\end{itemize}
\end{remark}

We note the following immediate consequence of the definition.

\begin{prop}[g-a-p implies Weyl almost periodicity]\label{g-a-p-is weyl}\phantom{X}
\begin{itemize}
\item [(a)] If $f \in \mathcal{L}^1_{loc}(G)$ is a g-a-p function, then $f
\in \wap(G)$.
\item [(b)] If $\mu\in \cM^\infty(G)$ is a weakly g-a-p measure, then
$\mu \in \Wap(G)$. In particular, any g-a-p measure is Weyl almost periodic.
\end{itemize}
\end{prop}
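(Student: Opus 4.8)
The plan is to reduce everything to the characterization of Weyl almost periodicity recorded in Remark~\ref{rem:Weyl-ap}(a): a function lies in $\wap_{\cA}(G)$ exactly when it can be approximated arbitrarily well in the Weyl seminorm $\|\cdot\|_{w,1,\cA}$ by Bohr almost periodic functions. The entire argument then rests on one elementary identity, which I will establish first: if $k\in SAP(G)$ is nonnegative, then $\uuM_{\cA}(k)=M(k)$. Indeed, Bohr almost periodic functions are amenable, so the averages $\frac{1}{|A_n|}\int_{x+A_n} k(t)\,\dd t$ converge to $M(k)$ uniformly in $x\in G$; hence $\sup_{x\in G}\frac{1}{|A_n|}\int_{x+A_n} k(t)\,\dd t\to M(k)$, and the $\limsup$ defining $\uuM_{\cA}(k)$ equals $M(k)$.

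For (a), I would first treat real-valued $f$. Given $\eps>0$, choose Bohr almost periodic $g\leq f\leq h$ with $M(h-g)<\eps$. Then $0\leq f-g\leq h-g$, so $|f-g|\leq h-g$, and since $h-g\in SAP(G)$ is nonnegative the identity above yields
\[
\|f-g\|_{w,1,\cA}=\uuM_{\cA}(|f-g|)\leq \uuM_{\cA}(h-g)=M(h-g)<\eps .
\]
As $f$ is bounded (being squeezed between Bohr almost periodic functions) it belongs to $L^1_{loc}(G)$, and the displayed estimate shows $f\in \wap_{\cA}(G)$. A complex-valued g-a-p function has g-a-p real and imaginary parts, each in $\wap_{\cA}(G)$ by the real case, and $\wap_{\cA}(G)$ is a vector space by Proposition~\ref{spi lemma}(b), so $f\in\wap_{\cA}(G)$.

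For (b) it suffices to prove $\mu*\varphi\in\wap_{\cA}(G)$ for every nonnegative $\varphi\in\Cc(G)$; the general case follows by decomposing $\varphi$ into the nonnegative real and imaginary parts of its positive and negative pieces (all in $\Cc(G)$) and invoking the vector space structure again. Fix a nonnegative $\varphi$ and $\eps>0$, and pick strongly almost periodic measures $\nu\leq\mu\leq\omega$ with $M(\omega-\nu)<\eps$. Convolution with the nonnegative $\varphi$ preserves order, so $\nu*\varphi\leq\mu*\varphi\leq\omega*\varphi$, and the bounding functions are Bohr almost periodic by the definition of strongly almost periodic measure. Exactly as in (a), $\|\mu*\varphi-\nu*\varphi\|_{w,1,\cA}\leq \uuM_{\cA}((\omega-\nu)*\varphi)=M((\omega-\nu)*\varphi)$.

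The main work is then to control $M((\omega-\nu)*\varphi)$. Here I will use that $\omega-\nu$ is again a strongly almost periodic measure, so its Fourier--Bohr coefficient at the trivial character $\underline 1$ is $M(\omega-\nu)$, and that Corollary~\ref{FB measure relations}(b) applied at $\underline 1$ factors the mean of the convolution:
\[
M\big((\omega-\nu)*\varphi\big)=a_{\underline 1}^{\cA}\big((\omega-\nu)*\varphi\big)=\widehat{\varphi}(\underline 1)\,a_{\underline 1}^{\cA}(\omega-\nu)=\Big(\int_G \varphi(t)\,\dd t\Big)\,M(\omega-\nu)<\eps\int_G \varphi(t)\,\dd t .
\]
Since $\int_G\varphi(t)\,\dd t$ is a fixed constant, letting $\eps\to 0$ shows that $\mu*\varphi$ is approximable in $\|\cdot\|_{w,1,\cA}$ by the Bohr almost periodic functions $\nu*\varphi$, hence $\mu*\varphi\in\wap_{\cA}(G)$, and thus $\mu\in\Wap_{\cA}(G)$. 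The only genuinely delicate points are the amenability identity $\uuM_{\cA}(k)=M(k)$ for nonnegative $k\in SAP(G)$ and the factorization of the mean of the convolution through the Fourier--Bohr coefficient at $\underline 1$; these are where the uniform-averaging theory does the real work, whereas the sandwiching estimates are purely formal.
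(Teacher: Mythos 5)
Your proof is correct and follows essentially the same route as the paper: the sandwich estimate $|f-g|\le h-g$ combined with the fact that for a nonnegative Bohr almost periodic function the uniform upper mean $\uuM_{\cA}$ coincides with the mean $M$. For part (b) the paper simply asserts that it reduces to (a); your explicit verification (order preservation of convolution with nonnegative $\varphi\in\Cc(G)$ and the factorization $M((\omega-\nu)*\varphi)=\widehat{\varphi}(\underline{1})\,M(\omega-\nu)$ via Corollary~\ref{FB measure relations}) is exactly the content the paper leaves implicit.
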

\begin{proof} It suffices to show (a) for real valued functions $f$. Let
$\varepsilon >0$, and let $h,g\in \sap(G)$ with
\begin{align*}
g \leq f \,&\leq\, h \,, \\
M(h-g) \,&<\, \eps \,,
\end{align*} be given. Now, clearly 
\begin{align*}
|f - g| \, &\leq \,  (h-g) \qquad \mbox{ and } \\
\|f - g\|_{\we,1} \, &\leq \, M(h-g) \leq \varepsilon
\end{align*}
follows. As
$\varepsilon>0$ was arbitrary, the desired statement holds.

\medskip

\noindent (b) follows from (a).
\end{proof}

\begin{remark}\phantom{X}
\begin{itemize}
    \item[(a)] If $f$ is g-a-p function, then $f \in \wap(G) \cap L^\infty(G)$ and hence $f \in \wap^p(G)$, for all $p \geq 1$.
    \item[(b)] Meyer \cite[Sect.~2]{Mey2} (compare \cite[Thm.~5.6.]{LRS}) constructed an example of a subset $A \subseteq \ZZ$ such that, $1_{A}$ is not g-a-p but 
    \[
    \| 1_{A} \|_{\we,1,\cA} \, = \, 0 \,.
    \]
    In particular, $1_{A}$ is Weyl almost periodic.  \exend
\end{itemize}
\end{remark}

A main merit of generalized almost periodicity is that the class of
regular model sets can be seen to have  this property. So,
generalized almost periodicity covers the arguably most important
examples of Aperiodic Order. This was already shown by Meyer in the
Euclidean setting, compare \cite[Thm.~3.3]{Mey2}. Our
more general situation can be treated similarly.
For the definition
of regular model sets and notation used below, see Appendix
\ref{appendix:cp}.

First, we will show that regular model sets satisfy a strong type of approximation by almost periodic measures, which was observed by M. Baake, which implies g-a-p.

\begin{lemma}\label{model is gwap}
\phantom{XX}
\begin{itemize}
   \item[(a)] Let $\vL$ be a regular model set on $G$. Then, for each $\eps >0$, there exist a locally finite set $\vG$ and measures $\mu, \nu \in \SAP(G)$ supported inside $\vG$ such that $0 \leq \mu \leq \delta_{\vG} \leq \nu \leq \delta_{\vG}$ and
   \[
   \limsup_{n\to\infty} \frac{1}{|A_n|} \card( \{ x \in \vG\, :\, \mu(\{x\}) \neq \nu(\{x\})\} \cap A_n ) < \eps \,.
   \]
   \item[(b)] Let $\omega$ be a measure such that, for each $\eps >0$, there exist a locally finite set $\vG$ and measures $\mu, \nu \in \SAP(G)$ supported inside $\vG$ such that $0 \leq \mu \leq \delta_{\vG} \leq \nu \leq \delta_{\vG}$ and
   \[
   \limsup_{n\to\infty} \frac{1}{|A_n|} \card( \{ x \in \vG\, :\, \mu(\{x\}) \neq \nu(\{x\})\} \cap A_n ) < \eps \,.
   \]
   Then, $\omega$ is a g-a-p measure.
 \end{itemize}
In particular, if $\vL$ is a regular model set on $G$, then $\delta_\vL$ is
a g-a-p measure.
\end{lemma}

\begin{remark}
Recently, in \cite{LRS}, it was shown that a Delone set $\vL$ is a regular model set if and only if $\vL$ is a Meyer set and $\delta_{\vL}$ is g-a-p.     \exend
 \end{remark}

\begin{proof}
(a) Let $(G,H, \cL)$ be a CPS, and let $W$ the regular window which
produces the regular model set. Let $\eps >0$. By the regularity of $\theta_H$ and \cite[Thm.~2.7]{rud2}, there exist an open set $U$ and compact set $K$ such that
\[
U \subseteq \overline{U} \subseteq W^\circ \subseteq W \subseteq  \overline{W} \subseteq K^\circ \subseteq K
\]
and $\theta_{H}(K \backslash U) < \frac{\eps}{\dens(\cL)}$. Choose $h,g \in \Cc(H)$ such that
\[
1_{\overline{U}} \leq h \leq 1_W \leq g \leq 1_{K^\circ}\,.
\]
Then,
\[
\mu:=\omega_h \leq \delta_\vL\leq \omega_g=:\nu
\]
and $\vG := \oplam(K)$ satisfy
\[
0 \leq \mu \leq \delta_{\vG} \leq \nu \leq \delta_{\vG}\,.
\]
Moreover, we have
\[
\{ x \in \vG\, :\, \mu(\{x\}) \neq \nu(\{x\})\} \subseteq \oplam(K \backslash U) \,.
\]
Since $K \backslash U$ is a compact set, we get
\[
\limsup_{n\to\infty} \frac{1}{|A_n|} \card( \{ x \in \vG : \mu(\{x\}) \neq \nu(\{x\})\} \cap A_n ) \leq \dens(\cL) \theta_{H}(K \backslash U) <\eps \,.
\]

\smallskip

\noindent (b) The given conditions show that, for all $x \in \vG$, we have $0 \leq (\omega-\nu)(\{x \}) \leq 1$. It follows immediately that
\[
M(\omega-\nu) \leq \limsup_{n\to\infty} \frac{1}{|A_n|} \card( \{ x \in \vG\, :\, \mu(\{x\}) \neq \nu(\{x\})\} \cap A_n )  \,.
\]
\end{proof}

It is possible to characterize  g-a-p functions via the Bohr
compactification. This is already hinted at in Meyer's work, but
details are not given. A weaker version of this result can be found in \cite[Page~68]{Har2}. We include the proof for completeness.

\begin{lemma}[Characterization of g-a-p]\label{lem:char-gap}
Let $f : G\longrightarrow \RR$ be a bounded and
measurable real valued function. Then, the following assertions are equivalent:
\begin{itemize}
\item[(i)] The function $f$ is g-a-p.
\item[(ii)] There exist two Riemann integrable functions $k_{<}',  k_{>}'$
on $G_\mathsf{b}$
with $k_{<}' \leq k_{>}'$ and
\[
\int_{G_\mathsf{b}}  \left(k_{>}' (x)- k_{<}'(x)\right)(x)\, \dd x =0
\]
such that
\[
g'\circ i_\mathsf{b} \leq f \leq h'\circ i_\mathsf{b}
\]
holds for all
$g',h'\in C(G_b)$ with $g'\leq k_{<}'$ and $k_{>}'  \leq h'$.
\end{itemize}
If the equivalent conditions (i) and (ii) hold, then $f$ belongs to
$B\hspace*{-1pt}es_{\cA} (G)$ for every van Hove sequence $\cA$, and
\[
(f)_{\mathsf{b},1} = [k_{<}']_1=[k_{>}']_1
\]
holds.
\end{lemma}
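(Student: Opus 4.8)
The plan is to transport the whole statement to the Bohr compactification $G_b$ and exploit that, via the Bohr mapping, $SAP(G)$ is an order-preserving, mean-preserving isometric copy of a dense sublattice of $C(G_b)$: by Theorem~\ref{Bap completion} together with the identity $M(f)=\int_{G_b} f_{\mathsf{b}}\,\dd x$ for $f\in SAP(G)$. Two standing facts will be used throughout. First, $SAP(G)$ is a sublattice, i.e. closed under $\max,\min$; this follows since $SAP(G)$ is closed under normal contractions (in particular under $z\mapsto|z|$) and $\max(a,b)=\tfrac12(a+b+|a-b|)$. Second, on the compact group $G_b$ with its probability Haar measure a bounded function is Riemann integrable if and only if its discontinuity set is Haar-null, equivalently if and only if it can be sandwiched between continuous functions whose integrals differ arbitrarily little. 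I will also use that an order relation $u\le v$ between functions in $SAP(G)$ lifts to $u_{\mathsf{b}}\le v_{\mathsf{b}}$ on $G_b$, because $i_b(G)$ is dense and $u_{\mathsf{b}},v_{\mathsf{b}}$ are continuous.

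For (i)$\Rightarrow$(ii), I would choose for each $n$ Bohr almost periodic $g_n\le f\le h_n$ with $M(h_n-g_n)<1/n$, clamp them to $[-\|f\|_\infty,\|f\|_\infty]$ by replacing $g_n$ with $\max(g_n,-\|f\|_\infty)$ and $h_n$ with $\min(h_n,\|f\|_\infty)$ (still in $SAP(G)$ and still sandwiching $f$), and then monotonize via $\tilde g_n=\max(g_1,\dots,g_n)$ and $\tilde h_n=\min(h_1,\dots,h_n)$. Passing to $G_b$, the $\tilde g_n'=(\tilde g_n)_{\mathsf{b}}$ increase and the $\tilde h_n'=(\tilde h_n)_{\mathsf{b}}$ decrease, so I set $k_<'=\sup_n\tilde g_n'$ (lower semicontinuous) and $k_>'=\inf_n\tilde h_n'$ (upper semicontinuous). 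Since $\tilde g_n\le f\le \tilde h_m$ on $G$ lifts to $\tilde g_n'\le\tilde h_m'$ on $G_b$, one gets $k_<'\le k_>'$; monotone convergence together with $\int_{G_b}(\tilde h_n'-\tilde g_n')\,\dd x=M(\tilde h_n-\tilde g_n)<1/n$ gives $\int_{G_b}(k_>'-k_<')\,\dd x=0$. As $k_<'$ is lsc, $k_>'$ is usc, and they agree Haar-a.e., a short semicontinuity argument shows each is continuous at every point where the two coincide, hence continuous a.e., hence Riemann integrable. Finally $k_<'\circ i_b=\sup_n\tilde g_n\le f$ and $k_>'\circ i_b=\inf_n\tilde h_n\ge f$, so every continuous $g'\le k_<'$ and $h'\ge k_>'$ satisfy $g'\circ i_b\le f\le h'\circ i_b$, which is (ii).

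For (ii)$\Rightarrow$(i), given $\eps>0$ I would use Riemann integrability of $k_<'$ and $k_>'$ to pick continuous $g'\le k_<'$ with $\int_{G_b}(k_<'-g')\,\dd x<\eps/2$ and continuous $h'\ge k_>'$ with $\int_{G_b}(h'-k_>')\,\dd x<\eps/2$. Then $g:=g'\circ i_b$ and $h:=h'\circ i_b$ are Bohr almost periodic, satisfy $g\le f\le h$ by (ii), and $M(h-g)=\int_{G_b}(h'-g')\,\dd x<\eps$ (using $\int_{G_b}(k_>'-k_<')\,\dd x=0$); hence $f$ is g-a-p.

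For the concluding assertions, note that in the construction above $|f-g|\le h-g$, so for any van Hove sequence $\cA$ one has $\|f-g\|_{b,1,\cA}=\uM_\cA(|f-g|)\le \uM_\cA(h-g)=M(h-g)<\eps$, the last value being independent of $\cA$. Thus $f$ is approximable in $\|\cdot\|_{b,1,\cA}$ by elements of $SAP(G)$, so $f\in Bap^1_{\cA}(G)$ for every $\cA$ by Remark~\ref{rem:sap in bap}. Since the Bohr map is the isometry $(\cdot)_{\mathsf{b},1}\colon Bap^1_{\cA}(G)/\!\equiv\;\to L^1(G_b)$ of Theorem~\ref{Bap completion} and $g=g'\circ i_b$ has image $[g']$, I obtain $\|(f)_{\mathsf{b},1}-[g']\|_1=\|f-g\|_{b,1,\cA}<\eps$ and $\|[k_<']-[g']\|_1=\int_{G_b}(k_<'-g')\,\dd x<\eps$, whence $(f)_{\mathsf{b},1}=[k_<']$ on letting $\eps\to0$. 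The main obstacle, demanding the most care, is the Riemann-integrability bookkeeping in (i)$\Rightarrow$(ii): establishing the semicontinuity of $k_<',k_>'$, showing that their a.e. coincidence forces continuity a.e., and checking that all order relations lift correctly through $i_b$.
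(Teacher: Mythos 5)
Your proof is correct and follows essentially the same route as the paper: monotonize the sandwiching sequences in $SAP(G)$, push them to $C(G_b)$ via the Bohr map, take the pointwise monotone limits as $k_{<}'$ and $k_{>}'$, and reverse the process using Riemann integrability for (ii)$\Rightarrow$(i). Your treatment is in fact slightly more detailed than the paper's in two spots — the semicontinuity argument for Riemann integrability (where the direct sandwich $\tilde g_n'\leq k_{<}'\leq \tilde h_n'$ with $\int(\tilde h_n'-\tilde g_n')<1/n$ would also do, and avoids any worry about the a.e.-continuity criterion on a non-metrizable $G_b$) and the explicit verification of $(f)_{\mathsf{b},1}=[k_{<}']$, which the paper only asserts follows from the construction.
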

\begin{proof}
(i)$\Longrightarrow$(ii): By (i), there exist $g_n, h_n \in \sap
(G)$, $n\in \NN$, with $g_n \leq f \leq h_n$ and
\[
\lim_{n\to\infty} M(h_n - g_n) = 0 \,.
\]
By eventually replacing $g_n$ and $h_n$ by
\begin{align*}
G_n \, &:= \, \max\{ g_1,g_2, \ldots ,g_n \} \\
  H_n \, &:= \, \min \{ h_1, h_2, \ldots , h_n \} \,,
\end{align*}
we can assume without loss of generality
that $g_n \leq g_{n+1}$ and $h_{n+1} \leq h_n$ for all $n\in \NN$.
By \cite[Thm.~4.3.5]{MoSt}, there exist unique $g_n',h_n'\in
C(G_\mathsf{b})$ with $h_n = h_n'\circ i_\mathsf{b}$ and $g_n =
g_n'\circ i_\mathsf{b}$ for each $n\in\NN$.
Then, since $i_\mathsf{b}(G)$ is dense in $G_\mathsf{b}$, we have
\[
g_1' \leq g_2' \leq \ldots \leq g_n' \leq  \ldots \leq h_n' \leq \ldots \leq h_1' \leq h_1' \,.
\]
Define
\begin{align*}
 k_{>}'(x) \, &:=\, \lim_{n\to\infty} h_n'(x) \,, \\
k_{<}'(x) \,&:=\, \lim_{n\to\infty} g_n'(x) \,.
\end{align*}
Then, $k_{<}'$ and $k_{>}'$ are Riemann integrable, and by construction, we have
\[
\int_{G_\mathsf{b}} (k_{>}' - k_{<}')(x)\, \dd x = 0 \,.
\]
Now, let $h'\in C(G_b)$ with $k_{>}' \leq h'$ be given. Then, for
each $\varepsilon>0$ and each $s\in G$, there exists some $n$ such that
$$f(s) \leq  h_n(s) = h_n' \circ i_\mathsf{b} (s) \leq k_{>}'\circ i_\mathsf{b} (s)
+\varepsilon \leq h' \circ i_\mathsf{b} (s)  + \varepsilon \,.
$$
This shows $f \leq h'\circ i_\mathsf{b}$.
The statement for $g'\in C(G_\mathsf{b})$ with $g'\leq k_{<}'$ can be shown
similarly. This shows (ii).

\medskip

\noindent (ii)$\Longrightarrow$(i): Let $\varepsilon >0$ be given. By
definition of Riemann integrability, we can find $h',g'\in C(G_\mathsf{b})$
with $h'\geq k_{>}'$ and $ k_{<}' \geq g'$ and $\int_{G_\mathsf{b}}  (h' -
g')(x)\, \dd x <\varepsilon$. Then, $g'\circ i_\mathsf{b}, h'\circ i_\mathsf{b}$ belong to $\sap
(G)$. By (ii), they satisfy
\[
g'\circ i_\mathsf{b} \leq f \leq h'\circ
i_\mathsf{b}\,.
\]
Moreover, we have
\[
M(h'\circ i_\mathsf{b} - g'\circ i_\mathsf{b}) = \int_{G_\mathsf{b}} (h' - g')(x)\,
\dd x < \varepsilon \,.
\]
This proves (i).

\medskip

\noindent We already showed in Lemma~\ref{g-a-p-is weyl} that any g-a-p
function belongs to $\wap(G)$ and, hence, to $B\hspace*{-1pt}es_{\cA} (G)$. The
last part of the statement follows from the construction in the
proof of the equivalence.
\end{proof}

As noted in Remark~\ref{rem:Weyl-ap} (c), any
perturbation of a Weyl almost periodic function by a function with
vanishing uniform absolute mean is also Weyl almost periodic. This
suggests to consider the class of functions which are  g-a-p up to
such a perturbation. This natural class will contain all g-a-p
functions. At the same time, it will also contain all weakly almost
periodic functions. So, the induced class of measures
will contain all regular model sets as well as all weakly almost
periodic measures.  Thus, it contains all  `smooth' examples studied
so far in the context of Aperiodic Order, see
\cite{bm,LS2,Martin2}. We will refer to this type of almost
periodicity as Meyer almost periodicity. A precise definition is
provided next.

\begin{definition}[Meyer almost periodicity]
A function $f : G \to \CC$ is called \textit{Meyer
almost periodic}\index{almost~periodic!Meyer~almost~periodic~function} if
$$f = g + h$$
with a g-a-p function $g$ and a bounded function $h$ with
$\|h\|_{\we,1,\cA}=0$.

A measure $\mu \in \cM^\infty(G)$ is called  \textit{Meyer almost
periodic}\index{almost~periodic!Meyer~almost~periodic~measure} if, for all $\varphi \in \Cc(G)$, the function
$\mu*\varphi$ is a Meyer almost periodic function.   \exend
\end{definition}

Since $h$ is bounded, the condition $\|h\|_{\we,1,\cA}=0$  can easily be seen to be
equivalent to $|h|$ being amenable with $M(|h|) =0$.  In particular,
Meyer almost periodicity is independent of the choice of van Hove
sequence, compare Proposition \ref{prop: amenable}.

\begin{remark} \phantom{X}
\begin{itemize}
   \item[(a)] If $f$ is Meyer almost periodic and $h$ is bounded with $\|h\|_{\we,1,\cA}=0$, then $f+h$ is Meyer almost periodic.
  \item [(b)] It is obvious that g-a-p functions are Meyer almost periodic functions. Similarly, g-a-p measures and weakly g-a-p measures are Meyer almost periodic measures.
  \item[(c)] A weakly almost periodic function or measure, respectively, is a Meyer almost periodic function or Meyer almost periodic measure, respectively.
  \item[(d)] A function $f$ is Meyer almost periodic if and only if $\re(f)$ and $\imm(f)$ are Meyer
almost periodic functions.
  \item[(e)] It is easy to see that a linear combination of Meyer almost periodic functions or measures, respectively, is a Meyer almost periodic function or measure, respectively.    \exend
\end{itemize}
\end{remark}

From Proposition~\ref{g-a-p-is weyl}, we
immediately obtain the following result.

\begin{coro}[Meyer almost periodicity entails Weyl almost
periodicity]\label{cor12345} \phantom{X}  \vspace*{-.5cm}
\begin{itemize}
\item [(a)] If $f \in \mathcal{L}^1_{loc}(G)$ is a Meyer almost periodic function, then $f
\in \wap(G)$.
\item [(b)] If $\mu\in \cM^\infty(G)$ is a Meyer almost periodic measure, then
$\mu \in \Wap(G)$.\qed
\end{itemize}
\end{coro}

\begin{remark} Note that most of the results in \cite[Sect.~1 and 2]{Mey2}
follow from Corollary~\ref{cor12345} and the
properties of Weyl almost periodic functions discussed above.   \exend
\end{remark}

As a consequence of the preceding results, we can show that $\Wap
(G)$ contains an ample supply of examples.

\begin{coro} \label{cor21}
The set $\Wap (G)$ contains all Dirac
combs of regular model sets as well as all weakly almost periodic
measures.\qed
\end{coro}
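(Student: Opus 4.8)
The plan is to obtain both assertions by chaining together the results already established for g-a-p and Meyer almost periodic measures, so that the corollary becomes essentially a bookkeeping exercise rather than a genuine computation.

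First I would treat the Dirac combs of regular model sets. Since a regular model set $\Lambda$ is in particular a Delone set, it is uniformly discrete, so its Dirac comb $\delta_\Lambda$ is translation bounded and hence $\delta_\Lambda \in \cM^\infty(G)$. By Lemma~\ref{model is gwap}, $\delta_\Lambda$ is a g-a-p measure. As recorded in the remark following the definition of Meyer almost periodicity, every g-a-p measure is Meyer almost periodic, so Corollary~\ref{cor:Meyer-implies-Weyl}(b) applies and yields $\delta_\Lambda \in \Wap(G)$. For the weakly almost periodic measures the argument is even shorter: if $\mu$ is weakly almost periodic, then $\mu * \varphi$ is a weakly almost periodic function, hence lies in $\Cu(G)$, for every $\varphi \in \Cc(G)$; by the characterization of translation boundedness recalled in Section~\ref{sec-key-player} this already gives $\mu \in \cM^\infty(G)$. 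The same remark notes that every weakly almost periodic measure is Meyer almost periodic, so a second application of Corollary~\ref{cor:Meyer-implies-Weyl}(b) gives $\mu \in \Wap(G)$.

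There is no substantive obstacle here, as all the real work was carried out in Lemma~\ref{model is gwap} (regular model sets are g-a-p) and in Corollary~\ref{cor:Meyer-implies-Weyl} (Meyer almost periodicity implies Weyl almost periodicity). The only point requiring a moment's care is verifying membership in $\cM^\infty(G)$ in each case, since Corollary~\ref{cor:Meyer-implies-Weyl}(b) is phrased for translation bounded measures; this follows from uniform discreteness of the underlying Delone set in the first case and from $\mu * \varphi \in \Cu(G)$ in the second. Accordingly, I expect the proof to consist of little more than stating these two reductions explicitly.
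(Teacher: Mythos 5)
Your proposal is correct and follows essentially the same route the paper intends: Lemma~\ref{model is gwap} (regular model sets give g-a-p Dirac combs), the remark that g-a-p and weakly almost periodic measures are Meyer almost periodic, and Corollary~\ref{cor:Meyer-implies-Weyl}. Your extra care about membership in $\cM^\infty(G)$ (via uniform discreteness in one case and $\mu*\varphi\in\Cu(G)$ in the other) is a sensible addition that the paper leaves implicit.
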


It is well known that weakly almost periodic measures and model sets
are uniquely ergodic, have pure-point spectrum and continuously representable
eigenfunctions \cite{LS2,Martin2}. When combining
Corollary~\ref{cor21} with Theorem~\ref{thm:wap} below, we obtain an alternative
proof for this statement.

\smallskip

We complete the section with a characterization of  Meyer almost
periodicity.

\begin{theorem}[Characterization of Meyer almost periodicity]
Let $f \in \mathcal{L}^1_{loc}(G)$. Then, the following assertions are equivalent:
\begin{itemize}
  \item[(i)]  The function $f$ is Meyer almost periodic.
  \item[(ii)] $f \in \wap(G)\cap L^\infty(G)$ and the class of $(f)_{\mathsf{b},1} \in
\mathcal{L}^1(G_{\mathsf{b}})$ contains a Riemann integrable function.
\end{itemize}
\end{theorem}
\begin{proof} (i)$\Longrightarrow$(ii): Corollary~\ref{cor21}
implies that any Meyer almost periodic function belongs to $\wap(G)$. Moreover, if $f = g  +h$ with $g$
being g-a-p and $h$ having uniform mean zero, then
$(f)_{\mathsf{b,1}} = (g)_{\mathsf{b,1}}$ contains an Riemann
integrable representative by Lemma  \ref{lem:char-gap}. Finally, since $g,h$ are bounded, so is $f$.

\medskip

\noindent (ii)$\Longrightarrow$(i): It suffices to prove the
claim for real valued functions.
Fix a van Hove sequence $\cA$, and let $k \in \mathcal{L}^1(G_{\mathsf{b}})$ be Riemann integrable such that
$(f)_{\mathsf{b},1, \cA}=[k]_1$ in $\mathcal{L}^1(G_{\mathsf{b}})$.
Since $f \in \wap(G) \cap L^\infty(G)$, we have
\[
(f)_{\mathsf{b},1, \cB}=[k]_1
\] for any other van Hove sequence $\cB$.
By Riemann integrability, we can then find an increasing sequence $(g_n')$ in $\Cc(G_{\mathsf{b}})$ and a decreasing sequence $(h_n')$ in $\Cc(G_{\mathsf{b}})$ satisfying
\[
g_n' \leq k \,\leq\,  h_n' 
\]
and
\[
\int_{G_{\mathsf{b}}}   (h_n'-  g_n')\, \dd
\theta_{G_{\mathsf{b}}}\, \leq \, \frac{1}{n} \,.
\]
Let $g_n =g_n' \circ i_{\mathsf{b}}, h_n =h_n' \circ i_{\mathsf{b}}$. Then, $(g_n)$ is an increasing sequence, $(h_n)$ is a decreasing sequence, $g_n,h_n \in \sap(G)$ and
\[
M(h_n-g_n) = \int_{G_{\mathsf{b}}}  (h_n'-  g_n')\, \dd
\theta_{G_{\mathsf{b}}} \leq \frac{1}{n} \,.
\]
Define $g(x)= \sup \{ g_n(x): n\in\NN \}$. Then, we have
\[
g_1\leq   g_2 \leq \ldots \leq   g_n \leq \ldots\leq g   \leq \ldots  h_n
\leq \ldots \leq  h_2 \leq h_1
\]
from where we get that $g$ is a g-a-p function. In particular, $g \in \wap(G)$.

Let $h=f-g$.
Note first that 
\[
|h|=|f-g| \leq |f|+|g| \leq |f|+|h_1| \,,
\]
and thus $h$ is bounded. Moreover, since $g_n \to g$ in $\wap(G)$, recalling that
$g_n \in \sap(G)$, for all van Hove sequences $\cB$, we have
$(g_n)_{\mathsf{b}} \to [g]_{\mathsf{b},1,\cB}$ in
$\mathcal{L}^1(G_{\mathsf{b}})$ and hence
\[[g]_{\mathsf{b},1,\cB}= [k]_1 =
[f]_{\mathsf{b},1,\cB} \]
in $\mathcal{L}^1(G_{\mathsf{b}})$. Therefore,
$[h]_{\mathsf{b},1,\cB}= 0$ and thus
\[\| h \|_{\mathsf{b}, 1,\cB}=0
\]
holds for all van Hove sequences $\cB$. Therefore, by Proposition~\ref{prop:
amenable} we get $\| h \|_{\we, 1}=0$. This completes the
proof.
\end{proof}

\section[Unique mean on Weyl almost periodic functions] {Unique mean on the class of uniformly continuous Weyl almost periodic functions.}

In this section, we briefly discuss the class
\[
\wap^p(G) \cap \Cu(G) \, = \, \wap(G) \cap \Cu(G) \,.
\]
This is a vector space which, as we discussed above, does not depend on the choice of $p$ or of the van Hove sequence $\cA$. We show that there exists a unique mean in this class, extending this well-known result from $\weak(G)$ to $\wap^p(G) \cap \Cu(G)$.
Moreover, we will see that there exists an alternate way of defining this mean without any reference to F\o lner sequence, and that the elements in this class can characterized by using this mean. This gives an alternative way to understand the independence of Weyl almost periodicity of the choice of  F\o lner sequence.

Let us briefly review the following results; for proofs, generalizations and a more details we refer the reader to \cite{MoSt}.
For $f \in \Cu(G)$ we denote by $C_f$ the closed convex hull of $\{ \tau_t f : t \in G\}$ in $(\Cu(G), \| \, \|_\infty)$.
Let us now recall the following result of Eberlein \cite{Eb}.

\begin{lemma}\cite[Prop.~4.5.6.]{MoSt}\label{prop-amen}
\begin{itemize}
  \item[(a)] For each $f \in \Cu(G)$, the set $C_f$ contains at most a constant function.
  \item[(b)] For each $f \in \Cu(G)$, the following are equivalent:
  \begin{itemize}
    \item[(i)] $C_f$ contains a constant function.
    \item[(ii)] The mean $M_{\cA}(f)$ exists uniformly for one F\o lner sequence.
     \item[(iii)] The mean $M_{\cB}(f)$ exists uniformly for all F\o lner sequences, and is the same.
  \end{itemize}
  Moreover, in this case, the unique constant function in $C_f$ is $M_{\cA}(f)$.\qed
\end{itemize}
\end{lemma}

With this in mind, we define  \nomenclature{$\amen$}{subset of $\Cu(G)$ consisting of amenable functions}
\[
\amen:= \{ f  \in \Cu(G)  : C_f \mbox{ contains a constant function} \} \,.
\]
Since condition (ii) (or (iii)) in Proposition~\ref{prop-amen} is linear, it follows that $\amen$ is a subspace of $(\Cu(G), \| \cdot \|_\infty)$, which is closed
by Proposition~\ref{prop-aprox-FB} (ii) applied to the trivial character.

\smallskip

Since, for $f \in \amen$, the closed convex hull $C_f$ contains exactly one constant function, we can define a mapping
\[
m: \amen \to \CC
\]
by setting $m(f)$ to be the only complex number such that $m(f) 1_G \in C_f$. Using again condition (ii) (or (iii)) in Proposition~\ref{prop-amen}, we see that $m$ is linear.

\medskip

Recall next that, given a $G$-invariant subspace $X$ of $L^\infty(G)$ such that $1_G \in X$, a \textit{mean}\index{mean} on $X$ is a mapping $ L : X \to \CC$
which is linear, positive, $G$-invariant and satisfies $L(1_G)=1$.

\smallskip

It is easy to see that the mapping $m$ we defined above is a mean. In fact, the following follows immediately from the definition, compare \cite[Prop.~4.5.9]{MoSt}.

\begin{prop}\label{uni-mea}
\phantom{XX}
\begin{itemize}
  \item[(a)] The mapping $m$ defined above is a mean on $\amen$.
  \item[(b)] Let $X \subseteq \amen$ be a closed $G$-invariant subspace such that $1_G \in X$ and $X$ is closed under complex conjugation. If $L$ is a mean on $X$, one has
\[
L(f) =m(f) \qquad  \text{ for all } f \in X \,.
\]
\end{itemize}
\end{prop}
\begin{proof}
(a) follows immediately from condition (ii) in Proposition~\ref{prop-amen}(b).

\medskip

\noindent(b) If $f \in  X$ is real valued, we have
\[
-\| f \|_\infty 1_G \leq f \leq \| f\|_\infty 1_G\,.
 \]
The positivity of $L$ gives $-\| f\|_\infty \leq L(f) \leq \|f\|_\infty$, i.e.
\begin{equation}
|L(f)| \leq \| f\|_\infty \,. \label{eq22xz}
\end{equation}
Next, let $f \in X$ be arbitrary. Since $X$ is closed under complex conjugation, we get $\mbox{Re}(f)=\frac{1}{2}(f+\bar{f}) \in X $ and $\mbox{Im}(f)=\frac{1}{2i}(f-\bar{f})\in X$. Then,
\[
  |L(\mbox{Re}(f))| \leq \| \mbox{Re}(f)\|_{\infty} \leq  \| f\|_\infty
\]
and
\[
  |L(\mbox{Im}(f))| \leq \| \mbox{Im}(f)\|_{\infty} \leq  \| f\|_\infty
\]
follow from Eq.~\eqref{eq22xz}. This gives
\[
|L(f)| \leq 2 \|f \|_\infty \,,
\]
which implies the continuity of $L$.

Finally, let $f \in X$ be arbitrary. By linearity, $L$ is constant on the convex hull $\mbox{Conv}\big( \{ \tau_t f :t \in G \}\big)$. Since $L$ is continuous, it is constant on
$C_f$. Using the fact that $m(f)1_G , f \in C_f$, we obtain
\[
L(f)=L(m(f)1_G)=m(f) \,.
\]
\end{proof}

Keeping in mind that the definition of $m$ does not depend on the choice of the F\o lner sequence, the following results are immediate.

\begin{lemma} \label{lem-mean-wap}
\phantom{XX}
\begin{itemize}
  \item[(a)] $\wap(G) \cap \Cu(G)$ is an algebra which is closed under translation, absolute value and multiplication by characters.
  \item[(b)] $\wap(G) \cap \Cu(G) \subseteq \amen$ and for all $f \in \wap(G) \cap \Cu(G)$, we have
  \[
  \| f\|_{\we,\cA} = m( |f|) \,.
  \]
  \item[(c)] The space $\wap(G) \cap \Cu(G)$ is closed in $(\Cu(G),  \| \cdot \||_\infty)$.
  \item[(d)] The mean $m$ is the only mean on $\wap(G) \cap \Cu(G)$.
\end{itemize}
\end{lemma}
\begin{proof}
(a) and (b) follow from  Proposition~\ref{spi lemma}.

\medskip

\noindent (c) Let $f_n \in \wap(G) \cap \Cu(G)$, and let $f \in \Cu(G)$ be such that $\| f_n -f \|_\infty \to 0$. Then, one has
\[
\| f_n -f \|_{\we,\cA} \leq \| f_n -f \|_\infty \to 0 \,.
\]
Since $f_n \in \wap(G)$, we get $f \in \wap(G)$.

\medskip

\noindent (d) follows from Proposition~\ref{uni-mea}.
\end{proof}

Finally, the definition, Lemma~\ref{lem-mean-wap} and Proposition~\ref{prop-char-wap} give the next result.

\begin{prop}
Let $f \in \Cu(G)$. Then, the following assertions are equivalent:
  \item[(i)] One has $f \in \wap(G)$.
  \item[(ii)] For each $\eps >0$, there is some $g \in \sap(G)$ such that $|f-g| \in \amen$ and
  \[
  m\big(|f-g|\big) < \eps \,.
  \]
  \item[(iii)] One has,
  \begin{itemize}
    \item{} $|f|^2 \in \amen$,
    \item{} $\chi f \in \amen$ for all $\chi \in \widehat{G}$,
    \item{} 
    \[
    m\big(|f|^2\big)= \sum_{\chi \in \widehat{G}} \big| m(\chi f) \big|^2 \,.      \tag*{$\qed$}
    \]
  \end{itemize}
\end{prop}

\chapter{Unavoidability of Besicovitch and Weyl almost periodicity}\label{sec:unavoidable}

In the preceding chapters, we have discussed how Besicovitch and
Weyl almost periodic functions allow us to solve Questions 2 and 3. Now, we discuss how,
under a mild additional regularity condition, these are actually
the only solutions.

\smallskip

In the article \cite{LAG}, Lagarias outlines some conditions that a
vector space $C$  of almost periodic functions should satisfy in
order to give a good theory. These conditions include the following:

\begin{itemize}
\item Expansion in a Fourier series, i.e. each  $f\in C$ has a formal
Fourier series $f \sim \sum c_\chi \chi$.
\item The Riesz--Fischer property  holds, i.e. for each square summable $(c_\chi)$ there is an element $f\in C$ with $f\sim\sum c_\chi \chi$.
\item The Parseval equality holds, i.e.  $\|f\|^2 = \sum |c_\chi|^2$ for all $f\in C$.
\end{itemize}

While it is not explicitly stated, two further requirements seem to
be natural. First, the Fourier expansion is linear. Second, with the
choice that the coefficients $c_\chi$ vanish for all but one $\chi$,
one obtains that the space $C$ contains the characters. Now, the
basic idea is that the measures (or distributions) $\mu$, with $\mu
\ast \varphi \in C$ for all $\varphi\in \Cc (G)$, have the desired
diffraction properties. This suggests to add another assumption, namely
that the elements of $C$ themselves also have the desired
diffraction properties. Making this additional assumption
(assumption (d) in Lemma~\ref{lem:appearance}), one ends up with
Besicovitch almost periodic functions.

\begin{lemma}[Appearance of  $B\hspace*{-1pt}es^2_{\cA} (G)$]  \label{lem:appearance}
Let $C$ be a subspace of $L^2_{loc}(G)$ with a seminorm $\| \cdot \|$,
and let $\cA$ be a van Hove sequence with the following properties:
\begin{itemize}
\item[(a)] For all $f \in C$, there exist $c_{\chi} \in \CC$
and a formal expansion $  f \sim \sum_{\chi} c_{\chi} \chi$ such that
\[
\| f \|^2 = \sum_{\chi\in\widehat{G}} |c_\chi |^2 \,.
\]
\item[(b)] We have $\widehat{G} \subseteq C$, and each $\chi \in \widehat{G}$ has a formal expansion $\chi \sim \chi$.
\item[(c)] The formal Fourier expansion in (a) is linear.
\item[(d)] For all $f \in C$, the reflected Eberlein convolution
$\lb f,f \rb_\cA$ exists with respect to $\cA$, is
continuous, and the measure $\gamma_f:=\lb f,f \rb_\cA \theta_G$ satisfies
\[
\reallywidehat{\gamma_f}=\sum_{\chi\in\widehat{G}} |c_\chi|^2\, \delta_{\chi} \,.
\]
\end{itemize}
Then, $C \subseteq \bap^2(G)$ and, for each $f\in C$, we have
\[
\| f\| \, = \, \| f \|_{\be, 2,\cA} 
\]
and 
\[
c_\chi  \,=\, a_\chi^{\cA}(f)   \,,
\]
for all $\chi \in \widehat{G}$.

Moreover, if the Riesz--Fischer condition holds in $C$, then $C=\Bap_{\cA}^2(G)$.
\end{lemma}
\begin{proof}
Let $f \in C$, and let $\{\chi_n: n\in\NN\}$ be an enumeration such that 
\[
f \sim \sum_{n=1}^{\infty} c_{\chi_n} \chi_n \,.
\] 
Note that this is possible, since $\{ \chi :c_\chi \neq 0\}$ is at
most countable, and, if finite, we can add some $\chi_n$ such that
$c_{\chi_n}=0$.
For each $N\in\NN$, set
\[
P_N:= \sum_{n=1}^{N} c_{\chi_n} \chi_n \,.
\]
Let $g_N=f-P_N \in C$. Then, by (c), $g_N$ has the formal Fourier
series
\[
g_N \sim \sum_{n=N+1}^{\infty} c_{\chi_n} \chi_n 
\]
and 
\[
\|g_N\|^2  = \sum_{n=N+1}^{\infty} |c_{\chi_n}|^2 \,.
\]
We also know that
\[
\reallywidehat{\gamma_{g_N}}=\sum_{n=N+1}^\infty |c_{\chi_n}|^2
\delta_{\chi_n}
\]
is a finite measure. Let $h$ be the inverse Fourier transform of
this finite measure. By \cite{ARMA1,MoSt}, we have 
\[
\widehat{\gamma_{g_N}}=\widehat{h \theta_G} \,,
\] 
which gives 
\[
\gamma_{g_N} = h\theta_G \,.
\] 
We clearly also have
$\gamma_{g_N}=\lb g_N, g_N \rb \theta_G$. This
shows that
\[
h\theta_G= \lb g_N, g_N \rb\, \theta_G \,,
\]
and therefore $h=\lb g_N, g_N \rb$ almost everywhere. Since $h$ and $\lb g_N, g_N \rb$ are continuous, they are equal
everywhere. In particular, 
\[
h(0)=\lb g_N, g_N \rb(0)=M_{\cA}(|g_N|^2) \,.
\] 
and hence,
\[
\|g_N\|^2 =\sum_{n=N+1}^{\infty} |c_{\chi_n}|^2=h (0)=
M_{\cA}(|g_N|^2)  \,.
\]
Note that, for $N=0$, this yields
\[
\|f\|^2=M_{\cA}(|f|^2) = \| f \|_{\be, 2, \cA}^2 \,.
\]
for all $f \in C$.

\noindent We also have
\[
\lim_{N \to \infty} \| f -P_N\|_{\be, 2, \cA}^2=\lim_{N \to \infty} M_{\cA}(|f- P_N|^2) = \lim_{N \to \infty}
\sum_{n=N+1}^{\infty} |c_{\chi_n}|^2=0  \,.
\]
This shows that $f \in \bap^2(G)$ and that $P_N \to f$ in
$\bap^2(G)$. Therefore,
\[
f= \sum_{\chi} c_{\chi} \chi \qquad \mbox{ in }
\bap^2(G)\,,
\]
which implies $c_\chi= a_\chi^\cA(f)$. The last
claim is obvious.
\end{proof}

If one assumes uniform existence of the autocorrelation, one ends up
with Weyl almost periodic functions as follows by a variant of the
preceding considerations.

\begin{lemma}[Appearance of $\wap^2 (G)$]
Let $C$ be a subspace of $L^2_{loc}(G)$ with a seminorm $\|\cdot \|$. Assume that the following properties hold:
\begin{itemize}
\item[(a)] For all $f \in C$, there exist $c_{\chi} \in \CC$
and a formal expansion $f \sim \sum_{\chi} c_{\chi} \chi$ with
\[
\| f \|^2 = \sum_{\chi\in\widehat{G}} |c_\chi |^2 \,.
\]
\item[(b)] We have $\widehat{G} \subseteq C$, and each $\chi \in \widehat{G}$ has a formal expansion $\chi \sim \chi$.
\item[(c)] The formal Fourier expansion in (a) is linear.
\item[(d)]  For all $f \in C$, the reflected Eberlein convolution $\lb f, f \rb$ exists, is independent of the choice of the van Hove sequence, is continuous, and the measure $\gamma_f:=\lb f,f \rb \, \theta_G$ satisfies
\[
\reallywidehat{\gamma_f}=\sum_{\chi} |c_\chi|^2 \delta_{\chi}  \,.
\]
\end{itemize}
Then, $C \subseteq \wap^2(G)$ and, for each $f\in C$, we have  
\[
\|f\| \, = \, \| f \|_{\we,2} 
\]
and 
\[
c_\chi \,=\, a_\chi(f)  \,,
\]
for all $\chi \in\widehat{G}$.
\end{lemma}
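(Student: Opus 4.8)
The plan is to follow the proof of the preceding lemma (the appearance of $Bap^2_{\cA}(G)$) essentially verbatim up to the point where the Besicovitch seminorm is identified, and then to exploit the extra hypothesis in (d) — independence of the van Hove sequence — to replace the Besicovitch seminorm by the Weyl seminorm. Concretely, I would fix $f\in C$, enumerate $\{\chi_n:n\in\NN\}$ so that $f\sim\sum_n c_{\chi_n}\chi_n$, and set $P_N:=\sum_{n=1}^N c_{\chi_n}\chi_n$ and $g_N:=f-P_N$. By (a) and (c) we have $g_N\in C$ with $g_N\sim\sum_{n>N}c_{\chi_n}\chi_n$ and $\|g_N\|^2=\sum_{n>N}|c_{\chi_n}|^2$. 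Just as before, (d) gives that $\widehat{\gamma_{g_N}}=\sum_{n>N}|c_{\chi_n}|^2\,\delta_{\chi_n}$ is a finite measure, that its inverse Fourier transform $h_N$ satisfies $h_N\theta_G=\gamma_{g_N}=(g_N\circledast\widetilde{g_N})\theta_G$ (using \cite{ARMA1,MoSt} and \cite[Rem.~2.3]{BS}), and that, both $h_N$ and $g_N\circledast\widetilde{g_N}$ being continuous, $(g_N\circledast\widetilde{g_N})(0)=h_N(0)=\sum_{n>N}|c_{\chi_n}|^2$. Since $(g_N\circledast_\cA\widetilde{g_N})(0)=M_\cA(|g_N|^2)$ by the definition of the Eberlein convolution, this yields $\|g_N\|^2=M_\cA(|g_N|^2)$, and in particular, for $N=0$, $\|f\|^2=M_\cA(|f|^2)$.

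The genuinely new step is the upgrade from the ordinary mean to the uniform mean. Since by (d) the Eberlein convolution $g_N\circledast\widetilde{g_N}$ is independent of the van Hove sequence, repeating the computation above along an arbitrary van Hove sequence $\cB$ gives $M_\cB(|g_N|^2)=(g_N\circledast_\cB\widetilde{g_N})(0)=(g_N\circledast_\cA\widetilde{g_N})(0)=\|g_N\|^2$ for every $\cB$; in particular $M_\cB(|g_N|^2)$ exists for all $\cB$ and is independent of $\cB$. I would then invoke Proposition~\ref{prop: amenable} (the implication from existence of the mean along every van Hove sequence to uniform existence along $\cA$) to conclude that $\frac{1}{|A_n|}\int_{s+A_n}|g_N|^2\,\dd t\to\|g_N\|^2$ uniformly in $s\in G$, whence $\uuM_\cA(|g_N|^2)=\|g_N\|^2$, i.e. $\|g_N\|_{w,2,\cA}^2=\|g_N\|^2$. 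Taking $N=0$ gives $\|f\|_{w,2,\cA}=\|f\|$, while letting $N\to\infty$ gives $\|f-P_N\|_{w,2,\cA}^2=\sum_{n>N}|c_{\chi_n}|^2\to0$; since each $P_N$ is a trigonometric polynomial, this shows $f\in Wap^2(G)$.

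Finally, to identify the coefficients I would use $Wap^2(G)\subset Bap^2_\cA(G)$ (Proposition~\ref{ap inclusions}) together with the continuity of the Fourier--Bohr coefficients $a_\chi^\cA$ on $Bap(G)$ (Corollary~\ref{FB for BAP fct}): for $N$ large enough that $\chi=\chi_m$ with $m\le N$ one has $a_\chi^\cA(P_N)=c_\chi$, and since $P_N\to f$ in $\|\cdot\|_{b,2,\cA}$ this passes to the limit to give $c_\chi=a_\chi^\cA(f)$. The one point requiring care — and what I expect to be the main obstacle — is the application of Proposition~\ref{prop: amenable} in the upgrade step: that proposition is stated for bounded functions, whereas $|g_N|^2$ need not be bounded when $f\in L^2_{loc}(G)$. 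I would address this by observing that the interlacing-of-van-Hove-sequences argument in the proof of Proposition~\ref{prop: amenable} uses only the existence of the means along all van Hove sequences, which we have just established for $|g_N|^2$, and hence goes through without the boundedness assumption.
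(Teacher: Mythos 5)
Your proposal is correct and follows essentially the same route as the paper: repeat the $Bap^2$ argument to get $\|g_N\|^2=(g_N\circledast\widetilde{g_N})(0)=M(|g_N|^2)$, then use the van Hove independence in (d) together with the mixing argument of Proposition~\ref{prop: amenable} to upgrade to uniform existence of the mean, hence to the Weyl seminorm. Your explicit observation that the implication (iii)$\Rightarrow$(i) of that proposition does not actually need boundedness of $|g_N|^2$ fills in a point the paper's one-line remark leaves implicit.
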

\begin{proof}
%
Follow the lines of the previous proof until the line: 
In particular
\[
h(0)=\lb g_N, g_N \rb(0)=M(|g_N|^2) \,.
\]

Here, note that $\lb g_N, g_N \rb(0)$ is independent
of the choice of the van Hove sequence, and so is $M(|g_N|^2)$. In
particular, the mean exists uniformly in translates.
Therefore,
\[
\|g_N\|^2 =\sum_{n=N+1}^{\infty} |c_{\chi_n}|^2=h (0)= M(|g_N|^2)  \,.
\]
For $N=0$, this yields
\[
\|f\|^2=M(|f|^2)=\| f\|_{\we,2}^2  \,,
\]
since the mean exists uniformly in translates.
We also have
\[
\lim_{N \to \infty} \| f-P_N\|_{\we,2}^2  =\lim_{N \to \infty} M(|f- P_N|^2) = \lim_{N \to \infty}
\sum_{n=N+1}^{\infty} |c_{\chi_n}|^2=0 \,.
\]
Since, by the above observations, the mean exists uniformly in translates, we get $f
\in \wap^2(G)$ and $P_N \to f$ in $\wap^2(G)$. Therefore,
\[
f= \sum_{\chi} c_{\chi} \chi \qquad \mbox{ holds  in } \wap^2(G) \,,
\]
which implies $c_\chi= M(f \bar{\chi})=a_\chi(f)$.
\end{proof}

\begin{remark} The preceding two lemmas contain in (d) the
requirement of continuity of the Eberlein convolution. This may seem
like an extra condition. However, for a translation
bounded measures $\mu$, the existence of the Eberlein convolution
$h:=\lb \mu\ast \varphi, \mu \ast \psi\rb$, for
$\varphi,\psi \in\Cc (G)$, automatically entails that $h$ is
continuous and even uniformly continuous, see Proposition
\ref{prop:Eberlein-functions}. So, as far as our application goes,
this is not a restriction.   \exend
\end{remark}

\chapter[Pure point diffraction and TMDS]{Pure point diffraction, almost periodicity and
translation bounded measure dynamical systems}\label{sec:Dynamics}
In this chapter, we have a look at pure-point diffraction and almost
periodicity from the point of dynamical
systems. As discussed in the introduction, dynamical systems play a
key role in the investigation of Aperiodic Order.
In the  companion
articles \cite{LSS,LS3}, we study related aspects for general dynamical
systems.

\section{Dynamical systems}
Suitable dynamical systems provide a convenient and most  used
way to deal with diffraction. The necessary background is discussed
in this section. We follow \cite{BL}, to which we refer for further
details and background.
We start by briefly reviewing general results about topological dynamical systems.

\subsection{Topological dynamical systems and ergodic measures}

Recall that a topological dynamical system $(X,G)$ consists of a compact topological space $X$ and a continuous group action
\[
\alpha : G \times X \to X \,.
\]
We will further assume that the topology on $X$ is metrisable and that $G$ is $\sigma$-compact. The metrisability condition on $X$ implies that $C(X)$ is separable, which will be important in many proofs below.
A dynamical system is called \textit{transitive}\index{transitive!dynamical~system} if there exists some $x \in X$ such that the orbit
\[
O_x:=\{ \alpha(t,x) : t \in G \}
\]
of $x$ is dense in $X$. In this case, we say that $x$ is a \textit{transitive point}\index{transitive!point}. The action $\alpha$ induces a translation operation first on functions $f :X \to \CC$, and then on measures via
\begin{align*}
  \tau_t f (x) \, &:=\, f\big( \alpha(-t,x)\big)   \\
  \tau_t \mathfrak{m} (f) \, &:=\, \mathfrak{m}(\tau_{-t}f) \,.
\end{align*}
Given a function $f \in C(X)$ and some $x \in X$, we define the function $f_x : G \to \CC$ via
\[
f_x(t):=  f\big( \alpha(t,x)\big)  \,.
\]
Then, the translation action is compatibly withe mapping $f \mapsto f_x$. Indeed, for all $f \in C(X), t \in G$ and $ x \in X$, we have
\[
\tau_t f_{x}(s) = f_{x}(s-t)=  f\big( \alpha(s-t,x)\big)= (\tau_t f)\big( \alpha(s,x)\big)= (\tau_t f)_x(s) \,.
\]

We now recall the following standard definitions.

\begin{definition}
A measure $\mathfrak{m}$ on $X$ is called \textit{$G$-invariant}\index{$G$-invariant~measure} if
\[
\tau_t \mathfrak{m} =\mathfrak{m} \qquad \text{ for all } t \in G \,.
\]
A $G$-invariant probability measure $\mathfrak{m}$ on $G$ is called \textit{ergodic}\index{ergodic~measure} if, for all measurable $G$-invariant subsets
$A \subseteq X$, we have $\mathfrak{m}(A) \in \{0,1\}$.
The dynamical system $(X, G)$ is called  \textit{uniquely ergodic}\index{unique~ergodicity} if there exists an unique ergodic measure on $X$.   \exend
\end{definition}

It is well known that for metrisable $X$ and $\sigma$-compact $G$, there exists ergodic measures on $X$.
Next, recall that a F\o lner sequence $\cA$ is called \textit{tempered}\index{F\o lner~sequence!tempered}
if there exists some constant $C>0$ such that
\begin{equation}\label{eq:temp}
\bigg| \bigcup_{k=1}^{n-1} (A^{}_n-A^{}_k) \bigg| \,<\, C |A^{}_n|
\end{equation}
for all $n\in\NN$.

The importance of ergodic measures is given by the following result.

\begin{theorem}[Birkhoff ergodic theorem]\cite{Lin}\label{thm:pointet}
Let $(X,G)$ be a topological dynamical system with metrisable $X$ and $\sigma$-compact $G$, and let $\mathfrak{m}$ be an ergodic measure on $X$.
\begin{itemize}
  \item[(a)] Any F\o lner sequence $\cA$ admits a subsequence which satisfies \eqref{eq:temp}.
  \item[(b)] If $\cA$ is a F\o lner sequence satisfying \eqref{eq:temp}, then, for each $f \in \cL^1(X, \mathfrak{m})$, there exists a set $X_f \subseteq X$ with $\mathfrak{m}(X_f)=1$, such that, for all
  $x \in X_f$, we have
  \begin{equation}\label{eq:pet}
\lim_{n\to\infty} \frac{1}{|A_n|} \int_{A_n} f\big(\alpha(t,x)\big)\, \dd t = \int_{X} f(y)\, \dd \mathfrak{m}(y) \,.  \tag*{$\qed$}
\end{equation}
\end{itemize}
\end{theorem}

\begin{remark}
If $f,g \in \cL^1(X, \mathfrak{m})$ are such that $\| f-g\|_1=0$, then
\[
 \int_{X} f(y)\, \dd \mathfrak{m}(y)=  \int_{X} g(y)\, \dd \mathfrak{m}(y) \,,
\]
but $\frac{1}{|A_n|} \int_{A_n} f(\alpha(t,x)) \dd t$ and $\frac{1}{|A_n|} \int_{A_n} g(\alpha(t,x)) \dd t$ could have different limits for some $x$.
In particular, the set $X_f$ can depend on the representative $f$ we choose in its $L^1$ class. For this reason, below, we work with functions in $\cL^1(X, \mm)$ and not with classes in $L^1(X, \mm)$.    \exend
\end{remark}
\medskip

The above result suggest the following definition.

\begin{definition}
Let $(X, G)$ be a topological dynamical system, let $\mathfrak{m}$ be a $G$-invariant probability measure on $X$, and let $\cA$ be a F\o lner sequence.
We say that $x \in X$ is \textit{generic}\index{generic~point} for $\mathfrak{m}$ and $\cA$ if, for all $f \in C(X)$, we have
\[
\lim_{n\to\infty} \frac{1}{|A_n|} \int_{A_n} f\big(\alpha(t,x)\big)\, \dd t = \int_{X} f(y)\, \dd \mathfrak{m}(y) \,.
\]
The set of generic points is denoted by $X_{\mathsf{g}}$\nomenclature{$X_{\mathsf{g}}$}{set of generic points for an ergodic measure}.    \exend
\end{definition}

Since $C(X)$ is separable, the Birkhoff ergodic theorem has the following consequence.

\begin{coro}\label{cor-ergodic-generic}
Let $(X, \mathfrak{m}, G)$ be an ergodic dynamical system and let with metrisable $X$ and $\sigma$-compact $G$, and let $\cA$ be a F\o lner sequence satisfying \eqref{eq:temp}.
\begin{itemize}
  \item[(a)] One has $\mathfrak{m}(X_{\mathsf{g}})=1$.
  \item[(b)] If $(X, G)$ is uniquely ergodic with unique ergodic measure $\mathfrak{m}$, then all elements $x \in X$ are generic for $\mathfrak{m}$ and all F\o lner sequences $\cB$.
\end{itemize}
\end{coro}
\begin{proof}
(a) follows from the separability of $C(X)$ and Birkhoff's ergodic theorem.

\smallskip
\noindent (b) is just the unique ergodic theorem. Alternatively, this follows immediately from Proposition~\ref{prop-gen} below.
\end{proof}

Next, we address the other direction, showing that each element $x \in X$ is generic for some $G$-invariant measure.

\begin{prop}\label{prop-gen}
Let $(X,G)$ be topological dynamical system with $X$ metrisable and $G$ $\sigma$-compact, and let $x \in X$.
Then, for each F\o lner sequence $\cA$, there exists a $G$-invariant probability measure $\mathfrak{m}$ on $X$ and some subsequence $\cB$ of $\cA$ such that
$x$ is generic for $\mm$ and $\cB$.
\end{prop}
\begin{proof}
For each $n$ define $\mathfrak{m}_n :C(X) \to \CC$ via
\[
\mathfrak{m}_n(f) := \frac{1}{|A_n|} \int_{A_n} f\big(\alpha(t,x)\big)\, \dd t \,.
\]
It is immediate that each $\mathfrak{m}_n$ is a probability measure on $X$.
Since $X$ is metrisable, $C(X)$ is separable. It follows that the set $\mbox{Prob}(X)$ of probability measures on $X$ is compact and metrisable in
the vague topology. Therefore, $(\mathfrak{m}_n)$ has a subsequence $(\mathfrak{m}_{n_k})$, which converges vaguely to some probability measure $\mathfrak{m}$. By definition, this measure satisfies
\[
\lim_{k\to\infty} \frac{1}{|A_{n_k}|} \int_{A_{n_k}} f\big(\alpha(t,x)\big)\, \dd t = \int_{X} f(y)\, \dd \mathfrak{m}(y) \,,
\]
showing that $x$ is generic for $\mathfrak{m}$ and $\cB= (A_{n_k})$.

Finally, the $G$-invariance of $\mathfrak{m}$ follows trivially from the fact that $\cB$ is a F\o lner sequence.
\end{proof}

We complete this subsection by recalling the following
characterization of unique ergodicity. For $G = \ZZ$, this is given in \cite{Wal}.
For more general $G$, this follows by simple
adaption of the argument. We include the details for the convenience of the reader.

\begin{theorem}\label{uniq ergod char}
Let $(X, G)$ be a transitive dynamical system, and let $x \in X$
be any transitive element. Then, $(X, G)$ is uniquely
ergodic if and only if the set
\[
\AAA:=\{ f \in C(X) \,:\,f_x \mbox{ is amenable} \}
\]
is dense in $C(X)$. Moreover, in this case, $f_x $
is amenable for all $f \in C(X)$.
\end{theorem}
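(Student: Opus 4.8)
The plan is to prove both implications by the classical machinery relating unique ergodicity to convergence of ergodic averages, adapted to the van Hove (hence F\o lner) averaging available on $G$. Throughout, I record three preliminary facts. First, for $f\in C(\XX)$ the orbit map $t\mapsto f(\tau_t x)$ is bounded (by $\|f\|_\infty$) and continuous, so it is a legitimate candidate for amenability, and by Proposition~\ref{prop: amenable} it is amenable exactly when $\lim_{n}\frac{1}{|B_n|}\int_{B_n} f(\tau_t x)\,\dd t$ exists for every van Hove sequence $\cB$. Second, for a fixed van Hove sequence the averaging functions $F_n\colon\XX\to\CC$, $F_n(\omega):=\frac{1}{|A_n|}\int_{A_n} f(\tau_t\omega)\,\dd t$, are continuous: this follows from joint continuity of the action together with compactness of $\XX$ and $\overline{A_n}$ (a tube-lemma argument giving $\sup_{t\in\overline{A_n}}|f(\tau_t\omega')-f(\tau_t\omega)|\to0$). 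Third, since $G$ is abelian and hence amenable and $\XX$ is compact, at least one $G$-invariant probability measure on $\XX$ exists.

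For the implication ``uniquely ergodic $\Rightarrow$ $\AAA$ dense'', let $m$ be the unique invariant probability measure and fix $f\in C(\XX)$ and a van Hove sequence $\cB$. I would argue by contradiction: if $\frac{1}{|B_n|}\int_{B_n} f(\tau_t x)\,\dd t$ did not converge to $\int_\XX f\,\dd m$, then some subsequence would converge to a value $\ell\neq\int_\XX f\,\dd m$. Form the orbit-average probability measures $\nu_k$ on $\XX$ by $\nu_k(g):=\frac{1}{|B_{n_k}|}\int_{B_{n_k}} g(\tau_t x)\,\dd t$ and pass to a weak-$*$ convergent subnet $\nu_{k_\alpha}\to\nu$, using weak-$*$ compactness of the probability measures on the compact space $\XX$; then $\nu(f)=\ell\neq\int_\XX f\,\dd m$. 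The key point is that $\nu$ is $G$-invariant: for $r\in G$ and $g\in C(\XX)$ one has $|\nu_k(g\circ\tau_r)-\nu_k(g)|\le \|g\|_\infty\,|(r+B_{n_k})\,\Delta\, B_{n_k}|/|B_{n_k}|$, which tends to $0$ by the F\o lner property of a van Hove sequence, so $\nu(g\circ\tau_r)=\nu(g)$. Unique ergodicity now forces $\nu=m$, contradicting $\nu(f)\neq\int_\XX f\,\dd m$. Hence every $f\in C(\XX)$ lies in $\AAA$, so $\AAA=C(\XX)$ is dense; this simultaneously establishes the concluding ``moreover'' assertion.

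For the converse, suppose $\AAA$ is dense. Fix a van Hove sequence $\cA$ and, for $f\in\AAA$, set $c(f):=\lim_{n}\frac{1}{|A_n|}\int_{A_n} f(\tau_t x)\,\dd t$, a limit that exists uniformly in translates and is independent of the sequence by amenability. The crucial observation is that $F_n\to c(f)$ uniformly on all of $\XX$: on the dense orbit one has $F_n(\tau_s x)=\frac{1}{|A_n|}\int_{s+A_n} f(\tau_u x)\,\dd u\to c(f)$ uniformly in $s$, again by amenability, and since $\omega\mapsto F_n(\omega)-c(f)$ is continuous while $\{\tau_s x:s\in G\}$ is dense, the supremum of this continuous function over $\XX$ equals its supremum over the dense orbit, whence $\sup_{\omega\in\XX}|F_n(\omega)-c(f)|\to0$. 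Now let $m$ be any invariant probability measure. Invariance yields $\int_\XX F_n\,\dd m=\int_\XX f\,\dd m$ for every $n$, while uniform convergence yields $\int_\XX F_n\,\dd m\to c(f)$; therefore $\int_\XX f\,\dd m=c(f)$, a value independent of $m$. Since $\AAA$ is dense in $C(\XX)$ and $f\mapsto\int_\XX f\,\dd m$ is $\|\cdot\|_\infty$-continuous, any two invariant probability measures agree on all of $C(\XX)$ and so coincide; together with the existence of one invariant measure this gives unique ergodicity.

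The genuine work, and the main obstacle, sits in the first implication: verifying that the weak-$*$ cluster point $\nu$ of the orbit averages is $G$-invariant, where the van Hove/F\o lner estimate is essential, and handling the cluster point via subnets rather than subsequences when $\XX$ is not metrisable (i.e.\ when $G$ is not second countable). The second implication is comparatively soft; its one nontrivial ingredient is the upgrade from uniform convergence on the dense orbit to uniform convergence on $\XX$, which is immediate from continuity of the averaging functions $F_n$.
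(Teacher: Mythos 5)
Your argument is correct and is precisely the classical Walters-type proof that the paper itself invokes without reproducing (the paper only remarks that the $G=\ZZ$ case is in \cite{Wal} and that the general case follows by adaptation): one direction via weak-$*$ cluster points of orbit-average measures made invariant by the F\o lner estimate, the other via uniform convergence of the averaging functions $F_n$ on the dense orbit upgraded to all of $\XX$ by continuity. All the delicate points (subnets for non-metrisable $\XX$, continuity of $F_n$ via the tube lemma, existence of an invariant measure from amenability of $G$) are handled correctly, so this is a faithful filling-in of the proof the paper omits rather than a different route.
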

\begin{proof}
$\Longrightarrow$: By the unique ergodic theorem, for all $f \in C(X)$, the function $f_x$ is amenable.

\medskip

\noindent $\Longleftarrow$: Fix an arbitrary F\o lner sequence $(A_n)$. Let $\mathfrak{m}_1,\mathfrak{m}_2$ be any two ergodic measures on $X$.
Let $f \in C(X)$ and let $\eps >0$. Then, there exists a $g \in \AAA$ such that $\|f - g \|_\infty <\frac{\eps}{8}$.

Since $g \in \AAA$, there exists some $N$ such that, for all $n >N$ and all $s \in G$, we have
\begin{equation}\label{g amen}
\left| \frac{1}{|A_n|} \int_{s+A_n} g(\alpha(t,x))\, \dd t - M(g) \right| < \frac{\eps}{8} \,.
\end{equation}
Now, by the ergodic theorem, there exists some $y \in X$ such that,
\[
\mathfrak{m}_1(f)= \lim_{n\to\infty} \frac{1}{|A_n|} \int_{A_n}  f\big(\alpha(t, y)\big)\, \dd t \,.
\]
Therefore, there exists some $n>N$ such that
\[
\left| \mathfrak{m}_1(f)- \frac{1}{|A_n|} \int_{A_n}f\big(\alpha(t,y)\big)\, \dd t \right| < \frac{\eps}{8} \,.
\]
For the rest of the proof we fix such an $n$.
Since $\|f - g \|_\infty <\frac{\eps}{8}$, we get
\[
\left|\mathfrak{m}_1(f)- \frac{1}{|A_n|}\int_{A_n} g\big(\alpha(t,y)\big)\, \dd y \right| < \frac{\eps}{4} \,.
\]
Next, since $x$ has dense orbit, there is a sequence $(t_{m})$ in $G$ such that $\alpha(t_{m}, x) \to y$. Therefore, by the uniform continuity of $g$, we get
\[
g\big(\alpha(t, y)\big) = \lim_{m\to\infty} g\big(\alpha(t_m+t, x)\big)  \qquad \text{ for all } t \in A_n \,.
\]
Since $g$ is continuous and bounded and $A_n$ is compact, we can apply the dominated convergence theorem to get
\[
 \frac{1}{|A_n|}\int_{A_n} g\big(\alpha(t,y)\big)\, \dd t= \lim_{m\to\infty} \frac{1}{|A_n|}\int_{A_n} g\big(\alpha(t_m+t, x)\big)\, \dd t \,.
\]
Therefore, for each $n$, there exists some $m_n$ such that
\[
 \left| \frac{1}{A_n} g\big(\alpha(t,y)\big)\, \dd t- \int_{A_n} g\big(\alpha(t_{m_n}+t, x)\big)\, \dd t \right| < \frac{\eps}{8}  \,.
\]
This gives
\[
\left| \mathfrak{m}_1(f)-  \int_{-t_{m_n}+A_n} g\big(\alpha(t, x)\big)\, \dd t \right| <
\frac{3\eps}{8}  \,,
\]
and Eqn.~\eqref{g amen} implies
\[
\left| \mathfrak{m}_1(f)-  M(g) \right| < \frac{\eps}{2} \,.
\]
Repeating the argument with $\mathfrak{m}_2$ instead of $\mathfrak{m}_1$, we get
\[
\left| \mathfrak{m}_2(f)-  M(g) \right| < \frac{\eps}{2} \,,
\]
which leads to
\[
\left| \mathfrak{m}_1(f)-  \mathfrak{m}_2(f) \right| < \eps \,.
\]
Since $\eps>0$ was arbitrary, we get $\mathfrak{m}_1(f)=\mathfrak{m}_2(f)$ for all $f \in C(X)$. This gives that $\mathfrak{m}_1=\mathfrak{m}_2$, and therefore, all ergodic measures on $X$ are equal.
\end{proof}

\subsection{Eigenfunctions}

Let us recall the following classical definition of eigenfunctions.

\begin{definition}
Let $(X,G)$ be a  topological dynamical system, and let $\mathfrak{m}$ be a $G$-invariant probability measure on $X$.
A function $f\in L^2 (X,\mathfrak{m})$ with
$f\neq 0$ is called and \textit{eigenfunction}\index{eigenfunction} with \textit{eigenvalue}\index{eigenvalue}
$\chi\in\widehat{G}$ if, for all $t \in G$, we have
\[
\tau_t f = \chi(t) f \qquad \mbox{ in } L^2(X,\mathfrak{m}) \,.
\]
The set of all eigenvalues of $(X, \mathfrak{m})$ is denoted by $\SSS(X,\mathfrak{m})$\nomenclature{$\mathbb{S}(x,\mathfrak{m})$}{set of eigenvalues for an ergodic measure}, and we will denote by $\EE_\chi$\nomenclature{$\mathbb{E}_\chi$}{eigenspace for the eigenvalue $\chi$} the eigenspace\index{eigenspace} for the eigenvalue $\chi$.

The dynamical system $(X, G, \mathfrak{m})$ is said to have a \textit{pure-point dynamical spectrum}\index{spectrum!pure~point~dynamical~spectrum} if  $L^2 (\XX, \mathfrak{m})$ possess  an orthonormal basis
consisting of eigenfunctions.     \exend
\end{definition}

When $X$ is compact and metrisable, and $G$ is second countable, it is easy to show that $\SSS(X,\mathfrak{m})$ is countable for all $\mathfrak{m}$.
Moreover, if $\mathfrak{m}$ is ergodic, $\SSS(X,\mathfrak{m})$ is a subgroup of $\widehat{G}$.

\smallskip

When talking about eigenvalues, we will often denote by $\underline{1}$ the $0$ eigenvalue. We use this notation to avoid confusion, as the
zero element $\underline{1}$ is actually the character $\underline{1} : G \to U(1)$ defined by
\[
\underline{1}(t)=1 \qquad \text{ for all } t \in G \,.
\]

\smallskip

Next, let $(X, G)$ be a transitive dynamical system. For $\chi \in \widehat{G}$, we provide a criterion for
the existence of a continuous eigenfunction $f_\chi$ for all $G$-invariant measures on $X$.

\begin{theorem}\label{thm-ds-cont-eigenfunctions}
Let $(X, G)$ be a dynamical system, with transitive point $x$, and let $\chi \in \widehat{G}$.
Assume that there exists some function $f \in C(X)$ and a F\o lner sequence $\cA$ such that the Fourier--Bohr coefficient
\[
a_{\chi}(f_x)= \lim_{n\to\infty} \frac{1}{|A_n|} \int_{s+A_n} \overline{\chi(t)} \, f\big(\alpha(t,x)\big)\, \dd t
\]
exist uniformly in $s\in G$ and satisfies  $a_\chi(f_x) \neq 0$.
Then, for  each $y \in X$, the Fourier--Bohr coefficient $ a_\chi^\cA(f_y)$ exists and does not vanish. Moreover, the function $e_\chi : X \to \CC$ defined by
\[
e_\chi(y):= \overline{a_{\chi}^\cA (f_y)}
\]
is continuous and satisfies
\[
(\tau_t e_\chi)(y)= \chi(t) \,  e_\chi(y) \qquad \text{ for all } t \in G, y \in X \,.
\]
In particular, if $\mathfrak{m}$ is a $G$-invariant measure on $X$, $e_\chi \in L^2(X, \mathfrak{m})$ is a continuous eigenfunction for $(X, G, \mathfrak{m})$.
\end{theorem}

\begin{remark}
If $(X,G)$ is uniquely ergodic, the converse result also holds: for any continuous eigenfunction $e_\chi$,
there exists some $f \in C(X)$ such that the Fourier--Bohr coefficient $a_{\chi}(f_x)$ exist uniformly in $s\in G$ and satisfies  $a_\chi(f_x) \neq 0$.
In fact, $f=f_\chi$ satisfies  $a_\chi(f_x)=f_\chi(x)$ uniformly in translates.   \exend
\end{remark}
\begin{proof}
For each $n$, define $\AAA^\chi_n: C(X) \to C(X)$ via
\[
\AAA^\chi_n(f) (y):=\frac{1}{|A_n|} \int_{A_n} \overline{\chi(s)}\, f\big( \alpha(s, y)\big)\, \dd s  \,,
\]
compare \cite{DL}. A straightforward computation reveals that $\AAA^\chi_n(f) \in  C(X)$ for each $f \in C(X)$.

Let $\eps >0$. Since the Fourier--Bohr coefficient $a_\chi(f_x)$ exists uniformly in translates, there exists an $N\in\NN$ such that
\[
\left|\frac{1}{|A_n|}\int_{A_n} \overline{\chi(s)}\, f_x(t+s)\, \dd s
-a_{\chi}(\tau_{t} f_x) \right| <\frac{\eps}{2}
\]
holds for all $n>N$ and all $t \in G$.
Therefore, for all $m,n >N$, we have
\begin{displaymath}
\left|\frac{1}{|A_n|}\int_{A_n} \overline{\chi(s)}\, f_x(t+s)\ \dd s -\frac{1}{|A_m|}\int_{A_m} \overline{\chi(s)}\, f_x(t+s)\ \dd s \right| <\eps\,.
\end{displaymath}
This shows that, for each $m,n >N$, we have
\[
\left| \AAA^\chi_n(f) (\alpha(t, x))- \AAA^\chi_m(f)
(\alpha(t, x)) \right|< \eps  \qquad \text{ for all } t \in G \,.
\]
Since the orbit $O_x=\{ \alpha(t,  x) \, :\, t \in G \}$ is dense in
$X$ and since $\AAA^\chi_n(f)- \AAA^\chi_m(f)
\in C(X)$, one has
\[
\| \overline{\AAA^\chi_n(f)} -\overline{\AAA^\chi_m(f)} \|_\infty  = \| \AAA^\chi_n(f) -\AAA^\chi_m(f) \|_\infty  \leq \eps  \,.
\]
Consequently, since $( C(X), \| \cdot \|_\infty)$ is a Banach spaces, there exists some $e_\chi \in  C(X)$ such that $\overline{\AAA^\chi_n(f)} \to e_\chi$ in $( C(X), \| \cdot \|_\infty)$.

We show that $e_\chi$ has the desired properties.
Since $\AAA^\chi_n(f) \in C(X) $ converges uniformly to $e_\chi$, we have $e_\chi \in C(X)$. Moreover, for all $y \in X$, we have
\begin{align*}
e_\chi(y)
     & = \lim_{n\to\infty} \overline{\AAA^\chi_n(f) (y)}=
        \lim_{n\to\infty} \frac{1}{|A_n|} \overline{\int_{A_n} \overline{\chi(s)}\,
        (f_y) (s)\, \dd s} = \overline{a_{\chi}^\cA (f_y)} \,.
\end{align*}
This shows that the Fourier--Bohr coefficient $a_{\chi}^\cA (f_y)$ exists.

Next, for all $y \in X$ and $t \in G$ we have
\begin{align*}
(\tau_t e_\chi)(y)&=e_{\chi}(\alpha(-t,y))=\lim_{n\to\infty} \frac{1}{|A_n|} \overline{\int_{A_n} \overline{\chi(s)}  f (\alpha(s-t,y) \dd s} \\
&=\chi(t) \lim_{n\to\infty} \frac{1}{|A_n|} \overline{\int_{t+A_n} \overline{\chi(u)}  f (\alpha(u,y) \dd u} = \chi(t)  e_\chi(y)\,.
\end{align*}
Finally, we have
\[
0 \neq \overline{a_{\chi}(f_x)}=e_{\chi}(x)=: \alpha
\]
by assumption. Moreover, for all $t \in G$ we have
\[
|e_{\chi}(\alpha(t,x))|= |\overline{\chi(t)} e_{\chi}(x)|=|\alpha| \,.
\]
Therefore, the continuous function $|e_\chi|$ is constant on the dense orbit $O_x$, and hence, by continuity we have
\[
|e_\chi(y)| = |\alpha| \neq 0 \qquad \text{ for all } y \in X \,.
\]
This shows that $e_\chi$ is not vanishing.
\end{proof}

\subsection{Wiener--Wintner points}

In this section, we review the notion of Wiener--Wintner points, as introduced in \cite{LS3}, and discuss the connection between this and Besicovitch almost periodicity. Here we review the definition and basic properties of these points.
Let us start with the definition.

\begin{definition}
Let $(X, G)$ be a dynamical system, and let $\mathfrak{m}$ be $G$-invariant probability measure on $X$.
We say that $x \in X$ is a \textit{Wiener--Wintner point}\index{Wiener--Wintner~point} with respect to $\mathfrak{m}$ and the F\o lner sequence $\cA$ if, for each
$\chi \in \SSS(X,\mathfrak{m})$, we can find a normal basis $B_\chi$ for the eigenspace $\EE_\chi$ such that, for all $f \in C(X)$, all $\chi \in \SSS(X,\mathfrak{m})$  and all $e_\chi \in B_\chi$, we have
\begin{equation}\label{eq-str-gen}
\int_{X} f(y) \overline{e_\chi(y)}\, \dd \mathfrak{m}(y) = \lim_{n\to\infty} \frac{1}{|A_n|} \int_{A_n} \overline{\chi(t)} f\big(\alpha(x,t)\big)\, \dd t = a_{\chi}^\cA(f_x)\,.
\end{equation}
We will denote the set of Wiener--Wintner points in $(X,G, \mm)$ by $X_{\mathsf{ww}}$\nomenclature{$X_{\mathsf{ww}}$}{set of Wiener--Wintner points for a  measure}.    \exend
\end{definition}

Note that the right hand side of \eqref{eq-str-gen} does not depend on the choice of $e_\chi \in B_\chi$. Therefore, all elements $e_\chi \in B_\chi$ define the same functional
\[
C(X) \ni f \to \langle f, e_\chi \rangle = \int_{X} f(y) \overline{e_\chi(y)}\, \dd \mathfrak{m}(y) \in \CC
\]
and hence, all elements in $B_\chi$ are equal. This means that, when a Wiener--Wintner point exists, all non-trivial eigenspaces are $1$-dimensional.
This gives the following consequences.

\begin{coro}
Let $(X, G)$ be a dynamical system, let $\mathfrak{m}$ be $G$-invariant probability measure on $X$, and let $\cA$ be a F\o lner sequence. Then, $x \in X$ is a Wiener--Wintner point
if and only if, for each $\chi \in \SSS(X,m)$, we can find a normalized eigenfunction $e_\chi$ such that \eqref{eq-str-gen} holds and $B=\{ e_\chi :\chi \in \SSS(X,m)\}$ is a basis for $(L^2(X,\mathfrak{m}))_{\mathsf{pp}}$. In this case, $B$ is an orthonormal basis for $(L^2(X,\mathfrak{m}))_{\mathsf{pp}}$. \qed
\end{coro}

\begin{coro}
Let $(X, G)$ be a dynamical system, let $\mathfrak{m}$ be $G$-invariant probability measure on $X$, and let $\cA$ be a F\o lner sequence. Assume that $(X, \mathfrak{m})$ has pure-point spectrum.
Then, $x \in X$ is a Wiener--Wintner point if and only if for each $\chi \in \SSS(X,m)$ we can find an eigenfunction $e_\chi$ such that \eqref{eq-str-gen} holds and $B=\{ e_\chi :\chi \in \SSS(X,m)\}$ is a basis for $L^2(X,\mathfrak{m})$. In this case $B$ is an orthonormal basis for $L^2(X,\mathfrak{m})$. \qed
\end{coro}

Next, let us show that Wiener--Wintner points are generic for ergodic measures.

\begin{theorem}\cite{LS3}
Let $(X, G)$ be a dynamical system, let $\mathfrak{m}$ be $G$-invariant probability measure on $X$, and let $\cA$ be a F\o lner sequence.
If $x \in X$ is a Wiener--Wintner point, then the following holds:
\begin{itemize}
  \item[(a)] The  measure $\mathfrak{m}$ is ergodic.
  \item[(b)] The  point $x$ is generic.
\end{itemize}
\end{theorem}
\begin{proof}
(a) By the above, $\dim(\EE_{\underline{1}})=1$. Therefore, $\mathfrak{m}$ is ergodic.

\medskip

\noindent (b) Let $B_{\underline{1}}=\{ e_{\underline{1}}\}$. Since $m$ is ergodic, there exists some $C \in \CC$ such that
\[
e_{\underline{1}}(y)=C\qquad \mbox{ for almost all } y \in X \,.
\]
Now, using $f = 1_{X} \in C(X)$ in \eqref{eq-str-gen}, we get
\[
\overline{C}= \int_{X} f(y) e_{\underline{1}}(y)\, \dd \mathfrak{m}(y)=\lim_{n\to\infty} \frac{1}{|A_n|} \int_{A_n} 1\, \dd t =1 \,.
\]
This shows that $C=1$. Therefore, for all $f \in C(X)$, the identity $f e_{\underline{1}}=  f$ holds almost surely. By Eq.~\eqref{eq-str-gen}, $x$ is generic.
\end{proof}

Next, let us recall the following result.

\begin{prop}\cite{LS3}\label{prop-sg-ergodic}
Let $(X, G, \mathfrak{m})$ be an ergodic dynamical system with metrisable $X$, and let $\cA$ be any F\o lner sequence such that Birkhoff's ergodic theorem (Theorem~\ref{thm:pointet}) holds. If $G$ is second countable, the set $X_{\mathsf{ww}}$ of Wiener--Wintner points in $X$ satisfies
\[
\mathfrak{m}(X_{\mathsf{ww}})=1 \,.  \tag*{$\qed$}
\]
\end{prop}




\section{Dynamical systems of translation bounded measures (TMDS)}
In this section, we look at a particular class of dynamical systems which was introduced and studied in \cite{BL}. In this case, one can define an autocorrelation for the dynamical system which, in many situations, can be connected via the so called Dworkin argument \cite{Dwo,DM} to the autocorrelation of individual measures in the dynamical system.

\smallskip

Recall that, given a relatively compact open set $V \subseteq G$ and some $C>0$, the set
\[
\cM_{C,V}:= \{ \mu \in \cM^\infty(G) : \| \mu \|_V \leq C \}
\]
is vaguely compact \cite[Thm.~2]{BL}. Moreover, if $G$ is second
countable, the vague topology is metrisable on $\cM_{C,V}$
\cite[Thm.~2]{BL}.
The natural  group action of $G$ on $\cM^\infty
(G)$ leaves $\cM_{C,V}$ invariant and is continuous
\cite[Prop.~2]{BL}. Specifically,
\[
G\times \cM_{C,V} \longrightarrow \cM_{C,V}\,, \qquad \alpha(t,\mu) :=\delta_t
\ast \mu  \,,
\]
is a continuous action on $\cM_{C,V}$.

\smallskip

With this in mind, we now recall the following definition \cite[Def.~2]{BL}.

\begin{definition}[Translation bounded measure dynamical system]
A pair $(\XX, G)$ is called a \textit{dynamical system on
the translation bounded measures}\index{dynamical~system~on~the~translation~bounded~measures} on $G$ (TMDS) if there is a constant $C >0$ and a relatively compact and open set $V\subseteq G$ such that $\XX$ is a closed subset of $\cM_{C,V}$ that is invariant
under the $G$-action.     \exend
\end{definition}

Note that a closed $G$-invariant subset $\XX \subseteq
\cM^\infty(G)$ is vaguely compact if and only if it is contained in
some $\cM_{C,V}$ \cite{SS}. Therefore, $(\XX, G)$ is a TMDS if and
only if $\XX \subseteq \cM^\infty(G)$ is $G$-invariant and vaguely
compact.

Any translation bounded measure $\mu$ gives rise to a TMDS $(\XX
(\mu), G)$, where the \textit{hull} $\XX(\mu)$\nomenclature{$\mathbb{X}(\mu)$}{hull of the translation bounded measure $\mu$} is defined as
\[
\XX(\mu):=\overline{\{ \tau_t  \mu : t\in G\}} \,,
\]
with closure taken in the vague topology.

\smallskip
Given any TMDS $(\XX, G)$, each $\varphi \in \Cc(G)$ induces a continuous function $f_\varphi : \XX \to \CC$\nomenclature{$f_\varphi$}{function on a dynamical system of translation bounded measures induced by $\varphi \in \Cc(G)$} via
\[
f_\varphi(\omega):= (\omega*\varphi)(0)= \int_{G} \varphi(-s)\, \dd \omega(s) \,,
\]
which is compatible with the translation \cite[Lem.~3]{BL}:
\[
f_\varphi(\tau_t  \omega)=f_{\tau_t  \varphi}(\omega) \qquad \text{ for all } t \in G, \varphi \in \Cc(G) \mbox{ and } \omega \in \XX \,.
\]
\smallskip

Next, let us review the notion of an autocorrelation measure.

\begin{theorem}\cite[Prop.~6, Lem.~7]{BL}\label{thm:aut def}
Let $(\XX, G)$ be a TMDS, and let $\mathfrak{m}$ be a $G$-invariant probability measure $\mathfrak{m}$ on $\XX$. Assume that $\phi \in \Cc(G)$ is such that $\int_{G} \phi(t) \dd t=1$. Then, the mapping
\[
\gamma_{\mathfrak{m}}(\varphi):= \int_{\XX} \int_{G} f_{\varphi}(\tau_{-t} \overline{\omega}) \phi(t)\, \dd \omega(t)\, \dd \mathfrak{m}(\omega)
\]
does not depend on the choice of $\phi$, is a positive definite measure, and
\begin{equation}\label{EQ}
 (\gamma_{\mathfrak{m}}*\varphi*\widetilde{\psi})(t)= \langle \tau_t f_\varphi,  f_\psi \rangle := \int_{\XX} (\tau_t f_\varphi) (\omega) \overline{f_\psi (\omega)}\ \dd \mathfrak{m}(\omega) \,.
\end{equation}
holds for all $\varphi,\psi \in \Cc(G)$ and all $t \in G$. \qed
\end{theorem}

We can now recall the following definition.

\begin{definition}\cite[Def.~6]{BL}
Given a TMDS $(\XX, G)$ with a $G$-invariant probability measure $\mathfrak{m}$, the measure $\gamma_{\mathfrak{m}}$ from Theorem~\ref{thm:aut def} is called the \textit{autocorrelation}\index{autocorrelation!autocorrelation~of~TMDS} of $(\XX, G, \mathfrak{m})$.

Its Fourier transform $\reallywidehat{\gamma_{\mathfrak{m}}}$ is called the \textit{diffraction}\index{diffraction!diffraction~of~TMDS} of $(\XX, G, \mathfrak{m})$.
We say that $(\XX, G, \mathfrak{m})$ has a \textit{pure-point diffraction spectrum}\index{spectrum!pure~point~diffraction~spectrum~for~TMDS} if $\reallywidehat{\gamma_{\mathfrak{m}}}$ is a pure-point measure.     \exend
\end{definition}

We will make use of the following result, see \cite{LS} for
generalisations to non-translation bounded measures.

\begin{theorem} \cite[Thms.~7, 8 and 9]{BL}\label{thm:pp-dyn-diff-spect}
Let $(\XX, G)$ be a TMDS with a $G$-invariant probability measure
$\mathfrak{m}$. Then, $(\XX, G, \mathfrak{m})$ has a pure-point diffraction spectrum if and
only if $L^2(\XX,\mathfrak{m})$ has a pure-point dynamical spectrum. \qed
\end{theorem}

The following result is a generalisation of \cite[Thm.~5]{BL}, and connects the dynamical and diffraction spectra.

\begin{theorem}\label{th-ac-same}
Let $(\XX, G )$ be a TMDS, let $\mathfrak{m}$ be a $G$-invariant measure on $\XX$ and let $\cA$ be a van Hove sequence. If $\omega$ is generic for $\mathfrak{m}$ with respect to $\cA$, the autocorrelation
\[
\gamma_\omega =\lim_{n\to\infty} \frac{1}{|A_n|} (\omega|_{A_n})*\widetilde{(\omega|_{A_n})}
\]
exists with respect to $\cA$ and satisfies $\gamma_\omega=\gamma_{\mathfrak{m}}$.
\end{theorem}
\begin{proof}
For $\varphi, \psi \in \Cc(G)$, we have
\[
(\gamma*\varphi*\widetilde{\psi})(0)= \int_{\XX}  f_\varphi (\mu) \overline{f_\psi (\mu)}\ \dd \mathfrak{m}(\omega) \,.
\]
Since $f_\varphi, f_\psi \in C(\XX)$ and $\omega$ is generic, we get
\begin{align*}
(\gamma_{\mathfrak{m}}*\varphi*\widetilde{\psi})(0)
&= \lim_{n\to\infty} \frac{1}{|A_n|} \int_{A_n}  f_\varphi (\tau_t \omega)\overline{f_\psi ( \tau_t \omega)} \dd t \\
&= \lim_{n\to\infty} \frac{1}{|A_n|} \int_{A_n} (\varphi \ast \omega)(t)\overline{(\psi\ast \omega)(t)}\, \dd t = M\big((\varphi \ast \omega)\cdot \overline{(\psi \ast \omega)}\big) \\
&= (\gamma_\omega*\varphi*\widetilde{\psi})(0) \,.
\end{align*}
with the last equality following from Proposition~\ref{prop-compute-autocorrelation}. The claim follows.
\end{proof}

As an immediate consequence, we see that each autocorrelation measure of a translation bounded measure is also an autocorrelation of a TMDS.

\begin{coro}
Let $\mu \in \cM^\infty(G)$, and let $\gamma_\mu$ be an autocorrelation of $\mu$ with respect to some van Hove sequence $\cA$. Then, there exists a $G$-invariant
measure $\mathfrak{m}$ on $(\XX(\mu), G)$ such that $\gamma_\mu=\gamma_{\mathfrak{m}}$.
\end{coro}
\begin{proof}
By Proposition~\ref{prop-gen}, $\mu$ is generic for some $G$-invariant
measure $\mathfrak{m}$ and some subsequence $\cB$ of $\cA$. Since $\gamma_\mu$ is also the autocorrelation of $\mu$ with respect to $\cB$, the claim follows from Theorem~\ref{th-ac-same}.
\end{proof}

\subsection{Unique ergodicity and continuity of eigenfunctions for TMDS}

Let us discuss now the unique ergodicity and continuity of eigenfunction in the case of TMDS.
The following is an immediate consequence of Theorem~\ref{uniq ergod char}.

\begin{coro}\label{cor:char-ue}
Let $\mu \in \cM^\infty(G)$. Then,
$\XX(\mu)$ is uniquely ergodic if and only if any (finite) product of functions in the set $\{ \mu*\varphi , \overline{ \mu*\varphi } \,:\, \varphi \in \Cc(G) \}$ is amenable.
\end{coro}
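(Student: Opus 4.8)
The plan is to invoke Theorem~\ref{uniq ergod char} for the transitive system $(\XX(\mu),G)$ with distinguished point $x=\mu$, which has dense orbit by the very definition of the hull. According to that theorem, $\XX(\mu)$ is uniquely ergodic if and only if the space
\[
\AAA=\{F\in C(\XX(\mu)) : t\mapsto F(\tau_t\mu)\text{ is amenable}\}
\]
is dense in $C(\XX(\mu))$, and moreover in the uniquely ergodic case \emph{every} $F\in C(\XX(\mu))$ lies in $\AAA$. So the whole task reduces to translating the condition ``$t\mapsto F(\tau_t\mu)$ amenable'' into a statement about the functions $\mu*\varphi$.

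The bridge is the family $f_\varphi(\omega)=(\omega*\varphi)(0)$. Unwinding the conventions for $\tau_t\mu$ (consistent with the compatibility $f_\varphi(\tau_t\omega)=f_{\tau_t\varphi}(\omega)$), a direct computation gives $f_\varphi(\tau_t\mu)=(\mu*\varphi)(-t)$ for all $t\in G$; that is, $t\mapsto f_\varphi(\tau_t\mu)$ is the reflection of $\mu*\varphi$. Consequently, for $\varphi_1,\dots,\varphi_m,\psi_1,\dots,\psi_n\in\Cc(G)$ the product $F:=\prod_i f_{\varphi_i}\cdot\prod_j\overline{f_{\psi_j}}\in C(\XX(\mu))$ satisfies
\[
F(\tau_t\mu)=\Big(\prod_{i}(\mu*\varphi_i)\cdot\prod_{j}\overline{\mu*\psi_j}\Big)(-t) \,.
\]
Since the reflection $(-B_n)$ of a van Hove sequence is again a van Hove sequence and Haar measure on $G$ is inversion invariant, amenability of a bounded measurable function is preserved under reflection (apply the characterization in Proposition~\ref{prop: amenable}(iii), noting that $\mu*\varphi\in\Cu(G)$ and finite products remain bounded). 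Hence $t\mapsto F(\tau_t\mu)$ is amenable exactly when the corresponding finite product of functions from $\{\mu*\varphi,\overline{\mu*\varphi}\}$ is amenable.

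With this correspondence the two implications are short. For the ``if'' direction, assume every such product is amenable and consider the $*$-subalgebra $\cB\subseteq C(\XX(\mu))$ generated by the constants and $\{f_\varphi,\overline{f_\varphi}:\varphi\in\Cc(G)\}$. Every element of $\cB$ is a finite linear combination of constants and of products $F$ as above, and by the displayed identity each such $F$ belongs to $\AAA$; since $\AAA$ is a vector space containing the constants, it follows that $\cB\subseteq\AAA$. As the $f_\varphi$ separate points of $\XX(\mu)$ (a measure is determined by its pairing with $\Cc(G)$), $\cB$ is a self-adjoint, point-separating subalgebra containing the constants, hence dense in $C(\XX(\mu))$ by Stone--Weierstrass; therefore $\AAA$ is dense and $\XX(\mu)$ is uniquely ergodic. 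Conversely, if $\XX(\mu)$ is uniquely ergodic, the final clause of Theorem~\ref{uniq ergod char} gives that $t\mapsto F(\tau_t\mu)$ is amenable for every $F\in C(\XX(\mu))$, in particular for every product $F$ of $f_{\varphi_i}$'s and $\overline{f_{\psi_j}}$'s, and reflecting back yields amenability of every finite product of functions from $\{\mu*\varphi,\overline{\mu*\varphi}\}$. The one point requiring care is that $\AAA$ is only a closed subspace and \emph{not} an algebra (products of amenable functions need not be amenable), so Stone--Weierstrass must be applied to the a priori larger algebra $\cB$ and the hypothesis on products is precisely what forces $\cB\subseteq\AAA$; beyond this bookkeeping, the reflection invariance of amenability and the point-separation needed for Stone--Weierstrass are routine, so I anticipate no serious obstacle.
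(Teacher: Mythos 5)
Your argument is correct and follows essentially the same route as the paper: both reduce the statement to Theorem~\ref{uniq ergod char} and apply Stone--Weierstrass to the conjugation-closed, point-separating algebra generated by the $f_\varphi$, with the hypothesis on products being exactly what places that algebra inside $\AAA$. Your extra care about the reflection $f_\varphi(\tau_t\mu)=(\mu*\varphi)(-t)$ and about $\AAA$ not being an algebra is sound bookkeeping that the paper leaves implicit.
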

\begin{proof}
 $\Longrightarrow$: follows from Theorem~\ref{uniq ergod char}.

\smallskip

\noindent $\Longleftarrow$: Let $\AAA$ be the algebra spanned by
\[
\{ 1_{\XX(\mu)}\} \cup \{ f_\varphi, \overline{f_\varphi} : \varphi \in \Cc(G) \} \,.
\]
Clearly, $\AAA \subseteq C(\XX(\mu))$ is an algebra which separates the points and is closed under complex conjugation. Hence, it is dense
in $C(\XX(\mu))$ by the Stone-Weierstra\ss{} theorem, and Theorem~\ref{uniq ergod char}
proves the claim.
\end{proof}

Next, we show that the uniform existence of the Fourier--Bohr
coefficients implies the continuity of the corresponding
eigenfunction, compare \cite{DL}.

\begin{theorem}\label{them cont eigenfunctions}
Let $\mu \in \cM^\infty(G)$, $\chi \in \widehat{G}$ and  $\cA$ be any
van Hove sequence. Assume that
\begin{displaymath}
a_\chi(\mu)= \lim_{n\to\infty} \frac{1}{|A_n|} \int_{s+A_n} \overline{\chi(t)}\,
\dd \mu (t)
\end{displaymath}
exists uniformly in $s\in G$ and satisfies  $a_\chi(\mu) \neq 0$.
Then, for  each $\omega \in \XX(\mu)$, the Fourier--Bohr coefficient
$a_\chi^\cA(\omega)$ exists, does not vanish, and the function
\[
a_\chi^\cA : \XX(\mu)\longrightarrow \CC
\]
is continuous with
\[
a_\chi^\cA (\tau_t  \omega) =\overline{\chi (t)} \, a_\chi^\cA
(\omega) \qquad \mbox{ for all } \omega \in\XX (\mu) , t\in G \,.
\]
In particular,
\[
f_\chi(\omega):= \overline{a_\chi^\cA ( \omega)}
\]
is a continuous eigenfunction with eigenvalue $\chi$.
\end{theorem}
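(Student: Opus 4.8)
The plan is to establish uniform existence of the Fourier--Bohr coefficient on the whole hull and to deduce continuity from uniform convergence. The key observation is that the defining limit for $a_\chi(\mu)$ converges uniformly not just over translates $s+A_n$ but, by amenability (Proposition~\ref{prop: amenable}), along the sequence $A_n$ applied to \emph{any} element of the hull, because every $\omega\in\XX(\mu)$ is a vague limit of translates $\tau_{s_k}\mu$ and the relevant averages are uniformly controlled. First I would fix $\varphi\in\Cc(G)$ with $\widehat\varphi(\chi)\neq 0$ and pass from the measure $\mu$ to the bounded uniformly continuous function $\mu*\varphi$; by Corollary~\ref{FB measure relations}(b), uniform existence of $a_\chi(\mu)$ gives uniform existence of $a_\chi(\mu*\varphi)=\widehat\varphi(\chi)\,a_\chi(\mu)$, and conversely Corollary~\ref{FB measure relations}(a) lets one recover $a_\chi(\omega)$ from $a_\chi(\omega*\varphi)$.

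The central step is to show that the continuous function $f_\varphi:\XX(\mu)\to\CC$, $f_\varphi(\omega)=(\omega*\varphi)(0)$, interacts with averaging so that
\[
a_\chi^\cA(\omega)=\lim_{n\to\infty}\frac{1}{|A_n|}\int_{A_n}\overline{\chi(t)}\,(\omega*\varphi)(t)\,\dd t
\]
exists and is continuous in $\omega$. Using the compatibility $f_\varphi(\tau_t\omega)=f_{\tau_t\varphi}(\omega)$ and $(\omega*\varphi)(t)=f_{\tau_{-t}\varphi}(\omega)$ (so these averages are averages of the continuous function $\omega\mapsto f_{\tau_{-t}\varphi}(\omega)\,\overline{\chi(t)}$ along the orbit), I would argue that the uniform-in-$s$ convergence for $\mu$ transfers to uniform-in-$\omega$ convergence on $\overline{\{\tau_t\mu:t\in G\}}=\XX(\mu)$: the averages
\[
F_n(\omega):=\frac{1}{|A_n|}\int_{A_n}\overline{\chi(t)}\,(\omega*\varphi)(t)\,\dd t
\]
are each continuous on $\XX(\mu)$ (being integrals of a jointly continuous, uniformly bounded integrand over a fixed compact set), and they converge uniformly on the dense orbit of $\mu$ by hypothesis. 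A uniform Cauchy estimate on the dense orbit, together with the uniform bound $\|\mu*\varphi\|_\infty<\infty$ and vague-to-uniform control, upgrades to uniform convergence of $(F_n)$ on all of $\XX(\mu)$; hence the limit $a_\chi^\cA(\omega*\varphi)=\widehat\varphi(\chi)\,a_\chi^\cA(\omega)$ is a uniform limit of continuous functions and is therefore continuous. Dividing by $\widehat\varphi(\chi)\neq 0$ gives continuity of $a_\chi^\cA$ on $\XX(\mu)$.

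The equivariance $a_\chi^\cA(\tau_t\omega)=\chi(t)\,a_\chi^\cA(\omega)$ I would verify by a direct change of variables: replacing $\omega$ by $\tau_t\omega$ shifts the domain $A_n$ to $A_n+t$ inside the integral, the factor $\overline{\chi}$ produces $\chi(t)$, and the van Hove property kills the boundary discrepancy between averaging over $A_n$ and over $A_n+t$. Finally, non-vanishing: since $a_\chi(\mu)\neq 0$, the continuous equivariant function $a_\chi^\cA$ is nonzero at $\mu$, and as $|a_\chi^\cA(\tau_t\omega)|=|a_\chi^\cA(\omega)|$ is constant along orbits and continuous, it is nonzero on the dense orbit of $\mu$ and hence everywhere on $\XX(\mu)$ by continuity. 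The main obstacle will be the upgrade from uniform convergence on the dense orbit to uniform convergence on the whole compact hull; this is exactly where the uniform-in-$s$ hypothesis (rather than mere pointwise existence) is indispensable, and I expect the careful handling of the vague convergence $\tau_{s_k}\mu\to\omega$ against the fixed compactly supported averaging kernel to be the technically delicate point.
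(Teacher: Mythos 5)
Your proposal is correct and follows essentially the same route as the paper: fix $\varphi$ with $\widehat{\varphi}(\chi)\neq 0$, observe that the averages $F_n$ are continuous on $\XX(\mu)$ and uniformly Cauchy on the dense orbit by the uniform-in-$s$ hypothesis, hence converge uniformly to a continuous limit identified with $\widehat{\varphi}(\chi)\,a_{\chi}^{\cA}(\cdot)$, after which equivariance and non-vanishing follow. Your closing argument for non-vanishing at \emph{every} $\omega$ (constancy of $|a_{\chi}^{\cA}|$ along the orbit plus continuity on the compact hull) is in fact slightly more complete than the paper's, which only records that the limit function is not identically zero.
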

\begin{proof}
Let $\varphi \in \Cc(G)$ be such that $\widehat{\varphi}(\chi)=1$.
Then, by Corollary~\ref{FB measure relations}, we have
$a_\chi(\mu*\varphi)=a_\chi(\mu)\,\widehat{\varphi}(\chi)\neq 0$.

Since the Fourier--Bohr coefficient $a_\chi(\mu)$ exists uniformly in $x$, by Corollary~\ref{FB measure relations}, so does $a_\chi(\mu*\varphi)$.
Therefore, by Theorem~\ref{thm-ds-cont-eigenfunctions},
\[
f_{\chi}(\omega):=\overline{a_{\chi}^\cA ( \omega*\varphi)} = \overline{\widehat{\varphi}(\chi)} \, \overline{ a_{\chi}^\cA(\omega)}=\overline{a_\chi^\cA ( \omega)}
\]
defines a continuous eigenfunction on $\XX(\mu)$ with eigenvalue $\chi$.

\end{proof}

\section[Generic elements with pure-point spectrum]{Generic elements for TMDS with pure-point spectrum and mean almost periodicity}

In this section, we show that mean almost periodic measures are exactly the measures which are generic with
respect to $G$-invariant measures with pure-point spectrum.

\begin{theorem}\label{thm:DS-char-map}
Let $\mu \in \cM^\infty(G)$, and let $\cA$ be a van Hove sequence.
\begin{itemize}
  \item[(a)] Assume that there exists some measure $\mathfrak{m}$ on $\XX(\mu)$ with pure-point spectrum such that $\mu$ is generic for $\mathfrak{m}$ and $\cA$. Then, the autocorrelation $\gamma$ of $\mu$ exists with respect to $\cA$ and $\mu \in \Map_{\cA}(G)$.
  \item[(b)] Assume that the autocorrelation $\gamma$ of $\mu$ exists with respect to $\cA$. Then, the following assertions are equivalent:
  \begin{itemize}
    \item[(i)] One has $\mu \in \Map_{\cA}(G)$.
    \item[(ii)] There exists a $G$-invariant measure $\mathfrak{m}$ on $\XX(\mu)$ with pure-point spectrum, and a subsequence $\cB$ of $\cA$ such that $\mu$ is generic for $\mathfrak{m}$ and $\cB$.
    \item[(iii)] For every $G$-invariant measure $\mathfrak{m}$ on $\XX(\mu)$ and any subsequence $\cB$ of $\cA$ such that $\mu$ is generic for $\mathfrak{m}$ and $\cB$, the dynamical system $(\XX(\mu), G, \mathfrak{m})$ has pure-point spectrum.
  \end{itemize}
\end{itemize}
\end{theorem}
\begin{proof}
(a) By Theorem~\ref{th-ac-same}, the autocorrelation $\gamma$ of $\mu$ exists with respect to $\cA$ and $\gamma=\gamma_{\mathfrak{m}}$.
By Theorem~\ref{thm:pp-dyn-diff-spect}, $\reallywidehat{\gamma_{\mathfrak{m}}}$ is pure point.

\medskip

\noindent (b) (i) $\Longrightarrow$ (iii):
Let $\mathfrak{m}$ be a $G$-invariant measure on $\XX(\mu)$, and let $\cB$ be a subsequence of $\cA$ such that $\mu$ is generic for $\mathfrak{m}$ and $\cB$.
Then, by Theorem~\ref{th-ac-same}, the autocorrelation $\gamma$ of $\mu$ exists with respect to $\cB$ and
\[
\gamma=\gamma_{\mathfrak{m}} \,.
\]
Since $\mu \in \Map_{\cA}(G) \subseteq \Map_{\cB}(G)$, Theorem~\ref{single element} then gives that $\widehat{\gamma}$ is pure point, and hence, so is
$\reallywidehat{\gamma_{\mathfrak{m}}}$. Therefore, by Theorem~\ref{thm:pp-dyn-diff-spect}, $(\XX(\mu), G, \mathfrak{m})$ has pure-point spectrum.

\medskip

\noindent (iii) $\Longrightarrow$ (ii): By Proposition~\ref{prop-gen}, there exists a $G$-invariant measure $\mathfrak{m}$ on $\XX(\mu)$ and a subsequence $\cB$ of $\cA$ such that $\mu$ is generic for $\mathfrak{m}$ and $\cB$. (iii) implies that $\mathfrak{m}$ has pure-point spectrum.

\medskip

\noindent(ii) $\Longrightarrow$ (i): By (ii) and (a), we have $\mu \in \Map_{\cB}(G)$. Since the autocorrelation $\gamma$ exists with respect to $\cA$ and $\cB$ is a subsequence of $\cA$,
$\gamma$ is the autocorrelation of $\mu$ with respect to $\cB$, and hence, by Theorem~\ref{single element}, $\widehat{\gamma}$ is pure point.
Since $\widehat{\gamma}$ is pure point and $\gamma$ is the autocorrelation of $\mu$ with respect to $\cA$, using Theorem~\ref{single element} again, we get $\mu \in \Map_{\cA}(G)$.
\end{proof}

Next, combining Theorem~\ref{thm:DS-char-map} and Corollary~\ref{cor-ergodic-generic}, we obtain the following two results.

\begin{theorem}\label{them: ue}
Let $(\XX, G)$ be a  TMDS, let $\mathfrak{m}$ be an ergodic measure on
$(\XX, G)$, and let $\cA$ be a  van Hove sequence along which Birkhoff's ergodic theorem holds.
Then, $(\XX, G, \mathfrak{m})$ has a pure-point spectrum if and only if
\[
\mm\bigl(\XX \cap \left( \Map_{\cA}(G)\right)\bigr) =1 \,.     \tag*{$\qed$}
\]
\end{theorem}





\begin{coro}
Let $(\XX, G)$ be a uniquely ergodic TMDS. Then, $(\XX, G)$ has a pure-point spectrum if and only if $\XX \subseteq \Map_{\cA}(G)$. \qed
\end{coro}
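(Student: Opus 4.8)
The plan is to deduce the corollary from Theorem~\ref{them: ue} by upgrading its almost-everywhere statements, taken with respect to the unique $G$-invariant probability measure $m$, to statements valid at every point of $\XX$. The implication $\Longleftarrow$ is immediate: if $\XX\subseteq\Map_{\cA}(G)$, then in particular $\omega\in\Map_{\cA}(G)$ for $m$-almost every $\omega$, and Theorem~\ref{them: ue} yields pure point spectrum (unique ergodicity guarantees that $m$ is ergodic and that the relevant averages converge, so the hypotheses of that theorem are met).

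For the implication $\Longrightarrow$ I would revisit the proof of Theorem~\ref{them: ue}. The only reason that proof produces a merely full-measure set of mean almost periodic elements is that $\gamma$ is known to be the autocorrelation of $\omega$ only for $m$-almost every $\omega$. The key step is therefore to show that, under unique ergodicity, $\gamma$ is the autocorrelation of \emph{every} $\omega\in\XX$. To this end I would use the identity $(\omega*\varphi)(s)=f_\varphi(\tau_{-s}\omega)$, valid for all $\varphi\in\Cc(G)$ and $s\in G$, so that for $\varphi,\psi\in\Cc(G)$
\[
\frac{1}{|A_n|}\int_{A_n}(\omega*\varphi)(s)\,\overline{(\omega*\psi)(s)}\,\dd s
=\frac{1}{|A_n|}\int_{A_n} F(\tau_{-s}\omega)\,\dd s,\qquad F:=f_\varphi\,\overline{f_\psi}\in C(\XX) .
\]
Since $(\XX,G)$ is uniquely ergodic, the ergodic averages of any continuous function converge uniformly in the base point to the space mean; applying this to $F$ along the van Hove sequence $(-A_n)$ shows that the right-hand side converges to $\int_{\XX}F\,\dd m$ for \emph{every} $\omega\in\XX$. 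Hence $M_{\cA}(\omega*\varphi\cdot\overline{\omega*\psi})$ exists for every $\omega$, so by Proposition~\ref{prop-compute-autocorrelation} the autocorrelation $\gamma_\omega$ exists for every $\omega$; comparing its value $(\gamma_\omega*\varphi*\widetilde{\psi})(0)=\int_{\XX}f_\varphi\,\overline{f_\psi}\,\dd m$ with \eqref{EQ} at $t=0$ and letting $\varphi,\psi$ vary gives $\gamma_\omega=\gamma$ for all $\omega\in\XX$.

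With this in hand the argument closes quickly. Pure point spectrum of $(\XX,G,m)$ is equivalent to pure point diffraction spectrum by \cite[Thm.~7, Thm.~8, Thm.~9]{BL}, so $\widehat{\gamma}$ is pure point. As $\gamma_\omega=\gamma$ for every $\omega\in\XX$, each $\omega$ has pure point diffraction along $\cA$, and Theorem~\ref{single element} gives $\omega\in\Map_{\cA}(G)$. Therefore $\XX\subseteq\Map_{\cA}(G)$, as desired.

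I expect the main obstacle to be precisely the passage from almost-everywhere to everywhere, that is, establishing that a single autocorrelation $\gamma$ serves all points of $\XX$. This is exactly where unique ergodicity is indispensable, since it upgrades the convergence of $\frac{1}{|A_n|}\int_{A_n}F(\tau_{-s}\omega)\,\dd s$ from $m$-almost every $\omega$ to every $\omega$; the remaining steps are direct invocations of Proposition~\ref{prop-compute-autocorrelation}, the equivalence of pure point diffraction and dynamical spectrum, and Theorem~\ref{single element}.
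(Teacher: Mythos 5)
Your proof is correct and follows essentially the same route as the paper: the crux in both cases is that unique ergodicity forces every $\omega\in\XX$ to have the same autocorrelation $\gamma$ along $\cA$, after which Theorem~\ref{single element} (together with the equivalence of pure point dynamical and diffraction spectrum) settles both directions. The only difference is that the paper obtains this uniform-autocorrelation fact by citing \cite[Thm.~5(a)]{BL}, whereas you rederive it from the uniform convergence of the ergodic averages of $f_\varphi\overline{f_\psi}$ --- which is precisely the content of that citation --- so your backward direction could equally well bypass Theorem~\ref{them: ue} (and its Birkhoff hypothesis on $\cA$) and invoke Theorem~\ref{single element} directly.
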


\section{Wiener--Wintner points for TMDS with pure-point spectrum}

Here, we show that Besicovitch almost periodic measures are exactly the measures which are Wiener--Wintner points with respect
to $G$-invariant measures with pure-point spectrum.

\begin{theorem}\label{bap-sg}
Let $\mu \in \cM^\infty(G)$, and let $\cA$ be a van Hove sequence. Then, $\mu \in \Bap_{\cA}(G)$ if and only if
there exists some measure $\mathfrak{m}$ on $\XX(\mu)$ with pure-point spectrum such that $\mu$ is a Wiener--Wintner point for $\mathfrak{m}$ and $\cA$.

Moreover, in this case, $\mathfrak{m}$ is ergodic.
\end{theorem}
\begin{proof}
$\Longrightarrow$: First, for all $\varphi \in \Cc(G)$, we have
\[
f_\varphi(\tau_t  \mu)=
(\mu*\varphi)(t) \,.
\]
Therefore, the functions $t \mapsto f_\varphi(\tau_t
\mu)$ and $t \mapsto \overline{f_\varphi(\tau_t  \mu)}$ belong to
$\bap(G)$. It follows immediately that, for every $g$ in the
algebra $\AAA$ generated by $\{  f_\varphi , \overline{ f_\varphi} \}$, we have
\[
g_{\mu}\in \bap(G) \cap L^\infty(G) \,.
\]
A standard density argument, compare \cite{LSS}, implies that, for all
$g \in C(\XX)$, we have $g_\mu \in \bap(G)$.
Therefore, by Proposition~\ref{Bap props}, the limit
\[
\mathfrak{m}(f):= \lim_{n\to\infty} \frac{1}{|A_n|} \int_{A_n} f(\tau_t  \mu)\, \dd t
\]
exists for each $f \in
C(\XX(\mu))$. It is obvious that $\mathfrak{m} : C(\XX) \to \CC$ is linear, positive, and therefore a positive measure.
Moreover, for the constant function $1_{\XX}$, we have
\[
\mathfrak{m}(\XX)= \mathfrak{m}(1_{\XX})= \lim_{n\to\infty} \frac{1}{|A_n|} \int_{A_n}1\, \dd t  =1 \,.
\]
Finally, for all $s \in G$ and $f \in C(\XX)$, we have
\begin{align*}
| \mathfrak{m}(f)-\tau_s \mathfrak{m}(f) |
    &= |\mathfrak{m}(f)-\mm(\tau_{-s}f) | \\
    &= \left| \lim_{n\to\infty} \frac{1}{|A_n|} \int_{A_n} f(\tau_t\mu)\,
      \dd t -\frac{1}{|A_n|} \int_{A_n} f(\tau_{z+s} \mu) \,\dd z \right| \\
    &= \left| \lim_{n\to\infty} \frac{1}{|A_n|} \int_{A_n} f(\tau_t  \mu)\,
       \dd t -\frac{1}{|A_n|} \int_{-s+A_m} f(\tau_{t} \mu)\, \dd t \right| \\
    &= \left| \lim_{n\to\infty} \frac{1}{|A_n|} \int_{A_n \triangle (-s+A_n)}
       f(\tau_t  \mu)\, \dd t \right| \\
       &\leq \| f \|_\infty\, \lim_{n\to\infty}\frac{|A_n \triangle (-s+A_n)|}{|A_n|}
       = 0 \,.
\end{align*}
Therefore, $\mathfrak{m}$ is $G$-invariant.

Next, by the above, for all $f \in C(\XX(\mu))$,
we have $f_{\mu} \in \bap^2(G)$.
Therefore, $f_{\mu}$ has well defined Fourier--Bohr coefficients.
Define $F_\chi : C(\XX(\mu)) \to \CC$ via
\[
F_\chi(f)=a_\chi^{\cA} (f_{\mu}) \,.
\]
By the Cauchy-Schwartz inequality, we then have
\[
\left| \frac{1}{|A_n|} \int_{A_n} \overline{\chi(t)}\, f(\tau_t  \mu)\,
\dd t \right|^2 \leq  \frac{1}{|A_n|} \int_{A_n} \left| f(\tau_t
\mu) \right|^2\, \dd t  \,.
\]
Moreover, by the definition of $\mathfrak{m}$, we have
\begin{equation}\label{eq mean}
\lim_{n\to\infty} \frac{1}{|A_n|} \int_{A_n} \left| f(\tau_t  \mu) \right|^2\,
\dd t= \int_{\XX} \left| f(\omega) \right|^2\, \dd \mathfrak{m}(\omega) \,.
\end{equation}
Therefore, for all $f \in C\big(\XX(\mu)\big)$, we have
\[
\left| F_\chi(f) \right| \leq \| f \|_2 \,.
\]
Since $C(\XX(\mu))$ is a dense subspace of $L^2(\XX(\mu), \mm)$, $F_\chi$ can be extended to a continuous functional on the Hilbert space $L^2(\XX(\mu), \mm)$.
By Riesz' lemma, there exists some element $e_\chi \in L^2(\XX(\mu), \mm)$ with $\| e_\chi \| \leq 1$ such that, for all
$f \in L^2(\XX(\mu), \mm)$, we have
\begin{equation}\label{Eq eigenfunct}
F_\chi(f) = \int_{\XX} f(\omega)\, \overline{ e_\chi(\omega)}\, \dd \mathfrak{m}(\omega) \,.
\end{equation}
Next, define $E:= \{ \chi \in \widehat{G} : e_\chi \neq 0 \}$. By
construction, $\chi \in E$ if and only if there exists some $f \in
C(\XX(\mu))$ such that $a_\chi^{\cA} (f_\mu) \neq 0$.
A short computation shows that, for all $f \in C(\XX(\mu))$, we have
\begin{align*}
0
    &= \int_{\XX} f(\omega)\, \overline{ e_\chi(\tau_t  \omega)}\, \dd \mathfrak{m}(\omega)
      - \int_{\XX}  f(\omega)\, \overline{ e_\chi(\tau_t \omega)}\,
       \dd \mathfrak{m}(\omega) \\
    &= \int_{\XX} f(\omega)\, \overline{e_\chi(\tau_t  \omega)}\, \dd \mathfrak{m}(\omega)
      - \int_{\XX}  f(\tau_{-t}\omega)\, \overline{ e_\chi(\omega)}\,
      \dd \mathfrak{m}(\omega) \\
      &= \int_{\XX} f(\omega)\, \overline{e_\chi(\tau_t  \omega)}\, \dd \mathfrak{m}(\omega)
      - F_\chi(\tau_tf) \\
    &= \int_{\XX} f(\omega)\, \overline{e_\chi(\tau_t  \omega)}\, \dd \mathfrak{m}(\omega)
      - \lim_{n\to\infty} \frac{1}{|A_n|} \int_{A_n} \tau_t f(\tau_s \omega)\,
        \overline{\chi(s)}\, \dd s \\
     &= \int_{\XX} f(\omega)\, \overline{e_\chi(\tau_t  \omega)}\, \dd \mathfrak{m}(\omega)
      - \overline{\chi(t)}\lim_{n\to\infty} \frac{1}{|A_n|} \int_{A_n}  f(\tau_{s-t} \omega)\,
        \overline{\chi(s-t)}\, \dd s \\
      &= \int_{\XX} f(\omega)\, \overline{f_\chi(\tau_t  \omega)}\, \dd \mathfrak{m}(\omega)
      - \overline{\chi(t)}\lim_{n\to\infty} \frac{1}{|A_n|} \int_{t+A_n}  f(\tau_{r} \omega)\,
        \overline{\chi(r)}\, \dd r \\
        &= \int_{\XX} f(\omega)\, \overline{e_\chi(\tau_t  \omega)}\, \dd \mathfrak{m}(\omega)
      - \overline{\chi(t)}\lim_{n\to\infty} \frac{1}{|A_n|} \int_{A_n}  f(\tau_{r} \omega)\,
        \overline{\chi(r)}\, \dd r \\
    &=\int_{\XX} f(\omega) \left( \overline{e_\chi(\tau_t  \omega)}
      - \overline{\chi(t)}\, \overline{e_{\chi} (\omega)} \right)
       \dd \mathfrak{m}(\omega)   \,,
\end{align*}
where the second last equality follows as $f$ is bounded from the van Hove condition.
The density of $C(\XX(\mu))$ in $L^2(\XX(\mu), \mm)$ implies that
\[
 \overline{e_\chi(\tau_t  \omega)}=  \overline{\chi(t)} \, \overline{e_{\chi} (\omega)} \,.
\]
It follows that for all $\chi \in E, e_\chi$ is an eigenfunction.

Finally, for each $f \in C(\XX(\mu))$, Eq.~\eqref{eq mean} and Parseval's identity for the
function $f_\mu$ give
\begin{align*}
\int_{\XX} \left| f(\omega) \right|^2 \dd \mathfrak{m}(\omega)
    &=\lim_{n\to\infty} \frac{1}{|A_n|} \int_{A_n} | f(\tau_t  \mu) |^2\, \dd t
      = \sum_{\chi \in \widehat{G}} \big| a_{\chi}^\cA( f_\mu) \big|^2 \\
    &= \sum_{\chi \in \widehat{G}} \left|F_\chi(f)\right|^2
      = \sum_{\chi \in \widehat{G}} \big|\langle f, e_\chi\rangle \big|^2 \,.
\end{align*}
Since the elements in $\{ e_ \chi : \chi \in E \}$ are orthogonal, $\|f_\chi \| \leq 1$ and $C(\XX(\mu))$ is dense in $L^2(\XX(\mu), \mm)$, it follows that $\| e_\chi \|=1$ for all $\chi \in E$ and that $\{ e_ \chi : \chi \in E \}$ is an orthonormal basis in $L^2(\XX(\mu),\mathfrak{m})$. This implies that
\[
E=\SSS(\XX(\mu), \mathfrak{m}) \,.
\]

Finally, by the definition of $F_\chi$, for all $f \in C(\XX(\mu))$ and all $\chi  \in \SSS(\XX(\mu), \mathfrak{m})=E$ we have
\[
\langle f, e_\chi \rangle = F_\chi(f)=a_\chi^{\cA} (f_{\mu}) \,.
\]
This shows that $\mu$ is a Wiener--Wintner point for $\mathfrak{m}$. It follows that $\mathfrak{m}$ is ergodic.

\medskip

\noindent $\Longleftarrow$: Let $B:=\{ e_\chi : \chi \in \SSS(\XX,\mathfrak{m})\}$ be the choice of eigenfunctions which make $\mu$ a Wiener--Wintner point.
Since $(\XX(\mu), G, \mathfrak{m})$ has pure-point spectrum, $B$ is an orthonormal basis for $L^2(\XX(\mu), G, \mathfrak{m})$.
Let $\varphi \in \Cc(G)$.
Since $f_\varphi \in L^2(\XX(\mu), G, \mathfrak{m})$, there exists characters $\chi_1, \ldots , \chi_n \ldots \in \SSS(\XX(\mu), \mathfrak{m})$ such that
\[
\lim_{N \to \infty} \| f_\varphi - \sum_{k=1}^N \langle  f_\varphi , e_{\chi_k} \rangle e_{\chi_k} \|_2^2 =0 \,.
\]
For simplicity, let
\[
c_k:=\langle  f_\varphi , e_{\chi_k} \rangle \,.
\]
Then, a short computation yields
\begin{align*}
  \| f_\varphi - \sum_{k=1}^N c_k e_{\chi_k} \|_2^2 & = \| f_\varphi\|_2^2 - \sum_{k=1}^N  \langle f_\varphi , c_k e_\chi \rangle   \\
   &- \sum_{k=1}^N  \overline{\langle f_\varphi , c_k e_\chi \rangle} + \sum_{k=1}^N |c_k|^2 \,.
\end{align*}
Next, since $f_\varphi$ is continuous and $\mu$ is a Wiener--Wintner point, we get
\[
  \| f_\varphi\|_2^2  = M_{\cA}(|\varphi\ast \mu|^2)
\]
and
\[
 \langle f_\varphi , c_k e_\chi \rangle  = \overline{c_k} \langle f_\varphi , e_{\chi_k} \rangle = \overline{c_k} a_{\chi_k}^\cA(\varphi\ast \mu) \,.
\]
Therefore,
\begin{align*}
\| f_\varphi - \sum_{k=1}^N c_k e_{\chi_k} \|_2^2
   &= M_{\cA}(|\varphi\ast \mu|^2) -\sum_{k=1}^N\overline{c_k} a_{\chi_k}^\cA(\varphi\ast \mu)  \\
   &\phantom{XXX}- \sum_{k=1}^N c_k \overline{ a_{\chi}^\cA(\varphi\ast \mu)} + \sum_{k=1}^N |c_k|^2 \\
   &= \lim_{n\to\infty} \frac{1}{|A_n|} \int_{A_n} \left( (\varphi\ast \mu)(t) \overline{(\varphi\ast \mu)(t)} - \sum_{k=1}^N\overline{c_k} (\varphi\ast \mu)(t) \overline{\chi_k(t)} \right.\\
   &\phantom{XXXX} \left.- \sum_{k=1}^N c_k \overline{(\varphi\ast \mu)(t)} \chi_k(t)+\sum_{k=1}^N c_k\chi_k(t) \overline{c_k} \overline{\chi_k(t)} \right) \dd t \\
    &= \lim_{n\to\infty} \frac{1}{|A_n|} \int_{A_n} \left| (\varphi\ast \mu)(t) - \sum_{k=1}^N c_k \chi_k(t) \right|^2 \dd t \\
    &= \| \varphi\ast \mu - \sum_{k=1}^N c_k \chi_k \|_{\be,2,\cA}^2 \,.
\end{align*}
Therefore, setting $P_N:= \sum_{k=1}^N c_k \chi_k$, we get
\[
\lim_{N\to\infty} \| \varphi\ast \mu - P_N \|_{\be,2,\cA}= \lim_{N\to\infty} \| f_\varphi - \sum_{k=1}^N c_k e_{\chi_k} \|_2 =0 \,
\]
i.e. $\varphi\ast \mu \in \bap^2(G)$. Since $\varphi \in \Cc(G)$ was arbitrary, we get $\mu \in \Bap_{\cA}(G)$.
\end{proof}

As immediate consequences, we get:

\begin{theorem}[Besicovitch almost periodic translation-bounded measures are generic for an ergodic measure with pure-point spectrum]\label{bap gives ergodic}
Let $\mu \in \Bap_{\cA}(G) \cap \cM^\infty(G)$. Then, there exists an ergodic, $G$-invariant probability measure $\mm$ on $\XX:=\XX(\mu)$ with the following properties:
\begin{itemize}
\item [(a)] For all $f \in C(\XX)$, we have
\[
\lim_{n\to\infty} \frac{1}{|A_n|} \int_{A_n} f(\tau_t  \mu)\, \dd t = \int_{\XX}
f(\omega)\, \dd \mm(\omega) \,.
\]
\item[(b)] The autocorrelation $\gamma_\mm$ of $(\XX, \mm, G)$ is also the autocorrelation $\gamma_\mu$ of $\mu$ with respect to $\cA$.
  \item[(c)] The system $(\XX, \mm, G)$ has pure-point dynamical spectrum, which is generated by $\{ \chi : a_{\chi}^\cA(\mu) \neq 0\}$. \qed
\end{itemize}
\end{theorem}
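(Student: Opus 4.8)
The plan is to construct the measure $m$ as the unique accumulation point of the empirical averages along $\cA$ and then to extract its ergodicity, autocorrelation, and spectral properties from the Besicovitch structure of $\mu$. First I would show that for every $f \in C(\XX)$ the limit $\lim_{n\to\infty} \frac{1}{|A_n|}\int_{A_n} f(\tau_t\mu)\,\dd t$ exists. By the Stone--Weierstra\ss{} argument of Corollary~\ref{cor:char-ue}, it suffices to treat finite products of functions of the form $f_\varphi(\omega) = (\omega*\varphi)(0)$ and their conjugates. Since $\mu \in \Bap_{\cA}(G)$, each $\mu*\varphi$ lies in $Bap^2_{\cA}(G)$, and by Proposition~\ref{prop:317} (and the remark following it) finite products of elements of $Bap^2_{\cA}(G)$ and their conjugates lie in $Bap_{\cA}(G)$, so their means exist by Proposition~\ref{Bap props}(a). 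A short computation identifies $f(\tau_t\mu)$ for such product functions $f$ with exactly such a product of translates of $\mu*\varphi$ evaluated appropriately, so the Birkhoff-type average converges. This defines a positive linear functional on $C(\XX)$, hence a probability measure $m$ on $\XX$, and establishes (a).

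Next I would argue that $m$ is $G$-invariant and ergodic. Invariance follows from the van Hove property: translating the integration window $A_n$ changes the average by a boundary term that vanishes in the limit, exactly as in the proof of Lemma~\ref{L1}(c) and Proposition~\ref{prop: amenable}. Ergodicity is the more delicate point. Here I would use the fact that the averages in (a) converge to a \emph{fixed} value independent of the starting point of the orbit; more precisely, since $\mu$ is Besicovitch almost periodic, the function $t\mapsto f(\tau_t\mu)$ is, for each product function $f$, amenable (its mean exists uniformly is not needed, only that the limit equals the space average), and one shows that the system $(\XX,G,m)$ admits no nontrivial invariant observables by verifying that $\mu$ is a generic point whose orbit averages already compute the integral against $m$. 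A clean route is to observe that the closure of the orbit together with the unique limiting behaviour of the averages forces $m$ to be the unique $G$-invariant measure with $\mu$ generic, and unique ergodicity (or at least genericity of $\mu$ for an ergodic component) yields ergodicity of $m$.

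For (b), I would apply Theorem~\ref{thm:aut def}: the autocorrelation $\gamma$ of $(\XX,G,m)$ is the unique positive definite measure with $(\gamma*\varphi*\widetilde\psi)(t) = \int_{\XX} (\tau_t f_\varphi)(\omega)\,\overline{f_\psi(\omega)}\,\dd m(\omega)$. Evaluating the right-hand side at $t=0$ using part (a) gives $\int_{\XX} f_\varphi\,\overline{f_\psi}\,\dd m = M_{\cA}\big((\mu*\varphi)\cdot\overline{\mu*\psi}\big)$, which by Proposition~\ref{prop-compute-autocorrelation} equals $(\gamma_\mu *\varphi*\widetilde\psi)(0)$, where $\gamma_\mu$ is the autocorrelation of $\mu$ along $\cA$ (which exists because products of $Bap^2_{\cA}(G)$-functions have means). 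Since both $\gamma$ and $\gamma_\mu$ are determined by these values for all $\varphi,\psi\in\Cc(G)$ and all $t\in G$, they coincide; I would handle general $t$ by the same computation with $\tau_t f_\varphi$.

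Finally, for (c), since $\mu$ is Besicovitch almost periodic, Theorem~\ref{Bap and BT} gives that $\widehat{\gamma_\mu}$ is pure point, and by (b) this is the diffraction of $(\XX,G,m)$; the equivalence of pure point diffraction and pure point dynamical spectrum (Theorem of \cite{BL} quoted in Section~\ref{sec:Dynamics}) then yields pure point dynamical spectrum. That the eigenvalues are generated by $\{\chi : a_\chi^{\cA}(\mu)\neq 0\}$ follows from the consistent phase property $\widehat{\gamma}(\{\chi\}) = |a_\chi^{\cA}(\mu)|^2$ of Theorem~\ref{Bap and BT}, so that $\widehat\gamma(\{\chi\})\neq 0$ precisely when $a_\chi^{\cA}(\mu)\neq 0$, combined with Theorem~\ref{them cont eigenfunctions} producing the corresponding (continuous) eigenfunctions $a_\chi^{\cA}$. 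I expect the \textbf{main obstacle} to be establishing ergodicity rather than mere invariance: genericity of $\mu$ only directly gives that $m$ is well defined and invariant, and one must rule out that $m$ is a nontrivial mixture of ergodic components. The cleanest resolution is to show that the orbit averages of $\mu$ converge to the \emph{same} limit as those started at any $\omega$ in a full-measure set (using that almost every $\omega\in\XX$ is itself Besicovitch almost periodic with the same Fourier--Bohr data, via Theorem~\ref{them: ergodic}), which pins down a single ergodic measure.
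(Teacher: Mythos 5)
Your parts (a) and (b) follow the paper's proof essentially verbatim (Besicovitch means for products of $f_\varphi$'s via Stone--Weierstra\ss, Følner invariance, and the identification $(\gamma*\varphi*\widetilde\psi)(0)=((\mu*\varphi)\circledast_\cA\widetilde{(\mu*\psi)})(0)$), and those steps are fine. The genuine gap is ergodicity, exactly where you flagged the obstacle, and your proposed resolution does not close it. You suggest deducing ergodicity from the fact that almost every $\omega\in\XX$ is Besicovitch almost periodic ``with the same Fourier--Bohr data, via Theorem~\ref{them: ergodic}'' --- but Theorem~\ref{them: ergodic} takes ergodicity of $m$ as a hypothesis, so this is circular. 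Your fallback, ``unique ergodicity \dots yields ergodicity of $m$,'' is also unavailable: $\XX(\mu)$ need not be uniquely ergodic for a Besicovitch almost periodic $\mu$ (Example~\ref{remark Bap but not Wap} is precisely a measure in $\Bap_\cA(G)$ whose hull is not uniquely ergodic). And genericity of the single point $\mu$ for $m$ does not by itself rule out that $m$ is a nontrivial mixture of ergodic components.

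The paper closes this gap by a quite different, self-contained spectral argument that proves ergodicity and pure point spectrum simultaneously: for each $\chi\in\widehat G$ it defines $F_\chi(f)=a_\chi^\cA\bigl(t\mapsto f(\tau_t\mu)\bigr)$ on the dense algebra $\AAA$ generated by the $f_{\varphi_j}$ (these coefficients exist because such orbit functions lie in $Bap^2_\cA(G)$), bounds $|F_\chi(f)|\le\|f\|_2$ by Cauchy--Schwarz together with part (a), represents $F_\chi$ by some $f_\chi\in L^2(m)$ via Riesz, checks that $f_\chi$ is an eigenfunction, and then uses the Parseval identity in $Bap^2_\cA(G)$ (Theorem~\ref{thm hilbert}(e)) to show $\int_\XX|f|^2\,\dd m=\sum_\chi|\langle f,f_\chi\rangle|^2$ for $f\in\AAA$, so that $\{f_\chi\}$ is an orthonormal basis with one-dimensional eigenspaces; ergodicity is then the one-dimensionality of the eigenspace of the trivial character. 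Separately, your appeal to Theorem~\ref{them cont eigenfunctions} in part (c) is not justified in this setting: that theorem requires the Fourier--Bohr coefficient to exist \emph{uniformly} in translates, which is a Weyl-type property and is not implied by Besicovitch almost periodicity; the eigenfunctions one obtains here are only $L^2$ (or $L^1$) objects, not continuous ones.
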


\begin{theorem}\label{them: ergodic}
Let $(\XX, G, \mathfrak{m})$ be an ergodic TMDS with second countable $G$, and let $\cA$ be a van Hove
sequence along which Birkhoff's ergodic theorem holds.
Then, the system $(\XX, G, m)$ has a pure-point spectrum if and only if
\[
\mm\bigl(\XX \cap \left(\Bap_{\cA}(G)\right)\bigr)= 1 \,.
\]
In this case, for each $\chi$ with $\reallywidehat{\gamma_\mm}(\{ \chi \}) \neq 0$, there exists a non-trivial eigenfunction $f_\chi \in \mathcal{L}^1(\XX,m)$ such that
\[
f_\chi(\omega)= \overline{a_\chi^\cA(\omega)} \qquad \mbox{ for all } \omega \in \Bap_{\cA}(G) \cap \XX \,,
\]
and
\[
\reallywidehat{\gamma_\mu}(\{ \chi \})=\left| f_\chi(\omega) \right|^2 = \left| a_\chi^\cA(\omega) \right|^2 \qquad \mbox{ for } \mm\mbox{-almost all } \omega \in \XX \,.
\]
\end{theorem}
\begin{proof}

\noindent $\Longleftarrow$: This follows from $\Bap_{\cA}(G) \subseteq \Map_{\cA}(G)$ and Theorem~\ref{them: ue}.

\medskip

\noindent $\Longrightarrow$: Follows from Proposition~\ref{prop-sg-ergodic} and Theorem~\ref{bap-sg}.

\smallskip

Finally, we will show the last statement.
Choose a $\chi \in \widehat{G}$ with $\widehat{\gamma}(\{\chi \}) \neq 0$. Define
\[
f_\chi(\omega)=
\begin{cases}
 \overline{a_\chi^\cA(\omega)}, & \mbox{ if } \omega \in \Bap_\cA(G) \cap \XX \,, \\
0, &\mbox{ otherwise } \,.
\end{cases}
\]
This is well defined as the Fourier--Bohr coefficients of Besicovitch
almost periodic measures exist by Theorem \ref{Bap and BT}.

We claim that this satisfies the given condition. Note first that, by Lemma~\ref{L1}(c), the set $\XX \cap \Bap_{\cA}(G)$ is $G$-invariant. It follows that we have
\[
f_\chi(\tau_t \omega)=0=\chi(t) f_\chi(\omega)
\]
for all $\omega \notin \Bap_\cA(G) \cap \XX$. For $\omega \in \Bap_\cA(G) \cap \XX$, it follows immediately from
the definition of the Fourier--Bohr coefficients and translation boundedness that
\begin{align*}
f_\chi(\tau_t\omega)
   &= \lim_{n\to\infty} \frac{1}{|A_n|} \overline{\int_{A_n} \overline{\chi(s)}\, \dd (\tau_t \mu)(s)} = \lim_{n\to\infty} \frac{1}{|A_n|}  \overline{\int_{A_n} (\tau_{-t} \overline{\chi})(s)\, \dd \mu(s) } \\
   &= \lim_{n\to\infty} \frac{1}{|A_n|}  \overline{ \int_{A_n}  \overline{\chi(s+t)}\, \dd \mu(s)} =\chi(t) \cdot f_\chi(\omega) \,.
\end{align*}

\smallskip

Next, we show that $f_\chi \in \mathcal{L}^1(\XX,m)$.
Choose $\varphi$ with $\widehat{\varphi}(\chi)=1$. As in the proof of Theorem~\ref{them cont eigenfunctions}, define $\AAA^\chi_n: C(\XX(\mu)) \to C(\XX(\mu))$ via
\[
\AAA^\chi_n(f) (\omega):=\frac{1}{|A_n|} \int_{A_n} \overline{\chi(s)}\, f( \tau_s \omega)\, \dd s  \,.
\]
Then, $\AAA_n^\chi(f_\varphi) \in C( \XX)$ for all $n$. Moreover, by definition, $\|\AAA_n^\chi ( \varphi) \|_\infty  \leq \| f_\varphi \|_\infty$.
For all $\omega \in \XX \cap \Bap_{\cA}(G)$, the Fourier--Bohr coefficients of $\omega$ exist by Corollary~\ref{bap2 char}. Hence,
\[
f_\chi(\omega) =\overline{a_\chi^\cA(\omega)}= \overline{a_\chi^\cA(\omega*\varphi)}= \lim_{n\to\infty}
\frac{1}{|A_n|} \overline{\int_{A_n} f_\varphi(\tau_t  \omega)\, \dd \omega} =
\lim_{n\to\infty} \overline{ \AAA_n^\chi(f_\varphi) (\omega)} \,.
\]
Since $\mm(\XX \cap \Bap_{\cA}(G))=1$, it follows that $\big(\overline{\AAA_n^\chi(f_\varphi)}\big)$ is a sequence of functions in $C(\XX) \subseteq \mathcal{L}^1(\XX,m)$, which is bounded by the constant function
$\| f_\varphi\|_\infty 1_{\XX} \in \mathcal{L}^1(\XX, \mm)$ and which converges almost everywhere to $f_\chi$.
The dominated convergence theorem implies that $f_\chi \in \mathcal{L}^1(\XX,\mm)$ as claimed.

Finally, as $\widehat{\gamma_\mu}(\{ \chi \}) \neq 0$ and as
$\gamma_\mu$ is almost surely the autocorrelation of $\omega \in \Bap_{\cA}(G)
\cap \XX$, we have
\[
0 \neq \widehat{\gamma_\mu}(\{ \chi \})= \left| a_\chi^\cA(\omega)\right|^2 =\left| f_\chi (\omega) \right|^2 \qquad \mbox{ for } \mm\mbox{-almost all } \omega \in \XX \,,
\]
by Theorem~\ref{thm:a-representation-besicov}. Therefore, $f_\chi$ is non-trivial, as well as the last claim.
\end{proof}

\begin{remark}
The direct implication $\Longrightarrow$ in Theorem~\ref{them: ergodic} can also be proven exactly as (iii) $\Longrightarrow$ (i) in Theorem~\ref{thm:wap}.   \exend
\end{remark}

As a last consequence, we list the following result, which shows that in the pure-point case, the notion of Wiener--Wintner point is equivalent to the points studied in \cite[Thm.~2]{KK}. A more general version of this can be found in
\cite{LS3}. As the proof is straightforward, we skip it and refer the reader to \cite{LS3} instead.

\begin{theorem}
Let $(\XX, \mm)$ be an ergodic TMDS with pure-point spectrum, let $\cA$ be a van Hove sequence along which the Birkhoff ergodic theorem holds, and let $\mu \in X$. Then, with respect to $\cA$ we have
\[
\XX_{\mathsf{ww}}=\XX_{\mathsf{g}} \cap \Bap_{\cA}(G) \,. \tag*{$\qed$}
\]
\end{theorem}

Let us cover another intriguing consequence of Theorem~\ref{bap-sg}.

\begin{theorem} Let $(\XX, G, \mathfrak{m})$ be a TMDS, and let $\cA$ be a van Hove sequence.
Assume that $\XX$ has pure-point spectrum and continuously representable eigenfunctions.
\begin{itemize}
  \item[(a)] One has $\XX_{\mathsf{ww}}=\XX_{\mathsf{g}}$.
  \item[(b)] We have $\XX_{\mathsf{g}} \subseteq \Bap_{\cA}(G)$.
  \item[(c)] The set $\XX_{\mathsf{g}}$ is non-empty if and only if $\mathfrak{m}$ is ergodic.
\end{itemize}
\end{theorem}
\begin{proof}
Since the eigenfunctions are continuous, the sets of generic and Wiener--Wintner points coincide. The claims follow.
\end{proof}

By combining all results in this section, we get:

\begin{coro}[Characterization of pure-point spectrum via almost periodicity]
\label{cor: ergodic}
Consider an ergodic TMDS $(\XX, G, \mm)$ with second countable $G$, and let $\cA$ be a van Hove sequence along which Birkhoff's ergodic theorem holds. Then, the following statements are equivalent:
\begin{itemize}
  \item [(i)] The system $(\XX, G, \mm)$ has a pure-point spectrum.
  \item [(ii)] One has $\mm(\XX \cap \Bap_{\cA}(G))=1$.
  \item [(iii)] One has $\mm(\XX \cap \Map_{\cA}(G))=1$.
\end{itemize}
\end{coro}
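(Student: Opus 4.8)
The plan is to obtain the three-way equivalence as an immediate consequence of the two main theorems of this subsection, since the hypotheses of the corollary---$m$ ergodic, $\cA$ a van Hove sequence along which Birkhoff's ergodic theorem holds, and $G$ second countable---are precisely those required to invoke both of them. First I would observe that the equivalence (i) $\Leftrightarrow$ (iii) is nothing but Theorem \ref{them: ue}: under the standing assumptions that $m$ is ergodic and that Birkhoff's theorem holds along $\cA$, that theorem asserts that $(\XX, G, m)$ has pure point spectrum if and only if $\omega \in \Map_{\cA}(G)$ for $m$-almost every $\omega \in \XX$, which is exactly statement (iii).

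Next I would note that the equivalence (i) $\Leftrightarrow$ (ii) is Theorem \ref{them: ergodic}, which additionally uses the second countability of $G$ (needed there for the construction via a countable dense subset of $\Cc(G)$ and the enumeration of eigenvalues). That theorem states that $(\XX, G, m)$ has pure point spectrum if and only if $\omega \in \Bap_{\cA}(G)$ for $m$-almost every $\omega \in \XX$, i.e. exactly statement (ii). Combining the two equivalences through the common condition (i) then yields (i) $\Leftrightarrow$ (ii) $\Leftrightarrow$ (iii), completing the argument.

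The only point requiring a word of care---and the closest thing to an obstacle---is the reconciliation of the ``for $m$-almost all $\omega$'' phrasing used in Theorems \ref{them: ue} and \ref{them: ergodic} with the full-measure statements $m(\XX \cap \Bap_{\cA}(G)) = 1$ and $m(\XX \cap \Map_{\cA}(G)) = 1$ appearing in (ii) and (iii). Here I would simply recall that the proofs of both theorems exhibit explicit \emph{measurable} subsets of $\XX$ of full $m$-measure (the sets $X \cap Y$, respectively $\YY$) that are contained in the relevant class of almost periodic measures; consequently the intersections $\XX \cap \Bap_{\cA}(G)$ and $\XX \cap \Map_{\cA}(G)$ contain full-measure measurable sets and the two formulations coincide. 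With this identification in place, no further computation is needed and the corollary follows.

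\begin{proof}
The equivalence (i) $\Leftrightarrow$ (iii) is Theorem \ref{them: ue}, whose hypotheses ($m$ ergodic and $\cA$ a van Hove sequence along which Birkhoff's ergodic theorem holds) are satisfied. The equivalence (i) $\Leftrightarrow$ (ii) is Theorem \ref{them: ergodic}, whose hypotheses (the same together with second countability of $G$) are likewise satisfied. Combining these two equivalences gives the asserted chain (i) $\Leftrightarrow$ (ii) $\Leftrightarrow$ (iii).
\end{proof}
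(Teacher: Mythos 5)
Your proposal is correct and matches the paper's approach exactly: the corollary is stated there with the remark ``Combining the results in this section we obtain the following,'' i.e.\ it is deduced precisely by invoking Theorem~\ref{them: ue} for (i)$\Leftrightarrow$(iii) and Theorem~\ref{them: ergodic} for (i)$\Leftrightarrow$(ii). Your extra remark on reconciling the ``almost all'' phrasing with the full-measure sets is a reasonable clarification but not a departure from the paper's argument.
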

\begin{proof}
Combine Theorem~\ref{thm:DS-char-map} and Theorem~\ref{them: ergodic}.
\end{proof}

\begin{remark}\label{rem1} \phantom{X}
\begin{itemize}
\item[(a)] In Corollary~\ref{cor: ergodic}, we can have
\[
\XX \cap \Bap_{\cA}(G) \subsetneq  \XX \cap \Map_{\cA}(G) \,.
\]
Consider for example the hull $\XX:=\XX(\mu)$, where $\mu$ is the
$a$-defect of $\ZZ$ for some $a \in (0,1) \backslash \QQ$ from Proposition~\ref{single def prop}. Then, by Proposition~\ref{single def prop}, $(\XX, G)$ is uniquely ergodic, has a pure-point spectrum and $\XX \subseteq \Map_{\cA}(G)$ but $\mu \notin \Bap_{\cA}(G)$.
\item[(b)]
 Let $\XX$ be a uniquely ergodic TMDS with pure-point diffraction.
Then, all elements $\omega \in \XX$ are mean almost periodic, and
almost all elements $\omega \in \XX$ are Besicovitch almost
periodic. It is not necessarily true that all elements $\omega \in
\XX$ are Besicovitch almost periodic.
Again, the hull
$\XX:=\XX(\mu)$, where $\mu$ is the $a$-defect of $\ZZ$ for some $a
\in (0,1) \backslash \QQ$, provides such an example.    \exend
\end{itemize}
\end{remark}

Let us briefly discuss the difference between mean and Besicovitch almost periodicity in the context of ergodic dynamical systems (with pure-point spectrum). The following result is an immediate consequence of Theorem~\ref{thm:DS-char-map} and Theorem~\ref{bap-sg}.

\begin{coro}
Consider an ergodic TMDS $(\XX, G, \mm)$ with pure-point spectrum and second countable $G$, and let $\cA$ be a van Hove sequence along which Birkhoff's ergodic theorem holds. Let $\mu \in \XX_{\mathsf{g}}$. Then, the following holds:
\begin{itemize}
  \item[(a)] $\mu \in \Map_{\cA}(G)$.
  \item[(b)] $\mu \in \Bap_{\cA}(G)$ if and only if $\mu \in \XX_{\mathsf{ww}}$.\qed
\end{itemize}
\end{coro}

As a consequence, we get the following theorem.

\begin{theorem}
Let $G$ be a second countable group, and let $\sigma$ be a positive pure-point measure on $\widehat{G}$. Then, the following statements are equivalent.
\begin{itemize}
\item[(i)] There is an ergodic TMDS $(\XX, G, \mathfrak{m})$ with autocorrelation $\gamma$ such that $(\widehat{\gamma})_{\mathsf{pp}}=\sigma$.
\item[(ii)] There is an ergodic TMDS $(\XX, G, \mathfrak{m})$ with a pure-point spectrum, and diffraction $\sigma$.
\item[(iii)] There is a van Hove sequence $\cA$ and some $\mu \in \Bap_{\cA}(G) \cap \cM^\infty(G)$ such that $\sigma$ is the diffraction of $\mu$ with respect to $\cA$.
\end{itemize}
\end{theorem}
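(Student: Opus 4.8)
The plan is to establish the cycle (iii)$\Longrightarrow$(ii)$\Longrightarrow$(i) first, as both of these are short consequences of machinery already in place, and then to close the loop with the substantial implication (i)$\Longrightarrow$(iii). For (iii)$\Longrightarrow$(ii): given $\mu\in\Bap_{\cA}(G)\cap\cM^\infty(G)$ whose diffraction is $\omega$ (so $\widehat{\gamma_\mu}=\omega$), I would apply Theorem~\ref{bap gives ergodic} to the hull $\XX(\mu)$. Part~(b) of that theorem identifies the dynamical autocorrelation $\gamma$ of $(\XX(\mu),G,m)$ with $\gamma_\mu$, whence the diffraction of the system is $\widehat{\gamma}=\widehat{\gamma_\mu}=\omega$, while part~(c) gives that the system has pure point dynamical spectrum; thus $(\XX(\mu),G,m)$ witnesses~(ii). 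For (ii)$\Longrightarrow$(i): from an ergodic TMDS with pure point dynamical spectrum and diffraction $\omega$, the equivalence of pure point dynamical spectrum and pure point diffraction (the cited result of Baake--Lenz, Theorem~7--9 of \cite{BL}) forces $\widehat{\gamma}$ to be a pure point measure, so that $(\widehat{\gamma})_{pp}=\widehat{\gamma}=\omega$, which is~(i).

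For (i)$\Longrightarrow$(iii) I would first extract the structural information carried by $\omega$. Since $\omega=(\widehat{\gamma})_{pp}\leq\widehat{\gamma}$ and $\widehat{\gamma}$ is translation bounded (being the diffraction of a translation bounded measure), the dominated measure $\omega$ is again a positive, translation bounded, pure point measure; in particular $\int_{\widehat{G}}|\widehat{\varphi}|^2\,\dd\omega<\infty$ for all $\varphi\in\Cc(G)$. Next I would fix a van Hove sequence $\cA$ along which Birkhoff's ergodic theorem holds (available as $G$ is second countable) and a generic point $\nu\in\XX$. Writing $E=\{\chi:\omega(\{\chi\})>0\}$ for the atoms, these are precisely the eigenvalues feeding the pure point part, and running the averaging operators $\AAA^\chi_n$ from the proofs of Theorem~\ref{them cont eigenfunctions} and Theorem~\ref{them: ergodic} against Birkhoff's theorem yields, for $m$-almost every $\nu$, existence of the Fourier--Bohr coefficients $A_\chi:=a_\chi^{\cA}(\nu)$ together with the consistent phase relation $|A_\chi|^2=\widehat{\gamma}(\{\chi\})=\omega(\{\chi\})$. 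These amplitudes satisfy $\sum_{\chi}|\widehat{\varphi}(\chi)|^2|A_\chi|^2<\infty$ for every $\varphi$, so by the construction in the Remark following Theorem~\ref{thm:a-representation-besicov} the datum $(A_\chi)$ already defines an $\mathcal N$-representation into $Bap^2_{\cA}(G)/\!\equiv$ whose diffraction is exactly $\omega$. It then remains to upgrade this abstract Besicovitch datum to a genuine translation bounded measure $\mu\in\cM^\infty(G)$ with $a_\chi^{\cA}(\mu)=A_\chi$ and $\mu*\varphi\in Bap^2_{\cA}(G)$ for all $\varphi$; once that is done, Theorem~\ref{Bap and BT} identifies $\widehat{\gamma_\mu}$ with $\omega$ and places $\mu$ in $\Bap_{\cA}(G)$, giving~(iii).

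The hard part will be precisely this last realization step. The natural candidate for $\mu$ is the inverse Fourier transform of the pure point measure $\sum_{\chi}A_\chi\delta_\chi$ on $\widehat{G}$ (a strongly almost periodic measure, hence Besicovitch almost periodic), but translation boundedness of $\omega$ only controls $\sum_{\chi}\omega(\{\chi\})$ locally and not $\sum_{\chi}|A_\chi|=\sum_{\chi}\sqrt{\omega(\{\chi\})}$, so the formal density $\sum_\chi A_\chi\chi$ need not converge to a translation bounded function and $\sum_\chi A_\chi\delta_\chi$ need not itself be translation bounded or transformable. My intended way around this is to use that the $A_\chi$ are not arbitrary square summable data but arise as the genuine Fourier--Bohr coefficients of the actual generic measure $\nu$ (equivalently, from the measurable eigenfunctions $f_\chi$ of the given system). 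Concretely, I would transport the problem to the Bohr compactification via Theorem~\ref{Bap completion}, where $(A_\chi)$ corresponds to an honest element of $L^2(G_{\mathsf b})$, and then exploit the compatibility of $\nu$ with this data -- through the factor map of $(\XX,G,m)$ onto its maximal equicontinuous (Kronecker) factor carried by $E$ -- to select a translation bounded representative, realizing the Kronecker factor as a concrete TMDS whose generic elements are the desired Besicovitch almost periodic measures with diffraction $\omega$.
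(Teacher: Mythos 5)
Your implications (iii)$\Rightarrow$(ii)$\Rightarrow$(i) are fine and essentially coincide with the paper's use of Theorem~\ref{bap gives ergodic} (the paper goes (iii)$\Rightarrow$(i) directly, but your detour through (ii) uses the same theorem and (ii)$\Rightarrow$(i) is immediate). The problem is your treatment of (i)$\Rightarrow$(iii), which contains a genuine gap rather than a proof. A system as in (i) need not have pure point spectrum: $\widehat{\gamma}$ may carry a nontrivial continuous component, so the generic element $\nu$ is in general not mean or Besicovitch almost periodic, and none of the machinery you invoke (Theorem~\ref{them cont eigenfunctions}, which presupposes uniform existence of Fourier--Bohr coefficients, or Theorem~\ref{them: ergodic}, which presupposes pure point spectrum) applies to it. Even granting the almost sure existence of $A_\chi=a_\chi^{\cA}(\nu)$ with $|A_\chi|^2=\widehat{\gamma}(\{\chi\})$ for the atoms — a Bombieri--Taylor type statement the paper does not establish for general ergodic TMDS — you are left with exactly the realization problem you flag as ``the hard part'': producing a translation bounded measure $\mu$ with $\mu*\varphi\in Bap^2_{\cA}(G)$ and $a_\chi^{\cA}(\mu)=A_\chi$. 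Your proposed fix (pass to the Bohr compactification and ``realize the Kronecker factor as a concrete TMDS whose generic elements are Besicovitch almost periodic with diffraction $\omega$'') is precisely the statement that needs proving, not a proof of it; the square summable datum $(A_\chi)$ lives in $L^2(G_{\mathsf b})$ but gives no control whatsoever on translation boundedness of any representative on $G$, and the measure-theoretic Kronecker factor of $(\XX,G,m)$ does not come equipped with a realization as a space of translation bounded measures on $G$.

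The paper sidesteps this entirely by running the cycle the other way: (i)$\Rightarrow$(ii) is quoted from Aujogue \cite[Thm.~4.1]{JBA}, which is exactly the nontrivial extraction of the pure point part of an ergodic TMDS as an honest ergodic TMDS with pure point spectrum and diffraction $\omega$; then (ii)$\Rightarrow$(iii) is cheap, since pure point spectrum forces $m(\XX\cap\Bap_{\cA}(G))=1$ by Corollary~\ref{cor: ergodic} while \cite[Thm.~5(b)]{BL} gives a full measure set of elements with diffraction $\omega$, and one picks a point in the intersection. If you want to keep your architecture, you must either cite Aujogue's theorem (or an equivalent) for the realization step, or supply a genuine construction; as written, the last paragraph of your argument is a plan, not an argument.
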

\begin{proof}
(i) $\Longrightarrow$ (ii) follows from \cite[Thm.~4.1]{JBA}.

\medskip

\noindent (ii) $\Longrightarrow$ (iii) Since $(\XX, G, \mm)$ has a pure-point spectrum, we have
\[
\mathfrak{m}(\XX \cap \Bap_{\cA}(G))=1
\]
by Corollary~\ref{cor: ergodic}. Now, let $\cA$ be a van Hove sequence along which Birkhoff's ergodic theorem holds. By \cite[Thm.~5(b)]{BL}, $\omega$ is almost surely the diffraction of $\nu \in \XX$. In particular, there exists some
$\mu \in \XX \cap \Bap_{\cA}(G)$ such that $\omega$ is the diffraction of $\mu$.

\medskip

\noindent (iii) $\Longrightarrow$ (i) This follows from Theorem~\ref{bap gives ergodic}.
\end{proof}

\section{Characterizing Weyl almost periodic measures  via TMDS}
We showed in the previous sections that having a pure-point spectrum for a TMDS
can be characterized via mean and Besicovitch almost periodicity.
Now, we show that, for a TMDS $(\XX, G)$, Weyl almost periodicity for
one/all elements is equivalent to having a pure-point dynamical spectrum and
continuous eigenfunctions and being uniquely ergodic.

\begin{theorem}\label{thm:wap}
Let $\mu \in \cM^\infty(G)$. Then, the following statements are equivalent:
\begin{itemize}
  \item [(i)] We have $\mu \in \Wap(G)$.
  \item [(ii)] We have $\XX(\mu) \subseteq \Wap(G)$.
  \item [(iii)] The dynamical system $\XX(\mu)$ is uniquely ergodic, has pure-point dynamical spectrum and continuous eigenfunctions.
\end{itemize}
In this case, for each $\chi$ with $a_{\chi}(\mu) \neq 0$,
the function
\begin{align*}
f_\chi &:
\XX(\mu)  \to \CC \\
f_{\chi}(\omega)\,&=\, \overline{a_{\chi}(\omega)}
\end{align*}
is a continuous eigenfunction for the system with eigenvalue $\chi$.
\end{theorem}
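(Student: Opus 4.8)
The plan is to prove the cycle $(ii)\Rightarrow(i)\Rightarrow(iii)\Rightarrow(ii)$, with the \emph{Moreover} clause falling out of the passage $(i)\Rightarrow(iii)$. The implication $(ii)\Rightarrow(i)$ is immediate, since $\mu\in\XX(\mu)$.

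For $(i)\Rightarrow(iii)$ I would first record that for a translation bounded measure Weyl $p$-almost periodicity is independent of $p$ (Proposition~\ref{WAP equality} and the remark following it), so $\mu\in\Wap^2(G)$. Unique ergodicity of $\XX(\mu)$ then follows by combining Theorem~\ref{thm: wap in bap} (which yields that every finite product of functions from $\{\mu*\varphi,\overline{\mu*\varphi}:\varphi\in\Cc(G)\}$ is amenable) with the characterization of unique ergodicity in Corollary~\ref{cor:char-ue}. Let $m$ be the unique invariant probability measure. By Theorem~\ref{theorem-uniform-phase-problem} the autocorrelation $\gamma$ of $\mu$ exists for every van Hove sequence and $\widehat{\gamma}$ is pure point; since the system is uniquely ergodic, $\gamma$ is also the autocorrelation of $(\XX(\mu),G,m)$ by \cite[Thm.~5(a)]{BL}, so the system has pure point diffraction and hence pure point dynamical spectrum by \cite[Thm.~7--9]{BL}. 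Finally, by Lemma~\ref{lem:fb} the Fourier--Bohr coefficients $a_\chi(\mu)$ exist uniformly in $s\in G$, and the consistent phase property in Theorem~\ref{theorem-uniform-phase-problem} identifies the set of eigenvalues $\{\chi:\widehat{\gamma}(\{\chi\})\neq0\}$ with $\{\chi:a_\chi(\mu)\neq0\}$. For each such $\chi$, Theorem~\ref{them cont eigenfunctions} produces the continuous eigenfunction $\omega\mapsto a_\chi^{\cA}(\omega)$; as the system is ergodic each eigenvalue is simple, so this gives a complete set of continuous eigenfunctions and simultaneously establishes the \emph{Moreover} statement.

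The core of the argument is $(iii)\Rightarrow(ii)$, and this is where I expect the main obstacle. Fix $\omega\in\XX(\mu)$ and $\varphi\in\Cc(G)$; I must show $\omega*\varphi\in\wap^2_{\cA}(G)$. Let $\{c_\chi:\chi\in E\}$ be an orthonormal basis of $L^2(\XX(\mu),m)$ consisting of normalized continuous eigenfunctions, $E$ the set of eigenvalues. Using $f_\varphi(\tau_t\omega)=(\omega*\varphi)(t)$ and $c_\chi(\tau_t\omega)=\chi(t)\,c_\chi(\omega)$, for finite $F\subseteq E$ I set $g_F:=f_\varphi-\sum_{\chi\in F}\langle f_\varphi,c_\chi\rangle c_\chi\in C(\XX(\mu))$ and the trigonometric polynomial $P_{F,\omega}(t):=\sum_{\chi\in F}\langle f_\varphi,c_\chi\rangle c_\chi(\omega)\,\chi(t)$, so that $g_F(\tau_t\omega)=(\omega*\varphi)(t)-P_{F,\omega}(t)$. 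The key identity is that, for every $x\in G$,
\[
\frac{1}{|A_n|}\int_{x+A_n}\bigl|(\omega*\varphi)(t)-P_{F,\omega}(t)\bigr|^2\,\dd t=\frac{1}{|A_n|}\int_{A_n}\bigl|g_F\bigr|^2(\tau_r(\tau_x\omega))\,\dd r,
\]
because the outer translate by $x$ is absorbed into $\tau_x\omega$ and $P_{F,\tau_x\omega}(r)=P_{F,\omega}(x+r)$.

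The obstacle is to control the supremum over $x$ in the Weyl seminorm uniformly in $n$, and this is precisely where all three hypotheses in $(iii)$ enter. Unique ergodicity gives, via Theorem~\ref{uniq ergod char} and Proposition~\ref{prop: amenable}, that $t\mapsto|g_F|^2(\tau_t\mu)$ is amenable, i.e. $\tfrac{1}{|A_n|}\int_{A_n}|g_F|^2(\tau_r\eta)\,\dd r\to\int_{\XX}|g_F|^2\,\dd m$ uniformly over $\eta$ in the dense orbit of $\mu$; since these averages are continuous functions of $\eta$, the convergence is uniform over all of $\XX(\mu)$. Taking $\eta=\tau_x\omega$ and the supremum over $x$ in the identity above yields
\[
\|\omega*\varphi-P_{F,\omega}\|_{w,2,\cA}^2\leq\int_{\XX}|g_F|^2\,\dd m=\|f_\varphi\|_2^2-\sum_{\chi\in F}\bigl|\langle f_\varphi,c_\chi\rangle\bigr|^2,
\]
which tends to $0$ as $F\uparrow E$ by pure point spectrum. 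Hence $\omega*\varphi\in\wap^2_{\cA}(G)\subseteq\wap_{\cA}(G)$ (Proposition~\ref{ap inclusions}), and as $\varphi$ and $\omega$ were arbitrary, $\XX(\mu)\subseteq\Wap(G)$. The delicate points — and the reason continuity of the eigenfunctions is indispensable — are that $g_F$ must lie in $C(\XX(\mu))$ for the uniform ergodic theorem to apply and that the $\sup_x$ defining the Weyl seminorm must be traded for a supremum of a continuous function over the group orbit inside $\XX(\mu)$.
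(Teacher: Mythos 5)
Your proposal follows essentially the same route as the paper: the cycle is the same, unique ergodicity comes from amenability of products of the $\mu*\varphi$ via Corollary~\ref{cor:char-ue}, continuity of eigenfunctions comes from Theorem~\ref{them cont eigenfunctions}, and the core implication (iii)$\Rightarrow$(ii) is exactly the paper's argument: approximate $f_\varphi$ in $L^2(m)$ by a finite sum of \emph{continuous} eigenfunctions and use the uniform ergodic theorem to convert the $L^2(m)$-distance into a Weyl $2$-seminorm bound, uniformly in the translate. Your explicit identity $P_{F,\tau_x\omega}(r)=P_{F,\omega}(x+r)$ and the extension of uniform convergence from the dense orbit to all of $\XX(\mu)$ by continuity are correct and spell out what the paper leaves to the phrase ``by the unique ergodic theorem.''

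There is one genuine (though easily repaired) gap in your (i)$\Rightarrow$(iii), in the continuous-eigenfunction step. You identify ``the set of eigenvalues'' with the Bragg spectrum $\{\chi:\widehat{\gamma}(\{\chi\})\neq 0\}=\{\chi:a_\chi(\mu)\neq 0\}$ and conclude that the eigenfunctions $\omega\mapsto a_\chi^{\cA}(\omega)$ for these $\chi$ form a complete set. But the dynamical point spectrum is in general the \emph{group generated by} the Bragg spectrum, which can be strictly larger (already for regular model sets the Bragg spectrum need not be a group, as $\reallywidecheck{1_W}$ may vanish at some points of the projected dual lattice). So Theorem~\ref{them cont eigenfunctions} only hands you continuous eigenfunctions for the Bragg eigenvalues, and simplicity of eigenvalues does not by itself yield continuity of the remaining ones. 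The missing sentence is the one the paper supplies: since the point spectrum is generated as a group by the Bragg spectrum and products (and conjugates) of continuous eigenfunctions are continuous eigenfunctions, every eigenvalue admits a continuous eigenfunction. With that observation inserted, your argument is complete, and the \emph{Moreover} clause follows as you state.
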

\begin{proof} (ii)$\implies$(i): This trivially holds.

\medskip

\noindent (iii)$\implies$(ii):
Denote the set of eigenvalues by $E$. Then, $E$ is a countable subgroup of $\widehat{G}$ by standard arguments. Choose a family $\{ f_\chi \}_{\chi \in E}$ of eigenfunctions which are continuous, normalized and such that $f_{\underline{1}}=1$.

Now, for each $\varphi \in \Cc(G)$ and each $\eps >0$, since $L^2(\XX, \mathfrak{m})$ has pure-point spectrum, exactly as in Theorem.~\ref{them: ergodic}, there exists some $F= \sum_{k=1}^N c_k f_{\chi_k}$ such that
\[
 \int_{\XX} \left| f_\varphi (\omega) - F(\omega) \right|^2\, \dd \mathfrak{m} (\omega) < \eps^2 \,.
\]
Next, fix some arbitrary $\nu \in \XX(\mu)$.
Since the eigenfunctions are continuous, so is $f_\varphi-F$. Therefore, by the unique ergodic theorem, we have
\begin{align*}
\int_{\XX} \left| f_\varphi (\omega) - F(\omega) \right|^2\, \dd \mathfrak{m} (\omega)
    &= \lim_{n\to\infty} \frac{1}{|A_n|} \int_{x+A_n} \left|(\nu*\varphi)(s)
      - F(\tau_s\nu) \right|^2 \, \dd s \\
    &= \lim_{n\to\infty} \frac{1}{|A_n|} \int_{x+A_n} \left|(\nu*\varphi)(s)
      - \sum_{k=1}^N c_k'  \chi_k(s) \right|^2\, \dd s
\end{align*}
uniformly in $x$, where
\[
c_k':=c_k f_{\chi_k}(\nu)\,.
\]
This shows that, for all $\varphi \in \Cc(G)$ and $\eps >0$, there exists a trigonometric polynomial $P=\sum_{k=1}^N c_k'  \chi_k$ such that
\[
\|\nu*\varphi - P\|_{\we,2} < \eps\,.
\]
This gives  $\nu*\varphi \in \wap^2(G)$ for all $\varphi \in \Cc(G)$.

\medskip

\noindent (i)$\implies$(iii): We split the proof into steps:

\smallskip

\noindent \underline{Step 1:} We prove unique
ergodicity.

Since $\mu \in \Wap(G)$, for all $\varphi \in \Cc(G)$, we have $\mu *\varphi \in \wap(G) \cap \Cu(G)$, and hence $\overline{\mu *\varphi} \in \wap(G)\cap \Cu(G)$.
It follows that
\[
A:=\{ \mu*\varphi, \overline{\mu*\varphi} : \varphi
\in \Cc(G) \} \subseteq \wap(G) \cap \Cu(G) \,.
\]
By Lemma~\ref{spi
lemma} (c), we get that $\{ \prod_{j=1}^n f_j : f_j \in A\}
\subseteq \wap(G)$. In particular, any product of elements in $A$ is
amenable. Unique ergodicity then follows from
Corollary~\ref{cor:char-ue}.

\medskip

\noindent \underline{Step 2:}  We prove that $(\XX(\mu), \mm)$ has pure-point
dynamical spectrum. Here, $\mathfrak{m}$ is the unique ergodic measure.

Let $\gamma$ be the
autocorrelation of the dynamical system. Then, by the unique
ergodicity, $\gamma$ is the autocorrelation of $\mu$ with respect to
some van Hove sequence $(A_n)$.

Since $\mu$ is Weyl almost
periodic, hence mean almost periodic, $\widehat{\gamma}$ is pure
point by Theorem~\ref{theorem-uniform-phase-problem}, and satisfies the (CPP)
\[
\widehat{\gamma}(\{ \chi \}) = |a_\chi(\mu)|^2 \,.
\]
\smallskip

\noindent \underline{Step 3:} We prove the continuity of the eigenfunctions.
Let $\chi \in \widehat{G}$ be any element such that $\widehat{\gamma}(\{ \chi \}) \neq 0$. Then, by the (CPP) we have $a_\chi(\mu) \neq 0$. By Lemma~\ref{lem:fb} the Fourier--Bohr coefficient $a_\chi(\mu)$ exists uniformly in translates.  Theorem~\ref{them cont eigenfunctions} then shows that the corresponding eigenfunction can be chosen continuous.
This shows that, for each $\chi$ with $\widehat{\gamma}(\{ \chi \}) \neq 0$, we can choose a continuous eigenfunction.
Since the pure-point dynamical spectrum is generated as a group by the Bragg spectrum and since the product of continuous eigenfunctions is a continuous eigenfunction, the claim follows.

\smallskip

\noindent The last claim follows from Theorem~\ref{them cont eigenfunctions}.
\end{proof}

As an immediate consequence, we get the following result.

\begin{coro}
Let $\mu$ be a Weyl almost periodic measure.
\begin{itemize}
  \item [(a)] For all $\omega \in \XX(\mu)$ and all $\chi \in \widehat{G}$, we have $\widehat{\gamma}(\{ \chi \})=\left| a_\chi(\omega) \right|^2$.
  \item [(b)] For each $\omega \in \XX(\mu)$, the dynamical spectrum of $\XX(\mu)$ is the group generated by $\{ \chi\in\widehat{G} \, :\, a_\chi( \omega) \neq 0 \}$.
  \item [(c)] For each $\chi \in \widehat{G}$ with $\widehat{\gamma}(\{ \chi \}) \neq 0$, the function $\omega \mapsto \overline{a_\chi(\omega)}$ is a continuous eigenfunction on $\XX(\mu)$.    \qed
 \end{itemize}
\end{coro}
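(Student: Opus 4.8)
The plan is to read off all three statements from Theorem~\ref{thm:wap}, combined with the solution to the uniform phase problem (Theorem~\ref{theorem-uniform-phase-problem}) and the unique ergodicity of the hull. First I would record the structural consequences of the hypothesis $\mu \in \Wap(G)$. By the equivalence (i)$\Leftrightarrow$(ii)$\Leftrightarrow$(iii) of Theorem~\ref{thm:wap}, the whole hull satisfies $\XX(\mu) \subseteq \Wap(G)$, and $(\XX(\mu),G)$ is uniquely ergodic with pure point dynamical spectrum and continuous eigenfunctions. Each $\omega \in \XX(\mu)$ is translation bounded (as $\XX(\mu) \subseteq \cM_{C,V}$) and Weyl almost periodic; since for translation bounded measures Weyl $p$-almost periodicity is independent of $p$ (the remark following Proposition~\ref{WAP equality}), every such $\omega$ lies in $\Wap^2(G)$. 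Let $m$ be the unique $G$-invariant probability measure and $\gamma$ the autocorrelation of $(\XX(\mu),G,m)$. By unique ergodicity and \cite[Thm.~5(a)]{BL}, this single measure $\gamma$ is simultaneously the autocorrelation of every $\omega \in \XX(\mu)$, with respect to any van Hove sequence.

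For part (a) I would apply Theorem~\ref{theorem-uniform-phase-problem} to each $\omega \in \XX(\mu)$: membership in $\Wap^2(G)$ yields exactly the consistent phase property $\widehat{\gamma}(\{\chi\}) = |a_\chi(\omega)|^2$ for all $\chi \in \widehat{G}$, where the autocorrelation occurring in that theorem is the common $\gamma$ identified above. This is precisely statement (a).

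For part (c), specializing (a) to $\omega = \mu$ shows that $\widehat{\gamma}(\{\chi\}) \neq 0$ if and only if $a_\chi(\mu) \neq 0$. For such $\chi$ the ``Moreover'' clause of Theorem~\ref{thm:wap} provides directly that $\omega \mapsto a_\chi(\omega)$ is a continuous eigenfunction, giving (c) at once. For part (b), I would combine (a) with the fact that the pure point dynamical spectrum of a system with continuous eigenfunctions is the group generated by its Bragg spectrum $\{\chi : \widehat{\gamma}(\{\chi\}) \neq 0\}$, as already used in step (3) of the proof of Theorem~\ref{thm:wap}. By (a) the Bragg spectrum coincides with $\{\chi : a_\chi(\omega) \neq 0\}$ for every fixed $\omega \in \XX(\mu)$, so the group of eigenvalues is generated by $\{\chi : a_\chi(\omega) \neq 0\}$, which is exactly (b).

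The main obstacle is the bookkeeping in (b), namely justifying that the group of eigenvalues equals the group generated by the Bragg peaks. One inclusion, that each Bragg peak is an eigenvalue, is furnished by the continuous eigenfunctions $\omega \mapsto a_\chi(\omega)$ produced in (c); the reverse inclusion, that every eigenvalue lies in the group generated by the Bragg spectrum, is the standard structural fact about pure point spectrum invoked in the proof of Theorem~\ref{thm:wap}, which I would cite there rather than reprove. Everything else is a direct transcription of the uniform phase problem and the unique-ergodicity identification $\gamma_\omega = \gamma$, so no substantial new computation is required.
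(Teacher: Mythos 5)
Your proposal is correct and follows exactly the route the paper intends: the corollary is stated as an immediate consequence of Theorem~\ref{thm:wap} (whose proof already contains the unique ergodicity, the identification of the common autocorrelation via \cite[Thm.~5(a)]{BL}, the consistent phase property from Theorem~\ref{theorem-uniform-phase-problem}, and the generation of the spectrum by the Bragg peaks), and your write-up simply makes these steps explicit. No gaps.
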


\chapter{Some (counter)examples}\label{App:counter}
In this chapter, we consider some examples showing the strictness of
certain inclusions. We also show  that the space $\Map_{\cA}(G)$ does
not answer Lagarias' Question 6 \cite[Prob.~4.6]{LAG}. The
first example is relevant in various parts of the book.

\begin{definition}[$a$-defect]
Let $a \in (0,1)$. We define the \textit{$a$-defect of $\ZZ$} by
\[
\vL_a:= \{ -n : n \in \NN \} \cup \{ n+a : n \in \NN \}  \,.        \tag*{$\Diamond$}
\]
\end{definition}

We can prove that the $a$-defect of $\ZZ$ has the following
properties.

\begin{prop}\label{single def prop} \phantom{X}
\begin{itemize}
  \item[(a)] For each $a \in (0,1)$ and each van Hove sequence $\cA$, we have
  \[
  \delta_{\vL_a} \in \Map_\cA(G) \,.
  \]
  \item[(b)] For each $a \in (0,1)$ and $A_n=[-n,n]$, we have
  \[
  \delta_{\vL_a} \notin \Bap_\cA(G) \,.
  \]
   In particular, for all $1 \leq p < \infty$, we have
   \begin{align*}
      \delta_{\vL_a} &\notin \Bap^p_\cA(G)  \\
      \delta_{\vL_a} &\notin \Wap(G) \,.
   \end{align*}
  \item[(c)] For each $a \in (0,1), 1 \leq p < \infty$ and $A_n=[-n,n^2]$, we have
  \[
  \delta_{\vL_a} \in \Bap^p_\cA(G) \,.
  \]
  \item[(d)] For each $a \in (0,1)$ and each van Hove sequence $\cA$, the autocorrelation $\gamma$ of $\vL_a$ exists with respect to $\cA$, and \[
      \gamma=\delta_\ZZ\,.
      \]
  \item[(e)] For each $a \in (0,1)$ and each $b >0$, the Fourier--Bohr coefficients of $\vL_a$ exists with respect to $A_n=[-n, bn]$ and satisfy
\begin{displaymath}
a_\lambda(\vL_a)=
\begin{cases}
\frac{1+be^{2 \pi \im \lambda a} }{b+1} ,  & \mbox{ if } \lambda \in \ZZ \,, \\
0, & \mbox{ if } \lambda \notin \ZZ \,.
\end{cases}
\end{displaymath}
  \item[(f)] Let $a \in (0,1) \backslash \QQ$. For all $ \lambda \in \ZZ \backslash \{ 0 \}$, the Fourier--Bohr coefficients of $\delta_{\vL_a}$ don't exist with respect to the van Hove sequence \[A_n=[-n, (2+(-1)^n)n]\,.\]
  \item[(g)] Let $a \in (0,1) \backslash \QQ$, and let $A_n=[-n,n]$. For all $\lambda \in \ZZ \backslash \{ 0 \}$, the Fourier--Bohr coefficients of $\delta_{\vL_a}$ exist with respect to $\cA$ and
  \[
\widehat{\gamma}(\{ \lambda \}) \neq \left| a_\lambda ^\cA(
\delta_{\vL_a} ) \right|^2 \,.
  \]
  \item[(h)] The dynamical system $\XX( \vL_a)$ is uniquely ergodic.
  \item[(i)] Let $a \in (0,1) \backslash \QQ$. The dynamical spectrum of $(\XX(\vL_a), G)$ is $\ZZ$, while the topological dynamical spectrum is trivial.
\end{itemize}
\end{prop}
\begin{proof}
(a) If $n \in \ZZ$, then $\tau_n \delta_{\vL_a}-\delta_{\vL_a}$
is a measure with compact support. It follows immediately that,
for all van Hove sequences
$\cA$, we have
\begin{displaymath}
\|\tau_n\delta_{\vL_a}-\delta_{\vL_a}\|_{\we,\cA}=0 \,.
\end{displaymath}
The claim follows.

\medskip

\noindent (b) Fix $0<b < \min \{ a, 1-a \}$. Choose a function $\varphi \in \Cc(\RR)$ satisfying
\[
\varphi \geq 1_{ [-\frac{b}{8}, \frac{b}{8}]}
\]
and
\[
\supp(\varphi) \subseteq (-\frac{b}{4}, \frac{b}{4}) \,.
\]
Let $f$ be a Bohr almost periodic function.
Since $\delta_{\ZZ}*\varphi-f$ and $\delta_{a+\ZZ}*\varphi -f$ are Bohr almost periodic, the independence of the mean with respect to van Hove sequences gives
\[
\lim_{n\to\infty} \frac{1}{n} \int_{-n}^0 \big|(\delta_{\ZZ}*\varphi)(t)-f(t)\big|\, \dd t =M( | \delta_{\ZZ}*\varphi-f |)= \| \delta_{\ZZ}*\varphi-f \|_{\be,1,\cA}
\]
and
\[
\lim_{n\to\infty} \frac{1}{n} \int_{0}^n| (\delta_{a+\ZZ}*\varphi)(t)-f(t)|\, \dd t =M( | \delta_{a+\ZZ}*\varphi-f |)= \| \delta_{a+\ZZ}*\varphi-f \|_{\be,1,\cA}  \,.
\]
Therefore, we obtain
\begin{align*}
{}
    &\lim_{n\to\infty} \frac{1}{2n}\left( \int_{0}^n |(\delta_{a+\ZZ}
     *\varphi)(t)-f(t)|\, \dd t+ \int_{-n}^0 |(\delta_{\ZZ}
     *\varphi)(t)-f(t) |\, \dd t \right) \\
    &\phantom{=======}=\frac{1}{2} \left(\| \delta_{\ZZ}*\varphi-f \|_{\be,1,\cA} +
      \|\delta_{a+\ZZ}*\varphi-f \|_{\be,1,\cA}  \right) \\
    &\phantom{=======}\geq\frac{1}{2}  \| \delta_{\ZZ}*\varphi-
      \delta_{a+\ZZ}*\varphi \|_{\be,1,\cA} \,.
\end{align*}
Now, the choice of $\varphi$ implies that
\[
(\delta_{\vL_a}*\varphi)(x)
=\begin{cases}
(\delta_{a+\ZZ}*\varphi)(x), &  \mbox{ for all } x >1\,,  \\
(\delta_{\ZZ}*\varphi)(x), &  \mbox{ for all } x <1 \,.
\end{cases}
\]
This yields
\begin{align*}
{}
    &\| \delta_{\vL_a}*\varphi-f \|_{\be,1,\cA}  \\
    &\phantom{XXX}= \lim_{n\to\infty} \frac{1}{2n}\bigg( \int_{0}^n|(\delta_{a+\ZZ} *\varphi)(t)-f(t) |\, \dd t
    + \int_{-n}^0| (\delta_{\ZZ}*\varphi)(t) -f(t)|\, \dd t \bigg) \\
    &\phantom{XXX}\geq \frac{1}{2} \| \delta_{\ZZ}*\varphi-
      \delta_{a+\ZZ}*\varphi \|_{\be,1,\cA}  \,.
\end{align*}
Finally, the choice of the support of $\varphi$ implies that, for
each $x \in \RR$, at most one of $(\delta_{a+\ZZ}*\varphi)(x)$ and
$(\delta_{\ZZ}*\varphi)(x)$ can be non-zero. Therefore,
\[
 | \delta_{\ZZ}*\varphi- \delta_{a+\ZZ}*\varphi |=  | \delta_{\ZZ}*\varphi|+ |\delta_{a+\ZZ}*\varphi |= \delta_{\ZZ}*\varphi+ \delta_{a+\ZZ}*\varphi  \,,
\]
and a short computation gives
\[
\| \delta_{\ZZ}*\varphi-
      \delta_{a+\ZZ}*\varphi \|_{\be,1,\cA} =2\int_{\RR}
\varphi(t)\, \dd t \geq \frac{b}{2} \,.
\]
Thus, we get
\[
\| \delta_{\vL_a}*\varphi-f \|_{\be,1,\cA}\geq \frac{b}{4}  \,.
\]
This shows that, for all $f \in \sap(\RR)$, and, in particular, for all
trigonometric polynomials, one has
\[
\| \delta_{\vL_a}*\varphi-f \|_{\be,1,\cA} \geq \frac{b}{4}
\,.
\]
Therefore, $\delta_{\vL_a}*\varphi \notin \bap^1(\RR)$ and $\delta_{\vL_a} \notin \Bap_{\cA}(\RR)$ hold.
Since $ \Wap^p(\RR) \subseteq \Bap_{\cA}^p(\RR) \subset\Bap_{\cA}(\RR) $, the last claim
follows.

\medskip

\noindent (c) Let $\varphi \in \Cc(\RR)$, and let $A$ be such that $\supp(\varphi)
\subseteq [-A,A]$. Then, for all $x >A$, we have
$\delta_{\vL_a}*\varphi=\delta_{a+\ZZ}*\varphi$. Therefore,
\begin{align*}
{}
  &  \|\delta_{\vL_a}*\varphi -\delta_{a+\ZZ}*\varphi \|^p_{\be,p,\cA}  \\
  &\phantom{XXXXXX}=\limsup_{n\to\infty} \frac{1}{n^2+n} \int_{-n}^{n^2}
      |(\delta_{\vL_a}*\varphi)(t) -(\delta_{a+\ZZ}*\varphi)(t)|^p\,
      \dd t \\
    &\phantom{XXXXXX}\leq \limsup_{n\to\infty} \frac{1}{n^2+n} \int_{-n}^{A}
      |(\delta_{\vL_a}*\varphi)(t) -(\delta_{a+\ZZ}*\varphi)(t)|^p
        \, \dd t  \\
    &\phantom{XXXXXX}\phantom{===}+\limsup_{n\to\infty} \frac{1}{n^2+n} \int_{A}^{n^2}
      |(\delta_{\vL_a}*\varphi)(t) -(\delta_{a+\ZZ}*\varphi)(t)|^p
      \, \dd t \\
    &\phantom{XXXXXX}=  \limsup_{n\to\infty} \frac{1}{n^2+n} \int_{-n}^{A}
       |(\delta_{\vL_a}*\varphi)(t) -(\delta_{a+\ZZ}*\varphi)(t)|^p
       \, \dd t \\
    &\phantom{XXXXXX}\leq  \limsup_{n\to\infty} \frac{A+n}{n^2+n}\|(\delta_{\vL_a}
       -\delta_{a+\ZZ})*\varphi\|_\infty^p =0  \,.
\end{align*}
Since $\delta_{a+\ZZ}*\varphi \in \sap(\RR)$, the claim follows.

\medskip

\noindent (d) First, we show that, for each van Hove sequence, we have
\[
\lim_{n\to\infty} \frac{1}{|A_n|} \card ( \vL_a  \cap A_n)=1 \,.
\]
To do this, fix a function $\varphi \in \Cc(\RR)$ with
$\supp(\varphi) \in [0,2]$ and $(\delta_\ZZ *\varphi)(x)=1$ for all $x
\in \RR$.
Then, a trivial computation shows that
\[
(\delta_{\vL_a}*\varphi)(x)=1 \qquad \text{ for all } x \notin [-2,3] \,.
\]
Therefore,
\[1-
(\delta_{\vL_a}*\varphi)(x) \in \Cc(\RR) \,.
\]
It follows immediately that
\[
\lim_{n\to\infty} \frac{1}{|A_n|} \int_{A_n} (\delta_{\vL_a}*\varphi)(x)\, \dd
x =1 \,.
\]
Now, by a standard Fubini and van Hove computation, we get
\[
\lim_{n\to\infty} \frac{1}{|A_n|} \left( \int_{A_n}
(\delta_{\vL_a}*\varphi)(x)\, \dd x - \int_{\RR} \varphi(t)\, \dd t\,
\card ( \vL_a  \cap A_n) \right)=0 \,.
\]
As
\[
1= M(1)= M( \delta_\ZZ *\varphi) = \dens(\ZZ) \int_{\RR} \varphi(t)\,
\dd t
\]
the claim follows.

\smallskip

Next, since $\vL_a$ is a Meyer set, a standard argument, compare
\cite{bm}, shows that its autocorrelation exists with respect to
$\cA$ if and only if the limit
\[
\eta(z)= \lim_{n\to\infty} \frac{1}{|A_n|} \card ( \vL_a \cap (z+
\vL_a) \cap A_n)
\]
exists for all $z \in \ZZ$. In this case, we have
\[
\gamma=
\sum_{z \in \RR} \eta(z) \delta_z \,,
\]
compare \cite{bm}.
Now, it is easy to see that, for all $z \in \ZZ$, the sets $\vL_a$
and $z+\vL_a$ agree outside the compact set $[-|z|, |z|+1]$.
Therefore, we have
\[
\eta(z)=\lim_{n\to\infty} \frac{1}{|A_n|} \card ( \vL_a \cap (z+
\vL_a) \cap A_n)=\lim_{n\to\infty} \frac{1}{|A_n|} \card ( \vL_a  \cap
A_n)=1
\]
for all  $z \in \ZZ$.
Also, for all $z \notin \ZZ$,  it is easy to see that $\vL_a
\cap (z+ \vL_a)$ is a finite set. Hence,
\[
\eta(z)= \lim_{n\to\infty} \frac{1}{|A_n|} \card ( \vL_a \cap (z+
\vL_a) \cap A_n)  =0
\]
holds for all $z \notin \ZZ$. The claim now follows.

\medskip

\noindent (e) We compute
\begin{align*}
{}
    &\frac{1}{bn+n} \int_{-n}^{bn} e^{-2 \pi \im \lambda t}\, \dd \delta_{\vL_a}(t)  \\
    &\phantom{XXXXX}= \frac{1}{b+1} \frac{1}{n} \int_{-n}^{0} e^{-2 \pi \im \lambda t}\,
       \dd \delta_{\vL_a}(t)+ \frac{b}{b+1} \frac{1}{bn}
       \int_{0}^{bn} e^{-2 \pi \im \lambda t}\, \dd \delta_{\vL_a}(t)   \\
    &\phantom{XXXXX}= \frac{1}{b+1}\frac{1}{n} \int_{-n}^{0} e^{-2 \pi \im \lambda t} \,
       \dd \delta_{\ZZ}(t) + \frac{b}{b+1} \frac{1}{bn} \int_{0}^{bn}
        e^{-2 \pi \im \lambda t}\, \dd \delta_{a+\delta_{\ZZ}}(t) \,.
\end{align*}
Now, since $\delta_{\ZZ}$ and $\delta_{a+\ZZ} $ are
periodic measures, their Fourier--Bohr coefficients exists with
respect to any van Hove sequence, and they are independent of the
choice of the van Hove sequence, see for example \cite{LS2}. Therefore, we have
\begin{align*}
\lim_{n\to\infty} \frac{1}{bn+n} \int_{-n}^{bn} e^{-2 \pi \im \lambda t}\,
  \dd \delta_{\vL_a}(t)
    &= \frac{1}{b+1} a_{\lambda}(\delta_{\ZZ})+ \frac{b}{b+1}
      a_{\lambda}(\delta_{a+\ZZ})  \\
    &= \left( \frac{1}{b+1} +e^{2 \pi \im \lambda a}  \frac{b}{b+1} \right)
      a_{\lambda}(\delta_{\ZZ}) \,.
\end{align*}
Since $\reallywidehat{\delta_\ZZ}=\delta_\ZZ$, the claim follows.

\medskip

\noindent (f) By (d), we have
\[
\lim_{n\to\infty}  \frac{1}{|A_{2n}|} \int_{A_{2n}} e^{-2 \pi \im \lambda t}
 \, \dd \delta_{\vL_a}(t)  =  \frac{1+3e^{2 \pi \im \lambda a} }{4}
\]
and
\[
\lim_{n\to\infty}  \frac{1}{|A_{2n+1}|} \int_{A_{2n+1}} e^{-2 \pi \im \lambda t}\, \dd \delta_{\vL_a}(t)  =  \frac{1+e^{2 \pi \im \lambda a} }{2} \,.
\]
Now,
\[
\frac{1+3e^{2 \pi \im \lambda a} }{4}=\frac{1+e^{2 \pi \im \lambda
a} }{2} \iff e^{2 \pi \im \lambda a}=1 \iff
\lambda a \in \ZZ\,.
\]
Since $a \notin \QQ$,
we have $\frac{1+3e^{2 \pi \im \lambda a} }{4} \neq \frac{1+e^{2 \pi
i \lambda a} }{2}$ for all $\lambda \in \ZZ$. Hence, one has
\[
\lim_{n\to\infty}  \frac{1}{|A_{2n}|} \int_{A_{2n}} e^{-2 \pi \im \lambda t}\, \dd
\delta_{\vL_a}(t) \neq \lim_{n\to\infty}  \frac{1}{|A_{2n+1}|}
\int_{A_{2n+1}} e^{-2 \pi \im \lambda t}\, \dd \delta_{\vL_a}(t) \,,
\]
showing that $\big(\frac{1}{|A_{n}|} \int_{A_{n}} e^{-2 \pi \im \lambda t}\,
\dd \delta_{\vL_a}(t)\big)$ is not convergent.

\medskip

\noindent (g) By (d), we have
\[
\lim_{n\to\infty}  \frac{1}{|A_{n}|} \int_{A_{n}} e^{-2 \pi \im \lambda t}\, \dd
\delta_{\vL_a}(t)  =  \frac{1+e^{2 \pi \im \lambda a} }{2}
\]
for all $\lambda \in \ZZ$. Hence, the Fourier--Bohr coefficients exist. Furthermore, we
have
\[
|\frac{1+e^{2 \pi \im \lambda a} }{2}|^2=1
\]
if and only if
\[
e^{2 \pi \im \lambda a}=1 \,
\] which again, by the irrationality of $a$,
implies that
\[
\widehat{\gamma}(\{ \lambda \}) \neq \left| a_\lambda ^\cA(
\delta_{\vL_a} ) \right|^2 \qquad \mbox{ for all } \lambda \in \ZZ
\backslash \{ 0 \} \,.
\]

\smallskip

\noindent (h) Note that
\[
\XX(\mu)= \{ \tau_t  \mu : t \in \RR \} \sqcup \{
\delta_{t+\ZZ} : t \in \RR/\ZZ  \}\,.
\]
Set
\[\TT:=\{  \delta_{t+\ZZ} : t \in \RR/\ZZ  \} \,.
\]
This is a compact
Abelian group, and the action of $\RR$ is simply addition modulo 1:
$\tau_s \delta_{t+\ZZ}=\delta_{t+s+\ZZ}$. Also, set
\[
\Omega:=\{ \tau_t  \mu : t \in \RR \}\,.
\]
We show that any ergodic measure is equal to the probability Haar
measure on $\TT$. This proves unique ergodicity.
Let $\mathfrak{m}$ be a $\RR$-invariant ergodic measure on $\XX(\mu)$.

Define $\Phi : \Cc(\RR) \to C(\XX(\mu))$ via
\[
\Phi(f)(\omega)=
\begin{cases}
f(t) & \mbox{ if } \omega = \tau_t \mu \in \Omega\\
0 &\mbox{ otherwise } \,
\end{cases}.
\]
 It is immediate that $\Phi(f)$ is
indeed continuous. Define,
\[
\eta(f)=\mathfrak{m}(\Phi(f))\qquad \mbox{ for all } f \in \Cc(\RR) \,.
\]
It is easy to see that $\eta$ is linear, and we have
\[
\left| \eta(f) \right| = \left| \mathfrak{m}(\Phi(f)) \right| \leq \|
\Phi(f) \|_\infty = \|f \|_\infty \,,
\]
for all $f \in\Cc(\RR)$.

Therefore, by Riesz' representation theorem, $\eta$ is a finite
measure on $\RR$. Also, it is easy to see from the definition that,
for all $t \in \RR$ and all $f \in \Cc(\RR)$, we have
\[
\Phi(\tau_t
f)=\tau_t \Phi(f)\,.
\]
Therefore, since $\mathfrak{m}$ is $\RR$-invariant, so
is $\eta$.
This implies that $\eta$ is a multiple of the Lebesgue measure on $\RR$ and hence, since it is finite,
\[
\eta=0 \,.
\]

Next, for each $n\in\NN$, choose an $f_n \in \Cc(\RR)$ with
$1_{[-n,n]} \leq f_n \leq 1_{[-n-1,n+1]}$, and let $\psi_n:= \Phi
(f_n)$. Then, $(\psi_n)$ is an increasing sequence of functions in
$C(\XX(\mu))$ which converges pointwise to the characteristic
function $1_\Omega$ of $\Omega$.
Let $g \in C(\XX)$, and define $h_n:\RR \to \CC$ via $h_n(t)=
\psi_n(\tau_t  \mu) g(\tau_t  \mu)$. Then, $h_n \in \Cc(\RR)$ and
$\Phi(h_n)= \psi_n g$.
Now, the monotone convergence theorem implies
\begin{align*}
\int_{\XX(\mu)} g(\omega)\, \dd \mathfrak{m}(\omega)
    &= \int_{\TT}g(\omega)\, \dd \mathfrak{m}(\omega)+\int_{\Omega}g(\omega)\, \dd \mathfrak{m}(\omega)
       \\
    &=\int_{\TT}g(\omega)\, \dd \mathfrak{m}(\omega)+\lim_{n\to\infty} \int_{\Omega}\,
      \psi_n g(\omega)\, \dd \mathfrak{m}(\omega)\\
    &=\int_{\TT}g(\omega)\, \dd \mathfrak{m}(\omega)+\lim_{n\to\infty} \int_{\Omega}
      \Phi(h_n)(\omega)\, \dd \mathfrak{m}(\omega)\\
    &=\int_{\TT}g(\omega)\, \dd \mathfrak{m}(\omega)+\lim_{n\to\infty}  \mathfrak{m}(\varphi(h_n))\\
    &=\int_{\TT}g(\omega)\, \dd \mathfrak{m}(\omega)+\lim_{n\to\infty} \eta(h_n)
       =\int_{\TT}g(\omega)\, \dd \mathfrak{m}(\omega) \,.
\end{align*}
Consequently, $\mathfrak{m}$ is supported on $\TT$. Hence, it is an
$\RR$-invariant probability measure on $\TT$. It follows that $\mathfrak{m}$ is the
probability Haar measure on $\TT$.

\medskip

\noindent (i) The first part follows from (c).
Now, we show that if $f_\lambda \not\equiv 0$ is a continuous
eigenfunction, then $\lambda =0$. We know that the measurable
spectrum is $\ZZ$, so $\lambda \in \ZZ$.
Let $f_\lambda$ be a continuous eigenfunction, and let $c=
f_\lambda(\delta_{\vL_a})$.
Now, for $n \in \NN$, we have
\[
\lim_{n \to \infty} \tau_n \delta_{\vL_a} = \delta_{a+\ZZ}
\]
in the local topology. Hence, since $f_\lambda$ is continuous,
\[
e^{2 \pi \im \lambda a} f_\lambda(\delta_{\ZZ})= f_\lambda (
\delta_{a+\ZZ} ) = \lim_{n\to\infty}  f_\lambda( \tau_n \delta_{\vL_a}) =
\lim_{n\to\infty} e^{2 \pi \im \lambda n} f_\lambda( \delta_{\vL_a})
= c \,.
\]
In the same way, for $n \in \NN$, we have
\[
\lim_{n \to \infty} \tau_{-n} \delta_{\vL_a} = \delta_{\ZZ} \,.
\]
Hence, since $f_\lambda$ is continuous, one has
\[
f_\lambda(\delta_{\ZZ})= \lim_{n\to\infty}  f_\lambda( \tau_{-n}
\delta_{\vL_a}) = c \,.
\]
This yields
\[
c= e^{2 \pi \im \lambda a} f_\lambda(\delta_{\ZZ})= e^{2 \pi \im \lambda
a} c \,.
\]
Hence, $c=0$ or $e^{2 \pi \im \lambda a}=1$.
In the first case, we get $f_\lambda \equiv 0$, which is not
possible, while the second case gives $\lambda=0$.
\end{proof}

\begin{remark} \phantom{X}
\begin{itemize}
  \item[(a)] In Proposition~\ref{single def prop}, (d) also follows from (h).
  \item[(b)] In Proposition~\ref{single def prop}, (h) can alternatively be proved by Corollary \ref{cor:char-ue}.  \exend
\end{itemize}
\end{remark}

Next, we discuss an example of a mean almost periodic measure with
respect to some van Hove sequence $\cA$ such that the
autocorrelation does not exists with respect to $\cA$. Since the
computations are straightforward and similar to the ones done for
the proof of Proposition~\ref{single def prop}, we skip them.

\begin{example}
Let $\vL:= \{ n , -2n : n \in \NN \}$. Then, $\vL$ is mean
almost periodic with respect to $A_n=[-n, (2+(-1)^n)n]$, but its
autocorrelation does not exist with respect to this van Hove
sequence. This shows that mean almost periodicity does not suffice for a convincing diffraction theory (and does not provide an answer to Question 6 of Lagarias article \cite{LAG}).   \exend
\end{example}

We complete the chapter by providing a simple example of a measure which is Besicovitch but not Weyl almost periodic. Other examples of such measures are provided by weak model sets of maximal density.

\begin{example}\label{remark Bap but not Wap}
Let
\[
\mu=\sum_{n=1}^\infty \sum_{k=1}^n \delta_{2^n+k} \,.
\]
Then, $\mu$
is Besicovitch almost periodic with respect to $A_n=[-n,n]$, but not
Weyl almost periodic.

Indeed, let $\varphi \in \Cc(\RR)$ be
arbitrary. We show that
\[
\|\mu*\varphi\|_{\be,1,\cA}=0 \,,
\] which yields
Besicovitch almost periodicity.

Let $A$ be such that $\supp(\varphi)
\subseteq [-A,A]$. Since
\[
\mu( [0,
2^m])=1+2+\ldots+m-1=\frac{m(m-1)}{2}\,,
\] we have
\begin{align*}
 \frac{1}{2n} \int_{[-n,n]} \left| \mu*\varphi (t) \right| \dd t & =\frac{1}{2n} \int_{\RR} 1_{[-n,n]}(t) \left| \int_{\RR} \varphi(t-s) \dd \mu(s) \right| \dd t  \\
&\leq \frac{1}{2n} \int_{\RR} 1_{[-n,n]}(t)  \int_{\RR}\left|\varphi(t-s) \right| \dd \mu(s) \dd t  \\
&= \frac{1}{2n} \int_{\RR}   \int_{\RR}1_{[-n,n]}(t) \left|\varphi(t-s) \right|  \dd t \dd \mu(s) \,.
\end{align*}

Now, a simple computation shows that for all $t,s$ we have
\begin{displaymath}
 1_{[-n,n]}(t) \left|\varphi(t-s) \right| \leq 1_{[-n-A,n+A]}(s) \left|\varphi(t-s) \right| \,.
\end{displaymath}
Therefore,
\begin{align*}
 \frac{1}{2n} \int_{[-n,n]} \left| (\mu*\varphi) (t) \right| \dd t &\leq \frac{1}{2n} \int_{\RR}   \int_{\RR}1_{[-n-A,n+A]}(s) \left|\varphi(t-s) \right|  \dd t\, \dd \mu(s) \\
 &=\frac{1}{2n} \int_{\RR}   \int_{\RR}1_{[-n-A,n+A]}(s) \left|\varphi(z) \right|  \dd z\, \dd \mu(s) \\
 &=\| \varphi \|_1 \frac{1}{2n}  \mu([-n-A,n+A])\leq \| \varphi \|_1 \frac{1}{2n}\mu([0, 2^{m+1})\\
 &= \| \varphi \|_1 \frac{\frac{m(m+1)}{2}}{2n} \leq  \| \varphi \|_1 \frac{(\log_2(n+A))^2}{2n} \,.
\end{align*}

This shows that
\[
\frac{1}{2n} \int_{[-n,n]} \left| (\mu*\varphi) (t) \right|\, \dd t
\leq  \| \varphi \|_1 \frac{(\log_2(n+A))^2}{2n} \,.
\]
From here, Besicovitch almost periodicity follows. One can show via a
similar approximation that $\mu$ is not Weyl almost periodic.

This can also be seen via Theorem~\ref{thm:wap}: $\XX(\mu)$ contains both the zero measure $0$, and $\delta_{\ZZ}$. This implies that $\XX(\mu)$ contains the hulls of these two measures, each one being a distinct compact group. Therefore, $\mu$ is
not uniquely ergodic, and hence cannot be Weyl almost periodic.   \exend
\end{example}

\chapter*{Acknowledgments}
The authors would like to thank Michael Baake for some discussions
and comments. NS would like to thank Christoph Richard for many inspiring discussions on almost periodicity. 

The work was supported by NSERC with grants
2020-00038 and 2024-04853(NS),  by DFG with grant 415818660(TS) and the authors are
grateful for the support. 
\appendix

\appendix

\chapter{Cut-and-project schemes}\label{appendix:cp}
In this Appendix, we give a brief review of cut-and-project schemes
 (CPS).
 Cut-and-project schemes have been introduced by Meyer \cite{Meyer} in order to produce models, which Meyer called model sets,  that almost solve an infinite system of linear equations modulo $\ZZ$. They have been rediscovered by de Bruijn \cite{dBr} as a way to re-derive the vertices of the Penrose tiling \cite{Pen}.
 Kramer and Neri \cite{KN} expanded on the de Bruijn ideas to construct a 3-D tiling with icosahedral symmetry. This work came just before the discovery of quasicrystals by Shechtman \cite{She}. Since then model sets have been prime  candidates in the modeling of  quasicrystals. The underlying approaches were using various restrictions and were sometimes  inconsistent with each other. A consistent and unified was then developed and presented by Lagarias  \cite{LAG1} and Moody \cite{MOO,Moody2}.  For a review on the early history of the cur-and-project formalism we recommend \cite{Kra} and \cite{GR}.

 Regular model sets (see below for definition), are Delone sets with pure point diffraction. This is a central result in the theory of Aperiodic Order, and was first proved by Hof \cite{Hof1,Hof2} for model sets in fully Euclidean cut-and-project schemes with polygonal window. The result was proven for general model sets by Schlottmann \cite{Martin2} via dynamical systems. Baake and Moody \cite{bm} provided an alternate proof via the (strong) almost periodicity of the autocorrelation measure, and Richard and Strungaru \cite{CRS} showed that the pure-point nature of regular model sets follows from the Poisson Summation Formula. Meyer \cite{Mey2} showed that regular model sets are g-a-p, and hence Weyl almost periodic measures, and their pure point nature is then an immediate consequence of Theorem~\ref{theorem-uniform-phase-problem}. Recently, some of those results have been extended to weak model sets with topological windows \cite{BHS,KR}, and to weak model sets with pre-compact Borel windows \cite{KRS,S}, and these results can be proved via Besicovitch almost periodicity (compare Section~\ref{weak ms}).

 For a detailed review of cut and project schemes and their properties, we refer to  \cite{MOO,CR,LR,TAO,CRS,NS11,LRS}, just to name a few.

A triple $\cp$ is called a \textit{cut and project scheme} (CPS)\index{cut~and~project~scheme} if $G$
and $H$ are LCA groups and $\cL$ is a \textit{ lattice\/}\index{lattice} in $G\times H$, i.e., a cocompact discrete subgroup, such that
\begin{itemize}
  \item{}the restriction of canonical projection $\pi^G : G\times H \longrightarrow G$ to $\cL$ is one to one;
  \item{}$\pi^H(\cL)$ is dense in $H$.
\end{itemize}
Let $L:=\pi^G (\cL)$. We can then define the \textit{star mapping} $(\cdot)^\star\!: L \longrightarrow H$ as
follows:
For each $x\in L$, the injectivity of the first projection implies that there exists a unique $y\in H$ such that $(x,y)\in\cL$. We denote this $y$ by $x^\star$.
Under the mapping
\[
L \ni x \to x^\star \in H \,,
\]
we have
\begin{displaymath}
\cL= \{ (x, x^\star) : x \in L\}
\end{displaymath}
and
%
%
%
%
\begin{center}
\begin{tikzcd}
G & \arrow[swap]{l}{\pi^G} G\times H \arrow{r}{\pi^H}& H \\
L \arrow[hookrightarrow]{u}  \arrow[swap, bend right=10]{rr}{\star}
& \arrow[hookrightarrow]{u} \arrow[swap, bend right=5]{l}{1-1} \cL
\arrow[bend left=5]{r}{\pi^H} & \arrow[hookrightarrow,
swap]{u}{\text{dense}}L^\star
\end{tikzcd}
\end{center}
Given a cut and project scheme, we can associate to any $W \subseteq H$,
called the \textit{window}, the set
\begin{displaymath}
\oplam(W):= \{x \in L : x^{\star} \in W\}\,.
\end{displaymath}
If $W$ is relatively compact, then $\oplam(W)$ is called a \textit{weak
model set}\index{model~set!weak~model~set}. If, additionally,
$W^{\circ}\ne \varnothing$, the set $\oplam(W)$ is called a \textit{model
set}\index{model~set!model~set}. Any weak model set is uniformly discrete, and any model set is a Delone set.
If, in addition, the model set $\oplam(W)$ satisfies $|\partial W|=0$, it is called a \textit{regular model set}\index{model~set!regular~model~set}.

Given a CPS $\cp$, for each function $h :H \to \CC$, we can define a formal sum via
\begin{displaymath}
\omega_h:= \sum_{x \in L} h(x^\star) \delta_x \,.
\end{displaymath}
If $h$ is compactly supported and bounded, then $\omega_h$ is a measure. The same holds under various decaying conditions of $h$ \cite{CR,LR,CRS,NS11}.
When $h=1_W$ is the characteristic function of a window $W$, we have
\[
\omega_h =\delta_{\smoplam(W)} \,.
\]
We can define a new CPS $(\widehat{G}, \widehat{H}, \cL^0)$, called the \textit{dual lattice}\index{lattice!dual~lattice} where $\cL^0$ is the annihilator, or the dual lattice, of $\cL$ in $\widehat{G} \times \widehat{H} \simeq \widehat{G \times H}$, that is
\[
\cL^0:=\{( \chi,\psi) \in \widehat{G} \times\widehat{H} :  \chi(x) \psi(y)=1 \ \text{for all } (x,y) \in \cL \} \,.
\]
For details that this is a CPS see \cite{MOO,Moody}.

\smallskip

We now list some of the essential properties of such combs \cite{CR,LR,CRS,TAO,TAO2,NS11}.

\begin{theorem}\cite{CRS}\label{thm-dens}
Let $\cp$ be a CPS and $h \in \Cc(H)$. Then,
\begin{itemize}
\item[(a)] $\omega_h \in \SAP(G)$ and
\[
M(\omega_h)=\dens(\cL)\, \int_H h(t)\, \dd t \,.
\]
\item[(b)] $\omega_h$ is Fourier transformable if and only if $\widehat{h} \in \mathcal{L}^1(\widehat{H})$. Moreover, in this case  $\omega_{\widecheck{h}}$ is a measure in the dual CPS and
  \begin{displaymath}
\widehat{\omega_{h}}=\dens(\cL) \,\omega_{\widecheck{h}} \,.        \tag*{$\qed$}
  \end{displaymath}
\end{itemize}
\end{theorem}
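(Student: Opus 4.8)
The plan is to treat the two parts with different tools: part (a) rests on the compactness of the torus $\TT := (G\times H)/\cL$, while part (b) is an instance of Poisson summation for the lattice $\cL\subset G\times H$. Throughout I would freely use part (a) when proving part (b).

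For (a), the key is to realise $\omega_h*\varphi$ as the pullback of a continuous function on $\TT$. Given $\varphi\in\Cc(G)$ and $h\in\Cc(H)$, the tensor $\varphi\otimes h\in\Cc(G\times H)$ has compact support, so its $\cL$-periodisation $\Phi(z):=\sum_{\ell\in\cL}(\varphi\otimes h)(z+\ell)$ is a well-defined continuous function on $\TT$. Let $\iota:G\to\TT$ be the continuous homomorphism $t\mapsto (t,0)+\cL$; since $\pi^H(\cL)$ is dense in $H$, the map $\iota$ has dense range. Evaluating the periodisation along $G\times\{0\}$ and using $\cL=\{(x,x^\star):x\in L\}$ gives, up to an irrelevant reflection of $\varphi$, exactly $\omega_h*\varphi=\Phi\circ\iota$. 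As $\Phi\in C(\TT)$ is a uniform limit of characters of the compact group $\TT$, and characters of $\TT$ pull back under $\iota$ to elements of $\widehat G$, the function $\Phi\circ\iota$ lies in the uniform closure of the trigonometric polynomials, i.e. in $SAP(G)$. Since $\varphi$ was arbitrary, $\omega_h\in\SAP(G)$. For the mean, I would use that the mean of a pullback of $F\in C(\TT)$ along the dense homomorphism $\iota$ equals $\int_\TT F\,\dd\theta_\TT$ for the normalised Haar measure on $\TT$, together with the unfolding identity $\int_\TT\Phi\,\dd\theta_\TT=\dens(\cL)\big(\int_G\varphi\,\dd\theta_G\big)\big(\int_H h\,\dd\theta_H\big)$. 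Combined with the multiplicativity $M(\omega_h*\varphi)=M(\omega_h)\int_G\varphi\,\dd\theta_G$, this yields $M(\omega_h)=\dens(\cL)\int_H h\,\dd\theta_H$.

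For (b), the core is the Poisson summation formula for $\cL$, which in the normalisation $\dens(\cL)=\operatorname{covol}(\cL)^{-1}$ reads $\sum_{\ell\in\cL}F(\ell)=\dens(\cL)\sum_{\ell^0\in\cL^0}\widehat F(\ell^0)$ for $F$ in a suitable test class. To verify Fourier transformability I would check the defining pairing $\omega_h(\varphi*\widetilde\varphi)=\int_{\widehat G}|\check\varphi|^2\,\dd(\dens(\cL)\,\omega_{\check h})$ for all $\varphi\in\Cc(G)$: expanding the left side as $\sum_{x\in L}h(x^\star)(\varphi*\widetilde\varphi)(x)$, writing $h(x^\star)=\int_{\widehat H}\check h(\psi)\,\psi(x^\star)\,\dd\psi$ by Fourier inversion, and applying the summation formula to the resulting function on $G\times H$ produces precisely $\dens(\cL)\sum_{\chi\in L^0}\check h(\chi^\star)\,|\check\varphi(\chi)|^2$, which is $\int|\check\varphi|^2\,\dd(\dens(\cL)\omega_{\check h})$. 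The hypothesis $\widehat h\in L^1(\widehat H)$ enters twice: it legitimises Fourier inversion, recovering $h$ from $\check h$, and, because $L^0=\pi^{\widehat G}(\cL^0)$ is the injective projection of the lattice $\cL^0$, it guarantees the local summability $\sum_{\chi\in L^0\cap K}|\check h(\chi^\star)|<\infty$ on compact $K$, so that $\omega_{\check h}$ is a genuine translation bounded measure in the dual CPS $(\widehat G,\widehat H,\cL^0)$. For the converse, assuming $\omega_h$ transformable, I would read off from the same pairing that $\widehat{\omega_h}$ must assign mass $\dens(\cL)\check h(\chi^\star)$ to each $\chi\in L^0$, and finiteness of this measure on compacta forces $\check h$, hence $\widehat h$, to be integrable.

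The main obstacle is the measure-theoretic bookkeeping in (b): establishing Poisson summation in this LCA generality with the correct convergence, justifying the interchange of the $L$-sum with the $\widehat H$-integral, and above all verifying that $\omega_{\check h}$ is an honest translation bounded measure rather than a merely formal sum. I would circumvent the convergence issues by first proving everything for $h$ in the dense subclass with $\widehat h\in\Cc(\widehat H)$, where all sums and integrals converge absolutely and the summation formula is classical, and then extending to general $h$ with $\widehat h\in L^1$ by an approximation argument, controlling the error through the translation bounded norm and the $\SAP$ structure supplied by part (a).
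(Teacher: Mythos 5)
The paper does not prove this statement at all: it is imported verbatim from \cite{CRS} in Appendix~B, so there is no internal proof to compare against. Your part (a) is essentially the standard argument from the literature (periodise $\varphi\otimes h$ over $\cL$, pull back along the dense-range homomorphism $t\mapsto (t,0)+\cL$ into the compact torus, approximate by characters, and compute the mean by unfolding over a fundamental domain), and I see no gap there. Part (b) has the right engine --- Poisson summation for $\cL$ --- but two steps would fail as written.

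First, your approximation scheme starts from ``the dense subclass with $\widehat h\in\Cc(\widehat H)$''. Inside $\Cc(H)$ this class is $\{0\}$ whenever $H$ has no compact open subgroup (e.g.\ $H=\RR^d$), by the qualitative uncertainty principle, so the base case of your induction is empty in the most relevant examples. The workable base class (and the one used in \cite{CRS}) is $K_2(H)=\mathrm{span}\{\psi*\widetilde\psi:\psi\in\Cc(H)\}$, where positivity of $\widehat{\psi*\widetilde\psi}$ lets one run Poisson summation by monotone approximation; one then extends to general $h\in\Cc(H)$ with $\widehat h\in L^1$. Second, the converse direction is underpowered: from Fourier transformability you can identify the candidate atoms $\dens(\cL)\,\check h(\chi^\star)$ at $\chi\in L^0$ via Fourier--Bohr coefficients, but local finiteness of $\sum_{\chi\in L^0\cap K}|\check h(\chi^\star)|$ is summability of $\check h$ over a countable weak-model-set slice of $\widehat H$, and this does \emph{not} by itself imply $\check h\in L^1(\widehat H,\theta_{\widehat H})$; nor, in the forward direction, does $\check h\in L^1\cap C_0$ imply that $\sum_{\chi\in L^0\cap K}|\check h(\chi^\star)|<\infty$, so translation boundedness of $\omega_{\check h}$ is not free. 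Both passages require the uniform distribution of the projected dual lattice points $(L^0\cap K_n)^\star$ in $\widehat H$ (Weyl/Moody equidistribution for model sets) to convert discrete sums into Haar integrals, an ingredient your outline does not supply. You correctly flag the measure-theoretic bookkeeping as the main obstacle, but the circumvention you propose does not close it.
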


Next, we introduce the concept of \textit{weak model sets of maximal density}. First, let us recall the following result.

\begin{prop}\cite{HR,NS11}\label{prop-dens-bounds}
Let $\cp$ be a CPS, and let $W \subseteq H$ be a pre-compact set. Then, for each van Hove sequence $\cA$, we have
\[
    \dens (\cL)\, |W^{\circ}| \, \leq \,
 \liminf_{m\to\infty} \frac{\delta_{\smoplam(W)}(A_m)}{|A_m|}  \, \leq \,
   \limsup_{m\to\infty} \frac{\delta_{\smoplam(W)}(A_m)}{|A_m|} \, \leq \,
       \dens (\cL)\, |\overline{W}| \,.
\]
\end{prop}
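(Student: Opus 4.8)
The plan is to sandwich the Dirac comb $\delta_{\smoplam(W)}$ between two weighted model combs built from \emph{continuous} windows, for which the density is known exactly via the quoted result of \cite{CRS}. The middle inequality $\liminf \leq \limsup$ is automatic, so only the two outer bounds require work, and they are proved by symmetric arguments.

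For the upper bound I would fix $\eps > 0$ and, using outer regularity of the Haar measure $\theta_H$ on the locally compact group $H$ together with Urysohn's lemma, choose $g \in \Cc(H)$ with $g \geq 0$, $g \geq 1$ on the compact set $\overline{W}$, and $\int_H g(t)\, \dd t \leq |\overline{W}| + \eps$. Since every $x \in \oplam(W)$ satisfies $x^\star \in W \subseteq \overline{W}$ and hence $g(x^\star) \geq 1$, while all remaining coefficients $g(x^\star)$ are nonnegative, one has $\delta_{\smoplam(W)} \leq \omega_g$ as positive measures, so $\delta_{\smoplam(W)}(A_m) \leq \omega_g(A_m)$ for every $m$. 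By the quoted theorem of \cite{CRS}, $\omega_g \in \SAP(G)$ with $M(\omega_g) = \dens(\cL)\int_H g$, and since $\omega_g$ is strongly almost periodic its mean is realised along the van Hove sequence $\cA$ as $\lim_m \omega_g(A_m)/|A_m|$. Therefore
\[
\limsup_{m\to\infty} \frac{\delta_{\smoplam(W)}(A_m)}{|A_m|} \leq \lim_{m\to\infty} \frac{\omega_g(A_m)}{|A_m|} = \dens(\cL)\int_H g(t)\, \dd t \leq \dens(\cL)\,(|\overline{W}| + \eps),
\]
and letting $\eps \to 0$ gives the rightmost inequality.

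For the lower bound I would argue dually. Given $\eps > 0$, inner regularity of $\theta_H$ supplies a compact $K \subseteq W^\circ$ with $|K| \geq |W^\circ| - \eps$, and Urysohn's lemma yields $h \in \Cc(H)$ with $0 \leq h \leq 1$, $h \geq 1_K$ and $\supp(h) \subseteq W^\circ$. Because $h(x^\star) \neq 0$ forces $x^\star \in W^\circ \subseteq W$, i.e.\ $x \in \oplam(W)$, and because $h \leq 1$, one obtains $\omega_h \leq \delta_{\smoplam(W)}$ as positive measures. Again by \cite{CRS}, $\omega_h \in \SAP(G)$ with mean $\dens(\cL)\int_H h$ along $\cA$, so
\[
\liminf_{m\to\infty} \frac{\delta_{\smoplam(W)}(A_m)}{|A_m|} \geq \lim_{m\to\infty} \frac{\omega_h(A_m)}{|A_m|} = \dens(\cL)\int_H h(t)\, \dd t \geq \dens(\cL)\,(|W^\circ| - \eps),
\]
and letting $\eps \to 0$ yields the leftmost inequality.

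The main obstacle I anticipate is the step asserting $\lim_m \omega_g(A_m)/|A_m| = M(\omega_g) = \dens(\cL)\int_H g$ along the prescribed van Hove sequence. The comparison $\delta_{\smoplam(W)}(A_m) \leq \omega_g(A_m)$ is purely pointwise and harmless, but converting the abstract mean of the strongly almost periodic measure $\omega_g$ into the limit of the normalised masses $\omega_g(A_m)/|A_m|$ requires controlling boundary contributions, and this is precisely where the van Hove property (rather than the mere F\o lner property) is indispensable, as emphasised in Section~\ref{sec-key-player}. I would discharge it by invoking the standard fact that an $\SAP$ (hence amenable) measure has its mean realised as this limit along \emph{every} van Hove sequence (cf.\ \cite{MoSt} and the discussion in Section~\ref{sec-key-player}); everything else then reduces to routine regularity of the Haar measure on $H$ and the positivity of the two comparisons.
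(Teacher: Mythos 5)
Your proof is correct. The paper itself quotes this proposition from \cite{HR,NS11} without reproducing a proof, but your sandwich argument --- squeezing $\delta_{\smoplam(W)}$ between $\omega_h$ and $\omega_g$ for continuous windows with $1_K \leq h \leq 1_{W^\circ}$ and $1_{\overline{W}} \leq g$, and then invoking $\lim_{m} \omega_g(A_m)/|A_m| = \dens(\cL)\int_H g$ from \cite{CRS} --- is exactly the technique the paper deploys in the proof of Proposition~\ref{wms}, and the step you flag as the crux (realising the mean of the translation bounded strongly almost periodic measure $\omega_g$ as the limit of $\omega_g(A_m)/|A_m|$ along the given van Hove sequence) is indeed the only nontrivial ingredient and is correctly discharged by the van Hove property.
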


We can now introduce the following definition.

\begin{definition}\label{wms def} \cite{BHS,KR} Given a CPS $\cp$, a van Hove sequence $\cA$ and a compact set $W \subseteq H$, we say that the weak model set $\oplam(W)$ has \textit{maximal density with respect to $\cA$}\index{model~set!maximal~density~weak~model~set} if
\[
  \lim_{m\to\infty} \frac{\delta_{\smoplam(W)}(A_m)}{|A_m|} \, = \, \dens (\cL)\, |W| \,.   \tag*{$\Diamond$}
\]
\end{definition}

\chapter{Semi-measures and their Fourier transform}\label{appendix:semi}
In this section, we collect the basic results we need about
semi-measures; see Definition~\ref{semi-measure}.
Let us stat with the following consequence of the definition.

\begin{lemma} \label{lem semi measure K_2}
Let $\vartheta$ be a Fourier transformable semi-measure.
\begin{itemize}
\item [(a)] For all $\psi \in K_2(G)$, we have $\widecheck{\psi} \in \mathcal{L}^1(|\widehat{\vartheta}|)$ and
\begin{displaymath}
\vartheta(\psi)=\widehat{\vartheta}(\widecheck{\psi}) \,.
\end{displaymath}
\item [(b)] For all $\psi \in K_2(G)$, we have
\begin{displaymath}
(\vartheta*\psi)(t) = \int_{\widehat{G}} \chi(t)\, \widehat{\psi}(\chi)\, \dd \widehat{\vartheta}(\chi)= \reallywidecheck{ \widehat{\psi} \widehat{\vartheta}}(t) \,.
\end{displaymath}
\end{itemize}
\end{lemma}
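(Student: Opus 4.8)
The plan is to establish (a) by polarization, reducing everything to the defining identity $\vartheta(\xi\ast\widetilde{\xi})=\widehat{\vartheta}(|\check{\xi}|^2)$, and then to read off (b) from (a) using translation invariance of $K_2(G)$. Since both $\vartheta$ and $\psi\mapsto\check{\psi}$ are linear, for (a) it suffices to treat a single generator $\psi=\varphi\ast\eta$ with $\varphi,\eta\in\Cc(G)$. I would write $\varphi\ast\eta=\varphi\ast\widetilde{\widetilde{\eta}}$ and apply the (function-valued) polarization identity to the sesquilinear map $(\xi_1,\xi_2)\mapsto\xi_1\ast\widetilde{\xi_2}$ (linear in $\xi_1$, conjugate-linear in $\xi_2$ because $\widetilde{c\xi}=\bar c\,\widetilde{\xi}$), giving
\[
\varphi\ast\eta=\frac14\sum_{k=0}^{3} i^{k}\,(\varphi+i^{k}\widetilde{\eta})\ast\widetilde{(\varphi+i^{k}\widetilde{\eta})}\,.
\]
Every summand has the form $\xi\ast\widetilde{\xi}$, so applying $\vartheta$ and the definition of Fourier transformability yields $\vartheta(\varphi\ast\eta)=\tfrac14\sum_{k}i^{k}\,\widehat{\vartheta}\bigl(|\check{\varphi}+i^{k}\check{\widetilde{\eta}}|^{2}\bigr)$, where I use $\reallywidecheck{\xi\ast\widetilde{\xi}}=|\check{\xi}|^{2}$ and $\check{\xi}=\check{\varphi}+i^{k}\check{\widetilde{\eta}}$.

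Next I would move the polarization inside the integral. Because each $|\check{\varphi}+i^{k}\check{\widetilde{\eta}}|^{2}$ lies in $L^{1}(|\widehat{\vartheta}|)$ by hypothesis, the finite sum may be pushed under the integral sign, and there the scalar polarization identity $\tfrac14\sum_{k}i^{k}|z+i^{k}w|^{2}=z\bar w$ applies pointwise with $z=\check{\varphi}(\chi)$, $w=\check{\widetilde{\eta}}(\chi)=\overline{\check{\eta}(\chi)}$. The resulting integrand is $\check{\varphi}\,\overline{\check{\widetilde{\eta}}}=\check{\varphi}\,\check{\eta}=\reallywidecheck{\varphi\ast\eta}=\check{\psi}$. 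Since $|\check{\varphi}\check{\eta}|\le\tfrac12(|\check{\varphi}|^{2}+|\check{\eta}|^{2})\in L^{1}(|\widehat{\vartheta}|)$, this shows $\check{\psi}\in L^{1}(|\widehat{\vartheta}|)$ and $\vartheta(\psi)=\widehat{\vartheta}(\check{\psi})$; extending over finite linear combinations of generators gives (a) in full.

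For (b) I would first note that $K_2(G)$ is invariant under translation and reflection, since $\tau_t(\varphi\ast\eta)=(\tau_t\varphi)\ast\eta$ and $(\varphi\ast\eta)^{\dagger}=\varphi^{\dagger}\ast\eta^{\dagger}$. Hence the function $s\mapsto\psi(t-s)=(\tau_t\psi^{\dagger})(s)$ again lies in $K_2(G)$, so $(\vartheta\ast\psi)(t)=\vartheta(\tau_t\psi^{\dagger})$ is well defined and (a) applies:
\[
(\vartheta\ast\psi)(t)=\widehat{\vartheta}\bigl(\reallywidecheck{\tau_t\psi^{\dagger}}\bigr)\,.
\]
Then the standard Fourier identities $\reallywidecheck{\tau_t g}(\chi)=\chi(t)\,\check{g}(\chi)$ and $\reallywidecheck{g^{\dagger}}=\widehat{g}$ give $\reallywidecheck{\tau_t\psi^{\dagger}}(\chi)=\chi(t)\,\widehat{\psi}(\chi)$, so $(\vartheta\ast\psi)(t)=\int_{\widehat{G}}\chi(t)\,\widehat{\psi}(\chi)\,\dd\widehat{\vartheta}(\chi)$. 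Finally, $\widehat{\psi}=\reallywidecheck{\psi^{\dagger}}\in L^{1}(|\widehat{\vartheta}|)$ by (a) (as $\psi^{\dagger}\in K_2(G)$), so $\widehat{\psi}\widehat{\vartheta}$ is a finite measure whose inverse Fourier transform at $t$ is exactly this integral, yielding the second equality $\reallywidecheck{\widehat{\psi}\widehat{\vartheta}}(t)$.

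The one step genuinely needing care is the interchange of the four-term polarization sum with the integral against $\widehat{\vartheta}$ in the passage from the semi-measure to the pointwise identity; this is legitimate precisely because the defining integrability $|\check{\xi}|^{2}\in L^{1}(|\widehat{\vartheta}|)$ is assumed for every $\xi\in\Cc(G)$, so each of the finitely many summands is integrable and the sum can be moved inside without any limiting argument. Everything else is routine manipulation with the convolution, translation, and reflection rules for the Fourier transform, which I would quote rather than reprove.
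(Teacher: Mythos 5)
Your proposal is correct and follows essentially the same route as the paper's proof: part (a) via the polarization identity applied to $(\xi_1,\xi_2)\mapsto\xi_1\ast\widetilde{\xi_2}$ together with linearity, and part (b) via the closure of $K_2(G)$ under translation and reflection combined with (a) and the standard Fourier identities. You merely spell out the polarization computation and the (finite-sum) interchange with the integral that the paper cites from the literature.
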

\begin{proof}
(a) By the polarisation identity \cite[p. 244]{MoSt}, the claim is true for $\psi= \varphi*\phi$ with $\varphi, \phi \in \Cc(G)$. (a) follows now by linearity.

\medskip

\noindent(b) Since $K_2(G)$ is closed under reflection and translation, we get
  \begin{displaymath}
(\vartheta*\psi)(t) = \vartheta(\tau_t  \varphi^\dagger)=
\widehat{\vartheta}( \reallywidecheck{\tau_t  \varphi^\dagger})=
\reallywidecheck{ \widehat{\psi} \widehat{\vartheta}}(t) \,,
  \end{displaymath}
for all $t\in G$.
\end{proof}

Next, let us recall the following definition \cite{NS18}.

\begin{definition}
A measure $\mu$ on $G$ is called \textit{weakly admissible}\index{weakly~admissible~measure}, if, for all
$\varphi \in K_2(\widehat{G})$, we have $\widehat{\varphi} \in
\mathcal{L}^1(|\mu|)$.       \exend
\end{definition}

We start with the following result, which emulates the standard proof that positive definite measures are Fourier transformable \cite[Thm.~4.5]{BF}, \cite[Thm.~4.11.5]{MoSt}.

\begin{lemma}\label{spectral lemma}
Let $\{ \sigma_\varphi \}_{\varphi \in \Cc(G)}$ be a family of
finite measures on $\widehat{G}$ which satisfy the compatibility
condition
\begin{equation}\label{comp}
\left| \widehat{\varphi} \right|^2 \sigma_\psi =  \big| \widehat{\psi} \big|^2 \sigma_\varphi  \qquad \mbox{ for all }\varphi, \psi \in \Cc(G) \,.
\end{equation}
Then, there exists a weakly admissible measure $\sigma$ on $\widehat{G}$ such that, for all $\varphi \in \Cc(G)$, we have
\begin{displaymath}
\sigma_\varphi = \left| \widehat{\varphi} \right|^2 \sigma \,.
\end{displaymath}
\end{lemma}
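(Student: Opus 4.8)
The plan is to construct the measure $\sigma$ by "dividing out" the factor $\left|\widehat{\varphi}\right|^2$ from $\sigma_\varphi$ in a consistent way. The key local observation is that for any $\chi_0 \in \widehat{G}$, we can find a $\varphi \in \Cc(G)$ with $\widehat{\varphi}(\chi_0) \neq 0$, and since $\widehat{\varphi}$ is continuous (being the Fourier transform of a compactly supported function), it is nonzero on an open neighborhood $U_{\chi_0}$ of $\chi_0$. On such a neighborhood we would like to \emph{define} $\sigma$ locally by
\begin{displaymath}
\sigma\vert_{U_{\chi_0}} := \frac{1}{\left|\widehat{\varphi}\right|^2}\, \sigma_\varphi\vert_{U_{\chi_0}} \,,
\end{displaymath}
which makes sense as a measure on $U_{\chi_0}$ since $\left|\widehat{\varphi}\right|^{-2}$ is a bounded continuous function there. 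The compatibility condition \eqref{comp} is exactly what guarantees that this local definition does not depend on the choice of $\varphi$: if $\psi$ is another such function with $\widehat{\psi} \neq 0$ on $U_{\chi_0}$, then on the common domain $\left|\widehat{\varphi}\right|^2 \sigma_\psi = \left|\widehat{\psi}\right|^2 \sigma_\varphi$ gives $\left|\widehat{\psi}\right|^{-2}\sigma_\psi = \left|\widehat{\varphi}\right|^{-2}\sigma_\varphi$ after dividing by the nowhere-vanishing $\left|\widehat{\varphi}\widehat{\psi}\right|^2$.

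Next I would glue these local definitions into a single measure on all of $\widehat{G}$. The family $\{U_{\chi_0}\}_{\chi_0 \in \widehat{G}}$ is an open cover of $\widehat{G}$; by local compactness and $\sigma$-compactness one extracts a suitable countable (locally finite) subcover and uses a partition of unity, or alternatively one directly checks that the locally defined measures agree on overlaps and hence patch together to a well-defined Radon measure $\sigma$. The consistency on overlaps is immediate from the uniqueness established in the previous step. This yields a positive measure $\sigma$ on $\widehat{G}$ with the property that $\sigma\vert_{U} = \left|\widehat{\varphi}\right|^{-2}\sigma_\varphi\vert_U$ on every open set $U$ where $\widehat{\varphi}$ is nonzero.

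Having constructed $\sigma$, I would then verify the two remaining claims. First, the identity $\sigma_\varphi = \left|\widehat{\varphi}\right|^2 \sigma$ holds globally: on the open set where $\widehat{\varphi} \neq 0$ it holds by construction, and on the complement (where $\widehat{\varphi}$ vanishes) both measures $\sigma_\varphi$ and $\left|\widehat{\varphi}\right|^2\sigma$ vanish, the former because of compatibility with any $\psi$ that is nonzero there. One must check $\sigma_\varphi$ indeed vanishes on $\{\widehat{\varphi}=0\}$: at a point $\chi_1$ in this set, choose $\psi$ with $\widehat{\psi}(\chi_1) \neq 0$; then on a neighborhood $\left|\widehat{\psi}\right|^2 \sigma_\varphi = \left|\widehat{\varphi}\right|^2\sigma_\psi$, and the right side is a finite measure absolutely continuous with density vanishing on $\{\widehat{\varphi}=0\}$, forcing $\sigma_\varphi$ to vanish there. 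Second, weak admissibility: given $\phi \in K_2(\widehat{G})$, we write $\phi = \alpha \ast \beta$ for $\alpha, \beta \in \Cc(\widehat{G})$ and must show $\widehat{\phi} \in L^1(\sigma)$; since $\sigma$ is locally finite and $\left|\widehat{\varphi}\right|^2 \sigma = \sigma_\varphi$ is \emph{finite} for each $\varphi \in \Cc(G)$, the integrability follows by covering the support behavior of $\widehat{\phi}$ with the factors $\left|\widehat{\varphi}\right|^2$ and using finiteness of $\sigma_\varphi$.

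The main obstacle I expect is the globalization step: producing a single measure from the locally consistent pieces requires care because $\widehat{G}$ need not be compact (it is locally compact), so one cannot use a single $\varphi$. The technical heart is the partition-of-unity / locally-finite-cover argument together with the verification that the patched object is genuinely a Radon measure (in particular locally finite), which reduces precisely to the finiteness of each $\sigma_\varphi$. Everything else is a routine consequence of the compatibility relation \eqref{comp}.
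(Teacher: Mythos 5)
Your construction is correct and rests on the same core idea as the paper's proof: divide $\sigma_\varphi$ by $\left|\widehat{\varphi}\right|^2$ wherever $\widehat{\varphi}$ does not vanish, and use the compatibility relation \eqref{comp} to see that the result is independent of $\varphi$. The paper implements this more directly, defining $\sigma(f):=\sigma_\varphi\bigl(f/\left|\widehat{\varphi}\right|^2\bigr)$ for each $f\in\Cc(\widehat{G})$, with $\varphi$ chosen so that $\widehat{\varphi}$ is nonvanishing on the (compact) support of $f$; such $\varphi$ exist for every compact subset of $\widehat{G}$, which is the global form of the pointwise fact you use. That route avoids your gluing step and also the separate verification that $\sigma_\varphi$ vanishes on $\{\widehat{\varphi}=0\}$ --- a point your approach genuinely needs and which you correctly identify and handle. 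Two small inaccuracies in your write-up are worth fixing. First, $\left|\widehat{\varphi}\right|^{-2}$ need not be bounded on all of $\{\widehat{\varphi}\neq 0\}$, since $\widehat{\varphi}$ may tend to zero towards the boundary; you should shrink $U_{\chi_0}$ to a relatively compact neighbourhood on whose closure $|\widehat{\varphi}|$ is bounded below. Second, $\widehat{G}$ need not be $\sigma$-compact even though $G$ is (take $G=\{0,1\}^I$ with $I$ uncountable, which is compact while its dual is uncountable and discrete), so a countable locally finite subcover is not available in general; this is harmless, however, because each compactly supported test function meets only finitely many patches, and a finite subordinate partition of unity together with the overlap consistency defines the glued functional --- exactly your alternative suggestion. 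With these repairs your argument is complete and equivalent to the paper's.
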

\begin{proof}
We follow closely the proof of \cite[Thm.~4.5]{BF}.
For each $f \in \Cc(\widehat{G})$, pick some $\varphi \in \Cc(G)$ such that $\widehat{\varphi}$ is not vanishing on $\supp(f)$. Such a function always exists by \cite[Prop.~2.4]{BF}, \cite[Cor.~4.9.12]{MoSt}. Define
\[
\frac{f}{\big| \widehat{\varphi} \big|^2}(\chi)=
\left\{
\begin{array}{cc}
\frac{f(\chi)}{\big| \widehat{\varphi}(\chi) \big|^2}(\chi) & \mbox{ if } f(\chi) \neq 0 \,,\\
 0 & \mbox{ otherwise}\,.
\end{array}
\right.
\]
It is immediate that $\frac{f}{\big| \widehat{\varphi} \big|^2} \in\Cc(\widehat{G})$.
Additionally, define
\begin{displaymath}
\sigma(f):=\sigma_{\varphi} \left( \frac{f}{\big| \widehat{\varphi} \big|^2} \right) \,.
\end{displaymath}
The compatibly condition Eq.~\eqref{comp} ensures that our definition doesn't depend on the choice of $\varphi$. It is easy to see that $\sigma : \Cc(G) \to \CC$ is linear.

We show next that $\sigma$ is continuous with respect to the inductive topology. To do this, fix some compact set $K$. Fix some $\varphi \in \Cc(G)$ such that $\widehat{\varphi} \geq 1_K$. Such a function exists again by  \cite[Prop.~2.4]{BF}, \cite[Cor.~4.9.12]{MoSt}.
Then, for all $f \in \Cc(\widehat{G})$ with $\supp(f) \subset K$, we have $\left| \frac{f}{\left| \widehat{\varphi} \right|^2} \right| \leq \| f \|_\infty 1_K$ and hence
\begin{displaymath}
\left| \sigma(f) \right|\leq \left| \sigma_{\varphi} \right|(K)  \cdot \| f\ \|_\infty \,.
\end{displaymath}
Since $\sigma_\varphi$ is a (finite) measure, the claim follows.

Next, we show that $\sigma_\varphi = \left| \widehat{\varphi} \right|^2 \sigma$ for all $\varphi \in \Cc(G)$.
Let $\varphi \in \Cc(G)$ be arbitrary. Pick some $f \in \Cc(\widehat{G})$, and choose some $\psi \in \Cc(G)$, such that $\widehat{\psi}$ is not vanishing on $\supp(f)$. Then,
\begin{align*}
(\left| \widehat{\varphi} \right|^2\sigma)(f)
    &=\sigma (\left| \widehat{\varphi} \right|^2f )
    =\sigma_{\psi} \left( \frac{\left| \widehat{\varphi} \right|^2f}{\big|
      \widehat{\psi} \big|^2} \right)
    = \bigl(\left| \widehat{\varphi} \right|^2 \sigma_{\psi}\bigr)
      \left( \frac{f}{\big| \widehat{\psi} \big|^2} \right)\\
    &= \bigl( \big| \widehat{\psi} \big|^2 \sigma_\varphi\bigr)
      \left( \frac{f}{\big| \widehat{\psi} \big|^2} \right)
    =\sigma_\varphi(f) \,.
\end{align*}
This shows that
\begin{displaymath}
\sigma_\varphi = \left| \widehat{\varphi} \right|^2 \sigma \,.
\end{displaymath}

Finally, since $\sigma_\varphi$ is finite, so is $\left| \widehat{\varphi} \right|^2 \sigma$, which gives the weak admissibility of $\sigma$.
\end{proof}

We can now prove the following simple result.

\begin{prop}\label{FT of semi-measures}
Let $\mu$ be a measure on $\widehat{G}$. Then, there exists a
semi-measure $\vartheta$ on $G$ such that $\widehat{\vartheta}=\mu$
if and only if $\mu$ is weakly admissible.
\end{prop}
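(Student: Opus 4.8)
The plan is to prove the two implications separately, observing that one direction is essentially a restatement of Lemma~\ref{lem semi measure K_2} while the other requires an explicit construction of $\vartheta$. Throughout I will use that $K_2(G)$ is closed under the reflection $\dagger$, since $(\varphi_1\ast\varphi_2)^{\dagger}=\varphi_1^{\dagger}\ast\varphi_2^{\dagger}$, and the elementary identity $\check{\psi}=\widehat{\psi^{\dagger}}$, which follows directly from the substitution $s=-t$ and $\overline{\chi(-s)}=\chi(s)$.

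For the forward direction, suppose a semi-measure $\vartheta$ on $G$ with $\widehat{\vartheta}=\mu$ exists. Then Lemma~\ref{lem semi measure K_2}(a) asserts precisely that $\check{\psi}\in L^1(|\widehat{\vartheta}|)=L^1(|\mu|)$ for every $\psi\in K_2(G)$. Writing $\check{\psi}=\widehat{\psi^{\dagger}}$ and letting $\psi$ range over $K_2(G)$ (so that $\psi^{\dagger}$ ranges over all of $K_2(G)$ as well), this says exactly that $\widehat{\phi}\in L^1(|\mu|)$ for all $\phi\in K_2(G)$, i.e. $\mu$ is weakly admissible.

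For the reverse direction I would start from weak admissibility and define the candidate semi-measure directly on $K_2(G)$ by
\[
\vartheta(\psi):=\int_{\widehat{G}}\check{\psi}\,\dd\mu \qquad (\psi\in K_2(G)).
\]
This is well defined and linear: for $\psi\in K_2(G)$ one has $\check{\psi}=\widehat{\psi^{\dagger}}$ with $\psi^{\dagger}\in K_2(G)$, so weak admissibility guarantees $\check{\psi}\in L^1(|\mu|)$. It then remains to check that $\mu$ plays the role of $\widehat{\vartheta}$ in the sense of Definition~\ref{semi-measure}. Here I would compute, for arbitrary $\varphi\in\Cc(G)$, that $\check{(\varphi\ast\widetilde{\varphi})}=\check{\varphi}\cdot\check{\widetilde{\varphi}}=\check{\varphi}\cdot\overline{\check{\varphi}}=|\check{\varphi}|^2$, using the convolution theorem together with $\check{\widetilde{\varphi}}=\overline{\check{\varphi}}$. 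Since $\varphi\ast\widetilde{\varphi}\in K_2(G)$, weak admissibility gives $|\check{\varphi}|^2\in L^1(|\mu|)$, and the definition of $\vartheta$ yields $\vartheta(\varphi\ast\widetilde{\varphi})=\mu(|\check{\varphi}|^2)$. These are exactly the two requirements for $\vartheta$ to be Fourier transformable with $\widehat{\vartheta}=\mu$.

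The individual computations (convolution theorem, $\check{\widetilde{\varphi}}=\overline{\check{\varphi}}$, well-definedness of $\vartheta$) are routine, so the only genuine point requiring care is the bookkeeping of the reflection conventions: the definition of weak admissibility is phrased through $\widehat{\cdot}$, whereas both Lemma~\ref{lem semi measure K_2} and the defining relation for $\widehat{\vartheta}$ involve $\check{\cdot}$. The main obstacle is therefore to keep track of the involution $\dagger$ on $K_2(G)$ and of $\check{\psi}=\widehat{\psi^{\dagger}}$, so that ``$\widehat{\phi}\in L^1(|\mu|)$ for all $\phi\in K_2(G)$'' and ``$\check{\psi}\in L^1(|\mu|)$ for all $\psi\in K_2(G)$'' are recognised as one and the same condition. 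Once this is settled, both implications reduce to one-line verifications, and no uniqueness statement for $\widehat{\vartheta}$ is needed.
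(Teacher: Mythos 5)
Your proof is correct and takes essentially the same route as the paper: the converse is the same construction $\vartheta(\psi):=\mu(\check{\psi})$ on $K_2(G)$, and the forward direction is read off from Lemma~\ref{lem semi measure K_2}(a). You are merely more explicit than the paper about the reflection bookkeeping $\check{\psi}=\widehat{\psi^{\dagger}}$ and about verifying the defining identity $\vartheta(\varphi\ast\widetilde{\varphi})=\mu\bigl(|\check{\varphi}|^2\bigr)$, which is a welcome clarification rather than a deviation.
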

\begin{proof}
$\Longrightarrow$: This follows from the definition of the Fourier transformability.

\smallskip

\noindent $\Longleftarrow$: Since $\mu$ is weakly admissible, we have $|\widecheck{\psi}| \in \mathcal{L}^1(|\mu|)$ for all $\psi \in K_2(G)$.
Then, we can define a semi-measure $\vartheta$ via
\begin{displaymath}
\vartheta(\psi) := \mu( \widecheck{\psi}) \quad \mbox{ for all } \psi \in K_2(G) \,.
\end{displaymath}
\end{proof}

We can now give an example of a semi-measure which is not a measure.

\begin{example}
On $G=\RR$, the Lebesgue measure is weakly admissible, and hence, so is its restriction to $[0, \infty)$ \cite[Lem. 3.2(ii)]{NS18}.
By Proposition~\ref{FT of semi-measures},
\begin{equation}\label{eq varme}
\vartheta(f) := \int_0^\infty \widecheck{f}(s)\, \dd s \,, \qquad  \text{ for all } f \in K_2(G) \,,
\end{equation}
is well defined and a semi-measure on $\RR$.
However, $\vartheta$ is not a measure.
Indeed, assume by contradiction that it is. Then, by Eq.~\eqref{eq varme}, $\vartheta$ is Fourier transformable as a measure and its Fourier transform as a measure is $\nu:= \lambda|_{[0, \infty)]}$.  \cite[Thm.~11.1]{ARMA} then implies that $\nu \in \WAP(\RR)$ and hence $\nu$ has a mean which does not depend on the choice of the van Hove sequence.
This is a contradiction, as the mean of $\nu$ with respect to $A_n=[0,n]$ is $1$, while the mean of $\nu$ with respect to $[-n,0]$ is $0$.    \exend
\end{example}

We introduce the concept of positive definiteness for a semi-measure, similar to a measure.

\begin{definition}
A semi-measure $\vartheta$ is called \textit{positive definite}\index{positive~definite!semi-measure}, if for all
$\varphi \in \Cc(G)$, we have $\vartheta(\varphi*\tilde{\varphi}) \geq 0$.   \exend
\end{definition}

\begin{remark}\label{rem:pd implies FT}
Similarly to  \cite[Thm.~4.5]{BF}, \cite[Thm.~4.11.5]{MoSt} one can prove that a semi-measure $\vartheta$ is Fourier transformable with positive Fourier transform if and only if $\vartheta$ is positive definite, satisfies $(\vartheta*\varphi)*\psi=(\vartheta*\psi)*\varphi$ for all $\varphi,\psi \in K_2(G)$, and, for all $\varphi \in K_2(G)$, the function $\vartheta*\varphi$ is continuous at $t=0$.         \exend
\end{remark}

\smallskip

We complete this Appendix by discussing when a semi-measure is a measure.

\begin{lemma}\label{posi semi is measure} Let $\vartheta$ be a semi-measure. Then,
$\vartheta$ is a measure if and only if, for all $K \subset G$, there exists a constant $C_K>0$ such that, for all $\psi \in K_2(G)$ with $\supp(\psi) \subseteq K$, we have
\begin{displaymath}
|\vartheta(\psi)| \leq C_K\,  \| \psi \|_\infty \,.
\end{displaymath}
\end{lemma}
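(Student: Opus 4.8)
The statement is a boundedness characterization of when a semi-measure $\vartheta : K_2(G)\to\CC$ extends to (or is) a genuine measure. The forward implication is essentially immediate: if $\vartheta$ is a measure, then by the very definition of a Radon measure (recalled in Section~\ref{sec-key-player}), for each compact $K\subseteq G$ there is a constant $a_K>0$ with $|\vartheta(\psi)|\leq a_K\,\|\psi\|_\infty$ for all $\psi\in\Cc(G)$ supported in $K$. Since $K_2(G)\subseteq\Cc(G)$, the same bound holds a fortiori for all $\psi\in K_2(G)$ with $\supp(\psi)\subseteq K$, so we may take $C_K = a_K$. This direction requires no real work.

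\textbf{For the converse}, I would argue that the stated local boundedness lets us extend $\vartheta$ from $K_2(G)$ to all of $\Cc(G)$ by continuity, and that the extension is a measure. First I would fix a compact set $K\subseteq G$ and consider the subspace $K_2(G)\cap \{\psi : \supp(\psi)\subseteq K\}$. The crucial input is that $K_2(G)$ is $\|\cdot\|_\infty$-dense in $\Cc(G)$ in the appropriate local sense: given any $\varphi\in\Cc(G)$ with $\supp(\varphi)\subseteq K$ and any $\eps>0$, one can find $\psi\in K_2(G)$ with $\supp(\psi)$ contained in a fixed slightly larger compact set $K'$ (e.g.\ $K+\overline{U}$ for a fixed relatively compact neighborhood $U$ of $0$) and $\|\varphi-\psi\|_\infty<\eps$. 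This is exactly the kind of approximation provided by convolving with an approximate unit: writing $\psi=\varphi*\phi_\alpha$ with $\phi_\alpha\in\Cc(G)$ an approximate identity supported near $0$ with $\int\phi_\alpha=1$, we have $\varphi*\phi_\alpha\in K_2(G)$ (since $\varphi\in\Cc(G)$ and hence $\varphi=\varphi_1*\varphi_2$ can be arranged, or more directly $\varphi*\phi_\alpha$ lies in the span of convolutions), and $\varphi*\phi_\alpha\to\varphi$ uniformly with supports controlled inside $K'$. Given such approximants, the hypothesis $|\vartheta(\psi)|\leq C_{K'}\|\psi\|_\infty$ shows that $\vartheta$ restricted to $\{\psi\in K_2(G):\supp\subseteq K'\}$ is uniformly continuous with respect to $\|\cdot\|_\infty$, and therefore extends uniquely to a bounded linear functional on the uniform closure, which contains all of $\{\varphi\in\Cc(G):\supp(\varphi)\subseteq K\}$.

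\textbf{Finally I would assemble these local extensions into a single measure.} The extensions for different compact sets are consistent (they all restrict back to $\vartheta$ on $K_2(G)$, and uniqueness of the continuous extension forces agreement on overlaps), so they glue to a well-defined linear functional $\overline\vartheta$ on all of $\Cc(G)$. By construction, for each compact $K$ and each $\varphi\in\Cc(G)$ with $\supp(\varphi)\subseteq K$ one has $|\overline\vartheta(\varphi)|\leq C_{K'}\|\varphi\|_\infty$, which is precisely the defining estimate of a Radon measure. Hence $\overline\vartheta$ is a measure, and since it agrees with $\vartheta$ on $K_2(G)$, the semi-measure $\vartheta$ \emph{is} (the restriction of) a measure. \textbf{The main obstacle} I anticipate is the density/approximation step: one must verify carefully that an arbitrary $\varphi\in\Cc(G)$ supported in $K$ can be approximated uniformly by elements of $K_2(G)$ whose supports stay inside a \emph{fixed} compact enlargement $K'$ of $K$ independent of the approximation parameter, so that a single constant $C_{K'}$ governs the whole approximating sequence. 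This is exactly what the approximate-unit machinery recalled in Section~\ref{sec-key-player} delivers, since convolution by a compactly supported $\phi_\alpha$ enlarges supports only by $\supp(\phi_\alpha)$, which can be fixed once and for all.
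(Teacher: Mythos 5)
Your proof is correct and follows essentially the same route as the paper: the forward direction is immediate from the definition of a Radon measure, and the converse extends $\vartheta$ by $\|\cdot\|_\infty$-continuity from $K_2(G)$ to each $C(G:K):=\{\varphi\in\Cc(G):\supp(\varphi)\subseteq K\}$ using approximate units and then glues the local extensions by uniqueness. In fact you are slightly more careful than the paper in pinning down that the approximants $\varphi*\phi_\alpha$ live in a \emph{fixed} enlargement $K'=K+\overline{U}$, which is exactly the point needed to make the density claim in the paper's one-line argument rigorous.
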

\begin{proof}
$\Longrightarrow$: This follows from the definition of measures.

\medskip

\noindent $\Longleftarrow$: Fix some $K \subseteq G$. The set
\[
C(G:K):= \{ \varphi  \in \Cc(G): \supp(\varphi) \subset K \}
\]
is a Banach space with respect to $\| \cdot \|_\infty$.
Now, the given relation says that $\vartheta$ is bounded on the dense subspace $C(G:K)\cap K_2(G)$ and hence, has a unique extension to a continuous mapping $\mu_{K} : C(G:K) \to \CC$.
Next, if $K', K''$ are arbitrary compacts with non-empty intersection $K=K' \cap K''$, it is easy to see that $\mu_{K'}|_{ C(G:K)}=\mu_{K''}|_{ C(G:K)}$. Therefore, we can define $\mu : \Cc(G) \to \CC$ via
\begin{displaymath}
\mu(\varphi)= \mu_{K}(\varphi) \,,
\end{displaymath}
where $K$ is any compact set containing the $\supp(\varphi)$. It is easy to see that $\mu$ is a measure.
\end{proof}

\chapter{Averaging along arbitrary van Hove sequences}

Here, we cover an important lemma which allows us relate the means of a bounded function with respect to different van Hove sequences. We denote the open ball around $z\in \CC$ with radius $r>0$ by $U_r (z)$.

\begin{prop}\label{prop:mother-of-uniform-van-Hove-results}
Let $h: G\longrightarrow \CC$ be a bounded measurable function.
Let $A$ be an open, relatively compact subset of $G$, and assume that there exist $r>0$ and $z\in \CC$ with
\[
\frac{1}{|A|} \int_{A+s} h(t)\, \dd t \in U_r (z)
\]
for all  $s\in G$. Then, for any van Hove sequence $\cB$ and any $R>r$, there exists a natural number $N$ with
\[
\frac{1}{|B_n|} \int_{B_n +v} h(t)\, \dd t \in U_R (z)
\]
for all $v\in G$ and $n\geq N$.
\end{prop}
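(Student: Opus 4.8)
The plan is to use a double-averaging (Fubini) argument that compares the $\cB$-averages against averages of the given $A$-averages. Write $g(s) := \tfrac{1}{|A|}\int_{A+s} h(t)\, \dd t$; by hypothesis $g$ is a bounded measurable function with $g(s)\in U_r(z)$, i.e.\ $|g(s)-z|<r$, for every $s\in G$. First I would introduce the auxiliary quantity
\[
I_n(v) := \frac{1}{|B_n|}\int_{B_n+v} g(s)\, \dd s .
\]
Since $|g(s)-z|<r$ pointwise, the triangle inequality for integrals gives immediately
\[
|I_n(v)-z| \;\leq\; \frac{1}{|B_n|}\int_{B_n+v} |g(s)-z|\, \dd s \;\leq\; r
\]
for all $n$ and all $v\in G$; so $I_n(v)$ always lies in the closed ball $\overline{U_r(z)}\subset U_R(z)$, with no use of the van Hove property yet. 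The whole task then reduces to showing that $I_n(v)$ approximates $\tfrac{1}{|B_n|}\int_{B_n+v}h(t)\,\dd t$ uniformly in $v$ as $n\to\infty$.

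To make this comparison, I would unfold $g$ and apply Fubini (legitimate since $h$ is bounded and all integration domains are relatively compact). Substituting $t=s+u$ with $u\in A$ yields
\[
I_n(v) = \frac{1}{|B_n|\,|A|}\int_{B_n+v}\int_A h(s+u)\, \dd u\, \dd s
       = \frac{1}{|A|}\int_A \Big(\frac{1}{|B_n|}\int_{B_n+v+u} h(t)\, \dd t\Big)\, \dd u .
\]
Thus $I_n(v)$ is exactly the average over $u\in A$ of the shifted $\cB$-averages $\tfrac{1}{|B_n|}\int_{B_n+v+u}h$. Consequently
\[
\Big|\,I_n(v) - \frac{1}{|B_n|}\int_{B_n+v} h(t)\, \dd t\,\Big|
 \;\leq\; \frac{1}{|A|}\int_A \frac{\|h\|_\infty}{|B_n|}\,
 \big|(B_n+v+u)\,\Delta\,(B_n+v)\big|\, \dd u
 \;=\; \frac{\|h\|_\infty}{|B_n|}\,\frac{1}{|A|}\int_A |(B_n+u)\,\Delta\,B_n|\, \dd u ,
\]
using translation invariance of Haar measure in the last step.

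The main obstacle is the uniform F\o lner estimate controlling $|(B_n+u)\,\Delta\,B_n|$ for $u$ ranging over the relatively compact set $A$. Here I would fix a symmetric compact set $K$ with $\overline{A}\cup(-\overline{A})\subseteq K$ and check directly from the definition of the $K$-boundary that $(B_n+u)\,\Delta\,B_n\subseteq\partial^K B_n$ for every $u\in A$: a point of $(B_n+u)\setminus B_n$ lies in $\overline{B_n+K}\setminus B_n$, while a point of $B_n\setminus(B_n+u)$ lies in $((G\setminus B_n)-K)\cap\overline{B_n}$. Hence $\sup_{u\in A}|(B_n+u)\,\Delta\,B_n| \leq |\partial^K B_n|$, and the van Hove property of $\cB$ gives $\|h\|_\infty\,|\partial^K B_n|/|B_n|\to 0$. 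Given $R>r$, I would then pick $N$ with $\|h\|_\infty\,|\partial^K B_n|/|B_n| < R-r$ for all $n\geq N$. Combining the two displayed estimates, for $n\geq N$ and every $v\in G$,
\[
\Big|\frac{1}{|B_n|}\int_{B_n+v} h(t)\, \dd t - z\Big|
 \;\leq\; \Big|\frac{1}{|B_n|}\int_{B_n+v} h - I_n(v)\Big| + |I_n(v)-z|
 \;<\; (R-r) + r \;=\; R,
\]
so the average lies in $U_R(z)$, as claimed. The only genuinely delicate point is the boundary inclusion above; everything else is bounded-convergence bookkeeping.
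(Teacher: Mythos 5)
Your proposal is correct and follows essentially the same route as the paper's proof: the Fubini/double-averaging comparison of the $\cB$-average of $h$ with the $\cB$-average of the $A$-averages, controlled by the van Hove boundary $\partial^{A\cup(-A)}B_n$. The only cosmetic difference is that you translate the integration domain where the paper translates the function, and you spell out the symmetric-difference inclusion that the paper leaves as ``a short computation''.
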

\begin{proof}
A short computation shows
\[
\left| \int_{B_n +v} h(t + u) \dd t - \int_{B_n + v} h(t)\, \dd t \right|  \leq \|h\|_\infty\, |\partial^{A\cup (-A)} B_n|
\]
for all $u\in A$  and $v\in G$.
In particular, we have
\[
\left|\frac{1}{|A|}\int_A \left(\int_{B_n + v} h(t+u)\, \dd t\right) \dd u - \int_{B_n + v} h(t)\, \dd t\right|\leq \|h\|_\infty\, |\partial^{A\cup (-A)} B_n|
\]
for all $v\in G$. Now, from the assumption, we find
\[
\frac{1}{|A|}\int_A \left(\frac{1}{|B_n|}\int_{B_n + v} h(t+u)\, \dd t\right) \dd u = \frac{1}{|B_n|}\int_{B_n+v} \left(\frac{1}{|A|}\int_A h(t+u)\, \dd u \right) \dd t \in U_r (z)
\]
for all $v\in G$ and $n\in\NN$.  Taking these statements together, we
infer that
\[
\frac{1}{|B_n|} \int_{B_n + v} h(t)\, \dd t \in U_{r + \delta(n)} (z)
\]
with
\[
\delta(n) = \frac{1}{|B_n|}\, \|h\|_\infty\, |\partial^{A\cup (-A)} B_n|
\]
for all $v\in G$ and $n\in \NN$. This easily gives the statement.
\end{proof}

\backmatter

\bibliographystyle{amsalpha}



\printnomenclature
\printindex

\end{document}